\providecommand{\U}[1]{\protect\rule{.1in}{.1in}}
\theoremstyle{definition}
\newtheorem{theo}{Theorem}[section]
\newenvironment{theorem}[1][]
{\begin{theo}[#1]\begin{leftbar}}
{\end{leftbar}\end{theo}}
\newtheorem{lem}[theo]{Lemma}
\newenvironment{lemma}[1][]
{\begin{lem}[#1]\begin{leftbar}}
{\end{leftbar}\end{lem}}
\newtheorem{prop}[theo]{Proposition}
\newenvironment{proposition}[1][]
{\begin{prop}[#1]\begin{leftbar}}
{\end{leftbar}\end{prop}}
\newtheorem{defi}[theo]{Definition}
\newenvironment{definition}[1][]
{\begin{defi}[#1]\begin{leftbar}}
{\end{leftbar}\end{defi}}
\newtheorem{remk}[theo]{Remark}
\newenvironment{remark}[1][]
{\begin{remk}[#1]\begin{leftbar}}
{\end{leftbar}\end{remk}}
\newtheorem{coro}[theo]{Corollary}
\newenvironment{corollary}[1][]
{\begin{coro}[#1]\begin{leftbar}}
{\end{leftbar}\end{coro}}
\newtheorem{conv}[theo]{Convention}
\newenvironment{convention}[1][]
{\begin{conv}[#1]\begin{leftbar}}
{\end{leftbar}\end{conv}}
\newtheorem{conj}[theo]{Conjecture}
\newenvironment{conjecture}[1][]
{\begin{conj}[#1]\begin{leftbar}}
{\end{leftbar}\end{conj}}
\newtheorem{exam}[theo]{Example}
\newenvironment{example}[1][]
{\begin{exam}[#1]\begin{leftbar}}
{\end{leftbar}\end{exam}}
\newenvironment{statement}{\begin{quote}}{\end{quote}}
\newenvironment{fineprint}{\begin{small}}{\end{small}}
\newcommand{\silentsubsection}{\subsection}
\let\sumnonlimits\sum
\let\prodnonlimits\prod
\let\cupnonlimits\bigcup
\let\capnonlimits\bigcap
\renewcommand{\sum}{\sumnonlimits\limits}
\renewcommand{\prod}{\prodnonlimits\limits}
\renewcommand{\bigcup}{\cupnonlimits\limits}
\renewcommand{\bigcap}{\capnonlimits\limits}
\newenvironment{verlong}{}{}
\newenvironment{vershort}{}{}
\newenvironment{noncompile}{}{}
\begin{document}

\title{Petrie symmetric functions}
\author{Darij Grinberg}
\date{April 22, 2021}
\maketitle

\begin{abstract}
\textbf{Abstract.} For any positive integer $k$ and nonnegative integer $m$,
we consider the symmetric function $G\left(  k,m\right)  $ defined as the sum
of all monomials of degree $m$ that involve only exponents smaller than $k$.
We call $G\left(  k,m\right)  $ a \textit{Petrie symmetric function} in honor
of Flinders Petrie, as the coefficients in its expansion in the Schur basis
are determinants of Petrie matrices (and thus belong to $\left\{
0,1,-1\right\}  $ by a classical result of Gordon and Wilkinson). More
generally, we prove a Pieri-like rule for expanding a product of the form
$G\left(  k,m\right)  \cdot s_{\mu}$ in the Schur basis whenever $\mu$ is a
partition; all coefficients in this expansion belong to $\left\{
0,1,-1\right\}  $. We also show that $G\left(  k,1\right)  ,G\left(
k,2\right)  ,G\left(  k,3\right)  ,\ldots$ form an algebraically independent
generating set for the symmetric functions when $1-k$ is invertible in the
base ring, and we prove a conjecture of Liu and Polo about the expansion of
$G\left(  k,2k-1\right)  $ in the Schur basis.

\textbf{Keywords:} symmetric functions, Schur functions, Schur polynomials,
combinatorial Hopf algebras, Petrie matrices, Pieri rules, Murnaghan--Nakayama rule.

\textbf{MSC2010 Mathematics Subject Classifications:} 05E05.

\end{abstract}
\tableofcontents

\section*{***}

Considered as a ring, the \textit{symmetric functions} (which is short for
\textquotedblleft formal power series in countably many indeterminates
$x_{1},x_{2},x_{3},\ldots$ that are of bounded degree and fixed under
permutations of the indeterminates\textquotedblright) are hardly a remarkable
object: By a classical result essentially known to Gauss, they form a
polynomial ring in countably many indeterminates. The true theory of symmetric
functions is rather the study of specific \textit{families} of symmetric
functions, often defined by combinatorial formulas (e.g., as multivariate
generating functions) but interacting deeply with many other fields of
mathematics. Classical families are, for example, the \textit{monomial
symmetric functions} $m_{\lambda}$, the \textit{complete homogeneous symmetric
functions }$h_{n}$, the \textit{power-sum symmetric functions} $p_{n}$, and
the \textit{Schur functions }$s_{\lambda}$. Some of these families -- such as
the monomial symmetric functions $m_{\lambda}$ and the Schur functions
$s_{\lambda}$ -- form bases of the ring of symmetric functions (as a module
over the base ring).

In this paper, we introduce a new family $\left(  G\left(  k,m\right)
\right)  _{k\geq1;\ m\geq0}$ of symmetric functions, which we call the
\textit{Petrie symmetric functions} in honor of Flinders Petrie. For any
integers $k\geq1$ and $m\geq0$, we define $G\left(  k,m\right)  $ as the sum
of all monomials of degree $m$ (in $x_{1},x_{2},x_{3},\ldots$) that involve
only exponents smaller than $k$. When $G\left(  k,m\right)  $ is expanded in
the Schur basis (i.e., as a linear combination of Schur functions $s_{\lambda
}$), all coefficients belong to $\left\{  0,1,-1\right\}  $ by a classical
result of Gordon and Wilkinson, as they are determinants of so-called
\textit{Petrie matrices} (whence our name for $G\left(  k,m\right)  $). We
give an explicit combinatorial description for the coefficients as well. More
generally, we prove a Pieri-like rule for expanding a product of the form
$G\left(  k,m\right)  \cdot s_{\mu}$ in the Schur basis whenever $\mu$ is a
partition; all coefficients in this expansion again belong to $\left\{
0,1,-1\right\}  $ (although we have no explicit combinatorial rule for them).
We show some further properties of $G\left(  k,m\right)  $ and prove that if
$k$ is a fixed positive integer such that $1-k$ is invertible in the base
ring, then $G\left(  k,1\right)  ,G\left(  k,2\right)  ,G\left(  k,3\right)
,\ldots$ form an algebraically independent generating set for the symmetric
functions. We prove a conjecture of Liu and Polo in \cite[Remark
1.4.5]{LiuPol19} about the expansion of $G\left(  k,2k-1\right)  $ in the
Schur basis.

This paper begins with Section \ref{sect.not}, in which we introduce the
notions and notations that the paper will rely on. (Further notations will
occasionally be introduced as the need arises.) The rest of the paper consists
of two essentially independent parts. The first part comprises Section
\ref{sect.thms}, in which we define the Petrie symmetric functions $G\left(
k,m\right)  $ (and the related power series $G\left(  k\right)  $) and state
several of their properties, and Section \ref{sect.proofs}, in which we prove
said properties. The second part is Section \ref{sect.liu}, which is devoted
to proving the conjecture of Liu and Polo.\footnote{This proof is independent
of the first part of the paper, except that it uses the very simple
Proposition \ref{prop.G.basics} \textbf{(c)}.} A final Section \ref{sect.fin}
adds comments, formulates two conjectures, and (in its last subsection)
explores a more general family of symmetric functions that still shares some
of the properties of the Petrie functions $G\left(  k,m\right)  $. (As a
byproduct of the latter generalization, a formula for the antipode of
$G\left(  k,m\right)  $ -- Corollary \ref{cor.Gkm.antipode} -- emerges.)

\subsubsection*{Acknowledgments}

I thank Moussa Ahmia, Per Alexandersson, Fran\c{c}ois Bergeron, Steve Doty,
Ira Gessel, Jim Haglund, Linyuan Liu, Patrick Polo, Sasha Postnikov,
Christopher Ryba, Richard Stanley, Ole Warnaar and Mark Wildon for interesting
and helpful conversations, and two referees for helpful suggestions. Special
thanks are due to Sasha Postnikov for his permission to include his
generalization of the Petrie symmetric functions.

This paper was started at the Mathematisches Forschungsinstitut Oberwolfach,
where I was staying as a Leibniz fellow in Summer 2019, and finished during a
semester program at the Institut Mittag--Leffler in 2020. I thank both
institutes for their hospitality. The SageMath computer algebra system
\cite{SageMath} has been used in discovering some of the results.

\begin{fineprint}
This material is based upon work supported by the Swedish Research Council
under grant no. 2016-06596 while the author was in residence at Institut
Mittag--Leffler in Djursholm, Sweden during Spring 2020.
\end{fineprint}

\subsubsection*{Remarks}

\textbf{1.} A short exposition of the main results of this paper (without
proofs), along with an additional question motivated by it, can be found in
\cite{Grinbe20}.

\textbf{2.} While finishing this work, I have become aware of three
independent discoveries of the Petrie symmetric functions $G\left(
k,m\right)  $:

\begin{enumerate}
\item[\textbf{(a)}] In \cite[\S 3.3]{DotWal92}, Stephen Doty and Grant Walker
define a \emph{modular complete symmetric function }$h_{d}^{\prime}$, which is
precisely our Petrie symmetric function $G\left(  k,m\right)  $ up to a
renaming of variables (namely, their $m$ and $d$ correspond to our $k$ and
$m$). Some of our results appear in their work: Our Theorem
\ref{thm.Gkm-genset} is (a slight generalization of) \cite[Corollary
3.9]{DotWal92}; our Theorem \ref{thm.Uk.main} is (part of) \cite[Proposition
3.15]{DotWal92} restated in the language of Hopf algebras. The $h_{d}^{\prime
}$ are studied further in Walker's follow-up paper \cite{Walker94}, some of
whose results mirror ours again (in particular, the maps $\psi^{p}$ and
$\psi_{p}$ from \cite{Walker94} are our $\mathbf{f}_{p}$ and $\mathbf{v}_{p}$).

\item[\textbf{(b)}] The preprint \cite{FuMei20} by Houshan Fu and Zhousheng
Mei introduces the Petrie symmetric functions $G\left(  k,m\right)  $ and
refers to them as \emph{truncated homogeneous symmetric functions}
$h_{m}^{\left[  k-1\right]  }$. Some results below are also independently
obtained in \cite{FuMei20}. In particular, Theorem \ref{thm.G.main} is a
formula in \cite[\S 2]{FuMei20}, and Theorem \ref{thm.petk.explicit} is
equivalent to \cite[Proposition 2.9]{FuMei20}. The particular case of Theorem
\ref{thm.Gkm-genset} when $\mathbf{k}=\mathbb{Q}$ is part of \cite[Theorem
2.7]{FuMei20}.

\item[\textbf{(c)}] The paper \cite{BaAhBe18} by Bazeniar, Ahmia and Belbachir
introduces the symmetric functions $G\left(  k,m\right)  $ as well, or rather
their evaluations $\left(  G\left(  k,m\right)  \right)  \left(  x_{1}%
,x_{2},\ldots,x_{n}\right)  $ at finitely many variables; it denotes them by
$E_{m}^{\left(  k-1\right)  }\left(  n\right)  =E_{m}^{\left(  k-1\right)
}\left(  x_{1},x_{2},\ldots,x_{n}\right)  $. Ahmia and Merca continue the
study of these $E_{m}^{\left(  k-1\right)  }\left(  x_{1},x_{2},\ldots
,x_{n}\right)  $ in \cite{AhmMer20}. Our Theorem \ref{thm.G.frob} is
equivalent to the second formula in \cite[Theorem 3.3]{AhmMer20} (although we
are using infinitely many variables).

\item[\textbf{(d)}] The formal power series $G\left(  k\right)  $ also appears
in \cite[Chapter I, \S 6]{FulLan85}, under the guise of \emph{Bott's
cannibalistic class} $\theta^{j}\left(  e\right)  $ (for $j=k$ and rewritten
in the language of $\lambda$-ring operations\footnote{See \cite[\S 16.74]%
{Hazewi08} for the connection between symmetric functions (over $\mathbb{Z}$)
and universal operations on $\lambda$-rings. To be specific: If $a$ is an
element of a $\lambda$-ring $A$, then the canonical $\lambda$-ring morphism
$\Lambda_{\mathbb{Z}}\rightarrow A$ (where $\Lambda_{\mathbb{Z}}$ is the ring
of symmetric functions over $\mathbb{Z}$) that sends $e_{1}=x_{1}+x_{2}%
+x_{3}+\cdots\in\Lambda_{\mathbb{Z}}$ to $a\in A$ will send the Petrie
symmetric function $G\left(  k,m\right)  $ to the \textquotedblleft$m$-th
graded component\textquotedblright\ of Bott's cannibalistic class $\theta
^{k}\left(  a\right)  $. (Bott's cannibalistic class $\theta^{k}\left(
a\right)  $ itself is defined only if $a$ is a \textquotedblleft positive
element\textquotedblright\ in the sense of \cite{FulLan85} (or can only be
defined in an appropriate closure of $A$). When it is defined, it is the image
of the series $G\left(  k\right)  $. Otherwise, its \textquotedblleft graded
components\textquotedblright\ are the right object to consider.)}); it is used
there to prove an abstract Riemann--Roch theorem. An application to group
representations appears in \cite{AtiTal69}.
\end{enumerate}

\textbf{3.} The Petrie symmetric functions have been added to Per
Alexandersson's collection of symmetric functions at
\url{https://www.math.upenn.edu/~peal/polynomials/petrie.htm} .

\subsection*{Remark on alternative versions}

\begin{vershort}
This paper also has a detailed version \cite{verlong}, which includes some
proofs that have been omitted from the present version (mostly basic
properties of symmetric functions).
\end{vershort}

\begin{verlong}
You are reading the detailed version of this paper. For the standard version
(which is shorter by virtue of omitting some straightforward or well-known
proofs), see \cite{vershort}.
\end{verlong}

\section{\label{sect.not}Notations}

We will use the following notations (most of which are also used in
\cite[\S 2.1]{GriRei}):

\begin{itemize}
\item We let $\mathbb{N}=\left\{  0,1,2,\ldots\right\}  $.

\item The words \textquotedblleft positive\textquotedblright,
\textquotedblleft larger\textquotedblright, etc. will be used in their
Anglophone meaning (so that $0$ is neither positive nor larger than itself).

\item We fix a commutative ring $\mathbf{k}$; we will use this $\mathbf{k}$ as
the base ring in what follows.

\item A \emph{weak composition} means an infinite sequence of nonnegative
integers that contains only finitely many nonzero entries (i.e., a sequence
$\left(  \alpha_{1},\alpha_{2},\alpha_{3},\ldots\right)  \in\mathbb{N}%
^{\infty}$ such that all but finitely many $i\in\left\{  1,2,3,\ldots\right\}
$ satisfy $\alpha_{i}=0$).

\item We let $\operatorname*{WC}$ denote the set of all weak compositions.

\item For any weak composition $\alpha$ and any positive integer $i$, we let
$\alpha_{i}$ denote the $i$-th entry of $\alpha$ (so that $\alpha=\left(
\alpha_{1},\alpha_{2},\alpha_{3},\ldots\right)  $). More generally, we use
this notation whenever $\alpha$ is an infinite sequence of any kind of objects.

\item The \emph{size} $\left\vert \alpha\right\vert $ of a weak composition
$\alpha$ is defined to be $\alpha_{1}+\alpha_{2}+\alpha_{3}+\cdots
\in\mathbb{N}$.

\item A \emph{partition} means a weak composition whose entries weakly
decrease (i.e., a weak composition $\alpha$ satisfying $\alpha_{1}\geq
\alpha_{2}\geq\alpha_{3}\geq\cdots$).

\item If $n\in\mathbb{Z}$, then a \emph{partition of} $n$ means a partition
$\alpha$ having size $n$ (that is, satisfying $\left\vert \alpha\right\vert
=n$).

\item We let $\operatorname*{Par}$ denote the set of all partitions. For each
$n\in\mathbb{Z}$, we let $\operatorname*{Par}\nolimits_{n}$ denote the set of
all partitions of $n$.

\item We will sometimes omit trailing zeroes from partitions: i.e., a
partition $\lambda=\left(  \lambda_{1},\lambda_{2},\lambda_{3},\ldots\right)
$ will be identified with the $k$-tuple $\left(  \lambda_{1},\lambda
_{2},\ldots,\lambda_{k}\right)  $ whenever $k\in\mathbb{N}$ satisfies
$\lambda_{k+1}=\lambda_{k+2}=\lambda_{k+3}=\cdots=0$. For example, $\left(
3,2,1,0,0,0,\ldots\right)  =\left(  3,2,1\right)  =\left(  3,2,1,0\right)  $.

\item The partition $\left(  0,0,0,\ldots\right)  =\left(  {}\right)  $ is
called the \emph{empty partition} and denoted by $\varnothing$.

\item A \emph{part} of a partition $\lambda$ means a nonzero entry of
$\lambda$. For example, the parts of the partition $\left(  3,1,1\right)
=\left(  3,1,1,0,0,0,\ldots\right)  $ are $3,1,1$.

\item We will use the notation $1^{k}$ for \textquotedblleft%
$\underbrace{1,1,\ldots,1}_{k\text{ times}}$\textquotedblright\ in partitions.
(For example, $\left(  2,1^{4}\right)  =\left(  2,1,1,1,1\right)  $. This
notation is a particular case of the more general notation $m^{k}$ for
\textquotedblleft$\underbrace{m,m,\ldots,m}_{k\text{ times}}$%
\textquotedblright\ in partitions, used, e.g., in \cite[Definition
2.2.1]{GriRei}.)

\item We let $\Lambda$ denote the ring of symmetric functions in infinitely
many variables $x_{1},x_{2},x_{3},\ldots$ over $\mathbf{k}$. This is a subring
of the ring $\mathbf{k}\left[  \left[  x_{1},x_{2},x_{3},\ldots\right]
\right]  $ of formal power series. To be more specific, $\Lambda$ consists of
all power series in $\mathbf{k}\left[  \left[  x_{1},x_{2},x_{3}%
,\ldots\right]  \right]  $ that are symmetric (i.e., invariant under
permutations of the variables) and of bounded degree (see \cite[\S 2.1]%
{GriRei} for the precise meaning of this).

\item A \emph{monomial} shall mean a formal expression of the form
$x_{1}^{\alpha_{1}}x_{2}^{\alpha_{2}}x_{3}^{\alpha_{3}}\cdots$ with $\alpha
\in\operatorname*{WC}$. Formal power series are formal infinite $\mathbf{k}%
$-linear combinations of such monomials.

\item For any weak composition $\alpha$, we let $\mathbf{x}^{\alpha}$ denote
the monomial $x_{1}^{\alpha_{1}}x_{2}^{\alpha_{2}}x_{3}^{\alpha_{3}}\cdots$.

\item The \emph{degree} of a monomial $\mathbf{x}^{\alpha}$ is defined to be
$\left\vert \alpha\right\vert $.

\item A formal power series is said to be \emph{homogeneous of degree }$n$
(for some $n\in\mathbb{N}$) if all monomials appearing in it (with nonzero
coefficient) have degree $n$. In particular, the power series $0$ is
homogeneous of any degree.

\item If $f\in\mathbf{k}\left[  \left[  x_{1},x_{2},x_{3},\ldots\right]
\right]  $ is a power series, then there is a unique family $\left(
f_{i}\right)  _{i\in\mathbb{N}}=\left(  f_{0},f_{1},f_{2},\ldots\right)  $ of
formal power series $f_{i}\in\mathbf{k}\left[  \left[  x_{1},x_{2}%
,x_{3},\ldots\right]  \right]  $ such that each $f_{i}$ is homogeneous of
degree $i$ and such that $f=\sum_{i\in\mathbb{N}}f_{i}$. This family $\left(
f_{i}\right)  _{i\in\mathbb{N}}$ is called the \emph{homogeneous
decomposition} of $f$, and its entry $f_{i}$ (for any given $i\in\mathbb{N}$)
is called the $i$\emph{-th degree homogeneous component} of $f$.

\item The $\mathbf{k}$-algebra $\Lambda$ is graded: i.e., any symmetric
function $f$ can be uniquely written as a sum $\sum_{i\in\mathbb{N}}f_{i}$,
where each $f_{i}$ is a homogeneous symmetric function of degree $i$, and
where all but finitely many $i\in\mathbb{N}$ satisfy $f_{i}=0$.
\end{itemize}

We shall use the symmetric functions $m_{\lambda},h_{n},e_{n},p_{n}%
,s_{\lambda}$ in $\Lambda$ as defined in \cite[Sections 2.1 and 2.2]{GriRei}.
Let us briefly recall how they are defined:

\begin{vershort}
\begin{itemize}
\item For any partition $\lambda$, we define the \emph{monomial symmetric
function }$m_{\lambda}\in\Lambda$ by\footnote{This definition of $m_{\lambda}$
is not the same as the one given in \cite[Definition 2.1.3]{GriRei}; but it is
easily seen to be equivalent to the latter (i.e., it defines the same
$m_{\lambda}$). See \cite{verlong} for the details of the proof.}
\[
m_{\lambda}=\sum\mathbf{x}^{\alpha},
\]
where the sum ranges over all weak compositions $\alpha\in\operatorname*{WC}$
that can be obtained from $\lambda$ by permuting entries\footnote{Here, we
understand $\lambda$ to be an infinite sequence, not a finite tuple, so the
entries being permuted include infinitely many $0$'s.}. For example,
\[
m_{\left(  2,2,1\right)  }=\sum_{i<j<k}x_{i}^{2}x_{j}^{2}x_{k}+\sum
_{i<j<k}x_{i}^{2}x_{j}x_{k}^{2}+\sum_{i<j<k}x_{i}x_{j}^{2}x_{k}^{2}.
\]
The family $\left(  m_{\lambda}\right)  _{\lambda\in\operatorname*{Par}}$
(that is, the family of the symmetric functions $m_{\lambda}$ as $\lambda$
ranges over all partitions) is a basis of the $\mathbf{k}$-module $\Lambda$.
\end{itemize}
\end{vershort}

\begin{verlong}
\begin{itemize}
\item For any partition $\lambda$, we define the \emph{monomial symmetric
function }$m_{\lambda}\in\Lambda$ by\footnote{This definition of $m_{\lambda}$
is not the same as the one given in \cite[Definition 2.1.3]{GriRei}; but it is
easily seen to be equivalent to the latter (i.e., it defines the same
$m_{\lambda}$). See Subsection \ref{subsect.proofs.Sinf} below (and the proof
of Proposition \ref{prop.proofs.mlam-eq} in particular) for the details.}
\[
m_{\lambda}=\sum\mathbf{x}^{\alpha},
\]
where the sum ranges over all weak compositions $\alpha\in\operatorname*{WC}$
that can be obtained from $\lambda$ by permuting entries\footnote{Here, we
understand $\lambda$ to be an infinite sequence, not a finite tuple, so the
entries being permuted include infinitely many $0$'s.}. For example,
\[
m_{\left(  2,2,1\right)  }=\sum_{i<j<k}x_{i}^{2}x_{j}^{2}x_{k}+\sum
_{i<j<k}x_{i}^{2}x_{j}x_{k}^{2}+\sum_{i<j<k}x_{i}x_{j}^{2}x_{k}^{2}.
\]
The family $\left(  m_{\lambda}\right)  _{\lambda\in\operatorname*{Par}}$
(that is, the family of the symmetric functions $m_{\lambda}$ as $\lambda$
ranges over all partitions) is a basis of the $\mathbf{k}$-module $\Lambda$.
\end{itemize}
\end{verlong}

\begin{itemize}
\item For each $n\in\mathbb{Z}$, we define the \emph{complete homogeneous
symmetric function }$h_{n}\in\Lambda$ by%
\[
h_{n}=\sum_{i_{1}\leq i_{2}\leq\cdots\leq i_{n}}x_{i_{1}}x_{i_{2}}\cdots
x_{i_{n}}=\sum_{\substack{\alpha\in\operatorname*{WC};\\\left\vert
\alpha\right\vert =n}}\mathbf{x}^{\alpha}=\sum_{\lambda\in\operatorname*{Par}%
\nolimits_{n}}m_{\lambda}.
\]
Thus, $h_{0}=1$ and $h_{n}=0$ for all $n<0$.

We know (e.g., from \cite[Proposition 2.4.1]{GriRei}) that the family $\left(
h_{n}\right)  _{n\geq1}=\left(  h_{1},h_{2},h_{3},\ldots\right)  $ is
algebraically independent and generates $\Lambda$ as a $\mathbf{k}$-algebra.
In other words, $\Lambda$ is freely generated by $h_{1},h_{2},h_{3},\ldots$ as
a commutative $\mathbf{k}$-algebra.

\item For each $n\in\mathbb{Z}$, we define the \emph{elementary symmetric
function }$e_{n}\in\Lambda$ by%
\[
e_{n}=\sum_{i_{1}<i_{2}<\cdots<i_{n}}x_{i_{1}}x_{i_{2}}\cdots x_{i_{n}}%
=\sum_{\substack{\alpha\in\operatorname*{WC}\cap\left\{  0,1\right\}
^{\infty};\\\left\vert \alpha\right\vert =n}}\mathbf{x}^{\alpha}.
\]
Thus, $e_{0}=1$ and $e_{n}=0$ for all $n<0$. If $n\geq0$, then $e_{n}%
=m_{\left(  1^{n}\right)  }$, where, as we have agreed above, the notation
$\left(  1^{n}\right)  $ stands for the $n$-tuple $\left(  1,1,\ldots
,1\right)  $.

\item For each positive integer $n$, we define the \emph{power-sum symmetric
function} $p_{n}\in\Lambda$ by%
\[
p_{n}=x_{1}^{n}+x_{2}^{n}+x_{3}^{n}+\cdots=m_{\left(  n\right)  }.
\]

\item For each partition $\lambda$, we define the \emph{Schur function
}$s_{\lambda}\in\Lambda$ by%
\[
s_{\lambda}=\sum\mathbf{x}_{T},
\]
where the sum ranges over all semistandard tableaux $T$ of shape $\lambda$,
and where $\mathbf{x}_{T}$ denotes the monomial obtained by multiplying the
$x_{i}$ for all entries $i$ of $T$. We refer the reader to \cite[Definition
2.2.1]{GriRei} or to \cite[\S 7.10]{Stanley-EC2} for the details of this
definition and further descriptions of the Schur functions. One of the most
important properties of Schur functions (see, e.g., \cite[(2.4.16) for
$\mu=\varnothing$]{GriRei} or \cite[Theorem 2.32]{MenRem15} or \cite[Theorem
7.16.1 for $\mu=\varnothing$]{Stanley-EC2} or \cite[Theorem 7.2.3
(a)]{Sagan20}) is the fact that%
\begin{equation}
s_{\lambda}=\det\left(  \left(  h_{\lambda_{i}-i+j}\right)  _{1\leq i\leq
\ell,\ 1\leq j\leq\ell}\right)  \label{eq.schur.JTsh}%
\end{equation}
for any partition $\lambda=\left(  \lambda_{1},\lambda_{2},\ldots
,\lambda_{\ell}\right)  $. This is known as the \emph{(first, straight-shape)
Jacobi--Trudi formula}.

The family $\left(  s_{\lambda}\right)  _{\lambda\in\operatorname*{Par}}$ is a
basis of the $\mathbf{k}$-module $\Lambda$, and is known as the \emph{Schur
basis}. It is easy to see that each $n\in\mathbb{N}$ satisfies $s_{\left(
n\right)  }=h_{n}$ and $s_{\left(  1^{n}\right)  }=e_{n}$. Moreover, for each
partition $\lambda$, the Schur function $s_{\lambda}\in\Lambda$ is homogeneous
of degree $\left\vert \lambda\right\vert $.
\end{itemize}

Among the many relations between these symmetric functions is an expression
for the power-sum symmetric function $p_{n}$ in terms of the Schur basis:

\begin{proposition}
\label{prop.p-as-sum}Let $n$ be a positive integer. Then,%
\[
p_{n}=\sum_{i=0}^{n-1}\left(  -1\right)  ^{i}s_{\left(  n-i,1^{i}\right)  }.
\]

\end{proposition}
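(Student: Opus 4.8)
The plan is to prove this identity by using the Jacobi--Trudi formula \eqref{eq.schur.JTsh} to evaluate each hook Schur function $s_{\left(  n-i,1^{i}\right)  }$ in terms of the complete homogeneous symmetric functions $h_{j}$, and then to recognize the resulting alternating sum as a telescoping expression. For the hook partition $\lambda=\left(  n-i,1^{i}\right)  =\left(  n-i,\underbrace{1,1,\ldots,1}_{i}\right)$, which has $\ell=i+1$ parts, the Jacobi--Trudi matrix $\left(  h_{\lambda_{a}-a+b}\right)  _{1\leq a,b\leq i+1}$ has a very simple shape: its first row is $\left(  h_{n-i},h_{n-i+1},\ldots,h_{n}\right)$, and for $a\geq2$ the $a$-th row is $\left(  h_{1-a+b}\right)_{b}$, whose entries are $h_{j}$ for $j$ ranging so that the matrix is lower-triangular below the first row, with $1$'s on the diagonal (since $h_{0}=1$) and $0$'s above (since $h_{j}=0$ for $j<0$) except in the top row. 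Expanding the determinant along the first column (or recognizing the near-triangular structure) should give $s_{\left(  n-i,1^{i}\right)  }=\sum_{j=0}^{i}\left(  -1\right)  ^{j}h_{n-j}e_{i-j}$ or something equivalent; in any case a short, clean closed form in terms of $h$'s and $e$'s.

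The key computation will then be to substitute this into $\sum_{i=0}^{n-1}\left(  -1\right)  ^{i}s_{\left(  n-i,1^{i}\right)  }$, interchange the order of summation, and collect terms. I expect the double sum to collapse: the coefficient of each product $h_{n-r}e_{r}$ (or $h_{a}e_{b}$ with $a+b=n$) across all $i$ should vanish except for the extreme terms, leaving something proportional to $p_{n}$. This should essentially reconstruct the classical Newton-type identity $p_{n}=\sum_{j=0}^{n-1}\left(-1\right)^{j}h_{n-j}e_{j}$ — wait, more precisely, the generating-function identity relating $\sum h_{n}t^{n}$ and $\sum e_{n}(-t)^{n}$ to $\sum p_{n}t^{n}$, namely $\left(\sum_{n\geq0}h_{n}t^{n}\right)\cdot\left(\sum_{n\geq0}(-1)^{n}e_{n}t^{n}\right)=1$ together with $\frac{d}{dt}\log\left(\sum h_{n}t^{n}\right)=\sum_{n\geq1}p_{n}t^{n-1}$. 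An alternative and perhaps cleaner route avoids recomputing hook Schur functions from scratch: it is a standard fact (e.g.\ from the Murnaghan--Nakayama rule, or directly from \cite{GriRei}) that $h_{n-i}e_{i}=s_{\left(  n-i,1^{i}\right)  }+s_{\left(  n-i+1,1^{i-1}\right)  }$ for $1\leq i\leq n-1$ (a two-term Pieri expansion), with the boundary cases $h_{n}e_{0}=h_{n}=s_{\left(  n\right)  }$ and $h_{0}e_{n}=e_{n}=s_{\left(  1^{n}\right)  }$. Given that, one forms $\sum_{i=0}^{n-1}\left(  -1\right)  ^{i}h_{n-i}e_{i}$ and watches the Schur functions telescope pairwise, leaving exactly $\sum_{i=0}^{n-1}\left(  -1\right)  ^{i}s_{\left(  n-i,1^{i}\right)  }$; separately, $\sum_{i=0}^{n-1}\left(  -1\right)  ^{i}h_{n-i}e_{i}$ is one of the classical Newton identities equal to $p_{n}$.

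Concretely, I would organize the write-up as follows. First, state (with a one-line justification via \eqref{eq.schur.JTsh} applied to the hook, or a citation) the two-term identity $h_{n-i}e_{i}=s_{\left(  n-i,1^{i}\right)  }+s_{\left(  n-i+1,1^{i-1}\right)  }$ for $1\leq i\leq n-1$, and $h_{n}=s_{\left(  n\right)  }$. Second, compute
\[
\sum_{i=0}^{n-1}\left(  -1\right)  ^{i}h_{n-i}e_{i}
=\sum_{i=0}^{n-1}\left(  -1\right)  ^{i}s_{\left(  n-i,1^{i}\right)  }
+\sum_{i=1}^{n-1}\left(  -1\right)  ^{i}s_{\left(  n-i+1,1^{i-1}\right)  },
\]
and reindex the second sum ($i\mapsto i+1$) so that it cancels all but the $i=0$ term of the first sum shifted appropriately — actually the cancellation is between consecutive terms and what survives is the full alternating sum of hooks, since the extra contributions telescope against each other. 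Third, invoke the classical Newton identity $\sum_{i=0}^{n-1}\left(  -1\right)  ^{i}h_{n-i}e_{i}=p_{n}$ (which follows from $\sum_{j=0}^{n}\left(-1\right)^{j}h_{n-j}e_{j}=0$ for $n\geq1$ together with bookkeeping, or is quoted directly from a standard reference such as \cite{GriRei} or \cite{Stanley-EC2}).

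The main obstacle I anticipate is purely bookkeeping: getting the index ranges and signs exactly right in the telescoping, and correctly handling the two boundary hooks $\left(  n\right)  $ and $\left(  1^{n}\right)  $ (the cases $i=0$ and $i=n-1$), where the two-term Pieri rule degenerates to a single term. There is no conceptual difficulty — the identity is essentially a repackaging of a classical Newton identity through the hook-length Jacobi--Trudi expansion — so the proof should be short once the telescoping is set up carefully.
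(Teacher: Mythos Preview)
Your proposed argument has a genuine error at its core. The claimed ``classical Newton identity'' $\sum_{i=0}^{n-1}(-1)^{i}h_{n-i}e_{i}=p_{n}$ is false: the standard relation $\sum_{i=0}^{n}(-1)^{i}h_{n-i}e_{i}=0$ (for $n\geq 1$) instead gives $\sum_{i=0}^{n-1}(-1)^{i}h_{n-i}e_{i}=(-1)^{n-1}e_{n}$. Correspondingly, your telescoping does not leave the full alternating hook sum as you assert. Writing $T_{i}=s_{(n-i,1^{i})}$, the Pieri identity $h_{n-i}e_{i}=T_{i}+T_{i-1}$ (for $1\leq i\leq n-1$) together with $h_{n}e_{0}=T_{0}$ yields
\[
\sum_{i=0}^{n-1}(-1)^{i}h_{n-i}e_{i}
=T_{0}+\sum_{i=1}^{n-1}(-1)^{i}(T_{i}+T_{i-1})
=(-1)^{n-1}T_{n-1}=(-1)^{n-1}e_{n},
\]
so almost everything cancels rather than nothing. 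Your two mistaken claims are mutually consistent (both sides really do equal $(-1)^{n-1}e_{n}$), but neither equals the target.

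Your first route --- expand each hook via Jacobi--Trudi and sum --- \emph{can} be salvaged, but the identity needed at the end is different. Using $s_{(n-i,1^{i})}=\sum_{j=0}^{i}(-1)^{j}h_{n-i+j}e_{i-j}$ and interchanging summations gives
\[
\sum_{i=0}^{n-1}(-1)^{i}s_{(n-i,1^{i})}=\sum_{m=1}^{n}(-1)^{n-m}\,m\,h_{m}e_{n-m},
\]
and the right-hand side equals $p_{n}$ by reading off the coefficient of $t^{n-1}$ in $P(t)=H'(t)E(-t)$ (a consequence of $H(t)E(-t)=1$ and $H'(t)/H(t)=P(t)$). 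For comparison, the paper does not give a self-contained argument here at all: it simply cites standard references and notes that the identity is also an immediate consequence of the Murnaghan--Nakayama rule applied to $p_{n}\cdot s_{\varnothing}$.
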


\begin{proof}
This is a classical formula, and appears (e.g.) in \cite[Problem 4.21]%
{Egge19}, \cite[Exercise 5.4.12(g)]{GriRei} and \cite[Exercise 2.2]{MenRem15}.
Alternatively, this is an easy consequence of the Murnaghan--Nakayama rule
(see \cite[Theorem 6.3]{MenRem15} or \cite[Theorem 4.4.2]{Sam17} or
\cite[Theorem 7.17.3]{Stanley-EC2} or \cite[(1)]{Wildon15}), applied to the
product $p_{n}s_{\varnothing}$ (since $s_{\varnothing}=1$).
\end{proof}

Finally, we will sometimes use the \emph{Hall inner product} $\left\langle
\cdot,\cdot\right\rangle :\Lambda\times\Lambda\rightarrow\mathbf{k}$ as
defined in \cite[Definition 2.5.12]{GriRei}.\footnote{However, it is denoted
by $\left(  \cdot,\cdot\right)  $ rather than by $\left\langle \cdot
,\cdot\right\rangle $ in \cite{GriRei}. (That is, what we call $\left\langle
a,b\right\rangle $ is denoted by $\left(  a,b\right)  $ in \cite{GriRei}.)
\par
The Hall inner product also appears (for $\mathbf{k}=\mathbb{Z}$ and
$\mathbf{k}=\mathbb{Q}$) in \cite[Definition 7.5]{Egge19}, in \cite[\S 7.9]%
{Stanley-EC2} and in \cite[Section I.4]{Macdon95} (note that it is called the
\textquotedblleft scalar product\textquotedblright\ in the latter two
references). The definitions of the Hall inner product in \cite[\S 7.9]%
{Stanley-EC2} and in \cite[Section I.4]{Macdon95} are different from ours, but
they are equivalent to ours (because of \cite[Corollary 7.12.2]{Stanley-EC2}
and \cite[Chapter I, (4.8)]{Macdon95}).} This is the $\mathbf{k}$-bilinear
form on $\Lambda$ that is defined by the requirement that
\[
\left\langle s_{\lambda},s_{\mu}\right\rangle =\delta_{\lambda,\mu
}\ \ \ \ \ \ \ \ \ \ \text{for any }\lambda,\mu\in\operatorname*{Par}%
\]
(where $\delta_{\lambda,\mu}$ denotes the Kronecker delta). Thus, the Schur
basis $\left(  s_{\lambda}\right)  _{\lambda\in\operatorname*{Par}}$ of
$\Lambda$ is an orthonormal basis with respect to the Hall inner product. It
is easy to see\footnote{See, e.g., \cite[Exercise 2.5.13(a)]{GriRei} for a
proof.} that the Hall inner product $\left(  \cdot,\cdot\right)  $ is graded:
i.e., we have%
\begin{equation}
\left\langle f,g\right\rangle =0 \label{eq.Hall.graded}%
\end{equation}
if $f$ and $g$ are two homogeneous symmetric functions of different degrees.
We shall also use the following two known evaluations of the Hall inner product:

\begin{proposition}
\label{prop.hjpj}Let $n$ be a positive integer. Then, $\left\langle
h_{n},p_{n}\right\rangle =1$.
\end{proposition}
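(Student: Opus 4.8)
The plan is to compute $\left(h_{n},p_{n}\right)$ by expanding $p_{n}$ in the Schur basis and exploiting the orthonormality of that basis. First I would recall two facts already available: that $h_{n}=s_{\left(  n\right)  }$ (noted above, just after the Jacobi--Trudi formula), and that Proposition \ref{prop.p-as-sum} gives $p_{n}=\sum_{i=0}^{n-1}\left(  -1\right)  ^{i}s_{\left(  n-i,1^{i}\right)  }$.

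Then, using the $\mathbf{k}$-bilinearity of the Hall inner product together with $\left(  s_{\lambda},s_{\mu}\right)  =\delta_{\lambda,\mu}$, I would write
\[
\left(  h_{n},p_{n}\right)  =\left(  s_{\left(  n\right)  },\ \sum_{i=0}^{n-1}\left(  -1\right)  ^{i}s_{\left(  n-i,1^{i}\right)  }\right)  =\sum_{i=0}^{n-1}\left(  -1\right)  ^{i}\delta_{\left(  n\right)  ,\left(  n-i,1^{i}\right)  }.
\]
The remaining step is the trivial observation that, for $1\leq i\leq n-1$, the partition $\left(  n-i,1^{i}\right)  $ has second entry equal to $1$ (since $n-i\geq1$), hence differs from $\left(  n\right)  =\left(  n,0,0,\ldots\right)  $; thus only the $i=0$ term of the sum is nonzero, and it contributes $\left(  -1\right)  ^{0}\cdot1=1$. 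This yields $\left(  h_{n},p_{n}\right)  =1$, as desired. (When $n=1$ the sum degenerates to the single term $s_{\left(  1\right)  }$, and the argument still goes through.)

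There is essentially no obstacle here: the entire content is packaged into Proposition \ref{prop.p-as-sum}, which has already been established. I would note in passing that an alternative one-line argument is available via the standard duality $\left(  h_{\lambda},m_{\mu}\right)  =\delta_{\lambda,\mu}$ between the complete homogeneous and monomial bases, combined with $p_{n}=m_{\left(  n\right)  }$; but since the Schur expansion of $p_{n}$ has just been recorded, the route above is the most self-contained.
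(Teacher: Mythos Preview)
Your proof is correct. However, the paper takes precisely the alternative route you mention at the end: it invokes the duality $\left(h_{\lambda},m_{\mu}\right)=\delta_{\lambda,\mu}$ from (\ref{eq.hlam-mlam-dual}) with $\lambda=\mu=\left(n\right)$, together with $h_{\left(n\right)}=h_{n}$ and $m_{\left(n\right)}=p_{n}$, to obtain $\left(h_{n},p_{n}\right)=1$ in one line. Your primary argument via Proposition~\ref{prop.p-as-sum} and the orthonormality of the Schur basis is exactly the method the paper uses for the companion result, Proposition~\ref{prop.ejpj} (where $e_{n}=s_{\left(1^{n}\right)}$ picks out the $i=n-1$ term instead). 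Both approaches are equally valid; the paper's choice for Proposition~\ref{prop.hjpj} avoids any dependence on Proposition~\ref{prop.p-as-sum}, at the cost of invoking a different (but equally classical) pair of dual bases.
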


\begin{proposition}
\label{prop.ejpj}Let $n$ be a positive integer. Then, $\left\langle
e_{n},p_{n}\right\rangle =\left(  -1\right)  ^{n-1}$.
\end{proposition}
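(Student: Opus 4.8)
The plan is to expand both $e_{n}$ and $p_{n}$ in the Schur basis and use its orthonormality with respect to the Hall inner product. Recall that $e_{n}=s_{\left(  1^{n}\right)  }$, as noted in the list of definitions above. Meanwhile, Proposition \ref{prop.p-as-sum} gives
\[
p_{n}=\sum_{i=0}^{n-1}\left(  -1\right)  ^{i}s_{\left(  n-i,1^{i}\right)  }.
\]
Therefore, by the $\mathbf{k}$-bilinearity of the Hall inner product and the orthonormality relation $\left(  s_{\lambda},s_{\mu}\right)  =\delta_{\lambda,\mu}$, we obtain
\[
\left(  e_{n},p_{n}\right)  =\left(  s_{\left(  1^{n}\right)  },\ \sum_{i=0}^{n-1}\left(  -1\right)  ^{i}s_{\left(  n-i,1^{i}\right)  }\right)  =\sum_{i=0}^{n-1}\left(  -1\right)  ^{i}\delta_{\left(  1^{n}\right)  ,\ \left(  n-i,1^{i}\right)  }.
\]
Now I would observe that the partition $\left(  n-i,1^{i}\right)  $ equals $\left(  1^{n}\right)  $ if and only if $n-i=1$, i.e., $i=n-1$ (for all smaller $i$, the first part $n-i$ is at least $2$, so the partition differs from $\left(  1^{n}\right)  $). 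Hence exactly one summand survives, namely the $i=n-1$ one, and it contributes $\left(  -1\right)  ^{n-1}$. This yields $\left(  e_{n},p_{n}\right)  =\left(  -1\right)  ^{n-1}$, as desired.

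I do not anticipate any real obstacle here: the argument is a one-line consequence of Proposition \ref{prop.p-as-sum} together with the orthonormality of the Schur basis, and the only thing to check carefully is the elementary claim about when $\left(  n-i,1^{i}\right)  =\left(  1^{n}\right)  $. (One could alternatively deduce this proposition from Proposition \ref{prop.hjpj} using the fact that the involution $\omega$ is an isometry of the Hall inner product with $\omega\left(  h_{n}\right)  =e_{n}$ and $\omega\left(  p_{n}\right)  =\left(  -1\right)  ^{n-1}p_{n}$, but since $\omega$ has not been introduced in this paper, the Schur-expansion route above is cleaner to present.)
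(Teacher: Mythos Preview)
Your proposal is correct and is essentially identical to the paper's own proof: both expand $p_{n}$ via Proposition~\ref{prop.p-as-sum}, write $e_{n}=s_{(1^{n})}$, and pick out the single surviving term $i=n-1$ using orthonormality of the Schur basis.
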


See Subsection \ref{subsect.proofs.ejpj} for the proofs of these two propositions.

\section{\label{sect.thms}Theorems}

\subsection{\label{subsect.petk.def}Definitions}

The main role in this paper is played by two power series that we will now define:

\begin{definition}
\label{def.G}\textbf{(a)} For any positive integer $k$, we let\footnotemark%
\begin{equation}
G\left(  k\right)  =\sum_{\substack{\alpha\in\operatorname*{WC};\\\alpha
_{i}<k\text{ for all }i}}\mathbf{x}^{\alpha}. \label{eq.Gk=}%
\end{equation}
This is a symmetric formal power series in $\mathbf{k}\left[  \left[
x_{1},x_{2},x_{3},\ldots\right]  \right]  $ (but does not belong to $\Lambda$
in general).

\textbf{(b)} For any positive integer $k$ and any $m\in\mathbb{N}$, we let%
\begin{equation}
G\left(  k,m\right)  =\sum_{\substack{\alpha\in\operatorname*{WC};\\\left\vert
\alpha\right\vert =m;\\\alpha_{i}<k\text{ for all }i}}\mathbf{x}^{\alpha}%
\in\Lambda. \label{eq.Gkm=}%
\end{equation}

\end{definition}

\footnotetext{Here and in all similar situations, \textquotedblleft for all
$i$\textquotedblright\ means \textquotedblleft for all positive integers
$i$\textquotedblright.}

\begin{example}
\textbf{(a)} We have%
\begin{align*}
G\left(  2\right)   &  =\sum_{\substack{\alpha\in\operatorname*{WC}%
;\\\alpha_{i}<2\text{ for all }i}}\mathbf{x}^{\alpha}\\
&  =1+x_{1}+x_{2}+x_{3}+\cdots+x_{1}x_{2}+x_{1}x_{3}+x_{2}x_{3}+\cdots\\
&  \ \ \ \ \ \ \ \ \ \ +x_{1}x_{2}x_{3}+x_{1}x_{2}x_{4}+x_{2}x_{3}x_{4}%
+\cdots\\
&  \ \ \ \ \ \ \ \ \ \ +\cdots\\
&  =\sum_{m\in\mathbb{N}}\ \ \underbrace{\sum_{1\leq i_{1}<i_{2}<\cdots<i_{m}%
}x_{i_{1}}x_{i_{2}}\cdots x_{i_{m}}}_{=e_{m}}=\sum_{m\in\mathbb{N}}e_{m}.
\end{align*}

\textbf{(b)} For each $m\in\mathbb{N}$, we have%
\[
G\left(  2,m\right)  =\sum_{\substack{\alpha\in\operatorname*{WC};\\\left\vert
\alpha\right\vert =m;\\\alpha_{i}<2\text{ for all }i}}\mathbf{x}^{\alpha}%
=\sum_{1\leq i_{1}<i_{2}<\cdots<i_{m}}x_{i_{1}}x_{i_{2}}\cdots x_{i_{m}}%
=e_{m}.
\]

\end{example}

We suggest the name $k$\emph{-Petrie symmetric series} for $G\left(  k\right)
$ and the name $\left(  k,m\right)  $\emph{-Petrie symmetric function} for
$G\left(  k,m\right)  $. The reason for this naming is that the coefficients
of these functions in the Schur basis of $\Lambda$ are determinants of Petrie
matrices, as we will see in Subsection \ref{subsect.proofs.petk.-101}.

\subsection{\label{subsect.petk.basics}Basic identities}

We begin our study of the $G\left(  k\right)  $ and $G\left(  k,m\right)  $
with some simple properties:

\begin{proposition}
\label{prop.G.basics}Let $k$ be a positive integer.

\textbf{(a)} The symmetric function $G\left(  k,m\right)  $ is the $m$-th
degree homogeneous component of $G\left(  k\right)  $ for each $m\in
\mathbb{N}$.

\textbf{(b)} We have%
\[
G\left(  k\right)  =\sum_{\substack{\alpha\in\operatorname*{WC};\\\alpha
_{i}<k\text{ for all }i}}\mathbf{x}^{\alpha}=\sum_{\substack{\lambda
\in\operatorname*{Par};\\\lambda_{i}<k\text{ for all }i}}m_{\lambda}%
=\prod_{i=1}^{\infty}\left(  x_{i}^{0}+x_{i}^{1}+\cdots+x_{i}^{k-1}\right)  .
\]

\textbf{(c)} We have
\[
G\left(  k,m\right)  =\sum_{\substack{\alpha\in\operatorname*{WC};\\\left\vert
\alpha\right\vert =m;\\\alpha_{i}<k\text{ for all }i}}\mathbf{x}^{\alpha}%
=\sum_{\substack{\lambda\in\operatorname*{Par};\\\left\vert \lambda\right\vert
=m;\\\lambda_{i}<k\text{ for all }i}}m_{\lambda}%
\]
for each $m\in\mathbb{N}$.

\textbf{(d)} If $m\in\mathbb{N}$ satisfies $k>m$, then $G\left(  k,m\right)
=h_{m}$.

\textbf{(e)} If $m\in\mathbb{N}$ and $k=2$, then $G\left(  k,m\right)  =e_{m}$.

\textbf{(f)} If $m=k$, then $G\left(  k,m\right)  =h_{m}-p_{m}$.
\end{proposition}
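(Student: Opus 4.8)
The plan is to compare $G\left(k,m\right)$ with $h_{m}$ directly on the level of monomial symmetric functions. By Proposition \ref{prop.G.basics} \textbf{(c)} (applied with $m=k$), we have
\[
G\left(  k,k\right)  =\sum_{\substack{\lambda\in\operatorname*{Par};\\\left\vert \lambda\right\vert =k;\\\lambda_{i}<k\text{ for all }i}}m_{\lambda},
\]
whereas the defining formula for $h_{k}$ (recalled in Section \ref{sect.not}) gives $h_{k}=\sum_{\lambda\in\operatorname*{Par}\nolimits_{k}}m_{\lambda}$. Subtracting, we obtain $h_{k}-G\left(  k,k\right)  =\sum m_{\lambda}$, where the sum ranges over all partitions $\lambda$ of $k$ that have at least one part $\geq k$.

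The only step requiring any thought is identifying this index set. If $\lambda$ is a partition of $k$ with some part $\geq k$, then in particular $\lambda_{1}\geq k=\left\vert \lambda\right\vert =\lambda_{1}+\lambda_{2}+\lambda_{3}+\cdots$; since all $\lambda_{i}$ are nonnegative, this forces $\lambda_{1}=k$ and $\lambda_{2}=\lambda_{3}=\cdots=0$, i.e., $\lambda=\left(  k\right)  $. Conversely, $\left(  k\right)$ is indeed a partition of $k$ with a part $\geq k$. Hence the sum has exactly one term, and $h_{k}-G\left(  k,k\right)  =m_{\left(  k\right)  }$. Finally, $m_{\left(  k\right)  }=p_{k}$ by the definition of the power-sum symmetric function in Section \ref{sect.not}, so $G\left(  k,k\right)  =h_{k}-p_{k}$, which (since $m=k$) is the claim.

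There is no real obstacle here; the argument is a one-line case analysis on partitions of $k$ once one writes everything in the monomial basis. (One could alternatively derive \textbf{(f)} from \textbf{(d)} by a telescoping/induction argument on $k$, but the direct comparison above is cleaner and self-contained.)
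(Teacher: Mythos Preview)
Your proof is correct and follows essentially the same approach as the paper: both arguments use part \textbf{(c)} to write $G(k,k)$ in the monomial basis, compare with $h_k=\sum_{\lambda\in\operatorname*{Par}\nolimits_k} m_\lambda$, and identify the single partition $\lambda=(k)$ that is excluded by the condition $\lambda_i<k$, recognizing $m_{(k)}=p_k$. (Your closing parenthetical about deriving \textbf{(f)} from \textbf{(d)} via telescoping is unclear to me and probably best dropped, but it does not affect the argument.)
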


\begin{vershort}
Proving Proposition \ref{prop.G.basics} makes good practice in understanding
the definitions of $m_{\lambda}$, $h_{n}$, $e_{n}$, $p_{n}$, $G\left(
k\right)  $ and $G\left(  k,n\right)  $. We omit the proof here; it can be
found in full (hardly necessary) detail in \cite{verlong}.
\end{vershort}

\begin{verlong}
We shall prove Proposition \ref{prop.G.basics} in Subsection
\ref{subsect.proofs.basics} below. (The easy proof is good practice in
understanding the definitions of $m_{\lambda}$, $h_{n}$, $e_{n}$, $p_{n}$,
$G\left(  k\right)  $ and $G\left(  k,n\right)  $.)
\end{verlong}

Parts \textbf{(d)} and \textbf{(e)} of Proposition \ref{prop.G.basics} show
that the Petrie symmetric functions $G\left(  k,m\right)  $ can be seen as
interpolating between the $h_{m}$ and the $e_{m}$.

\subsection{\label{subsect.petk.schur}The Schur expansion}

The solution to \cite[Exercise 7.3]{Stanley-EC2} gives an expansion of
$G\left(  3\right)  $ in terms of the elementary symmetric functions (due to
I. M. Gessel); this expansion can be rewritten as%
\[
G\left(  3\right)  =\sum_{n\in\mathbb{N}}e_{n}^{2}+\sum_{m<n}c_{m,n}e_{m}%
e_{n},\ \ \ \ \ \ \ \ \ \ \text{where }c_{m,n}=\left(  -1\right)  ^{m-n}%
\begin{cases}
2, & \text{if }3\mid m-n;\\
-1, & \text{if }3\nmid m-n
\end{cases}
\ \ \ .
\]
We shall instead expand $G\left(  k\right)  $ in terms of Schur functions. For
this, we need to define some notations.

\begin{convention}
\label{conv.iverson}We shall use the \emph{Iverson bracket notation}: i.e., if
$\mathcal{A}$ is a logical statement, then $\left[  \mathcal{A}\right]  $
shall denote the truth value of $\mathcal{A}$ (that is, the integer $%
\begin{cases}
1, & \text{if }\mathcal{A}\text{ is true;}\\
0, & \text{if }\mathcal{A}\text{ is false}%
\end{cases}
$\ \ ).
\end{convention}

We shall furthermore use the notation $\left(  a_{i,j}\right)  _{1\leq
i\leq\ell,\ 1\leq j\leq\ell}$ for the $\ell\times\ell$-matrix whose $\left(
i,j\right)  $-th entry is $a_{i,j}$ for each $i,j\in\left\{  1,2,\ldots
,\ell\right\}  $.

\begin{definition}
\label{def.petklam} Let $\lambda=\left(  \lambda_{1},\lambda_{2}%
,\ldots,\lambda_{\ell}\right)  \in\operatorname*{Par}$ and $\mu=\left(
\mu_{1},\mu_{2},\ldots,\mu_{\ell}\right)  \in\operatorname*{Par}$, and let $k$
be a positive integer. Then, the $k$\emph{-Petrie number} $\operatorname*{pet}%
\nolimits_{k}\left(  \lambda,\mu\right)  $ of $\lambda$ and $\mu$ is the
integer defined by%
\[
\operatorname*{pet}\nolimits_{k}\left(  \lambda,\mu\right)  =\det\left(
\left(  \left[  0\leq\lambda_{i}-\mu_{j}-i+j<k\right]  \right)  _{1\leq
i\leq\ell,\ 1\leq j\leq\ell}\right)  .
\]
Note that this integer does not depend on the choice of $\ell$ (in the sense
that it does not change if we enlarge $\ell$ by adding trailing zeroes to the
representations of $\lambda$ and $\mu$); this follows from Lemma
\ref{lem.petkrel.indep} below.
\end{definition}

\begin{example}
Let $\lambda$ be the partition $\left(  3,2,1\right)  \in\operatorname*{Par}$,
let $\mu$ be the partition $\left(  1,1\right)  \in\operatorname*{Par}$, let
$\ell=3$, and let $k$ be a positive integer. Then, the definition of
$\operatorname*{pet}\nolimits_{k}\left(  \lambda,\mu\right)  $ yields%
\begin{align*}
&  \operatorname*{pet}\nolimits_{k}\left(  \lambda,\mu\right) \\
&  =\det\left(  \left(  \left[  0\leq\lambda_{i}-\mu_{j}-i+j<k\right]
\right)  _{1\leq i\leq\ell,\ 1\leq j\leq\ell}\right) \\
&  =\det\left(
\begin{array}
[c]{ccc}%
\left[  0\leq\lambda_{1}-\mu_{1}<k\right]  & \left[  0\leq\lambda_{1}-\mu
_{2}+1<k\right]  & \left[  0\leq\lambda_{1}-\mu_{3}+2<k\right] \\
\left[  0\leq\lambda_{2}-\mu_{1}-1<k\right]  & \left[  0\leq\lambda_{2}%
-\mu_{2}<k\right]  & \left[  0\leq\lambda_{2}-\mu_{3}+1<k\right] \\
\left[  0\leq\lambda_{3}-\mu_{1}-2<k\right]  & \left[  0\leq\lambda_{3}%
-\mu_{2}-1<k\right]  & \left[  0\leq\lambda_{3}-\mu_{3}<k\right]
\end{array}
\right) \\
&  =\det\left(
\begin{array}
[c]{ccc}%
\left[  0\leq3-1<k\right]  & \left[  0\leq3-1+1<k\right]  & \left[
0\leq3-0+2<k\right] \\
\left[  0\leq2-1-1<k\right]  & \left[  0\leq2-1<k\right]  & \left[
0\leq2-0+1<k\right] \\
\left[  0\leq1-1-2<k\right]  & \left[  0\leq1-1-1<k\right]  & \left[
0\leq1-0<k\right]
\end{array}
\right) \\
&  \ \ \ \ \ \ \ \ \ \ \ \ \ \ \ \ \ \ \ \ \left(
\begin{array}
[c]{c}%
\text{since }\lambda_{1}=3\text{ and }\lambda_{2}=2\text{ and }\lambda_{3}=1\\
\text{and }\mu_{1}=1\text{ and }\mu_{2}=1\text{ and }\mu_{3}=0
\end{array}
\right) \\
&  =\det\left(
\begin{array}
[c]{ccc}%
\left[  0\leq2<k\right]  & \left[  0\leq3<k\right]  & \left[  0\leq5<k\right]
\\
\left[  0\leq0<k\right]  & \left[  0\leq1<k\right]  & \left[  0\leq3<k\right]
\\
\left[  0\leq-2<k\right]  & \left[  0\leq-1<k\right]  & \left[  0\leq
1<k\right]
\end{array}
\right)  .
\end{align*}
Thus, taking $k=4$, we obtain%
\begin{align*}
\operatorname*{pet}\nolimits_{4}\left(  \lambda,\mu\right)   &  =\det\left(
\begin{array}
[c]{ccc}%
\left[  0\leq2<4\right]  & \left[  0\leq3<4\right]  & \left[  0\leq5<4\right]
\\
\left[  0\leq0<4\right]  & \left[  0\leq1<4\right]  & \left[  0\leq3<4\right]
\\
\left[  0\leq-2<4\right]  & \left[  0\leq-1<4\right]  & \left[  0\leq
1<4\right]
\end{array}
\right) \\
&  =\det\left(
\begin{array}
[c]{ccc}%
1 & 1 & 0\\
1 & 1 & 1\\
0 & 0 & 1
\end{array}
\right)  =0.
\end{align*}
On the other hand, taking $k=3$, we obtain%
\begin{align*}
\operatorname*{pet}\nolimits_{3}\left(  \lambda,\mu\right)   &  =\det\left(
\begin{array}
[c]{ccc}%
\left[  0\leq2<3\right]  & \left[  0\leq3<3\right]  & \left[  0\leq5<3\right]
\\
\left[  0\leq0<3\right]  & \left[  0\leq1<3\right]  & \left[  0\leq3<3\right]
\\
\left[  0\leq-2<3\right]  & \left[  0\leq-1<3\right]  & \left[  0\leq
1<3\right]
\end{array}
\right) \\
&  =\det\left(
\begin{array}
[c]{ccc}%
1 & 0 & 0\\
1 & 1 & 0\\
0 & 0 & 1
\end{array}
\right)  =1.
\end{align*}

\end{example}

\begin{lemma}
\label{lem.petkrel.indep}Let $\lambda\in\operatorname*{Par}$ and $\mu
\in\operatorname*{Par}$, and let $k$ be a positive integer. Let $\ell
\in\mathbb{N}$ be such that $\lambda=\left(  \lambda_{1},\lambda_{2}%
,\ldots,\lambda_{\ell}\right)  $ and $\mu=\left(  \mu_{1},\mu_{2},\ldots
,\mu_{\ell}\right)  $. Then, the determinant $\det\left(  \left(  \left[
0\leq\lambda_{i}-\mu_{j}-i+j<k\right]  \right)  _{1\leq i\leq\ell,\ 1\leq
j\leq\ell}\right)  $ does not depend on the choice of $\ell$.
\end{lemma}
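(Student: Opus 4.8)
The statement is that if $\lambda$ and $\mu$ both have length $\leq \ell$, then the determinant $\det\left(\left(\left[0\leq\lambda_i-\mu_j-i+j<k\right]\right)_{1\leq i\leq\ell,\ 1\leq j\leq\ell}\right)$ is unchanged if we replace $\ell$ by $\ell+1$ (adding one trailing zero to each of $\lambda$ and $\mu$). Since $\lambda_\ell \geq \lambda_{\ell+1} = 0$ and $\mu_\ell \geq \mu_{\ell+1}=0$ are allowed (we only need $\ell$ large enough to accommodate all parts), it suffices to prove the single-step claim $\ell \rightsquigarrow \ell+1$ and then iterate. So the plan is: fix $\ell$ such that $\lambda=(\lambda_1,\ldots,\lambda_\ell)$ and $\mu=(\mu_1,\ldots,\mu_\ell)$, form the $(\ell+1)\times(\ell+1)$ matrix $A$ using $\lambda_{\ell+1}=\mu_{\ell+1}=0$, and show $\det A$ equals the $\ell\times\ell$ determinant, by Laplace expansion along the last row.

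The key computation is to understand the last row of $A$, i.e. the entries $A_{\ell+1,j} = \left[0\leq \lambda_{\ell+1}-\mu_j-(\ell+1)+j<k\right] = \left[0 \leq -\mu_j - \ell - 1 + j < k\right]$ for $j \in \{1,\ldots,\ell+1\}$. For $j \leq \ell$ we have $-\mu_j - \ell - 1 + j \leq -\mu_j - 1 < 0$ (since $\mu_j \geq 0$ and $j \leq \ell$), so every such entry is $0$. For $j = \ell+1$ we get $-\mu_{\ell+1} - \ell - 1 + (\ell+1) = 0$, and $\left[0 \leq 0 < k\right] = 1$ since $k \geq 1$. Hence the last row of $A$ is $(0,0,\ldots,0,1)$. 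Laplace expansion along this row then gives $\det A = 1 \cdot \det(A')$ where $A'$ is the top-left $\ell\times\ell$ minor, i.e. $A' = \left(\left[0\leq\lambda_i-\mu_j-i+j<k\right]\right)_{1\leq i\leq\ell,\ 1\leq j\leq\ell}$ — exactly the original matrix. This proves the one-step invariance.

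To get the full statement: given two admissible choices $\ell_1 \leq \ell_2$, applying the one-step result $\ell_2 - \ell_1$ times shows the $\ell_2 \times \ell_2$ determinant equals the $\ell_1 \times \ell_1$ determinant, so any two admissible choices yield the same value. I would phrase this as a short induction on the number of trailing zeros added. There is essentially no obstacle here: the only thing to be careful about is the inequality $-\mu_j - \ell - 1 + j < 0$ for $j \leq \ell$, which is where the weak-decreasing/nonnegativity of $\mu$'s entries (in fact just nonnegativity) and the bound $j \leq \ell$ are used, together with the fact that $k \geq 1$ (a positive integer, not merely a nonnegative one) to guarantee the corner entry is $1$ rather than $0$. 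If $k$ were allowed to be $0$ the lemma would fail, so it is worth noting explicitly that positivity of $k$ is what makes the expansion work.
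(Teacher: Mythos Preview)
Your proof is correct and matches the paper's direct argument (its ``second proof'' in the detailed version): Laplace-expand the $(\ell+1)\times(\ell+1)$ determinant along the last row, which is $(0,\ldots,0,1)$, to recover the $\ell\times\ell$ determinant, then iterate. The paper also gives a slicker alternative: observe that applying the $\mathbf{k}$-algebra homomorphism $\alpha_k$ (sending $h_i \mapsto [i<k]$) to the Jacobi--Trudi identity $s_{\lambda/\mu} = \det\bigl((h_{\lambda_i-\mu_j-i+j})_{1\leq i,j\leq \ell}\bigr)$ yields exactly the determinant in question on the right-hand side, while the left-hand side $\alpha_k(s_{\lambda/\mu})$ is manifestly independent of $\ell$. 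This second approach has the bonus of immediately giving Lemma~\ref{lem.alphak.slammu} ($\alpha_k(s_{\lambda/\mu}) = \operatorname{pet}_k(\lambda,\mu)$), which the paper uses later; your elementary argument avoids invoking Jacobi--Trudi but does not yield this byproduct.
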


See Subsection \ref{subsect.proofs.petkrel.indep} for the simple proof of
Lemma \ref{lem.petkrel.indep}.

Surprisingly, the $k$-Petrie numbers $\operatorname*{pet}\nolimits_{k}\left(
\lambda,\mu\right)  $ can take only three possible values:

\begin{proposition}
\label{prop.petkrel.-101}Let $\lambda\in\operatorname*{Par}$ and $\mu
\in\operatorname*{Par}$, and let $k$ be a positive integer. Then,
$\operatorname*{pet}\nolimits_{k}\left(  \lambda,\mu\right)  \in\left\{
-1,0,1\right\}  $.
\end{proposition}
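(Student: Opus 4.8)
The plan is to recognize the matrix $\left( \left[ 0\leq\lambda_{i}-\mu_{j}-i+j<k\right] \right)_{1\leq i\leq\ell,\ 1\leq j\leq\ell}$ as a \emph{Petrie matrix} (in the sense of Gordon and Wilkinson) and invoke their classical theorem that the determinant of any Petrie matrix lies in $\left\{-1,0,1\right\}$. Recall that a $0$-$1$ matrix is called a Petrie matrix if in each column the $1$'s occur in consecutive positions (i.e., form an unbroken interval of rows). So the crux is to verify that our matrix has this property: fixing a column index $j$, I must show that the set of row indices $i$ for which $0\leq\lambda_{i}-\mu_{j}-i+j<k$ is an interval in $\left\{1,2,\ldots,\ell\right\}$.

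To see this, fix $j$ and write $a_i = \lambda_i - i$. Since $\lambda$ is a partition, $\lambda_1\geq\lambda_2\geq\cdots\geq\lambda_\ell$, and subtracting the strictly increasing sequence $1,2,\ldots,\ell$ gives that $a_i = \lambda_i - i$ is \emph{strictly decreasing} in $i$. The condition $0\leq\lambda_{i}-\mu_{j}-i+j<k$ rewrites as $\mu_j - j \leq a_i \leq \mu_j - j + k - 1$, i.e., $a_i$ lies in a fixed integer interval of length $k$. Because $i\mapsto a_i$ is strictly monotone (decreasing), the set of $i$ with $a_i$ in any interval is itself an interval of indices. Hence each column of the matrix is an interval of $1$'s, so the matrix is a Petrie matrix, and Proposition~\ref{prop.petkrel.-101} follows from the Gordon--Wilkinson theorem.

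The main obstacle, such as it is, is purely expository: I should either cite Gordon and Wilkinson for the $\left\{-1,0,1\right\}$ determinant bound or include a short self-contained proof of it, since the paper may wish to be self-contained here. A clean self-contained argument: induct on the size $\ell$; if some column has its interval of $1$'s starting at row $i=1$ (or if a column is entirely zero, giving determinant $0$), perform row operations subtracting that column's ``anchor'' row — more efficiently, one can argue by strong induction using the observation that among the columns, consider the one whose interval of $1$'s has the \emph{topmost} starting row; subtracting adjacent rows within the block structure reduces to a smaller Petrie matrix while multiplying the determinant by $\pm1$. Since the monotonicity argument is genuinely the only substantive point and it is short, I expect the whole proof to be a single paragraph once the citation decision is made; I would lean toward citing \cite{GorWil74} (Gordon--Wilkinson) and remarking that the matrix in Definition~\ref{def.petklam} is a Petrie matrix by the monotonicity of $\lambda_i - i$.
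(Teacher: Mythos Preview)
Your proposal is correct and matches the paper's proof essentially line for line: the paper rewrites the condition as $\mu_j - j \leq \lambda_i - i < \mu_j - j + k$, observes that $\lambda_i - i$ is strictly decreasing so each column has its $1$'s in consecutive rows, and then cites \cite[Theorem 1]{GorWil74} for the determinant bound on Petrie matrices. The paper opts to cite Gordon--Wilkinson rather than reprove their result, so your inclination to do the same is exactly what the author did.
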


Proposition \ref{prop.petkrel.-101} will be proved in Subsection
\ref{subsect.proofs.petk.-101}.

We can now expand the Petrie symmetric functions $G\left(  k,m\right)  $ and
the power series $G\left(  k\right)  $ in the basis $\left(  s_{\lambda
}\right)  _{\lambda\in\operatorname*{Par}}$ of $\Lambda$:

\begin{theorem}
\label{thm.G.main}Let $k$ be a positive integer. Then,%
\[
G\left(  k\right)  =\sum_{\lambda\in\operatorname*{Par}}\operatorname*{pet}%
\nolimits_{k}\left(  \lambda,\varnothing\right)  s_{\lambda}.
\]
(Recall that $\varnothing$ denotes the empty partition $\left(  {}\right)
=\left(  0,0,0,\ldots\right)  $.)
\end{theorem}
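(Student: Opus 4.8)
The plan is to reduce the claim to a degree-by-degree identity inside $\Lambda$, and then to read off the coefficient of each Schur function $s_{\lambda}$ by pairing with $s_{\lambda}$ under the Hall inner product and expanding $s_{\lambda}$ via the Jacobi--Trudi formula $\left(\ref{eq.schur.JTsh}\right)$.

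First I would observe that, since $s_{\lambda}$ is homogeneous of degree $\left\vert \lambda\right\vert$, since $G\left(k,m\right)$ is homogeneous of degree $m$, and since (Proposition \ref{prop.G.basics} \textbf{(a)}) the $G\left(k,m\right)$ are the homogeneous components of $G\left(k\right)$, the assertion $G\left(k\right)=\sum_{\lambda\in\operatorname*{Par}}\operatorname*{pet}\nolimits_{k}\left(\lambda,\varnothing\right)s_{\lambda}$ is equivalent to the family of identities
\[
G\left(k,m\right)=\sum_{\lambda\in\operatorname*{Par}\nolimits_{m}}\operatorname*{pet}\nolimits_{k}\left(\lambda,\varnothing\right)s_{\lambda}\qquad\text{for every }m\in\mathbb{N},
\]
each of which lives in $\Lambda$. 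Fix $m\in\mathbb{N}$ and $\lambda\in\operatorname*{Par}\nolimits_{m}$, and pick $\ell\in\mathbb{N}$ with $\lambda=\left(\lambda_{1},\ldots,\lambda_{\ell}\right)$; by Lemma \ref{lem.petkrel.indep} we may compute $\operatorname*{pet}\nolimits_{k}\left(\lambda,\varnothing\right)$ using this $\ell$. Since the Schur basis is orthonormal for the Hall inner product, it suffices to show $\left(G\left(k,m\right),s_{\lambda}\right)=\operatorname*{pet}\nolimits_{k}\left(\lambda,\varnothing\right)$.

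Next I would expand $s_{\lambda}$ by $\left(\ref{eq.schur.JTsh}\right)$ and apply the Leibniz formula for the determinant together with bilinearity of the inner product, obtaining
\[
\left(G\left(k,m\right),s_{\lambda}\right)=\sum_{\sigma}\operatorname{sgn}\left(\sigma\right)\left(G\left(k,m\right),\ h_{\lambda_{1}-1+\sigma\left(1\right)}h_{\lambda_{2}-2+\sigma\left(2\right)}\cdots h_{\lambda_{\ell}-\ell+\sigma\left(\ell\right)}\right),
\]
the sum ranging over all permutations $\sigma$ of $\left\{1,2,\ldots,\ell\right\}$. The one extra ingredient needed is the elementary identity that for $f\in\Lambda$ and $\beta\in\operatorname*{WC}$, the value $\left(f,\ h_{\beta_{1}}h_{\beta_{2}}h_{\beta_{3}}\cdots\right)$ equals the coefficient of $\mathbf{x}^{\beta}$ in $f$; this holds because $\left(m_{\mu}\right)_{\mu\in\operatorname*{Par}}$ and $\left(h_{\mu}\right)_{\mu\in\operatorname*{Par}}$ are dual bases for the Hall inner product and $\prod_{i}h_{\beta_{i}}$ equals $h_{\nu}$ for $\nu$ the partition obtained by sorting $\beta$ (a product containing some $h_{n}$ with $n<0$ is $0$, matching a zero coefficient). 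Feeding this into the previous display with $f=G\left(k,m\right)$, and invoking Proposition \ref{prop.G.basics} \textbf{(c)} — which says the coefficient of $\mathbf{x}^{\alpha}$ in $G\left(k,m\right)$ is $1$ when $\left\vert \alpha\right\vert =m$ and $\alpha_{i}<k$ for all $i$, and $0$ otherwise — the $\sigma$-summand collapses to $\prod_{i=1}^{\ell}\left[0\leq\lambda_{i}-i+\sigma\left(i\right)<k\right]$: the lower bound records non-vanishing of the relevant $h$'s, while the size condition is automatic since $\sum_{i=1}^{\ell}\left(\lambda_{i}-i+\sigma\left(i\right)\right)=\left\vert \lambda\right\vert =m$. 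Hence
\[
\left(G\left(k,m\right),s_{\lambda}\right)=\sum_{\sigma}\operatorname{sgn}\left(\sigma\right)\prod_{i=1}^{\ell}\left[0\leq\lambda_{i}-i+\sigma\left(i\right)<k\right]=\det\left(\left(\left[0\leq\lambda_{i}-i+j<k\right]\right)_{1\leq i\leq\ell,\ 1\leq j\leq\ell}\right),
\]
and the right-hand side is exactly $\operatorname*{pet}\nolimits_{k}\left(\lambda,\varnothing\right)$ (take $\mu=\varnothing$, i.e.\ $\mu_{j}=0$, in Definition \ref{def.petklam}). Summing over $\lambda\in\operatorname*{Par}\nolimits_{m}$ and then over $m$ gives the theorem.

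I do not anticipate a genuine obstacle here: the proof is really a coefficient extraction repackaged through the Hall inner product, with the Jacobi--Trudi determinant reappearing verbatim on the other side. The points needing a little care are the handling of non-positive subscripts $\lambda_{i}-i+\sigma\left(i\right)$ (dealt with by the convention $h_{n}=0$ for $n<0$) and the independence of $\operatorname*{pet}\nolimits_{k}\left(\lambda,\varnothing\right)$ from the padding length $\ell$ (Lemma \ref{lem.petkrel.indep}). One could also bypass the Hall inner product entirely by comparing coefficients of $\mathbf{x}^{\alpha}$ directly on both sides of the degree-$m$ identity, but pairing with $s_{\lambda}$ keeps the bookkeeping shortest.
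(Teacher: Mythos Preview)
Your proof is correct. It takes a different route from the paper: the paper derives Theorem \ref{thm.G.main} as the special case $\mu=\varnothing$ of the Pieri-type rule Theorem \ref{thm.G.pieri}, which in turn is proved either via a Cauchy-type identity (Theorem \ref{thm.our-cauchy}) combined with the auxiliary $\mathbf{k}$-algebra homomorphism $\alpha_{k}:\Lambda\to\mathbf{k}$ sending $h_{i}\mapsto\left[i<k\right]$, or via alternants in finitely many variables. You instead bypass both the Cauchy identity and $\alpha_{k}$, working directly with the Hall inner product and the duality of $\left(h_{\lambda}\right)$ and $\left(m_{\lambda}\right)$. The Jacobi--Trudi expansion plays the same role in both arguments, and your coefficient-extraction step $\left(G\left(k,m\right),\prod_{i}h_{\beta_{i}}\right)=\left[\text{coeff. of }\mathbf{x}^{\beta}\right]$ is essentially the computation $\alpha_{k}\left(s_{\lambda}\right)=\operatorname*{pet}\nolimits_{k}\left(\lambda,\varnothing\right)$ of Lemma \ref{lem.alphak.slammu} repackaged through the inner product. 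Your approach is more self-contained for this particular statement; the paper's approach yields the stronger Theorem \ref{thm.G.pieri} for arbitrary $\mu$ at essentially no extra cost (indeed, your argument would extend to general $\mu$ by using the adjointness $\left(G\left(k,m\right)s_{\mu},s_{\lambda}\right)=\left(G\left(k,m\right),s_{\lambda/\mu}\right)$ together with the skew Jacobi--Trudi formula, which is precisely what $\alpha_{k}$ encodes).
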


We will not prove Theorem \ref{thm.G.main} directly; instead, we will first
show a stronger result (Theorem \ref{thm.G.pieri}), and then derive Theorem
\ref{thm.G.main} from it in Subsection \ref{subsect.proofs.G.cors}.

\begin{corollary}
\label{cor.Gkm.main}Let $k$ be a positive integer. Let $m\in\mathbb{N}$. Then,%
\[
G\left(  k,m\right)  =\sum_{\lambda\in\operatorname*{Par}\nolimits_{m}%
}\operatorname*{pet}\nolimits_{k}\left(  \lambda,\varnothing\right)
s_{\lambda}.
\]

\end{corollary}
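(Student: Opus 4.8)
The plan is to deduce Corollary \ref{cor.Gkm.main} from Theorem \ref{thm.G.main} by extracting the $m$-th degree homogeneous component on both sides of the identity $G\left(  k\right)  =\sum_{\lambda\in\operatorname*{Par}}\operatorname*{pet}\nolimits_{k}\left(  \lambda,\varnothing\right)  s_{\lambda}$.

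First I would observe that the right-hand side is a well-defined element of $\mathbf{k}\left[  \left[  x_{1},x_{2},x_{3},\ldots\right]  \right]  $: for each weak composition $\alpha$, the monomial $\mathbf{x}^{\alpha}$ occurs only in those $s_{\lambda}$ with $\left\vert \lambda\right\vert =\left\vert \alpha\right\vert $, of which there are finitely many, so the coefficient of $\mathbf{x}^{\alpha}$ in the sum is a finite $\mathbf{k}$-linear combination of the integers $\operatorname*{pet}\nolimits_{k}\left(  \lambda,\varnothing\right)  $. Hence the homogeneous decomposition of this power series (in the sense set up in Section \ref{sect.not}) is meaningful, and since each $s_{\lambda}$ is homogeneous of degree $\left\vert \lambda\right\vert $, its $m$-th degree homogeneous component is exactly $\sum_{\lambda\in\operatorname*{Par}\nolimits_{m}}\operatorname*{pet}\nolimits_{k}\left(  \lambda,\varnothing\right)  s_{\lambda}$.

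On the other side, Proposition \ref{prop.G.basics} \textbf{(a)} states that $G\left(  k,m\right)  $ is the $m$-th degree homogeneous component of $G\left(  k\right)  $. Since passing to the $m$-th degree homogeneous component is a $\mathbf{k}$-linear operation compatible with the (coefficientwise finite) sums at hand, equating the $m$-th components of both sides of Theorem \ref{thm.G.main} immediately yields $G\left(  k,m\right)  =\sum_{\lambda\in\operatorname*{Par}\nolimits_{m}}\operatorname*{pet}\nolimits_{k}\left(  \lambda,\varnothing\right)  s_{\lambda}$, which is the assertion of the corollary.

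I do not expect any genuine obstacle here; the one point requiring a little care is the bookkeeping around the fact that $G\left(  k\right)  $ lies in $\mathbf{k}\left[  \left[  x_{1},x_{2},x_{3},\ldots\right]  \right]  $ rather than in $\Lambda$, so that ``homogeneous component'' must be read as the component in the homogeneous decomposition of a formal power series, not of a symmetric function of bounded degree. Once Theorem \ref{thm.G.main} is granted, the corollary is essentially a one-line consequence.
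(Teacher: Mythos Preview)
Your proposal is correct. It is precisely the route the paper itself mentions just after stating the corollary (``Corollary \ref{cor.Gkm.main} easily follows from Theorem \ref{thm.G.main} using Proposition \ref{prop.G.basics} \textbf{(a)}''); the paper then chooses instead to specialize Corollary \ref{cor.Gkm.pieri} at $\mu=\varnothing$, but since that corollary is itself obtained by the same homogeneous-component extraction applied to Theorem \ref{thm.G.pieri}, the two arguments are the same in substance.
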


Corollary \ref{cor.Gkm.main} easily follows from Theorem \ref{thm.G.main}
using Proposition \ref{prop.G.basics} \textbf{(a)}; but again, we shall
instead derive it from a stronger result (Corollary \ref{cor.Gkm.pieri}) in
Subsection \ref{subsect.proofs.G.cors}.

We will see a more explicit description of the $k$-Petrie numbers
$\operatorname*{pet}\nolimits_{k}\left(  \lambda,\varnothing\right)  $ in
Subsection \ref{subsect.petk.mu}.

\begin{remark}
Corollary \ref{cor.Gkm.main}, in combination with Proposition
\ref{prop.petkrel.-101}, shows that each $k$-Petrie function $G\left(
k,m\right)  $ (for any $k>0$ and $m\in\mathbb{N}$) is a linear combination of
Schur functions, with all coefficients belonging to $\left\{  -1,0,1\right\}
$. It is natural to expect the more general symmetric functions
\[
\widetilde{G}\left(  k,k^{\prime},m\right)  =\sum_{\substack{\alpha
\in\operatorname*{WC};\\\left\vert \alpha\right\vert =m;\\k^{\prime}\leq
\alpha_{i}<k\text{ for all }i}}\mathbf{x}^{\alpha}%
,\ \ \ \ \ \ \ \ \ \ \text{where }0<k^{\prime}\leq k,
\]
to have the same property. However, this is not the case. For example,%
\[
\widetilde{G}\left(  4,2,5\right)  =m_{\left(  3,2\right)  }=-2s_{\left(
1,1,1,1,1\right)  }+2s_{\left(  2,1,1,1\right)  }-s_{\left(  2,2,1\right)
}-s_{\left(  3,1,1\right)  }+s_{\left(  3,2\right)  }.
\]

\end{remark}

\subsection{\label{subsect.petk.mu}An explicit description of the $k$-Petrie
numbers $\operatorname*{pet}\nolimits_{k}\left(  \lambda,\varnothing\right)
$}

Can the $k$-Petrie numbers $\operatorname*{pet}\nolimits_{k}\left(
\lambda,\varnothing\right)  $ from Definition \ref{def.petklam} be described
more explicitly than as determinants? To be somewhat pedantic, the answer to
this question depends on one's notion of \textquotedblleft
explicit\textquotedblright, as determinants are not hard to compute, and
another algorithm for calculating $\operatorname*{pet}\nolimits_{k}\left(
\lambda,\varnothing\right)  $ can be extracted from our proof of Proposition
\ref{prop.petkrel.-101} (when combined with \cite[proof of Theorem
1]{GorWil74}). Nevertheless, there is a more explicit description. This
description will be stated in Theorem \ref{thm.petk.explicit} further below.

First, let us get a simple case out of the way:

\begin{proposition}
\label{prop.petk.0}Let $\lambda\in\operatorname*{Par}$, and let $k$ be a
positive integer such that $\lambda_{1}\geq k$. Then, $\operatorname*{pet}%
\nolimits_{k}\left(  \lambda,\varnothing\right)  =0$.
\end{proposition}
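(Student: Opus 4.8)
The plan is to show that the matrix $\left(\left[0\le\lambda_i-i+j<k\right]\right)_{1\le i\le\ell,\ 1\le j\le\ell}$ has a zero top row, so its determinant vanishes. Fix $\ell$ large enough that $\lambda=\left(\lambda_1,\lambda_2,\ldots,\lambda_\ell\right)$; by Lemma \ref{lem.petkrel.indep} the value of $\operatorname*{pet}\nolimits_k\left(\lambda,\varnothing\right)$ does not depend on this choice. Since $\mu=\varnothing$, the $\left(i,j\right)$-th entry of the matrix is $\left[0\le\lambda_i-i+j<k\right]$. I would look at the first row, i.e.\ $i=1$: the entries are $\left[0\le\lambda_1-1+j<k\right]$ for $j\in\left\{1,2,\ldots,\ell\right\}$. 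For each such $j$ we have $\lambda_1-1+j\ge\lambda_1-1+1=\lambda_1\ge k$ (using $j\ge1$ and the hypothesis $\lambda_1\ge k$), so the condition $\lambda_1-1+j<k$ fails, and hence $\left[0\le\lambda_1-1+j<k\right]=0$. Thus every entry of the first row is $0$.

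Therefore the matrix $\left(\left[0\le\lambda_i-i+j<k\right]\right)_{1\le i\le\ell,\ 1\le j\le\ell}$ has an all-zero first row (note $\ell\ge1$, since $\lambda_1\ge k\ge1$ forces $\lambda\ne\varnothing$), so its determinant is $0$. By Definition \ref{def.petklam}, this determinant is exactly $\operatorname*{pet}\nolimits_k\left(\lambda,\varnothing\right)$, so $\operatorname*{pet}\nolimits_k\left(\lambda,\varnothing\right)=0$, as claimed.

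There is essentially no obstacle here: the only thing to be slightly careful about is the edge case $\ell=0$, which cannot occur because $\lambda_1\ge k\ge1$ means $\lambda$ has at least one part, forcing $\ell\ge1$; and the independence of $\ell$ is already handled by Lemma \ref{lem.petkrel.indep}. (An alternative, which I would mention as a remark, is that this is also immediate from Corollary \ref{cor.Gkm.main} once one observes that no monomial in $G\left(k,m\right)$ can have $x$-exponent as large as $k$ while $s_\lambda$ with $\lambda_1\ge k$ necessarily involves such a monomial — but the direct determinant argument is cleaner and does not depend on the as-yet-unproved Corollary \ref{cor.Gkm.main}.)
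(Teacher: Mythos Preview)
Your proof is correct and essentially identical to the paper's own argument: both write out the defining determinant for $\operatorname*{pet}\nolimits_k\left(\lambda,\varnothing\right)$, observe that the first row vanishes because $\lambda_1-1+j\ge\lambda_1\ge k$ for every $j\ge1$, and conclude that the determinant is zero. The paper handles the $\ell\ge1$ point in the same way you do.
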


\begin{proof}
[Proof of Proposition \ref{prop.petk.0}.]Write $\lambda$ as $\lambda=\left(
\lambda_{1},\lambda_{2},\ldots,\lambda_{\ell}\right)  $. Thus, $\ell\geq1$
(since $\lambda_{1}\geq k>0$). Moreover, the empty partition $\varnothing$ can
be written as $\varnothing=\left(  \varnothing_{1},\varnothing_{2}%
,\ldots,\varnothing_{\ell}\right)  $ (since $\varnothing_{i}=0$ for each
integer $i>\ell$).

Thus, we have $\lambda=\left(  \lambda_{1},\lambda_{2},\ldots,\lambda_{\ell
}\right)  $ and $\varnothing=\left(  \varnothing_{1},\varnothing_{2}%
,\ldots,\varnothing_{\ell}\right)  $. Hence, the definition of
$\operatorname*{pet}\nolimits_{k}\left(  \lambda,\varnothing\right)  $ yields
\begin{align}
\operatorname*{pet}\nolimits_{k}\left(  \lambda,\varnothing\right)   &
=\det\left(  \left(  \left[  0\leq\lambda_{i}-\underbrace{\varnothing_{j}%
}_{=0}-i+j<k\right]  \right)  _{1\leq i\leq\ell,\ 1\leq j\leq\ell}\right)
\nonumber\\
&  =\det\left(  \left(  \left[  0\leq\lambda_{i}-i+j<k\right]  \right)
_{1\leq i\leq\ell,\ 1\leq j\leq\ell}\right)  .
\label{pf.prop.petk.0.petklam0=}%
\end{align}

But each $j\in\left\{  1,2,\ldots,\ell\right\}  $ satisfies $\left[
0\leq\lambda_{1}-1+j<k\right]  =0$ (since $\lambda_{1}-1+\underbrace{j}%
_{\geq1}\geq\lambda_{1}-1+1=\lambda_{1}\geq k$). In other words, the
$\ell\times\ell$-matrix $\left(  \left[  0\leq\lambda_{i}-i+j<k\right]
\right)  _{1\leq i\leq\ell,\ 1\leq j\leq\ell}$ has first row $\left(
0,0,\ldots,0\right)  $. Therefore, its determinant is $0$. In other words,
$\operatorname*{pet}\nolimits_{k}\left(  \lambda,\varnothing\right)  =0$
(since $\operatorname*{pet}\nolimits_{k}\left(  \lambda,\varnothing\right)  $
is its determinant\footnote{by (\ref{pf.prop.petk.0.petklam0=})}). This proves
Proposition \ref{prop.petk.0}.
\end{proof}

Stating Theorem \ref{thm.petk.explicit} will require some notation:

\begin{definition}
\label{def.transpose}For any $\lambda\in\operatorname*{Par}$, we define the
\emph{transpose} of $\lambda$ to be the partition $\lambda^{t}\in
\operatorname*{Par}$ determined by%
\[
\left(  \lambda^{t}\right)  _{i}=\left\vert \left\{  j\in\left\{
1,2,3,\ldots\right\}  \ \mid\ \lambda_{j}\geq i\right\}  \right\vert
\ \ \ \ \ \ \ \ \ \ \text{for each }i\geq1.
\]
This partition $\lambda^{t}$ is also known as the \emph{conjugate} of
$\lambda$, and is perhaps easiest to understand in terms of Young diagrams: To
wit, the Young diagram of $\lambda^{t}$ is obtained from that of $\lambda$ by
a flip across the main diagonal.
\end{definition}

One important use of transpose partitions is the following fact (see, e.g.,
\cite[(2.4.17) for $\mu=\varnothing$]{GriRei} or \cite[Theorem 2.32]{MenRem15}
or \cite[Theorem 7.16.2 applied to $\lambda^{t}$ and $\varnothing$ instead of
$\lambda$ and $\mu$]{Stanley-EC2} for proofs): We have%
\begin{equation}
s_{\lambda^{t}}=\det\left(  \left(  e_{\lambda_{i}-i+j}\right)  _{1\leq
i\leq\ell,\ 1\leq j\leq\ell}\right)  \label{eq.schur.JTsh-e}%
\end{equation}
for any partition $\lambda=\left(  \lambda_{1},\lambda_{2},\ldots
,\lambda_{\ell}\right)  $. This is known as the \emph{(second, straight-shape)
Jacobi--Trudi formula}.

We will use the following notation for quotients and remainders:

\begin{convention}
Let $k$ be a positive integer. Let $n\in\mathbb{Z}$. Then, $n\%k$ shall denote
the remainder of $n$ divided by $k$, whereas $n//k$ shall denote the quotient
of this division (an integer). Thus, $n//k$ and $n\%k$ are uniquely determined
by the three requirements that $n//k\in\mathbb{Z}$ and $n\%k\in\left\{
0,1,\ldots,k-1\right\}  $ and $n=\left(  n//k\right)  \cdot k+\left(
n\%k\right)  $.

The \textquotedblleft$//$\textquotedblright\ and \textquotedblleft%
$\%$\textquotedblright\ signs bind more strongly than the \textquotedblleft%
$+$\textquotedblright\ and \textquotedblleft$-$\textquotedblright\ signs. That
is, for example, the expression \textquotedblleft$a+b\%k$\textquotedblright%
\ shall be understood to mean \textquotedblleft$a+\left(  b\%k\right)
$\textquotedblright\ rather than \textquotedblleft$\left(  a+b\right)
\%k$\textquotedblright.
\end{convention}

Now, we can state our \textquotedblleft formula\textquotedblright\ for
$k$-Petrie numbers of the form $\operatorname*{pet}\nolimits_{k}\left(
\lambda,\varnothing\right)  $.

\begin{theorem}
\label{thm.petk.explicit}Let $\lambda\in\operatorname*{Par}$, and let $k$ be a
positive integer. Let $\mu=\lambda^{t}$.

\textbf{(a)} If $\mu_{k}\neq0$, then $\operatorname*{pet}\nolimits_{k}\left(
\lambda,\varnothing\right)  =0$.

From now on, let us assume that $\mu_{k}=0$.

Define a $\left(  k-1\right)  $-tuple $\left(  \beta_{1},\beta_{2}%
,\ldots,\beta_{k-1}\right)  \in\mathbb{Z}^{k-1}$ by setting%
\begin{equation}
\beta_{i}=\mu_{i}-i\ \ \ \ \ \ \ \ \ \ \text{for each }i\in\left\{
1,2,\ldots,k-1\right\}  . \label{eq.thm.petk.explicit.beti=}%
\end{equation}

Define a $\left(  k-1\right)  $-tuple $\left(  \gamma_{1},\gamma_{2}%
,\ldots,\gamma_{k-1}\right)  \in\left\{  1,2,\ldots,k\right\}  ^{k-1}$ by
setting%
\begin{equation}
\gamma_{i}=1+\left(  \beta_{i}-1\right)  \%k\ \ \ \ \ \ \ \ \ \ \text{for each
}i\in\left\{  1,2,\ldots,k-1\right\}  . \label{eq.thm.petk.explicit.gammi=}%
\end{equation}

\textbf{(b)} If the $k-1$ numbers $\gamma_{1},\gamma_{2},\ldots,\gamma_{k-1}$
are not distinct, then $\operatorname*{pet}\nolimits_{k}\left(  \lambda
,\varnothing\right)  =0$.

\textbf{(c)} Assume that the $k-1$ numbers $\gamma_{1},\gamma_{2}%
,\ldots,\gamma_{k-1}$ are distinct. Let
\[
g=\left\vert \left\{  \left(  i,j\right)  \in\left\{  1,2,\ldots,k-1\right\}
^{2}\ \mid\ i<j\text{ and }\gamma_{i}<\gamma_{j}\right\}  \right\vert .
\]
Then, $\operatorname*{pet}\nolimits_{k}\left(  \lambda,\varnothing\right)
=\left(  -1\right)  ^{\left(  \beta_{1}+\beta_{2}+\cdots+\beta_{k-1}\right)
+g+\left(  \gamma_{1}+\gamma_{2}+\cdots+\gamma_{k-1}\right)  }$.
\end{theorem}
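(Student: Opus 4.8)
The plan is to reduce $\operatorname*{pet}\nolimits_{k}\left(\lambda,\varnothing\right)$ to the determinant appearing in the second Jacobi--Trudi formula \eqref{eq.schur.JTsh-e}, and then evaluate that determinant combinatorially. Recall from \eqref{pf.prop.petk.0.petklam0=} that $\operatorname*{pet}\nolimits_{k}\left(\lambda,\varnothing\right)=\det\left(\left(\left[0\leq\lambda_{i}-i+j<k\right]\right)_{1\leq i,j\leq\ell}\right)$. The entry $\left[0\leq\lambda_{i}-i+j<k\right]$ is exactly the coefficient of $x^{\lambda_{i}-i+j}$ in the polynomial $1+x+x^{2}+\cdots+x^{k-1}$; in other words, if we think of the generating function $f\left(x\right)=\sum_{n\geq 0}e_n x^n$ truncated appropriately\textellipsis. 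More precisely, the right move is to transpose: since $s_{\lambda^{t}}=\det\left(\left(e_{\lambda_{i}-i+j}\right)_{1\leq i,j\leq\ell}\right)$ and $G\left(k\right)=\sum_{\lambda}\operatorname*{pet}\nolimits_{k}\left(\lambda,\varnothing\right)s_{\lambda}$ by Theorem \ref{thm.G.main}, I would instead work directly with the matrix $M=\left(\left[0\leq\lambda_{i}-i+j<k\right]\right)_{1\leq i,j\leq\ell}$ where now I index rows by the transpose. So: let $\mu=\lambda^{t}$, write $\mu=\left(\mu_1,\ldots,\mu_m\right)$ with $m=\mu_1^{t}=\lambda_1$, and work with $N=\left(\left[0\leq\mu_i-i+j<k\right]\right)_{1\leq i,j\leq m}$. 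Part \textbf{(a)}: if $\mu_k\neq 0$, then $\mu_k\geq 1$, hence $\mu_i\geq 1$ for all $i\leq k$, and I would exhibit a zero pattern forcing $\det N=0$ --- specifically, the first $k$ rows of $N$, restricted to some $k-1$ columns, form a singular submatrix, or more directly the $k\times k$ top-left block has a dependency because row $i$ has its support in columns $\{i,i+1,\ldots,i+k-1\}$ starting from the first column where $\mu_i-i+j\geq 0$, and when $\mu_1\geq\mu_2\geq\cdots\geq\mu_k\geq 1$ the supports overlap too much. (The cleanest version: the $k$ rows $1,\ldots,k$ live in a span of dimension $<k$; I will make this rigorous by a pigeonhole on the leading nonzero column of each row modulo $k$.)

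For parts \textbf{(b)} and \textbf{(c)}, assume $\mu_k=0$, so the matrix $N$ effectively only has $k-1$ ``active'' rows: rows $k,k+1,\ldots$ have $\mu_i=0$, so row $i$ equals $\left(\left[0\leq -i+j<k\right]\right)_j=\left(\left[i\leq j<i+k\right]\right)_j$, a shifted block of $k$ consecutive ones. I would do column operations on $N$ to simplify: adding column $j$ to column $j-k$ (or subtracting appropriately) annihilates most of these trivial rows and collapses the whole determinant to a $\left(k-1\right)\times\left(k-1\right)$ determinant built from the first $k-1$ rows. This is the technical heart. The key observation is that in the polynomial ring, $\left(1+x+\cdots+x^{k-1}\right) = \left(1-x^k\right)/\left(1-x\right)$, so $\left[0\leq n<k\right]$ as a function of $n$ is $1$-periodic-ish: the infinite ``band'' structure means that after the reduction, the surviving $\left(k-1\right)\times\left(k-1\right)$ matrix has $\left(i,j\right)$-entry depending only on $\left(\beta_i - j\right)\bmod k$ where $\beta_i=\mu_i-i$ as in \eqref{eq.thm.petk.explicit.beti=} --- in fact it becomes a $0/1$ matrix that is a permutation matrix precisely when the residues $\gamma_i=1+\left(\beta_i-1\right)\%k$ are distinct, and is singular otherwise. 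That gives \textbf{(b)} immediately.

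For \textbf{(c)}, once the reduced $\left(k-1\right)\times\left(k-1\right)$ matrix is recognized as a signed permutation matrix --- row $i$ having its single $1$ in column $\gamma_i$ (or rather, the column that reduces to $\gamma_i$), possibly multiplied by $\pm 1$ coming from how many times we wrapped around in the column reductions, i.e. from $\beta_i//k$ --- the determinant is $\operatorname{sgn}$ of the permutation $i\mapsto\gamma_i$ times $\prod\left(-1\right)^{\text{wraps}}$. The sign of the permutation sending $1,\ldots,k-1$ to the distinct values $\gamma_1,\ldots,\gamma_{k-1}$ is $\left(-1\right)^{\text{inv}}$ where $\operatorname{inv}$ counts pairs $i<j$ with $\gamma_i>\gamma_j$; since $\operatorname{inv}+g=\binom{k-1}{2}$ with $g$ as defined in the theorem, we can convert. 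I would then carefully bookkeep the total sign: each column operation $C_{j}\mapsto C_{j}-C_{j-k}$ etc.\ is determinant-preserving, so the only signs come from (i) reindexing/sorting the permutation, contributing $\left(-1\right)^{\operatorname{inv}}$, and (ii) the normalization of each of the $k-1$ surviving rows, where pushing the ``$1$'' from column $\mu_i-i+(\text{something})$ down into the fundamental range $\{1,\ldots,k\}$ costs a factor $\left(-1\right)^{?}$ tied to $\left(\beta_i - \gamma_i\right)/k$ or to $\mu_i$ itself. Matching this against the claimed exponent $\left(\beta_1+\cdots+\beta_{k-1}\right)+g+\left(\gamma_1+\cdots+\gamma_{k-1}\right)$ is a pure sign-tracking computation: I expect $\left(-1\right)^{\beta_i+\gamma_i}$ per row is the wrap-sign (since $\gamma_i\equiv\beta_i\pmod k$ up to an additive constant, $\beta_i+\gamma_i$ has the right parity to record $\lfloor\cdot/k\rfloor$ when $k$ is\textellipsis\ --- one has to be careful about even vs.\ odd $k$ here, which is the subtle point), and $\left(-1\right)^{g}=\left(-1\right)^{\binom{k-1}{2}}\left(-1\right)^{\operatorname{inv}}$ converts the inversion sign into the stated one, with $\binom{k-1}{2}$ absorbed into the $\sum\gamma_i$ or $\sum\beta_i$ terms since the $\gamma_i$ are a permutation of a fixed set only when\textellipsis\ --- actually they need not be, so this bookkeeping is genuinely where the work is.

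The main obstacle is the sign computation in \textbf{(c)}: getting the column-reduction step exactly right (which columns to combine, in which order, and confirming no determinant sign is lost) and then matching the per-row wrap factor to $\left(-1\right)^{\beta_i+\gamma_i}$ and the permutation sign to $\left(-1\right)^{g+\binom{k-1}{2}}$, all uniformly in the parity of $k$. I would handle this by first treating the reduction abstractly (proving $\det N$ equals a $\left(k-1\right)\times\left(k-1\right)$ determinant $\det\left(\left(\left[0\leq\beta_i-j+rk<k\text{ for the unique valid }r\right]\cdot(-1)^{r}\right)_{i,j}\right)$ or similar), then specializing. An alternative, possibly cleaner route --- which I would fall back on if the direct bookkeeping gets unwieldy --- is to compute $\det N$ via the Lindström--Gessel--Viennot-style interpretation or via \cite[proof of Theorem 1]{GorWil74} on Petrie matrices, where the determinant of a Petrie matrix is already known to be $0,\pm 1$ with a combinatorial rule; one then just has to identify our $\gamma_i$'s with the data in that rule. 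But given that Proposition \ref{prop.petkrel.-101} will separately be proved via Gordon--Wilkinson, re-deriving the explicit value from scratch via the column-reduction argument is the more self-contained path, so that is what I would write up, accepting that the sign-chasing in the last paragraph of the proof is unavoidably the longest part.
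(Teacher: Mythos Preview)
There is a genuine gap in your setup. You write $N=\left(\left[0\leq\mu_i-i+j<k\right]\right)_{1\leq i,j\leq m}$ and proceed as though $\det N=\operatorname{pet}_k(\lambda,\varnothing)$. But $\det N$ is $\operatorname{pet}_k(\mu,\varnothing)$, not $\operatorname{pet}_k(\lambda,\varnothing)$, and these are not equal in general: for $k=3$ and $\lambda=(2)$ (so $\mu=(1,1)$) one has $\operatorname{pet}_3((2),\varnothing)=1$ while $\operatorname{pet}_3((1,1),\varnothing)=\det\begin{pmatrix}1&1\\1&1\end{pmatrix}=0$. You invoke the second Jacobi--Trudi identity \eqref{eq.schur.JTsh-e}, which is the right idea, but that identity has \emph{elementary} symmetric functions as entries, and the specialization sending $h_r\mapsto[0\leq r<k]$ does \emph{not} send $e_r\mapsto[0\leq r<k]$. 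So your entire column-reduction argument, your ``active rows'' picture, and your sign-tracking are all being carried out on the wrong matrix.

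The paper's route fixes exactly this. It uses the homomorphism $\alpha_k:\Lambda\to\mathbf{k}$ from Definition~\ref{def.alphak} together with Lemma~\ref{lem.alphak.slammu}, so that $\operatorname{pet}_k(\lambda,\varnothing)=\alpha_k(s_\lambda)$. Since $\mu_k=0$ gives $\mu=(\mu_1,\ldots,\mu_{k-1})$, the second Jacobi--Trudi formula yields $s_\lambda=s_{\mu^t}=\det\left((e_{\mu_i-i+j})_{1\leq i,j\leq k-1}\right)$, and hence $\operatorname{pet}_k(\lambda,\varnothing)=\det\left((\alpha_k(e_{\beta_i+j}))_{1\leq i,j\leq k-1}\right)$. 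This is already a $(k-1)\times(k-1)$ determinant --- no column operations needed to shrink it. The crucial ingredient you are missing is Lemma~\ref{lem.alphak.e}, which evaluates $\alpha_k(e_r)=(-1)^{r+r\%k}\left[r\%k\in\{0,1\}\right]$; this is where the mod-$k$ periodicity and the extra signs genuinely come from. After factoring out $(-1)^{\beta_i}$ from each row and $(-1)^j$ from each column (Lemma~\ref{lem.petk.explicit.1}~\textbf{(b)}), what remains is exactly the matrix $G$ of Lemma~\ref{lem.petk.explicit.2}, whose determinant is computed directly by recognizing it (after sorting the $\gamma_i$) as a block-diagonal bidiagonal matrix. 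Your high-level picture --- reduce to $(k-1)\times(k-1)$, recognize a signed-permutation structure, bookkeep signs --- matches the paper's shape, but the correct entries are $\alpha_k(e_{\beta_i+j})$, not $[0\leq\beta_i+j<k]$, and obtaining those requires the generating-function computation in Lemma~\ref{lem.alphak.e}.
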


The proof of this theorem is technical and will be given in Subsection
\ref{subsect.proofs.petk.explicit}.

It is possible to restate part of Theorem \ref{thm.petk.explicit} without
using $\lambda^{t}$:

\begin{proposition}
\label{prop.petk.explicit-old}Let $\lambda\in\operatorname*{Par}$, and let $k$
be a positive integer. Assume that $\lambda_{1}<k$. Define a subset $B$ of
$\mathbb{Z}$ by
\[
B=\left\{  \lambda_{i}-i\ \mid\ i\in\left\{  1,2,3,\ldots\right\}  \right\}
.
\]

Let $\overline{0},\overline{1},\ldots,\overline{k-1}$ be the residue classes
of the integers $0,1,\ldots,k-1$ modulo $k$ (considered as subsets of
$\mathbb{Z}$). Let $W$ be the set of all integers smaller than $k-1$.

Then, $\operatorname*{pet}\nolimits_{k}\left(  \lambda,\varnothing\right)
\neq0$ if and only if each $i\in\left\{  0,1,\ldots,k-1\right\}  $ satisfies
$\left\vert \left(  \overline{i}\cap W\right)  \setminus B\right\vert \leq1$.
\end{proposition}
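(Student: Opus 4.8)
The plan is to deduce Proposition \ref{prop.petk.explicit-old} from Theorem \ref{thm.petk.explicit} by rewriting the latter's criterion in terms of $\lambda$ instead of $\lambda^{t}$. Put $\mu=\lambda^{t}$. Since $\lambda_{1}<k$, no part of $\lambda$ reaches $k$, so $\mu_{k}=\left\vert \left\{ j\ \mid\ \lambda_{j}\geq k\right\} \right\vert =0$; hence parts \textbf{(b)} and \textbf{(c)} of Theorem \ref{thm.petk.explicit} apply, and jointly they say that $\operatorname*{pet}\nolimits_{k}\left( \lambda,\varnothing\right) \neq 0$ exactly when the numbers $\gamma_{1},\ldots,\gamma_{k-1}$ of that theorem are distinct. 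Since each $\gamma_{i}\in\left\{ 1,\ldots,k\right\} $ satisfies $\gamma_{i}\equiv\beta_{i}=\mu_{i}-i\pmod{k}$, the $\gamma_{i}$ are distinct if and only if the integers $\mu_{1}-1,\ \mu_{2}-2,\ \ldots,\ \mu_{k-1}-\left( k-1\right) $ are pairwise incongruent modulo $k$. This is the first half of the reduction.

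For the second half I would rewrite the condition in the Proposition. Since $\lambda_{1}<k$, every $\lambda_{i}-i$ is $\leq\lambda_{1}-1\leq k-2$, so $B\subseteq W$, and thus $\left( \overline{i}\cap W\right) \setminus B=\overline{i}\cap\left( W\setminus B\right) $ for each residue class $\overline{i}$. Hence ``$\left\vert \left( \overline{i}\cap W\right) \setminus B\right\vert \leq 1$ for all $i$'' says exactly that the elements of $W\setminus B$ are pairwise incongruent modulo $k$. So it remains to show that $W\setminus B$ is pairwise incongruent mod $k$ precisely when $\mu_{1}-1,\ldots,\mu_{k-1}-\left( k-1\right) $ are.

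The missing link is the classical identity $\mathbb{Z}\setminus B=\left\{ j-1-\mu_{j}\ \mid\ j\geq 1\right\} $ (equivalently: for each $n\in\mathbb{Z}$, exactly one of $n\in B$ and $-1-n\in\left\{ \mu_{j}-j\ \mid\ j\geq 1\right\} $ holds). I would prove this by comparing, inside a large window $\left\{ -N,\ldots,N-1\right\} $ (with $N\geq\max\left\{ \lambda_{1},\mu_{1}\right\} $), the $N$ beads $\lambda_{i}-i$ ($1\leq i\leq N$) of $\lambda$ with the $N$ numbers $-1-\left( \mu_{j}-j\right) $ ($1\leq j\leq N$): the reflection $n\mapsto -1-n$ preserves the window, and the dichotomy ``the cell $\left( i,j\right) $ lies in the diagram of $\lambda$ or not'' gives $\lambda_{i}+\mu_{j}\neq i+j-1$, i.e.\ $\lambda_{i}-i\neq -1-\left( \mu_{j}-j\right) $, for all $i,j$; so these two $N$-element subsets of the $2N$-element window are disjoint, hence complementary, and letting $N\to\infty$ gives the identity. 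Since $j\mapsto j-1-\mu_{j}$ is strictly increasing (its consecutive differences are $1+\left( \mu_{j}-\mu_{j+1}\right) \geq 1$), it enumerates $\mathbb{Z}\setminus B$ without repetition; intersecting with $W$ and using $\mu_{k}=0$ (which forces $j-1-\mu_{j}=j-1\geq k-1$ for $j\geq k$, while $j-1-\mu_{j}\leq j-1\leq k-2$ for $j\leq k-1$) gives $W\setminus B=\left\{ j-1-\mu_{j}\ \mid\ 1\leq j\leq k-1\right\} $. Finally, applying the bijection $n\mapsto -1-n$ of $\mathbb{Z}/k$ turns $W\setminus B$ into $\left\{ \mu_{j}-j\ \mid\ 1\leq j\leq k-1\right\} $, so $W\setminus B$ is pairwise incongruent mod $k$ if and only if $\mu_{1}-1,\ldots,\mu_{k-1}-\left( k-1\right) $ are; combined with the first two paragraphs, this proves the Proposition. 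I expect the conjugation identity (together with the index bookkeeping that isolates the range $1\leq j\leq k-1$ in the description of $W\setminus B$) to be the only genuine obstacle; everything else is transcription of Theorem \ref{thm.petk.explicit}.
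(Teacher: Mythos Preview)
Your proof is correct and follows essentially the same approach as the paper. The paper likewise reduces to Theorem \ref{thm.petk.explicit} via $\mu=\lambda^{t}$, identifies $W\setminus B$ with $\{\eta_{1},\ldots,\eta_{k-1}\}$ where $\eta_{j}=-1-(\mu_{j}-j)=j-1-\mu_{j}$, and then runs the same chain of equivalences; the only cosmetic difference is that the paper factors the complement identity $\mathbb{Z}\setminus B=\{j-1-\mu_{j}\mid j\geq 1\}$ out into a separate Proposition \ref{prop.partition.transpose.djun}, whereas you prove it inline by the window-counting argument.
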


In Subsection \ref{subsect.proofs.petk.explicit}, we will outline how this
proposition can be derived from Theorem \ref{thm.petk.explicit}.

The sets $B$ and $\left(  \overline{i}\cap W\right)  \setminus B$ in
Proposition \ref{prop.petk.explicit-old} are related to the $k$%
\textit{-modular structure} of the partition $\lambda$, such as the $\beta
$\textit{-set}, the $k$\textit{-abacus}, the $k$\textit{-core} and the
$k$\textit{-quotient} (see \cite[\S \S 1--3]{Olsson93} for some of these
concepts). Essentially equivalent concepts include the \textit{Maya diagram}
of $\lambda$ (see, e.g., \cite[\S 3.3]{Crane18})\footnote{The \emph{Maya
diagram} of $\lambda$ is a coloring of the set $\left\{  z+\dfrac{1}{2}%
\ \mid\ z\in\mathbb{Z}\right\}  $ with the colors black and white, in which
the elements $\lambda_{i}-i+\dfrac{1}{2}$ (for all $i\in\left\{
1,2,3,\ldots\right\}  $) are colored black while all remaining elements are
colored white. Borcherds's proof of the Jacobi triple product identity
(\cite[\S 13.3]{Camero94}) also essentially constructs this Maya diagram
(wording it in terms of the \textquotedblleft\textit{Dirac sea}%
\textquotedblright\ model for electrons).} and the \textit{first column hook
lengths} of $\lambda$ (see \cite[Proposition (1.3)]{Olsson93}).

\subsection{\label{subsect.thms.pieri}A \textquotedblleft
Pieri\textquotedblright\ rule}

Now, the following generalization of Theorem \ref{thm.G.main} holds:

\begin{theorem}
\label{thm.G.pieri}Let $k$ be a positive integer. Let $\mu\in
\operatorname*{Par}$. Then,%
\[
G\left(  k\right)  \cdot s_{\mu}=\sum_{\lambda\in\operatorname*{Par}%
}\operatorname*{pet}\nolimits_{k}\left(  \lambda,\mu\right)  s_{\lambda}.
\]

\end{theorem}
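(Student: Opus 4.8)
The plan is to prove Theorem~\ref{thm.G.pieri} by reducing the product $G\left(k\right)\cdot s_\mu$ to a determinant via the Jacobi--Trudi formula, and then recognizing that determinant as the expansion claimed. First I would fix a large $\ell$ so that $\mu=\left(\mu_1,\mu_2,\ldots,\mu_\ell\right)$, and I would work with the expression $G\left(k\right)=\sum_{n\geq 0}G\left(k,n\right)$, where each $G\left(k,n\right)$ is a genuine symmetric function. The key observation is that $G\left(k\right)$ behaves like a ``truncated'' version of $\sum_n h_n = \prod_i \left(1-x_i\right)^{-1}$: by Proposition~\ref{prop.G.basics}~\textbf{(b)} we have $G\left(k\right)=\prod_{i=1}^\infty\left(1+x_i+\cdots+x_i^{k-1}\right)$. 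I expect the cleanest route is to introduce the generating-function identity $G\left(k,n\right)=\sum_{j\in\mathbb{Z}}\left(-1\right)^j h_{n-jk}\cdot\left[\text{something}\right]$ --- more precisely, since $1+x+\cdots+x^{k-1}=\dfrac{1-x^k}{1-x}$, one gets $G\left(k\right)=\left(\prod_i\left(1-x_i^k\right)\right)\cdot\left(\prod_i\left(1-x_i\right)^{-1}\right)$, and extracting homogeneous components expresses $G\left(k,n\right)$ as an alternating sum of products $e_r\left[p_k\text{-ish terms}\right]\cdot h_s$. However, this is messier than needed; instead I would directly use the combinatorial identity that, for the relevant index sets, $\left(\text{coefficient machinery}\right)$ collapses.

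The heart of the argument is the following. By the (dual) Jacobi--Trudi formula combined with the branching structure of Schur functions, multiplying $s_\mu$ by $G\left(k\right)$ and expanding in the Schur basis amounts, via the Hall inner product, to computing $\left(G\left(k\right)s_\mu,\,s_\lambda\right)$ for each partition $\lambda$. So I would fix $\lambda$ (with $\ell\left(\lambda\right)\leq\ell$, enlarging $\ell$ if necessary), and aim to show
\[
\left(G\left(k\right)s_\mu,\,s_\lambda\right)=\operatorname*{pet}\nolimits_k\left(\lambda,\mu\right)=\det\left(\left(\left[0\leq\lambda_i-\mu_j-i+j<k\right]\right)_{1\leq i\leq\ell,\ 1\leq j\leq\ell}\right).
\]
The right-hand entry $\left[0\leq\lambda_i-\mu_j-i+j<k\right]$ is precisely the coefficient of $s_{\left(\lambda_i-i+j,\ldots\right)}$-type data; more to the point, it equals $\left(G\left(k\right),\,h_{\lambda_i-\mu_j-i+j}\right)$ would-be-value --- but actually $\left[0\leq n<k\right]$ is exactly the coefficient of $h_n$ in the expansion of $G\left(k\right)$ in the $h$-basis, in the sense that $G\left(k\right)=\sum_{\alpha}\left(\prod_i\left[0\leq\alpha_i<k\right]\right)\mathbf{x}^\alpha$ and the ``dual $h$-basis'' pairing picks out such indicator values. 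So I would establish the lemma: if one expands $G\left(k\right)$ against the complete homogeneous functions appropriately, the relevant structure constant is $\left[0\leq n<k\right]$. Then I would invoke the standard fact that $s_\mu\cdot\left(\text{multiplication by a symmetric function }f\right)$, when $f=\sum_n f_n$ with $f_n$ homogeneous, expanded in Schurs, is governed by a determinant $\det\left(\left(c_{\lambda_i-\mu_j-i+j}\right)_{i,j}\right)$ where $f=\sum_n c_n h_n$ --- this is the ``$h$-multiplication Jacobi--Trudi'' or more precisely a consequence of the fact that for $f=\sum_n c_n h_n$ with the $h_n$ replaced formally, $f\cdot s_\mu = \det\left(\left(\sum_n c_n h_{\lambda_i' \ldots}\right)\right)$. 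Concretely: writing $G\left(k\right)$ in terms of $h$'s is delicate because $G\left(k\right)\notin\Lambda$, but $G\left(k\right)$ is a well-defined element of the completion, and the finite-degree pieces $G\left(k,m\right)$ are honest symmetric functions; so I would carry out the computation degree by degree.

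The main obstacle --- and the step I would spend the most care on --- is the passage from ``$G\left(k\right)$ as a product $\prod_i\left(1+x_i+\cdots+x_i^{k-1}\right)$'' to ``$G\left(k\right)s_\mu = \det$ of a matrix of indicator brackets.'' The right framework is the \emph{Jacobi--Trudi-type identity for skew Schur functions twisted by a multiplicative symmetric function}: for any symmetric function (or formal series of bounded-below degree) $f$ whose expansion in the $h$-basis is $f=\sum_{n\geq 0}c_n h_n$, and any partitions $\lambda,\mu$ with $\ell\left(\lambda\right),\ell\left(\mu\right)\leq\ell$, one has the ``creation-operator'' / ``Bernstein'' type formula, or more elementarily the identity obtained by applying $f\cdot$ to the Jacobi--Trudi determinant $s_\mu=\det\left(\left(h_{\mu_i-i+j}\right)\right)$ and using that $h_a h_b$ reorganizes --- actually the slick statement is: $f\cdot s_\mu=\sum_\lambda \det\left(\left(c_{\lambda_i-\mu_j-i+j}\right)_{1\leq i,j\leq\ell}\right)s_\lambda$ whenever $f$ is a sum of $h_n$'s with scalar coefficients $c_n$ (set $c_n=0$ for $n<0$). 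This is because $h_n\cdot s_\mu=\sum_\lambda\left[\text{Pieri}\right]s_\lambda$ and the determinant identity is the ``iterated Pieri = Jacobi--Trudi'' phenomenon; alternatively it follows from the ribbon/Murnaghan--Nakayama-style manipulation, or from the fact that in the basis $\left\{s_\lambda\right\}$, multiplication by $h_n$ has a bidiagonal-ish matrix and products of such give determinants. Once this determinantal multiplication lemma is in hand, I only need $c_n=\left[0\leq n<k\right]$ for $G\left(k\right)$, i.e.\ that $G\left(k\right)=\sum_{n=0}^{k-1}h_n$ \emph{in the $h$-basis sense} --- wait, that is false as a literal equation ($G\left(k\right)$ has unbounded degree and $\sum_{n=0}^{k-1}h_n$ does not); the correct statement is that $c_n$, the coefficient when $G\left(k\right)$ is expanded in the (topological) $h$-basis of the completion $\widehat\Lambda=\mathbf{k}\left[\left[h_1,h_2,\ldots\right]\right]$ graded-completed, is \emph{not} simply $\left[0\leq n<k\right]$ either. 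So the real content is: find the expansion $G\left(k\right)=\sum_{\nu}d_\nu h_\nu$ (sum over partitions $\nu$ of $h$-monomials) and show the induced determinant collapses to the single-row-bracket form. I believe the clean resolution is to \emph{not} go through the $h$-basis at all but instead prove the theorem by showing both sides satisfy the same recursion in $k$ (using $1+x+\cdots+x^{k-1}=\left(1+x+\cdots+x^{k-2}\right)+x^{k-1}$, i.e.\ $G\left(k\right)=G\left(k-1\right)+\left(\text{correction}\right)$) or --- most likely what the paper does --- by a direct bijective/sign-reversing-involution argument on semistandard tableaux: expand $G\left(k\right)s_\mu=\sum_\mathbf{x}^\alpha \cdot \sum_T \mathbf{x}_T$ over pairs (weak composition $\alpha$ with entries $<k$, SSYT $T$ of shape $\mu$), interpret via RSK or the Bender--Knuth / Lindström--Gessel--Viennot lemma as a signed sum of lattice-path families, and cancel. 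The main obstacle is precisely setting up this sign-reversing involution (or the equivalent determinant manipulation) so that the surviving terms are indexed by exactly the nonzero entries of $\det\left(\left(\left[0\leq\lambda_i-\mu_j-i+j<k\right]\right)\right)$; everything else is bookkeeping with the Jacobi--Trudi formula~\eqref{eq.schur.JTsh}.
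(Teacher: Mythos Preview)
Your target equation $\left(G\left(k\right)s_\mu,\,s_\lambda\right)=\operatorname*{pet}\nolimits_k\left(\lambda,\mu\right)$ is exactly right, and your observation that $\left[0\leq n<k\right]=\left(G\left(k\right),h_n\right)$ is correct and important. But the ``determinantal multiplication lemma'' you state --- that $f\cdot s_\mu=\sum_\lambda\det\left(\left(c_{\lambda_i-\mu_j-i+j}\right)\right)s_\lambda$ whenever $f=\sum_n c_n h_n$ --- is false, and this is where the argument breaks down. (Take $f=h_1+h_2$ and $\mu=\varnothing$: your formula would predict a nonzero coefficient on $s_{\left(2,1\right)}$, namely $\det\left(\begin{smallmatrix}c_2&c_3\\c_0&c_1\end{smallmatrix}\right)=1$, but $h_1+h_2=s_{\left(1\right)}+s_{\left(2\right)}$.) You correctly sense that something is off, but the alternative routes you sketch (recursion in $k$, a sign-reversing involution) are not developed enough to carry the proof.

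The missing idea is this: by adjointness $\left(G\left(k\right)s_\mu,s_\lambda\right)=\left(G\left(k\right),s_{\lambda/\mu}\right)$, so you need to evaluate the linear functional $f\mapsto\left(G\left(k\right),f\right)$ on $s_{\lambda/\mu}=\det\left(\left(h_{\lambda_i-\mu_j-i+j}\right)\right)$. The point is that this particular linear functional is actually a \emph{ring homomorphism}: it agrees on the basis $\left(h_\nu\right)_{\nu\in\operatorname*{Par}}$ with the $\mathbf{k}$-algebra homomorphism $\alpha_k:\Lambda\to\mathbf{k}$ determined by $\alpha_k\left(h_n\right)=\left[0\leq n<k\right]$ (check: $\left(G\left(k\right),h_\nu\right)=\left[\nu_i<k\text{ for all }i\right]=\prod_i\left[0\leq\nu_i<k\right]=\alpha_k\left(h_\nu\right)$, using $\left(h_\nu,m_\kappa\right)=\delta_{\nu,\kappa}$ and $G\left(k\right)=\sum_{\kappa:\,\kappa_i<k}m_\kappa$). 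Because $\alpha_k$ is multiplicative, it commutes with the determinant, giving $\left(G\left(k\right),s_{\lambda/\mu}\right)=\alpha_k\left(s_{\lambda/\mu}\right)=\det\left(\left(\left[0\leq\lambda_i-\mu_j-i+j<k\right]\right)\right)=\operatorname*{pet}\nolimits_k\left(\lambda,\mu\right)$. This is the substance of the paper's first proof (packaged there via the Cauchy-type identity of Theorem~\ref{thm.our-cauchy}); the paper also gives a second proof working directly with alternants in finitely many variables.
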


Theorem \ref{thm.G.main} is the particular case of Theorem \ref{thm.G.pieri}
for $\mu=\varnothing$.

We shall give two proofs of Theorem \ref{thm.G.pieri} in Subsections
\ref{subsect.proofs.G.pieri.1st} and \ref{subsect.proofs.G.pieri.2nd}.

We can also generalize Corollary \ref{cor.Gkm.main} to obtain a Pieri-like
rule for multiplication by $G\left(  k,m\right)  $:

\begin{corollary}
\label{cor.Gkm.pieri}Let $k$ be a positive integer. Let $m\in\mathbb{N}$. Let
$\mu\in\operatorname*{Par}$. Then,%
\[
G\left(  k,m\right)  \cdot s_{\mu}=\sum_{\lambda\in\operatorname*{Par}%
\nolimits_{m+\left\vert \mu\right\vert }}\operatorname*{pet}\nolimits_{k}%
\left(  \lambda,\mu\right)  s_{\lambda}.
\]

\end{corollary}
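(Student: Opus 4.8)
The plan is to derive Corollary \ref{cor.Gkm.pieri} from Theorem \ref{thm.G.pieri} by comparing homogeneous components. I would first recall from Proposition \ref{prop.G.basics} \textbf{(a)} that $G\left(k\right)=\sum_{i\in\mathbb{N}}G\left(k,i\right)$ is the homogeneous decomposition of $G\left(k\right)$, so that multiplying by $s_{\mu}$ gives
\[
G\left(k\right)\cdot s_{\mu}=\sum_{i\in\mathbb{N}}G\left(k,i\right)\cdot s_{\mu}.
\]
Since $G\left(k,i\right)$ is homogeneous of degree $i$ and $s_{\mu}$ is homogeneous of degree $\left\vert\mu\right\vert$, the summand $G\left(k,i\right)\cdot s_{\mu}$ is homogeneous of degree $i+\left\vert\mu\right\vert$; hence the right-hand side above is exactly the homogeneous decomposition of $G\left(k\right)\cdot s_{\mu}$ (up to the degree shift by $\left\vert\mu\right\vert$), and in particular the degree-$\left(m+\left\vert\mu\right\vert\right)$ homogeneous component of $G\left(k\right)\cdot s_{\mu}$ is $G\left(k,m\right)\cdot s_{\mu}$.

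Next I would compute the same homogeneous component from the other side of Theorem \ref{thm.G.pieri}. Grouping the sum $\sum_{\lambda\in\operatorname*{Par}}\operatorname*{pet}\nolimits_{k}\left(\lambda,\mu\right)s_{\lambda}$ according to the size $\left\vert\lambda\right\vert$ (which is legitimate because $\operatorname*{Par}\nolimits_{n}$ is finite for each $n$, and because $s_{\lambda}$ is homogeneous of degree $\left\vert\lambda\right\vert$), the degree-$n$ homogeneous component of this power series is $\sum_{\lambda\in\operatorname*{Par}\nolimits_{n}}\operatorname*{pet}\nolimits_{k}\left(\lambda,\mu\right)s_{\lambda}$. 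Taking $n=m+\left\vert\mu\right\vert$ and equating the two expressions for the degree-$\left(m+\left\vert\mu\right\vert\right)$ component of $G\left(k\right)\cdot s_{\mu}$ yields precisely
\[
G\left(k,m\right)\cdot s_{\mu}=\sum_{\lambda\in\operatorname*{Par}\nolimits_{m+\left\vert\mu\right\vert}}\operatorname*{pet}\nolimits_{k}\left(\lambda,\mu\right)s_{\lambda},
\]
as desired.

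There is essentially no hard obstacle here; the corollary is a formal consequence of Theorem \ref{thm.G.pieri}. The only point requiring minor care is that $G\left(k\right)$ lies in $\mathbf{k}\left[\left[x_{1},x_{2},x_{3},\ldots\right]\right]$ rather than in $\Lambda$, so one should phrase the ``extract the homogeneous component'' step at the level of formal power series (where homogeneous decompositions still exist, as recalled in Section \ref{sect.not}), and check that the infinite sums $\sum_{i\in\mathbb{N}}G\left(k,i\right)\cdot s_{\mu}$ and $\sum_{\lambda\in\operatorname*{Par}}\operatorname*{pet}\nolimits_{k}\left(\lambda,\mu\right)s_{\lambda}$ are summable in the appropriate sense (each monomial receives contributions from only finitely many terms). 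Once that is noted, comparing coefficients degree by degree finishes the proof.
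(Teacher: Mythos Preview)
Your proposal is correct and follows essentially the same approach as the paper: both derive the corollary from Theorem \ref{thm.G.pieri} by extracting the $(m+\left\vert\mu\right\vert)$-th homogeneous component on each side, using Proposition \ref{prop.G.basics} \textbf{(a)} and the homogeneity of $s_{\mu}$ and $s_{\lambda}$. Your remarks about working at the level of formal power series and the summability of the relevant infinite sums are well taken and match the level of care the paper exercises (particularly in its detailed version).
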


Corollary \ref{cor.Gkm.pieri} follows from Theorem \ref{thm.G.pieri} by
projecting onto the $\left(  m+\left\vert \mu\right\vert \right)  $-th graded
component of $\Lambda$. (We shall explain this argument in more detail in
Subsection \ref{subsect.proofs.G.cors}.)

\subsection{\label{subsect.thms.coprod}Coproducts of Petrie functions}

In the following, the \textquotedblleft$\otimes$\textquotedblright\ sign will
always stand for $\otimes_{\mathbf{k}}$ (that is, tensor product of
$\mathbf{k}$-modules or of $\mathbf{k}$-algebras).

The $\mathbf{k}$-algebra $\Lambda$ is a Hopf algebra due to the presence of a
comultiplication $\Delta:\Lambda\rightarrow\Lambda\otimes\Lambda$. We recall
(from \cite[\S 2.1]{GriRei}) one way to define this comultiplication:

Consider the rings
\[
\mathbf{k}\left[  \left[  \mathbf{x}\right]  \right]  :=\mathbf{k}\left[
\left[  x_{1},x_{2},x_{3},\ldots\right]  \right]
\ \ \ \ \ \ \ \ \ \ \text{and}\ \ \ \ \ \ \ \ \ \ \mathbf{k}\left[  \left[
\mathbf{x},\mathbf{y}\right]  \right]  :=\mathbf{k}\left[  \left[  x_{1}%
,x_{2},x_{3},\ldots,y_{1},y_{2},y_{3},\ldots\right]  \right]
\]
of formal power series. We shall use the notations $\mathbf{x}$ and
$\mathbf{y}$ for the sequences $\left(  x_{1},x_{2},x_{3},\ldots\right)  $ and
$\left(  y_{1},y_{2},y_{3},\ldots\right)  $ of indeterminates. If
$f\in\mathbf{k}\left[  \left[  \mathbf{x}\right]  \right]  $ is any formal
power series, then $f\left(  \mathbf{y}\right)  $ shall mean the result of
substituting $y_{1},y_{2},y_{3},\ldots$ for the variables $x_{1},x_{2}%
,x_{3},\ldots$ in $f$. (This will be a formal power series in $\mathbf{k}%
\left[  \left[  y_{1},y_{2},y_{3},\ldots\right]  \right]  $.) For the sake of
symmetry, we also use the analogous notation $f\left(  \mathbf{x}\right)  $
for the result of substituting $x_{1},x_{2},x_{3},\ldots$ for $x_{1}%
,x_{2},x_{3},\ldots$ in $f$; of course, this $f\left(  \mathbf{x}\right)  $ is
just $f$. Finally, if the power series $f\in\mathbf{k}\left[  \left[
\mathbf{x}\right]  \right]  $ is symmetric, then we use the notation $f\left(
\mathbf{x},\mathbf{y}\right)  $ for the result of substituting the variables
$x_{1},x_{2},x_{3},\ldots,y_{1},y_{2},y_{3},\ldots$ for the variables
$x_{1},x_{2},x_{3},\ldots$ in $f$ (that is, choosing some bijection
$\phi:\left\{  x_{1},x_{2},x_{3},\ldots\right\}  \rightarrow\left\{
x_{1},x_{2},x_{3},\ldots,y_{1},y_{2},y_{3},\ldots\right\}  $%
\ \ \ \ \footnote{Such bijections clearly exist, since the sets $\left\{
x_{1},x_{2},x_{3},\ldots\right\}  $ and $\left\{  x_{1},x_{2},x_{3}%
,\ldots,y_{1},y_{2},y_{3},\ldots\right\}  $ have the same cardinality (namely,
$\aleph_{0}$). This is one of several observations commonly illustrated by the
metaphor of \textquotedblleft Hilbert's hotel\textquotedblright.} and
substituting $\phi\left(  x_{i}\right)  $ for each $x_{i}$ in $f$). This
result does not depend on the order in which the former variables are
substituted for the latter (i.e., on the choice of the bijection $\phi$)
because $f$ is symmetric.

Now, the comultiplication of $\Lambda$ is the map $\Delta:\Lambda
\rightarrow\Lambda\otimes\Lambda$ determined as follows: For a symmetric
function $f\in\Lambda$, we have%
\begin{equation}
\Delta\left(  f\right)  =\sum_{i\in I}f_{1,i}\otimes f_{2,i},
\label{eq.Delta-on-Lam.then}%
\end{equation}
where $f_{1,i},f_{2,i}\in\Lambda$ are such that%
\begin{equation}
f\left(  \mathbf{x},\mathbf{y}\right)  =\sum_{i\in I}f_{1,i}\left(
\mathbf{x}\right)  f_{2,i}\left(  \mathbf{y}\right)  .
\label{eq.Delta-on-Lam.if}%
\end{equation}
More precisely, if $f\in\Lambda$, if $I$ is a finite set, and if $\left(
f_{1,i}\right)  _{i\in I}\in\Lambda^{I}$ and $\left(  f_{2,i}\right)  _{i\in
I}\in\Lambda^{I}$ are two families satisfying (\ref{eq.Delta-on-Lam.if}), then
$\Delta\left(  f\right)  $ is given by (\ref{eq.Delta-on-Lam.then}).
\footnote{In the language of \cite[\S 2.1]{GriRei}, this can be restated as
$\Delta\left(  f\right)  =f\left(  \mathbf{x},\mathbf{y}\right)  $, because
$\Lambda\otimes\Lambda$ is identified with a certain subring of $\mathbf{k}%
\left[  \left[  \mathbf{x},\mathbf{y}\right]  \right]  $ in \cite[\S 2.1]%
{GriRei} (via the injection $\Lambda\otimes\Lambda\rightarrow\mathbf{k}\left[
\left[  \mathbf{x},\mathbf{y}\right]  \right]  $ that sends any $u\otimes
v\in\Lambda\otimes\Lambda$ to $u\left(  \mathbf{x}\right)  v\left(
\mathbf{y}\right)  \in\mathbf{k}\left[  \left[  \mathbf{x},\mathbf{y}\right]
\right]  $).}

For example, for any $n\in\mathbb{N}$, it is easy to see that%
\[
e_{n}\left(  \mathbf{x},\mathbf{y}\right)  =\sum_{i=0}^{n}e_{i}\left(
\mathbf{x}\right)  e_{n-i}\left(  \mathbf{y}\right)  ,
\]
and thus the above definition of $\Delta$ yields%
\[
\Delta\left(  e_{n}\right)  =\sum_{i=0}^{n}e_{i}\otimes e_{n-i}.
\]

A similar formula exists for the image of a Petrie symmetric function under
$\Delta$:

\begin{theorem}
\label{thm.DeltaGkm}Let $k$ be a positive integer. Let $m\in\mathbb{N}$. Then,%
\[
\Delta\left(  G\left(  k,m\right)  \right)  =\sum_{i=0}^{m}G\left(
k,i\right)  \otimes G\left(  k,m-i\right)  .
\]

\end{theorem}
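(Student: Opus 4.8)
The plan is to use the characterization of $\Delta$ via the substitution $f \mapsto f(\mathbf{x}, \mathbf{y})$: it suffices to exhibit a finite family $(f_{1,i})$, $(f_{2,i})$ of symmetric functions with $G(k,m)(\mathbf{x},\mathbf{y}) = \sum_i f_{1,i}(\mathbf{x}) f_{2,i}(\mathbf{y})$ and then read off $\Delta(G(k,m))$ from \eqref{eq.Delta-on-Lam.then}. So first I would work directly with the monomial description of $G(k,m)$ from Proposition \ref{prop.G.basics} \textbf{(c)}, or better yet with the defining sum \eqref{eq.Gkm=} over weak compositions. The key observation is that a monomial $\mathbf{x}^\alpha \mathbf{y}^\beta$ of total degree $m$ in the combined variable set $x_1, x_2, \ldots, y_1, y_2, \ldots$ has all its exponents $< k$ if and only if $\alpha$ is a weak composition with all entries $< k$ and $\beta$ is a weak composition with all entries $< k$; and its total degree splits as $|\alpha| + |\beta| = m$. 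Grouping the monomials appearing in $G(k,m)(\mathbf{x},\mathbf{y})$ by the value $i = |\alpha|$, I get
\[
G(k,m)(\mathbf{x},\mathbf{y}) = \sum_{i=0}^{m} \Biggl( \sum_{\substack{\alpha \in \operatorname*{WC};\ |\alpha| = i;\\ \alpha_j < k \text{ for all } j}} \mathbf{x}^\alpha \Biggr) \Biggl( \sum_{\substack{\beta \in \operatorname*{WC};\ |\beta| = m-i;\\ \beta_j < k \text{ for all } j}} \mathbf{y}^\beta \Biggr) = \sum_{i=0}^{m} G(k,i)(\mathbf{x}) \cdot G(k,m-i)(\mathbf{y}),
\]
which is exactly the form \eqref{eq.Delta-on-Lam.if} with $I = \{0,1,\ldots,m\}$, $f_{1,i} = G(k,i)$ and $f_{2,i} = G(k,m-i)$.

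Then by the definition of $\Delta$ (the ``more precise'' statement following \eqref{eq.Delta-on-Lam.then}), we conclude $\Delta(G(k,m)) = \sum_{i=0}^{m} G(k,i) \otimes G(k,m-i)$, as desired. The only points requiring a little care are: (1) justifying the rearrangement of the (infinite) sum of monomials into the product of two sums — this is legitimate because we are working in the ring of formal power series and, after fixing the total degree $m$, only finitely many monomials of each multidegree occur, so there is no convergence issue and the manipulation is purely formal; and (2) checking that each $G(k,i)$ genuinely lies in $\Lambda$, which is already part of Definition \ref{def.G} \textbf{(b)}. One should also note that $G(k,m)(\mathbf{x},\mathbf{y})$ makes sense as the image of the symmetric function $G(k,m)$ under the substitution described in the paragraph before \eqref{eq.Delta-on-Lam.then}, since $G(k,m)$ is symmetric.

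I do not expect any genuine obstacle here; this is one of the ``easy'' structural results. If there is a subtle point, it is purely bookkeeping: making sure the bijection $\phi: \{x_1, x_2, \ldots\} \to \{x_1, x_2, \ldots, y_1, y_2, \ldots\}$ used to define $G(k,m)(\mathbf{x},\mathbf{y})$ interacts correctly with the exponent condition ``$\alpha_j < k$ for all $j$'', i.e. that bounding exponents is preserved under relabeling variables. Since the condition ``every exponent is $< k$'' is invariant under any permutation/relabeling of the variables, this is immediate, and the computation above goes through verbatim. An alternative, even shorter route would be to invoke the product formula $G(k) = \prod_{i=1}^\infty (x_i^0 + x_i^1 + \cdots + x_i^{k-1})$ from Proposition \ref{prop.G.basics} \textbf{(b)} to see that $G(k)(\mathbf{x},\mathbf{y}) = G(k)(\mathbf{x}) \cdot G(k)(\mathbf{y})$ (a ``grouplike-type'' identity for the non-symmetric-function series $G(k)$), and then extract the degree-$m$ component using Proposition \ref{prop.G.basics} \textbf{(a)}; but the direct monomial argument above is cleaner and self-contained, so that is the one I would write up.
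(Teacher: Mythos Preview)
Your proposal is correct and follows essentially the same approach as the paper: both arguments compute $G(k,m)(\mathbf{x},\mathbf{y})$ by observing that every monomial in the combined alphabet factors uniquely into an $x$-part and a $y$-part, that the exponent bound ``$<k$'' and the total degree split accordingly, and then group by the degree $i$ of the $x$-part before invoking \eqref{eq.Delta-on-Lam.then}. The paper merely packages the same computation in the language of ``$k$-bounded $i$-monomials'' rather than weak compositions, and spells out the bijection between monomials in $x_1,x_2,\ldots,y_1,y_2,\ldots$ and pairs of monomials a bit more explicitly.
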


The proof of Theorem \ref{thm.DeltaGkm} is given in Subsection
\ref{subsect.proofs.DeltaGkm}; it is a simple consequence (albeit somewhat
painful to explain) of (\ref{eq.Delta-on-Lam.then}).

It is well-known that $\Delta:\Lambda\rightarrow\Lambda\otimes\Lambda$ is a
$\mathbf{k}$-algebra homomorphism. Equipping the $\mathbf{k}$-algebra
$\Lambda$ with the comultiplication $\Delta$ (as well as a counit
$\varepsilon:\Lambda\rightarrow\mathbf{k}$, which we won't need here) yields a
connected graded Hopf algebra. (See, e.g., \cite[\S 2.1]{GriRei} for proofs.)

\subsection{The Frobenius endomorphisms and Petrie functions}

We shall next derive another formula for the Petrie symmetric functions
$G\left(  k,m\right)  $. For this formula, we need the following definition
(\cite[Exercise 2.9.9]{GriRei}):

\begin{definition}
\label{def.fn}Let $n\in\left\{  1,2,3,\ldots\right\}  $. We define a map
$\mathbf{f}_{n}:\Lambda\rightarrow\Lambda$ by%
\[
\left(  \mathbf{f}_{n}\left(  a\right)  =a\left(  x_{1}^{n},x_{2}^{n}%
,x_{3}^{n},\ldots\right)  \ \ \ \ \ \ \ \ \ \ \text{for each }a\in
\Lambda\right)  .
\]
This map $\mathbf{f}_{n}$ is called the $n$\emph{-th Frobenius endomorphism}
of $\Lambda$.
\end{definition}

Clearly, this map $\mathbf{f}_{n}$ is a $\mathbf{k}$-algebra endomorphism of
$\Lambda$ (since it amounts to a substitution of indeterminates). It is known
(from \cite[Exercise 2.9.9(d)]{GriRei}) that this map $\mathbf{f}_{n}%
:\Lambda\rightarrow\Lambda$ is a Hopf algebra endomorphism of $\Lambda$.

Using the notion of \emph{plethysm} (see, e.g., \cite[Chapter 7, Definition
A2.6]{Stanley-EC2} or \cite[\S I.8]{Macdon95}\footnote{Note that
\cite{Stanley-EC2} uses the notation $f\left[  g\right]  $ for the plethysm of
$f$ with $g$, whereas \cite{Macdon95} uses the notation $f\circ g$ for this.
We shall use $f\left[  g\right]  $.}), we can view the map $\mathbf{f}_{n}$ as
a plethysm with the $n$-th power-sum symmetric function $p_{n}$, in the sense
that any $a\in\Lambda$ satisfies $\mathbf{f}_{n}\left(  a\right)  =a\left[
p_{n}\right]  =p_{n}\left[  a\right]  $ as long as $\mathbf{k}=\mathbb{Z}$.
(Plethysm becomes somewhat subtle when the base ring $\mathbf{k}$ is
complicated; $\mathbf{f}_{n}\left(  a\right)  =a\left[  p_{n}\right]  $ holds
for any $\mathbf{k}$, while $\mathbf{f}_{n}\left(  a\right)  =p_{n}\left[
a\right]  $ relies on good properties of $\mathbf{k}$.) The plethystic
viewpoint makes some properties of $\mathbf{f}_{n}$ clear, but we shall avoid
it for reasons of elementarity.

Now, we can express the Petrie symmetric functions $G\left(  k,m\right)  $
using Frobenius endomorphisms as follows:

\begin{theorem}
\label{thm.G.frob}Let $k$ be a positive integer. Let $m\in\mathbb{N}$. Then,%
\[
G\left(  k,m\right)  =\sum_{i\in\mathbb{N}}\left(  -1\right)  ^{i}%
h_{m-ki}\cdot\mathbf{f}_{k}\left(  e_{i}\right)  .
\]
(The sum on the right hand side of this equality is well-defined, since all
sufficiently high $i\in\mathbb{N}$ satisfy $m-ki<0$ and thus $h_{m-ki}=0$.)
\end{theorem}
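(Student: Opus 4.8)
The plan is to recognize the right-hand side as the result of expanding a generating-function identity for $G(k)$ and extracting the degree-$m$ homogeneous component. First I would recall (Proposition~\ref{prop.G.basics}~\textbf{(b)}) that
\[
G(k)=\prod_{i=1}^{\infty}\left(x_i^0+x_i^1+\cdots+x_i^{k-1}\right)
=\prod_{i=1}^{\infty}\frac{1-x_i^k}{1-x_i},
\]
where the last equality is the finite geometric series summation (valid as an identity of formal power series since $1-x_i$ is invertible in $\mathbf{k}[[x_1,x_2,\ldots]]$). Now $\prod_i \frac{1}{1-x_i}=\sum_{n\in\mathbb{N}}h_n$ is the classical generating function for the complete homogeneous symmetric functions, and $\prod_i(1-x_i^k)$ is exactly $\mathbf{f}_k$ applied to the generating function $\prod_i(1-x_i)=\sum_{i\in\mathbb{N}}(-1)^i e_i$ for the elementary symmetric functions (since $\mathbf{f}_k$ substitutes $x_j^k$ for $x_j$ and is a $\mathbf{k}$-algebra homomorphism). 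Hence, as formal power series,
\[
G(k)=\left(\sum_{n\in\mathbb{N}}h_n\right)\cdot\left(\sum_{i\in\mathbb{N}}(-1)^i\,\mathbf{f}_k(e_i)\right).
\]

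Next I would take the degree-$m$ homogeneous component of both sides. On the left, Proposition~\ref{prop.G.basics}~\textbf{(a)} gives $G(k,m)$. On the right, $h_n$ is homogeneous of degree $n$ and $\mathbf{f}_k(e_i)$ is homogeneous of degree $ki$ (because $e_i$ is homogeneous of degree $i$ and $\mathbf{f}_k$ multiplies degrees by $k$), so the degree-$m$ part of the product is $\sum_{n+ki=m}(-1)^i h_n\,\mathbf{f}_k(e_i)=\sum_{i\in\mathbb{N}}(-1)^i h_{m-ki}\,\mathbf{f}_k(e_i)$, where the sum is finite since $h_{m-ki}=0$ once $ki>m$. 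This yields the claimed formula.

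The main obstacle is purely one of rigor about formal power series: I must justify that the rearrangements above are legitimate in $\mathbf{k}[[x_1,x_2,x_3,\ldots]]$. Specifically, the infinite product $\prod_i(1-x_i^k)$ and the infinite sum $\sum_i(-1)^i\mathbf{f}_k(e_i)$ must be shown to be summable/multipliable (each monomial appears in only finitely many factors/terms), and likewise for $\prod_i\frac{1}{1-x_i}=\sum_n h_n$; then the product of the two convergent families is again well-defined and its expansion may be regrouped by total degree. All of this is standard (it is the same bookkeeping that underlies the classical identity $\left(\sum_n h_n\right)\left(\sum_i(-1)^i e_i\right)=1$), so I would either cite \cite[\S 2.1, \S 2.2]{GriRei} for the relevant summability framework or spell it out briefly: the product formula for $G(k)$ is already established in Proposition~\ref{prop.G.basics}~\textbf{(b)}, and the factorization $x_i^0+\cdots+x_i^{k-1}=(1-x_i^k)/(1-x_i)$ together with applying $\mathbf{f}_k$ to the known identity $\prod_i(1-x_i)=\sum_i(-1)^i e_i$ does the rest. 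Taking the degree-$m$ component is then immediate.
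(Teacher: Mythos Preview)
Your proof is correct and follows essentially the same factorization idea as the paper: write $G(k)=\prod_i(1-x_i^k)\cdot\prod_i(1-x_i)^{-1}$, identify the two factors with $\sum_i(-1)^i\mathbf{f}_k(e_i)$ and $\sum_n h_n$, then extract the degree-$m$ piece. The only difference is packaging: the paper introduces an auxiliary variable $t$ and works in $\bigl(\mathbf{k}[[x_1,x_2,\ldots]]\bigr)[[t]]$, comparing coefficients of $t^m$ rather than taking homogeneous components directly. This sidesteps the very rigor issue you flag, since powers of $t$ track degree explicitly and the relevant sums are then visibly convergent power series in $t$. Your approach is fine, but note that $\mathbf{f}_k$ as defined in the paper has domain $\Lambda$, not all of $\mathbf{k}[[x_1,x_2,\ldots]]$, so when you apply it to $\prod_i(1-x_i)$ you are tacitly using its extension to formal power series (which the paper introduces separately as $\mathbf{F}_k$); you should make that extension explicit.
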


Theorem \ref{thm.G.frob} will be proved in Subsection
\ref{subsect.proofs.G.frob} below.

\subsection{\label{subsect.thms.genset}The Petrie functions as polynomial
generators of $\Lambda$}

We now claim the following:

\begin{theorem}
\label{thm.Gkm-genset}Fix a positive integer $k$. Assume that $1-k$ is
invertible in $\mathbf{k}$.

Then, the family $\left(  G\left(  k,m\right)  \right)  _{m\geq1}=\left(
G\left(  k,1\right)  ,G\left(  k,2\right)  ,G\left(  k,3\right)
,\ldots\right)  $ is an algebraically independent generating set of the
commutative $\mathbf{k}$-algebra $\Lambda$. (In other words, the canonical
$\mathbf{k}$-algebra homomorphism
\begin{align*}
\mathbf{k}\left[  u_{1},u_{2},u_{3},\ldots\right]   &  \rightarrow\Lambda,\\
u_{m}  &  \mapsto G\left(  k,m\right)
\end{align*}
is an isomorphism.)
\end{theorem}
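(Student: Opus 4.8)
The plan is to exploit that $\Lambda=\mathbf{k}\left[h_{1},h_{2},h_{3},\ldots\right]$ is a polynomial ring (recalled in Section~\ref{sect.not}) and to show that the Petrie functions are obtained from the $h_{m}$ by a ``triangular change of generators with invertible diagonal''. Concretely, I claim that for each $m\geq 1$,
\[
G\left(k,m\right)=c_{m}\,h_{m}+\left(\text{a polynomial in }h_{1},h_{2},\ldots,h_{m-1}\right),\qquad\text{where }c_{m}=\begin{cases}1-k,&\text{if }k\mid m;\\1,&\text{if }k\nmid m.\end{cases}
\]
Granting this, the theorem follows by a standard argument: in either case $c_{m}$ is a unit of $\mathbf{k}$ ($1$ always, $1-k$ by hypothesis), so we may solve for $h_{m}$, and an easy induction on $m$ gives $\mathbf{k}\left[G\left(k,1\right),\ldots,G\left(k,m\right)\right]=\mathbf{k}\left[h_{1},\ldots,h_{m}\right]$ for all $m$; taking the union over $m$ shows $G\left(k,1\right),G\left(k,2\right),\ldots$ generate $\Lambda$, i.e. the map $\phi\colon\mathbf{k}\left[u_{1},u_{2},\ldots\right]\to\Lambda,\ u_{m}\mapsto G\left(k,m\right)$ is surjective. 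Since $\phi$ is a graded surjection whose $n$-th graded component is a surjection of free $\mathbf{k}$-modules of the same finite rank $\left\vert\operatorname{Par}_{n}\right\vert$ (the rank count holds because $\deg u_{m}=m$ makes degree-$n$ monomials in the $u_{i}$ correspond to partitions of $n$, exactly as for the $h_{i}$), each such component is an isomorphism, so $\phi$ is bijective.

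It remains to establish the displayed formula. The $\mathbf{k}$-module $\Lambda_{m}$ (the $m$-th homogeneous component) has basis $\left(h_{\lambda}\right)_{\lambda\in\operatorname{Par}_{m}}$ with $h_{\lambda}:=h_{\lambda_{1}}h_{\lambda_{2}}\cdots$; of these, only $h_{\left(m\right)}=h_{m}$ is indexed by a one-part partition, and all others lie in $\mathbf{k}\left[h_{1},\ldots,h_{m-1}\right]$ and span the submodule of products of two positive-degree symmetric functions. So $c_{m}$ in the displayed formula is forced to be the coefficient of $h_{m}$ in the $h$-expansion of $G\left(k,m\right)$, and I extract it via the Hall inner product: since $p_{m}=m_{\left(m\right)}$ and the families $\left(h_{\lambda}\right)$ and $\left(m_{\lambda}\right)$ are dual with respect to $\left(\cdot,\cdot\right)$ (see \cite{GriRei}), the coefficient of $h_{m}$ in any $f\in\Lambda_{m}$ equals $\left(f,p_{m}\right)$; in particular $\left(f,p_{m}\right)=0$ whenever $f$ is a product of positive-degree symmetric functions. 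Now pair Theorem~\ref{thm.G.frob}, $G\left(k,m\right)=\sum_{i\in\mathbb{N}}\left(-1\right)^{i}h_{m-ki}\cdot\mathbf{f}_{k}\left(e_{i}\right)$, with $p_{m}$: the $i=0$ term contributes $\left(h_{m},p_{m}\right)=1$ (Proposition~\ref{prop.hjpj}); every term with $i\geq 1$ and $m-ki\geq 1$ is a product of two positive-degree symmetric functions and contributes $0$; terms with $m-ki<0$ vanish; and, when $k\mid m$, the term $i=m/k$ contributes $\left(-1\right)^{m/k}\left(\mathbf{f}_{k}\left(e_{m/k}\right),p_{m}\right)$. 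Finally $\left(\mathbf{f}_{k}\left(e_{j}\right),p_{kj}\right)=\left(-1\right)^{j-1}k$ --- this can be seen, e.g., by passing the Frobenius endomorphism to the other side of the inner product (its adjoint sends $p_{kj}\mapsto kp_{j}$; see \cite{GriRei}) and invoking Proposition~\ref{prop.ejpj}, or directly from the power-sum expansion of $e_{j}$. Combining, $c_{m}=1+[k\mid m]\left(-1\right)^{m/k}\left(-1\right)^{m/k-1}k=1-[k\mid m]k$, as claimed.

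I expect the only non-formal step to be this last one --- pinning down the contribution of $\mathbf{f}_{k}\left(e_{m/k}\right)$ modulo products of lower-degree symmetric functions --- and this is precisely where the hypothesis on $1-k$ enters. It is also where it must enter: by Proposition~\ref{prop.G.basics}\textbf{(f)}, $G\left(k,k\right)=h_{k}-p_{k}\equiv\left(1-k\right)h_{k}$ modulo such products, so if $1-k$ were not a unit, then $G\left(k,k\right)$ together with $G\left(k,1\right),\ldots,G\left(k,k-1\right)$ (which equal $h_{1},\ldots,h_{k-1}$ by Proposition~\ref{prop.G.basics}\textbf{(d)}) would not recover $h_{k}$, and $\phi$ would fail to be surjective. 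Everything else is standard and can be cited from \cite{GriRei}.
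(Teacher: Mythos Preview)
Your proof is correct and follows essentially the same path as the paper. The paper packages the ``triangularity plus rank'' argument into a black-box criterion (Proposition~\ref{prop.genset-crit}, cited from \cite[Exercise~2.5.24]{GriRei}): if each $v_m\in\Lambda$ is homogeneous of degree $m$ and $(p_m,v_m)$ is invertible, then $(v_m)_{m\geq 1}$ is an algebraically independent generating set. You instead prove this criterion inline. The core computation --- that $(p_m,G(k,m))=1-[k\mid m]k$ --- is carried out identically in both: expand $G(k,m)$ via Theorem~\ref{thm.G.frob}, kill the cross terms using that $p_m$ is orthogonal to products of positive-degree elements (the paper's Lemma~\ref{lem.hall-pmprod}, your duality observation), and evaluate the surviving $\mathbf{f}_k(e_{m/k})$ term via the $\mathbf{f}_k$/$\mathbf{v}_k$ adjunction (the paper's Lemma~\ref{lem.hall-hfep}).
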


We shall prove Theorem \ref{thm.Gkm-genset} in Subsection
\ref{subsect.proofs.Gkm-genset}. The proof uses the following two formulas for
Hall inner products:\footnote{Here, we are again using the Iverson bracket
notation.}

\begin{lemma}
\label{lem.hall-hfep}Let $k$ and $m$ be positive integers. Let $j\in
\mathbb{N}$. Then, $\left\langle p_{m},\mathbf{f}_{k}\left(  e_{j}\right)
\right\rangle =\left(  -1\right)  ^{j-1}\left[  m=kj\right]  k$.
\end{lemma}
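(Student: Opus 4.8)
The plan is to compute the Hall inner product $\left( p_{m}, \mathbf{f}_{k}\left( e_{j}\right) \right)$ by exploiting the fact that $\mathbf{f}_{k}$ is a Hopf algebra endomorphism of $\Lambda$, and in particular its adjoint with respect to the Hall inner product is explicitly known. Recall from \cite[Exercise 2.9.9]{GriRei} that the Frobenius endomorphism $\mathbf{f}_{k}$ has a well-understood effect on the power-sum basis: namely $\mathbf{f}_{k}\left( p_{n}\right) = p_{kn}$ for every positive integer $n$, since $p_{n}\left( x_{1}^{k}, x_{2}^{k}, \ldots\right) = \sum_{i} \left( x_{i}^{k}\right)^{n} = \sum_{i} x_{i}^{kn} = p_{kn}$. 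This is the key structural input.

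First I would reduce to the power-sum side. Write $e_{j}$ in the power-sum basis — or, more cleanly, avoid doing that explicitly and instead use the self-adjointness / adjointness properties of the Hall form. The cleanest route: the operator $\mathbf{f}_{k}$ on $\Lambda$ has a Hall-adjoint $\mathbf{f}_{k}^{\ast}$, and I would identify $\mathbf{f}_{k}^{\ast}$ as the map that picks out "degrees divisible by $k$" appropriately scaled. Concretely, using the power-sum basis $\left( p_{\lambda}\right)_{\lambda \in \operatorname*{Par}}$, which satisfies $\left( p_{\lambda}, p_{\mu}\right) = \delta_{\lambda, \mu} z_{\lambda}$, one computes $\left( \mathbf{f}_{k}\left( p_{\mu}\right), p_{\lambda}\right) = \left( p_{k\mu}, p_{\lambda}\right)$, which is nonzero only when $\lambda = k\mu$ (meaning every part of $\lambda$ is $k$ times a part of $\mu$), in which case it equals $z_{k\mu}$. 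So $\mathbf{f}_{k}^{\ast}\left( p_{\lambda}\right) = \dfrac{z_{k\mu}}{z_{\mu}} p_{\mu}$ if $\lambda = k\mu$ for some partition $\mu$, and $0$ otherwise. Applying this with $\lambda = \left( m\right)$ a single part: $\left( m\right) = k\mu$ forces $\mu = \left( m/k\right)$ and requires $k \mid m$; and then $z_{\left( m\right)} = m$ while $z_{\left( m/k\right)} = m/k$, so $\mathbf{f}_{k}^{\ast}\left( p_{m}\right) = \dfrac{m}{m/k}\, p_{m/k} = k\, p_{m/k}$ when $k \mid m$, and $0$ otherwise.

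Then I would combine: $\left( p_{m}, \mathbf{f}_{k}\left( e_{j}\right)\right) = \left( \mathbf{f}_{k}^{\ast}\left( p_{m}\right), e_{j}\right)$. If $k \nmid m$ this is $0$; if $k \mid m$ it equals $k \left( p_{m/k}, e_{j}\right)$. By the graded property of the Hall form \eqref{eq.Hall.graded}, $\left( p_{m/k}, e_{j}\right) = 0$ unless $m/k = j$, i.e.\ $m = kj$; and when $m = kj$, Proposition~\ref{prop.ejpj} gives $\left( e_{j}, p_{j}\right) = \left( -1\right)^{j-1}$. Assembling the cases: $\left( p_{m}, \mathbf{f}_{k}\left( e_{j}\right)\right) = \left( -1\right)^{j-1} \left[ m = kj\right] k$, which is exactly the claim. (One should note the edge case $j = 0$: then $e_{0} = 1$ is homogeneous of degree $0$, $\mathbf{f}_{k}\left( e_{0}\right) = 1$, and $\left( p_{m}, 1\right) = 0$ since $m \geq 1$, consistent with $\left[ m = 0\right] = 0$.)

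The main obstacle is mostly bookkeeping: I must make sure the adjoint $\mathbf{f}_{k}^{\ast}$ is computed correctly against the $z_{\lambda}$-normalized power-sum basis, getting the scalar $z_{k\mu}/z_{\mu}$ right for a one-part partition, and handling the possibility that $\mathbf{k}$ is not a $\mathbb{Q}$-algebra — but since the final identity has integer coefficients and the Hall form is $\mathbf{k}$-bilinear defined via the Schur (hence integral) basis, it suffices to verify it over $\mathbb{Z}$ or even just use the power-sum computation over $\mathbb{Q}$ and then invoke that both sides are integers and the identity is preserved under the base-change $\mathbb{Z} \to \mathbf{k}$. An alternative, fully base-ring-agnostic route that sidesteps $z_{\lambda}$ entirely is: expand $e_{j}$ via the Jacobi–Trudi / Newton identities is messier, so instead use that $\mathbf{f}_{k}$ is a Hopf endomorphism and that $p_{m}$ is primitive, giving $\left( p_{m}, \mathbf{f}_{k}\left( e_{j}\right)\right) = \left( p_{m}, \mathbf{f}_{k}\left( e_{j}\right)\right)$; but the power-sum adjoint computation above is the most transparent and I would present that, with the $\mathbb{Z}$-to-$\mathbf{k}$ remark to cover general base rings.
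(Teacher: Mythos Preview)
Your proof is correct and follows essentially the same approach as the paper: both pass to the Hall-adjoint of $\mathbf{f}_{k}$ applied to $p_{m}$ (obtaining $k\,p_{m/k}$ when $k\mid m$ and $0$ otherwise), then pair with $e_{j}$ using the graded property \eqref{eq.Hall.graded} and Proposition~\ref{prop.ejpj}. The only difference is that the paper names this adjoint as the Verschiebung endomorphism $\mathbf{v}_{k}$ and cites Proposition~\ref{prop.f-v-dual} and \eqref{eq.vnpm} as black boxes, whereas you rederive the action on $p_{m}$ directly from the power-sum orthogonality $(p_{\lambda},p_{\mu})=\delta_{\lambda,\mu}z_{\lambda}$; this buys the paper a proof that works uniformly over any base ring $\mathbf{k}$ without the $\mathbb{Q}$-to-$\mathbb{Z}$-to-$\mathbf{k}$ base-change remark you need.
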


\begin{proposition}
\label{prop.hall-pmGkm}Let $k$ and $m$ be positive integers. Then,
$\left\langle p_{m},G\left(  k,m\right)  \right\rangle =1-\left[  k\mid
m\right]  k$.
\end{proposition}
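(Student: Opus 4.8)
The plan is to extract $\left(  p_{m},G\left(  k,m\right)  \right)$ from the Frobenius-type formula of Theorem \ref{thm.G.frob},
\[
G\left(  k,m\right)  =\sum_{i\in\mathbb{N}}\left(  -1\right)  ^{i}h_{m-ki}\cdot\mathbf{f}_{k}\left(  e_{i}\right)  ,
\]
by expanding the Hall inner product linearly, $\left(  p_{m},G\left(  k,m\right)  \right)  =\sum_{i\in\mathbb{N}}\left(  -1\right)  ^{i}\left(  p_{m},h_{m-ki}\cdot\mathbf{f}_{k}\left(  e_{i}\right)  \right)$ (a finite sum, since $h_{m-ki}=0$ once $ki>m$), and evaluating each summand separately. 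The key point is that $p_{m}$ is primitive: from the definition of $\Delta$ in Subsection \ref{subsect.thms.coprod}, together with the identity $p_{m}\left(  \mathbf{x},\mathbf{y}\right)  =p_{m}\left(  \mathbf{x}\right)  +p_{m}\left(  \mathbf{y}\right)$ (which is immediate from $p_{m}=x_{1}^{m}+x_{2}^{m}+\cdots$), one reads off $\Delta\left(  p_{m}\right)  =p_{m}\otimes1+1\otimes p_{m}$.

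Next I would invoke the standard fact (see \cite{GriRei}) that multiplication and comultiplication on $\Lambda$ are mutually adjoint for the Hall inner product, i.e., $\left(  ab,c\right)  =\left(  a\otimes b,\Delta\left(  c\right)  \right)$ for all $a,b,c\in\Lambda$, where $\Lambda\otimes\Lambda$ is equipped with the inner product satisfying $\left(  a\otimes b,c\otimes d\right)  =\left(  a,c\right)  \left(  b,d\right)$. Specializing $c=p_{m}$ and using primitivity yields
\[
\left(  p_{m},fg\right)  =\left(  p_{m},f\right)  \left(  1,g\right)  +\left(  1,f\right)  \left(  p_{m},g\right)  \qquad\text{for all }f,g\in\Lambda.
\]
Since $\left(  1,h\right)  =0$ whenever $h$ is homogeneous of strictly positive degree (by (\ref{eq.Hall.graded}), as $1$ is homogeneous of degree $0$), this forces $\left(  p_{m},fg\right)  =0$ as soon as $f$ and $g$ are both homogeneous of strictly positive degree.

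Armed with this, the sum collapses. The $i=0$ summand is $\left(  p_{m},h_{m}\cdot\mathbf{f}_{k}\left(  e_{0}\right)  \right)  =\left(  p_{m},h_{m}\right)  =1$ by Proposition \ref{prop.hjpj} (here $e_{0}=1$ and $\mathbf{f}_{k}$ is an algebra map, so $\mathbf{f}_{k}\left(  e_{0}\right)  =1$). For $i\geq1$ with $ki>m$ the factor $h_{m-ki}$ vanishes. For $i\geq1$ with $ki<m$, both $h_{m-ki}$ (homogeneous of degree $m-ki>0$) and $\mathbf{f}_{k}\left(  e_{i}\right)$ (homogeneous of degree $ki>0$) have positive degree, so the summand is $0$ by the previous paragraph. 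The one remaining case is $i\geq1$ with $ki=m$, which is possible only when $k\mid m$, in which case $i=m/k$ and $h_{m-ki}=h_{0}=1$, so the summand equals $\left(  -1\right)  ^{m/k}\left(  p_{m},\mathbf{f}_{k}\left(  e_{m/k}\right)  \right)  =\left(  -1\right)  ^{m/k}\left(  -1\right)  ^{m/k-1}k=-k$ by Lemma \ref{lem.hall-hfep}. Adding everything up gives $\left(  p_{m},G\left(  k,m\right)  \right)  =1-\left[  k\mid m\right]  k$, which is the claim. (Sanity checks: for $k=2$ one has $G\left(  2,m\right)  =e_{m}$, and the formula reads $\left(  e_{m},p_{m}\right)  =1-\left[  2\mid m\right]  2=\left(  -1\right)  ^{m-1}$, matching Proposition \ref{prop.ejpj}; and for $m=k$ it reads $\left(  p_{k},h_{k}-p_{k}\right)  =1-k$, matching Proposition \ref{prop.G.basics} \textbf{(f)}.)

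The only genuinely non-elementary input is the adjointness of product and coproduct on $\Lambda$, and the case analysis above is pure bookkeeping with signs and degrees; I do not anticipate a real obstacle. Should one prefer to avoid Hopf-algebraic input, the needed vanishing $\left(  p_{m},fg\right)  =0$ for positive-degree $f,g$ can instead be obtained from the duality $\left(  m_{\lambda},h_{\nu}\right)  =\delta_{\lambda,\nu}$ (with $h_{\nu}=h_{\nu_{1}}h_{\nu_{2}}\cdots$): expanding $f$ and $g$ in the $h$-basis writes $fg$ as a $\mathbf{k}$-linear combination of products $h_{\lambda}h_{\nu}=h_{\lambda\cup\nu}$ with $\ell\left(  \lambda\cup\nu\right)  \geq2$, none of which pairs nontrivially with $p_{m}=m_{\left(  m\right)  }$.
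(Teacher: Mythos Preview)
Your proof is correct and follows essentially the same route as the paper: both start from Theorem \ref{thm.G.frob}, expand the Hall inner product linearly, use Proposition \ref{prop.hjpj} for the $i=0$ term and Lemma \ref{lem.hall-hfep} for the $i=m/k$ term, and kill the remaining terms via the vanishing $\left(p_{m},fg\right)=0$ for $f,g$ homogeneous of positive degree. The paper isolates that vanishing as Lemma \ref{lem.hall-pmprod} and proves it primarily via the duality $\left(h_{\lambda},m_{\mu}\right)=\delta_{\lambda,\mu}$ (precisely the alternative you sketch at the end), while your main argument uses primitivity of $p_{m}$ together with the adjointness of product and coproduct; the paper also records this Hopf-algebraic proof in its detailed version, so the two arguments coincide even in this detail.
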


Both of these formulas will be proved in Subsection
\ref{subsect.proofs.Gkm-genset} as well.

\subsection{The Verschiebung endomorphisms}

Now we recall another definition (\cite[Exercise 2.9.10]{GriRei}):

\begin{definition}
\label{def.vn} Let $n\in\left\{  1,2,3,\ldots\right\}  $. We define a
$\mathbf{k}$-algebra homomorphism $\mathbf{v}_{n}:\Lambda\rightarrow\Lambda$
by%
\[
\left(  \mathbf{v}_{n}\left(  h_{m}\right)  =%
\begin{cases}
h_{m/n}, & \text{if }n\mid m;\\
0, & \text{if }n\nmid m
\end{cases}
\ \ \ \ \ \ \ \ \ \ \text{for each }m>0\right)  .
\]
(This is well-defined, since the sequence $\left(  h_{1},h_{2},h_{3}%
,\ldots\right)  $ is an algebraically independent generating set of the
commutative $\mathbf{k}$-algebra $\Lambda$.)

This map $\mathbf{v}_{n}$ is called the $n$\emph{-th Verschiebung
endomorphism} of $\Lambda$.
\end{definition}

Again, it is known (\cite[Exercise 2.9.10(e)]{GriRei}) that this map
$\mathbf{v}_{n}:\Lambda\rightarrow\Lambda$ is a Hopf algebra endomorphism of
$\Lambda$. Moreover, the following holds (\cite[Exercise 2.9.10(f)]{GriRei}):

\begin{proposition}
\label{prop.f-v-dual}Let $n\in\left\{  1,2,3,\ldots\right\}  $. Then, the maps
$\mathbf{f}_{n}:\Lambda\rightarrow\Lambda$ and $\mathbf{v}_{n}:\Lambda
\rightarrow\Lambda$ are adjoint with respect to the Hall inner product on
$\Lambda$. That is, any $a\in\Lambda$ and $b\in\Lambda$ satisfy%
\[
\left\langle a,\mathbf{f}_{n}\left(  b\right)  \right\rangle =\left\langle
\mathbf{v}_{n}\left(  a\right)  ,b\right\rangle .
\]

\end{proposition}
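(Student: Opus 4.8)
The plan is to reduce the identity to a statement about two dual bases of $\Lambda$. Both sides of the asserted equality $\left( a,\mathbf{f}_{n}\left( b\right) \right) =\left( \mathbf{v}_{n}\left( a\right) ,b\right) $ are $\mathbf{k}$-bilinear in $\left( a,b\right) \in\Lambda\times\Lambda$, so it suffices to verify it when $a$ and $b$ range over (possibly different) $\mathbf{k}$-module bases of $\Lambda$. I would take $a=h_{\lambda}$ and $b=m_{\mu}$, where for a partition $\lambda=\left( \lambda_{1},\lambda_{2},\lambda_{3},\ldots\right) $ we write $h_{\lambda}:=h_{\lambda_{1}}h_{\lambda_{2}}h_{\lambda_{3}}\cdots$ (a finite product, since $h_{0}=1$). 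Both $\left( h_{\lambda}\right) _{\lambda\in\operatorname*{Par}}$ and $\left( m_{\mu}\right) _{\mu\in\operatorname*{Par}}$ are $\mathbf{k}$-module bases of $\Lambda$, and it is a well-known fact (see, e.g., \cite{GriRei}) that they are dual with respect to the Hall inner product, i.e., $\left( h_{\lambda},m_{\mu}\right) =\delta_{\lambda,\mu}$ for all $\lambda,\mu\in\operatorname*{Par}$. Thus it remains to prove
\[
\left( h_{\lambda},\mathbf{f}_{n}\left( m_{\mu}\right) \right) =\left( \mathbf{v}_{n}\left( h_{\lambda}\right) ,m_{\mu}\right) \qquad\text{for all }\lambda,\mu\in\operatorname*{Par}.
\]

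Next I would compute the symmetric functions $\mathbf{f}_{n}\left( m_{\mu}\right) $ and $\mathbf{v}_{n}\left( h_{\lambda}\right) $ directly from Definitions \ref{def.fn} and \ref{def.vn}. Since $\mathbf{f}_{n}$ substitutes $x_{i}\mapsto x_{i}^{n}$ and $m_{\mu}$ is the sum of the monomials $\mathbf{x}^{\alpha}$ over all rearrangements $\alpha$ of $\mu$, applying $\mathbf{f}_{n}$ turns each $\mathbf{x}^{\alpha}$ into $\mathbf{x}^{n\alpha}$ (the monomial with exponents $n\alpha_{i}$), and the sequences $n\alpha$ are precisely the rearrangements of $n\mu:=\left( n\mu_{1},n\mu_{2},n\mu_{3},\ldots\right) \in\operatorname*{Par}$; hence $\mathbf{f}_{n}\left( m_{\mu}\right) =m_{n\mu}$. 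On the other side, $\mathbf{v}_{n}$ is a $\mathbf{k}$-algebra homomorphism, so $\mathbf{v}_{n}\left( h_{\lambda}\right) =\mathbf{v}_{n}\left( h_{\lambda_{1}}\right) \mathbf{v}_{n}\left( h_{\lambda_{2}}\right) \cdots$, and the defining formula for $\mathbf{v}_{n}$ on the $h_{m}$ shows this equals $h_{\lambda/n}$ if $n\mid\lambda_{i}$ for every $i$ (where $\lambda/n:=\left( \lambda_{1}/n,\lambda_{2}/n,\lambda_{3}/n,\ldots\right) $), and equals $0$ otherwise; compactly, $\mathbf{v}_{n}\left( h_{\lambda}\right) =\left[ n\mid\lambda_{i}\text{ for all }i\right] h_{\lambda/n}$.

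Substituting these and using $\left( h_{\lambda},m_{\mu}\right) =\delta_{\lambda,\mu}$, the left-hand side of the displayed identity becomes $\left( h_{\lambda},m_{n\mu}\right) =\delta_{\lambda,n\mu}$, while the right-hand side becomes $\left[ n\mid\lambda_{i}\text{ for all }i\right] \left( h_{\lambda/n},m_{\mu}\right) =\left[ n\mid\lambda_{i}\text{ for all }i\right] \delta_{\lambda/n,\mu}$. It then remains only to observe that these two expressions coincide: each equals $1$ exactly when $n$ divides every $\lambda_{i}$ and $\lambda_{i}/n=\mu_{i}$ for all $i$, that is, exactly when $\lambda=n\mu$. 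This completes the proof. I do not expect any genuine obstacle: the argument is a routine bilinearity-and-dual-bases verification, and the only points requiring mild care are the bookkeeping around the divisibility condition $n\mid\lambda_{i}$ and the harmless conventions $h_{0}=1$ and $0/n=0$. (An alternative proof uses the power-sum functions and the formulas $\mathbf{f}_{n}\left( p_{m}\right) =p_{nm}$, $\mathbf{v}_{n}\left( p_{m}\right) =\left[ n\mid m\right] np_{m/n}$ together with a standard evaluation of $\left( p_{\lambda},p_{\mu}\right) $; but the $p_{\lambda}$ form a basis only once $\mathbb{Q}$ is adjoined to the base ring, so that route needs an extra base-change argument, making the one above preferable for a general $\mathbf{k}$.)
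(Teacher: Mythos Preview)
Your proof is correct. The paper does not give its own proof of this proposition; it merely cites \cite[Exercise 2.9.10(f)]{GriRei} as a reference. Your argument via the dual bases $(h_\lambda)$ and $(m_\mu)$, together with the direct computations $\mathbf{f}_n(m_\mu)=m_{n\mu}$ and $\mathbf{v}_n(h_\lambda)=[n\mid\lambda_i\text{ for all }i]\,h_{\lambda/n}$, is a clean self-contained verification that works over an arbitrary base ring $\mathbf{k}$.
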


Furthermore, any positive integers $n$ and $m$ satisfy%
\begin{equation}
\mathbf{v}_{n}\left(  p_{m}\right)  =%
\begin{cases}
np_{m/n}, & \text{if }n\mid m;\\
0, & \text{if }n\nmid m
\end{cases}
\ \ \ . \label{eq.vnpm}%
\end{equation}
(This is \cite[Exercise 2.9.10(a)]{GriRei}.)

\subsection{The Hopf endomorphisms $U_{k}$ and $V_{k}$}

In this final subsection, we shall show another way to obtain the Petrie
symmetric functions $G\left(  k,m\right)  $ using the machinery of Hopf
algebras. We refer, e.g., to \cite[Chapters 1 and 2]{GriRei} for everything we
will use about Hopf algebras.

\begin{convention}
As already mentioned, $\Lambda$ is a connected graded Hopf algebra. We let $S$
denote its antipode.
\end{convention}

\begin{definition}
\label{def.convolution}If $C$ is a $\mathbf{k}$-coalgebra and $A$ is a
$\mathbf{k}$-algebra, and if $f,g:C\rightarrow A$ are two $\mathbf{k}$-linear
maps, then the \emph{convolution} $f\star g$ of $f$ and $g$ is defined to be
the $\mathbf{k}$-linear map $m_{A}\circ\left(  f\otimes g\right)  \circ
\Delta_{C}:C\rightarrow A$, where $\Delta_{C}:C\rightarrow C\otimes C$ is the
comultiplication of the $\mathbf{k}$-coalgebra $C$, and where $m_{A}:A\otimes
A\rightarrow A$ is the $\mathbf{k}$-linear map sending each pure tensor
$a\otimes b\in A\otimes A$ to $ab\in A$.
\end{definition}

We also recall Definition \ref{def.vn} and Definition \ref{def.fn}. We now
claim the following.

\begin{theorem}
\label{thm.Uk.main}Fix a positive integer $k$. Let $U_{k}$ be the map
$\mathbf{f}_{k}\circ S\circ\mathbf{v}_{k}:\Lambda\rightarrow\Lambda$. Let
$V_{k}$ be the map $\operatorname*{id}\nolimits_{\Lambda}\star U_{k}%
:\Lambda\rightarrow\Lambda$. (This is well-defined by Definition
\ref{def.convolution}, since $\Lambda$ is both a $\mathbf{k}$-coalgebra and a
$\mathbf{k}$-algebra.) Then:

\textbf{(a)} The map $U_{k}$ is a $\mathbf{k}$-Hopf algebra homomorphism.

\textbf{(b)} The map $V_{k}$ is a $\mathbf{k}$-Hopf algebra homomorphism.

\textbf{(c)} We have $V_{k}\left(  h_{m}\right)  =G\left(  k,m\right)  $ for
each $m\in\mathbb{N}$.

\textbf{(d)} We have $V_{k}\left(  p_{n}\right)  =\left(  1-\left[  k\mid
n\right]  k\right)  p_{n}$ for each positive integer $n$.
\end{theorem}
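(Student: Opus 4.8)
The plan is to prove parts \textbf{(a)}--\textbf{(d)} roughly in the given order, but to organize the computation around power sums, since $p_1, p_2, p_3, \ldots$ are algebraically independent generators of $\Lambda$ (over $\mathbb{Q}$, or rather after base change; but Hopf-algebra homomorphism claims can be checked on any generating set since $\Lambda$ is a polynomial ring, so it suffices to exhibit the right behaviour on $h_m$ and on $p_n$). For part \textbf{(a)}: each of $\mathbf{f}_k$, $S$, and $\mathbf{v}_k$ is a Hopf algebra endomorphism of $\Lambda$ (the first by the remark after Definition \ref{def.fn}, the third by the remark after Definition \ref{def.vn}, and the antipode $S$ of any commutative Hopf algebra is a Hopf algebra endomorphism), and a composite of Hopf algebra homomorphisms is one; so $U_k$ is a $\mathbf{k}$-Hopf algebra homomorphism. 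For part \textbf{(b)}: the convolution $\operatorname{id}_\Lambda \star U_k$ of two Hopf algebra endomorphisms of a commutative-and-cocommutative Hopf algebra is again a Hopf algebra endomorphism — this is a standard fact (the set of Hopf algebra endomorphisms of a commutative cocommutative Hopf algebra is closed under convolution), but to be safe I would verify directly that $V_k$ is both an algebra map and a coalgebra map using cocommutativity and the fact that $\Delta$ is an algebra map.

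For part \textbf{(d)}, which I expect to be the computational heart, I would compute $V_k(p_n)$ explicitly. Since $p_n$ is primitive ($\Delta(p_n) = p_n \otimes 1 + 1 \otimes p_n$), the definition of convolution gives
\[
V_k(p_n) = (\operatorname{id}_\Lambda \star U_k)(p_n) = p_n \cdot U_k(1) + 1 \cdot U_k(p_n) = p_n + U_k(p_n),
\]
using $U_k(1) = 1$. Now $U_k(p_n) = \mathbf{f}_k(S(\mathbf{v}_k(p_n)))$. By \eqref{eq.vnpm}, $\mathbf{v}_k(p_n) = k\,p_{n/k}$ if $k \mid n$ and $0$ otherwise. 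On power sums the antipode acts by $S(p_j) = -p_j$ (since $p_j$ is primitive). And $\mathbf{f}_k(p_j) = p_{jk}$ (substituting $x_i^k$ for $x_i$ in $p_j = \sum x_i^j$ yields $\sum x_i^{jk} = p_{jk}$). Chaining these: if $k \mid n$, then $U_k(p_n) = \mathbf{f}_k(S(k\,p_{n/k})) = \mathbf{f}_k(-k\,p_{n/k}) = -k\,p_n$; if $k \nmid n$, then $U_k(p_n) = 0$. Hence $V_k(p_n) = p_n - [k \mid n]\,k\,p_n = (1 - [k \mid n]k)\,p_n$, proving \textbf{(d)}.

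For part \textbf{(c)}, I would compute $V_k(h_m)$ using the definition of convolution together with the coproduct formula $\Delta(h_m) = \sum_{i=0}^m h_i \otimes h_{m-i}$:
\[
V_k(h_m) = \sum_{i=0}^m h_i \cdot U_k(h_{m-i}).
\]
So I need $U_k(h_j) = \mathbf{f}_k(S(\mathbf{v}_k(h_j)))$. By definition $\mathbf{v}_k(h_j) = h_{j/k}$ if $k \mid j$ and $0$ otherwise. The antipode on $\Lambda$ satisfies $S(h_r) = (-1)^r e_r$ (a standard identity, following from $\sum_{r} (-1)^r e_r h_{n-r} = [n=0]$, i.e.\ $S$ exchanging the $h$ and $e$ sequences up to sign). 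And $\mathbf{f}_k(e_r) = \mathbf{f}_k(e_r)$ is exactly the object appearing in Theorem \ref{thm.G.frob}. Thus $U_k(h_j) = (-1)^{j/k}\,\mathbf{f}_k(e_{j/k})$ when $k \mid j$, and $0$ otherwise. Substituting $i = m - kt$ (so $j = m - i = kt$), we get
\[
V_k(h_m) = \sum_{t \geq 0} h_{m - kt} \cdot (-1)^t\,\mathbf{f}_k(e_t),
\]
which is precisely the right-hand side of Theorem \ref{thm.G.frob}, hence equals $G(k,m)$. This proves \textbf{(c)} and completes the proof.

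The main obstacle I anticipate is pinning down the three auxiliary facts about how $S$, $\mathbf{f}_k$ and $\mathbf{v}_k$ act — namely $S(p_j) = -p_j$, $S(h_r) = (-1)^r e_r$, $\mathbf{f}_k(p_j) = p_{jk}$ — cleanly and with correct references; each is classical (e.g.\ in \cite[Chapter 2]{GriRei}) but worth stating carefully. A secondary subtlety is the justification that checking the Hopf-morphism property on the algebra generators $h_m$ (equivalently $p_n$) suffices for \textbf{(a)} and \textbf{(b)}; here one uses that $\Lambda$ is the free commutative algebra on $(h_m)_{m \geq 1}$ together with the fact that being a coalgebra morphism is a linear (hence closed) condition, or simply invokes that convolution of Hopf endomorphisms of a commutative cocommutative Hopf algebra is a Hopf endomorphism.
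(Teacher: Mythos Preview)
Your proposal is correct and follows essentially the same approach as the paper's proof: part \textbf{(a)} by composing the three Hopf endomorphisms $\mathbf{f}_k$, $S$, $\mathbf{v}_k$; part \textbf{(b)} by the standard fact that convolution of Hopf endomorphisms of a commutative cocommutative Hopf algebra is again one; part \textbf{(d)} by using primitivity of $p_n$ together with the chain $\mathbf{v}_k(p_n) \to S \to \mathbf{f}_k$; and part \textbf{(c)} by expanding $\Delta(h_m)$, computing $U_k(h_j)$ via $\mathbf{v}_k(h_j)$ and $S(h_r) = (-1)^r e_r$, and matching the result to Theorem~\ref{thm.G.frob}. The auxiliary identities you flag ($S(p_j) = -p_j$, $S(h_r) = (-1)^r e_r$, $\mathbf{f}_k(p_j) = p_{jk}$) are exactly the ones the paper invokes, with references to \cite{GriRei}.
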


See Subsection \ref{subsect.proofs.Uk.main} for a proof of this theorem.

\begin{vershort}
Note that Theorem \ref{thm.Uk.main} can be used to give a second proof of
Theorem \ref{thm.DeltaGkm}; see \cite{verlong} for this.
\end{vershort}

\begin{verlong}
Using Theorem \ref{thm.Uk.main}, we can give a new proof for Theorem
\ref{thm.DeltaGkm}; see Subsection \ref{subsect.proofs.DeltaGkm.2nd} for this.
\end{verlong}

We also obtain the following corollary from Theorem \ref{thm.DeltaGkm}:

\begin{corollary}
\label{cor.p-via-G}Let $k$ and $n$ be two positive integers. Then, there
exists a polynomial $f\in\mathbf{k}\left[  x_{1},x_{2},x_{3},\ldots\right]  $
such that%
\begin{equation}
\left(  1-\left[  k\mid n\right]  k\right)  p_{n}=f\left(  G\left(
k,1\right)  ,G\left(  k,2\right)  ,G\left(  k,3\right)  ,\ldots\right)  .
\label{eq.cor.p-via-G.eq}%
\end{equation}

\end{corollary}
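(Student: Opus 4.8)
The plan is to deduce this directly from Theorem \ref{thm.Uk.main}. Recall from that theorem that $V_{k}:\Lambda\rightarrow\Lambda$ is a $\mathbf{k}$-algebra homomorphism satisfying $V_{k}\left(  h_{m}\right)  =G\left(  k,m\right)  $ for all $m\in\mathbb{N}$ (part \textbf{(c)}) and $V_{k}\left(  p_{n}\right)  =\left(  1-\left[  k\mid n\right]  k\right)  p_{n}$ (part \textbf{(d)}). These two facts are exactly the two ingredients the argument needs.

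First I would use the fact (recalled in Section \ref{sect.not}) that $\left(  h_{1},h_{2},h_{3},\ldots\right)  $ is an algebraically independent generating set of the commutative $\mathbf{k}$-algebra $\Lambda$. Hence there is a polynomial $f\in\mathbf{k}\left[  x_{1},x_{2},x_{3},\ldots\right]  $ with $p_{n}=f\left(  h_{1},h_{2},h_{3},\ldots\right)  $; indeed, Newton's identities express $p_{n}$ as an integer polynomial in $h_{1},\ldots,h_{n}$, so $f$ may even be taken to involve only its first $n$ variables. Next I would apply the algebra homomorphism $V_{k}$ to both sides of $p_{n}=f\left(  h_{1},h_{2},h_{3},\ldots\right)  $: since $V_{k}$ respects sums and products, $V_{k}\left(  f\left(  h_{1},h_{2},h_{3},\ldots\right)  \right)  =f\left(  V_{k}\left(  h_{1}\right)  ,V_{k}\left(  h_{2}\right)  ,V_{k}\left(  h_{3}\right)  ,\ldots\right)  $, which by Theorem \ref{thm.Uk.main} \textbf{(c)} equals $f\left(  G\left(  k,1\right)  ,G\left(  k,2\right)  ,G\left(  k,3\right)  ,\ldots\right)  $. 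On the other hand, $V_{k}\left(  p_{n}\right)  =\left(  1-\left[  k\mid n\right]  k\right)  p_{n}$ by Theorem \ref{thm.Uk.main} \textbf{(d)}. Equating the two evaluations of $V_{k}\left(  p_{n}\right)  $ then gives \eqref{eq.cor.p-via-G.eq} with this choice of $f$.

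There is no real obstacle here; the only points requiring a little care are that the substitution $f\left(  G\left(  k,1\right)  ,G\left(  k,2\right)  ,G\left(  k,3\right)  ,\ldots\right)  $ is well-defined precisely because $f$ is a genuine polynomial (hence involves only finitely many of its arguments), and that $V_{k}$ being a $\mathbf{k}$-algebra homomorphism is exactly what permits pulling it inside the polynomial $f$. (If one wishes to stay closer to the stated attribution to Theorem \ref{thm.DeltaGkm}, one could instead first re-prove Theorem \ref{thm.Uk.main} \textbf{(d)} using the coproduct formula, but the argument above via parts \textbf{(c)} and \textbf{(d)} of Theorem \ref{thm.Uk.main} is the most direct route.)
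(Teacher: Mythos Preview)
Your proof is correct and is essentially identical to the paper's own argument: write $p_{n}=f\left(h_{1},h_{2},h_{3},\ldots\right)$ using that the $h_{m}$ generate $\Lambda$, then apply the $\mathbf{k}$-algebra homomorphism $V_{k}$ and invoke Theorem~\ref{thm.Uk.main} \textbf{(b)}, \textbf{(c)}, \textbf{(d)}. Your aside about Newton's identities and the remark on the attribution to Theorem~\ref{thm.DeltaGkm} are extra but harmless.
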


This corollary will be proved in Subsection \ref{subsect.proofs.p-via-G}.

\section{\label{sect.proofs}Proofs}

\begin{verlong}
\silentsubsection{\label{subsect.proofs.Sinf}The infinite and finitary
symmetric groups}

We now approach the proofs of the many claims made above. First, let us
briefly discuss some technicalities in the definition of monomial symmetric
functions $m_{\lambda}$.

Let $\mathfrak{S}_{\infty}$ be the group of all permutations of the set
$\left\{  1,2,3,\ldots\right\}  $. (The group operation is given by composition.)

A permutation $\sigma\in\mathfrak{S}_{\infty}$ is said to be \emph{finitary}
if it leaves all but finitely many elements of $\left\{  1,2,3,\ldots\right\}
$ invariant (i.e., if all but finitely many $i\in\left\{  1,2,3,\ldots
\right\}  $ satisfy $\sigma\left(  i\right)  =i$).

The set of all finitary permutations $\sigma\in\mathfrak{S}_{\infty}$ is a
subgroup of $\mathfrak{S}_{\infty}$; it is called the \emph{finitary symmetric
group}, and will be denoted by $\mathfrak{S}_{\left(  \infty\right)  }$.

The group $\mathfrak{S}_{\infty}$ acts on the set $\operatorname*{WC}$ of all
weak compositions by permuting their entries:%
\begin{align*}
\sigma\cdot\left(  \alpha_{1},\alpha_{2},\alpha_{3},\ldots\right)   &
=\left(  \alpha_{\sigma^{-1}\left(  1\right)  },\alpha_{\sigma^{-1}\left(
2\right)  },\alpha_{\sigma^{-1}\left(  3\right)  },\ldots\right) \\
&  \ \ \ \ \ \ \ \ \ \ \text{for any }\left(  \alpha_{1},\alpha_{2},\alpha
_{3},\ldots\right)  \in\operatorname*{WC}\text{ and }\sigma\in\mathfrak{S}%
_{\infty}.
\end{align*}
Thus, the subgroup $\mathfrak{S}_{\left(  \infty\right)  }$ of $\mathfrak{S}%
_{\infty}$ acts on $\operatorname*{WC}$ as well. We now claim the following:

\begin{lemma}
\label{lem.proofs.Sinf-orbits}Let $\beta\in\operatorname*{WC}$. Then, the
orbit $\mathfrak{S}_{\left(  \infty\right)  }\beta$ of $\beta$ under the
action of $\mathfrak{S}_{\left(  \infty\right)  }$ is identical with the orbit
$\mathfrak{S}_{\infty}\beta$ of $\beta$ under the action of $\mathfrak{S}%
_{\infty}$.
\end{lemma}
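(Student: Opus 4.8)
The plan is to prove the two inclusions $\mathfrak{S}_{\left(  \infty\right)  }\beta\subseteq\mathfrak{S}_{\infty}\beta$ and $\mathfrak{S}_{\infty}\beta\subseteq\mathfrak{S}_{\left(  \infty\right)  }\beta$ separately. The first is immediate, since $\mathfrak{S}_{\left(  \infty\right)  }$ is a subgroup of $\mathfrak{S}_{\infty}$, so every element of the $\mathfrak{S}_{\left(  \infty\right)  }$-orbit of $\beta$ also lies in its $\mathfrak{S}_{\infty}$-orbit. The substance is the reverse inclusion: given $\gamma=\sigma\cdot\beta$ for some $\sigma\in\mathfrak{S}_{\infty}$, I must produce a \emph{finitary} permutation $\tau$ with $\tau\cdot\beta=\gamma$. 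The idea is that $\sigma$ only needs to act nontrivially on the finite support of $\beta$, so it can be freely redefined to be the identity far away.

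Concretely, let $A=\left\{  i\in\left\{  1,2,3,\ldots\right\}  \ \mid\ \beta_{i}\neq0\right\}  $; this is a finite set because $\beta$ is a weak composition. From $\gamma_{i}=\beta_{\sigma^{-1}\left(  i\right)  }$ one sees that the support $\left\{  i\ \mid\ \gamma_{i}\neq0\right\}  $ of $\gamma$ is exactly $B:=\sigma\left(  A\right)  $, which is again finite and satisfies $\left\vert B\right\vert =\left\vert A\right\vert  $. Hence $\left\vert A\setminus B\right\vert =\left\vert A\right\vert -\left\vert A\cap B\right\vert =\left\vert B\right\vert -\left\vert A\cap B\right\vert =\left\vert B\setminus A\right\vert  $, so we may fix a bijection $h:B\setminus A\to A\setminus B$. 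Now define $\tau:\left\{  1,2,3,\ldots\right\}  \to\left\{  1,2,3,\ldots\right\}  $ by $\tau\left(  j\right)  =\sigma\left(  j\right)  $ for $j\in A$, by $\tau\left(  j\right)  =h\left(  j\right)  $ for $j\in B\setminus A$, and by $\tau\left(  j\right)  =j$ for all remaining $j$ (that is, for $j\notin A\cup B$). A quick check shows that $\tau$ is a bijection: the three pieces $A$, $B\setminus A$, $\left\{  1,2,3,\ldots\right\}  \setminus\left(  A\cup B\right)  $ of the domain map onto $B$, $A\setminus B$, $\left\{  1,2,3,\ldots\right\}  \setminus\left(  A\cup B\right)  $ respectively, and each of these two triples partitions $\left\{  1,2,3,\ldots\right\}  $. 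Moreover $\tau$ fixes every element outside the finite set $A\cup B$, so $\tau\in\mathfrak{S}_{\left(  \infty\right)  }$.

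It remains to verify $\tau\cdot\beta=\gamma$, which I would do by checking the equivalent identity $\beta_{j}=\gamma_{\tau\left(  j\right)  }$ for every $j$ (equivalent because it says $\beta=\tau^{-1}\cdot\gamma$, and applying $\tau\cdot\left(  -\right)  $ turns this into $\tau\cdot\beta=\gamma$). For $j\in A$ we have $\tau\left(  j\right)  =\sigma\left(  j\right)  $, hence $\gamma_{\tau\left(  j\right)  }=\gamma_{\sigma\left(  j\right)  }=\beta_{\sigma^{-1}\left(  \sigma\left(  j\right)  \right)  }=\beta_{j}$. For $j\notin A$ we have $\beta_{j}=0$, and $\tau\left(  j\right)  \notin B$ (indeed $\tau\left(  j\right)  =h\left(  j\right)  \in A\setminus B$ if $j\in B\setminus A$, and $\tau\left(  j\right)  =j\notin B$ if $j\notin A\cup B$), so $\gamma_{\tau\left(  j\right)  }=0=\beta_{j}$. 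Thus $\tau\cdot\beta=\gamma$ with $\tau\in\mathfrak{S}_{\left(  \infty\right)  }$, so $\gamma\in\mathfrak{S}_{\left(  \infty\right)  }\beta$, completing the second inclusion.

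I do not expect a real obstacle here: the argument is pure bookkeeping built on the definition of the action $\sigma\cdot\left(  \alpha_{1},\alpha_{2},\ldots\right)  =\left(  \alpha_{\sigma^{-1}\left(  1\right)  },\alpha_{\sigma^{-1}\left(  2\right)  },\ldots\right)  $. The only point requiring a moment's care is checking that $\tau$ is genuinely a bijection, and this is exactly what the cardinality identity $\left\vert A\setminus B\right\vert =\left\vert B\setminus A\right\vert $ secures, since it is what guarantees the existence of the bijection $h$ used to patch $\tau$ together.
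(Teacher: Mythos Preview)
Your proof is correct and follows essentially the same approach as the paper's: both identify the (finite) supports of $\beta$ and $\gamma$, use the given permutation to map one support bijectively onto the other, patch the symmetric difference with an arbitrary bijection (your $h$, the paper's $\rho$), and extend by the identity to obtain a finitary permutation. The only differences are notational (you swap the roles of the letters $\sigma$ and $\tau$, and of $A,B$ versus $B,C$).
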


Our proof of Lemma \ref{lem.proofs.Sinf-orbits} will rely on a lemma about how
two bijections $\varphi:X\rightarrow X^{\prime}$ and $\psi:Y\rightarrow
Y^{\prime}$ can be combined (\textquotedblleft glued
together\textquotedblright) to a bijection $X\cup Y\rightarrow X^{\prime}\cup
Y^{\prime}$ as long as $X$ and $Y$ are disjoint and $X^{\prime}$ and
$Y^{\prime}$ are disjoint:

\begin{lemma}
\label{lem.proofs.Sinf-glue1}Let $X$, $Y$, $X^{\prime}$ and $Y^{\prime}$ be
four sets. Assume that $X$ and $Y$ are disjoint. Assume that $X^{\prime}$ and
$Y^{\prime}$ are disjoint. Let $\varphi:X\rightarrow X^{\prime}$ be a
bijection. Let $\psi:Y\rightarrow Y^{\prime}$ be a bijection. Then, the map%
\begin{align*}
X\cup Y  &  \rightarrow X^{\prime}\cup Y^{\prime},\\
z  &  \mapsto%
\begin{cases}
\varphi\left(  z\right)  , & \text{if }z\in X;\\
\psi\left(  z\right)  , & \text{if }z\in Y
\end{cases}
\end{align*}
is well-defined and is a bijection from $X\cup Y$ to $X^{\prime}\cup
Y^{\prime}$.
\end{lemma}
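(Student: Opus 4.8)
The plan is to verify, in turn, that the proposed map is well-defined, then injective, then surjective; all three checks are routine bookkeeping with the two disjointness hypotheses. Write $f$ for the candidate map, so that $f\left(  z\right)  =\varphi\left(  z\right)  $ when $z\in X$ and $f\left(  z\right)  =\psi\left(  z\right)  $ when $z\in Y$.

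First, for well-definedness: since $X$ and $Y$ are disjoint, every $z\in X\cup Y$ lies in exactly one of $X$ and $Y$, so the two clauses in the definition of $f$ never conflict. Moreover, if $z\in X$ then $f\left(  z\right)  =\varphi\left(  z\right)  \in X^{\prime}\subseteq X^{\prime}\cup Y^{\prime}$, and if $z\in Y$ then $f\left(  z\right)  =\psi\left(  z\right)  \in Y^{\prime}\subseteq X^{\prime}\cup Y^{\prime}$; hence $f$ indeed maps into $X^{\prime}\cup Y^{\prime}$.

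Next, for injectivity: suppose $z_{1},z_{2}\in X\cup Y$ satisfy $f\left(  z_{1}\right)  =f\left(  z_{2}\right)  $. If both $z_{1},z_{2}\in X$, then $\varphi\left(  z_{1}\right)  =\varphi\left(  z_{2}\right)  $ forces $z_{1}=z_{2}$ since $\varphi$ is a bijection; the case $z_{1},z_{2}\in Y$ is analogous using $\psi$. The remaining case -- say $z_{1}\in X$ and $z_{2}\in Y$ -- cannot occur, since then $f\left(  z_{1}\right)  =\varphi\left(  z_{1}\right)  \in X^{\prime}$ and $f\left(  z_{2}\right)  =\psi\left(  z_{2}\right)  \in Y^{\prime}$ would be equal, placing this common element in $X^{\prime}\cap Y^{\prime}=\varnothing$, a contradiction. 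So $f$ is injective.

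Finally, for surjectivity: let $w\in X^{\prime}\cup Y^{\prime}$. If $w\in X^{\prime}$, then because $\varphi:X\rightarrow X^{\prime}$ is surjective there is some $z\in X$ with $\varphi\left(  z\right)  =w$, whence $f\left(  z\right)  =w$; if $w\in Y^{\prime}$, the same argument with $\psi$ produces a preimage. Thus $f$ is surjective, and combining the three steps shows $f$ is a bijection. There is no genuine obstacle here; the only point worth flagging is that the "mixed" subcase of the injectivity argument uses the disjointness of $X^{\prime}$ and $Y^{\prime}$ in an essential way, just as the consistency of the case split uses the disjointness of $X$ and $Y$.
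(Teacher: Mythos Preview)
Your proof is correct and complete. The paper itself omits the proof of this lemma entirely, remarking only that it ``is a basic fact in set theory, and its proof is straightforward (whence we omit it)''; your argument is exactly the standard verification one would supply.
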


Lemma \ref{lem.proofs.Sinf-glue1} is a basic fact in set theory, and its proof
is straightforward (whence we omit it).

\begin{proof}
[Proof of Lemma \ref{lem.proofs.Sinf-orbits}.]From $\mathfrak{S}_{\left(
\infty\right)  }\subseteq\mathfrak{S}_{\infty}$, we obtain $\mathfrak{S}%
_{\left(  \infty\right)  }\beta\subseteq\mathfrak{S}_{\infty}\beta$. We shall
now show that $\mathfrak{S}_{\infty}\beta\subseteq\mathfrak{S}_{\left(
\infty\right)  }\beta$.

Indeed, let $\gamma\in\mathfrak{S}_{\infty}\beta$. Thus, there exists some
permutation $\tau\in\mathfrak{S}_{\infty}$ such that $\gamma=\tau\beta$.
Consider this $\tau$. The map $\tau$ is a permutation of the set $\left\{
1,2,3,\ldots\right\}  $ (since $\tau\in\mathfrak{S}_{\infty}$), and thus is a
bijection from $\left\{  1,2,3,\ldots\right\}  $ to $\left\{  1,2,3,\ldots
\right\}  $. We have $\gamma=\left(  \gamma_{1},\gamma_{2},\gamma_{3}%
,\ldots\right)  $ and thus%
\begin{align*}
\left(  \gamma_{1},\gamma_{2},\gamma_{3},\ldots\right)   &  =\gamma
=\tau\underbrace{\beta}_{=\left(  \beta_{1},\beta_{2},\beta_{3},\ldots\right)
}=\tau\cdot\left(  \beta_{1},\beta_{2},\beta_{3},\ldots\right) \\
&  =\left(  \beta_{\tau^{-1}\left(  1\right)  },\beta_{\tau^{-1}\left(
2\right)  },\beta_{\tau^{-1}\left(  3\right)  },\ldots\right)
\end{align*}
(by the definition of the action of $\mathfrak{S}_{\infty}$ on
$\operatorname*{WC}$). In other words,%
\begin{equation}
\gamma_{i}=\beta_{\tau^{-1}\left(  i\right)  }\ \ \ \ \ \ \ \ \ \ \text{for
every positive integer }i. \label{pf.lem.proofs.Sinf-orbits.gbt}%
\end{equation}

Define two subsets $B$ and $C$ of $\left\{  1,2,3,\ldots\right\}  $ by%
\begin{align*}
B  &  =\left\{  i\in\left\{  1,2,3,\ldots\right\}  \ \mid\ \beta_{i}%
\neq0\right\}  \ \ \ \ \ \ \ \ \ \ \text{and}\\
C  &  =\left\{  i\in\left\{  1,2,3,\ldots\right\}  \ \mid\ \gamma_{i}%
\neq0\right\}  .
\end{align*}
Then, the set $B$ is finite\footnote{\textit{Proof.} Recall that $\beta$ is a
weak composition. Thus, $\beta$ contains only finitely many nonzero entries
(by the definition of a weak composition). In other words, there are only
finitely many $i\in\left\{  1,2,3,\ldots\right\}  $ satisfying $\beta_{i}%
\neq0$. In other words, the set $\left\{  i\in\left\{  1,2,3,\ldots\right\}
\ \mid\ \beta_{i}\neq0\right\}  $ is finite. In other words, the set $B$ is
finite (since $B=\left\{  i\in\left\{  1,2,3,\ldots\right\}  \ \mid\ \beta
_{i}\neq0\right\}  $).}. Hence, the set $B\setminus C$ is finite (since
$B\setminus C$ is a subset of $B$).

We have $\tau\left(  j\right)  \in C$ for each $j\in B$%
\ \ \ \ \footnote{\textit{Proof.} Let $j\in B$. Thus, $j\in B=\left\{
i\in\left\{  1,2,3,\ldots\right\}  \ \mid\ \beta_{i}\neq0\right\}  $. In other
words, $j$ is an $i\in\left\{  1,2,3,\ldots\right\}  $ satisfying $\beta
_{i}\neq0$. In other words, $j$ is an element of $\left\{  1,2,3,\ldots
\right\}  $ and satisfies $\beta_{j}\neq0$. Hence, $j\in\left\{
1,2,3,\ldots\right\}  $ (since $j$ is an element of $\left\{  1,2,3,\ldots
\right\}  $), so that $\tau\left(  j\right)  \in\left\{  1,2,3,\ldots\right\}
$ (since $\tau$ is a bijection from $\left\{  1,2,3,\ldots\right\}  $ to
$\left\{  1,2,3,\ldots\right\}  $). Thus, $\tau\left(  j\right)  $ is a
positive integer. Hence, (\ref{pf.lem.proofs.Sinf-orbits.gbt}) (applied to
$i=\tau\left(  j\right)  $) yields $\gamma_{\tau\left(  j\right)  }%
=\beta_{\tau^{-1}\left(  \tau\left(  j\right)  \right)  }=\beta_{j}$ (since
$\tau^{-1}\left(  \tau\left(  j\right)  \right)  =j$). Therefore,
$\gamma_{\tau\left(  j\right)  }=\beta_{j}\neq0$. Thus, $\tau\left(  j\right)
$ is an $i\in\left\{  1,2,3,\ldots\right\}  $ satisfying $\gamma_{i}\neq0$
(since $\tau\left(  j\right)  \in\left\{  1,2,3,\ldots\right\}  $ and
$\gamma_{\tau\left(  j\right)  }\neq0$). In other words, $\tau\left(
j\right)  \in\left\{  i\in\left\{  1,2,3,\ldots\right\}  \ \mid\ \gamma
_{i}\neq0\right\}  $. In other words, $\tau\left(  j\right)  \in C$ (since
$C=\left\{  i\in\left\{  1,2,3,\ldots\right\}  \ \mid\ \gamma_{i}%
\neq0\right\}  $). Qed.}. Hence, the map
\begin{align*}
\overline{\tau}:B  &  \rightarrow C,\\
j  &  \mapsto\tau\left(  j\right)
\end{align*}
is well-defined. Consider this map $\overline{\tau}$.

We have $\tau^{-1}\left(  j\right)  \in B$ for each $j\in C$%
\ \ \ \ \footnote{\textit{Proof.} Let $j\in C$. Thus, $j\in C=\left\{
i\in\left\{  1,2,3,\ldots\right\}  \ \mid\ \gamma_{i}\neq0\right\}  $. In
other words, $j$ is an $i\in\left\{  1,2,3,\ldots\right\}  $ satisfying
$\gamma_{i}\neq0$. In other words, $j$ is an element of $\left\{
1,2,3,\ldots\right\}  $ and satisfies $\gamma_{j}\neq0$. Hence, $j\in\left\{
1,2,3,\ldots\right\}  $ (since $j$ is an element of $\left\{  1,2,3,\ldots
\right\}  $), so that $\tau^{-1}\left(  j\right)  \in\left\{  1,2,3,\ldots
\right\}  $ (since $\tau$ is a bijection from $\left\{  1,2,3,\ldots\right\}
$ to $\left\{  1,2,3,\ldots\right\}  $). Thus, $\tau^{-1}\left(  j\right)  $
is a positive integer. Hence, (\ref{pf.lem.proofs.Sinf-orbits.gbt}) (applied
to $i=j$) yields $\gamma_{j}=\beta_{\tau^{-1}\left(  j\right)  }$. Therefore,
$\beta_{\tau^{-1}\left(  j\right)  }=\gamma_{j}\neq0$. Thus, $\tau^{-1}\left(
j\right)  $ is an $i\in\left\{  1,2,3,\ldots\right\}  $ satisfying $\beta
_{i}\neq0$ (since $\tau^{-1}\left(  j\right)  \in\left\{  1,2,3,\ldots
\right\}  $ and $\beta_{\tau^{-1}\left(  j\right)  }\neq0$). In other words,
$\tau^{-1}\left(  j\right)  \in\left\{  i\in\left\{  1,2,3,\ldots\right\}
\ \mid\ \beta_{i}\neq0\right\}  $. In other words, $\tau^{-1}\left(  j\right)
\in B$ (since $B=\left\{  i\in\left\{  1,2,3,\ldots\right\}  \ \mid\ \beta
_{i}\neq0\right\}  $). Qed.}. Hence, the map
\begin{align*}
\widehat{\tau}:C  &  \rightarrow B,\\
j  &  \mapsto\tau^{-1}\left(  j\right)
\end{align*}
is well-defined. Consider this map $\widehat{\tau}$.

We have $\overline{\tau}\circ\widehat{\tau}=\operatorname*{id}$%
\ \ \ \ \footnote{\textit{Proof.} Every $j\in C$ satisfies%
\begin{align*}
\left(  \overline{\tau}\circ\widehat{\tau}\right)  \left(  j\right)   &
=\overline{\tau}\left(  \underbrace{\widehat{\tau}\left(  j\right)
}_{\substack{=\tau^{-1}\left(  j\right)  \\\text{(by the definition of
}\widehat{\tau}\text{)}}}\right)  =\overline{\tau}\left(  \tau^{-1}\left(
j\right)  \right)  =\tau\left(  \tau^{-1}\left(  j\right)  \right)
\ \ \ \ \ \ \ \ \ \ \left(  \text{by the definition of }\overline{\tau}\right)
\\
&  =j=\operatorname*{id}\left(  j\right)  .
\end{align*}
Thus, $\overline{\tau}\circ\widehat{\tau}=\operatorname*{id}$.} and
$\widehat{\tau}\circ\overline{\tau}=\operatorname*{id}$%
\ \ \ \ \footnote{\textit{Proof.} Every $j\in B$ satisfies%
\begin{align*}
\left(  \widehat{\tau}\circ\overline{\tau}\right)  \left(  j\right)   &
=\widehat{\tau}\left(  \underbrace{\overline{\tau}\left(  j\right)
}_{\substack{=\tau\left(  j\right)  \\\text{(by the definition of }%
\overline{\tau}\text{)}}}\right)  =\widehat{\tau}\left(  \tau\left(  j\right)
\right)  =\tau^{-1}\left(  \tau\left(  j\right)  \right)
\ \ \ \ \ \ \ \ \ \ \left(  \text{by the definition of }\widehat{\tau}\right)
\\
&  =j=\operatorname*{id}\left(  j\right)  .
\end{align*}
Thus, $\widehat{\tau}\circ\overline{\tau}=\operatorname*{id}$.}. Combining
these two equalities, we conclude that the maps $\overline{\tau}$ and
$\widehat{\tau}$ are mutually inverse. Hence, the map $\overline{\tau}$ is
invertible, i.e., is a bijection. Thus, we have found a bijection
$\overline{\tau}:B\rightarrow C$. Therefore, the set $C$ has the same
cardinality as $B$. Thus, the set $C$ is finite (since the set $B$ is finite).
Hence, the set $C\setminus B$ is finite (since $C\setminus B$ is a subset of
$C$).

We have showed that the set $C$ has the same cardinality as $B$. In other
words, $\left\vert C\right\vert =\left\vert B\right\vert $. But the set $C$ is
the union of the two disjoint sets $C\cap B$ and $C\setminus B$; hence, we
have $\left\vert C\right\vert =\left\vert C\cap B\right\vert +\left\vert
C\setminus B\right\vert $. Hence, $\left\vert C\setminus B\right\vert
=\left\vert C\right\vert -\left\vert C\cap B\right\vert $. The same argument
(with the roles of $B$ and $C$ interchanged) yields $\left\vert B\setminus
C\right\vert =\left\vert B\right\vert -\left\vert B\cap C\right\vert $.
Comparing this with $\left\vert C\setminus B\right\vert
=\underbrace{\left\vert C\right\vert }_{=\left\vert B\right\vert }-\left\vert
\underbrace{C\cap B}_{=B\cap C}\right\vert =\left\vert B\right\vert
-\left\vert B\cap C\right\vert $, we obtain $\left\vert C\setminus
B\right\vert =\left\vert B\setminus C\right\vert $. In other words, the two
sets $C\setminus B$ and $B\setminus C$ have the same cardinality. Hence, there
exists a bijection $\rho:C\setminus B\rightarrow B\setminus C$. Consider this
$\rho$.

Elementary set theory shows that $C\cup\left(  C\setminus B\right)  =C\cup
B=B\cup C$ and $B\cup\left(  B\setminus C\right)  =B\cup C$.

The sets $B$ and $C\setminus B$ are disjoint. The sets $C$ and $B\setminus C$
are disjoint. The maps $\overline{\tau}:B\rightarrow C$ and $\rho:C\setminus
B\rightarrow B\setminus C$ are bijections. Hence, Lemma
\ref{lem.proofs.Sinf-glue1} (applied to $X=B$, $Y=C\setminus B$, $X^{\prime
}=C$, $Y^{\prime}=B\setminus C$, $\varphi=\overline{\tau}$ and $\psi=\rho$)
yields that the map%
\begin{align*}
C\cup\left(  C\setminus B\right)   &  \rightarrow B\cup\left(  B\setminus
C\right)  ,\\
z  &  \mapsto%
\begin{cases}
\overline{\tau}\left(  z\right)  , & \text{if }z\in B;\\
\rho\left(  z\right)  , & \text{if }z\in C\setminus B
\end{cases}
\end{align*}
is well-defined and is a bijection from $C\cup\left(  C\setminus B\right)  $
to $B\cup\left(  B\setminus C\right)  $. In view of $C\cup\left(  C\setminus
B\right)  =B\cup C$ and $B\cup\left(  B\setminus C\right)  =B\cup C$, we can
restate this result as follows: The map%
\begin{align*}
B\cup C  &  \rightarrow B\cup C,\\
z  &  \mapsto%
\begin{cases}
\overline{\tau}\left(  z\right)  , & \text{if }z\in B;\\
\rho\left(  z\right)  , & \text{if }z\in C\setminus B
\end{cases}
\end{align*}
is well-defined and is a bijection from $B\cup C$ to $B\cup C$. Let us denote
this map by $\eta$. Thus, $\eta$ is a bijection from $B\cup C$ to $B\cup C$.
In other words, the map $\eta:B\cup C\rightarrow B\cup C$ is a bijection.

Clearly, $B$ and $C$ are subsets of $\left\{  1,2,3,\ldots\right\}  $; thus,
$B\cup C$ is a subset of $\left\{  1,2,3,\ldots\right\}  $. Let $D$ denote the
complement of this subset $B\cup C$ in $\left\{  1,2,3,\ldots\right\}  $. That
is, we have $D=\left\{  1,2,3,\ldots\right\}  \setminus\left(  B\cup C\right)
$. Thus, the set $D$ is disjoint from $B\cup C$ and satisfies $\left(  B\cup
C\right)  \cup D=\left\{  1,2,3,\ldots\right\}  $.

The sets $B\cup C$ and $D$ are disjoint (since $D$ is disjoint from $B\cup
C$). The maps $\eta:B\cup C\rightarrow B\cup C$ and $\operatorname*{id}%
\nolimits_{D}:D\rightarrow D$ are bijections. Hence, Lemma
\ref{lem.proofs.Sinf-glue1} (applied to $X=B\cup C$, $Y=D$, $X^{\prime}=B\cup
C$, $Y^{\prime}=D$, $\varphi=\eta$ and $\psi=\operatorname*{id}\nolimits_{D}$)
yields that the map%
\begin{align*}
\left(  B\cup C\right)  \cup D  &  \rightarrow\left(  B\cup C\right)  \cup
D,\\
z  &  \mapsto%
\begin{cases}
\eta\left(  z\right)  , & \text{if }z\in B\cup C;\\
\operatorname*{id}\nolimits_{D}\left(  z\right)  , & \text{if }z\in D
\end{cases}
\end{align*}
is well-defined and is a bijection from $\left(  B\cup C\right)  \cup D$ to
$\left(  B\cup C\right)  \cup D$. In view of $\left(  B\cup C\right)  \cup
D=\left\{  1,2,3,\ldots\right\}  $, we can restate this result as follows: The
map%
\begin{align*}
\left\{  1,2,3,\ldots\right\}   &  \rightarrow\left\{  1,2,3,\ldots\right\}
,\\
z  &  \mapsto%
\begin{cases}
\eta\left(  z\right)  , & \text{if }z\in B\cup C;\\
\operatorname*{id}\nolimits_{D}\left(  z\right)  , & \text{if }z\in D
\end{cases}
\end{align*}
is well-defined and is a bijection from $\left\{  1,2,3,\ldots\right\}  $ to
$\left\{  1,2,3,\ldots\right\}  $. Let us denote this map by $\sigma$. Thus,
$\sigma$ is a bijection from $\left\{  1,2,3,\ldots\right\}  $ to $\left\{
1,2,3,\ldots\right\}  $. In other words, $\sigma$ is a permutation of
$\left\{  1,2,3,\ldots\right\}  $. In other words, $\sigma\in\mathfrak{S}%
_{\infty}$ (since $\mathfrak{S}_{\infty}$ is the set of all permutations of
$\left\{  1,2,3,\ldots\right\}  $).

The set $B\cup C$ is finite (since it is the union of the two finite sets $B$
and $C$). Hence, all but finitely many $i\in\left\{  1,2,3,\ldots\right\}  $
satisfy $i\notin B\cup C$. But each $i\in\left\{  1,2,3,\ldots\right\}  $
satisfying $i\notin B\cup C$ must satisfy $\sigma\left(  i\right)
=i$\ \ \ \ \footnote{\textit{Proof.} Let $i\in\left\{  1,2,3,\ldots\right\}  $
be such that $i\notin B\cup C$. Thus, $i\in\left\{  1,2,3,\ldots\right\}
\setminus\left(  B\cup C\right)  =D$ (since $D=\left\{  1,2,3,\ldots\right\}
\setminus\left(  B\cup C\right)  $). The definition of $\sigma$ yields
\begin{align*}
\sigma\left(  i\right)   &  =%
\begin{cases}
\eta\left(  i\right)  , & \text{if }i\in B\cup C;\\
\operatorname*{id}\nolimits_{D}\left(  i\right)  , & \text{if }i\in D
\end{cases}
\ \ \ =\operatorname*{id}\nolimits_{D}\left(  i\right)
\ \ \ \ \ \ \ \ \ \ \left(  \text{since }i\in D\right) \\
&  =i.
\end{align*}
Qed.}. Hence, all but finitely many $i\in\left\{  1,2,3,\ldots\right\}  $
satisfy $\sigma\left(  i\right)  =i$ (since all but finitely many
$i\in\left\{  1,2,3,\ldots\right\}  $ satisfy $i\notin B\cup C$). In other
words, the permutation $\sigma$ leaves all but finitely many elements of
$\left\{  1,2,3,\ldots\right\}  $ invariant. In other words, the permutation
$\sigma$ is finitary (by the definition of \textquotedblleft
finitary\textquotedblright). Hence, $\sigma$ is a finitary permutation in
$\mathfrak{S}_{\infty}$. In other words, $\sigma\in\mathfrak{S}_{\left(
\infty\right)  }$ (since $\mathfrak{S}_{\left(  \infty\right)  }$ is the set
of all finitary permutations in $\mathfrak{S}_{\infty}$).

Now, we claim that%
\begin{equation}
\gamma_{\sigma\left(  j\right)  }=\beta_{j}\ \ \ \ \ \ \ \ \ \ \text{for every
positive integer }j. \label{pf.lem.proofs.Sinf-orbits.gambet}%
\end{equation}

[\textit{Proof of (\ref{pf.lem.proofs.Sinf-orbits.gambet}):} Let $j$ be a
positive integer. We must prove (\ref{pf.lem.proofs.Sinf-orbits.gambet}).

We have $j\in\left\{  1,2,3,\ldots\right\}  $ (since $j$ is a positive
integer). Thus, $\sigma\left(  j\right)  \in\left\{  1,2,3,\ldots\right\}  $
(since $\sigma$ is a bijection from $\left\{  1,2,3,\ldots\right\}  $ to
$\left\{  1,2,3,\ldots\right\}  $). In other words, $\sigma\left(  j\right)  $
is a positive integer. Hence, (\ref{pf.lem.proofs.Sinf-orbits.gbt}) (applied
to $i=\sigma\left(  j\right)  $) yields $\gamma_{\sigma\left(  j\right)
}=\beta_{\tau^{-1}\left(  \sigma\left(  j\right)  \right)  }$.

We are in one of the following three cases:

\textit{Case 1:} We have $j\in B$.

\textit{Case 2:} We have $j\in C\setminus B$.

\textit{Case 3:} We have neither $j\in B$ nor $j\in C\setminus B$.

Let us first consider Case 1. In this case, we have $j\in B$. Hence, $j\in
B\subseteq B\cup C$. Now, the definition of $\sigma$ yields
\begin{align*}
\sigma\left(  j\right)   &  =%
\begin{cases}
\eta\left(  j\right)  , & \text{if }j\in B\cup C;\\
\operatorname*{id}\nolimits_{D}\left(  j\right)  , & \text{if }j\in D
\end{cases}
\ \ \ =\eta\left(  j\right)  \ \ \ \ \ \ \ \ \ \ \left(  \text{since }j\in
B\cup C\right) \\
&  =%
\begin{cases}
\overline{\tau}\left(  j\right)  , & \text{if }j\in B;\\
\rho\left(  j\right)  , & \text{if }j\in C\setminus B
\end{cases}
\ \ \ \ \ \ \ \ \ \ \left(  \text{by the definition of }\eta\right) \\
&  =\overline{\tau}\left(  j\right)  \ \ \ \ \ \ \ \ \ \ \left(  \text{since
}j\in B\right) \\
&  =\tau\left(  j\right)  \ \ \ \ \ \ \ \ \ \ \left(  \text{by the definition
of }\overline{\tau}\right)  .
\end{align*}
Hence, $\tau^{-1}\left(  \sigma\left(  j\right)  \right)  =j$. But recall that
$\gamma_{\sigma\left(  j\right)  }=\beta_{\tau^{-1}\left(  \sigma\left(
j\right)  \right)  }$. In view of $\tau^{-1}\left(  \sigma\left(  j\right)
\right)  =j$, this rewrites as $\gamma_{\sigma\left(  j\right)  }=\beta_{j}$.
Thus, (\ref{pf.lem.proofs.Sinf-orbits.gambet}) is proved in Case 1.

Let us next consider Case 2. In this case, we have $j\in C\setminus B$. Thus,
$j\in C\setminus B\subseteq C\subseteq B\cup C$. Now, the definition of
$\sigma$ yields
\begin{align*}
\sigma\left(  j\right)   &  =%
\begin{cases}
\eta\left(  j\right)  , & \text{if }j\in B\cup C;\\
\operatorname*{id}\nolimits_{D}\left(  j\right)  , & \text{if }j\in D
\end{cases}
\ \ \ =\eta\left(  j\right)  \ \ \ \ \ \ \ \ \ \ \left(  \text{since }j\in
B\cup C\right) \\
&  =%
\begin{cases}
\overline{\tau}\left(  j\right)  , & \text{if }j\in B;\\
\rho\left(  j\right)  , & \text{if }j\in C\setminus B
\end{cases}
\ \ \ \ \ \ \ \ \ \ \left(  \text{by the definition of }\eta\right) \\
&  =\rho\left(  \underbrace{j}_{\in C\setminus B}\right)
\ \ \ \ \ \ \ \ \ \ \left(  \text{since }j\in C\setminus B\right) \\
&  \in\rho\left(  C\setminus B\right)  =B\setminus
C\ \ \ \ \ \ \ \ \ \ \left(  \text{since }\rho\text{ is a bijection from
}C\setminus B\text{ to }B\setminus C\right)  .
\end{align*}
In other words, $\sigma\left(  j\right)  \in B$ and $\sigma\left(  j\right)
\notin C$. Thus, we can easily conclude that $\gamma_{\sigma\left(  j\right)
}=0$\ \ \ \ \footnote{\textit{Proof.} Assume the contrary. Thus,
$\gamma_{\sigma\left(  j\right)  }\neq0$. Hence, $\sigma\left(  j\right)  $ is
an $i\in\left\{  1,2,3,\ldots\right\}  $ satisfying $\gamma_{i}\neq0$ (since
$\sigma\left(  j\right)  \in\left\{  1,2,3,\ldots\right\}  $ and
$\gamma_{\sigma\left(  j\right)  }\neq0$). In other words, $\sigma\left(
j\right)  \in\left\{  i\in\left\{  1,2,3,\ldots\right\}  \ \mid\ \gamma
_{i}\neq0\right\}  $. But this contradicts $\sigma\left(  j\right)  \notin
C=\left\{  i\in\left\{  1,2,3,\ldots\right\}  \ \mid\ \gamma_{i}\neq0\right\}
$. This contradiction shows that our assumption was false, qed.}. Also, from
$j\in C\setminus B$, we obtain $j\in C$ and $j\notin B$. Hence, $\beta_{j}%
=0$\ \ \ \ \footnote{\textit{Proof.} Assume the contrary. Thus, $\beta_{j}%
\neq0$. Hence, $j$ is an $i\in\left\{  1,2,3,\ldots\right\}  $ satisfying
$\beta_{i}\neq0$ (since $j\in\left\{  1,2,3,\ldots\right\}  $ and $\beta
_{j}\neq0$). In other words, $j\in\left\{  i\in\left\{  1,2,3,\ldots\right\}
\ \mid\ \beta_{i}\neq0\right\}  $. But this contradicts $j\notin B=\left\{
i\in\left\{  1,2,3,\ldots\right\}  \ \mid\ \beta_{i}\neq0\right\}  $. This
contradiction shows that our assumption was false, qed.}. Comparing this with
$\gamma_{\sigma\left(  j\right)  }=0$, we obtain $\gamma_{\sigma\left(
j\right)  }=\beta_{j}$. Thus, (\ref{pf.lem.proofs.Sinf-orbits.gambet}) is
proved in Case 2.

Let us finally consider Case 3. In this case, we have neither $j\in B$ nor
$j\in C\setminus B$. In other words, we have $j\notin B$ and $j\notin
C\setminus B$. Hence, we have $j\notin B\cup\left(  C\setminus B\right)  $. In
view of $B\cup\left(  C\setminus B\right)  =B\cup C$ (which follows from
elementary set theory), we can restate this as $j\notin B\cup C$. In other
words, $j\notin B$ and $j\notin C$. Thus, we can easily conclude that
$\gamma_{j}=0$\ \ \ \ \footnote{\textit{Proof.} Assume the contrary. Thus,
$\gamma_{j}\neq0$. Hence, $j$ is an $i\in\left\{  1,2,3,\ldots\right\}  $
satisfying $\gamma_{i}\neq0$ (since $j\in\left\{  1,2,3,\ldots\right\}  $ and
$\gamma_{j}\neq0$). In other words, $j\in\left\{  i\in\left\{  1,2,3,\ldots
\right\}  \ \mid\ \gamma_{i}\neq0\right\}  $. But this contradicts $j\notin
C=\left\{  i\in\left\{  1,2,3,\ldots\right\}  \ \mid\ \gamma_{i}\neq0\right\}
$. This contradiction shows that our assumption was false, qed.} and
$\beta_{j}=0$\ \ \ \ \footnote{\textit{Proof.} Assume the contrary. Thus,
$\beta_{j}\neq0$. Hence, $j$ is an $i\in\left\{  1,2,3,\ldots\right\}  $
satisfying $\beta_{i}\neq0$ (since $j\in\left\{  1,2,3,\ldots\right\}  $ and
$\beta_{j}\neq0$). In other words, $j\in\left\{  i\in\left\{  1,2,3,\ldots
\right\}  \ \mid\ \beta_{i}\neq0\right\}  $. But this contradicts $j\notin
B=\left\{  i\in\left\{  1,2,3,\ldots\right\}  \ \mid\ \beta_{i}\neq0\right\}
$. This contradiction shows that our assumption was false, qed.}.

From $j\in\left\{  1,2,3,\ldots\right\}  $ and $j\notin B\cup C$, we obtain
$j\in\left\{  1,2,3,\ldots\right\}  \setminus\left(  B\cup C\right)  $. In
other words, $j\in D$ (since $D=\left\{  1,2,3,\ldots\right\}  \setminus
\left(  B\cup C\right)  $). Now, the definition of $\sigma$ yields
\begin{align*}
\sigma\left(  j\right)   &  =%
\begin{cases}
\eta\left(  j\right)  , & \text{if }j\in B\cup C;\\
\operatorname*{id}\nolimits_{D}\left(  j\right)  , & \text{if }j\in D
\end{cases}
\ \ \ =\operatorname*{id}\nolimits_{D}\left(  j\right)
\ \ \ \ \ \ \ \ \ \ \left(  \text{since }j\in D\right) \\
&  =j.
\end{align*}
Hence, $\gamma_{\sigma\left(  j\right)  }=\gamma_{j}=0=\beta_{j}$ (since
$\beta_{j}=0$). Thus, (\ref{pf.lem.proofs.Sinf-orbits.gambet}) is proved in
Case 3.

We have now proved (\ref{pf.lem.proofs.Sinf-orbits.gambet}) in each of the
three Cases 1, 2 and 3. Since these three Cases cover all possibilities, we
thus conclude that (\ref{pf.lem.proofs.Sinf-orbits.gambet}) always holds.]

We have now proved (\ref{pf.lem.proofs.Sinf-orbits.gambet}). Hence, we have%
\begin{equation}
\gamma_{i}=\beta_{\sigma^{-1}\left(  i\right)  }\ \ \ \ \ \ \ \ \ \ \text{for
every positive integer }i. \label{pf.lem.proofs.Sinf-orbits.gbs}%
\end{equation}

[\textit{Proof of (\ref{pf.lem.proofs.Sinf-orbits.gbs}):} Let $i$ be a
positive integer. Thus, $i\in\left\{  1,2,3,\ldots\right\}  $, so that
$\sigma^{-1}\left(  i\right)  \in\left\{  1,2,3,\ldots\right\}  $ (since
$\sigma$ is a bijection from $\left\{  1,2,3,\ldots\right\}  $ to $\left\{
1,2,3,\ldots\right\}  $). In other words, $\sigma^{-1}\left(  i\right)  $ is a
positive integer. Hence, (\ref{pf.lem.proofs.Sinf-orbits.gambet}) (applied to
$j=\sigma^{-1}\left(  i\right)  $) yields $\gamma_{\sigma\left(  \sigma
^{-1}\left(  i\right)  \right)  }=\beta_{\sigma^{-1}\left(  i\right)  }$. In
other words, $\gamma_{i}=\beta_{\sigma^{-1}\left(  i\right)  }$ (since
$\sigma\left(  \sigma^{-1}\left(  i\right)  \right)  =i$). This proves
(\ref{pf.lem.proofs.Sinf-orbits.gbs}).]

Now, we have proved (\ref{pf.lem.proofs.Sinf-orbits.gbs}). In other words, we
have proved that $\gamma_{i}=\beta_{\sigma^{-1}\left(  i\right)  }$ for every
positive integer $i$. In other words,
\begin{equation}
\left(  \gamma_{1},\gamma_{2},\gamma_{3},\ldots\right)  =\left(  \beta
_{\sigma^{-1}\left(  1\right)  },\beta_{\sigma^{-1}\left(  2\right)  }%
,\beta_{\sigma^{-1}\left(  3\right)  },\ldots\right)  .
\label{pf.lem.proofs.Sinf-orbits.at1}%
\end{equation}
Now, the definition of the action of $\mathfrak{S}_{\infty}$ on
$\operatorname*{WC}$ yields
\[
\sigma\cdot\left(  \beta_{1},\beta_{2},\beta_{3},\ldots\right)  =\left(
\beta_{\sigma^{-1}\left(  1\right)  },\beta_{\sigma^{-1}\left(  2\right)
},\beta_{\sigma^{-1}\left(  3\right)  },\ldots\right)  .
\]
Hence,%
\begin{align*}
\sigma\cdot\underbrace{\beta}_{=\left(  \beta_{1},\beta_{2},\beta_{3}%
,\ldots\right)  }  &  =\sigma\cdot\left(  \beta_{1},\beta_{2},\beta_{3}%
,\ldots\right)  =\left(  \beta_{\sigma^{-1}\left(  1\right)  },\beta
_{\sigma^{-1}\left(  2\right)  },\beta_{\sigma^{-1}\left(  3\right)  }%
,\ldots\right) \\
&  =\left(  \gamma_{1},\gamma_{2},\gamma_{3},\ldots\right)
\ \ \ \ \ \ \ \ \ \ \left(  \text{by (\ref{pf.lem.proofs.Sinf-orbits.at1}%
)}\right) \\
&  =\gamma.
\end{align*}
Thus, $\gamma=\underbrace{\sigma}_{\in\mathfrak{S}_{\left(  \infty\right)  }%
}\cdot\beta\in\mathfrak{S}_{\left(  \infty\right)  }\beta$.

Forget that we fixed $\gamma$. We thus have shown that $\gamma\in
\mathfrak{S}_{\left(  \infty\right)  }\beta$ for each $\gamma\in
\mathfrak{S}_{\infty}\beta$. In other words, $\mathfrak{S}_{\infty}%
\beta\subseteq\mathfrak{S}_{\left(  \infty\right)  }\beta$. Combining this
with $\mathfrak{S}_{\left(  \infty\right)  }\beta\subseteq\mathfrak{S}%
_{\infty}\beta$, we obtain $\mathfrak{S}_{\left(  \infty\right)  }%
\beta=\mathfrak{S}_{\infty}\beta$. In other words, the orbit $\mathfrak{S}%
_{\left(  \infty\right)  }\beta$ of $\beta$ under the action of $\mathfrak{S}%
_{\left(  \infty\right)  }$ is identical with the orbit $\mathfrak{S}_{\infty
}\beta$ of $\beta$ under the action of $\mathfrak{S}_{\infty}$. This proves
Lemma \ref{lem.proofs.Sinf-orbits}.
\end{proof}

Recall that each partition is a weak composition. In other words, we have
$\operatorname*{Par}\subseteq\operatorname*{WC}$.

Now, we easily obtain the following:

\begin{proposition}
\label{prop.proofs.mlam-eq}Let $\lambda\in\operatorname*{Par}$. Our above
definition of $m_{\lambda}$ is equivalent to the definition of $m_{\lambda}$
given in \cite[Definition 2.1.3]{GriRei}.
\end{proposition}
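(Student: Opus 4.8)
The plan is to compare the two index sets over which the respective sums run. In \cite[Definition 2.1.3]{GriRei}, the monomial symmetric function $m_{\lambda}$ is defined as $\sum_{\alpha}\mathbf{x}^{\alpha}$, where $\alpha$ ranges over all weak compositions lying in the orbit $\mathfrak{S}_{\left(  \infty\right)  }\lambda$ of $\lambda$ under the finitary symmetric group (equivalently: over all weak compositions whose nonzero entries, sorted into weakly decreasing order, form the list of parts of $\lambda$). Our definition above, by contrast, is $\sum_{\alpha}\mathbf{x}^{\alpha}$ with $\alpha$ ranging over all weak compositions in the orbit $\mathfrak{S}_{\infty}\lambda$ of $\lambda$ under the full symmetric group $\mathfrak{S}_{\infty}$ (here $\lambda$ is viewed as the infinite sequence $\left(  \lambda_{1},\lambda_{2},\ldots\right)  $, so that the $0$'s are permuted as well). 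Hence it suffices to prove that these two index sets coincide, i.e., that $\mathfrak{S}_{\left(  \infty\right)  }\lambda=\mathfrak{S}_{\infty}\lambda$, keeping in mind that in either sum each weak composition in the orbit contributes exactly one monomial (with no multiplicity).

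First I would observe that $\lambda$, regarded as the infinite sequence $\left(  \lambda_{1},\lambda_{2},\lambda_{3},\ldots\right)  $, is itself a weak composition, so Lemma \ref{lem.proofs.Sinf-orbits} applies with $\beta=\lambda$. It yields precisely $\mathfrak{S}_{\left(  \infty\right)  }\lambda=\mathfrak{S}_{\infty}\lambda$, which is exactly the equality of index sets we need; plugging this into the two sums gives the two definitions of $m_{\lambda}$ the same value.

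If \cite[Definition 2.1.3]{GriRei} is phrased combinatorially (via sorting nonzero entries) rather than directly via the $\mathfrak{S}_{\left(  \infty\right)  }$-action, I would additionally record the elementary fact that, for a weak composition $\alpha$, the condition ``$\alpha\in\mathfrak{S}_{\left(  \infty\right)  }\lambda$'' is equivalent to ``the multiset of nonzero entries of $\alpha$ equals the multiset of parts of $\lambda$'': a finitary permutation clearly preserves this multiset, and conversely any two weak compositions sharing this multiset differ by a permutation moving only the finitely many positions that carry nonzero entries, hence by a finitary one. This reduces GriRei's definition to the $\mathfrak{S}_{\left(  \infty\right)  }$-orbit form, after which the preceding paragraph concludes the argument.

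The only genuine content here is Lemma \ref{lem.proofs.Sinf-orbits}, which has already been proved; everything else is bookkeeping about what ``permuting the entries of an infinite sequence'' means. The main point to be careful about is that both sums are indexed by \emph{distinct} weak compositions (so that no monomial is counted more than once), which is immediate from the way both definitions are stated. I expect no real obstacle beyond quoting the precise form of \cite[Definition 2.1.3]{GriRei}.
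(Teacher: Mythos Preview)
Your proof is correct and follows essentially the same approach as the paper's: both reduce the equivalence to the equality $\mathfrak{S}_{\left(\infty\right)}\lambda=\mathfrak{S}_{\infty}\lambda$, which is exactly Lemma~\ref{lem.proofs.Sinf-orbits} applied to $\beta=\lambda$. Your additional paragraph hedging against a possible combinatorial phrasing of \cite[Definition 2.1.3]{GriRei} is not needed here (the paper records that GriRei's definition is already the $\mathfrak{S}_{\left(\infty\right)}$-orbit sum), but it does no harm.
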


\begin{proof}
[Proof of Proposition \ref{prop.proofs.mlam-eq}.]We have $\lambda
\in\operatorname*{Par}\subseteq\operatorname*{WC}$. Hence, Lemma
\ref{lem.proofs.Sinf-orbits} (applied to $\beta=\lambda$) shows that the orbit
$\mathfrak{S}_{\left(  \infty\right)  }\lambda$ of $\lambda$ under the action
of $\mathfrak{S}_{\left(  \infty\right)  }$ is identical with the orbit
$\mathfrak{S}_{\infty}\lambda$ of $\lambda$ under the action of $\mathfrak{S}%
_{\infty}$. In other words, $\mathfrak{S}_{\left(  \infty\right)  }%
\lambda=\mathfrak{S}_{\infty}\lambda$. In other words, $\mathfrak{S}_{\infty
}\lambda=\mathfrak{S}_{\left(  \infty\right)  }\lambda$.

Recall that the group $\mathfrak{S}_{\infty}$ acts on the set
$\operatorname*{WC}$ of all weak compositions by permuting their entries.
Thus, for any weak composition $\alpha$, we have the following chain of
logical equivalences:%
\begin{align*}
&  \ \left(  \alpha\text{ can be obtained from }\lambda\text{ by permuting
entries}\right) \\
&  \Longleftrightarrow\ \left(  \alpha\text{ can be obtained from }%
\lambda\text{ by the action of some }\sigma\in\mathfrak{S}_{\infty}\right) \\
&  \Longleftrightarrow\ \left(  \text{there exists some }\sigma\in
\mathfrak{S}_{\infty}\text{ such that }\alpha=\sigma\lambda\right) \\
&  \Longleftrightarrow\ \left(  \alpha\in\mathfrak{S}_{\infty}\lambda\right)
.
\end{align*}
Thus, we have the following equality of summation signs:%
\begin{equation}
\sum_{\substack{\alpha\in\operatorname*{WC};\\\alpha\text{ can be obtained
from }\lambda\text{ by permuting entries}}}=\sum_{\substack{\alpha
\in\operatorname*{WC};\\\alpha\in\mathfrak{S}_{\infty}\lambda}}=\sum
_{\alpha\in\mathfrak{S}_{\infty}\lambda}
\label{pf.prop.proofs.mlam-eq.sum=sum}%
\end{equation}
(since $\mathfrak{S}_{\infty}\lambda\subseteq\operatorname*{WC}$). Hence,%
\begin{align}
&  \sum_{\substack{\alpha\in\operatorname*{WC};\\\alpha\text{ can be obtained
from }\lambda\text{ by permuting entries}}}\mathbf{x}^{\alpha}\nonumber\\
&  =\sum_{\alpha\in\mathfrak{S}_{\infty}\lambda}\mathbf{x}^{\alpha}%
=\sum_{\alpha\in\mathfrak{S}_{\left(  \infty\right)  }\lambda}\mathbf{x}%
^{\alpha} \label{pf.prop.proofs.mlam-eq.1}%
\end{align}
(since $\mathfrak{S}_{\infty}\lambda=\mathfrak{S}_{\left(  \infty\right)
}\lambda$).

Our above definition of $m_{\lambda}$ says that%
\[
m_{\lambda}=\sum\mathbf{x}^{\alpha},
\]
where the sum ranges over all weak compositions $\alpha\in\operatorname*{WC}$
that can be obtained from $\lambda$ by permuting entries. In other words, it
says that%
\[
m_{\lambda}=\sum_{\substack{\alpha\in\operatorname*{WC};\\\alpha\text{ can be
obtained from }\lambda\text{ by permuting entries}}}\mathbf{x}^{\alpha}.
\]
On the other hand, the definition of $m_{\lambda}$ given in \cite[Definition
2.1.3]{GriRei} says that%
\begin{equation}
m_{\lambda}=\sum_{\alpha\in\mathfrak{S}_{\left(  \infty\right)  }\lambda
}\mathbf{x}^{\alpha}. \label{pf.prop.G.basics.b.1}%
\end{equation}
But the right hand sides of these two equalities are equal (because of
(\ref{pf.prop.proofs.mlam-eq.1})). Hence, the left hand sides must be equal as
well. In other words, $m_{\lambda}$ defined according to our above definition
of $m_{\lambda}$ is equal to $m_{\lambda}$ defined according to
\cite[Definition 2.1.3]{GriRei}. In other words, these two definitions are
equivalent. This proves Proposition \ref{prop.proofs.mlam-eq}.
\end{proof}
\end{verlong}

\subsection{The symmetric functions $h_{\lambda}$}

\begin{vershort}
We shall now approach the proofs of the claims made above. First, let us
introduce a family of symmetric functions, obtained by multiplying several
$h_{n}$'s:
\end{vershort}

\begin{verlong}
Next, let us introduce a family of symmetric functions, obtained by
multiplying several $h_{n}$'s:
\end{verlong}

\begin{definition}
\label{def.hlam}Let $\lambda$ be a partition. Write $\lambda$ in the form
$\lambda=\left(  \lambda_{1},\lambda_{2},\ldots,\lambda_{\ell}\right)  $,
where $\lambda_{1},\lambda_{2},\ldots,\lambda_{\ell}$ are positive integers.
Then, we define a symmetric function $h_{\lambda}\in\Lambda$ by%
\[
h_{\lambda}=h_{\lambda_{1}}h_{\lambda_{2}}\cdots h_{\lambda_{\ell}}.
\]

\end{definition}

\begin{verlong}
Note that this definition also appears in \cite[Definition 2.2.1]{GriRei}.
\end{verlong}

The symmetric function $h_{\lambda}$ is called the \emph{complete homogeneous
symmetric function} corresponding to the partition $\lambda$.

From \cite[Corollary 2.5.17(a)]{GriRei}, we know that the families $\left(
h_{\lambda}\right)  _{\lambda\in\operatorname*{Par}}$ and $\left(  m_{\lambda
}\right)  _{\lambda\in\operatorname*{Par}}$ are dual bases with respect to the
Hall inner product. Thus,%
\begin{equation}
\left\langle h_{\lambda},m_{\mu}\right\rangle =\delta_{\lambda,\mu
}\ \ \ \ \ \ \ \ \ \ \text{for any }\lambda\in\operatorname*{Par}\text{ and
}\mu\in\operatorname*{Par}. \label{eq.hlam-mlam-dual}%
\end{equation}

Let us record a slightly different way to express $h_{\lambda}$:

\begin{proposition}
\label{prop.hlam.inf}Let $\lambda$ be a partition. Then,%
\[
h_{\lambda}=h_{\lambda_{1}}h_{\lambda_{2}}h_{\lambda_{3}}\cdots.
\]
(Here, the infinite product $h_{\lambda_{1}}h_{\lambda_{2}}h_{\lambda_{3}%
}\cdots$ is well-defined, since every sufficiently high positive integer $i$
satisfies $\lambda_{i}=0$ and thus $h_{\lambda_{i}}=h_{0}=1$.)
\end{proposition}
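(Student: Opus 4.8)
The plan is to deduce Proposition~\ref{prop.hlam.inf} directly from Definition~\ref{def.hlam} by a short argument showing that the extra factors in the infinite product $h_{\lambda_{1}}h_{\lambda_{2}}h_{\lambda_{3}}\cdots$ beyond the finitely many listed in the definition are all equal to $1$. First I would recall that $\lambda$ is a partition, so by the definition of a partition its entries weakly decrease and all but finitely many of them are $0$; hence there exists some $\ell\in\mathbb{N}$ such that $\lambda_{1},\lambda_{2},\ldots,\lambda_{\ell}$ are positive integers while $\lambda_{\ell+1}=\lambda_{\ell+2}=\cdots=0$. (Here I should be slightly careful about the edge case $\lambda=\varnothing$, where $\ell=0$ and the product $h_{\lambda_{1}}\cdots h_{\lambda_{\ell}}$ is empty, hence equals $1$; the argument below still goes through verbatim.)

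Next I would observe that for every integer $i>\ell$ we have $\lambda_{i}=0$ and therefore $h_{\lambda_{i}}=h_{0}=1$ (recall $h_{0}=1$ from the definition of the complete homogeneous symmetric functions). This is exactly what makes the infinite product $h_{\lambda_{1}}h_{\lambda_{2}}h_{\lambda_{3}}\cdots$ well-defined: all but finitely many of its factors equal $1$, so the product stabilizes. Concretely, for any $N\geq\ell$ the partial product $h_{\lambda_{1}}h_{\lambda_{2}}\cdots h_{\lambda_{N}}$ equals $h_{\lambda_{1}}h_{\lambda_{2}}\cdots h_{\lambda_{\ell}}$ (since the factors $h_{\lambda_{\ell+1}},\ldots,h_{\lambda_{N}}$ are all $1$), and the infinite product is by definition the common value of these stabilized partial products.

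Finally I would chain these observations together: by Definition~\ref{def.hlam}, $h_{\lambda}=h_{\lambda_{1}}h_{\lambda_{2}}\cdots h_{\lambda_{\ell}}$, and by the previous paragraph this equals $h_{\lambda_{1}}h_{\lambda_{2}}h_{\lambda_{3}}\cdots$, which is the claimed identity. I do not expect any genuine obstacle here; the only subtlety worth spelling out is the well-definedness of the infinite product (already flagged in the statement) and the remark that the representation $\lambda=(\lambda_{1},\ldots,\lambda_{\ell})$ with positive $\lambda_{i}$ used in Definition~\ref{def.hlam} is precisely the one with $\ell$ equal to the number of parts of $\lambda$, so the two products being compared agree factor-by-factor up to index $\ell$ and the tail contributes only $1$'s. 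Thus the proof is essentially a one-line unwinding of definitions, which is why the paper relegates it to the detailed version.
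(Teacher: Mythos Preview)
Your proof is correct and follows essentially the same approach as the paper's: write $\lambda=(\lambda_1,\ldots,\lambda_\ell)$ with $\lambda_1,\ldots,\lambda_\ell$ positive, observe that $h_{\lambda_i}=h_0=1$ for all $i>\ell$, and conclude that the infinite product collapses to the finite product $h_{\lambda_1}\cdots h_{\lambda_\ell}=h_\lambda$.
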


This is how $h_{\lambda}$ is defined in \cite[Section I.2]{Macdon95}.

\begin{vershort}
\begin{proof}
[Proof of Proposition \ref{prop.hlam.inf}.]This is an easy consequence of
$h_{0}=1$.
\end{proof}
\end{vershort}

\begin{verlong}
\begin{proof}
[Proof of Proposition \ref{prop.hlam.inf}.]Write the partition $\lambda$ in
the form $\lambda=\left(  \lambda_{1},\lambda_{2},\ldots,\lambda_{\ell
}\right)  $, where $\lambda_{1},\lambda_{2},\ldots,\lambda_{\ell}$ are
positive integers. Then, the definition of $h_{\lambda}$ yields $h_{\lambda
}=h_{\lambda_{1}}h_{\lambda_{2}}\cdots h_{\lambda_{\ell}}$. But from
$\lambda=\left(  \lambda_{1},\lambda_{2},\ldots,\lambda_{\ell}\right)  $, we
obtain $\lambda_{\ell+1}=\lambda_{\ell+2}=\lambda_{\ell+3}=\cdots=0$. In other
words, each $i\in\left\{  \ell+1,\ell+2,\ell+3,\ldots\right\}  $ satisfies
$\lambda_{i}=0$. Hence, each $i\in\left\{  \ell+1,\ell+2,\ell+3,\ldots
\right\}  $ satisfies
\begin{align}
h_{\lambda_{i}}  &  =h_{0}\ \ \ \ \ \ \ \ \ \ \left(  \text{since }\lambda
_{i}=0\right) \nonumber\\
&  =1. \label{pf.prop.hlam.inf.1}%
\end{align}

Now,
\begin{align*}
h_{\lambda_{1}}h_{\lambda_{2}}h_{\lambda_{3}}\cdots &  =\left(  h_{\lambda
_{1}}h_{\lambda_{2}}\cdots h_{\lambda_{\ell}}\right)  \underbrace{\left(
h_{\lambda_{\ell+1}}h_{\lambda_{\ell+2}}h_{\lambda_{\ell+3}}\cdots\right)
}_{=\prod_{i=\ell+1}^{\infty}h_{\lambda_{i}}}=\left(  h_{\lambda_{1}%
}h_{\lambda_{2}}\cdots h_{\lambda_{\ell}}\right)  \prod_{i=\ell+1}^{\infty
}\underbrace{h_{\lambda_{i}}}_{\substack{=1\\\text{(by
(\ref{pf.prop.hlam.inf.1}))}}}\\
&  =\left(  h_{\lambda_{1}}h_{\lambda_{2}}\cdots h_{\lambda_{\ell}}\right)
\underbrace{\prod_{i=\ell+1}^{\infty}1}_{=1}=h_{\lambda_{1}}h_{\lambda_{2}%
}\cdots h_{\lambda_{\ell}}.
\end{align*}
Comparing this with $h_{\lambda}=h_{\lambda_{1}}h_{\lambda_{2}}\cdots
h_{\lambda_{\ell}}$, we obtain $h_{\lambda}=h_{\lambda_{1}}h_{\lambda_{2}%
}h_{\lambda_{3}}\cdots$. This proves Proposition \ref{prop.hlam.inf}.
\end{proof}
\end{verlong}

\subsection{\label{subsect.proofs.ejpj}Proofs of Proposition \ref{prop.hjpj}
and Proposition \ref{prop.ejpj}}

\begin{proof}
[Proof of Proposition \ref{prop.hjpj}.]There are myriad ways to prove this.
Here is perhaps the simplest one: Let us use the notation $h_{\lambda}$ as
defined in Definition \ref{def.hlam}. Thus, $h_{\left(  n\right)  }=h_{n}$
(since $n$ is a positive integer). Applying (\ref{eq.hlam-mlam-dual}) to
$\lambda=\left(  n\right)  $ and $\mu=\left(  n\right)  $, we obtain
$\left\langle h_{\left(  n\right)  },m_{\left(  n\right)  }\right\rangle
=\delta_{\left(  n\right)  ,\left(  n\right)  }=1$. In view of $h_{\left(
n\right)  }=h_{n}$ and $m_{\left(  n\right)  }=p_{n}$, this rewrites as
$\left\langle h_{n},p_{n}\right\rangle =1$. This proves Proposition
\ref{prop.hjpj}.
\end{proof}

\begin{proof}
[Proof of Proposition \ref{prop.ejpj}.]This is \cite[Exercise 2.8.8(a)]%
{GriRei}. But here is a self-contained proof: Proposition \ref{prop.p-as-sum}
yields%
\begin{align*}
\left\langle e_{n},p_{n}\right\rangle  &  =\left\langle e_{n},\sum_{i=0}%
^{n-1}\left(  -1\right)  ^{i}s_{\left(  n-i,1^{i}\right)  }\right\rangle
=\sum_{i=0}^{n-1}\left(  -1\right)  ^{i}\underbrace{\left\langle
e_{n},s_{\left(  n-i,1^{i}\right)  }\right\rangle }_{\substack{=\left\langle
s_{\left(  1^{n}\right)  },s_{\left(  n-i,1^{i}\right)  }\right\rangle
\\\text{(since }e_{n}=s_{\left(  1^{n}\right)  }\text{)}}}\\
&  =\sum_{i=0}^{n-1}\left(  -1\right)  ^{i}\underbrace{\left\langle s_{\left(
1^{n}\right)  },s_{\left(  n-i,1^{i}\right)  }\right\rangle }%
_{\substack{=\delta_{\left(  1^{n}\right)  ,\left(  n-i,1^{i}\right)
}\\\text{(since the basis }\left(  s_{\lambda}\right)  _{\lambda
\in\operatorname*{Par}}\text{ of }\Lambda\text{ is}\\\text{orthonormal with
respect to}\\\text{the Hall inner product)}}}=\sum_{i=0}^{n-1}\left(
-1\right)  ^{i}\underbrace{\delta_{\left(  1^{n}\right)  ,\left(
n-i,1^{i}\right)  }}_{\substack{=%
\begin{cases}
1, & \text{if }\left(  1^{n}\right)  =\left(  n-i,1^{i}\right)  ;\\
0, & \text{if }\left(  1^{n}\right)  \neq\left(  n-i,1^{i}\right)
\end{cases}
\\=%
\begin{cases}
1, & \text{if }i=n-1;\\
0, & \text{if }i\neq n-1
\end{cases}
\\\text{(since we have }\left(  1^{n}\right)  =\left(  n-i,1^{i}\right)
\\\text{if and only if }i=n-1\text{)}}}\\
&  =\sum_{i=0}^{n-1}\left(  -1\right)  ^{i}%
\begin{cases}
1, & \text{if }i=n-1;\\
0, & \text{if }i\neq n-1
\end{cases}
\ \ \ =\left(  -1\right)  ^{n-1}.
\end{align*}

\end{proof}

\begin{verlong}
\silentsubsection{\label{subsect.proofs.basics}Proof of Proposition
\ref{prop.G.basics}}

Our next goal is to prove {Proposition \ref{prop.G.basics}. Speaking frankly,
the proof is obvious, but making it fully rigorous will require us to prove
some lemmas first. We shall use the notations from Subsection
\ref{subsect.proofs.Sinf}; in particular, we recall how the group
}$\mathfrak{S}_{\infty}$ and its subgroup $\mathfrak{S}_{\left(
\infty\right)  }$ act on $\operatorname*{WC}$. It is intuitively clear that
any weak composition has exactly one partition among its rearrangements
(namely, the partition obtained by sorting its entries into weakly decreasing
order). In other words, each orbit of the action of $\mathfrak{S}_{\left(
\infty\right)  }$ on $\operatorname*{WC}$ contains exactly one partition. Let
us state this as a lemma and outline a rigorous proof:

\begin{lemma}
\label{lem.G.basics.unique-perm}Let $\alpha\in\operatorname*{WC}$. Then, there
exists a unique partition $\lambda\in\operatorname*{Par}$ such that $\alpha
\in\mathfrak{S}_{\left(  \infty\right)  }\lambda$.
\end{lemma}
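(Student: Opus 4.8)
Proof proposal for Lemma~\ref{lem.G.basics.unique-perm}.

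The plan is to handle existence and uniqueness separately. For \textbf{existence}, I would first use the definition of a weak composition: $\alpha$ has only finitely many nonzero entries, say at positions $i_1 < i_2 < \cdots < i_r$. Let $\lambda$ be the partition obtained by arranging the values $\alpha_{i_1}, \alpha_{i_2}, \ldots, \alpha_{i_r}$ in weakly decreasing order and padding with trailing zeroes; this is manifestly a partition with exactly $r$ nonzero parts $\lambda_1 \geq \cdots \geq \lambda_r$, and its multiset of entries coincides with that of $\alpha$. I must then produce a finitary permutation $\sigma$ with $\sigma\lambda = \alpha$. Since $\{\lambda_1, \ldots, \lambda_r\}$ and $\{\alpha_{i_1}, \ldots, \alpha_{i_r}\}$ are the same multiset, there is a bijection $\pi \colon \{1, \ldots, r\} \to \{i_1, \ldots, i_r\}$ with $\alpha_{\pi(j)} = \lambda_j$ for $j \leq r$. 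Fixing $N \geq i_r$ (so that $r \leq i_r \leq N$ and $\{i_1, \ldots, i_r\} \subseteq \{1, \ldots, N\}$), I would pick any bijection from $\{r+1, \ldots, N\}$ onto $\{1, \ldots, N\} \setminus \{i_1, \ldots, i_r\}$ (possible since both sets have $N - r$ elements), glue it to $\pi$ via Lemma~\ref{lem.proofs.Sinf-glue1} to obtain a permutation of $\{1, \ldots, N\}$, and extend by the identity on $\{N+1, N+2, \ldots\}$. The resulting $\sigma$ fixes every $i > N$, hence is finitary, and a short case check ($j \leq r$, then $r < j \leq N$, then $j > N$) gives $\alpha_{\sigma(j)} = \lambda_j$ for all $j$; by the definition of the action of $\mathfrak{S}_{\left(\infty\right)}$ on $\operatorname*{WC}$, this means $\alpha = \sigma\lambda \in \mathfrak{S}_{\left(\infty\right)}\lambda$.

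For \textbf{uniqueness}, suppose $\lambda, \mu \in \operatorname*{Par}$ both satisfy $\alpha \in \mathfrak{S}_{\left(\infty\right)}\lambda$ and $\alpha \in \mathfrak{S}_{\left(\infty\right)}\mu$. Since the orbits of a group action partition the underlying set, having $\alpha$ in both orbits forces $\mathfrak{S}_{\left(\infty\right)}\lambda = \mathfrak{S}_{\left(\infty\right)}\mu$, so $\mu = \sigma\lambda$ for some $\sigma \in \mathfrak{S}_{\left(\infty\right)}$; in particular $\lambda$ and $\mu$ have the same multiset of entries. It then remains to prove the purely order-theoretic fact that two weakly decreasing weak compositions with the same multiset of entries must be equal. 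I would do this by contradiction: let $i$ be the smallest index with $\lambda_i \neq \mu_i$, say $\lambda_i > \mu_i$ (the other case being symmetric), and count entries strictly greater than $\mu_i$. On the $\lambda$ side there are at least $i$ of them, since $\lambda_1 \geq \cdots \geq \lambda_i > \mu_i$; on the $\mu$ side all such entries occur among $\mu_1, \ldots, \mu_{i-1}$ (because $\mu$ is weakly decreasing and $\mu_i$ itself is not $> \mu_i$), so there are at most $i - 1$. This contradicts the equality of multisets, so $\lambda = \mu$.

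The main obstacle is not conceptual but bookkeeping: making the "glue a finite bijection and extend by the identity" construction of $\sigma$ precise and checking carefully that it is finitary and actually carries $\lambda$ to $\alpha$ entrywise. (If desired, one can alternatively deduce the finitary assertion of existence from the $\mathfrak{S}_\infty$-version together with Lemma~\ref{lem.proofs.Sinf-orbits}, but the direct construction above is self-contained.) Everything else is routine.
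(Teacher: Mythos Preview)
Your proof is correct and follows essentially the same two-step structure as the paper: sort the entries to produce a partition for existence, then argue that two partitions with the same multiset of entries must coincide for uniqueness. The only real difference is in the existence construction: the paper simply picks $k$ large enough that $\alpha_{k+1}=\alpha_{k+2}=\cdots=0$ and sorts the first $k$ entries (zeros included) via a permutation of $\{1,\ldots,k\}$, which is automatically finitary; you instead isolate the nonzero positions $i_1<\cdots<i_r$, build a bijection $\{1,\ldots,r\}\to\{i_1,\ldots,i_r\}$, and then glue on an auxiliary bijection to fill out $\{1,\ldots,N\}$. Both work, but the paper's version avoids the gluing step entirely by letting the zeros come along for the ride. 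Your uniqueness argument (counting entries exceeding $\mu_i$) is a concrete unpacking of the paper's one-line appeal to ``a partition is uniquely determined by how often it contains each $i\in\mathbb{N}$.''
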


\begin{proof}
[Proof of Lemma \ref{lem.G.basics.unique-perm} (sketched).]We have $\alpha
\in\operatorname*{WC}$. In other words, $\alpha$ is a weak composition. In
other words, $\alpha$ is a sequence $\left(  \alpha_{1},\alpha_{2},\alpha
_{3},\ldots\right)  $ of nonnegative integers that contains only finitely many
nonzero entries. Thus, there exists some $k\in\mathbb{N}$ such that
$\alpha_{k+1}=\alpha_{k+2}=\alpha_{k+3}=\cdots=0$. Consider this $k$. Thus,
$\alpha=\left(  \alpha_{1},\alpha_{2},\ldots,\alpha_{k},0,0,0,\ldots\right)
$. Now, by sorting the first $k$ entries $\alpha_{1},\alpha_{2},\ldots
,\alpha_{k}$ of $\alpha$ into weakly decreasing order, we obtain a new weak
composition $\beta\in\operatorname*{WC}$ that has the form $\beta=\left(
\alpha_{\sigma\left(  1\right)  },\alpha_{\sigma\left(  2\right)  }%
,\ldots,\alpha_{\sigma\left(  k\right)  },0,0,0,\ldots\right)  $ for some
permutation $\sigma$ of $\left\{  1,2,\ldots,k\right\}  $ and satisfies
$\alpha_{\sigma\left(  1\right)  }\geq\alpha_{\sigma\left(  2\right)  }%
\geq\cdots\geq\alpha_{\sigma\left(  k\right)  }$. Consider this $\beta$ and
this $\sigma$. From $\beta=\left(  \alpha_{\sigma\left(  1\right)  }%
,\alpha_{\sigma\left(  2\right)  },\ldots,\alpha_{\sigma\left(  k\right)
},0,0,0,\ldots\right)  $, we conclude that the entries of $\beta$ are weakly
decreasing (since $\alpha_{\sigma\left(  1\right)  }\geq\alpha_{\sigma\left(
2\right)  }\geq\cdots\geq\alpha_{\sigma\left(  k\right)  }\geq0\geq0\geq
0\geq\cdots$); in other words, $\beta$ is a partition. In other words,
$\beta\in\operatorname*{Par}$. Also, $\beta$ was obtained from $\alpha$ by
sorting the first $k$ entries into weakly decreasing order; thus, $\beta$ was
obtained from $\alpha$ by permuting the first $k$ entries. Hence, $\alpha$ can
be obtained from $\beta$ by permuting the first $k$ entries. This shows that
$\alpha\in\mathfrak{S}_{\left(  \infty\right)  }\beta$ (since permuting the
first $k$ entries of a weak composition can be achieved by the action of some
permutation $\sigma\in\mathfrak{S}_{\left(  \infty\right)  }$). Therefore,
there exists a partition $\lambda\in\operatorname*{Par}$ such that $\alpha
\in\mathfrak{S}_{\left(  \infty\right)  }\lambda$ (namely, $\lambda=\beta$).
It thus remains to prove that this $\lambda$ is unique. In other words, it
remains to prove that there exists at most one partition $\lambda
\in\operatorname*{Par}$ such that $\alpha\in\mathfrak{S}_{\left(
\infty\right)  }\lambda$.

But this is easy: Let $\lambda\in\operatorname*{Par}$ be a partition such that
$\alpha\in\mathfrak{S}_{\left(  \infty\right)  }\lambda$. From $\alpha
\in\mathfrak{S}_{\left(  \infty\right)  }\lambda$, we conclude that the
sequence $\alpha$ is a rearrangement of the sequence $\lambda$. Hence, the
sequences $\alpha$ and $\lambda$ differ only in the order of their entries.
Hence, for each $i\in\mathbb{N}$, we have%
\begin{align}
&  \left(  \text{the number of times }i\text{ appears in }\alpha\right)
\nonumber\\
&  =\left(  \text{the number of times }i\text{ appears in }\lambda\right)  .
\label{pf.lem.G.basics.unique-perm.uni.1}%
\end{align}
The same argument (applied to $\beta$ instead of $\lambda$) yields that for
each $i\in\mathbb{N}$, we have%
\begin{align*}
&  \left(  \text{the number of times }i\text{ appears in }\alpha\right) \\
&  =\left(  \text{the number of times }i\text{ appears in }\beta\right)
\end{align*}
(since $\alpha\in\mathfrak{S}_{\left(  \infty\right)  }\beta$). Comparing this
with (\ref{pf.lem.G.basics.unique-perm.uni.1}), we conclude that%
\begin{align*}
&  \left(  \text{the number of times }i\text{ appears in }\beta\right) \\
&  =\left(  \text{the number of times }i\text{ appears in }\lambda\right)
\end{align*}
for each $i\in\mathbb{N}$. In other words, the two partitions $\beta$ and
$\lambda$ contain each $i\in\mathbb{N}$ the same number of times. But this
entails that the partitions $\beta$ and $\lambda$ are equal (because a
partition is uniquely determined by how often it contains each $i\in
\mathbb{N}$). In other words, $\beta=\lambda$. Thus, $\lambda=\beta$. Now,
forget that we fixed $\lambda$. We thus have proved that every partition
$\lambda\in\operatorname*{Par}$ such that $\alpha\in\mathfrak{S}_{\left(
\infty\right)  }\lambda$ will satisfy $\lambda=\beta$. Hence, there exists at
most one partition $\lambda\in\operatorname*{Par}$ such that $\alpha
\in\mathfrak{S}_{\left(  \infty\right)  }\lambda$. This completes the proof of
Lemma \ref{lem.G.basics.unique-perm}.
\end{proof}

The next lemma tells us that the action of $\mathfrak{S}_{\left(
\infty\right)  }$ on $\operatorname*{WC}$ leaves some properties of a
partition unchanged:

\begin{lemma}
\label{lem.G.basics.lam-al}Let $\lambda\in\operatorname*{Par}$. Let $\alpha
\in\mathfrak{S}_{\left(  \infty\right)  }\lambda$. Then:

\textbf{(a)} We have $\left\vert \lambda\right\vert =\left\vert \alpha
\right\vert $.

\textbf{(b)} For every positive integer $k$, we have the logical
equivalence\footnotemark%
\[
\left(  \lambda_{i}<k\text{ for all }i\right)  \ \Longleftrightarrow\ \left(
\alpha_{i}<k\text{ for all }i\right)  .
\]

\end{lemma}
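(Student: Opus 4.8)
The plan is to unwind the definition of the action of $\mathfrak{S}_{\left(  \infty\right)  }$ on $\operatorname*{WC}$. Since $\alpha\in\mathfrak{S}_{\left(  \infty\right)  }\lambda$, there exists a finitary permutation $\sigma\in\mathfrak{S}_{\left(  \infty\right)  }$ with $\alpha=\sigma\cdot\lambda$; by the definition of this action this says exactly that
\[
\alpha_{i}=\lambda_{\sigma^{-1}\left(  i\right)  }\qquad\text{for every positive integer }i,
\]
where $\sigma^{-1}$ is a bijection from $\left\{  1,2,3,\ldots\right\}  $ to itself. Both parts then follow by reindexing along $\sigma^{-1}$.

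For part \textbf{(a)}, I would write $\left\vert \alpha\right\vert =\sum_{i\geq1}\alpha_{i}=\sum_{i\geq1}\lambda_{\sigma^{-1}\left(  i\right)  }$ and substitute $j=\sigma^{-1}\left(  i\right)  $ to obtain $\sum_{j\geq1}\lambda_{j}=\left\vert \lambda\right\vert $. The reindexing is legitimate because $\lambda$, being a weak composition, has only finitely many nonzero entries, so the sum is essentially finite and $\sigma^{-1}$ merely permutes the (finitely many) positions that contribute.

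For part \textbf{(b)}, fix a positive integer $k$. If $\lambda_{i}<k$ for all $i$, then in particular $\lambda_{\sigma^{-1}\left(  i\right)  }<k$ for all $i$, i.e.\ $\alpha_{i}<k$ for all $i$. Conversely, if $\alpha_{i}<k$ for all $i$, then $\lambda_{\sigma^{-1}\left(  i\right)  }<k$ for all $i$; since $\sigma^{-1}$ is surjective, every positive integer $j$ equals $\sigma^{-1}\left(  i\right)  $ for some $i$, whence $\lambda_{j}<k$ for all $j$. This establishes the claimed equivalence.

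There is essentially no obstacle here; the only point requiring a word of care is the reindexing in part \textbf{(a)}, and even that is routine once one recalls that a weak composition has finite support. (This lemma will then be combined with Lemma \ref{lem.G.basics.unique-perm} to rewrite the sums defining $G\left(  k\right)  $ and $G\left(  k,m\right)  $ as sums over partitions, thereby proving parts \textbf{(b)} and \textbf{(c)} of Proposition \ref{prop.G.basics}.)
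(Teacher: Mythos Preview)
Your proof is correct and follows essentially the same approach as the paper: unwind the definition of the action to get $\alpha_{i}=\lambda_{\sigma^{-1}(i)}$, then use the bijectivity of $\sigma^{-1}$ to reindex the sum for part \textbf{(a)} and to transfer the ``all entries $<k$'' condition for part \textbf{(b)}. The paper phrases part \textbf{(b)} slightly differently (observing that the sequence $\lambda_{\sigma^{-1}(1)},\lambda_{\sigma^{-1}(2)},\ldots$ is a rearrangement of $\lambda_{1},\lambda_{2},\ldots$), but this is the same argument in different words.
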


\footnotetext{Here and in all similar situations, \textquotedblleft for all
$i$\textquotedblright\ means \textquotedblleft for all positive integers
$i$\textquotedblright.} (Note that we could have just as well required
$\lambda\in\operatorname*{WC}$ instead of $\lambda\in\operatorname*{Par}$ in
Lemma \ref{lem.G.basics.lam-al}, and we could have required $\alpha
\in\mathfrak{S}_{\infty}\lambda$ instead of $\alpha\in\mathfrak{S}_{\left(
\infty\right)  }\lambda$. But we have chosen to state the lemma in the setting
in which we will be applying it later on.)

\begin{proof}
[Proof of Lemma \ref{lem.G.basics.lam-al}.]We have $\lambda\in
\operatorname*{Par}\subseteq\operatorname*{WC}$, so that $\lambda=\left(
\lambda_{1},\lambda_{2},\lambda_{3},\ldots\right)  $.

We have $\alpha\in\mathfrak{S}_{\left(  \infty\right)  }\lambda$. In other
words, there exists some $\sigma\in\mathfrak{S}_{\left(  \infty\right)  }$
such that $\alpha=\sigma\cdot\lambda$. Consider this $\sigma$.

We have $\sigma\in\mathfrak{S}_{\left(  \infty\right)  }\subseteq
\mathfrak{S}_{\infty}$. Thus, $\sigma$ is a permutation of the set $\left\{
1,2,3,\ldots\right\}  $; in other words, $\sigma$ is a bijection from
$\left\{  1,2,3,\ldots\right\}  $ to $\left\{  1,2,3,\ldots\right\}  $. Hence,
its inverse $\sigma^{-1}$ is a bijection from $\left\{  1,2,3,\ldots\right\}
$ to $\left\{  1,2,3,\ldots\right\}  $ as well. In other words, $\sigma^{-1}$
is a permutation of the set $\left\{  1,2,3,\ldots\right\}  $.

We have $\lambda=\left(  \lambda_{1},\lambda_{2},\lambda_{3},\ldots\right)  $.
Now, $\alpha=\left(  \alpha_{1},\alpha_{2},\alpha_{3},\ldots\right)  $, so
that%
\begin{align*}
\left(  \alpha_{1},\alpha_{2},\alpha_{3},\ldots\right)   &  =\alpha
=\sigma\cdot\underbrace{\lambda}_{=\left(  \lambda_{1},\lambda_{2},\lambda
_{3},\ldots\right)  }=\sigma\cdot\left(  \lambda_{1},\lambda_{2},\lambda
_{3},\ldots\right) \\
&  =\left(  \lambda_{\sigma^{-1}\left(  1\right)  },\lambda_{\sigma
^{-1}\left(  2\right)  },\lambda_{\sigma^{-1}\left(  3\right)  }%
,\ldots\right)
\end{align*}
(by the definition of the action of $\mathfrak{S}_{\infty}$ on
$\operatorname*{WC}$).

\textbf{(a)} The definition of $\left\vert \lambda\right\vert $ yields
\begin{align*}
\left\vert \lambda\right\vert  &  =\lambda_{1}+\lambda_{2}+\lambda_{3}%
+\cdots=\sum_{i\in\left\{  1,2,3,\ldots\right\}  }\lambda_{i}=\sum
_{i\in\left\{  1,2,3,\ldots\right\}  }\lambda_{\sigma^{-1}\left(  i\right)
}\\
&  \ \ \ \ \ \ \ \ \ \ \left(
\begin{array}
[c]{c}%
\text{here, we have substituted }\sigma^{-1}\left(  i\right)  \text{ for
}i\text{ in the sum,}\\
\text{since }\sigma^{-1}\text{ is a bijection from }\left\{  1,2,3,\ldots
\right\}  \text{ to }\left\{  1,2,3,\ldots\right\}
\end{array}
\right) \\
&  =\lambda_{\sigma^{-1}\left(  1\right)  }+\lambda_{\sigma^{-1}\left(
2\right)  }+\lambda_{\sigma^{-1}\left(  3\right)  }+\cdots.
\end{align*}
The definition of $\left\vert \alpha\right\vert $ yields%
\begin{align*}
\left\vert \alpha\right\vert  &  =\alpha_{1}+\alpha_{2}+\alpha_{3}%
+\cdots=\lambda_{\sigma^{-1}\left(  1\right)  }+\lambda_{\sigma^{-1}\left(
2\right)  }+\lambda_{\sigma^{-1}\left(  3\right)  }+\cdots\\
&  \ \ \ \ \ \ \ \ \ \ \left(  \text{since }\left(  \alpha_{1},\alpha
_{2},\alpha_{3},\ldots\right)  =\left(  \lambda_{\sigma^{-1}\left(  1\right)
},\lambda_{\sigma^{-1}\left(  2\right)  },\lambda_{\sigma^{-1}\left(
3\right)  },\ldots\right)  \right)  .
\end{align*}
Comparing these two equalities, we find $\left\vert \lambda\right\vert
=\left\vert \alpha\right\vert $. This proves Lemma \ref{lem.G.basics.lam-al}
\textbf{(a)}.

\textbf{(b)} Recall that $\sigma^{-1}$ is a permutation of the set $\left\{
1,2,3,\ldots\right\}  $. Hence, the numbers $\lambda_{\sigma^{-1}\left(
1\right)  },\lambda_{\sigma^{-1}\left(  2\right)  },\lambda_{\sigma
^{-1}\left(  3\right)  },\ldots$ are precisely the numbers $\lambda
_{1},\lambda_{2},\lambda_{3},\ldots$, except possibly in a different order.
Thus, the numbers $\lambda_{\sigma^{-1}\left(  1\right)  },\lambda
_{\sigma^{-1}\left(  2\right)  },\lambda_{\sigma^{-1}\left(  3\right)
},\ldots$ are all $<k$ if and only if the numbers $\lambda_{1},\lambda
_{2},\lambda_{3},\ldots$ are all $<k$. In other words, we have the following
logical equivalence:%
\begin{align}
&  \ \left(  \text{the numbers }\lambda_{\sigma^{-1}\left(  1\right)
},\lambda_{\sigma^{-1}\left(  2\right)  },\lambda_{\sigma^{-1}\left(
3\right)  },\ldots\text{ are all }<k\right) \nonumber\\
&  \Longleftrightarrow\ \left(  \text{the numbers }\lambda_{1},\lambda
_{2},\lambda_{3},\ldots\text{ are all }<k\right)  .
\label{pf.lem.G.basics.lam-al.b.1}%
\end{align}

Now, we have the following chain of logical equivalences:%
\begin{align*}
&  \ \left(  \alpha_{i}<k\text{ for all }i\right) \\
&  \Longleftrightarrow\ \left(  \text{the numbers }\alpha_{1},\alpha
_{2},\alpha_{3},\ldots\text{ are all }<k\right) \\
&  \Longleftrightarrow\ \left(  \text{the numbers }\lambda_{\sigma^{-1}\left(
1\right)  },\lambda_{\sigma^{-1}\left(  2\right)  },\lambda_{\sigma
^{-1}\left(  3\right)  },\ldots\text{ are all }<k\right) \\
&  \ \ \ \ \ \ \ \ \ \ \ \ \ \ \ \ \ \ \ \ \left(  \text{since }\left(
\alpha_{1},\alpha_{2},\alpha_{3},\ldots\right)  =\left(  \lambda_{\sigma
^{-1}\left(  1\right)  },\lambda_{\sigma^{-1}\left(  2\right)  }%
,\lambda_{\sigma^{-1}\left(  3\right)  },\ldots\right)  \right) \\
&  \Longleftrightarrow\ \left(  \text{the numbers }\lambda_{1},\lambda
_{2},\lambda_{3},\ldots\text{ are all }<k\right)  \ \ \ \ \ \ \ \ \ \ \left(
\text{by (\ref{pf.lem.G.basics.lam-al.b.1})}\right) \\
&  \Longleftrightarrow\ \left(  \lambda_{i}<k\text{ for all }i\right)  .
\end{align*}
In other words, we have the equivalence $\left(  \lambda_{i}<k\text{ for all
}i\right)  \ \Longleftrightarrow\ \left(  \alpha_{i}<k\text{ for all
}i\right)  $. This proves Lemma \ref{lem.G.basics.lam-al} \textbf{(b)}.
\end{proof}

\begin{proof}
[Proof of Proposition \ref{prop.G.basics}.]\textbf{(a)} It is easy to see that
for any $m\in\mathbb{N}$, the formal power series $G\left(  k,m\right)  $ is
homogeneous of degree $m$\ \ \ \ \footnote{\textit{Proof.} Let $m\in
\mathbb{N}$. For any $\alpha\in\operatorname*{WC}$, the monomial
$\mathbf{x}^{\alpha}$ is a monomial of degree $\left\vert \alpha\right\vert $.
Thus, if $\alpha\in\operatorname*{WC}$ satisfies $\left\vert \alpha\right\vert
=m$, then $\mathbf{x}^{\alpha}$ is a monomial of degree $m$ (since $\left\vert
\alpha\right\vert =m$). Hence, $\sum_{\substack{\alpha\in\operatorname*{WC}%
;\\\left\vert \alpha\right\vert =m;\\\alpha_{i}<k\text{ for all }i}%
}\mathbf{x}^{\alpha}$ is a sum of monomials of degree $m$. In view of
\[
G\left(  k,m\right)  =\sum_{\substack{\alpha\in\operatorname*{WC};\\\left\vert
\alpha\right\vert =m;\\\alpha_{i}<k\text{ for all }i}}\mathbf{x}^{\alpha
}\ \ \ \ \ \ \ \ \ \ \left(  \text{by (\ref{eq.Gkm=})}\right)  ,
\]
we can restate this as follows: $G\left(  k,m\right)  $ is a sum of monomials
of degree $m$. Thus, the formal power series $G\left(  k,m\right)  $ is
homogeneous of degree $m$. Qed.}. Moreover, (\ref{eq.Gk=}) yields%
\[
G\left(  k\right)  =\underbrace{\sum_{\substack{\alpha\in\operatorname*{WC}%
;\\\alpha_{i}<k\text{ for all }i}}}_{\substack{=\sum_{m\in\mathbb{N}}%
\ \ \sum_{\substack{\alpha\in\operatorname*{WC};\\\left\vert \alpha\right\vert
=m;\\\alpha_{i}<k\text{ for all }i}}\\\text{(since }\left\vert \alpha
\right\vert \in\mathbb{N}\text{ for each }\alpha\in\operatorname*{WC}\text{)}%
}}\mathbf{x}^{\alpha}=\sum_{m\in\mathbb{N}}\ \ \underbrace{\sum
_{\substack{\alpha\in\operatorname*{WC};\\\left\vert \alpha\right\vert
=m;\\\alpha_{i}<k\text{ for all }i}}\mathbf{x}^{\alpha}}_{\substack{=G\left(
k,m\right)  \\\text{(by (\ref{eq.Gkm=}))}}}=\sum_{m\in\mathbb{N}}G\left(
k,m\right)  .
\]
Thus, the family $\left(  G\left(  k,m\right)  \right)  _{m\in\mathbb{N}}$ is
the homogeneous decomposition of $G\left(  k\right)  $ (since each $G\left(
k,m\right)  $ is homogeneous of degree $m$). Hence, for each $m\in\mathbb{N}$,
the power series $G\left(  k,m\right)  $ is the $m$-th degree homogeneous
component of $G\left(  k\right)  $. This proves Proposition
\ref{prop.G.basics} \textbf{(a)}.

\textbf{(b)} Let us define the group $\mathfrak{S}_{\left(  \infty\right)  }$
and its action on the set $\operatorname*{WC}$ as in Subsection
\ref{subsect.proofs.Sinf}. Then,%
\begin{equation}
\sum_{\substack{\lambda\in\operatorname*{Par};\\\lambda_{i}<k\text{ for all
}i}}\ \ \underbrace{m_{\lambda}}_{\substack{=\sum_{\alpha\in\mathfrak{S}%
_{\left(  \infty\right)  }\lambda}\mathbf{x}^{\alpha}\\\text{(by
(\ref{pf.prop.G.basics.b.1}))}}}=\sum_{\substack{\lambda\in\operatorname*{Par}%
;\\\lambda_{i}<k\text{ for all }i}}\ \ \sum_{\alpha\in\mathfrak{S}_{\left(
\infty\right)  }\lambda}\mathbf{x}^{\alpha}. \label{pf.prop.G.basics.b.new1}%
\end{equation}

Now, we have the following equality of summation signs:%
\begin{align*}
\sum_{\substack{\lambda\in\operatorname*{Par};\\\lambda_{i}<k\text{ for all
}i}}\ \ \sum_{\alpha\in\mathfrak{S}_{\left(  \infty\right)  }\lambda}  &
=\sum_{\lambda\in\operatorname*{Par}}\ \ \underbrace{\sum_{\substack{\alpha
\in\mathfrak{S}_{\left(  \infty\right)  }\lambda;\\\lambda_{i}<k\text{ for all
}i}}}_{\substack{=\sum_{\substack{\alpha\in\mathfrak{S}_{\left(
\infty\right)  }\lambda;\\\alpha_{i}<k\text{ for all }i}}\\\text{(because for
each }\alpha\in\mathfrak{S}_{\left(  \infty\right)  }\lambda\text{, we have
the}\\\text{equivalence }\left(  \lambda_{i}<k\text{ for all }i\right)
\ \Longleftrightarrow\ \left(  \alpha_{i}<k\text{ for all }i\right)
\\\text{(by Lemma \ref{lem.G.basics.lam-al} \textbf{(b)}))}}}=\sum_{\lambda
\in\operatorname*{Par}}\underbrace{\sum_{\substack{\alpha\in\mathfrak{S}%
_{\left(  \infty\right)  }\lambda;\\\alpha_{i}<k\text{ for all }i}%
}}_{\substack{=\sum_{\substack{\alpha\in\operatorname*{WC};\\\alpha
\in\mathfrak{S}_{\left(  \infty\right)  }\lambda;\\\alpha_{i}<k\text{ for all
}i}}\\\text{(since }\mathfrak{S}_{\left(  \infty\right)  }\lambda
\subseteq\operatorname*{WC}\text{)}}}\\
&  =\sum_{\lambda\in\operatorname*{Par}}\ \ \sum_{\substack{\alpha
\in\operatorname*{WC};\\\alpha\in\mathfrak{S}_{\left(  \infty\right)  }%
\lambda;\\\alpha_{i}<k\text{ for all }i}}=\sum_{\substack{\alpha
\in\operatorname*{WC};\\\alpha_{i}<k\text{ for all }i}}\ \ \sum
_{\substack{\lambda\in\operatorname*{Par};\\\alpha\in\mathfrak{S}_{\left(
\infty\right)  }\lambda}}.
\end{align*}
Hence, (\ref{pf.prop.G.basics.b.new1}) becomes%
\begin{align}
\sum_{\substack{\lambda\in\operatorname*{Par};\\\lambda_{i}<k\text{ for all
}i}}m_{\lambda}  &  =\underbrace{\sum_{\substack{\lambda\in\operatorname*{Par}%
;\\\lambda_{i}<k\text{ for all }i}}\ \ \sum_{\alpha\in\mathfrak{S}_{\left(
\infty\right)  }\lambda}}_{=\sum_{\substack{\alpha\in\operatorname*{WC}%
;\\\alpha_{i}<k\text{ for all }i}}\ \ \sum_{\substack{\lambda\in
\operatorname*{Par};\\\alpha\in\mathfrak{S}_{\left(  \infty\right)  }\lambda
}}}\mathbf{x}^{\alpha}\nonumber\\
&  =\sum_{\substack{\alpha\in\operatorname*{WC};\\\alpha_{i}<k\text{ for all
}i}}\ \ \sum_{\substack{\lambda\in\operatorname*{Par};\\\alpha\in
\mathfrak{S}_{\left(  \infty\right)  }\lambda}}\mathbf{x}^{\alpha}.
\label{pf.prop.G.basics.b.new2}%
\end{align}

Now, fix some $\alpha\in\operatorname*{WC}$. Then, Lemma
\ref{lem.G.basics.unique-perm} yields that there exists a unique partition
$\lambda\in\operatorname*{Par}$ such that $\alpha\in\mathfrak{S}_{\left(
\infty\right)  }\lambda$. Thus, the sum $\sum_{\substack{\lambda
\in\operatorname*{Par};\\\alpha\in\mathfrak{S}_{\left(  \infty\right)
}\lambda}}\mathbf{x}^{\alpha}$ has exactly one addend. Hence, this sum
simplifies as follows:%
\begin{equation}
\sum_{\substack{\lambda\in\operatorname*{Par};\\\alpha\in\mathfrak{S}_{\left(
\infty\right)  }\lambda}}\mathbf{x}^{\alpha}=\mathbf{x}^{\alpha}.
\label{pf.prop.G.basics.b.new3}%
\end{equation}

Forget that we fixed $\alpha$. We thus have proved
(\ref{pf.prop.G.basics.b.new3}) for each $\alpha\in\operatorname*{WC}$. Thus,
(\ref{pf.prop.G.basics.b.new2}) becomes%
\[
\sum_{\substack{\lambda\in\operatorname*{Par};\\\lambda_{i}<k\text{ for all
}i}}m_{\lambda}=\sum_{\substack{\alpha\in\operatorname*{WC};\\\alpha
_{i}<k\text{ for all }i}}\ \ \underbrace{\sum_{\substack{\lambda
\in\operatorname*{Par};\\\alpha\in\mathfrak{S}_{\left(  \infty\right)
}\lambda}}\mathbf{x}^{\alpha}}_{\substack{=\mathbf{x}^{\alpha}\\\text{(by
(\ref{pf.prop.G.basics.b.new3}))}}}=\sum_{\substack{\alpha\in
\operatorname*{WC};\\\alpha_{i}<k\text{ for all }i}}\mathbf{x}^{\alpha}.
\]
Comparing this with (\ref{eq.Gk=}), we obtain%
\begin{equation}
G\left(  k\right)  =\sum_{\substack{\lambda\in\operatorname*{Par}%
;\\\lambda_{i}<k\text{ for all }i}}m_{\lambda}.\label{pf.prop.G.basics.b.2}%
\end{equation}
Comparing (\ref{eq.Gk=}) with%
\begin{align*}
&  \prod_{i=1}^{\infty}\underbrace{\left(  x_{i}^{0}+x_{i}^{1}+\cdots
+x_{i}^{k-1}\right)  }_{=\sum_{u\in\left\{  0,1,\ldots,k-1\right\}  }x_{i}%
^{u}}\\
&  =\prod_{i=1}^{\infty}\ \ \sum_{u\in\left\{  0,1,\ldots,k-1\right\}  }%
x_{i}^{u}=\underbrace{\sum_{\substack{\left(  u_{1},u_{2},u_{3},\ldots\right)
\in\left\{  0,1,\ldots,k-1\right\}  ^{\infty};\\\text{all but finitely many
}i\text{ satisfy }u_{i}=0}}}_{\substack{=\sum_{\substack{\left(  u_{1}%
,u_{2},u_{3},\ldots\right)  \in\left\{  0,1,\ldots,k-1\right\}  ^{\infty
};\\\left(  u_{1},u_{2},u_{3},\ldots\right)  \in\operatorname*{WC}%
}}\\\text{(since a sequence }\left(  u_{1},u_{2},u_{3},\ldots\right)  \text{
of nonnegative integers}\\\text{satisfies the statement \textquotedblleft all
but finitely many }i\text{ satisfy }u_{i}=0\text{\textquotedblright}\\\text{if
and only if it satisfies }\left(  u_{1},u_{2},u_{3},\ldots\right)
\in\operatorname*{WC}\text{)}}}x_{1}^{u_{1}}x_{2}^{u_{2}}x_{3}^{u_{3}}\cdots\\
&  \ \ \ \ \ \ \ \ \ \ \ \ \ \ \ \ \ \ \ \ \left(  \text{by the product
rule}\right)  \\
&  =\underbrace{\sum_{\substack{\left(  u_{1},u_{2},u_{3},\ldots\right)
\in\left\{  0,1,\ldots,k-1\right\}  ^{\infty};\\\left(  u_{1},u_{2}%
,u_{3},\ldots\right)  \in\operatorname*{WC}}}}_{\substack{=\sum
_{\substack{\left(  u_{1},u_{2},u_{3},\ldots\right)  \in\operatorname*{WC}%
;\\\left(  u_{1},u_{2},u_{3},\ldots\right)  \in\left\{  0,1,\ldots
,k-1\right\}  ^{\infty}}}\\=\sum_{\substack{\left(  u_{1},u_{2},u_{3}%
,\ldots\right)  \in\operatorname*{WC};\\u_{i}<k\text{ for all }i}%
}\\\text{(because a weak composition }\left(  u_{1},u_{2},u_{3},\ldots\right)
\in\operatorname*{WC}\\\text{satisfies the statement }\left(  u_{1}%
,u_{2},u_{3},\ldots\right)  \in\left\{  0,1,\ldots,k-1\right\}  ^{\infty
}\\\text{if and only if it satisfies \textquotedblleft}u_{i}<k\text{ for all
}i\text{\textquotedblright)}}}x_{1}^{u_{1}}x_{2}^{u_{2}}x_{3}^{u_{3}}\cdots\\
&  =\sum_{\substack{\left(  u_{1},u_{2},u_{3},\ldots\right)  \in
\operatorname*{WC};\\u_{i}<k\text{ for all }i}}x_{1}^{u_{1}}x_{2}^{u_{2}}%
x_{3}^{u_{3}}\cdots=\sum_{\substack{\left(  \alpha_{1},\alpha_{2},\alpha
_{3},\ldots\right)  \in\operatorname*{WC};\\\alpha_{i}<k\text{ for all }%
i}}\underbrace{x_{1}^{\alpha_{1}}x_{2}^{\alpha_{2}}x_{3}^{\alpha_{3}}\cdots
}_{\substack{=\mathbf{x}^{\left(  \alpha_{1},\alpha_{2},\alpha_{3}%
,\ldots\right)  }\\\text{(by the definition of }\mathbf{x}^{\left(  \alpha
_{1},\alpha_{2},\alpha_{3},\ldots\right)  }\text{)}}}\\
&  \ \ \ \ \ \ \ \ \ \ \ \ \ \ \ \ \ \ \ \ \left(
\begin{array}
[c]{c}%
\text{here, we have renamed the}\\
\text{summation index }\left(  u_{1},u_{2},u_{3},\ldots\right)  \text{ as
}\left(  \alpha_{1},\alpha_{2},\alpha_{3},\ldots\right)
\end{array}
\right)  \\
&  =\sum_{\substack{\left(  \alpha_{1},\alpha_{2},\alpha_{3},\ldots\right)
\in\operatorname*{WC};\\\alpha_{i}<k\text{ for all }i}}\mathbf{x}^{\left(
\alpha_{1},\alpha_{2},\alpha_{3},\ldots\right)  }=\sum_{\substack{\alpha
\in\operatorname*{WC};\\\alpha_{i}<k\text{ for all }i}}\mathbf{x}^{\alpha}\\
&  \ \ \ \ \ \ \ \ \ \ \ \ \ \ \ \ \ \ \ \ \left(
\begin{array}
[c]{c}%
\text{here, we have renamed the}\\
\text{summation index }\left(  \alpha_{1},\alpha_{2},\alpha_{3},\ldots\right)
\text{ as }\alpha
\end{array}
\right)  ,
\end{align*}
we obtain%
\[
G\left(  k\right)  =\prod_{i=1}^{\infty}\left(  x_{i}^{0}+x_{i}^{1}%
+\cdots+x_{i}^{k-1}\right)  .
\]
Combining this equality with (\ref{pf.prop.G.basics.b.2}) and (\ref{eq.Gk=}),
we obtain%
\[
G\left(  k\right)  =\sum_{\substack{\alpha\in\operatorname*{WC};\\\alpha
_{i}<k\text{ for all }i}}\mathbf{x}^{\alpha}=\sum_{\substack{\lambda
\in\operatorname*{Par};\\\lambda_{i}<k\text{ for all }i}}m_{\lambda}%
=\prod_{i=1}^{\infty}\left(  x_{i}^{0}+x_{i}^{1}+\cdots+x_{i}^{k-1}\right)  .
\]
This proves Proposition \ref{prop.G.basics} \textbf{(b)}.

\textbf{(c)} Let $m\in\mathbb{N}$. Let us define the group $\mathfrak{S}%
_{\left(  \infty\right)  }$ and its action on the set $\operatorname*{WC}$ as
in Subsection \ref{subsect.proofs.Sinf}. Then,%
\begin{equation}
\sum_{\substack{\lambda\in\operatorname*{Par};\\\left\vert \lambda\right\vert
=m;\\\lambda_{i}<k\text{ for all }i}}\ \ \underbrace{m_{\lambda}%
}_{\substack{=\sum_{\alpha\in\mathfrak{S}_{\left(  \infty\right)  }\lambda
}\mathbf{x}^{\alpha}\\\text{(by (\ref{pf.prop.G.basics.b.1}))}}}=\sum
_{\substack{\lambda\in\operatorname*{Par};\\\left\vert \lambda\right\vert
=m;\\\lambda_{i}<k\text{ for all }i}}\ \ \sum_{\alpha\in\mathfrak{S}_{\left(
\infty\right)  }\lambda}\mathbf{x}^{\alpha}. \label{pf.prop.G.basics.c.new1}%
\end{equation}

Now, we have the following equality of summation signs:%
\begin{align*}
\sum_{\substack{\lambda\in\operatorname*{Par};\\\left\vert \lambda\right\vert
=m;\\\lambda_{i}<k\text{ for all }i}}\ \ \sum_{\alpha\in\mathfrak{S}_{\left(
\infty\right)  }\lambda}  &  =\sum_{\lambda\in\operatorname*{Par}%
}\ \ \underbrace{\sum_{\substack{\alpha\in\mathfrak{S}_{\left(  \infty\right)
}\lambda;\\\left\vert \lambda\right\vert =m;\\\lambda_{i}<k\text{ for all }%
i}}}_{\substack{=\sum_{\substack{\alpha\in\mathfrak{S}_{\left(  \infty\right)
}\lambda;\\\left\vert \lambda\right\vert =m;\\\alpha_{i}<k\text{ for all }%
i}}\\\text{(because for each }\alpha\in\mathfrak{S}_{\left(  \infty\right)
}\lambda\text{, we have the}\\\text{equivalence }\left(  \lambda_{i}<k\text{
for all }i\right)  \ \Longleftrightarrow\ \left(  \alpha_{i}<k\text{ for all
}i\right)  \\\text{(by Lemma \ref{lem.G.basics.lam-al} \textbf{(b)}))}}}\\
&  =\sum_{\lambda\in\operatorname*{Par}}\ \ \underbrace{\sum_{\substack{\alpha
\in\mathfrak{S}_{\left(  \infty\right)  }\lambda;\\\left\vert \lambda
\right\vert =m;\\\alpha_{i}<k\text{ for all }i}}}_{\substack{=\sum
_{\substack{\alpha\in\mathfrak{S}_{\left(  \infty\right)  }\lambda
;\\\left\vert \alpha\right\vert =m;\\\alpha_{i}<k\text{ for all }%
i}}\\\text{(because for each }\alpha\in\mathfrak{S}_{\left(  \infty\right)
}\lambda\text{,}\\\text{we have }\left\vert \lambda\right\vert =\left\vert
\alpha\right\vert \\\text{(by Lemma \ref{lem.G.basics.lam-al} \textbf{(a)}))}%
}}=\sum_{\lambda\in\operatorname*{Par}}\ \ \underbrace{\sum_{\substack{\alpha
\in\mathfrak{S}_{\left(  \infty\right)  }\lambda;\\\left\vert \alpha
\right\vert =m;\\\alpha_{i}<k\text{ for all }i}}}_{\substack{=\sum
_{\substack{\alpha\in\operatorname*{WC};\\\alpha\in\mathfrak{S}_{\left(
\infty\right)  }\lambda;\\\left\vert \alpha\right\vert =m;\\\alpha_{i}<k\text{
for all }i}}\\\text{(since }\mathfrak{S}_{\left(  \infty\right)  }%
\lambda\subseteq\operatorname*{WC}\text{)}}}\\
&  =\sum_{\lambda\in\operatorname*{Par}}\ \ \sum_{\substack{\alpha
\in\operatorname*{WC};\\\alpha\in\mathfrak{S}_{\left(  \infty\right)  }%
\lambda;\\\left\vert \alpha\right\vert =m;\\\alpha_{i}<k\text{ for all }%
i}}=\sum_{\substack{\alpha\in\operatorname*{WC};\\\left\vert \alpha\right\vert
=m;\\\alpha_{i}<k\text{ for all }i}}\ \ \sum_{\substack{\lambda\in
\operatorname*{Par};\\\alpha\in\mathfrak{S}_{\left(  \infty\right)  }\lambda
}}.
\end{align*}
Hence, (\ref{pf.prop.G.basics.c.new1}) becomes%
\begin{align}
\sum_{\substack{\lambda\in\operatorname*{Par};\\\left\vert \lambda\right\vert
=m;\\\lambda_{i}<k\text{ for all }i}}m_{\lambda}  &  =\underbrace{\sum
_{\substack{\lambda\in\operatorname*{Par};\\\left\vert \lambda\right\vert
=m;\\\lambda_{i}<k\text{ for all }i}}\ \ \sum_{\alpha\in\mathfrak{S}_{\left(
\infty\right)  }\lambda}}_{=\sum_{\substack{\alpha\in\operatorname*{WC}%
;\\\left\vert \alpha\right\vert =m;\\\alpha_{i}<k\text{ for all }i}%
}\ \ \sum_{\substack{\lambda\in\operatorname*{Par};\\\alpha\in\mathfrak{S}%
_{\left(  \infty\right)  }\lambda}}}\mathbf{x}^{\alpha}=\sum_{\substack{\alpha
\in\operatorname*{WC};\\\left\vert \alpha\right\vert =m;\\\alpha_{i}<k\text{
for all }i}}\ \ \underbrace{\sum_{\substack{\lambda\in\operatorname*{Par}%
;\\\alpha\in\mathfrak{S}_{\left(  \infty\right)  }\lambda}}\mathbf{x}^{\alpha
}}_{\substack{=\mathbf{x}^{\alpha}\\\text{(by (\ref{pf.prop.G.basics.b.new3}%
))}}}\nonumber\\
&  =\sum_{\substack{\alpha\in\operatorname*{WC};\\\left\vert \alpha\right\vert
=m;\\\alpha_{i}<k\text{ for all }i}}\mathbf{x}^{\alpha}.
\label{pf.prop.G.basics.c.new5}%
\end{align}
Now, (\ref{eq.Gkm=}) becomes%
\[
G\left(  k,m\right)  =\sum_{\substack{\alpha\in\operatorname*{WC};\\\left\vert
\alpha\right\vert =m;\\\alpha_{i}<k\text{ for all }i}}\mathbf{x}^{\alpha}%
=\sum_{\substack{\lambda\in\operatorname*{Par};\\\left\vert \lambda\right\vert
=m;\\\lambda_{i}<k\text{ for all }i}}m_{\lambda}\ \ \ \ \ \ \ \ \ \ \left(
\text{by (\ref{pf.prop.G.basics.c.new5})}\right)  .
\]
This proves Proposition \ref{prop.G.basics} \textbf{(c)}.

\textbf{(d)} Let $m\in\mathbb{N}$ satisfy $k>m$. Then, each $\alpha
\in\operatorname*{WC}$ satisfying $\left\vert \alpha\right\vert =m$ must
automatically satisfy $\left(  \alpha_{i}<k\text{ for all }i\right)
$\ \ \ \ \footnote{\textit{Proof.} Let $\alpha\in\operatorname*{WC}$ satisfy
$\left\vert \alpha\right\vert =m$. We must prove that $\alpha_{i}<k$ for all
$i$.
\par
Indeed, let $i$ be a positive integer. We must prove that $\alpha_{i}<k$.
\par
We have $i\in\left\{  1,2,3,\ldots\right\}  $ (since $i$ is a positive
integer). The definition of $\left\vert \alpha\right\vert $ yields
\begin{align*}
\left\vert \alpha\right\vert  &  =\alpha_{1}+\alpha_{2}+\alpha_{3}+\cdots
=\sum_{j\geq1}\alpha_{j}=\alpha_{i}+\sum_{\substack{j\geq1;\\j\neq
i}}\underbrace{\alpha_{j}}_{\substack{\geq0\\\text{(since }\alpha
\in\operatorname*{WC}\subseteq\mathbb{N}^{\infty}\text{)}}}\\
&  \ \ \ \ \ \ \ \ \ \ \left(  \text{here, we have split off the addend for
}j=i\text{ from the sum}\right) \\
&  \geq\alpha_{i}+\underbrace{\sum_{\substack{j\geq1;\\j\neq i}}0}_{=0}%
=\alpha_{i},
\end{align*}
so that $\alpha_{i}\leq\left\vert \alpha\right\vert =m<k$ (since $k>m$). Thus,
we have proved that $\alpha_{i}<k$. Qed.}. Hence, the condition
\textquotedblleft$\alpha_{i}<k$ for all $i$\textquotedblright\ under the
summation sign \textquotedblleft$\sum_{\substack{\alpha\in\operatorname*{WC}%
;\\\left\vert \alpha\right\vert =m;\\\alpha_{i}<k\text{ for all }i}%
}$\textquotedblright\ is redundant and can be removed. In other words, we have
the following equality between summation signs:%
\[
\sum_{\substack{\alpha\in\operatorname*{WC};\\\left\vert \alpha\right\vert
=m;\\\alpha_{i}<k\text{ for all }i}}=\sum_{\substack{\alpha\in
\operatorname*{WC};\\\left\vert \alpha\right\vert =m}}\ .
\]
Now, (\ref{eq.Gkm=}) yields%
\[
G\left(  k,m\right)  =\underbrace{\sum_{\substack{\alpha\in\operatorname*{WC}%
;\\\left\vert \alpha\right\vert =m;\\\alpha_{i}<k\text{ for all }i}}}%
_{=\sum_{\substack{\alpha\in\operatorname*{WC};\\\left\vert \alpha\right\vert
=m}}}\mathbf{x}^{\alpha}=\sum_{\substack{\alpha\in\operatorname*{WC}%
;\\\left\vert \alpha\right\vert =m}}\mathbf{x}^{\alpha}=h_{m}%
\]
(since the definition of $h_{m}$ yields $h_{m}=\sum_{\substack{\alpha
\in\operatorname*{WC};\\\left\vert \alpha\right\vert =m}}\mathbf{x}^{\alpha}%
$). This proves Proposition \ref{prop.G.basics} \textbf{(d)}.

\textbf{(e)} Let $m\in\mathbb{N}$, and assume that $k=2$. Then, an $\alpha
\in\operatorname*{WC}$ satisfies $\left(  \alpha_{i}<k\text{ for all
}i\right)  $ if and only if it satisfies $\alpha\in\left\{  0,1\right\}
^{\infty}$ (because we have the chain of logical equivalences%
\begin{align*}
\left(  \alpha_{i}<k\text{ for all }i\right)  \  &  \Longleftrightarrow
\ \left(  \alpha_{i}<2\text{ for all }i\right)  \ \ \ \ \ \ \ \ \ \ \left(
\text{since }k=2\right) \\
&  \Longleftrightarrow\ \left(  \alpha_{i}\in\left\{  0,1\right\}  \text{ for
all }i\right)  \ \ \ \ \ \ \ \ \ \ \left(  \text{since }\alpha_{i}%
\in\mathbb{N}\text{ for all }i\right) \\
&  \Longleftrightarrow\ \left(  \alpha\in\left\{  0,1\right\}  ^{\infty
}\right)
\end{align*}
for each $\alpha\in\operatorname*{WC}$). Therefore, the condition
\textquotedblleft$\alpha_{i}<k$ for all $i$\textquotedblright\ under the
summation sign \textquotedblleft$\sum_{\substack{\alpha\in\operatorname*{WC}%
;\\\left\vert \alpha\right\vert =m;\\\alpha_{i}<k\text{ for all }i}%
}$\textquotedblright\ can be replaced by \textquotedblleft$\alpha\in\left\{
0,1\right\}  ^{\infty}$\textquotedblright. Thus, we obtain the following
equality between summation signs:%
\[
\sum_{\substack{\alpha\in\operatorname*{WC};\\\left\vert \alpha\right\vert
=m;\\\alpha_{i}<k\text{ for all }i}}=\sum_{\substack{\alpha\in
\operatorname*{WC};\\\left\vert \alpha\right\vert =m;\\\alpha\in\left\{
0,1\right\}  ^{\infty}}}=\sum_{\substack{\alpha\in\operatorname*{WC}%
;\\\alpha\in\left\{  0,1\right\}  ^{\infty};\\\left\vert \alpha\right\vert
=m}}=\sum_{\substack{\alpha\in\operatorname*{WC}\cap\left\{  0,1\right\}
^{\infty};\\\left\vert \alpha\right\vert =m}}\ \ .
\]
Now, (\ref{eq.Gkm=}) yields%
\[
G\left(  k,m\right)  =\underbrace{\sum_{\substack{\alpha\in\operatorname*{WC}%
;\\\left\vert \alpha\right\vert =m;\\\alpha_{i}<k\text{ for all }i}}}%
_{=\sum_{\substack{\alpha\in\operatorname*{WC}\cap\left\{  0,1\right\}
^{\infty};\\\left\vert \alpha\right\vert =m}}}\mathbf{x}^{\alpha}%
=\sum_{\substack{\alpha\in\operatorname*{WC}\cap\left\{  0,1\right\}
^{\infty};\\\left\vert \alpha\right\vert =m}}\mathbf{x}^{\alpha}=e_{m}%
\]
(since the definition of $e_{m}$ yields $e_{m}=\sum_{\substack{\alpha
\in\operatorname*{WC}\cap\left\{  0,1\right\}  ^{\infty};\\\left\vert
\alpha\right\vert =m}}\mathbf{x}^{\alpha}$). This proves Proposition
\ref{prop.G.basics} \textbf{(e)}.

\textbf{(f)} Let $m=k$. Thus, $p_{m}=p_{k}=m_{\left(  k\right)  }$ (since $k$
is a positive integer).

Now, if $\lambda\in\operatorname*{Par}$ satisfies $\left\vert \lambda
\right\vert =k$, then we have the logical equivalence%
\begin{equation}
\left(  \lambda_{i}<k\text{ for all }i\right)  \ \Longleftrightarrow\ \left(
\lambda\neq\left(  k\right)  \right)  . \label{pf.prop.G.basics.f.1}%
\end{equation}

[\textit{Proof of (\ref{pf.prop.G.basics.f.1}):} Let $\lambda\in
\operatorname*{Par}$ satisfy $\left\vert \lambda\right\vert =k$. We must prove
the equivalence (\ref{pf.prop.G.basics.f.1}). We shall prove the
\textquotedblleft$\Longrightarrow$\textquotedblright\ and \textquotedblleft%
$\Longleftarrow$\textquotedblright\ directions of this equivalence separately:

$\Longrightarrow:$ Assume that $\left(  \lambda_{i}<k\text{ for all }i\right)
$. We must show that $\lambda\neq\left(  k\right)  $.

We have assumed that $\left(  \lambda_{i}<k\text{ for all }i\right)  $.
Applying this to $i=1$, we obtain $\lambda_{1}<k$. But if we had
$\lambda=\left(  k\right)  $, then we would have $\lambda_{1}=\left(
k\right)  _{1}=k$, which would contradict $\lambda_{1}<k$. Hence, we cannot
have $\lambda=\left(  k\right)  $. Thus, $\lambda\neq\left(  k\right)  $. This
proves the \textquotedblleft$\Longrightarrow$\textquotedblright\ implication
of the equivalence (\ref{pf.prop.G.basics.f.1}).

$\Longleftarrow:$ Assume that $\lambda\neq\left(  k\right)  $. We must show
that $\left(  \lambda_{i}<k\text{ for all }i\right)  $.

Indeed, let $i$ be a positive integer. Assume (for the sake of contradiction)
that $\lambda_{i}\geq k$. But $\lambda$ is a partition (since $\lambda
\in\operatorname*{Par}$); thus, $\lambda_{1}\geq\lambda_{2}\geq\lambda_{3}%
\geq\cdots$ and therefore $\lambda_{1}\geq\lambda_{i}\geq k$. But $\left\vert
\lambda\right\vert =k$, so that
\begin{align*}
k  &  =\left\vert \lambda\right\vert =\lambda_{1}+\lambda_{2}+\lambda
_{3}+\cdots\ \ \ \ \ \ \ \ \ \ \left(  \text{by the definition of }\left\vert
\lambda\right\vert \right) \\
&  =\underbrace{\lambda_{1}}_{\geq k}+\left(  \lambda_{2}+\lambda_{3}%
+\lambda_{4}+\cdots\right)  \geq k+\left(  \lambda_{2}+\lambda_{3}+\lambda
_{4}+\cdots\right)  .
\end{align*}
Subtracting $k$ from both sides of this inequality, we obtain $0\geq
\lambda_{2}+\lambda_{3}+\lambda_{4}+\cdots$, so that $\lambda_{2}+\lambda
_{3}+\lambda_{4}+\cdots\leq0$. In other words, the sum of the numbers
$\lambda_{2},\lambda_{3},\lambda_{4},\ldots$ is $\leq0$.

But the numbers $\lambda_{2},\lambda_{3},\lambda_{4},\ldots$ are nonnegative
integers (since $\lambda$ is a partition). Hence, the only way their sum can
be $\leq0$ is if they are all $=0$. Since their sum is $\leq0$, we thus
conclude that they are all $=0$. In other words, $\lambda_{2}=\lambda
_{3}=\lambda_{4}=\cdots=0$. Hence, $\lambda=\left(  \lambda_{1}\right)  $.
Thus, $\left\vert \lambda\right\vert =\lambda_{1}$, so that $\lambda
_{1}=\left\vert \lambda\right\vert =k$. Hence, $\lambda=\left(
\underbrace{\lambda_{1}}_{=k}\right)  =\left(  k\right)  $. This contradicts
$\lambda\neq\left(  k\right)  $.

This contradiction shows that our assumption (that $\lambda_{i}\geq k$) was
false. Hence, we must have $\lambda_{i}<k$.

Forget that we fixed $i$. We have now showed that $\left(  \lambda_{i}<k\text{
for all }i\right)  $. This proves the \textquotedblleft$\Longleftarrow
$\textquotedblright\ implication of the equivalence
(\ref{pf.prop.G.basics.f.1}).

We have now proven both implications of the equivalence
(\ref{pf.prop.G.basics.f.1}). This concludes the proof of
(\ref{pf.prop.G.basics.f.1}).]

The logical equivalence (\ref{pf.prop.G.basics.f.1}) yields the following
equality of summation signs:%
\begin{equation}
\sum_{\substack{\lambda\in\operatorname*{Par};\\\left\vert \lambda\right\vert
=k;\\\lambda_{i}<k\text{ for all }i}}=\sum_{\substack{\lambda\in
\operatorname*{Par};\\\left\vert \lambda\right\vert =k;\\\lambda\neq\left(
k\right)  }}\ \ . \label{pf.prop.G.basics.f.1sum=sum}%
\end{equation}

Now, one of the definitions of $h_{k}$ yields%
\begin{align*}
h_{k}  &  =\sum_{\lambda\in\operatorname*{Par}\nolimits_{k}}m_{\lambda}%
=\sum_{\substack{\lambda\in\operatorname*{Par};\\\left\vert \lambda\right\vert
=k}}m_{\lambda}\\
&  \ \ \ \ \ \ \ \ \ \ \left(  \text{since }\operatorname*{Par}\nolimits_{k}%
\text{ is the set of all }\lambda\in\operatorname*{Par}\text{ satisfying
}\left\vert \lambda\right\vert =k\right) \\
&  =\underbrace{m_{\left(  k\right)  }}_{=p_{k}}+\sum_{\substack{\lambda
\in\operatorname*{Par};\\\left\vert \lambda\right\vert =k;\\\lambda\neq\left(
k\right)  }}m_{\lambda}\\
&  \ \ \ \ \ \ \ \ \ \ \left(  \text{here, we have split off the addend for
}\lambda=\left(  k\right)  \text{ from the sum}\right) \\
&  =p_{k}+\sum_{\substack{\lambda\in\operatorname*{Par};\\\left\vert
\lambda\right\vert =k;\\\lambda\neq\left(  k\right)  }}m_{\lambda}.
\end{align*}
Hence,%
\[
h_{k}-p_{k}=\sum_{\substack{\lambda\in\operatorname*{Par};\\\left\vert
\lambda\right\vert =k;\\\lambda\neq\left(  k\right)  }}m_{\lambda}.
\]

On the other hand, Proposition \ref{prop.G.basics} \textbf{(c)} (applied to
$k$ instead of $m$) yields%
\[
G\left(  k,k\right)  =\sum_{\substack{\alpha\in\operatorname*{WC};\\\left\vert
\alpha\right\vert =k;\\\alpha_{i}<k\text{ for all }i}}\mathbf{x}^{\alpha
}=\underbrace{\sum_{\substack{\lambda\in\operatorname*{Par};\\\left\vert
\lambda\right\vert =k;\\\lambda_{i}<k\text{ for all }i}}}_{\substack{=\sum
_{\substack{\lambda\in\operatorname*{Par};\\\left\vert \lambda\right\vert
=k;\\\lambda\neq\left(  k\right)  }}\\\text{(by
(\ref{pf.prop.G.basics.f.1sum=sum}))}}}m_{\lambda}=\sum_{\substack{\lambda
\in\operatorname*{Par};\\\left\vert \lambda\right\vert =k;\\\lambda\neq\left(
k\right)  }}m_{\lambda}.
\]
Comparing these two equalities, we obtain $G\left(  k,k\right)  =h_{k}-p_{k}$.
In other words, $G\left(  k,m\right)  =h_{m}-p_{m}$ (since $m=k$). This proves
Proposition \ref{prop.G.basics} \textbf{(f)}.
\end{proof}
\end{verlong}

\subsection{\label{subsect.proofs.skew}Skew Schur functions}

Let us define a classical partial order on $\operatorname*{Par}$ (see, e.g.,
\cite[Definition 2.3.1]{GriRei}):

\begin{definition}
Let $\lambda$ and $\mu$ be two partitions.

We say that $\mu\subseteq\lambda$ if each $i\in\left\{  1,2,3,\ldots\right\}
$ satisfies $\mu_{i}\leq\lambda_{i}$.

We say that $\mu\not \subseteq \lambda$ if we don't have $\mu\subseteq\lambda$.
\end{definition}

For example, $\left(  3,2\right)  \subseteq\left(  4,2,1\right)  $, but
$\left(  3,2,1\right)  \not \subseteq \left(  4,2\right)  $ (since $\left(
3,2,1\right)  _{3}=1$ is not $\leq$ to $\left(  4,2\right)  _{3}=0$).

For any two partitions $\lambda$ and $\mu$, a symmetric function
$s_{\lambda/\mu}$ called a \textit{skew Schur function} is defined in
\cite[Definition 2.3.1]{GriRei} and in \cite[\S I.5]{Macdon95} (see also
\cite[Definition 7.10.1]{Stanley-EC2} for the case when $\mu\subseteq\lambda
$). We shall not recall its standard definition here, but rather state a few properties.

The first property (which can in fact be used as an alternative definition of
$s_{\lambda/\mu}$) is the \textit{first Jacobi--Trudi formula} for skew
shapes; it states the following:

\begin{theorem}
\label{thm.JTs.h}Let $\lambda=\left(  \lambda_{1},\lambda_{2},\ldots
,\lambda_{\ell}\right)  $ and $\mu=\left(  \mu_{1},\mu_{2},\ldots,\mu_{\ell
}\right)  $ be two partitions. Then,%
\begin{equation}
s_{\lambda/\mu}=\det\left(  \left(  h_{\lambda_{i}-\mu_{j}-i+j}\right)
_{1\leq i\leq\ell,\ 1\leq j\leq\ell}\right)  . \label{eq.schur.JT.sh-lammu}%
\end{equation}

\end{theorem}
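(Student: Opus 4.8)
The plan is to deduce (\ref{eq.schur.JT.sh-lammu}) from the combinatorial definition of $s_{\lambda/\mu}$ in \cite[Definition 2.3.1]{GriRei} as the generating function $\sum_{T}\mathbf{x}_{T}$ over all column-strict (semistandard) tableaux $T$ of skew shape $\lambda/\mu$, and to evaluate the right-hand determinant by the Lindstr\"{o}m--Gessel--Viennot lemma on non-intersecting lattice paths. First I would handle the degenerate case $\mu\not\subseteq\lambda$ separately: here there are no column-strict tableaux of shape $\lambda/\mu$, so $s_{\lambda/\mu}=0$; on the other hand there is some $r$ with $\mu_{r}>\lambda_{r}$, and then every entry $h_{\lambda_{i}-\mu_{j}-i+j}$ with $i\in\left\{  r,r+1,\ldots,\ell\right\}  $ and $j\in\left\{  1,2,\ldots,r\right\}  $ has a negative subscript (since $\lambda_{i}-\mu_{j}-i+j\leq\lambda_{r}-\mu_{r}<0$) and hence vanishes; this produces a zero submatrix of format $\left(  \ell-r+1\right)  \times r$ with $\left(  \ell-r+1\right)  +r>\ell$, which forces the determinant to be $0$. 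So from now on I may assume $\mu\subseteq\lambda$.

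For the main case I would use the standard lattice-path model in the grid $\mathbb{Z}\times\mathbb{N}$: a path moves by unit steps to the east and to the north, and an east step along the horizontal line $y=t-1$ carries weight $x_{t}$ (for $t\in\left\{  1,2,3,\ldots\right\}  $). Taking the sources $B_{j}=\left(  \mu_{j}-j,\,0\right)  $ and the sinks $A_{i}=\left(  \lambda_{i}-i,\,N\right)  $ for $N$ large (and passing to the limit $N\to\infty$), one checks directly that the weight generating function of all paths from $B_{j}$ to $A_{i}$ equals $h_{\lambda_{i}-\mu_{j}-i+j}$: such a path performs exactly $\left(  \lambda_{i}-i\right)  -\left(  \mu_{j}-j\right)  =\lambda_{i}-\mu_{j}-i+j$ east steps, and the multiset of heights at which they occur can be an arbitrary weakly increasing sequence, which is precisely the combinatorial description of $h_{\lambda_{i}-\mu_{j}-i+j}$ (and if $\lambda_{i}-\mu_{j}-i+j<0$ there are no such paths, matching $h_{\lambda_{i}-\mu_{j}-i+j}=0$). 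The Lindstr\"{o}m--Gessel--Viennot lemma then rewrites the right-hand side of (\ref{eq.schur.JT.sh-lammu}) as the signed sum, over permutations $\sigma$ of $\left\{  1,2,\ldots,\ell\right\}  $, of $\operatorname{sgn}\left(  \sigma\right)  $ times the generating function of all families of pairwise non-intersecting paths in which the $j$-th path runs from $B_{j}$ to $A_{\sigma\left(  j\right)  }$.

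The next step is to note that, since $\lambda$ and $\mu$ are partitions, the sequences $\left(  \lambda_{i}-i\right)  _{i}$ and $\left(  \mu_{j}-j\right)  _{j}$ are \emph{strictly} decreasing; hence the sources $B_{j}$ are arranged in strictly decreasing order of their $x$-coordinates, and so are the sinks $A_{i}$. A standard argument then shows that a family of pairwise non-intersecting paths of the above type can exist only for $\sigma=\operatorname{id}$, so exactly one term survives and its sign is $+1$. Finally I would invoke the classical weight-preserving bijection between families of non-intersecting paths $\left(  B_{i}\to A_{i}\right)  _{1\leq i\leq\ell}$ and column-strict tableaux of skew shape $\lambda/\mu$: reading off, from south to north, the heights $t$ of the successive east steps of the $i$-th path records the entries placed in the boxes $\left(  i,\mu_{i}+1\right)  ,\left(  i,\mu_{i}+2\right)  ,\ldots,\left(  i,\lambda_{i}\right)  $ of row $i$; weak increase along each row is automatic from the geometry of the path, and the non-intersecting condition translates exactly into strict increase down each column. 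Summing weights and comparing with $s_{\lambda/\mu}=\sum_{T}\mathbf{x}_{T}$ yields (\ref{eq.schur.JT.sh-lammu}).

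The main difficulty here is bookkeeping rather than conceptual: pinning down the path model so that the single-path generating function is \emph{exactly} $h_{\lambda_{i}-\mu_{j}-i+j}$ in all cases (including rows with $\lambda_{i}=\mu_{i}$, where the path degenerates, and entries with negative subscript, where no path exists), checking carefully that non-intersecting families force $\sigma=\operatorname{id}$, and verifying that the path/tableau correspondence matches the non-intersecting condition with column-strictness bijectively. As an alternative that avoids lattice paths, one can argue algebraically: apply the $\mathbf{k}$-algebra homomorphism $\Delta:\Lambda\rightarrow\Lambda\otimes\Lambda$ to the straight-shape formula (\ref{eq.schur.JTsh}) for $s_{\lambda}$ (using that $\Delta$ commutes with the determinant because $\Lambda\otimes\Lambda$ is commutative, and that $\Delta\left(  h_{n}\right)  =\sum_{a+b=n}h_{a}\otimes h_{b}$), factor the resulting $\ell\times\ell$ matrix over $\Lambda\otimes\Lambda$ as a product of two effectively finite matrices, apply the Cauchy--Binet formula, identify each resulting minor with a tensor $s_{\nu}\otimes s_{\kappa}$ via (\ref{eq.schur.JTsh}) once more, and compare the outcome with $\Delta\left(  s_{\lambda}\right)  =\sum_{\mu}s_{\lambda/\mu}\otimes s_{\mu}$; the same bookkeeping then reappears inside the Cauchy--Binet sum and in dealing with index sequences that are not partitions.
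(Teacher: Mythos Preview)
Your proof is correct. The paper itself does not give a proof of Theorem~\ref{thm.JTs.h}; it merely states the result and cites \cite[(2.4.12)]{GriRei} and \cite[Chapter I, (5.4)]{Macdon95} for proofs. Your Lindstr\"om--Gessel--Viennot argument is in fact essentially the proof given in the first of these references, so you have reconstructed what the paper defers to.
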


Theorem \ref{thm.JTs.h} appears (with proof) in \cite[(2.4.16)]{GriRei} and in
\cite[Chapter I, (5.4)]{Macdon95}.

The following properties of skew Schur functions are easy to see:

\begin{itemize}
\item If $\lambda$ is any partition, then $s_{\lambda/\varnothing}=s_{\lambda
}$. (Recall that $\varnothing$ denotes the empty partition.)

\item If $\lambda$ and $\mu$ are two partitions satisfying $\mu\not \subseteq
\lambda$, then $s_{\lambda/\mu}=0$.
\end{itemize}

\subsection{\label{subsect.proofs.cauchy}A Cauchy-like identity}

We shall use the following identity, which connects the skew Schur functions
$s_{\lambda/\mu}$, the symmetric functions $h_{\lambda}$ from Definition
\ref{def.hlam} and the monomial symmetric functions $m_{\lambda}$:

\begin{theorem}
\label{thm.our-cauchy}Recall the symmetric functions $h_{\lambda}$ defined in
Definition \ref{def.hlam}. Let $\mu$ be any partition. Then, in the ring
$\mathbf{k}\left[  \left[  \mathbf{x},\mathbf{y}\right]  \right]  $, we have%
\[
\sum_{\lambda\in\operatorname*{Par}}s_{\lambda/\mu}\left(  \mathbf{x}\right)
s_{\lambda}\left(  \mathbf{y}\right)  =s_{\mu}\left(  \mathbf{y}\right)
\cdot\sum_{\lambda\in\operatorname*{Par}}h_{\lambda}\left(  \mathbf{x}\right)
m_{\lambda}\left(  \mathbf{y}\right)  .
\]
Here, we are using the notations introduced in Subsection
\ref{subsect.thms.coprod}.
\end{theorem}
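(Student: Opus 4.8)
The plan is to deduce this from the classical Cauchy identity together with the fact that skew Schur functions expand via Littlewood--Richardson coefficients. Recall first that $\left( h_{\lambda}\right) _{\lambda\in\operatorname*{Par}}$ and $\left( m_{\lambda}\right) _{\lambda\in\operatorname*{Par}}$ are dual bases with respect to the Hall inner product (this is (\ref{eq.hlam-mlam-dual})), while $\left( s_{\lambda}\right) _{\lambda\in\operatorname*{Par}}$ is orthonormal; since any two pairs of dual bases of $\Lambda$ produce the same reproducing kernel (an elementary change-of-basis computation), this yields the classical Cauchy identity
\[
\sum_{\lambda\in\operatorname*{Par}}h_{\lambda}\left( \mathbf{x}\right) m_{\lambda}\left( \mathbf{y}\right) =\sum_{\lambda\in\operatorname*{Par}}s_{\lambda}\left( \mathbf{x}\right) s_{\lambda}\left( \mathbf{y}\right)
\]
in $\mathbf{k}\left[ \left[ \mathbf{x},\mathbf{y}\right] \right] $ (see, e.g., \cite[\S 2.5]{GriRei} or \cite[\S I.4]{Macdon95}). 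Consequently the right-hand side of the theorem equals $s_{\mu}\left( \mathbf{y}\right) \cdot\sum_{\lambda}s_{\lambda}\left( \mathbf{x}\right) s_{\lambda}\left( \mathbf{y}\right) $, so it suffices to prove the ``skew Cauchy identity''
\[
\sum_{\lambda\in\operatorname*{Par}}s_{\lambda/\mu}\left( \mathbf{x}\right) s_{\lambda}\left( \mathbf{y}\right) =s_{\mu}\left( \mathbf{y}\right) \cdot\sum_{\lambda\in\operatorname*{Par}}s_{\lambda}\left( \mathbf{x}\right) s_{\lambda}\left( \mathbf{y}\right) .
\]

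For this I would introduce the Littlewood--Richardson coefficients $c^{\nu}_{\lambda\mu}\in\mathbb{N}$, defined by $s_{\lambda}s_{\mu}=\sum_{\nu}c^{\nu}_{\lambda\mu}s_{\nu}$; a standard fact (see, e.g., \cite{GriRei} or \cite[\S I.5]{Macdon95}) is that these same coefficients give $s_{\nu/\mu}=\sum_{\lambda}c^{\nu}_{\lambda\mu}s_{\lambda}$, and that $c^{\nu}_{\lambda\mu}=0$ unless $\lambda\subseteq\nu$ and $\left\vert \lambda\right\vert +\left\vert \mu\right\vert =\left\vert \nu\right\vert $. Then, using $s_{\mu}\left( \mathbf{y}\right) s_{\lambda}\left( \mathbf{y}\right) =\sum_{\nu}c^{\nu}_{\lambda\mu}s_{\nu}\left( \mathbf{y}\right) $,
\begin{align*}
s_{\mu}\left( \mathbf{y}\right) \sum_{\lambda}s_{\lambda}\left( \mathbf{x}\right) s_{\lambda}\left( \mathbf{y}\right)
&=\sum_{\lambda}\sum_{\nu}c^{\nu}_{\lambda\mu}\,s_{\lambda}\left( \mathbf{x}\right) s_{\nu}\left( \mathbf{y}\right)
=\sum_{\nu}\left( \sum_{\lambda}c^{\nu}_{\lambda\mu}s_{\lambda}\left( \mathbf{x}\right) \right) s_{\nu}\left( \mathbf{y}\right) \\
&=\sum_{\nu}s_{\nu/\mu}\left( \mathbf{x}\right) s_{\nu}\left( \mathbf{y}\right) ,
\end{align*}
and renaming the summation index $\nu$ as $\lambda$ turns the last sum into the left-hand side of the skew Cauchy identity. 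The interchange of the two summations is legitimate because for each fixed bidegree only finitely many pairs $\left( \lambda,\nu\right) $ contribute (those with $\lambda\subseteq\nu$ and $\left\vert \nu\right\vert $ fixed).

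An alternative route, closer to the coproduct formalism of Subsection~\ref{subsect.thms.coprod}, is to substitute a doubled alphabet $\left( \mathbf{x},\mathbf{w}\right) $ into the ordinary Cauchy identity: the product form $\prod_{i,j}\left( 1-x_{i}y_{j}\right) ^{-1}$ factors as a product over $\mathbf{x},\mathbf{y}$ times a product over $\mathbf{w},\mathbf{y}$, while $s_{\lambda}\left( \mathbf{x},\mathbf{w}\right) =\sum_{\mu}s_{\lambda/\mu}\left( \mathbf{x}\right) s_{\mu}\left( \mathbf{w}\right) $ is the usual coproduct expansion of a Schur function; comparing coefficients of the (linearly independent) series $s_{\mu}\left( \mathbf{w}\right) $ again yields the theorem. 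In either approach the one genuine point to watch is the bookkeeping with infinitely many variables: every identity invoked is really an identity in the completed tensor product of $\Lambda$ with itself, holding degree by degree, and each rearrangement of sums involves only finitely many nonzero terms in a given degree. The substantive input --- the classical Cauchy identity and the Littlewood--Richardson (equivalently, coproduct) expansion of $s_{\nu/\mu}$ --- is entirely standard, so the proof amounts to assembling these ingredients with the convergence bookkeeping checked.
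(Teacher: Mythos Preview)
Your proof is correct. The approach is close to the paper's but not identical: the paper cites the skew Cauchy identity
\[
\sum_{\lambda}s_{\lambda}\left(\mathbf{x}\right)s_{\lambda/\mu}\left(\mathbf{y}\right)=s_{\mu}\left(\mathbf{x}\right)\cdot\prod_{i,j}\left(1-x_{i}y_{j}\right)^{-1}
\]
as a known result (from \cite[\S I.5, example 26]{Macdon95} and \cite[Exercise 2.5.11(a)]{GriRei}), swaps $\mathbf{x}$ and $\mathbf{y}$, and then replaces the infinite product by $\sum_{\lambda}h_{\lambda}\left(\mathbf{x}\right)m_{\lambda}\left(\mathbf{y}\right)$ via the classical Cauchy identity. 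You instead first convert $\sum_{\lambda}h_{\lambda}m_{\lambda}$ to $\sum_{\lambda}s_{\lambda}s_{\lambda}$ and then \emph{prove} the skew Cauchy identity from the Littlewood--Richardson expansion $s_{\nu/\mu}=\sum_{\lambda}c^{\nu}_{\lambda\mu}s_{\lambda}$. Your route is slightly more self-contained (it unpacks one of the citations), at the cost of invoking the LR coefficients; the paper's route is shorter by treating the skew Cauchy identity as a black box. Your alternative doubled-alphabet argument is in fact essentially how the cited identity is proved in those references.
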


Theorem \ref{thm.our-cauchy} appears in \cite[fourth display on page
70]{Macdon95}, but let us give a proof for the sake of completeness:

\begin{proof}
[Proof of Theorem \ref{thm.our-cauchy}.]A well-known identity (proved, e.g.,
in \cite[Chapter I, (4.2)]{Macdon95} and in \cite[proof of Proposition
2.5.15]{GriRei}) says that%
\begin{equation}
\prod_{i,j=1}^{\infty}\left(  1-x_{i}y_{j}\right)  ^{-1}=\sum_{\lambda
\in\operatorname*{Par}}h_{\lambda}\left(  \mathbf{x}\right)  m_{\lambda
}\left(  \mathbf{y}\right)  . \label{pf.thm.our-cauchy.hmxy}%
\end{equation}
(Here, the product sign \textquotedblleft$\prod_{i,j=1}^{\infty}%
$\textquotedblright\ means \textquotedblleft$\prod_{\left(  i,j\right)
\in\left\{  1,2,3,\ldots\right\}  ^{2}}$\textquotedblright.)

Another well-known identity (proved, e.g., in \cite[\S I.5, example
26]{Macdon95} and in \cite[Exercise 2.5.11(a)]{GriRei}) says that%
\[
\sum_{\lambda\in\operatorname*{Par}}s_{\lambda}\left(  \mathbf{x}\right)
s_{\lambda/\mu}\left(  \mathbf{y}\right)  =s_{\mu}\left(  \mathbf{x}\right)
\cdot\prod_{i,j=1}^{\infty}\left(  1-x_{i}y_{j}\right)  ^{-1}.
\]
If we swap the roles of $\mathbf{x}=\left(  x_{1},x_{2},x_{3},\ldots\right)  $
and $\mathbf{y}=\left(  y_{1},y_{2},y_{3},\ldots\right)  $ in this identity,
then we obtain%
\[
\sum_{\lambda\in\operatorname*{Par}}s_{\lambda}\left(  \mathbf{y}\right)
s_{\lambda/\mu}\left(  \mathbf{x}\right)  =s_{\mu}\left(  \mathbf{y}\right)
\cdot\prod_{i,j=1}^{\infty}\left(  1-y_{i}x_{j}\right)  ^{-1}.
\]
In view of%
\[
\sum_{\lambda\in\operatorname*{Par}}\underbrace{s_{\lambda}\left(
\mathbf{y}\right)  s_{\lambda/\mu}\left(  \mathbf{x}\right)  }_{=s_{\lambda
/\mu}\left(  \mathbf{x}\right)  s_{\lambda}\left(  \mathbf{y}\right)  }%
=\sum_{\lambda\in\operatorname*{Par}}s_{\lambda/\mu}\left(  \mathbf{x}\right)
s_{\lambda}\left(  \mathbf{y}\right)
\]
and
\begin{align*}
\prod_{i,j=1}^{\infty}\left(  1-y_{i}x_{j}\right)  ^{-1}  &
=\underbrace{\prod_{j,i=1}^{\infty}}_{=\prod_{i,j=1}^{\infty}}\left(
1-\underbrace{y_{j}x_{i}}_{=x_{i}y_{j}}\right)  ^{-1}\\
&  \ \ \ \ \ \ \ \ \ \ \left(
\begin{array}
[c]{c}%
\text{here, we have renamed the index }\left(  i,j\right)  \text{ as }\left(
j,i\right) \\
\text{in the product}%
\end{array}
\right) \\
&  =\prod_{i,j=1}^{\infty}\left(  1-x_{i}y_{j}\right)  ^{-1}=\sum_{\lambda
\in\operatorname*{Par}}h_{\lambda}\left(  \mathbf{x}\right)  m_{\lambda
}\left(  \mathbf{y}\right)  \ \ \ \ \ \ \ \ \ \ \left(  \text{by
(\ref{pf.thm.our-cauchy.hmxy})}\right)  ,
\end{align*}
this rewrites as%
\[
\sum_{\lambda\in\operatorname*{Par}}s_{\lambda/\mu}\left(  \mathbf{x}\right)
s_{\lambda}\left(  \mathbf{y}\right)  =s_{\mu}\left(  \mathbf{y}\right)
\cdot\sum_{\lambda\in\operatorname*{Par}}h_{\lambda}\left(  \mathbf{x}\right)
m_{\lambda}\left(  \mathbf{y}\right)  .
\]
This proves Theorem \ref{thm.our-cauchy}.
\end{proof}

\subsection{\label{subsect.proofs.petk.alphak}The $\mathbf{k}$-algebra
homomorphism $\alpha_{k}:\Lambda\rightarrow\mathbf{k}$}

Recall that the family $\left(  h_{n}\right)  _{n\geq1}=\left(  h_{1}%
,h_{2},h_{3},\ldots\right)  $ is algebraically independent and generates
$\Lambda$ as a $\mathbf{k}$-algebra. Thus, $\Lambda$ can be viewed as a
polynomial ring in the (infinitely many) indeterminates $h_{1},h_{2}%
,h_{3},\ldots$. The universal property of a polynomial ring thus shows that if
$A$ is any commutative $\mathbf{k}$-algebra, and if $\left(  a_{1},a_{2}%
,a_{3},\ldots\right)  $ is any sequence of elements of $A$, then there is a
unique $\mathbf{k}$-algebra homomorphism from $\Lambda$ to $A$ that sends
$h_{i}$ to $a_{i}$ for all positive integers $i$. We shall refer to this as
the \textit{h-universal property of }$\Lambda$. It lets us make the following
definition:\footnote{We are using the Iverson bracket notation (see Convention
\ref{conv.iverson}) here.}

\begin{definition}
\label{def.alphak}Let $k$ be a positive integer. The h-universal property of
$\Lambda$ shows that there is a unique $\mathbf{k}$-algebra homomorphism
$\alpha_{k}:\Lambda\rightarrow\mathbf{k}$ that sends $h_{i}$ to $\left[
i<k\right]  $ for all positive integers $i$. Consider this $\alpha_{k}$.
\end{definition}

We will use this homomorphism $\alpha_{k}$ several times in what follows; let
us thus begin by stating some elementary properties of $\alpha_{k}$.

\begin{lemma}
\label{lem.alphak.h}Let $k$ be a positive integer.

\textbf{(a)} We have%
\begin{equation}
\alpha_{k}\left(  h_{i}\right)  =\left[  i<k\right]
\ \ \ \ \ \ \ \ \ \ \text{for all }i\in\mathbb{N}. \label{pf.thm.G.main.alk}%
\end{equation}

\textbf{(b)} We have%
\begin{equation}
\alpha_{k}\left(  h_{i}\right)  =\left[  0\leq i<k\right]
\ \ \ \ \ \ \ \ \ \ \text{for all }i\in\mathbb{Z}. \label{pf.thm.G.main.alkZ}%
\end{equation}

\textbf{(c)} Let $\lambda$ be a partition. Define $h_{\lambda}$ as in
Definition \ref{def.hlam}. Then,%
\begin{equation}
\alpha_{k}\left(  h_{\lambda}\right)  =\left[  \lambda_{i}<k\text{ for all
}i\right]  . \label{pf.thm.G.main.alkh}%
\end{equation}
(Here, \textquotedblleft for all $i$\textquotedblright\ means
\textquotedblleft for all positive integers $i$\textquotedblright.)
\end{lemma}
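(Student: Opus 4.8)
The plan is to prove the three parts in order, using only the defining property of $\alpha_k$ from Definition \ref{def.alphak} together with the basic facts $h_0 = 1$ and $h_n = 0$ for $n < 0$ recalled in Section \ref{sect.not}. For part \textbf{(a)}, if $i$ is a positive integer then $\alpha_k\left(h_i\right) = \left[i < k\right]$ is exactly how $\alpha_k$ was defined; and if $i = 0$, then $h_0 = 1$ gives $\alpha_k\left(h_0\right) = \alpha_k\left(1\right) = 1$ (as $\alpha_k$ is a $\mathbf{k}$-algebra homomorphism), which matches $\left[0 < k\right] = 1$ since $k$ is positive. For part \textbf{(b)}, I would split on the sign of $i \in \mathbb{Z}$: when $i \geq 0$ the bracket $\left[0 \leq i < k\right]$ equals $\left[i < k\right] = \alpha_k\left(h_i\right)$ by part \textbf{(a)}; when $i < 0$ we have $h_i = 0$, so $\alpha_k\left(h_i\right) = 0 = \left[0 \leq i < k\right]$.

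For part \textbf{(c)}, I would write $\lambda = \left(\lambda_1, \lambda_2, \ldots, \lambda_\ell\right)$ with $\lambda_1, \ldots, \lambda_\ell$ positive integers, so that $h_\lambda = h_{\lambda_1} h_{\lambda_2} \cdots h_{\lambda_\ell}$ by Definition \ref{def.hlam}. Applying the homomorphism $\alpha_k$ and then part \textbf{(a)} to each factor yields $\alpha_k\left(h_\lambda\right) = \prod_{j=1}^{\ell} \left[\lambda_j < k\right]$, which is $1$ if $\lambda_j < k$ for every $j \in \left\{1, 2, \ldots, \ell\right\}$ and $0$ otherwise. It then remains to note that this last condition is equivalent to ``$\lambda_i < k$ for all positive integers $i$'', because every positive integer $i > \ell$ automatically satisfies $\lambda_i = 0 < k$; hence $\alpha_k\left(h_\lambda\right) = \left[\lambda_i < k \text{ for all } i\right]$.

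The argument is entirely routine and presents no genuine obstacle. The only points needing a little attention are the degenerate degrees in parts \textbf{(a)} and \textbf{(b)} — namely $i = 0$, where one must use $h_0 = 1$ rather than the literal statement of Definition \ref{def.alphak}, and $i < 0$, where one uses $h_n = 0$ — and, in part \textbf{(c)}, the passage from the finite index set $\left\{1, \ldots, \ell\right\}$ to the set of all positive integers. All three of these small points rely on the standing hypothesis that $k$ is a \emph{positive} integer (equivalently $0 < k$), so it is worth flagging that this hypothesis is genuinely used.
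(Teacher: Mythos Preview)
Your proposal is correct and follows essentially the same approach as the paper: both handle parts \textbf{(a)} and \textbf{(b)} by splitting off the degenerate cases $i=0$ and $i<0$ (using $h_0=1$ and $h_n=0$ for $n<0$), and both prove part \textbf{(c)} by applying the homomorphism $\alpha_k$ to the finite product $h_{\lambda_1}\cdots h_{\lambda_\ell}$ and then observing that the condition ``$\lambda_i<k$ for all $i\in\{1,\ldots,\ell\}$'' is equivalent to ``$\lambda_i<k$ for all positive integers $i$'' since $\lambda_i=0<k$ for $i>\ell$.
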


\begin{proof}
[Proof of Lemma \ref{lem.alphak.h}.]Note that $0<k$ (since $k$ is positive).

\textbf{(a)} Let $i\in\mathbb{N}$. We must prove that $\alpha_{k}\left(
h_{i}\right)  =\left[  i<k\right]  $. If $i>0$, then this follows from the
definition of $\alpha_{k}$. Thus, we WLOG assume that we don't have $i>0$.
Hence, $i=0$ (since $i\in\mathbb{N}$). Therefore, $h_{i}=h_{0}=1$, so that
$\alpha_{k}\left(  h_{i}\right)  =\alpha_{k}\left(  1\right)  =1$ (since
$\alpha_{k}$ is a $\mathbf{k}$-algebra homomorphism). On the other hand,
$i=0<k$, so that $\left[  i<k\right]  =1$. Comparing this with $\alpha
_{k}\left(  h_{i}\right)  =1$, we obtain $\alpha_{k}\left(  h_{i}\right)
=\left[  i<k\right]  $. This proves Lemma \ref{lem.alphak.h} \textbf{(a)}.

\begin{vershort}
\textbf{(b)} Let $i\in\mathbb{Z}$. We must prove that $\alpha_{k}\left(
h_{i}\right)  =\left[  0\leq i<k\right]  $. If $i<0$, then this boils down to
$0=0$ (since $i<0$ entails $h_{i}=0$ and thus $\alpha_{k}\left(  h_{i}\right)
=\alpha_{k}\left(  0\right)  =0$, but also $\left[  0\leq i<k\right]  =0$).
Hence, we WLOG assume that $i\geq0$. Thus, $i\in\mathbb{N}$. Hence, Lemma
\ref{lem.alphak.h} \textbf{(a)} yields $\alpha_{k}\left(  h_{i}\right)
=\left[  i<k\right]  $. On the other hand, the statement \textquotedblleft%
$0\leq i<k$\textquotedblright\ is equivalent to the statement
\textquotedblleft$i<k$\textquotedblright\ (since $0\leq i$ holds
automatically\footnote{because $i\geq0$}); thus, $\left[  0\leq i<k\right]
=\left[  i<k\right]  $. Comparing this with $\alpha_{k}\left(  h_{i}\right)
=\left[  i<k\right]  $, we obtain $\alpha_{k}\left(  h_{i}\right)  =\left[
0\leq i<k\right]  $. This proves Lemma \ref{lem.alphak.h} \textbf{(b)}.
\end{vershort}

\begin{verlong}
\textbf{(b)} Let $i\in\mathbb{Z}$. We must prove that $\alpha_{k}\left(
h_{i}\right)  =\left[  0\leq i<k\right]  $. If $i<0$, then this easily follows
from $0=0$\ \ \ \ \footnote{\textit{Proof.} Assume that $i<0$. Thus, $h_{i}%
=0$, so that $\alpha_{k}\left(  h_{i}\right)  =\alpha_{k}\left(  0\right)  =0$
(since $\alpha_{k}$ is a $\mathbf{k}$-algebra homomorphism). But the statement
\textquotedblleft$0\leq i<k$\textquotedblright\ is false (since $i<0$); hence,
$\left[  0\leq i<k\right]  =0$. Comparing this with $\alpha_{k}\left(
h_{i}\right)  =0$, we obtain $\alpha_{k}\left(  h_{i}\right)  =\left[  0\leq
i<k\right]  $, qed.}. Hence, we WLOG assume that we don't have $i<0$.
Therefore, $i\geq0$, so that $i\in\mathbb{N}$. Hence, Lemma \ref{lem.alphak.h}
\textbf{(a)} yields $\alpha_{k}\left(  h_{i}\right)  =\left[  i<k\right]  $.
On the other hand, the statement \textquotedblleft$0\leq i<k$%
\textquotedblright\ is equivalent to the statement \textquotedblleft%
$i<k$\textquotedblright\ (since $0\leq i$ holds automatically\footnote{because
$i\geq0$}); thus, $\left[  0\leq i<k\right]  =\left[  i<k\right]  $. Comparing
this with $\alpha_{k}\left(  h_{i}\right)  =\left[  i<k\right]  $, we obtain
$\alpha_{k}\left(  h_{i}\right)  =\left[  0\leq i<k\right]  $. This proves
Lemma \ref{lem.alphak.h} \textbf{(b)}.
\end{verlong}

\textbf{(c)} We recall the following simple property of truth values: If
$\ell\in\mathbb{N}$, and if $\mathcal{A}_{1},\mathcal{A}_{2},\ldots
,\mathcal{A}_{\ell}$ are $\ell$ logical statements, then
\begin{align}
\prod_{i=1}^{\ell}\left[  \mathcal{A}_{i}\right]   &  =\left[  \mathcal{A}%
_{1}\right]  \left[  \mathcal{A}_{2}\right]  \cdots\left[  \mathcal{A}_{\ell
}\right]  =\left[  \mathcal{A}_{1}\wedge\mathcal{A}_{2}\wedge\cdots
\wedge\mathcal{A}_{\ell}\right] \nonumber\\
&  =\left[  \mathcal{A}_{i}\text{ for all }i\in\left\{  1,2,\ldots
,\ell\right\}  \right]  . \label{pf.lem.alphak.h.c.iversons}%
\end{align}

Now, write the partition $\lambda$ in the form $\lambda=\left(  \lambda
_{1},\lambda_{2},\ldots,\lambda_{\ell}\right)  $, where $\lambda_{1}%
,\lambda_{2},\ldots,\lambda_{\ell}$ are positive integers. Then, the
definition of $h_{\lambda}$ yields
\[
h_{\lambda}=h_{\lambda_{1}}h_{\lambda_{2}}\cdots h_{\lambda_{\ell}}%
=\prod_{i=1}^{\ell}h_{\lambda_{i}}.
\]
Applying the map $\alpha_{k}$ to both sides of this equality, we find%
\begin{align}
\alpha_{k}\left(  h_{\lambda}\right)   &  =\alpha_{k}\left(  \prod_{i=1}%
^{\ell}h_{\lambda_{i}}\right)  =\prod_{i=1}^{\ell}\underbrace{\alpha
_{k}\left(  h_{\lambda_{i}}\right)  }_{\substack{=\left[  \lambda
_{i}<k\right]  \\\text{(by (\ref{pf.thm.G.main.alk}), applied to }\lambda
_{i}\text{ instead of }i\text{)}}}\nonumber\\
&  \ \ \ \ \ \ \ \ \ \ \ \ \ \ \ \ \ \ \ \ \left(  \text{since }\alpha
_{k}\text{ is a }\mathbf{k}\text{-algebra homomorphism}\right) \nonumber\\
&  =\prod_{i=1}^{\ell}\left[  \lambda_{i}<k\right]  =\left[  \lambda
_{i}<k\text{ for all }i\in\left\{  1,2,\ldots,\ell\right\}  \right]
\label{pf.lem.alphak.h.c.2}\\
&  \ \ \ \ \ \ \ \ \ \ \ \ \ \ \ \ \ \ \ \ \left(  \text{by
(\ref{pf.lem.alphak.h.c.iversons}), applied to }\mathcal{A}_{i}=\left(
\text{\textquotedblleft}\lambda_{i}<k\text{\textquotedblright}\right)
\right)  .\nonumber
\end{align}

\begin{vershort}
But we have $\lambda=\left(  \lambda_{1},\lambda_{2},\ldots,\lambda_{\ell
}\right)  $ and thus $\lambda_{\ell+1}=\lambda_{\ell+2}=\lambda_{\ell
+3}=\cdots=0$. In other words, we have $\lambda_{i}=0$ for all $i\in\left\{
\ell+1,\ell+2,\ell+3,\ldots\right\}  $. Hence, we have $\lambda_{i}<k$ for all
$i\in\left\{  \ell+1,\ell+2,\ell+3,\ldots\right\}  $ (since all these $i$
satisfy $\lambda_{i}=0<k$). Thus, the statement \textquotedblleft$\lambda
_{i}<k$ for all $i$\textquotedblright\ is equivalent to the statement
\textquotedblleft$\lambda_{i}<k$ for all $i\in\left\{  1,2,\ldots
,\ell\right\}  $\textquotedblright.
\end{vershort}

\begin{verlong}
But we have $\lambda=\left(  \lambda_{1},\lambda_{2},\ldots,\lambda_{\ell
}\right)  $ and thus $\lambda_{\ell+1}=\lambda_{\ell+2}=\lambda_{\ell
+3}=\cdots=0$. In other words, we have $\lambda_{i}=0$ for all $i\in\left\{
\ell+1,\ell+2,\ell+3,\ldots\right\}  $. Hence, we have $\lambda_{i}<k$ for all
$i\in\left\{  \ell+1,\ell+2,\ell+3,\ldots\right\}  $ (since $\lambda_{i}=0$
implies $\lambda_{i}<k$ (because $0<k$)).

Now, we have the following equivalence of logical statements:%
\begin{align*}
&  \ \left(  \lambda_{i}<k\text{ for all }i\right) \\
&  \Longleftrightarrow\ \left(  \lambda_{i}<k\text{ for all positive integers
}i\right) \\
&  \Longleftrightarrow\ \left(  \lambda_{i}<k\text{ for all }i\in
\underbrace{\left\{  1,2,3,\ldots\right\}  }_{=\left\{  1,2,\ldots
,\ell\right\}  \cup\left\{  \ell+1,\ell+2,\ell+3,\ldots\right\}  }\right) \\
&  \Longleftrightarrow\ \left(  \lambda_{i}<k\text{ for all }i\in\left\{
1,2,\ldots,\ell\right\}  \cup\left\{  \ell+1,\ell+2,\ell+3,\ldots\right\}
\right) \\
&  \Longleftrightarrow\ \left(  \lambda_{i}<k\text{ for all }i\in\left\{
1,2,\ldots,\ell\right\}  \text{ and all }i\in\left\{  \ell+1,\ell
+2,\ell+3,\ldots\right\}  \right) \\
&  \Longleftrightarrow\ \left(  \lambda_{i}<k\text{ for all }i\in\left\{
1,2,\ldots,\ell\right\}  \right)  \wedge\underbrace{\left(  \lambda
_{i}<k\text{ for all }i\in\left\{  \ell+1,\ell+2,\ell+3,\ldots\right\}
\right)  }_{\substack{\Longleftrightarrow\ \left(  \text{true}\right)
\\\text{(since we have }\lambda_{i}<k\text{ for all }i\in\left\{  \ell
+1,\ell+2,\ell+3,\ldots\right\}  \text{)}}}\\
&  \Longleftrightarrow\ \left(  \lambda_{i}<k\text{ for all }i\in\left\{
1,2,\ldots,\ell\right\}  \right)  \wedge\left(  \text{true}\right) \\
&  \Longleftrightarrow\ \left(  \lambda_{i}<k\text{ for all }i\in\left\{
1,2,\ldots,\ell\right\}  \right)  .
\end{align*}

\end{verlong}

Hence,%
\[
\left[  \lambda_{i}<k\text{ for all }i\right]  =\left[  \lambda_{i}<k\text{
for all }i\in\left\{  1,2,\ldots,\ell\right\}  \right]  .
\]
Comparing this with (\ref{pf.lem.alphak.h.c.2}), we obtain $\alpha_{k}\left(
h_{\lambda}\right)  =\left[  \lambda_{i}<k\text{ for all }i\right]  $. Thus,
Lemma \ref{lem.alphak.h} \textbf{(c)} is proved.
\end{proof}

\subsection{\label{subsect.proofs.petkrel.indep}Proof of Lemma
\ref{lem.petkrel.indep}}

\begin{vershort}
Lemma \ref{lem.petkrel.indep} can be proved directly using Laplace expansion
of determinants. But the homomorphism $\alpha_{k}$ from Definition
\ref{def.alphak} allows for a slicker proof:

\begin{proof}
[Proof of Lemma \ref{lem.petkrel.indep}.]Recall that $\alpha_{k}$ is a
$\mathbf{k}$-algebra homomorphism. Thus, $\alpha_{k}$ respects determinants;
i.e., if $\left(  a_{i,j}\right)  _{1\leq i\leq m,\ 1\leq j\leq m}\in
\Lambda^{m\times m}$ is an $m\times m$-matrix, then%
\begin{align}
&  \alpha_{k}\left(  \det\left(  \left(  a_{i,j}\right)  _{1\leq i\leq
m,\ 1\leq j\leq m}\right)  \right) \nonumber\\
&  =\det\left(  \left(  \alpha_{k}\left(  a_{i,j}\right)  \right)  _{1\leq
i\leq m,\ 1\leq j\leq m}\right)  . \label{pf.lem.petkrel.indep.short.adet}%
\end{align}

Applying $\alpha_{k}$ to both sides of (\ref{eq.schur.JT.sh-lammu}), we obtain%
\begin{align}
\alpha_{k}\left(  s_{\lambda/\mu}\right)   &  =\alpha_{k}\left(  \det\left(
\left(  h_{\lambda_{i}-\mu_{j}-i+j}\right)  _{1\leq i\leq\ell,\ 1\leq
j\leq\ell}\right)  \right) \nonumber\\
&  =\det\left(  \left(  \underbrace{\alpha_{k}\left(  h_{\lambda_{i}-\mu
_{j}-i+j}\right)  }_{\substack{=\left[  0\leq\lambda_{i}-\mu_{j}-i+j<k\right]
\\\text{(by (\ref{pf.thm.G.main.alkZ}))}}}\right)  _{1\leq i\leq\ell,\ 1\leq
j\leq\ell}\right) \nonumber\\
&  \ \ \ \ \ \ \ \ \ \ \ \ \ \ \ \ \ \ \ \ \left(  \text{by
(\ref{pf.lem.petkrel.indep.short.adet}), applied to }m=\ell\text{ and }%
a_{i,j}=h_{\lambda_{i}-\mu_{j}-i+j}\right) \nonumber\\
&  =\det\left(  \left(  \left[  0\leq\lambda_{i}-\mu_{j}-i+j<k\right]
\right)  _{1\leq i\leq\ell,\ 1\leq j\leq\ell}\right)  .
\label{pf.lem.petkrel.indep.short.1}%
\end{align}
Thus, $\det\left(  \left(  \left[  0\leq\lambda_{i}-\mu_{j}-i+j<k\right]
\right)  _{1\leq i\leq\ell,\ 1\leq j\leq\ell}\right)  $ does not depend on the
choice of $\ell$ (since $\alpha_{k}\left(  s_{\lambda/\mu}\right)  $ does not
depend on the choice of $\ell$). This proves Lemma \ref{lem.petkrel.indep}.
\end{proof}
\end{vershort}

\begin{verlong}
We shall give two proofs of Lemma \ref{lem.petkrel.indep}: one using the
homomorphism $\alpha_{k}$ from Definition \ref{def.alphak}, and one by direct
manipulation of determinants.

\begin{proof}
[First proof of Lemma \ref{lem.petkrel.indep}.]Recall that $\alpha_{k}%
:\Lambda\rightarrow\mathbf{k}$ is a $\mathbf{k}$-algebra homomorphism. Thus,
$\alpha_{k}$ respects determinants; i.e., if $\left(  a_{i,j}\right)  _{1\leq
i\leq m,\ 1\leq j\leq m}\in\Lambda^{m\times m}$ is an $m\times m$-matrix over
$\Lambda$ (for some $m\in\mathbb{N}$), then%
\begin{align}
&  \alpha_{k}\left(  \det\left(  \left(  a_{i,j}\right)  _{1\leq i\leq
m,\ 1\leq j\leq m}\right)  \right) \nonumber\\
&  =\det\left(  \left(  \alpha_{k}\left(  a_{i,j}\right)  \right)  _{1\leq
i\leq m,\ 1\leq j\leq m}\right)  . \label{pf.lem.petkrel.indep.adet}%
\end{align}

Applying $\alpha_{k}$ to both sides of the equality
(\ref{eq.schur.JT.sh-lammu}), we obtain%
\begin{align}
\alpha_{k}\left(  s_{\lambda/\mu}\right)   &  =\alpha_{k}\left(  \det\left(
\left(  h_{\lambda_{i}-\mu_{j}-i+j}\right)  _{1\leq i\leq\ell,\ 1\leq
j\leq\ell}\right)  \right) \nonumber\\
&  =\det\left(  \left(  \underbrace{\alpha_{k}\left(  h_{\lambda_{i}-\mu
_{j}-i+j}\right)  }_{\substack{=\left[  0\leq\lambda_{i}-\mu_{j}-i+j<k\right]
\\\text{(by (\ref{pf.thm.G.main.alkZ}), applied to }\lambda_{i}-\mu
_{j}-i+j\\\text{instead of }i\text{)}}}\right)  _{1\leq i\leq\ell,\ 1\leq
j\leq\ell}\right) \nonumber\\
&  \ \ \ \ \ \ \ \ \ \ \left(  \text{by (\ref{pf.lem.petkrel.indep.adet}),
applied to }m=\ell\text{ and }a_{i,j}=h_{\lambda_{i}-\mu_{j}-i+j}\right)
\nonumber\\
&  =\det\left(  \left(  \left[  0\leq\lambda_{i}-\mu_{j}-i+j<k\right]
\right)  _{1\leq i\leq\ell,\ 1\leq j\leq\ell}\right)  .
\label{pf.lem.petkrel.indep.1}%
\end{align}
Clearly, the integer $\alpha_{k}\left(  s_{\lambda/\mu}\right)  $ does not
depend on the choice of $\ell$. In view of the equality
(\ref{pf.lem.petkrel.indep.1}), we can rewrite this as follows: The integer
\newline$\det\left(  \left(  \left[  0\leq\lambda_{i}-\mu_{j}-i+j<k\right]
\right)  _{1\leq i\leq\ell,\ 1\leq j\leq\ell}\right)  $ does not depend on the
choice of $\ell$. This proves Lemma \ref{lem.petkrel.indep}.
\end{proof}

\begin{proof}
[Second proof of Lemma \ref{lem.petkrel.indep}.]It suffices to show that%
\begin{align*}
&  \det\left(  \left(  \left[  0\leq\lambda_{i}-\mu_{j}-i+j<k\right]  \right)
_{1\leq i\leq\ell,\ 1\leq j\leq\ell}\right) \\
&  =\det\left(  \left(  \left[  0\leq\lambda_{i}-\mu_{j}-i+j<k\right]
\right)  _{1\leq i\leq\ell+1,\ 1\leq j\leq\ell+1}\right)  .
\end{align*}
So let us prove this.

From $\mu=\left(  \mu_{1},\mu_{2},\ldots,\mu_{\ell}\right)  $, we obtain
$\mu_{\ell+1}=0$. From $\lambda=\left(  \lambda_{1},\lambda_{2},\ldots
,\lambda_{\ell}\right)  $, we obtain $\lambda_{\ell+1}=0$. Hence,%
\[
\left[  0\leq\underbrace{\lambda_{\ell+1}-\mu_{\ell+1}-\left(  \ell+1\right)
+\left(  \ell+1\right)  }_{\substack{=\lambda_{\ell+1}-\mu_{\ell
+1}=0\\\text{(since }\lambda_{\ell+1}=0\text{ and }\mu_{\ell+1}=0\text{)}%
}}<k\right]  =\left[  0\leq0<k\right]  =1
\]
(since $0\leq0<k$).

Recall that $\lambda_{\ell+1}=0$. Hence, each $j\in\left\{  1,2,\ldots
,\ell\right\}  $ satisfies%
\begin{equation}
\left[  0\leq\lambda_{\ell+1}-\mu_{j}-\left(  \ell+1\right)  +j<k\right]  =0
\label{pf.lem.petkrel.indep.3}%
\end{equation}
\footnote{\textit{Proof of (\ref{pf.lem.petkrel.indep.3}):} Let $j\in\left\{
1,2,\ldots,\ell\right\}  $. Then, $\underbrace{\lambda_{\ell+1}}%
_{=0}-\underbrace{\mu_{j}}_{\geq0}-\underbrace{\left(  \ell+1\right)  }%
_{>\ell}+\underbrace{j}_{\leq\ell}<0-0-\ell+\ell=0$. Hence, $0\leq
\lambda_{\ell+1}-\mu_{j}-\left(  \ell+1\right)  +j<k$ cannot hold. Thus,
$\left[  0\leq\lambda_{\ell+1}-\mu_{j}-\left(  \ell+1\right)  +j<k\right]
=0$. Qed.}. In other words, the first $\ell$ entries of the last row of the
matrix \newline$\left(  \left[  0\leq\lambda_{i}-\mu_{j}-i+j<k\right]
\right)  _{1\leq i\leq\ell+1,\ 1\leq j\leq\ell+1}$ are $0$. Hence, if we
expand the determinant of this matrix along this last row, then we obtain a
sum having only one (potentially) nonzero addend, namely%
\begin{align*}
&  \underbrace{\left[  0\leq\lambda_{\ell+1}-\mu_{\ell+1}-\left(
\ell+1\right)  +\left(  \ell+1\right)  <k\right]  }_{=1}\cdot\det\left(
\left(  \left[  0\leq\lambda_{i}-\mu_{j}-i+j<k\right]  \right)  _{1\leq
i\leq\ell,\ 1\leq j\leq\ell}\right) \\
&  =\det\left(  \left(  \left[  0\leq\lambda_{i}-\mu_{j}-i+j<k\right]
\right)  _{1\leq i\leq\ell,\ 1\leq j\leq\ell}\right)  .
\end{align*}
Thus,%
\begin{align*}
&  \det\left(  \left(  \left[  0\leq\lambda_{i}-\mu_{j}-i+j<k\right]  \right)
_{1\leq i\leq\ell,\ 1\leq j\leq\ell}\right) \\
&  =\det\left(  \left(  \left[  0\leq\lambda_{i}-\mu_{j}-i+j<k\right]
\right)  _{1\leq i\leq\ell+1,\ 1\leq j\leq\ell+1}\right)  .
\end{align*}
This completes our second proof of Lemma \ref{lem.petkrel.indep}.
\end{proof}
\end{verlong}

\begin{vershort}
We record a useful consequence of the above proof:
\end{vershort}

\begin{verlong}
Our first proof of Lemma \ref{lem.petkrel.indep} has an additional consequence
that will be useful to us:
\end{verlong}

\begin{lemma}
\label{lem.alphak.slammu}Let $k$ be a positive integer. Let $\lambda$ and
$\mu$ be two partitions. Then, the homomorphism $\alpha_{k}:\Lambda
\rightarrow\mathbf{k}$ from Definition \ref{def.alphak} satisfies%
\begin{equation}
\alpha_{k}\left(  s_{\lambda/\mu}\right)  =\operatorname*{pet}\nolimits_{k}%
\left(  \lambda,\mu\right)  . \label{pf.thm.G.pieri.alks}%
\end{equation}

\end{lemma}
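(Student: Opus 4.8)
\textbf{Proof proposal for Lemma \ref{lem.alphak.slammu}.}
The plan is to reuse almost verbatim the computation already carried out in the proof of Lemma \ref{lem.petkrel.indep}; the only new observation is that the left-hand side of that computation was $\alpha_{k}\left(  s_{\lambda/\mu}\right)  $ all along. First I would choose $\ell\in\mathbb{N}$ large enough that both $\lambda$ and $\mu$ can be written as $\lambda=\left(  \lambda_{1},\lambda_{2},\ldots,\lambda_{\ell}\right)  $ and $\mu=\left(  \mu_{1},\mu_{2},\ldots,\mu_{\ell}\right)  $ (such an $\ell$ exists, since partitions have only finitely many nonzero parts and we may append trailing zeroes). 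This is harmless because, by Lemma \ref{lem.petkrel.indep} and Definition \ref{def.petklam}, the $k$-Petrie number $\operatorname*{pet}\nolimits_{k}\left(  \lambda,\mu\right)  $ does not depend on this choice.

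Next I would apply the $\mathbf{k}$-algebra homomorphism $\alpha_{k}$ to both sides of the skew first Jacobi--Trudi formula (\ref{eq.schur.JT.sh-lammu}) from Theorem \ref{thm.JTs.h}. Since $\alpha_{k}$ is a $\mathbf{k}$-algebra homomorphism, it commutes with the formation of determinants of matrices over $\Lambda$ (this is exactly (\ref{pf.lem.petkrel.indep.short.adet})/(\ref{pf.lem.petkrel.indep.adet}) in the proof of Lemma \ref{lem.petkrel.indep}). Hence
\[
\alpha_{k}\left(  s_{\lambda/\mu}\right)  =\alpha_{k}\left(  \det\left(  \left(  h_{\lambda_{i}-\mu_{j}-i+j}\right)  _{1\leq i\leq\ell,\ 1\leq j\leq\ell}\right)  \right)  =\det\left(  \left(  \alpha_{k}\left(  h_{\lambda_{i}-\mu_{j}-i+j}\right)  \right)  _{1\leq i\leq\ell,\ 1\leq j\leq\ell}\right)  .
\]
Now I would invoke Lemma \ref{lem.alphak.h} \textbf{(b)}, i.e.\ (\ref{pf.thm.G.main.alkZ}), applied to the integer $\lambda_{i}-\mu_{j}-i+j$, to rewrite each entry $\alpha_{k}\left(  h_{\lambda_{i}-\mu_{j}-i+j}\right)  $ as $\left[  0\leq\lambda_{i}-\mu_{j}-i+j<k\right]  $.

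Putting these together gives
\[
\alpha_{k}\left(  s_{\lambda/\mu}\right)  =\det\left(  \left(  \left[  0\leq\lambda_{i}-\mu_{j}-i+j<k\right]  \right)  _{1\leq i\leq\ell,\ 1\leq j\leq\ell}\right)  ,
\]
and the right-hand side is precisely $\operatorname*{pet}\nolimits_{k}\left(  \lambda,\mu\right)  $ by Definition \ref{def.petklam} (with this choice of $\ell$), which completes the proof. There is essentially no real obstacle here: the argument is purely formal, and the only point requiring any care is the mild bookkeeping of padding $\lambda$ and $\mu$ to a common length $\ell$ and citing Lemma \ref{lem.petkrel.indep} so that the resulting determinant genuinely equals the $k$-Petrie number independently of $\ell$.
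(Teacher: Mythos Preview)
Your proposal is correct and follows exactly the same approach as the paper: pick a common $\ell$, apply $\alpha_{k}$ to the skew Jacobi--Trudi determinant, use that $\alpha_{k}$ is a $\mathbf{k}$-algebra homomorphism together with (\ref{pf.thm.G.main.alkZ}), and identify the resulting determinant as $\operatorname*{pet}\nolimits_{k}\left(\lambda,\mu\right)$. The paper simply cites the equality (\ref{pf.lem.petkrel.indep.short.1}) already obtained in the proof of Lemma \ref{lem.petkrel.indep} rather than rewriting the computation, but the argument is identical.
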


\begin{vershort}
\begin{proof}
[Proof of Lemma \ref{lem.alphak.slammu}.]Write the partitions $\lambda$ and
$\mu$ in the forms $\lambda=\left(  \lambda_{1},\lambda_{2},\ldots
,\lambda_{\ell}\right)  $ and $\mu=\left(  \mu_{1},\mu_{2},\ldots,\mu_{\ell
}\right)  $ for some $\ell\in\mathbb{N}$. Then, the equality
(\ref{pf.lem.petkrel.indep.short.1}) (which we showed in our proof of Lemma
\ref{lem.petkrel.indep}) yields%
\[
\alpha_{k}\left(  s_{\lambda/\mu}\right)  =\det\left(  \left(  \left[
0\leq\lambda_{i}-\mu_{j}-i+j<k\right]  \right)  _{1\leq i\leq\ell,\ 1\leq
j\leq\ell}\right)  =\operatorname*{pet}\nolimits_{k}\left(  \lambda
,\mu\right)
\]
(by the definition of $\operatorname*{pet}\nolimits_{k}\left(  \lambda
,\mu\right)  $). This proves Lemma \ref{lem.alphak.slammu}.
\end{proof}
\end{vershort}

\begin{verlong}
\begin{proof}
[Proof of Lemma \ref{lem.alphak.slammu}.]Write the partitions $\lambda$ and
$\mu$ in the forms $\lambda=\left(  \lambda_{1},\lambda_{2},\ldots
,\lambda_{\ell}\right)  $ and $\mu=\left(  \mu_{1},\mu_{2},\ldots,\mu_{\ell
}\right)  $ for some $\ell\in\mathbb{N}$\ \ \ \ \footnote{Such an $\ell$ can
always be found, since each of $\lambda$ and $\mu$ has only finitely many
nonzero entries.}. Then, the equality (\ref{pf.lem.petkrel.indep.1}) (which we
showed in our first proof of Lemma \ref{lem.petkrel.indep}) yields%
\[
\alpha_{k}\left(  s_{\lambda/\mu}\right)  =\det\left(  \left(  \left[
0\leq\lambda_{i}-\mu_{j}-i+j<k\right]  \right)  _{1\leq i\leq\ell,\ 1\leq
j\leq\ell}\right)  =\operatorname*{pet}\nolimits_{k}\left(  \lambda
,\mu\right)
\]
(by the definition of $\operatorname*{pet}\nolimits_{k}\left(  \lambda
,\mu\right)  $). This proves Lemma \ref{lem.alphak.slammu}.
\end{proof}
\end{verlong}

\subsection{\label{subsect.proofs.petk.-101}Proof of Proposition
\ref{prop.petkrel.-101}}

\begin{proof}
[Proof of Proposition \ref{prop.petkrel.-101}.]Write the partitions $\lambda$
and $\mu$ in the forms $\lambda=\left(  \lambda_{1},\lambda_{2},\ldots
,\lambda_{\ell}\right)  $ and $\mu=\left(  \mu_{1},\mu_{2},\ldots,\mu_{\ell
}\right)  $ for some $\ell\in\mathbb{N}$\ \ \ \ \footnote{Such an $\ell$ can
always be found, since each of $\lambda$ and $\mu$ has only finitely many
nonzero entries.}. The definition of $\operatorname*{pet}\nolimits_{k}\left(
\lambda,\mu\right)  $ yields
\begin{align}
\operatorname*{pet}\nolimits_{k}\left(  \lambda,\mu\right)   &  =\det\left(
\left(  \left[  \underbrace{0\leq\lambda_{i}-\mu_{j}-i+j<k}_{\text{This is
equivalent to }\mu_{j}-j\leq\lambda_{i}-i<\mu_{j}-j+k}\right]  \right)
_{1\leq i\leq\ell,\ 1\leq j\leq\ell}\right) \nonumber\\
&  =\det\left(  \left(  \left[  \mu_{j}-j\leq\lambda_{i}-i<\mu_{j}-j+k\right]
\right)  _{1\leq i\leq\ell,\ 1\leq j\leq\ell}\right)  .
\label{pf.prop.petkrel.-101.0}%
\end{align}

Let $B$ be the $\ell\times\ell$-matrix $\left(  \left[  \mu_{j}-j\leq
\lambda_{i}-i<\mu_{j}-j+k\right]  \right)  _{1\leq i\leq\ell,\ 1\leq j\leq
\ell}\in\mathbf{k}^{\ell\times\ell}$. Then, (\ref{pf.prop.petkrel.-101.0})
rewrites as follows:%
\begin{equation}
\operatorname*{pet}\nolimits_{k}\left(  \lambda,\mu\right)  =\det B.
\label{pf.prop.petkrel.-101.=detB}%
\end{equation}

We will use the concept of Petrie matrices (see \cite[Theorem 1]{GorWil74}).
Namely, a \emph{Petrie matrix} is a matrix whose entries all belong to
$\left\{  0,1\right\}  $ and such that the $1$'s in each column occur
consecutively (i.e., as a contiguous block). In other words, a Petrie matrix
is a matrix whose each column has the form
\begin{equation}
\left(  \underbrace{0,0,\ldots,0}_{a\text{ zeroes}},\underbrace{1,1,\ldots
,1}_{b\text{ ones}},\underbrace{0,0,\ldots,0}_{c\text{ zeroes}}\right)  ^{T}
\label{pf.prop.petk.-101.petcol}%
\end{equation}
for some nonnegative integers $a,b,c$ (where any of $a,b,c$ can be $0$). For
example, $\left(
\begin{array}
[c]{ccc}%
0 & 0 & 1\\
1 & 0 & 1\\
0 & 0 & 0
\end{array}
\right)  $ is a Petrie matrix, but $\left(
\begin{array}
[c]{ccc}%
0 & 1 & 0\\
1 & 0 & 0\\
0 & 1 & 1
\end{array}
\right)  $ is not.

A well-known result due to Fulkerson and Gross (first stated in \cite[\S 8]%
{FulGro65}\footnote{See \cite[Theorem 1]{GorWil74} for an explicit proof.})
says that if a square matrix $A$ is a Petrie matrix, then%
\begin{equation}
\det A\in\left\{  -1,0,1\right\}  . \label{pf.prop.petk.-101.1}%
\end{equation}

Now, we shall show that $B$ is a Petrie matrix.

Indeed, fix some $j\in\left\{  1,2,\ldots,\ell\right\}  $. We have
$\lambda_{1}\geq\lambda_{2}\geq\cdots\geq\lambda_{\ell}$ (since $\lambda$ is a
partition) and thus $\lambda_{1}-1>\lambda_{2}-2>\cdots>\lambda_{\ell}-\ell$.
In other words, the numbers $\lambda_{i}-i$ for $i\in\left\{  1,2,\ldots
,\ell\right\}  $ decrease as $i$ increases. Hence, the set of all
$i\in\left\{  1,2,\ldots,\ell\right\}  $ satisfying $\mu_{j}-j\leq\lambda
_{i}-i<\mu_{j}-j+k$ is a (possibly empty) integer interval\footnote{An
\emph{integer interval} means a subset of $\mathbb{Z}$ that has the form
$\left\{  a,a+1,\ldots,b\right\}  $ for some $a\in\mathbb{Z}$ and
$b\in\mathbb{Z}$. (If $a>b$, then this is the empty set.)}. Let us denote this
integer interval by $I_{j}$. Therefore, if we let $i$ range over $\left\{
1,2,\ldots,\ell\right\}  $, then the truth value $\left[  \mu_{j}-j\leq
\lambda_{i}-i<\mu_{j}-j+k\right]  $ will be $1$ for all $i\in I_{j}$, and $0$
for all other $i$. Since $I_{j}$ is an integer interval, this means that this
truth value will be $1$ when $i$ lies in a certain integer interval (namely,
$I_{j}$) and $0$ when $i$ lies outside it. In other words, the $j$-th column
of the matrix $B$ has a contiguous (but possibly empty) block of $1$'s (in the
rows corresponding to all $i\in I_{j}$), while all other entries of this
column are $0$ (because the entries of the $j$-th column of $B$ are precisely
these truth values $\left[  \mu_{j}-j\leq\lambda_{i}-i<\mu_{j}-j+k\right]  $
for all $i\in\left\{  1,2,\ldots,\ell\right\}  $). Therefore, this column has
the form (\ref{pf.prop.petk.-101.petcol}) for some nonnegative integers
$a,b,c$.

Now, forget that we fixed $j$. We thus have proved that for each $j\in\left\{
1,2,\ldots,\ell\right\}  $, the $j$-th column of $B$ has the form
(\ref{pf.prop.petk.-101.petcol}) for some nonnegative integers $a,b,c$. In
other words, each column of $B$ has this form. Hence, $B$ is a Petrie matrix
(by the definition of a Petrie matrix). Therefore, (\ref{pf.prop.petk.-101.1})
(applied to $A=B$) yields $\det B\in\left\{  -1,0,1\right\}  $. Thus,
(\ref{pf.prop.petkrel.-101.=detB}) becomes $\operatorname*{pet}\nolimits_{k}%
\left(  \lambda,\mu\right)  =\det B\in\left\{  -1,0,1\right\}  $. This proves
Proposition \ref{prop.petkrel.-101}.
\end{proof}

\subsection{\label{subsect.proofs.G.pieri.1st}First proof of Theorem
\ref{thm.G.pieri}}

We are now ready for our first proof of Theorem \ref{thm.G.pieri}:

\begin{proof}
[First proof of Theorem \ref{thm.G.pieri}.]We shall use the notations
$\mathbf{k}\left[  \left[  \mathbf{x}\right]  \right]  $, $\mathbf{k}\left[
\left[  \mathbf{x},\mathbf{y}\right]  \right]  $, $\mathbf{x}$, $\mathbf{y}$,
$f\left(  \mathbf{x}\right)  $ and $f\left(  \mathbf{y}\right)  $ introduced
in Subsection \ref{subsect.thms.coprod}. If $R$ is any commutative ring, then
$R\left[  \left[  \mathbf{y}\right]  \right]  $ shall denote the ring
$R\left[  \left[  y_{1},y_{2},y_{3},\ldots\right]  \right]  $ of formal power
series in the indeterminates $y_{1},y_{2},y_{3},\ldots$ over the ring $R$. We
will identify the ring $\mathbf{k}\left[  \left[  \mathbf{x},\mathbf{y}%
\right]  \right]  $ with the ring $\left(  \mathbf{k}\left[  \left[
\mathbf{x}\right]  \right]  \right)  \left[  \left[  \mathbf{y}\right]
\right]  =\left(  \mathbf{k}\left[  \left[  x_{1},x_{2},x_{3},\ldots\right]
\right]  \right)  \left[  \left[  y_{1},y_{2},y_{3},\ldots\right]  \right]  $.
Note that $\Lambda\subseteq\mathbf{k}\left[  \left[  \mathbf{x}\right]
\right]  $ and thus $\Lambda\left[  \left[  \mathbf{y}\right]  \right]
\subseteq\left(  \mathbf{k}\left[  \left[  \mathbf{x}\right]  \right]
\right)  \left[  \left[  \mathbf{y}\right]  \right]  =\mathbf{k}\left[
\left[  \mathbf{x},\mathbf{y}\right]  \right]  $. We equip the rings
$\mathbf{k}\left[  \left[  \mathbf{y}\right]  \right]  $, $\Lambda\left[
\left[  \mathbf{y}\right]  \right]  $ and $\mathbf{k}\left[  \left[
\mathbf{x},\mathbf{y}\right]  \right]  $ with the usual topologies that are
defined on rings of power series, where $\Lambda$ itself is equipped with the
discrete topology. This has the somewhat confusing consequence that
$\Lambda\left[  \left[  \mathbf{y}\right]  \right]  \subseteq\mathbf{k}\left[
\left[  \mathbf{x},\mathbf{y}\right]  \right]  $ is an inclusion of rings but
not of topological spaces; however, this will not cause us any trouble, since
all infinite sums in $\Lambda\left[  \left[  \mathbf{y}\right]  \right]  $ we
will consider (such as $\sum_{\lambda\in\operatorname*{Par}}s_{\lambda/\mu
}\left(  \mathbf{x}\right)  s_{\lambda}\left(  \mathbf{y}\right)  $ and
$\sum_{\lambda\in\operatorname*{Par}}h_{\lambda}\left(  \mathbf{x}\right)
m_{\lambda}\left(  \mathbf{y}\right)  $) will converge to the same value in
either topology.

We consider both $\mathbf{k}\left[  \left[  \mathbf{y}\right]  \right]  $ and
$\Lambda$ as subrings of $\Lambda\left[  \left[  \mathbf{y}\right]  \right]  $
(indeed, $\mathbf{k}\left[  \left[  \mathbf{y}\right]  \right]  $ embeds into
$\Lambda\left[  \left[  \mathbf{y}\right]  \right]  $ because $\mathbf{k}$ is
a subring of $\Lambda$, whereas $\Lambda$ embeds into $\Lambda\left[  \left[
\mathbf{y}\right]  \right]  $ because $\Lambda\left[  \left[  \mathbf{y}%
\right]  \right]  $ is a ring of power series over $\Lambda$).

In this proof, the word \textquotedblleft monomial\textquotedblright\ may
refer to a monomial in any set of variables (not necessarily in $x_{1}%
,x_{2},x_{3},\ldots$).

Recall the $\mathbf{k}$-algebra homomorphism $\alpha_{k}:\Lambda
\rightarrow\mathbf{k}$ from Definition \ref{def.alphak}. This $\mathbf{k}%
$-algebra homomorphism $\alpha_{k}:\Lambda\rightarrow\mathbf{k}$ induces a
$\mathbf{k}\left[  \left[  \mathbf{y}\right]  \right]  $-algebra homomorphism
$\alpha_{k}\left[  \left[  \mathbf{y}\right]  \right]  :\Lambda\left[  \left[
\mathbf{y}\right]  \right]  \rightarrow\mathbf{k}\left[  \left[
\mathbf{y}\right]  \right]  $, which is given by the formula%
\[
\left(  \alpha_{k}\left[  \left[  \mathbf{y}\right]  \right]  \right)  \left(
\sum_{\substack{\mathfrak{n}\text{ is a monomial}\\\text{in }y_{1},y_{2}%
,y_{3},\ldots}}f_{\mathfrak{n}}\mathfrak{n}\right)  =\sum
_{\substack{\mathfrak{n}\text{ is a monomial}\\\text{in }y_{1},y_{2}%
,y_{3},\ldots}}\alpha_{k}\left(  f_{\mathfrak{n}}\right)  \mathfrak{n}%
\]
for any family $\left(  f_{\mathfrak{n}}\right)  _{\mathfrak{n}\text{ is a
monomial in }y_{1},y_{2},y_{3},\ldots}$ of elements of $\Lambda$. This induced
$\mathbf{k}\left[  \left[  \mathbf{y}\right]  \right]  $-algebra homomorphism
$\alpha_{k}\left[  \left[  \mathbf{y}\right]  \right]  $ is $\mathbf{k}\left[
\left[  \mathbf{y}\right]  \right]  $-linear and continuous (with respect to
the usual topologies on the power series rings $\Lambda\left[  \left[
\mathbf{y}\right]  \right]  $ and $\mathbf{k}\left[  \left[  \mathbf{y}%
\right]  \right]  $), and thus preserves infinite $\mathbf{k}\left[  \left[
\mathbf{y}\right]  \right]  $-linear combinations. Moreover, it extends
$\alpha_{k}$ (that is, for any $f\in\Lambda$, we have $\left(  \alpha
_{k}\left[  \left[  \mathbf{y}\right]  \right]  \right)  \left(  f\right)
=\alpha_{k}\left(  f\right)  $).

\begin{noncompile}
Also, the map $\alpha_{k}\left[  \left[  \mathbf{y}\right]  \right]  $ leaves
every formal power series $g\in\mathbf{k}\left[  \left[  \mathbf{y}\right]
\right]  $ unchanged (that is, for any $g\in\mathbf{k}\left[  \left[
\mathbf{y}\right]  \right]  $, we have $\left(  \alpha_{k}\left[  \left[
\mathbf{y}\right]  \right]  \right)  \left(  g\right)  =g$).
\end{noncompile}

Recall the skew Schur functions $s_{\lambda/\mu}$ defined in Subsection
\ref{subsect.proofs.skew}. Also, recall the symmetric functions $h_{\lambda}$
defined in Definition \ref{def.hlam}. Theorem \ref{thm.our-cauchy} yields%
\begin{align*}
\sum_{\lambda\in\operatorname*{Par}}s_{\lambda/\mu}\left(  \mathbf{x}\right)
s_{\lambda}\left(  \mathbf{y}\right)   &  =s_{\mu}\left(  \mathbf{y}\right)
\cdot\sum_{\lambda\in\operatorname*{Par}}h_{\lambda}\left(  \mathbf{x}\right)
m_{\lambda}\left(  \mathbf{y}\right)  =\sum_{\lambda\in\operatorname*{Par}%
}s_{\mu}\left(  \mathbf{y}\right)  \underbrace{h_{\lambda}\left(
\mathbf{x}\right)  m_{\lambda}\left(  \mathbf{y}\right)  }_{=m_{\lambda
}\left(  \mathbf{y}\right)  h_{\lambda}\left(  \mathbf{x}\right)  }\\
&  =\sum_{\lambda\in\operatorname*{Par}}s_{\mu}\left(  \mathbf{y}\right)
m_{\lambda}\left(  \mathbf{y}\right)  \underbrace{h_{\lambda}\left(
\mathbf{x}\right)  }_{=h_{\lambda}}=\sum_{\lambda\in\operatorname*{Par}}%
s_{\mu}\left(  \mathbf{y}\right)  m_{\lambda}\left(  \mathbf{y}\right)
h_{\lambda}.
\end{align*}
Comparing this with%
\[
\sum_{\lambda\in\operatorname*{Par}}\underbrace{s_{\lambda/\mu}\left(
\mathbf{x}\right)  s_{\lambda}\left(  \mathbf{y}\right)  }_{=s_{\lambda
}\left(  \mathbf{y}\right)  s_{\lambda/\mu}\left(  \mathbf{x}\right)  }%
=\sum_{\lambda\in\operatorname*{Par}}s_{\lambda}\left(  \mathbf{y}\right)
\underbrace{s_{\lambda/\mu}\left(  \mathbf{x}\right)  }_{=s_{\lambda/\mu}%
}=\sum_{\lambda\in\operatorname*{Par}}s_{\lambda}\left(  \mathbf{y}\right)
s_{\lambda/\mu},
\]
we obtain%
\[
\sum_{\lambda\in\operatorname*{Par}}s_{\lambda}\left(  \mathbf{y}\right)
s_{\lambda/\mu}=\sum_{\lambda\in\operatorname*{Par}}s_{\mu}\left(
\mathbf{y}\right)  m_{\lambda}\left(  \mathbf{y}\right)  h_{\lambda}.
\]
Consider this as an equality in the ring $\Lambda\left[  \left[
\mathbf{y}\right]  \right]  =\Lambda\left[  \left[  y_{1},y_{2},y_{3}%
,\ldots\right]  \right]  $. Apply the map $\alpha_{k}\left[  \left[
\mathbf{y}\right]  \right]  :\Lambda\left[  \left[  \mathbf{y}\right]
\right]  \rightarrow\mathbf{k}\left[  \left[  \mathbf{y}\right]  \right]  $ to
both sides of this equality. We obtain%
\[
\left(  \alpha_{k}\left[  \left[  \mathbf{y}\right]  \right]  \right)  \left(
\sum_{\lambda\in\operatorname*{Par}}s_{\lambda}\left(  \mathbf{y}\right)
s_{\lambda/\mu}\right)  =\left(  \alpha_{k}\left[  \left[  \mathbf{y}\right]
\right]  \right)  \left(  \sum_{\lambda\in\operatorname*{Par}}s_{\mu}\left(
\mathbf{y}\right)  m_{\lambda}\left(  \mathbf{y}\right)  h_{\lambda}\right)
.
\]
Comparing this with%
\begin{align*}
&  \left(  \alpha_{k}\left[  \left[  \mathbf{y}\right]  \right]  \right)
\left(  \sum_{\lambda\in\operatorname*{Par}}s_{\lambda}\left(  \mathbf{y}%
\right)  s_{\lambda/\mu}\right) \\
&  =\sum_{\lambda\in\operatorname*{Par}}s_{\lambda}\left(  \mathbf{y}\right)
\cdot\underbrace{\left(  \alpha_{k}\left[  \left[  \mathbf{y}\right]  \right]
\right)  \left(  s_{\lambda/\mu}\right)  }_{\substack{=\alpha_{k}\left(
s_{\lambda/\mu}\right)  \\\text{(since }\alpha_{k}\left[  \left[
\mathbf{y}\right]  \right]  \text{ extends }\alpha_{k}\text{)}}}\\
&  \ \ \ \ \ \ \ \ \ \ \left(  \text{since the map }\alpha_{k}\left[  \left[
\mathbf{y}\right]  \right]  \text{ preserves infinite }\mathbf{k}\left[
\left[  \mathbf{y}\right]  \right]  \text{-linear combinations}\right) \\
&  =\sum_{\lambda\in\operatorname*{Par}}s_{\lambda}\left(  \mathbf{y}\right)
\cdot\underbrace{\alpha_{k}\left(  s_{\lambda/\mu}\right)  }%
_{\substack{=\operatorname*{pet}\nolimits_{k}\left(  \lambda,\mu\right)
\\\text{(by (\ref{pf.thm.G.pieri.alks}))}}}=\sum_{\lambda\in
\operatorname*{Par}}s_{\lambda}\left(  \mathbf{y}\right)  \cdot
\operatorname*{pet}\nolimits_{k}\left(  \lambda,\mu\right)  =\sum_{\lambda
\in\operatorname*{Par}}\operatorname*{pet}\nolimits_{k}\left(  \lambda
,\mu\right)  \cdot s_{\lambda}\left(  \mathbf{y}\right)  ,
\end{align*}
we obtain%
\begin{align*}
&  \sum_{\lambda\in\operatorname*{Par}}\operatorname*{pet}\nolimits_{k}\left(
\lambda,\mu\right)  \cdot s_{\lambda}\left(  \mathbf{y}\right) \\
&  =\left(  \alpha_{k}\left[  \left[  \mathbf{y}\right]  \right]  \right)
\left(  \sum_{\lambda\in\operatorname*{Par}}s_{\mu}\left(  \mathbf{y}\right)
m_{\lambda}\left(  \mathbf{y}\right)  h_{\lambda}\right)  =\sum_{\lambda
\in\operatorname*{Par}}s_{\mu}\left(  \mathbf{y}\right)  m_{\lambda}\left(
\mathbf{y}\right)  \underbrace{\left(  \alpha_{k}\left[  \left[
\mathbf{y}\right]  \right]  \right)  \left(  h_{\lambda}\right)
}_{\substack{=\alpha_{k}\left(  h_{\lambda}\right)  \\\text{(since }\alpha
_{k}\left[  \left[  \mathbf{y}\right]  \right]  \text{ extends }\alpha
_{k}\text{)}}}\\
&  \ \ \ \ \ \ \ \ \ \ \left(  \text{since the map }\alpha_{k}\left[  \left[
\mathbf{y}\right]  \right]  \text{ preserves infinite }\mathbf{k}\left[
\left[  \mathbf{y}\right]  \right]  \text{-linear combinations}\right) \\
&  =\sum_{\lambda\in\operatorname*{Par}}s_{\mu}\left(  \mathbf{y}\right)
m_{\lambda}\left(  \mathbf{y}\right)  \underbrace{\alpha_{k}\left(
h_{\lambda}\right)  }_{\substack{=\left[  \lambda_{i}<k\text{ for all
}i\right]  \\\text{(by (\ref{pf.thm.G.main.alkh}))}}}=\sum_{\lambda
\in\operatorname*{Par}}s_{\mu}\left(  \mathbf{y}\right)  m_{\lambda}\left(
\mathbf{y}\right)  \cdot\left[  \lambda_{i}<k\text{ for all }i\right] \\
&  =\sum_{\lambda\in\operatorname*{Par}}\left[  \lambda_{i}<k\text{ for all
}i\right]  \cdot s_{\mu}\left(  \mathbf{y}\right)  m_{\lambda}\left(
\mathbf{y}\right)  .
\end{align*}
Renaming the indeterminates $\mathbf{y}=\left(  y_{1},y_{2},y_{3}%
,\ldots\right)  $ as $\mathbf{x}=\left(  x_{1},x_{2},x_{3},\ldots\right)  $ on
both sides of this equality, we obtain%
\begin{align*}
&  \sum_{\lambda\in\operatorname*{Par}}\operatorname*{pet}\nolimits_{k}\left(
\lambda,\mu\right)  \cdot s_{\lambda}\left(  \mathbf{x}\right) \\
&  =\sum_{\lambda\in\operatorname*{Par}}\left[  \lambda_{i}<k\text{ for all
}i\right]  \cdot\underbrace{s_{\mu}\left(  \mathbf{x}\right)  }_{=s_{\mu}%
}\underbrace{m_{\lambda}\left(  \mathbf{x}\right)  }_{=m_{\lambda}}%
=\sum_{\lambda\in\operatorname*{Par}}\left[  \lambda_{i}<k\text{ for all
}i\right]  \cdot s_{\mu}m_{\lambda}\\
&  =\sum_{\substack{\lambda\in\operatorname*{Par};\\\lambda_{i}<k\text{ for
all }i}}\underbrace{\left[  \lambda_{i}<k\text{ for all }i\right]
}_{\substack{=1\\\text{(since }\lambda_{i}<k\text{ for all }i\text{)}}}\cdot
s_{\mu}m_{\lambda}+\sum_{\substack{\lambda\in\operatorname*{Par};\\\text{not
}\left(  \lambda_{i}<k\text{ for all }i\right)  }}\underbrace{\left[
\lambda_{i}<k\text{ for all }i\right]  }_{\substack{=0\\\text{(since we don't
have \textquotedblleft}\lambda_{i}<k\text{ for all }i\text{\textquotedblright%
)}}}\cdot s_{\mu}m_{\lambda}\\
&  \ \ \ \ \ \ \ \ \ \ \left(  \text{since any }\lambda\in\operatorname*{Par}%
\text{ satisfies either }\left(  \lambda_{i}<k\text{ for all }i\right)  \text{
or not }\left(  \lambda_{i}<k\text{ for all }i\right)  \right) \\
&  =\sum_{\substack{\lambda\in\operatorname*{Par};\\\lambda_{i}<k\text{ for
all }i}}s_{\mu}m_{\lambda}+\underbrace{\sum_{\substack{\lambda\in
\operatorname*{Par};\\\text{not }\left(  \lambda_{i}<k\text{ for all
}i\right)  }}0s_{\mu}m_{\lambda}}_{=0}=\sum_{\substack{\lambda\in
\operatorname*{Par};\\\lambda_{i}<k\text{ for all }i}}s_{\mu}m_{\lambda}.
\end{align*}
Comparing this with%
\[
G\left(  k\right)  \cdot s_{\mu}=s_{\mu}\cdot\underbrace{G\left(  k\right)
}_{\substack{=\sum_{\substack{\lambda\in\operatorname*{Par};\\\lambda
_{i}<k\text{ for all }i}}m_{\lambda}\\\text{(by Proposition
\ref{prop.G.basics} \textbf{(b)})}}}=s_{\mu}\cdot\sum_{\substack{\lambda
\in\operatorname*{Par};\\\lambda_{i}<k\text{ for all }i}}m_{\lambda}%
=\sum_{\substack{\lambda\in\operatorname*{Par};\\\lambda_{i}<k\text{ for all
}i}}s_{\mu}m_{\lambda},
\]
we obtain
\[
G\left(  k\right)  \cdot s_{\mu}=\sum_{\lambda\in\operatorname*{Par}%
}\operatorname*{pet}\nolimits_{k}\left(  \lambda,\mu\right)  \cdot
\underbrace{s_{\lambda}\left(  \mathbf{x}\right)  }_{=s_{\lambda}}%
=\sum_{\lambda\in\operatorname*{Par}}\operatorname*{pet}\nolimits_{k}\left(
\lambda,\mu\right)  s_{\lambda}.
\]
This proves Theorem \ref{thm.G.pieri}.
\end{proof}

\subsection{\label{subsect.proofs.G.pieri.2nd}Second proof of Theorem
\ref{thm.G.pieri}}

\begin{vershort}
Our second proof of Theorem \ref{thm.G.pieri} will rely on \cite[\S 2.6]%
{GriRei} and specifically on the notion of alternants. We shall give only a
sketch of it; the reader can consult \cite{verlong} for the full version.

\begin{proof}
[Second proof of Theorem \ref{thm.G.pieri} (sketched).]If $f\in\mathbf{k}%
\left[  \left[  x_{1},x_{2},x_{3},\ldots\right]  \right]  $ is any formal
power series, and if $\ell\in\mathbb{N}$, then $f\left(  x_{1},x_{2}%
,\ldots,x_{\ell}\right)  $ shall denote the formal power series%
\[
f\left(  x_{1},x_{2},\ldots,x_{\ell},0,0,0,\ldots\right)  \in\mathbf{k}\left[
\left[  x_{1},x_{2},\ldots,x_{\ell}\right]  \right]
\]
that is obtained by substituting $0,0,0,\ldots$ for the variables $x_{\ell
+1},x_{\ell+2},x_{\ell+3},\ldots$ in $f$. Equivalently, $f\left(  x_{1}%
,x_{2},\ldots,x_{\ell}\right)  $ can be obtained from $f$ by removing all
monomials that contain any of the variables $x_{\ell+1},x_{\ell+2},x_{\ell
+3},\ldots$. This makes it clear that any $f\in\mathbf{k}\left[  \left[
x_{1},x_{2},x_{3},\ldots\right]  \right]  $ satisfies%
\begin{equation}
f=\lim\limits_{\ell\rightarrow\infty}f\left(  x_{1},x_{2},\ldots,x_{\ell
}\right)  \label{pf.thm.G.pieri.short.lim}%
\end{equation}
(where the limit is taken with respect to the usual topology on $\mathbf{k}%
\left[  \left[  x_{1},x_{2},x_{3},\ldots\right]  \right]  $).

Let $\ell\left(  \mu\right)  $ denote the number of parts of $\mu$. Fix an
$\ell\in\mathbb{N}$ such that $\ell\geq\ell\left(  \mu\right)  $. We shall
show that%
\begin{align}
&  \left(  G\left(  k\right)  \right)  \left(  x_{1},x_{2},\ldots,x_{\ell
}\right)  \cdot s_{\mu}\left(  x_{1},x_{2},\ldots,x_{\ell}\right) \nonumber\\
&  =\sum_{\lambda\in\operatorname*{Par}}\operatorname*{pet}\nolimits_{k}%
\left(  \lambda,\mu\right)  s_{\lambda}\left(  x_{1},x_{2},\ldots,x_{\ell
}\right)  . \label{pf.thm.G.pieri.short.lvars}%
\end{align}
Once this is done, the usual \textquotedblleft let $\ell$ tend to $\infty
$\textquotedblright\ argument (analogous to \cite[proof of Corollary
2.6.11]{GriRei}) will yield the validity of Theorem \ref{thm.G.pieri}.

Let $P_{\ell}$ denote the set of all partitions with at most $\ell$ parts. Any
partition $\lambda\in P_{\ell}$ satisfies $\lambda=\left(  \lambda_{1}%
,\lambda_{2},\ldots,\lambda_{\ell}\right)  $, and thus can be regarded as an
$\ell$-tuple of nonnegative integers. In other words, $P_{\ell}\subseteq
\mathbb{N}^{\ell}$. More precisely, the partitions $\lambda\in P_{\ell}$ are
precisely the $\ell$-tuples $\left(  \lambda_{1},\lambda_{2},\ldots
,\lambda_{\ell}\right)  \in\mathbb{N}^{\ell}$ satisfying $\lambda_{1}%
\geq\lambda_{2}\geq\cdots\geq\lambda_{\ell}$.

Define the $\ell$-tuple $\rho=\left(  \ell-1,\ell-2,\ldots,0\right)
\in\mathbb{N}^{\ell}$.

For any $\ell$-tuple $\alpha\in\mathbb{N}^{\ell}$ and each $i\in\left\{
1,2,\ldots,\ell\right\}  $, we shall write $\alpha_{i}$ for the $i$-th entry
of $\alpha$.

For any $\ell$-tuple $\alpha\in\mathbb{N}^{\ell}$, we let $\mathbf{x}^{\alpha
}$ denote the monomial $x_{1}^{\alpha_{1}}x_{2}^{\alpha_{2}}\cdots x_{\ell
}^{\alpha_{\ell}}$.

Let $\mathfrak{S}_{\ell}$ denote the symmetric group of the set $\left\{
1,2,\ldots,\ell\right\}  $. This group $\mathfrak{S}_{\ell}$ acts by
$\mathbf{k}$-algebra homomorphisms on the polynomial ring $\mathbf{k}\left[
x_{1},x_{2},\ldots,x_{\ell}\right]  $. It also acts on the set $\mathbb{N}%
^{\ell}$ by permuting the entries of an $\ell$-tuple; namely,%
\[
\sigma\cdot\beta=\left(  \beta_{\sigma^{-1}\left(  1\right)  },\beta
_{\sigma^{-1}\left(  2\right)  },\ldots,\beta_{\sigma^{-1}\left(  \ell\right)
}\right)  \ \ \ \ \ \ \ \ \ \ \text{for any }\sigma\in\mathfrak{S}_{\ell
}\text{ and }\beta\in\mathbb{N}^{\ell}.
\]

For any $\sigma\in\mathfrak{S}_{\ell}$, we let $\left(  -1\right)  ^{\sigma}$
denote the sign of the permutation $\sigma$.

If $\alpha\in\mathbb{N}^{\ell}$ is any $\ell$-tuple, then we define the
polynomial $a_{\alpha}\in\mathbf{k}\left[  x_{1},x_{2},\ldots,x_{\ell}\right]
$ (called the $\alpha$\emph{-alternant}) by%
\[
a_{\alpha}=\sum_{\sigma\in\mathfrak{S}_{\ell}}\left(  -1\right)  ^{\sigma
}\sigma\left(  \mathbf{x}^{\alpha}\right)  =\det\left(  \left(  x_{j}%
^{\alpha_{i}}\right)  _{1\leq i\leq\ell,\ 1\leq j\leq\ell}\right)  .
\]

We define addition of $\ell$-tuples $\alpha\in\mathbb{N}^{\ell}$ entrywise (so
that $\left(  \alpha+\beta\right)  _{i}=\alpha_{i}+\beta_{i}$ for every
$\alpha,\beta\in\mathbb{N}^{\ell}$ and $i\in\left\{  1,2,\ldots,\ell\right\}
$). Thus, $\lambda+\rho\in\mathbb{N}^{\ell}$ for each $\lambda\in P_{\ell}$
(since $\lambda\in P_{\ell}\subseteq\mathbb{N}^{\ell}$).

It is known (\cite[Corollary 2.6.7]{GriRei}) that%
\begin{equation}
s_{\lambda}\left(  x_{1},x_{2},\ldots,x_{\ell}\right)  =\dfrac{a_{\lambda
+\rho}}{a_{\rho}} \label{pf.thm.G.pieri.short.slam}%
\end{equation}
for every $\lambda\in P_{\ell}$.

The partition $\mu$ has at most $\ell$ parts (since $\ell\geq\ell\left(
\mu\right)  $). In other words, $\mu\in P_{\ell}$.

Now, define $\alpha\in\mathbb{N}^{\ell}$ by $\alpha=\mu+\rho$. Proposition
\ref{prop.G.basics} \textbf{(b)} yields%
\[
G\left(  k\right)  =\prod_{i=1}^{\infty}\left(  x_{i}^{0}+x_{i}^{1}%
+\cdots+x_{i}^{k-1}\right)  .
\]
Substituting $0,0,0,\ldots$ for $x_{\ell+1},x_{\ell+2},x_{\ell+3},\ldots$ in
this equality, we obtain%
\begin{align}
&  \left(  G\left(  k\right)  \right)  \left(  x_{1},x_{2},\ldots,x_{\ell
}\right) \nonumber\\
&  =\left(  \prod_{i=1}^{\ell}\left(  x_{i}^{0}+x_{i}^{1}+\cdots+x_{i}%
^{k-1}\right)  \right)  \cdot\left(  \prod_{i=\ell+1}^{\infty}%
\underbrace{\left(  0^{0}+0^{1}+\cdots+0^{k-1}\right)  }%
_{\substack{=1\\\text{(since }0^{0}=1\text{ and }0^{j}=0\text{ for all
}j>0\text{)}}}\right) \nonumber\\
&  =\prod_{i=1}^{\ell}\left(  x_{i}^{0}+x_{i}^{1}+\cdots+x_{i}^{k-1}\right)
\label{pf.thm.G.pieri.short.Gkxl1}\\
&  =\sum_{\left(  j_{1},j_{2},\ldots,j_{\ell}\right)  \in\left\{
0,1,\ldots,k-1\right\}  ^{\ell}}x_{1}^{j_{1}}x_{2}^{j_{2}}\cdots x_{\ell
}^{j_{\ell}}\ \ \ \ \ \ \ \ \ \ \left(  \text{by the product rule}\right)
\nonumber\\
&  =\sum_{\substack{\beta\in\mathbb{N}^{\ell};\\\beta_{i}<k\text{ for all }%
i}}\mathbf{x}^{\beta}\label{pf.thm.G.pieri.short.Gkxl2}\\
&  \ \ \ \ \ \ \ \ \ \ \left(  \text{here, we have substituted }\beta\text{
for }\left(  j_{1},j_{2},\ldots,j_{\ell}\right)  \text{ in the sum}\right)
.\nonumber
\end{align}
(Here and in the rest of this proof, \textquotedblleft for all $i$%
\textquotedblright\ means \textquotedblleft for all $i\in\left\{
1,2,\ldots,\ell\right\}  $\textquotedblright.)

From (\ref{pf.thm.G.pieri.short.Gkxl1}), we see that $\left(  G\left(
k\right)  \right)  \left(  x_{1},x_{2},\ldots,x_{\ell}\right)  $ is a
polynomial in $\mathbf{k}\left[  x_{1},x_{2},\ldots,x_{\ell}\right]  $ (not
merely a power series). From (\ref{pf.thm.G.pieri.short.Gkxl1}), we moreover
see that this polynomial $\left(  G\left(  k\right)  \right)  \left(
x_{1},x_{2},\ldots,x_{\ell}\right)  \in\mathbf{k}\left[  x_{1},x_{2}%
,\ldots,x_{\ell}\right]  $ is invariant under the action of $\mathfrak{S}%
_{\ell}$.

But from $a_{\alpha}=\sum_{\sigma\in\mathfrak{S}_{\ell}}\left(  -1\right)
^{\sigma}\sigma\left(  \mathbf{x}^{\alpha}\right)  $, we obtain%
\begin{align}
\left(  G\left(  k\right)  \right)  \left(  x_{1},x_{2},\ldots,x_{\ell
}\right)  \cdot a_{\alpha}  &  =\left(  G\left(  k\right)  \right)  \left(
x_{1},x_{2},\ldots,x_{\ell}\right)  \cdot\sum_{\sigma\in\mathfrak{S}_{\ell}%
}\left(  -1\right)  ^{\sigma}\sigma\left(  \mathbf{x}^{\alpha}\right)
\nonumber\\
&  =\sum_{\sigma\in\mathfrak{S}_{\ell}}\left(  -1\right)  ^{\sigma
}\underbrace{\left(  G\left(  k\right)  \right)  \left(  x_{1},x_{2}%
,\ldots,x_{\ell}\right)  }_{\substack{=\sigma\left(  \left(  G\left(
k\right)  \right)  \left(  x_{1},x_{2},\ldots,x_{\ell}\right)  \right)
\\\text{(since the polynomial }\left(  G\left(  k\right)  \right)  \left(
x_{1},x_{2},\ldots,x_{\ell}\right)  \\\text{is invariant under the action of
}\mathfrak{S}_{\ell}\text{)}}}\cdot\sigma\left(  \mathbf{x}^{\alpha}\right)
\nonumber\\
&  =\sum_{\sigma\in\mathfrak{S}_{\ell}}\left(  -1\right)  ^{\sigma
}\underbrace{\sigma\left(  \left(  G\left(  k\right)  \right)  \left(
x_{1},x_{2},\ldots,x_{\ell}\right)  \right)  \cdot\sigma\left(  \mathbf{x}%
^{\alpha}\right)  }_{\substack{=\sigma\left(  \left(  G\left(  k\right)
\right)  \left(  x_{1},x_{2},\ldots,x_{\ell}\right)  \cdot\mathbf{x}^{\alpha
}\right)  \\\text{(since }\mathfrak{S}_{\ell}\text{ acts on }\mathbf{k}\left[
x_{1},x_{2},\ldots,x_{\ell}\right]  \\\text{by }\mathbf{k}\text{-algebra
homomorphisms)}}}\nonumber\\
&  =\sum_{\sigma\in\mathfrak{S}_{\ell}}\left(  -1\right)  ^{\sigma}%
\sigma\left(  \left(  G\left(  k\right)  \right)  \left(  x_{1},x_{2}%
,\ldots,x_{\ell}\right)  \cdot\mathbf{x}^{\alpha}\right) \nonumber\\
&  =\sum_{\sigma\in\mathfrak{S}_{\ell}}\left(  -1\right)  ^{\sigma}%
\sigma\left(  \sum_{\substack{\beta\in\mathbb{N}^{\ell};\\\beta_{i}<k\text{
for all }i}}\mathbf{x}^{\beta}\cdot\mathbf{x}^{\alpha}\right)
\ \ \ \ \ \ \ \ \ \ \left(  \text{by (\ref{pf.thm.G.pieri.short.Gkxl2}%
)}\right) \nonumber\\
&  =\sum_{\substack{\beta\in\mathbb{N}^{\ell};\\\beta_{i}<k\text{ for all }%
i}}\sum_{\sigma\in\mathfrak{S}_{\ell}}\left(  -1\right)  ^{\sigma}%
\sigma\left(  \underbrace{\mathbf{x}^{\beta}\cdot\mathbf{x}^{\alpha}%
}_{\substack{=\mathbf{x}^{\alpha}\cdot\mathbf{x}^{\beta}\\=\mathbf{x}%
^{\alpha+\beta}}}\right) \nonumber\\
&  =\sum_{\substack{\beta\in\mathbb{N}^{\ell};\\\beta_{i}<k\text{ for all }%
i}}\underbrace{\sum_{\sigma\in\mathfrak{S}_{\ell}}\left(  -1\right)  ^{\sigma
}\sigma\left(  \mathbf{x}^{\alpha+\beta}\right)  }_{\substack{=a_{\alpha
+\beta}\\\text{(by the definition of }a_{\alpha+\beta}\text{)}}}=\sum
_{\substack{\beta\in\mathbb{N}^{\ell};\\\beta_{i}<k\text{ for all }%
i}}a_{\alpha+\beta}\nonumber\\
&  =\sum_{\substack{\gamma\in\mathbb{N}^{\ell};\\0\leq\gamma_{i}-\alpha
_{i}<k\text{ for all }i}}a_{\gamma} \label{pf.thm.G.pieri.short.prod1}%
\end{align}
(here, we have substituted $\gamma$ for $\alpha+\beta$ in the sum).

It is well-known (and easy to check using the properties of
determinants\footnote{specifically: using the fact that a square matrix with
two equal rows always has determinant $0$}) that if an $\ell$-tuple $\gamma
\in\mathbb{N}^{\ell}$ has two equal entries, then%
\begin{equation}
a_{\gamma}=0. \label{pf.thm.G.pieri.short.agam=0}%
\end{equation}
Moreover, any $\ell$-tuple $\gamma\in\mathbb{N}^{\ell}$ and any $\sigma
\in\mathfrak{S}_{\ell}$ satisfy%
\begin{equation}
a_{\sigma\cdot\gamma}=\left(  -1\right)  ^{\sigma}\cdot a_{\gamma}.
\label{pf.thm.G.pieri.short.asiggam=}%
\end{equation}
(This, too, follows from the properties of determinants\footnote{specifically:
using the fact that permuting the rows of a square matrix results in its
determinant getting multiplied by the sign of the permutation}.)

Let $SP_{\ell}$ denote the set of all $\ell$-tuples $\delta\in\mathbb{N}%
^{\ell}$ such that $\delta_{1}>\delta_{2}>\cdots>\delta_{\ell}$. Then, the map%
\begin{align}
P_{\ell}  &  \rightarrow SP_{\ell},\nonumber\\
\lambda &  \mapsto\lambda+\rho\label{pf.thm.G.pieri.short.PSPbij}%
\end{align}
is a bijection.

If an $\ell$-tuple $\gamma\in\mathbb{N}^{\ell}$ has no two equal entries, then
$\gamma$ can be uniquely written in the form $\sigma\cdot\delta$ for some
$\sigma\in\mathfrak{S}_{\ell}$ and some $\delta\in SP_{\ell}$ (indeed,
$\delta$ is the result of sorting $\gamma$ into decreasing order, while
$\sigma$ is the permutation that achieves this sorting). In other words, the
map%
\begin{align}
\mathfrak{S}_{\ell}\times SP_{\ell}  &  \rightarrow\left\{  \gamma
\in\mathbb{N}^{\ell}\ \mid\ \text{the }\ell\text{-tuple }\gamma\text{ has no
two equal entries}\right\}  ,\nonumber\\
\left(  \sigma,\delta\right)   &  \mapsto\sigma\cdot\delta
\label{pf.thm.G.pieri.short.SSPbbij}%
\end{align}
is a bijection.

Now, (\ref{pf.thm.G.pieri.short.prod1}) becomes%
\begin{align}
&  \left(  G\left(  k\right)  \right)  \left(  x_{1},x_{2},\ldots,x_{\ell
}\right)  \cdot a_{\alpha}\nonumber\\
&  =\sum_{\substack{\gamma\in\mathbb{N}^{\ell};\\0\leq\gamma_{i}-\alpha
_{i}<k\text{ for all }i}}a_{\gamma}\nonumber\\
&  =\sum_{\substack{\gamma\in\mathbb{N}^{\ell};\\0\leq\gamma_{i}-\alpha
_{i}<k\text{ for all }i;\\\text{the }\ell\text{-tuple }\gamma\text{ has no two
equal entries}}}a_{\gamma}+\sum_{\substack{\gamma\in\mathbb{N}^{\ell}%
;\\0\leq\gamma_{i}-\alpha_{i}<k\text{ for all }i;\\\text{the }\ell\text{-tuple
}\gamma\text{ has two equal entries}}}\underbrace{a_{\gamma}}%
_{\substack{=0\\\text{(by (\ref{pf.thm.G.pieri.short.agam=0}))}}}\nonumber\\
&  =\sum_{\substack{\gamma\in\mathbb{N}^{\ell};\\0\leq\gamma_{i}-\alpha
_{i}<k\text{ for all }i;\\\text{the }\ell\text{-tuple }\gamma\text{ has no two
equal entries}}}a_{\gamma}\nonumber\\
&  =\sum_{\substack{\gamma\in\mathbb{N}^{\ell};\\\text{the }\ell\text{-tuple
}\gamma\text{ has no two equal entries}}}\left[  0\leq\gamma_{i}-\alpha
_{i}<k\text{ for all }i\right]  \cdot a_{\gamma}\nonumber\\
&  =\underbrace{\sum_{\left(  \sigma,\delta\right)  \in\mathfrak{S}_{\ell
}\times SP_{\ell}}}_{=\sum_{\delta\in SP_{\ell}}\ \ \sum_{\sigma
\in\mathfrak{S}_{\ell}}}\left[  0\leq\underbrace{\left(  \sigma\cdot
\delta\right)  _{i}}_{=\delta_{\sigma^{-1}\left(  i\right)  }}-\alpha
_{i}<k\text{ for all }i\right]  \cdot\underbrace{a_{\sigma\cdot\delta}%
}_{\substack{=\left(  -1\right)  ^{\sigma}a_{\delta}\\\text{(by
(\ref{pf.thm.G.pieri.short.asiggam=}))}}}\nonumber\\
&  \ \ \ \ \ \ \ \ \ \ \left(
\begin{array}
[c]{c}%
\text{here, we have substituted }\sigma\cdot\delta\text{ for }\gamma\text{ in
the sum,}\\
\text{since the map (\ref{pf.thm.G.pieri.short.SSPbbij}) is a bijection}%
\end{array}
\right) \nonumber\\
&  =\sum_{\delta\in SP_{\ell}}\ \ \sum_{\sigma\in\mathfrak{S}_{\ell}%
}\underbrace{\left[  0\leq\delta_{\sigma^{-1}\left(  i\right)  }-\alpha
_{i}<k\text{ for all }i\right]  }_{\substack{=\prod_{i=1}^{\ell}\left[
0\leq\delta_{\sigma^{-1}\left(  i\right)  }-\alpha_{i}<k\right]
\\=\prod_{i=1}^{\ell}\left[  0\leq\delta_{i}-\alpha_{\sigma\left(  i\right)
}<k\right]  \\\text{(here, we have substituted }\sigma\left(  i\right)
\\\text{for }i\text{ in the product, since }\sigma\text{ is a bijection)}%
}}\cdot\left(  -1\right)  ^{\sigma}a_{\delta}\nonumber\\
&  =\sum_{\delta\in SP_{\ell}}\ \ \underbrace{\sum_{\sigma\in\mathfrak{S}%
_{\ell}}\left(  \prod_{i=1}^{\ell}\left[  0\leq\delta_{i}-\alpha
_{\sigma\left(  i\right)  }<k\right]  \right)  \cdot\left(  -1\right)
^{\sigma}}_{\substack{=\sum_{\sigma\in\mathfrak{S}_{\ell}}\left(  -1\right)
^{\sigma}\prod_{i=1}^{\ell}\left[  0\leq\delta_{i}-\alpha_{\sigma\left(
i\right)  }<k\right]  \\=\det\left(  \left(  \left[  0\leq\delta_{i}%
-\alpha_{j}<k\right]  \right)  _{1\leq i\leq\ell,\ 1\leq j\leq\ell}\right)
\\\text{(by the definition of a determinant)}}}a_{\delta}\nonumber\\
&  =\sum_{\delta\in SP_{\ell}}\det\left(  \left(  \left[  0\leq\delta
_{i}-\alpha_{j}<k\right]  \right)  _{1\leq i\leq\ell,\ 1\leq j\leq\ell
}\right)  a_{\delta}\nonumber\\
&  =\sum_{\lambda\in P_{\ell}}\det\left(  \left(  \left[  0\leq\left(
\lambda+\rho\right)  _{i}-\alpha_{j}<k\right]  \right)  _{1\leq i\leq
\ell,\ 1\leq j\leq\ell}\right)  a_{\lambda+\rho}
\label{pf.thm.G.pieri.short.prod2}%
\end{align}
(here, we have substituted $\lambda+\rho$ for $\delta$ in the sum, since the
map (\ref{pf.thm.G.pieri.short.PSPbij}) is a bijection).

Using the definitions of $\rho$ and $\alpha$, it is easy to see that
\begin{equation}
\left(  \lambda+\rho\right)  _{i}-\alpha_{j}=\lambda_{i}-\mu_{j}-i+j
\label{pf.thm.G.pieri.short.dif}%
\end{equation}
for every $\lambda\in P_{\ell}$ and every $i,j\in\left\{  1,2,\ldots
,\ell\right\}  $.

Now, (\ref{pf.thm.G.pieri.short.slam}) (applied to $\lambda=\mu$) yields%
\[
s_{\mu}\left(  x_{1},x_{2},\ldots,x_{\ell}\right)  =\dfrac{a_{\mu+\rho}%
}{a_{\rho}}=\dfrac{a_{\alpha}}{a_{\rho}}%
\]
(since $\mu+\rho=\alpha$). Multiplying this equality by $\left(  G\left(
k\right)  \right)  \left(  x_{1},x_{2},\ldots,x_{\ell}\right)  $, we find%
\begin{align*}
&  \left(  G\left(  k\right)  \right)  \left(  x_{1},x_{2},\ldots,x_{\ell
}\right)  \cdot s_{\mu}\left(  x_{1},x_{2},\ldots,x_{\ell}\right) \\
&  =\left(  G\left(  k\right)  \right)  \left(  x_{1},x_{2},\ldots,x_{\ell
}\right)  \cdot\dfrac{a_{\alpha}}{a_{\rho}}\\
&  =\dfrac{1}{a_{\rho}}\cdot\underbrace{\left(  G\left(  k\right)  \right)
\left(  x_{1},x_{2},\ldots,x_{\ell}\right)  \cdot a_{\alpha}}_{\substack{=\sum
_{\lambda\in P_{\ell}}\det\left(  \left(  \left[  0\leq\left(  \lambda
+\rho\right)  _{i}-\alpha_{j}<k\right]  \right)  _{1\leq i\leq\ell,\ 1\leq
j\leq\ell}\right)  a_{\lambda+\rho}\\\text{(by
(\ref{pf.thm.G.pieri.short.prod2}))}}}\\
&  =\sum_{\lambda\in P_{\ell}}\det\left(  \left(  \left[  0\leq
\underbrace{\left(  \lambda+\rho\right)  _{i}-\alpha_{j}}_{\substack{=\lambda
_{i}-\mu_{j}-i+j\\\text{(by (\ref{pf.thm.G.pieri.short.dif}))}}}<k\right]
\right)  _{1\leq i\leq\ell,\ 1\leq j\leq\ell}\right)  \underbrace{\dfrac
{a_{\lambda+\rho}}{a_{\rho}}}_{\substack{=s_{\lambda}\left(  x_{1}%
,x_{2},\ldots,x_{\ell}\right)  \\\text{(by (\ref{pf.thm.G.pieri.short.slam}%
))}}}\\
&  =\sum_{\lambda\in P_{\ell}}\underbrace{\det\left(  \left(  \left[
0\leq\lambda_{i}-\mu_{j}-i+j<k\right]  \right)  _{1\leq i\leq\ell,\ 1\leq
j\leq\ell}\right)  }_{\substack{=\operatorname*{pet}\nolimits_{k}\left(
\lambda,\mu\right)  \\\text{(by the definition of }\operatorname*{pet}%
\nolimits_{k}\left(  \lambda,\mu\right)  \text{)}}}\cdot s_{\lambda}\left(
x_{1},x_{2},\ldots,x_{\ell}\right) \\
&  =\sum_{\lambda\in P_{\ell}}\operatorname*{pet}\nolimits_{k}\left(
\lambda,\mu\right)  s_{\lambda}\left(  x_{1},x_{2},\ldots,x_{\ell}\right)
=\sum_{\lambda\in\operatorname*{Par}}\operatorname*{pet}\nolimits_{k}\left(
\lambda,\mu\right)  s_{\lambda}\left(  x_{1},x_{2},\ldots,x_{\ell}\right)  ,
\end{align*}
where the last equality sign is owed to the well-known fact (see, e.g.,
\cite[Exercise 2.3.8(b)]{GriRei}) that every $\lambda\in\operatorname*{Par}%
\setminus P_{\ell}$ (that is, every partition $\lambda$ having more than
$\ell$ parts) satisfies $s_{\lambda}\left(  x_{1},x_{2},\ldots,x_{\ell
}\right)  =0$. Thus, (\ref{pf.thm.G.pieri.short.lvars}) has been proved.

Forget that we fixed $\ell$. Thus, we have proved
(\ref{pf.thm.G.pieri.short.lvars}) for each $\ell\in\mathbb{N}$ that satisfies
$\ell\geq\ell\left(  \mu\right)  $. As we mentioned above, we can now use a
standard limiting argument (using (\ref{pf.thm.G.pieri.short.lim})) to obtain
the claim of Theorem \ref{thm.G.pieri}.
\end{proof}
\end{vershort}

\begin{verlong}
Our second proof of Theorem \ref{thm.G.pieri} will rely on \cite[\S 2.6]%
{GriRei} and specifically on the notion of alternants:

\begin{proof}
[Second proof of Theorem \ref{thm.G.pieri}.]If $f\in\mathbf{k}\left[  \left[
x_{1},x_{2},x_{3},\ldots\right]  \right]  $ is any formal power series, and if
$\ell\in\mathbb{N}$, then $f\left(  x_{1},x_{2},\ldots,x_{\ell}\right)  $
shall denote the formal power series%
\[
f\left(  x_{1},x_{2},\ldots,x_{\ell},0,0,0,\ldots\right)  \in\mathbf{k}\left[
\left[  x_{1},x_{2},\ldots,x_{\ell}\right]  \right]
\]
that is obtained by substituting $0,0,0,\ldots$ for the variables $x_{\ell
+1},x_{\ell+2},x_{\ell+3},\ldots$ in $f$. Equivalently, $f\left(  x_{1}%
,x_{2},\ldots,x_{\ell}\right)  $ can be obtained from $f$ by removing all
monomials that contain any of the variables $x_{\ell+1},x_{\ell+2},x_{\ell
+3},\ldots$. This makes it clear that any $f\in\mathbf{k}\left[  \left[
x_{1},x_{2},x_{3},\ldots\right]  \right]  $ satisfies%
\begin{equation}
f=\lim\limits_{\ell\rightarrow\infty}f\left(  x_{1},x_{2},\ldots,x_{\ell
}\right)  \label{pf.thm.G.pieri.lim}%
\end{equation}
(where the limit is taken with respect to the usual topology on $\mathbf{k}%
\left[  \left[  x_{1},x_{2},x_{3},\ldots\right]  \right]  $).

Let $\ell\left(  \mu\right)  $ denote the \emph{length} of the partition $\mu
$; this is defined as the unique $i\in\mathbb{N}$ such that $\mu_{1},\mu
_{2},\ldots,\mu_{i}$ are positive but $\mu_{i+1},\mu_{i+2},\mu_{i+3},\ldots$
are zero. Thus, $\ell\left(  \mu\right)  $ is the number of parts of $\mu$.

Fix an $\ell\in\mathbb{N}$ such that $\ell\geq\ell\left(  \mu\right)  $.

Let $P_{\ell}$ denote the set of all partitions with at most $\ell$ parts. We
shall show that%
\begin{align}
&  \left(  G\left(  k\right)  \right)  \left(  x_{1},x_{2},\ldots,x_{\ell
}\right)  \cdot s_{\mu}\left(  x_{1},x_{2},\ldots,x_{\ell}\right) \nonumber\\
&  =\sum_{\lambda\in P_{\ell}}\operatorname*{pet}\nolimits_{k}\left(
\lambda,\mu\right)  s_{\lambda}\left(  x_{1},x_{2},\ldots,x_{\ell}\right)  .
\label{pf.thm.G.pieri.lvars}%
\end{align}
Once this is done, the usual \textquotedblleft let $\ell$ tend to $\infty
$\textquotedblright\ argument (analogous to \cite[proof of Corollary
2.6.11]{GriRei}) will yield the validity of Theorem \ref{thm.G.pieri}.

Any partition $\lambda\in P_{\ell}$ satisfies $\lambda_{\ell+1}=\lambda
_{\ell+2}=\lambda_{\ell+3}=\cdots=0$ and thus $\lambda=\left(  \lambda
_{1},\lambda_{2},\ldots,\lambda_{\ell}\right)  $. Thus, any partition
$\lambda\in P_{\ell}$ can be regarded as an $\ell$-tuple of nonnegative
integers. In other words, $P_{\ell}\subseteq\mathbb{N}^{\ell}$. More
precisely, the partitions $\lambda\in P_{\ell}$ are precisely the $\ell
$-tuples $\left(  \lambda_{1},\lambda_{2},\ldots,\lambda_{\ell}\right)
\in\mathbb{N}^{\ell}$ satisfying $\lambda_{1}\geq\lambda_{2}\geq\cdots
\geq\lambda_{\ell}$.

Define the $\ell$-tuple $\rho=\left(  \ell-1,\ell-2,\ldots,0\right)
\in\mathbb{N}^{\ell}$.

For any $\ell$-tuple $\alpha\in\mathbb{N}^{\ell}$ and each $i\in\left\{
1,2,\ldots,\ell\right\}  $, we shall write $\alpha_{i}$ for the $i$-th entry
of $\alpha$.

For any $\ell$-tuple $\alpha\in\mathbb{N}^{\ell}$, we let $\mathbf{x}^{\alpha
}$ denote the monomial $x_{1}^{\alpha_{1}}x_{2}^{\alpha_{2}}\cdots x_{\ell
}^{\alpha_{\ell}}$.

Let $\mathfrak{S}_{\ell}$ denote the symmetric group of the set $\left\{
1,2,\ldots,\ell\right\}  $. This group $\mathfrak{S}_{\ell}$ acts by
$\mathbf{k}$-algebra homomorphisms on the polynomial ring $\mathbf{k}\left[
x_{1},x_{2},\ldots,x_{\ell}\right]  $.

The symmetric group $\mathfrak{S}_{\ell}$ also acts on the set $\mathbb{N}%
^{\ell}$ by permuting the entries of an $\ell$-tuple; namely,%
\[
\sigma\cdot\beta=\left(  \beta_{\sigma^{-1}\left(  1\right)  },\beta
_{\sigma^{-1}\left(  2\right)  },\ldots,\beta_{\sigma^{-1}\left(  \ell\right)
}\right)  \ \ \ \ \ \ \ \ \ \ \text{for any }\sigma\in\mathfrak{S}_{\ell
}\text{ and }\beta\in\mathbb{N}^{\ell}.
\]
This action has the property that%
\[
\mathbf{x}^{\sigma\cdot\beta}=\sigma\left(  \mathbf{x}^{\beta}\right)
\ \ \ \ \ \ \ \ \ \ \text{for any }\sigma\in\mathfrak{S}_{\ell}\text{ and
}\beta\in\mathbb{N}^{\ell}.
\]

For any $\sigma\in\mathfrak{S}_{\ell}$, we let $\left(  -1\right)  ^{\sigma}$
denote the sign of the permutation $\sigma$.

If $\alpha\in\mathbb{N}^{\ell}$ is any $\ell$-tuple, then we define the
polynomial $a_{\alpha}\in\mathbf{k}\left[  x_{1},x_{2},\ldots,x_{\ell}\right]
$ by%
\[
a_{\alpha}=\sum_{\sigma\in\mathfrak{S}_{\ell}}\left(  -1\right)  ^{\sigma
}\sigma\left(  \mathbf{x}^{\alpha}\right)  =\det\left(  \left(  x_{j}%
^{\alpha_{i}}\right)  _{1\leq i\leq\ell,\ 1\leq j\leq\ell}\right)  .
\]
\footnote{Here is why the second equality sign in this equality holds:
\par
Let $\alpha\in\mathbb{N}^{\ell}$ be any $\ell$-tuple. Then, each $\sigma
\in\mathfrak{S}_{\ell}$ satisfies%
\[
\sigma\left(  \underbrace{\mathbf{x}^{\alpha}}_{=x_{1}^{\alpha_{1}}%
x_{2}^{\alpha_{2}}\cdots x_{\ell}^{\alpha_{\ell}}}\right)  =\sigma\left(
x_{1}^{\alpha_{1}}x_{2}^{\alpha_{2}}\cdots x_{\ell}^{\alpha_{\ell}}\right)
=x_{\sigma\left(  1\right)  }^{\alpha_{1}}x_{\sigma\left(  2\right)  }%
^{\alpha_{2}}\cdots x_{\sigma\left(  \ell\right)  }^{\alpha_{\ell}}%
\]
(by the definition of the action of $\mathfrak{S}_{\ell}$ on $\mathbf{k}%
\left[  x_{1},x_{2},\ldots,x_{\ell}\right]  $). Thus,%
\[
\sum_{\sigma\in\mathfrak{S}_{\ell}}\left(  -1\right)  ^{\sigma}%
\underbrace{\sigma\left(  \mathbf{x}^{\alpha}\right)  }_{=x_{\sigma\left(
1\right)  }^{\alpha_{1}}x_{\sigma\left(  2\right)  }^{\alpha_{2}}\cdots
x_{\sigma\left(  \ell\right)  }^{\alpha_{\ell}}}=\sum_{\sigma\in
\mathfrak{S}_{\ell}}\left(  -1\right)  ^{\sigma}x_{\sigma\left(  1\right)
}^{\alpha_{1}}x_{\sigma\left(  2\right)  }^{\alpha_{2}}\cdots x_{\sigma\left(
\ell\right)  }^{\alpha_{\ell}}.
\]
Comparing this with%
\[
\det\left(  \left(  x_{j}^{\alpha_{i}}\right)  _{1\leq i\leq\ell,\ 1\leq
j\leq\ell}\right)  =\sum_{\sigma\in\mathfrak{S}_{\ell}}\left(  -1\right)
^{\sigma}x_{\sigma\left(  1\right)  }^{\alpha_{1}}x_{\sigma\left(  2\right)
}^{\alpha_{2}}\cdots x_{\sigma\left(  \ell\right)  }^{\alpha_{\ell}%
}\ \ \ \ \ \ \ \ \ \ \left(
\begin{array}
[c]{c}%
\text{by the definition}\\
\text{of a determinant}%
\end{array}
\right)  ,
\]
we obtain $\sum_{\sigma\in\mathfrak{S}_{\ell}}\left(  -1\right)  ^{\sigma
}\sigma\left(  \mathbf{x}^{\alpha}\right)  =\det\left(  \left(  x_{j}%
^{\alpha_{i}}\right)  _{1\leq i\leq\ell,\ 1\leq j\leq\ell}\right)  $. Qed.}
This polynomial $a_{\alpha}$ is called the $\alpha$\emph{-alternant}.

We define addition of $\ell$-tuples $\alpha\in\mathbb{N}^{\ell}$ entrywise (so
that $\left(  \alpha+\beta\right)  _{i}=\alpha_{i}+\beta_{i}$ for every
$\alpha,\beta\in\mathbb{N}^{\ell}$ and $i\in\left\{  1,2,\ldots,\ell\right\}
$). Thus, $\lambda+\rho\in\mathbb{N}^{\ell}$ for each $\lambda\in P_{\ell}$
(since $\lambda\in P_{\ell}\subseteq\mathbb{N}^{\ell}$). Note that
\begin{equation}
\mathbf{x}^{\alpha}\cdot\mathbf{x}^{\beta}=\mathbf{x}^{\alpha+\beta}
\label{pf.thm.G.pieri.xaxb}%
\end{equation}
for any $\alpha,\beta\in\mathbb{N}^{\ell}$.

It is known (\cite[Corollary 2.6.7]{GriRei}) that%
\begin{equation}
s_{\lambda}\left(  x_{1},x_{2},\ldots,x_{\ell}\right)  =\dfrac{a_{\lambda
+\rho}}{a_{\rho}} \label{pf.thm.G.pieri.slam}%
\end{equation}
for every $\lambda\in P_{\ell}$. (The denominator $a_{\rho}$ is a
non-zero-divisor in the ring $\mathbf{k}\left[  x_{1},x_{2},\ldots,x_{\ell
}\right]  $, and the quotient $\dfrac{a_{\lambda+\rho}}{a_{\rho}}$ exists.)

Note that $\ell\geq\ell\left(  \mu\right)  $, so that $\ell\left(  \mu\right)
\leq\ell$; in other words, the partition $\mu$ has at most $\ell$ parts (since
$\ell\left(  \mu\right)  $ is the number of parts of $\mu$). In other words,
$\mu\in P_{\ell}$.

Now, define $\alpha\in\mathbb{N}^{\ell}$ by $\alpha=\mu+\rho$. Proposition
\ref{prop.G.basics} \textbf{(b)} yields%
\[
G\left(  k\right)  =\prod_{i=1}^{\infty}\left(  x_{i}^{0}+x_{i}^{1}%
+\cdots+x_{i}^{k-1}\right)  .
\]
Substituting $0,0,0,\ldots$ for $x_{\ell+1},x_{\ell+2},x_{\ell+3},\ldots$ in
this equality, we obtain%
\begin{align}
&  \left(  G\left(  k\right)  \right)  \left(  x_{1},x_{2},\ldots,x_{\ell
}\right)  \nonumber\\
&  =\left(  \prod_{i=1}^{\ell}\left(  x_{i}^{0}+x_{i}^{1}+\cdots+x_{i}%
^{k-1}\right)  \right)  \cdot\left(  \prod_{i=\ell+1}^{\infty}%
\underbrace{\left(  0^{0}+0^{1}+\cdots+0^{k-1}\right)  }%
_{\substack{=1\\\text{(since }0^{0}=1\text{ and }0^{j}=0\text{ for all
}j>0\text{)}}}\right)  \nonumber\\
&  =\prod_{i=1}^{\ell}\underbrace{\left(  x_{i}^{0}+x_{i}^{1}+\cdots
+x_{i}^{k-1}\right)  }_{=\sum_{j=0}^{k-1}x_{i}^{j}}=\prod_{i=1}^{\ell}%
\ \ \sum_{j=0}^{k-1}x_{i}^{j}\label{pf.thm.G.pieri.Gkxl1}\\
&  =\sum_{\left(  j_{1},j_{2},\ldots,j_{\ell}\right)  \in\left\{
0,1,\ldots,k-1\right\}  ^{\ell}}\underbrace{x_{1}^{j_{1}}x_{2}^{j_{2}}\cdots
x_{\ell}^{j_{\ell}}}_{\substack{=\mathbf{x}^{\left(  j_{1},j_{2}%
,\ldots,j_{\ell}\right)  }\\\text{(by the definition of }\mathbf{x}^{\left(
j_{1},j_{2},\ldots,j_{\ell}\right)  }\text{)}}}\ \ \ \ \ \ \ \ \ \ \left(
\text{by the product rule}\right)  \nonumber\\
&  =\sum_{\left(  j_{1},j_{2},\ldots,j_{\ell}\right)  \in\left\{
0,1,\ldots,k-1\right\}  ^{\ell}}\mathbf{x}^{\left(  j_{1},j_{2},\ldots
,j_{\ell}\right)  }=\underbrace{\sum_{\beta\in\left\{  0,1,\ldots,k-1\right\}
^{\ell}}}_{=\sum_{\substack{\beta\in\mathbb{N}^{\ell};\\\beta_{i}<k\text{ for
all }i}}}\mathbf{x}^{\beta}\nonumber\\
&  \ \ \ \ \ \ \ \ \ \ \left(  \text{here, we have substituted }\beta\text{
for }\left(  j_{1},j_{2},\ldots,j_{\ell}\right)  \text{ in the sum}\right)
\nonumber\\
&  =\sum_{\substack{\beta\in\mathbb{N}^{\ell};\\\beta_{i}<k\text{ for all }%
i}}\mathbf{x}^{\beta}.\nonumber
\end{align}
(Here and in the rest of this proof, \textquotedblleft for all $i$%
\textquotedblright\ means \textquotedblleft for all $i\in\left\{
1,2,\ldots,\ell\right\}  $\textquotedblright.)

From (\ref{pf.thm.G.pieri.Gkxl1}), we see that $\left(  G\left(  k\right)
\right)  \left(  x_{1},x_{2},\ldots,x_{\ell}\right)  $ is a polynomial in
$\mathbf{k}\left[  x_{1},x_{2},\ldots,x_{\ell}\right]  $ (not merely a power
series in $\mathbf{k}\left[  \left[  x_{1},x_{2},\ldots,x_{\ell}\right]
\right]  $). This polynomial $\left(  G\left(  k\right)  \right)  \left(
x_{1},x_{2},\ldots,x_{\ell}\right)  \in\mathbf{k}\left[  x_{1},x_{2}%
,\ldots,x_{\ell}\right]  $ is invariant under the action of $\mathfrak{S}%
_{\ell}$ (because of (\ref{pf.thm.G.pieri.Gkxl1}), or alternatively, because
$G\left(  k\right)  $ is a symmetric power series). In other words,%
\begin{equation}
\sigma\left(  \left(  G\left(  k\right)  \right)  \left(  x_{1},x_{2}%
,\ldots,x_{\ell}\right)  \right)  =\left(  G\left(  k\right)  \right)  \left(
x_{1},x_{2},\ldots,x_{\ell}\right)  \label{pf.thm.G.pieri.Gkxl-symm}%
\end{equation}
for any $\sigma\in\mathfrak{S}_{\ell}$.

But from $a_{\alpha}=\sum_{\sigma\in\mathfrak{S}_{\ell}}\left(  -1\right)
^{\sigma}\sigma\left(  \mathbf{x}^{\alpha}\right)  $, we obtain%
\begin{align}
&  \left(  G\left(  k\right)  \right)  \left(  x_{1},x_{2},\ldots,x_{\ell
}\right)  \cdot a_{\alpha}\nonumber\\
&  =\left(  G\left(  k\right)  \right)  \left(  x_{1},x_{2},\ldots,x_{\ell
}\right)  \cdot\sum_{\sigma\in\mathfrak{S}_{\ell}}\left(  -1\right)  ^{\sigma
}\sigma\left(  \mathbf{x}^{\alpha}\right) \nonumber\\
&  =\sum_{\sigma\in\mathfrak{S}_{\ell}}\left(  -1\right)  ^{\sigma
}\underbrace{\left(  G\left(  k\right)  \right)  \left(  x_{1},x_{2}%
,\ldots,x_{\ell}\right)  }_{\substack{=\sigma\left(  \left(  G\left(
k\right)  \right)  \left(  x_{1},x_{2},\ldots,x_{\ell}\right)  \right)
\\\text{(by (\ref{pf.thm.G.pieri.Gkxl-symm}))}}}\cdot\sigma\left(
\mathbf{x}^{\alpha}\right) \nonumber\\
&  =\sum_{\sigma\in\mathfrak{S}_{\ell}}\left(  -1\right)  ^{\sigma
}\underbrace{\sigma\left(  \left(  G\left(  k\right)  \right)  \left(
x_{1},x_{2},\ldots,x_{\ell}\right)  \right)  \cdot\sigma\left(  \mathbf{x}%
^{\alpha}\right)  }_{\substack{=\sigma\left(  \left(  G\left(  k\right)
\right)  \left(  x_{1},x_{2},\ldots,x_{\ell}\right)  \cdot\mathbf{x}^{\alpha
}\right)  \\\text{(since }\mathfrak{S}_{\ell}\text{ acts on }\mathbf{k}\left[
x_{1},x_{2},\ldots,x_{\ell}\right]  \\\text{by }\mathbf{k}\text{-algebra
homomorphisms)}}}\nonumber\\
&  =\sum_{\sigma\in\mathfrak{S}_{\ell}}\left(  -1\right)  ^{\sigma}%
\sigma\left(  \underbrace{\left(  G\left(  k\right)  \right)  \left(
x_{1},x_{2},\ldots,x_{\ell}\right)  }_{=\sum_{\substack{\beta\in
\mathbb{N}^{\ell};\\\beta_{i}<k\text{ for all }i}}\mathbf{x}^{\beta}}%
\cdot\mathbf{x}^{\alpha}\right) \nonumber\\
&  =\sum_{\sigma\in\mathfrak{S}_{\ell}}\left(  -1\right)  ^{\sigma}%
\sigma\left(  \sum_{\substack{\beta\in\mathbb{N}^{\ell};\\\beta_{i}<k\text{
for all }i}}\mathbf{x}^{\beta}\cdot\mathbf{x}^{\alpha}\right)  =\sum
_{\substack{\beta\in\mathbb{N}^{\ell};\\\beta_{i}<k\text{ for all }i}%
}\ \ \sum_{\sigma\in\mathfrak{S}_{\ell}}\left(  -1\right)  ^{\sigma}%
\sigma\left(  \underbrace{\mathbf{x}^{\beta}\cdot\mathbf{x}^{\alpha}%
}_{\substack{=\mathbf{x}^{\alpha}\cdot\mathbf{x}^{\beta}\\=\mathbf{x}%
^{\alpha+\beta}\\\text{(by (\ref{pf.thm.G.pieri.xaxb}))}}}\right) \nonumber\\
&  =\sum_{\substack{\beta\in\mathbb{N}^{\ell};\\\beta_{i}<k\text{ for all }%
i}}\ \ \underbrace{\sum_{\sigma\in\mathfrak{S}_{\ell}}\left(  -1\right)
^{\sigma}\sigma\left(  \mathbf{x}^{\alpha+\beta}\right)  }%
_{\substack{=a_{\alpha+\beta}\\\text{(since }a_{\alpha+\beta}\text{ is
defined}\\\text{by }a_{\alpha+\beta}=\sum_{\sigma\in\mathfrak{S}_{\ell}%
}\left(  -1\right)  ^{\sigma}\sigma\left(  \mathbf{x}^{\alpha+\beta}\right)
\text{)}}}=\sum_{\substack{\beta\in\mathbb{N}^{\ell};\\\beta_{i}<k\text{ for
all }i}}a_{\alpha+\beta}\nonumber\\
&  =\sum_{\substack{\gamma\in\mathbb{N}^{\ell};\\0\leq\gamma_{i}-\alpha
_{i}<k\text{ for all }i}}a_{\gamma} \label{pf.thm.G.pieri.prod1}%
\end{align}
(here, we have substituted $\gamma$ for $\alpha+\beta$ in the sum).

It is well-known (and easy to check using the properties of
determinants\footnote{specifically: using the fact that a square matrix with
two equal rows always has determinant $0$}) that if an $\ell$-tuple $\gamma
\in\mathbb{N}^{\ell}$ has two equal entries, then%
\begin{equation}
a_{\gamma}=0. \label{pf.thm.G.pieri.agam=0}%
\end{equation}
Moreover, any $\ell$-tuple $\gamma\in\mathbb{N}^{\ell}$ and any $\sigma
\in\mathfrak{S}_{\ell}$ satisfy%
\begin{equation}
a_{\sigma\cdot\gamma}=\left(  -1\right)  ^{\sigma}\cdot a_{\gamma}.
\label{pf.thm.G.pieri.asiggam=}%
\end{equation}
(This, too, follows from the properties of determinants\footnote{specifically:
using the fact that permuting the rows of a square matrix results in its
determinant getting multiplied by the sign of the permutation}.)

Let $SP_{\ell}$ denote the set of all $\ell$-tuples $\delta\in\mathbb{N}%
^{\ell}$ such that $\delta_{1}>\delta_{2}>\cdots>\delta_{\ell}$. Then, the map%
\begin{align}
P_{\ell}  &  \rightarrow SP_{\ell},\nonumber\\
\lambda &  \mapsto\lambda+\rho\label{pf.thm.G.pieri.PSPbij}%
\end{align}
is a bijection.

If an $\ell$-tuple $\gamma\in\mathbb{N}^{\ell}$ has no two equal entries, then
$\gamma$ can be uniquely written in the form $\sigma\cdot\delta$ for some
$\sigma\in\mathfrak{S}_{\ell}$ and some $\delta\in SP_{\ell}$ (indeed,
$\delta$ is the result of sorting $\gamma$ into decreasing order, while
$\sigma$ is the permutation that achieves this sorting). In other words, the
map%
\begin{align}
\mathfrak{S}_{\ell}\times SP_{\ell}  &  \rightarrow\left\{  \gamma
\in\mathbb{N}^{\ell}\ \mid\ \text{the }\ell\text{-tuple }\gamma\text{ has no
two equal entries}\right\}  ,\nonumber\\
\left(  \sigma,\delta\right)   &  \mapsto\sigma\cdot\delta
\label{pf.thm.G.pieri.SSPbbij}%
\end{align}
is a bijection.

Now, (\ref{pf.thm.G.pieri.prod1}) becomes%
\begin{align}
&  \left(  G\left(  k\right)  \right)  \left(  x_{1},x_{2},\ldots,x_{\ell
}\right)  \cdot a_{\alpha}\nonumber\\
&  =\sum_{\substack{\gamma\in\mathbb{N}^{\ell};\\0\leq\gamma_{i}-\alpha
_{i}<k\text{ for all }i}}a_{\gamma}\nonumber\\
&  =\sum_{\substack{\gamma\in\mathbb{N}^{\ell};\\0\leq\gamma_{i}-\alpha
_{i}<k\text{ for all }i;\\\text{the }\ell\text{-tuple }\gamma\text{ has no two
equal entries}}}a_{\gamma}+\sum_{\substack{\gamma\in\mathbb{N}^{\ell}%
;\\0\leq\gamma_{i}-\alpha_{i}<k\text{ for all }i;\\\text{the }\ell\text{-tuple
}\gamma\text{ has two equal entries}}}\underbrace{a_{\gamma}}%
_{\substack{=0\\\text{(by (\ref{pf.thm.G.pieri.agam=0}))}}}\nonumber\\
&  =\sum_{\substack{\gamma\in\mathbb{N}^{\ell};\\0\leq\gamma_{i}-\alpha
_{i}<k\text{ for all }i;\\\text{the }\ell\text{-tuple }\gamma\text{ has no two
equal entries}}}a_{\gamma}\nonumber\\
&  =\sum_{\substack{\gamma\in\mathbb{N}^{\ell};\\\text{the }\ell\text{-tuple
}\gamma\text{ has no two equal entries}}}\left[  0\leq\gamma_{i}-\alpha
_{i}<k\text{ for all }i\right]  \cdot a_{\gamma}\nonumber\\
&  =\underbrace{\sum_{\left(  \sigma,\delta\right)  \in\mathfrak{S}_{\ell
}\times SP_{\ell}}}_{=\sum_{\delta\in SP_{\ell}}\ \ \sum_{\sigma
\in\mathfrak{S}_{\ell}}}\left[  0\leq\underbrace{\left(  \sigma\cdot
\delta\right)  _{i}}_{=\delta_{\sigma^{-1}\left(  i\right)  }}-\alpha
_{i}<k\text{ for all }i\right]  \cdot\underbrace{a_{\sigma\cdot\delta}%
}_{\substack{=\left(  -1\right)  ^{\sigma}a_{\delta}\\\text{(by
(\ref{pf.thm.G.pieri.asiggam=}))}}}\nonumber\\
&  \ \ \ \ \ \ \ \ \ \ \left(
\begin{array}
[c]{c}%
\text{here, we have substituted }\sigma\cdot\delta\text{ for }\gamma\text{ in
the sum,}\\
\text{since the map (\ref{pf.thm.G.pieri.SSPbbij}) is a bijection}%
\end{array}
\right) \nonumber\\
&  =\sum_{\delta\in SP_{\ell}}\ \ \sum_{\sigma\in\mathfrak{S}_{\ell}%
}\underbrace{\left[  0\leq\delta_{\sigma^{-1}\left(  i\right)  }-\alpha
_{i}<k\text{ for all }i\right]  }_{\substack{=\prod_{i=1}^{\ell}\left[
0\leq\delta_{\sigma^{-1}\left(  i\right)  }-\alpha_{i}<k\right]
\\=\prod_{i=1}^{\ell}\left[  0\leq\delta_{i}-\alpha_{\sigma\left(  i\right)
}<k\right]  \\\text{(here, we have substituted }\sigma\left(  i\right)
\\\text{for }i\text{ in the product, since }\sigma\text{ is a bijection)}%
}}\cdot\left(  -1\right)  ^{\sigma}a_{\delta}\nonumber\\
&  =\sum_{\delta\in SP_{\ell}}\ \ \underbrace{\sum_{\sigma\in\mathfrak{S}%
_{\ell}}\left(  \prod_{i=1}^{\ell}\left[  0\leq\delta_{i}-\alpha
_{\sigma\left(  i\right)  }<k\right]  \right)  \cdot\left(  -1\right)
^{\sigma}}_{\substack{=\sum_{\sigma\in\mathfrak{S}_{\ell}}\left(  -1\right)
^{\sigma}\prod_{i=1}^{\ell}\left[  0\leq\delta_{i}-\alpha_{\sigma\left(
i\right)  }<k\right]  \\=\det\left(  \left(  \left[  0\leq\delta_{i}%
-\alpha_{j}<k\right]  \right)  _{1\leq i\leq\ell,\ 1\leq j\leq\ell}\right)
\\\text{(by the definition of a determinant)}}}a_{\delta}\nonumber\\
&  =\sum_{\delta\in SP_{\ell}}\det\left(  \left(  \left[  0\leq\delta
_{i}-\alpha_{j}<k\right]  \right)  _{1\leq i\leq\ell,\ 1\leq j\leq\ell
}\right)  a_{\delta}\nonumber\\
&  =\sum_{\lambda\in P_{\ell}}\det\left(  \left(  \left[  0\leq\left(
\lambda+\rho\right)  _{i}-\alpha_{j}<k\right]  \right)  _{1\leq i\leq
\ell,\ 1\leq j\leq\ell}\right)  a_{\lambda+\rho} \label{pf.thm.G.pieri.prod2}%
\end{align}
(here, we have substituted $\lambda+\rho$ for $\delta$ in the sum, since the
map (\ref{pf.thm.G.pieri.PSPbij}) is a bijection).

Every $\lambda\in P_{\ell}$ and every $i,j\in\left\{  1,2,\ldots,\ell\right\}
$ satisfy%
\begin{align}
&  \underbrace{\left(  \lambda+\rho\right)  _{i}}_{\substack{=\lambda_{i}%
+\rho_{i}=\lambda_{i}+\ell-i\\\text{(since the definition of }\rho
\\\text{yields }\rho_{i}=\ell-i\text{)}}}-\underbrace{\alpha_{j}%
}_{\substack{=\left(  \mu+\rho\right)  _{j}\\\text{(since }\alpha=\mu
+\rho\text{)}}}\nonumber\\
&  =\lambda_{i}+\ell-i-\underbrace{\left(  \mu+\rho\right)  _{j}%
}_{\substack{=\mu_{j}+\rho_{j}=\mu_{j}+\ell-j\\\text{(since the definition of
}\rho\\\text{yields }\rho_{j}=\ell-j\text{)}}}=\lambda_{i}+\ell-i-\left(
\mu_{j}+\ell-j\right) \nonumber\\
&  =\lambda_{i}-\mu_{j}-i+j. \label{pf.thm.G.pieri.dif}%
\end{align}

Now, (\ref{pf.thm.G.pieri.slam}) (applied to $\lambda=\mu$) yields%
\[
s_{\mu}\left(  x_{1},x_{2},\ldots,x_{\ell}\right)  =\dfrac{a_{\mu+\rho}%
}{a_{\rho}}=\dfrac{a_{\alpha}}{a_{\rho}}%
\]
(since $\mu+\rho=\alpha$). Multiplying this equality by $\left(  G\left(
k\right)  \right)  \left(  x_{1},x_{2},\ldots,x_{\ell}\right)  $, we find%
\begin{align*}
&  \left(  G\left(  k\right)  \right)  \left(  x_{1},x_{2},\ldots,x_{\ell
}\right)  \cdot s_{\mu}\left(  x_{1},x_{2},\ldots,x_{\ell}\right) \\
&  =\left(  G\left(  k\right)  \right)  \left(  x_{1},x_{2},\ldots,x_{\ell
}\right)  \cdot\dfrac{a_{\alpha}}{a_{\rho}}\\
&  =\dfrac{1}{a_{\rho}}\cdot\underbrace{\left(  G\left(  k\right)  \right)
\left(  x_{1},x_{2},\ldots,x_{\ell}\right)  \cdot a_{\alpha}}_{\substack{=\sum
_{\lambda\in P_{\ell}}\det\left(  \left(  \left[  0\leq\left(  \lambda
+\rho\right)  _{i}-\alpha_{j}<k\right]  \right)  _{1\leq i\leq\ell,\ 1\leq
j\leq\ell}\right)  a_{\lambda+\rho}\\\text{(by (\ref{pf.thm.G.pieri.prod2}))}%
}}\\
&  =\dfrac{1}{a_{\rho}}\cdot\sum_{\lambda\in P_{\ell}}\det\left(  \left(
\left[  0\leq\left(  \lambda+\rho\right)  _{i}-\alpha_{j}<k\right]  \right)
_{1\leq i\leq\ell,\ 1\leq j\leq\ell}\right)  a_{\lambda+\rho}\\
&  =\sum_{\lambda\in P_{\ell}}\det\left(  \left(  \left[  0\leq
\underbrace{\left(  \lambda+\rho\right)  _{i}-\alpha_{j}}_{\substack{=\lambda
_{i}-\mu_{j}-i+j\\\text{(by (\ref{pf.thm.G.pieri.dif}))}}}<k\right]  \right)
_{1\leq i\leq\ell,\ 1\leq j\leq\ell}\right)  \underbrace{\dfrac{a_{\lambda
+\rho}}{a_{\rho}}}_{\substack{=s_{\lambda}\left(  x_{1},x_{2},\ldots,x_{\ell
}\right)  \\\text{(by (\ref{pf.thm.G.pieri.slam}))}}}\\
&  =\sum_{\lambda\in P_{\ell}}\underbrace{\det\left(  \left(  \left[
0\leq\lambda_{i}-\mu_{j}-i+j<k\right]  \right)  _{1\leq i\leq\ell,\ 1\leq
j\leq\ell}\right)  }_{\substack{=\operatorname*{pet}\nolimits_{k}\left(
\lambda,\mu\right)  \\\text{(by the definition of }\operatorname*{pet}%
\nolimits_{k}\left(  \lambda,\mu\right)  \text{)}}}\cdot s_{\lambda}\left(
x_{1},x_{2},\ldots,x_{\ell}\right) \\
&  =\sum_{\lambda\in P_{\ell}}\operatorname*{pet}\nolimits_{k}\left(
\lambda,\mu\right)  s_{\lambda}\left(  x_{1},x_{2},\ldots,x_{\ell}\right)  .
\end{align*}
This proves (\ref{pf.thm.G.pieri.lvars}).

On the other hand, it is known (see, e.g., \cite[Exercise 2.3.8(b)]{GriRei})
that if $\lambda$ is a partition having more than $\ell$ parts, then%
\begin{equation}
s_{\lambda}\left(  x_{1},x_{2},\ldots,x_{\ell}\right)  =0.
\label{pf.thm.G.pieri.too-long}%
\end{equation}

Now, each partition $\lambda\in\operatorname*{Par}$ either has at most $\ell$
parts or has more than $\ell$ parts (but not both at the same time). Hence,%
\begin{align*}
&  \sum_{\lambda\in\operatorname*{Par}}\operatorname*{pet}\nolimits_{k}\left(
\lambda,\mu\right)  s_{\lambda}\left(  x_{1},x_{2},\ldots,x_{\ell}\right) \\
&  =\underbrace{\sum_{\substack{\lambda\in\operatorname*{Par};\\\lambda\text{
has at most }\ell\text{ parts}}}}_{\substack{=\sum_{\lambda\in P_{\ell}%
}\\\text{(by the definition of }P_{\ell}\text{)}}}\operatorname*{pet}%
\nolimits_{k}\left(  \lambda,\mu\right)  s_{\lambda}\left(  x_{1},x_{2}%
,\ldots,x_{\ell}\right) \\
&  \ \ \ \ \ \ \ \ \ \ +\sum_{\substack{\lambda\in\operatorname*{Par}%
;\\\lambda\text{ has more than }\ell\text{ parts}}}\operatorname*{pet}%
\nolimits_{k}\left(  \lambda,\mu\right)  \underbrace{s_{\lambda}\left(
x_{1},x_{2},\ldots,x_{\ell}\right)  }_{\substack{=0\\\text{(by
(\ref{pf.thm.G.pieri.too-long}))}}}\\
&  =\sum_{\lambda\in P_{\ell}}\operatorname*{pet}\nolimits_{k}\left(
\lambda,\mu\right)  s_{\lambda}\left(  x_{1},x_{2},\ldots,x_{\ell}\right)
+\underbrace{\sum_{\substack{\lambda\in\operatorname*{Par};\\\lambda\text{ has
more than }\ell\text{ parts}}}\operatorname*{pet}\nolimits_{k}\left(
\lambda,\mu\right)  0}_{=0}\\
&  =\sum_{\lambda\in P_{\ell}}\operatorname*{pet}\nolimits_{k}\left(
\lambda,\mu\right)  s_{\lambda}\left(  x_{1},x_{2},\ldots,x_{\ell}\right)  .
\end{align*}
Comparing this with (\ref{pf.thm.G.pieri.lvars}), we obtain%
\begin{align}
&  \left(  G\left(  k\right)  \right)  \left(  x_{1},x_{2},\ldots,x_{\ell
}\right)  \cdot s_{\mu}\left(  x_{1},x_{2},\ldots,x_{\ell}\right) \nonumber\\
&  =\sum_{\lambda\in\operatorname*{Par}}\operatorname*{pet}\nolimits_{k}%
\left(  \lambda,\mu\right)  s_{\lambda}\left(  x_{1},x_{2},\ldots,x_{\ell
}\right)  . \label{pf.thm.G.pieri.lvars-Par}%
\end{align}

Forget that we fixed $\ell$. Thus, we have proved
(\ref{pf.thm.G.pieri.lvars-Par}) for each $\ell\in\mathbb{N}$ that satisfies
$\ell\geq\ell\left(  \mu\right)  $. Thus, for each $\ell\in\mathbb{N}$ that
satisfies $\ell\geq\ell\left(  \mu\right)  $, we have%
\begin{align}
&  \left(  G\left(  k\right)  \cdot s_{\mu}\right)  \left(  x_{1},x_{2}%
,\ldots,x_{\ell}\right) \nonumber\\
&  =\left(  G\left(  k\right)  \right)  \left(  x_{1},x_{2},\ldots,x_{\ell
}\right)  \cdot s_{\mu}\left(  x_{1},x_{2},\ldots,x_{\ell}\right) \nonumber\\
&  =\sum_{\lambda\in\operatorname*{Par}}\operatorname*{pet}\nolimits_{k}%
\left(  \lambda,\mu\right)  s_{\lambda}\left(  x_{1},x_{2},\ldots,x_{\ell
}\right)  \ \ \ \ \ \ \ \ \ \ \left(  \text{by (\ref{pf.thm.G.pieri.lvars-Par}%
)}\right)  . \label{pf.thm.G.pieri.lvars2}%
\end{align}
Now, (\ref{pf.thm.G.pieri.lim}) (applied to $f=G\left(  k\right)  \cdot
s_{\mu}$) yields%
\begin{align*}
G\left(  k\right)  \cdot s_{\mu}  &  =\lim\limits_{\ell\rightarrow\infty
}\underbrace{\left(  G\left(  k\right)  \cdot s_{\mu}\right)  \left(
x_{1},x_{2},\ldots,x_{\ell}\right)  }_{\substack{=\sum_{\lambda\in
\operatorname*{Par}}\operatorname*{pet}\nolimits_{k}\left(  \lambda
,\mu\right)  s_{\lambda}\left(  x_{1},x_{2},\ldots,x_{\ell}\right)  \text{
when }\ell\geq\ell\left(  \mu\right)  \\\text{(by (\ref{pf.thm.G.pieri.lvars2}%
))}}}\\
&  =\lim\limits_{\ell\rightarrow\infty}\sum_{\lambda\in\operatorname*{Par}%
}\operatorname*{pet}\nolimits_{k}\left(  \lambda,\mu\right)  s_{\lambda
}\left(  x_{1},x_{2},\ldots,x_{\ell}\right)  .
\end{align*}
Comparing this with%
\begin{align*}
\sum_{\lambda\in\operatorname*{Par}}\operatorname*{pet}\nolimits_{k}\left(
\lambda,\mu\right)  \underbrace{s_{\lambda}}_{\substack{=\lim\limits_{\ell
\rightarrow\infty}s_{\lambda}\left(  x_{1},x_{2},\ldots,x_{\ell}\right)
\\\text{(by (\ref{pf.thm.G.pieri.lim}))}}}  &  =\sum_{\lambda\in
\operatorname*{Par}}\operatorname*{pet}\nolimits_{k}\left(  \lambda
,\mu\right)  \lim\limits_{\ell\rightarrow\infty}s_{\lambda}\left(  x_{1}%
,x_{2},\ldots,x_{\ell}\right) \\
&  =\lim\limits_{\ell\rightarrow\infty}\sum_{\lambda\in\operatorname*{Par}%
}\operatorname*{pet}\nolimits_{k}\left(  \lambda,\mu\right)  s_{\lambda
}\left(  x_{1},x_{2},\ldots,x_{\ell}\right)
\end{align*}
\footnote{Why were we allowed to interchange the limit with the summation sign
here? One way to justify this is by realizing that each Schur function
$s_{\lambda}$ and therefore each polynomial $s_{\lambda}\left(  x_{1}%
,x_{2},\ldots,x_{\ell}\right)  $ are homogeneous of degree $\left\vert
\lambda\right\vert $, and for each $n\in\mathbb{N}$ there are only finitely
many partitions $\lambda\in\operatorname*{Par}$ satisfying $\left\vert
\lambda\right\vert =n$. This entails that each individual monomial
$\mathfrak{m}$ is affected only by finitely many addends in the sums appearing
on both sides of our equation.}, we obtain%
\[
G\left(  k\right)  \cdot s_{\mu}=\sum_{\lambda\in\operatorname*{Par}%
}\operatorname*{pet}\nolimits_{k}\left(  \lambda,\mu\right)  s_{\lambda}.
\]
This completes the second proof of Theorem \ref{thm.G.pieri}.
\end{proof}
\end{verlong}

\subsection{\label{subsect.proofs.G.cors}Proofs of Corollary
\ref{cor.Gkm.pieri}, Theorem \ref{thm.G.main} and Corollary \ref{cor.Gkm.main}%
}

Having proved Theorem \ref{thm.G.pieri}, we can now obtain Corollary
\ref{cor.Gkm.pieri}, Theorem \ref{thm.G.main} and Corollary \ref{cor.Gkm.main}
as easy consequences:

\begin{vershort}
\begin{proof}
[Proof of Corollary \ref{cor.Gkm.pieri}.]Proposition \ref{prop.G.basics}
\textbf{(a)} yields that the $m$-th degree homogeneous component of $G\left(
k\right)  $ is $G\left(  k,m\right)  $. Hence, the $\left(  m+\left\vert
\mu\right\vert \right)  $-th degree homogeneous component of $G\left(
k\right)  \cdot s_{\mu}$ is $G\left(  k,m\right)  \cdot s_{\mu}$ (because
$s_{\mu}$ is homogeneous of degree $\left\vert \mu\right\vert $).

Theorem \ref{thm.G.pieri} yields%
\[
G\left(  k\right)  \cdot s_{\mu}=\sum_{\lambda\in\operatorname*{Par}%
}\operatorname*{pet}\nolimits_{k}\left(  \lambda,\mu\right)  s_{\lambda}.
\]
Taking the $\left(  m+\left\vert \mu\right\vert \right)  $-th degree
homogeneous components on both sides of this equality, we obtain%
\[
G\left(  k,m\right)  \cdot s_{\mu}=\sum_{\lambda\in\operatorname*{Par}%
\nolimits_{m+\left\vert \mu\right\vert }}\operatorname*{pet}\nolimits_{k}%
\left(  \lambda,\mu\right)  s_{\lambda}%
\]
(because each Schur function $s_{\lambda}$ is homogeneous of degree
$\left\vert \lambda\right\vert $, whereas the $\left(  m+\left\vert
\mu\right\vert \right)  $-th degree homogeneous component of $G\left(
k\right)  \cdot s_{\mu}$ is $G\left(  k,m\right)  \cdot s_{\mu}$). This proves
Corollary \ref{cor.Gkm.pieri}.
\end{proof}
\end{vershort}

\begin{verlong}
\begin{proof}
[Proof of Corollary \ref{cor.Gkm.pieri}.]Forget that we fixed $m$. If
$n\in\mathbb{N}$, then the power series $%
\begin{cases}
G\left(  k,n-\left\vert \mu\right\vert \right)  \cdot s_{\mu}, & \text{if
}n\geq\left\vert \mu\right\vert ;\\
0, & \text{if }n<\left\vert \mu\right\vert
\end{cases}
\in\mathbf{k}\left[  \left[  x_{1},x_{2},x_{3},\ldots\right]  \right]  $ is
homogeneous of degree $n$\ \ \ \ \footnote{\textit{Proof.} Let $n\in
\mathbb{N}$. We must prove that the power series $%
\begin{cases}
G\left(  k,n-\left\vert \mu\right\vert \right)  \cdot s_{\mu}, & \text{if
}n\geq\left\vert \mu\right\vert ;\\
0, & \text{if }n<\left\vert \mu\right\vert
\end{cases}
$ is homogeneous of degree $n$.
\par
We are in one of the following two cases:
\par
\textit{Case 1:} We have $n\geq\left\vert \mu\right\vert $.
\par
\textit{Case 2:} We have $n<\left\vert \mu\right\vert $.
\par
Let us first consider Case 1. In this case, we have $n\geq\left\vert
\mu\right\vert $. Hence, $n-\left\vert \mu\right\vert \in\mathbb{N}$. But
Proposition \ref{prop.G.basics} \textbf{(a)} (applied to $m=n-\left\vert
\mu\right\vert $) yields that the symmetric function $G\left(  k,n-\left\vert
\mu\right\vert \right)  $ is the $\left(  n-\left\vert \mu\right\vert \right)
$-th degree homogeneous component of $G\left(  k\right)  $. Hence, $G\left(
k,n-\left\vert \mu\right\vert \right)  $ is homogeneous of degree
$n-\left\vert \mu\right\vert $.
\par
On the other hand, recall that for any $\lambda\in\operatorname*{Par}$, the
Schur function $s_{\lambda}$ is homogeneous of degree $\left\vert
\lambda\right\vert $. Applying this to $\lambda=\mu$, we conclude that the
Schur function $s_{\mu}$ is homogeneous of degree $\left\vert \mu\right\vert
$.
\par
So we know that $G\left(  k,n-\left\vert \mu\right\vert \right)  $ is
homogeneous of degree $n-\left\vert \mu\right\vert $, whereas $s_{\mu}$ is
homogeneous of degree $\left\vert \mu\right\vert $. Since $\Lambda$ is a
graded algebra, this entails that the power series $G\left(  k,n-\left\vert
\mu\right\vert \right)  \cdot s_{\mu}$ (being the product of $G\left(
k,n-\left\vert \mu\right\vert \right)  $ and $s_{\mu}$) is homogeneous of
degree $\left(  n-\left\vert \mu\right\vert \right)  +\left\vert
\mu\right\vert $. In other words, the power series $G\left(  k,n-\left\vert
\mu\right\vert \right)  \cdot s_{\mu}$ is homogeneous of degree $n$ (since
$\left(  n-\left\vert \mu\right\vert \right)  +\left\vert \mu\right\vert =n$).
In other words, the power series $%
\begin{cases}
G\left(  k,n-\left\vert \mu\right\vert \right)  \cdot s_{\mu}, & \text{if
}n\geq\left\vert \mu\right\vert ;\\
0, & \text{if }n<\left\vert \mu\right\vert
\end{cases}
$ is homogeneous of degree $n$ (since%
\[%
\begin{cases}
G\left(  k,n-\left\vert \mu\right\vert \right)  \cdot s_{\mu}, & \text{if
}n\geq\left\vert \mu\right\vert ;\\
0, & \text{if }n<\left\vert \mu\right\vert
\end{cases}
=G\left(  k,n-\left\vert \mu\right\vert \right)  \cdot s_{\mu}%
\ \ \ \ \ \ \ \ \ \ \left(  \text{because }n\geq\left\vert \mu\right\vert
\right)
\]
). Thus, we have proved in Case 1 that the power series $%
\begin{cases}
G\left(  k,n-\left\vert \mu\right\vert \right)  \cdot s_{\mu}, & \text{if
}n\geq\left\vert \mu\right\vert ;\\
0, & \text{if }n<\left\vert \mu\right\vert
\end{cases}
$ is homogeneous of degree $n$.
\par
Let us now consider Case 2. In this case, we have $n<\left\vert \mu\right\vert
$. The power series $0$ is homogeneous of degree $n$ (since $0$ is homogeneous
of any degree). In other words, the power series $%
\begin{cases}
G\left(  k,n-\left\vert \mu\right\vert \right)  \cdot s_{\mu}, & \text{if
}n\geq\left\vert \mu\right\vert ;\\
0, & \text{if }n<\left\vert \mu\right\vert
\end{cases}
$ is homogeneous of degree $n$ (since%
\[%
\begin{cases}
G\left(  k,n-\left\vert \mu\right\vert \right)  \cdot s_{\mu}, & \text{if
}n\geq\left\vert \mu\right\vert ;\\
0, & \text{if }n<\left\vert \mu\right\vert
\end{cases}
=0\ \ \ \ \ \ \ \ \ \ \left(  \text{because }n<\left\vert \mu\right\vert
\right)
\]
). Thus, we have proved in Case 2 that the power series $%
\begin{cases}
G\left(  k,n-\left\vert \mu\right\vert \right)  \cdot s_{\mu}, & \text{if
}n\geq\left\vert \mu\right\vert ;\\
0, & \text{if }n<\left\vert \mu\right\vert
\end{cases}
$ is homogeneous of degree $n$.
\par
Thus, our claim (namely, that the power series $%
\begin{cases}
G\left(  k,n-\left\vert \mu\right\vert \right)  \cdot s_{\mu}, & \text{if
}n\geq\left\vert \mu\right\vert ;\\
0, & \text{if }n<\left\vert \mu\right\vert
\end{cases}
$ is homogeneous of degree $n$) has been proven in both Cases 1 and 2. Since
these cases cover all possibilities, we thus conclude that our claim always
holds. Qed.}.

In the proof of Proposition \ref{prop.G.basics} \textbf{(a)}, we have shown
that $G\left(  k\right)  =\sum_{m\in\mathbb{N}}G\left(  k,m\right)  $.
Multiplying both sides of this equality by $s_{\mu}$, we find%
\[
G\left(  k\right)  \cdot s_{\mu}=\left(  \sum_{m\in\mathbb{N}}G\left(
k,m\right)  \right)  \cdot s_{\mu}=\sum_{m\in\mathbb{N}}G\left(  k,m\right)
\cdot s_{\mu}.
\]
Comparing this with%
\begin{align*}
&  \sum_{n\in\mathbb{N}}%
\begin{cases}
G\left(  k,n-\left\vert \mu\right\vert \right)  \cdot s_{\mu}, & \text{if
}n\geq\left\vert \mu\right\vert ;\\
0, & \text{if }n<\left\vert \mu\right\vert
\end{cases}
\\
&  =\sum_{\substack{n\in\mathbb{N};\\n\geq\left\vert \mu\right\vert
}}\underbrace{%
\begin{cases}
G\left(  k,n-\left\vert \mu\right\vert \right)  \cdot s_{\mu}, & \text{if
}n\geq\left\vert \mu\right\vert ;\\
0, & \text{if }n<\left\vert \mu\right\vert
\end{cases}
}_{\substack{=G\left(  k,n-\left\vert \mu\right\vert \right)  \cdot s_{\mu
}\\\text{(since }n\geq\left\vert \mu\right\vert \text{)}}}\ \ \ \ +\sum
_{\substack{n\in\mathbb{N};\\n<\left\vert \mu\right\vert }}\underbrace{%
\begin{cases}
G\left(  k,n-\left\vert \mu\right\vert \right)  \cdot s_{\mu}, & \text{if
}n\geq\left\vert \mu\right\vert ;\\
0, & \text{if }n<\left\vert \mu\right\vert
\end{cases}
}_{\substack{=0\\\text{(since }n<\left\vert \mu\right\vert \text{)}}}\\
&  \ \ \ \ \ \ \ \ \ \ \ \ \ \ \ \ \ \ \ \ \left(
\begin{array}
[c]{c}%
\text{since each }n\in\mathbb{N}\text{ satisfies either }n\geq\left\vert
\mu\right\vert \text{ or }n<\left\vert \mu\right\vert \\
\text{(but not both at the same time)}%
\end{array}
\right) \\
&  =\sum_{\substack{n\in\mathbb{N};\\n\geq\left\vert \mu\right\vert }}G\left(
k,n-\left\vert \mu\right\vert \right)  \cdot s_{\mu}+\underbrace{\sum
_{\substack{n\in\mathbb{N};\\n<\left\vert \mu\right\vert }}0}_{=0}%
=\sum_{\substack{n\in\mathbb{N};\\n\geq\left\vert \mu\right\vert }}G\left(
k,n-\left\vert \mu\right\vert \right)  \cdot s_{\mu}\\
&  =\sum_{m\in\mathbb{N}}G\left(  k,m\right)  \cdot s_{\mu}%
\ \ \ \ \ \ \ \ \ \ \left(  \text{here, we have substituted }m\text{ for
}n-\left\vert \mu\right\vert \text{ in the sum}\right)  ,
\end{align*}
we obtain%
\begin{equation}
G\left(  k\right)  \cdot s_{\mu}=\sum_{n\in\mathbb{N}}%
\begin{cases}
G\left(  k,n-\left\vert \mu\right\vert \right)  \cdot s_{\mu}, & \text{if
}n\geq\left\vert \mu\right\vert ;\\
0, & \text{if }n<\left\vert \mu\right\vert
\end{cases}
\ \ \ . \label{pf.cor.Gkm.pieri.2}%
\end{equation}

But recall that each $%
\begin{cases}
G\left(  k,n-\left\vert \mu\right\vert \right)  \cdot s_{\mu}, & \text{if
}n\geq\left\vert \mu\right\vert ;\\
0, & \text{if }n<\left\vert \mu\right\vert
\end{cases}
$ is homogeneous of degree $n$. Thus, the equality (\ref{pf.cor.Gkm.pieri.2})
reveals that the family
\[
\left(
\begin{cases}
G\left(  k,n-\left\vert \mu\right\vert \right)  \cdot s_{\mu}, & \text{if
}n\geq\left\vert \mu\right\vert ;\\
0, & \text{if }n<\left\vert \mu\right\vert
\end{cases}
\right)  _{n\in\mathbb{N}}%
\]
is the homogeneous decomposition of $G\left(  k\right)  \cdot s_{\mu}$ (by the
definition of a homogeneous decomposition). Therefore, for each $n\in
\mathbb{N}$, the power series $%
\begin{cases}
G\left(  k,n-\left\vert \mu\right\vert \right)  \cdot s_{\mu}, & \text{if
}n\geq\left\vert \mu\right\vert ;\\
0, & \text{if }n<\left\vert \mu\right\vert
\end{cases}
$ is the $n$-th degree homogeneous component of $G\left(  k\right)  \cdot
s_{\mu}$.

Now, let $m\in\mathbb{N}$. We have just shown that for each $n\in\mathbb{N}$,
the power series $%
\begin{cases}
G\left(  k,n-\left\vert \mu\right\vert \right)  \cdot s_{\mu}, & \text{if
}n\geq\left\vert \mu\right\vert ;\\
0, & \text{if }n<\left\vert \mu\right\vert
\end{cases}
$ is the $n$-th degree homogeneous component of $G\left(  k\right)  \cdot
s_{\mu}$. Applying this to $n=m+\left\vert \mu\right\vert $, we conclude that
the power series $%
\begin{cases}
G\left(  k,\left(  m+\left\vert \mu\right\vert \right)  -\left\vert
\mu\right\vert \right)  \cdot s_{\mu}, & \text{if }m+\left\vert \mu\right\vert
\geq\left\vert \mu\right\vert ;\\
0, & \text{if }m+\left\vert \mu\right\vert <\left\vert \mu\right\vert
\end{cases}
$ is the $\left(  m+\left\vert \mu\right\vert \right)  $-th degree homogeneous
component of $G\left(  k\right)  \cdot s_{\mu}$. Since%
\begin{align*}
&
\begin{cases}
G\left(  k,\left(  m+\left\vert \mu\right\vert \right)  -\left\vert
\mu\right\vert \right)  \cdot s_{\mu}, & \text{if }m+\left\vert \mu\right\vert
\geq\left\vert \mu\right\vert ;\\
0, & \text{if }m+\left\vert \mu\right\vert <\left\vert \mu\right\vert
\end{cases}
\\
&  =G\left(  k,\underbrace{\left(  m+\left\vert \mu\right\vert \right)
-\left\vert \mu\right\vert }_{=m}\right)  \cdot s_{\mu}%
\ \ \ \ \ \ \ \ \ \ \left(  \text{since }m+\left\vert \mu\right\vert
\geq\left\vert \mu\right\vert \text{ (because }m\geq0\text{)}\right) \\
&  =G\left(  k,m\right)  \cdot s_{\mu},
\end{align*}
we can rewrite this as follows: The power series $G\left(  k,m\right)  \cdot
s_{\mu}$ is the $\left(  m+\left\vert \mu\right\vert \right)  $-th degree
homogeneous component of $G\left(  k\right)  \cdot s_{\mu}$. In other words,%
\begin{align}
&  G\left(  k,m\right)  \cdot s_{\mu}\nonumber\\
&  =\left(  \text{the }\left(  m+\left\vert \mu\right\vert \right)  \text{-th
degree homogeneous component of }G\left(  k\right)  \cdot s_{\mu}\right)  .
\label{pf.cor.Gkm.pieri.left}%
\end{align}

On the other hand, Theorem \ref{thm.G.pieri} yields%
\begin{align}
G\left(  k\right)  \cdot s_{\mu}  &  =\underbrace{\sum_{\lambda\in
\operatorname*{Par}}}_{\substack{=\sum_{n\in\mathbb{N}}\ \ \sum
_{\substack{\lambda\in\operatorname*{Par};\\\left\vert \lambda\right\vert
=n}}\\\text{(since each }\lambda\in\operatorname*{Par}\\\text{satisfies
}\left\vert \lambda\right\vert \in\mathbb{N}\text{)}}}\operatorname*{pet}%
\nolimits_{k}\left(  \lambda,\mu\right)  s_{\lambda}\nonumber\\
&  =\sum_{n\in\mathbb{N}}\ \ \sum_{\substack{\lambda\in\operatorname*{Par}%
;\\\left\vert \lambda\right\vert =n}}\operatorname*{pet}\nolimits_{k}\left(
\lambda,\mu\right)  s_{\lambda}. \label{pf.cor.Gkm.pieri.4}%
\end{align}
For each $n\in\mathbb{N}$, the formal power series $\sum_{\substack{\lambda
\in\operatorname*{Par};\\\left\vert \lambda\right\vert =n}}\operatorname*{pet}%
\nolimits_{k}\left(  \lambda,\mu\right)  s_{\lambda}$ is homogeneous of degree
$n$\ \ \ \ \footnote{\textit{Proof.} Let $n\in\mathbb{N}$. Recall that for any
$\lambda\in\operatorname*{Par}$, the Schur function $s_{\lambda}$ is
homogeneous of degree $\left\vert \lambda\right\vert $. Hence, if $\lambda
\in\operatorname*{Par}$ satisfies $\left\vert \lambda\right\vert =n$, then the
Schur function $s_{\lambda}$ is homogeneous of degree $n$ (since $\left\vert
\lambda\right\vert =n$). Thus, $\sum_{\substack{\lambda\in\operatorname*{Par}%
;\\\left\vert \lambda\right\vert =n}}\operatorname*{pet}\nolimits_{k}\left(
\lambda,\mu\right)  s_{\lambda}$ is a $\mathbf{k}$-linear combination of Schur
functions that are homogeneous of degree $n$. Therefore, $\sum
_{\substack{\lambda\in\operatorname*{Par};\\\left\vert \lambda\right\vert
=n}}\operatorname*{pet}\nolimits_{k}\left(  \lambda,\mu\right)  s_{\lambda}$
is homogeneous of degree $n$. Qed.}. Thus, the equality
(\ref{pf.cor.Gkm.pieri.4}) reveals that
\[
\left(  \sum_{\substack{\lambda\in\operatorname*{Par};\\\left\vert
\lambda\right\vert =n}}\operatorname*{pet}\nolimits_{k}\left(  \lambda
,\mu\right)  s_{\lambda}\right)  _{n\in\mathbb{N}}%
\]
is the homogeneous decomposition of $G\left(  k\right)  \cdot s_{\mu}$.
Therefore, for each $n\in\mathbb{N}$, the power series $\sum
_{\substack{\lambda\in\operatorname*{Par};\\\left\vert \lambda\right\vert
=n}}\operatorname*{pet}\nolimits_{k}\left(  \lambda,\mu\right)  s_{\lambda}$
is the $n$-th degree homogeneous component of $G\left(  k\right)  \cdot
s_{\mu}$. Applying this to $n=m+\left\vert \mu\right\vert $, we conclude that
the power series $\sum_{\substack{\lambda\in\operatorname*{Par};\\\left\vert
\lambda\right\vert =m+\left\vert \mu\right\vert }}\operatorname*{pet}%
\nolimits_{k}\left(  \lambda,\mu\right)  s_{\lambda}$ is the $\left(
m+\left\vert \mu\right\vert \right)  $-th degree homogeneous component of
$G\left(  k\right)  \cdot s_{\mu}$. In other words,%
\begin{align*}
&  \sum_{\substack{\lambda\in\operatorname*{Par};\\\left\vert \lambda
\right\vert =m+\left\vert \mu\right\vert }}\operatorname*{pet}\nolimits_{k}%
\left(  \lambda,\mu\right)  s_{\lambda}\\
&  =\left(  \text{the }\left(  m+\left\vert \mu\right\vert \right)  \text{-th
degree homogeneous component of }G\left(  k\right)  \cdot s_{\mu}\right)  .
\end{align*}
Comparing this with (\ref{pf.cor.Gkm.pieri.left}), we find%
\[
G\left(  k,m\right)  \cdot s_{\mu}=\underbrace{\sum_{\substack{\lambda
\in\operatorname*{Par};\\\left\vert \lambda\right\vert =m+\left\vert
\mu\right\vert }}}_{\substack{=\sum_{\lambda\in\operatorname*{Par}%
\nolimits_{m+\left\vert \mu\right\vert }}\\\text{(since }\operatorname*{Par}%
\nolimits_{m+\left\vert \mu\right\vert }\text{ is defined as the}\\\text{set
of all }\lambda\in\operatorname*{Par}\text{ satisfying }\left\vert
\lambda\right\vert =m+\left\vert \mu\right\vert \text{)}}}\operatorname*{pet}%
\nolimits_{k}\left(  \lambda,\mu\right)  s_{\lambda}=\sum_{\lambda
\in\operatorname*{Par}\nolimits_{m+\left\vert \mu\right\vert }}%
\operatorname*{pet}\nolimits_{k}\left(  \lambda,\mu\right)  s_{\lambda}.
\]
This proves Corollary \ref{cor.Gkm.pieri}.
\end{proof}
\end{verlong}

\begin{proof}
[Proof of Theorem \ref{thm.G.main}.]Theorem \ref{thm.G.pieri} (applied to
$\mu=\varnothing$) yields%
\[
G\left(  k\right)  \cdot s_{\varnothing}=\sum_{\lambda\in\operatorname*{Par}%
}\operatorname*{pet}\nolimits_{k}\left(  \lambda,\varnothing\right)
s_{\lambda}.
\]
Comparing this with $G\left(  k\right)  \cdot\underbrace{s_{\varnothing}}%
_{=1}=G\left(  k\right)  $, we obtain%
\[
G\left(  k\right)  =\sum_{\lambda\in\operatorname*{Par}}\operatorname*{pet}%
\nolimits_{k}\left(  \lambda,\varnothing\right)  s_{\lambda}.
\]
This proves Theorem \ref{thm.G.main}.
\end{proof}

\begin{proof}
[Proof of Corollary \ref{cor.Gkm.main}.]Corollary \ref{cor.Gkm.pieri} (applied
to $\mu=\varnothing$) yields%
\[
G\left(  k,m\right)  \cdot s_{\varnothing}=\sum_{\lambda\in\operatorname*{Par}%
\nolimits_{m+\left\vert \varnothing\right\vert }}\operatorname*{pet}%
\nolimits_{k}\left(  \lambda,\varnothing\right)  s_{\lambda}.
\]
In view of $G\left(  k,m\right)  \cdot\underbrace{s_{\varnothing}}%
_{=1}=G\left(  k,m\right)  $ and $m+\underbrace{\left\vert \varnothing
\right\vert }_{=0}=m$, we can rewrite this as
\[
G\left(  k,m\right)  =\sum_{\lambda\in\operatorname*{Par}\nolimits_{m}%
}\operatorname*{pet}\nolimits_{k}\left(  \lambda,\varnothing\right)
s_{\lambda}.
\]
This proves Corollary \ref{cor.Gkm.main}.
\end{proof}

\subsection{\label{subsect.proofs.petk.explicit}Proof of Theorem
\ref{thm.petk.explicit}}

Our proof of Theorem \ref{thm.petk.explicit} will depend on two lemmas about determinants:

\begin{lemma}
\label{lem.petk.explicit.1}Let $m\in\mathbb{N}$. Let $R$ be a commutative
ring. Let $\left(  a_{i,j}\right)  _{1\leq i\leq m,\ 1\leq j\leq m}\in
R^{m\times m}$ be an $m\times m$-matrix.

\textbf{(a)} If $\tau$ is any permutation of $\left\{  1,2,\ldots,m\right\}
$, then%
\[
\det\left(  \left(  a_{\tau\left(  i\right)  ,j}\right)  _{1\leq i\leq
m,\ 1\leq j\leq m}\right)  =\left(  -1\right)  ^{\tau}\cdot\det\left(  \left(
a_{i,j}\right)  _{1\leq i\leq m,\ 1\leq j\leq m}\right)  .
\]
Here, $\left(  -1\right)  ^{\tau}$ denotes the sign of the permutation $\tau$.

\textbf{(b)} Let $u_{1},u_{2},\ldots,u_{m}$ be $m$ elements of $R$. Let
$v_{1},v_{2},\ldots,v_{m}$ be $m$ elements of $R$. Then,%
\[
\det\left(  \left(  u_{i}v_{j}a_{i,j}\right)  _{1\leq i\leq m,\ 1\leq j\leq
m}\right)  =\left(  \prod_{i=1}^{m}\left(  u_{i}v_{i}\right)  \right)
\cdot\det\left(  \left(  a_{i,j}\right)  _{1\leq i\leq m,\ 1\leq j\leq
m}\right)  .
\]

\end{lemma}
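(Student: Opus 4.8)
The plan is to prove both parts directly from the Leibniz formula for the determinant, namely $\det\left(\left(a_{i,j}\right)_{1\leq i\leq m,\ 1\leq j\leq m}\right)=\sum_{\sigma\in\mathfrak{S}_m}\left(-1\right)^{\sigma}\prod_{i=1}^{m}a_{i,\sigma\left(i\right)}$, where $\mathfrak{S}_m$ denotes the symmetric group of $\left\{1,2,\ldots,m\right\}$ and $\left(-1\right)^{\sigma}$ denotes the sign of $\sigma$. No deeper theory is needed; the only work is careful bookkeeping with reindexing sums and products over $\mathfrak{S}_m$.

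For part \textbf{(b)}, I would apply the Leibniz formula to the matrix $\left(u_iv_ja_{i,j}\right)_{1\leq i\leq m,\ 1\leq j\leq m}$, obtaining $\sum_{\sigma\in\mathfrak{S}_m}\left(-1\right)^{\sigma}\prod_{i=1}^{m}\left(u_iv_{\sigma\left(i\right)}a_{i,\sigma\left(i\right)}\right)$. Splitting each product as $\left(\prod_{i=1}^{m}u_i\right)\left(\prod_{i=1}^{m}v_{\sigma\left(i\right)}\right)\left(\prod_{i=1}^{m}a_{i,\sigma\left(i\right)}\right)$ and using the fact that $\sigma$ is a bijection from $\left\{1,2,\ldots,m\right\}$ to itself to rewrite $\prod_{i=1}^{m}v_{\sigma\left(i\right)}=\prod_{j=1}^{m}v_j$, I can pull the factor $\left(\prod_{i=1}^{m}u_i\right)\left(\prod_{j=1}^{m}v_j\right)$ out of the sum. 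What remains is $\left(\prod_{i=1}^{m}u_i\right)\left(\prod_{j=1}^{m}v_j\right)\cdot\det\left(\left(a_{i,j}\right)_{1\leq i\leq m,\ 1\leq j\leq m}\right)$, and the identity $\left(\prod_{i=1}^{m}u_i\right)\left(\prod_{j=1}^{m}v_j\right)=\prod_{i=1}^{m}\left(u_iv_i\right)$ gives the claim.

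For part \textbf{(a)}, I would again start from the Leibniz formula: $\det\left(\left(a_{\tau\left(i\right),j}\right)_{1\leq i\leq m,\ 1\leq j\leq m}\right)=\sum_{\sigma\in\mathfrak{S}_m}\left(-1\right)^{\sigma}\prod_{i=1}^{m}a_{\tau\left(i\right),\sigma\left(i\right)}$. Substituting $k=\tau\left(i\right)$ in each product (legitimate since $\tau$ is a bijection) turns $\prod_{i=1}^{m}a_{\tau\left(i\right),\sigma\left(i\right)}$ into $\prod_{k=1}^{m}a_{k,\left(\sigma\circ\tau^{-1}\right)\left(k\right)}$. Then I substitute $\pi=\sigma\circ\tau^{-1}$ in the outer sum; as $\sigma$ ranges over $\mathfrak{S}_m$, so does $\pi$, and $\sigma=\pi\circ\tau$ together with the multiplicativity of the sign yields $\left(-1\right)^{\sigma}=\left(-1\right)^{\pi}\left(-1\right)^{\tau}$. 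Pulling $\left(-1\right)^{\tau}$ out gives $\left(-1\right)^{\tau}\sum_{\pi\in\mathfrak{S}_m}\left(-1\right)^{\pi}\prod_{k=1}^{m}a_{k,\pi\left(k\right)}=\left(-1\right)^{\tau}\det\left(\left(a_{i,j}\right)_{1\leq i\leq m,\ 1\leq j\leq m}\right)$.

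The main (and essentially only) obstacle is making the two index substitutions in part \textbf{(a)} — namely $k=\tau\left(i\right)$ inside the product and $\pi=\sigma\circ\tau^{-1}$ in the sum — fully rigorous, including the observation that the sign map $\mathfrak{S}_m\rightarrow\left\{1,-1\right\}$ is a group homomorphism so that $\left(-1\right)^{\pi\circ\tau}=\left(-1\right)^{\pi}\left(-1\right)^{\tau}$. Everything else is a direct computation. Both parts are standard facts of linear algebra and could alternatively simply be cited; but since the surrounding text favors self-contained arguments, I would include the short proofs sketched above.
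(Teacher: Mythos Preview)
Your proposal is correct and takes essentially the same approach as the paper: the paper simply cites part \textbf{(a)} as the well-known fact about row permutations and says part \textbf{(b)} ``follows easily from the definition of the determinant,'' which is precisely the Leibniz-formula computation you carry out. You have supplied the details the paper omits, and those details are all correct.
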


\begin{proof}
[Proof of Lemma \ref{lem.petk.explicit.1}.]\textbf{(a)} This is just the
well-known fact that if the rows of a square matrix are permuted using a
permutation $\tau$, then the determinant of this matrix gets multiplied by
$\left(  -1\right)  ^{\tau}$.

\textbf{(b)} This follows easily from the definition of the determinant.
\end{proof}

\begin{lemma}
\label{lem.petk.explicit.2}Let $k$ be a positive integer. Let $\gamma
_{1},\gamma_{2},\ldots,\gamma_{k-1}$ be $k-1$ elements of the set $\left\{
1,2,\ldots,k\right\}  $.

Let $G$ be the $\left(  k-1\right)  \times\left(  k-1\right)  $-matrix%
\[
\left(  \left(  -1\right)  ^{\left(  \gamma_{i}+j\right)  \%k}\left[  \left(
\gamma_{i}+j\right)  \%k\in\left\{  0,1\right\}  \right]  \right)  _{1\leq
i\leq k-1,\ 1\leq j\leq k-1}.
\]

\textbf{(a)} If the $k-1$ numbers $\gamma_{1},\gamma_{2},\ldots,\gamma_{k-1}$
are not distinct, then
\[
\det G=0.
\]

\textbf{(b)} If $\gamma_{1}>\gamma_{2}>\cdots>\gamma_{k-1}$, then%
\[
\det G=\left(  -1\right)  ^{\left(  \gamma_{1}+\gamma_{2}+\cdots+\gamma
_{k-1}\right)  -\left(  1+2+\cdots+\left(  k-1\right)  \right)  }.
\]

\textbf{(c)} Assume that the $k-1$ numbers $\gamma_{1},\gamma_{2}%
,\ldots,\gamma_{k-1}$ are distinct. Let%
\[
g=\left\vert \left\{  \left(  i,j\right)  \in\left\{  1,2,\ldots,k-1\right\}
^{2}\ \mid\ i<j\text{ and }\gamma_{i}<\gamma_{j}\right\}  \right\vert .
\]
Then,%
\[
\det G=\left(  -1\right)  ^{g+\left(  \gamma_{1}+\gamma_{2}+\cdots
+\gamma_{k-1}\right)  -\left(  1+2+\cdots+\left(  k-1\right)  \right)  }.
\]

\end{lemma}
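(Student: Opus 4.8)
The three parts will be handled in sequence, with part \textbf{(b)} carrying essentially all of the work.

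Part \textbf{(a)} is immediate. The $(i,j)$-entry of $G$ depends only on $\gamma_i$ and on $j$; hence if $\gamma_i = \gamma_{i'}$ for two distinct indices $i,i'$, then rows $i$ and $i'$ of $G$ coincide, and therefore $\det G = 0$.

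For part \textbf{(b)}, the first step is to read off the entries of $G$ explicitly. The $(i,j)$-entry equals $1$ when $\gamma_i + j \equiv 0 \pmod{k}$, equals $-1$ when $\gamma_i + j \equiv 1 \pmod{k}$, and equals $0$ otherwise. Since $\gamma_i \in \{1,2,\ldots,k\}$ and $j \in \{1,2,\ldots,k-1\}$, this means precisely that row $i$ has a single $+1$, located in column $k-\gamma_i$ and present exactly when $\gamma_i \leq k-1$; a single $-1$, located in column $k+1-\gamma_i$ and present exactly when $\gamma_i \geq 2$; and zeros everywhere else. The next step is to exploit $\gamma_1 > \gamma_2 > \cdots > \gamma_{k-1}$. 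The $\gamma_i$ are then $k-1$ distinct elements of $\{1,\ldots,k\}$, so there is a unique $m \in \{1,\ldots,k\}$ with $\{\gamma_1,\ldots,\gamma_{k-1}\} = \{1,\ldots,k\} \setminus \{m\}$, and explicitly $\gamma_i = k+1-i$ for $1 \leq i \leq k-m$ while $\gamma_i = k-i$ for $k-m < i \leq k-1$. Substituting these values, one finds that for $i \leq k-m$ row $i$ has a $-1$ in column $i$ and a $+1$ in column $i-1$ (the latter only when $i \geq 2$), whereas for $i > k-m$ row $i$ has a $+1$ in column $i$ and a $-1$ in column $i+1$ (the latter only when $i \leq k-2$). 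In particular the first $k-m$ rows have all their nonzero entries in columns $1,\ldots,k-m$, and the remaining $m-1$ rows have all their nonzero entries in columns $k-m+1,\ldots,k-1$. Thus $G$ is block-diagonal, consisting of a $(k-m)\times(k-m)$ lower-bidiagonal block with $-1$'s on the diagonal and a $(m-1)\times(m-1)$ upper-bidiagonal block with $+1$'s on the diagonal, whence $\det G = (-1)^{k-m}\cdot 1^{m-1} = (-1)^{k-m}$. Finally, $\gamma_1+\gamma_2+\cdots+\gamma_{k-1} = (1+2+\cdots+k) - m$, so $k - m = (\gamma_1+\gamma_2+\cdots+\gamma_{k-1}) - (1+2+\cdots+(k-1))$, which is exactly the exponent claimed in part \textbf{(b)}.

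For part \textbf{(c)}, I would reduce to part \textbf{(b)} by sorting the rows. Since the $\gamma_i$ are distinct, there is a unique permutation $\tau$ of $\{1,2,\ldots,k-1\}$ with $\gamma_{\tau(1)} > \gamma_{\tau(2)} > \cdots > \gamma_{\tau(k-1)}$. Let $G'$ be the matrix obtained from $G$ by permuting its rows according to $\tau$; then $G'$ is precisely the matrix of part \textbf{(b)} built from the decreasing tuple $(\gamma_{\tau(1)},\ldots,\gamma_{\tau(k-1)})$. By Lemma \ref{lem.petk.explicit.1} \textbf{(a)} we have $\det G = (-1)^{\tau} \det G'$, and by part \textbf{(b)} (together with the fact that rearranging the $\gamma_i$ does not change their sum) we have $\det G' = (-1)^{(\gamma_1+\cdots+\gamma_{k-1}) - (1+\cdots+(k-1))}$. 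It remains to check that $(-1)^{\tau} = (-1)^{g}$. This is the standard fact that the sign of the permutation sorting a sequence of distinct numbers into decreasing order equals $(-1)$ raised to the number of its non-inversions: concretely, the number of pairs $i < i'$ with $\tau^{-1}(i) > \tau^{-1}(i')$ --- equivalently, with $\gamma_i < \gamma_{i'}$ --- is exactly $g$, so $(-1)^{\tau} = (-1)^{\tau^{-1}} = (-1)^{g}$. Combining the three equalities yields the formula in part \textbf{(c)}.

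The only delicate point is the bookkeeping in part \textbf{(b)}: correctly determining which rows carry a single nonzero entry (those with $\gamma_i = 1$ or $\gamma_i = k$) versus two, and verifying that the two groups of rows occupy disjoint sets of columns, so that $G$ genuinely splits as a block-diagonal matrix. Granting that combinatorial picture, the determinant evaluation and the ensuing arithmetic are routine.
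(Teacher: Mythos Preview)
Your proof is correct and follows essentially the same approach as the paper: part \textbf{(b)} is handled by observing that the sorted $\gamma_i$'s omit exactly one element of $\{1,\ldots,k\}$ and that $G$ then splits as a block-diagonal matrix with a lower-bidiagonal block (diagonal entries $-1$) and an upper-bidiagonal block (diagonal entries $+1$), while part \textbf{(c)} reduces to \textbf{(b)} by the row-permutation argument via Lemma~\ref{lem.petk.explicit.1}~\textbf{(a)}. Your bookkeeping in part \textbf{(b)} (locating the $+1$ at column $k-\gamma_i$ and the $-1$ at column $k+1-\gamma_i$, with the stated boundary conditions) and your sign computation $(-1)^{\tau} = (-1)^{\tau^{-1}} = (-1)^{g}$ in part \textbf{(c)} are both accurate.
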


\begin{proof}
[Proof of Lemma \ref{lem.petk.explicit.2}.]\textbf{(a)} Assume that the $k-1$
numbers $\gamma_{1},\gamma_{2},\ldots,\gamma_{k-1}$ are not distinct. In other
words, there exist two elements $u$ and $v$ of $\left\{  1,2,\ldots
,k-1\right\}  $ such that $u<v$ and $\gamma_{u}=\gamma_{v}$. Consider these
$u$ and $v$. Now, from $\gamma_{u}=\gamma_{v}$, we conclude that the $u$-th
and the $v$-th rows of the matrix $G$ are equal (by the construction of $G$).
Hence, the matrix $G$ has two equal rows (since $u<v$). Thus, $\det G=0$. This
proves Lemma \ref{lem.petk.explicit.2} \textbf{(a)}.

\textbf{(b)} Assume that $\gamma_{1}>\gamma_{2}>\cdots>\gamma_{k-1}$. Thus,
$\gamma_{1},\gamma_{2},\ldots,\gamma_{k-1}$ are distinct. Hence, $\left\{
\gamma_{1},\gamma_{2},\ldots,\gamma_{k-1}\right\}  $ is a $\left(  k-1\right)
$-element set. But $\left\{  \gamma_{1},\gamma_{2},\ldots,\gamma
_{k-1}\right\}  $ is a subset of $\left\{  1,2,\ldots,k\right\}  $ (since
$\gamma_{1},\gamma_{2},\ldots,\gamma_{k-1}$ are elements of $\left\{
1,2,\ldots,k\right\}  $). Therefore, $\left\{  \gamma_{1},\gamma_{2}%
,\ldots,\gamma_{k-1}\right\}  $ is a $\left(  k-1\right)  $-element subset of
$\left\{  1,2,\ldots,k\right\}  $.

\begin{vershort}
Hence, $\left\{  \gamma_{1},\gamma_{2},\ldots,\gamma_{k-1}\right\}  =\left\{
1,2,\ldots,k\right\}  \setminus\left\{  u\right\}  $ for some $u\in\left\{
1,2,\ldots,k\right\}  $ (since any $\left(  k-1\right)  $-element subset of
$\left\{  1,2,\ldots,k\right\}  $ has such a form). Consider this $u$. From
$\left\{  \gamma_{1},\gamma_{2},\ldots,\gamma_{k-1}\right\}  =\left\{
1,2,\ldots,k\right\}  \setminus\left\{  u\right\}  $, we conclude that
$\gamma_{1},\gamma_{2},\ldots,\gamma_{k-1}$ are the $k-1$ elements of the set
$\left\{  1,2,\ldots,k\right\}  \setminus\left\{  u\right\}  $, listed in
decreasing order (since $\gamma_{1}>\gamma_{2}>\cdots>\gamma_{k-1}$). In other
words,%
\begin{equation}
\left(  \gamma_{1},\gamma_{2},\ldots,\gamma_{k-1}\right)  =\left(
k,k-1,\ldots,\widehat{u},\ldots,2,1\right)  ,
\label{pf.lem.petk.explicit.2.b.short.2}%
\end{equation}
where the \textquotedblleft hat\textquotedblright\ over the $u$ signifies that
$u$ is omitted from the list (i.e., the expression \textquotedblleft$\left(
k,k-1,\ldots,\widehat{u},\ldots,2,1\right)  $\textquotedblright\ is understood
to mean the $\left(  k-1\right)  $-element list $\left(  k,k-1,\ldots
,u+1,u-1,\ldots,2,1\right)  $, which contains all $k$ integers from $1$ to $k$
in decreasing order except for $u$).
\end{vershort}

\begin{verlong}
But $\left\{  1,2,\ldots,k\right\}  $ is a $k$-element set. Hence, each
$\left(  k-1\right)  $-element subset of $\left\{  1,2,\ldots,k\right\}  $ has
the form $\left\{  1,2,\ldots,k\right\}  \setminus\left\{  u\right\}  $ for
some $u\in\left\{  1,2,\ldots,k\right\}  $. Thus, in particular, $\left\{
\gamma_{1},\gamma_{2},\ldots,\gamma_{k-1}\right\}  $ has this form (since
$\left\{  \gamma_{1},\gamma_{2},\ldots,\gamma_{k-1}\right\}  $ is a $\left(
k-1\right)  $-element subset of $\left\{  1,2,\ldots,k\right\}  $). In other
words,%
\begin{equation}
\left\{  \gamma_{1},\gamma_{2},\ldots,\gamma_{k-1}\right\}  =\left\{
1,2,\ldots,k\right\}  \setminus\left\{  u\right\}
\label{pf.lem.petk.explicit.2.b.1}%
\end{equation}
for some $u\in\left\{  1,2,\ldots,k\right\}  $. Consider this $u$. From
(\ref{pf.lem.petk.explicit.2.b.1}), we conclude that $\gamma_{1},\gamma
_{2},\ldots,\gamma_{k-1}$ are the $k-1$ elements of the set $\left\{
1,2,\ldots,k\right\}  \setminus\left\{  u\right\}  $, listed in decreasing
order (since $\gamma_{1}>\gamma_{2}>\cdots>\gamma_{k-1}$). In other words,%
\begin{equation}
\left(  \gamma_{1},\gamma_{2},\ldots,\gamma_{k-1}\right)  =\left(
k,k-1,\ldots,\widehat{u},\ldots,2,1\right)  ,
\label{pf.lem.petk.explicit.2.b.2}%
\end{equation}
where the \textquotedblleft hat\textquotedblright\ over the $u$ signifies that
$u$ is omitted from the list (i.e., the expression \textquotedblleft$\left(
k,k-1,\ldots,\widehat{u},\ldots,2,1\right)  $\textquotedblright\ is understood
to mean the $\left(  k-1\right)  $-element list $\left(  k,k-1,\ldots
,u+1,u-1,\ldots,2,1\right)  $, which contains all $k$ integers from $1$ to $k$
in decreasing order except for $u$). Thus,%
\begin{align}
\left(  \gamma_{1},\gamma_{2},\ldots,\gamma_{k-u}\right)   &  =\left(
k,k-1,\ldots,u+1\right)  \ \ \ \ \ \ \ \ \ \ \text{and}%
\label{pf.lem.petk.explicit.2.b.2a}\\
\left(  \gamma_{k-u+1},\gamma_{k-u+2},\ldots,\gamma_{k-1}\right)   &  =\left(
u-1,u-2,\ldots,1\right)  . \label{pf.lem.petk.explicit.2.b.2b}%
\end{align}

\end{verlong}

Now, we claim that%
\begin{align}
&  \left(  -1\right)  ^{\left(  \gamma_{i}+j\right)  \%k}\left[  \left(
\gamma_{i}+j\right)  \%k\in\left\{  0,1\right\}  \right] \nonumber\\
&  =\left(  -1\right)  ^{\gamma_{i}+j-k}\left[  \gamma_{i}+j\in\left\{
k,k+1\right\}  \right]  \label{pf.lem.petk.explicit.2.b.5}%
\end{align}
for any $i\in\left\{  1,2,\ldots,k-1\right\}  $ and $j\in\left\{
1,2,\ldots,k-1\right\}  $.

[\textit{Proof of (\ref{pf.lem.petk.explicit.2.b.5}):} Let $i\in\left\{
1,2,\ldots,k-1\right\}  $ and $j\in\left\{  1,2,\ldots,k-1\right\}  $. We must
prove the equality (\ref{pf.lem.petk.explicit.2.b.5}).

From $i\in\left\{  1,2,\ldots,k-1\right\}  $, we obtain $1\leq i\leq k-1$ and
thus $k-1\geq1$. Thus, $k>k-1\geq1$. Hence, $k+1<2k$, so that $\left(
k+1\right)  \%k=1$.

\begin{vershort}
If we don't have $\left(  \gamma_{i}+j\right)  \%k\in\left\{  0,1\right\}  $,
then both truth values \newline$\left[  \left(  \gamma_{i}+j\right)
\%k\in\left\{  0,1\right\}  \right]  $ and $\left[  \gamma_{i}+j\in\left\{
k,k+1\right\}  \right]  $ are $0$ (indeed, the statement \textquotedblleft%
$\gamma_{i}+j\in\left\{  k,k+1\right\}  $\textquotedblright\ is false, since
otherwise it would imply $\left(  \gamma_{i}+j\right)  \%k\in\left\{
0,1\right\}  $), and therefore the equality (\ref{pf.lem.petk.explicit.2.b.5})
simplifies to $\left(  -1\right)  ^{\left(  \gamma_{i}+j\right)  \%k}0=\left(
-1\right)  ^{\gamma_{i}+j-k}0$ in this case, which is obviously true. Hence,
for the rest of this proof, we WLOG assume that we do have $\left(  \gamma
_{i}+j\right)  \%k\in\left\{  0,1\right\}  $.
\end{vershort}

\begin{verlong}
If we don't have $\left(  \gamma_{i}+j\right)  \%k\in\left\{  0,1\right\}  $,
then we cannot have $\gamma_{i}+j\in\left\{  k,k+1\right\}  $ either (because
$\gamma_{i}+j\in\left\{  k,k+1\right\}  $ would entail $\left(  \gamma
_{i}+j\right)  \%k\in\left\{  \underbrace{k\%k}_{=0},\underbrace{\left(
k+1\right)  \%k}_{=1}\right\}  =\left\{  0,1\right\}  $). Thus, if we don't
have $\left(  \gamma_{i}+j\right)  \%k\in\left\{  0,1\right\}  $, then both
truth values \newline$\left[  \left(  \gamma_{i}+j\right)  \%k\in\left\{
0,1\right\}  \right]  $ and $\left[  \gamma_{i}+j\in\left\{  k,k+1\right\}
\right]  $ are $0$, and therefore the equality
(\ref{pf.lem.petk.explicit.2.b.5}) simplifies to $\left(  -1\right)  ^{\left(
\gamma_{i}+j\right)  \%k}0=\left(  -1\right)  ^{\gamma_{i}+j-k}0$ in this
case, which is obviously true. Hence, for the rest of this proof, we WLOG
assume that we do have $\left(  \gamma_{i}+j\right)  \%k\in\left\{
0,1\right\}  $.
\end{verlong}

\begin{vershort}
Adding $\gamma_{i}\in\left\{  1,2,\ldots,k\right\}  $ with $j\in\left\{
1,2,\ldots,k-1\right\}  $, we find $\gamma_{i}+j\in\left\{  2,3,\ldots
,2k-1\right\}  $. But if $p\in\left\{  2,3,\ldots,2k-1\right\}  $ satisfies
$p\%k\in\left\{  0,1\right\}  $, then $p\in\left\{  k,k+1\right\}  $ (since
the only numbers in $\left\{  2,3,\ldots,2k-1\right\}  $ that leave the
remainders $0$ and $1$ upon division by $k$ are $k$ and $k+1$). Applying this
to $p=\gamma_{i}+j$, we obtain $\gamma_{i}+j\in\left\{  k,k+1\right\}  $
(since $\gamma_{i}+j\in\left\{  2,3,\ldots,2k-1\right\}  $ and $\left(
\gamma_{i}+j\right)  \%k\in\left\{  0,1\right\}  $). Hence, $k\leq\gamma
_{i}+j<2k$ (since $k+1<2k$), so that $\left(  \gamma_{i}+j\right)  //k=1$. But
every integer $n$ satisfies $n=\left(  n//k\right)  k+\left(  n\%k\right)  $.
Applying this to $n=\gamma_{i}+j$, we obtain $\gamma_{i}+j=\underbrace{\left(
\left(  \gamma_{i}+j\right)  //k\right)  }_{=1}k+\left(  \left(  \gamma
_{i}+j\right)  \%k\right)  =k+\left(  \left(  \gamma_{i}+j\right)  \%k\right)
$. Hence, $\left(  \gamma_{i}+j\right)  \%k=\gamma_{i}+j-k$. Thus,%
\[
\underbrace{\left(  -1\right)  ^{\left(  \gamma_{i}+j\right)  \%k}%
}_{\substack{=\left(  -1\right)  ^{\gamma_{i}+j-k}\\\text{(since }\left(
\gamma_{i}+j\right)  \%k=\gamma_{i}+j-k\text{)}}}\underbrace{\left[  \left(
\gamma_{i}+j\right)  \%k\in\left\{  0,1\right\}  \right]  }%
_{\substack{=1\\\text{(since }\left(  \gamma_{i}+j\right)  \%k\in\left\{
0,1\right\}  \text{)}}}=\left(  -1\right)  ^{\gamma_{i}+j-k}.
\]

\end{vershort}

\begin{verlong}
But $\gamma_{i}\in\left\{  1,2,\ldots,k\right\}  $, so that $1\leq\gamma
_{i}\leq k$. Also, $j\in\left\{  1,2,\ldots,k-1\right\}  $, so that $1\leq
j\leq k-1$. Hence, $\underbrace{\gamma_{i}}_{\geq1}+\underbrace{j}_{\geq1}%
\geq1+1=2$ and $\underbrace{\gamma_{i}}_{\leq k}+\underbrace{j}_{\leq k-1}\leq
k+\left(  k-1\right)  =2k-1$. Altogether, we thus obtain $2\leq\gamma
_{i}+j\leq2k-1$, so that $\gamma_{i}+j\in\left\{  2,3,\ldots,2k-1\right\}  $.

The remainders of the numbers $2,3,\ldots,2k-1$ upon division by $k$ are
$2,3,\ldots,k-1,0,1,\ldots,k-1$ (in this order). Thus, the only numbers
$p\in\left\{  2,3,\ldots,2k-1\right\}  $ that satisfy $p\%k\in\left\{
0,1\right\}  $ are $k$ and $k+1$. In other words, for any number $p\in\left\{
2,3,\ldots,2k-1\right\}  $ satisfying $p\%k\in\left\{  0,1\right\}  $, we have
$p\in\left\{  k,k+1\right\}  $. Applying this to $p=\gamma_{i}+j$, we obtain
$\gamma_{i}+j\in\left\{  k,k+1\right\}  $ (since $\gamma_{i}+j\in\left\{
2,3,\ldots,2k-1\right\}  $ and $\left(  \gamma_{i}+j\right)  \%k\in\left\{
0,1\right\}  $). Hence, $k\leq\gamma_{i}+j\leq k+1$, so that $k\leq\gamma
_{i}+j<2k$ (since $k+1<2k$). Thus, $\left(  \gamma_{i}+j\right)  //k=1$. But
every integer $n$ satisfies $n=\left(  n//k\right)  k+\left(  n\%k\right)  $.
Applying this to $n=\gamma_{i}+j$, we obtain $\gamma_{i}+j=\underbrace{\left(
\left(  \gamma_{i}+j\right)  //k\right)  }_{=1}k+\left(  \left(  \gamma
_{i}+j\right)  \%k\right)  =k+\left(  \left(  \gamma_{i}+j\right)  \%k\right)
$. Hence, $\left(  \gamma_{i}+j\right)  \%k=\gamma_{i}+j-k$. Thus,%
\[
\underbrace{\left(  -1\right)  ^{\left(  \gamma_{i}+j\right)  \%k}%
}_{\substack{=\left(  -1\right)  ^{\gamma_{i}+j-k}\\\text{(since }\left(
\gamma_{i}+j\right)  \%k=\gamma_{i}+j-k\text{)}}}\underbrace{\left[  \left(
\gamma_{i}+j\right)  \%k\in\left\{  0,1\right\}  \right]  }%
_{\substack{=1\\\text{(since }\left(  \gamma_{i}+j\right)  \%k\in\left\{
0,1\right\}  \text{)}}}=\left(  -1\right)  ^{\gamma_{i}+j-k}.
\]

\end{verlong}

\noindent Comparing this with%
\[
\left(  -1\right)  ^{\gamma_{i}+j-k}\underbrace{\left[  \gamma_{i}%
+j\in\left\{  k,k+1\right\}  \right]  }_{\substack{=1\\\text{(since }%
\gamma_{i}+j\in\left\{  k,k+1\right\}  \text{)}}}=\left(  -1\right)
^{\gamma_{i}+j-k},
\]
we obtain%
\[
\left(  -1\right)  ^{\left(  \gamma_{i}+j\right)  \%k}\left[  \left(
\gamma_{i}+j\right)  \%k\in\left\{  0,1\right\}  \right]  =\left(  -1\right)
^{\gamma_{i}+j-k}\left[  \gamma_{i}+j\in\left\{  k,k+1\right\}  \right]  .
\]
This proves (\ref{pf.lem.petk.explicit.2.b.5}).]

Now, $G$ is a $\left(  k-1\right)  \times\left(  k-1\right)  $-matrix. For
each $i\in\left\{  1,2,\ldots,k-1\right\}  $ and $j\in\left\{  1,2,\ldots
,k-1\right\}  $, we have%
\begin{align*}
&  \left(  \text{the }\left(  i,j\right)  \text{-th entry of }G\right) \\
&  =\left(  -1\right)  ^{\left(  \gamma_{i}+j\right)  \%k}\left[  \left(
\gamma_{i}+j\right)  \%k\in\left\{  0,1\right\}  \right]
\ \ \ \ \ \ \ \ \ \ \left(  \text{by the definition of }G\right) \\
&  =\left(  -1\right)  ^{\gamma_{i}+j-k}\left[  \gamma_{i}+j\in\left\{
k,k+1\right\}  \right]  \ \ \ \ \ \ \ \ \ \ \left(  \text{by
(\ref{pf.lem.petk.explicit.2.b.5})}\right) \\
&  =%
\begin{cases}
1, & \text{if }\gamma_{i}+j=k;\\
-1, & \text{if }\gamma_{i}+j=k+1;\\
0, & \text{otherwise}%
\end{cases}
\quad=%
\begin{cases}
1, & \text{if }j=k-\gamma_{i};\\
-1, & \text{if }j=k-\gamma_{i}+1;\\
0, & \text{otherwise}%
\end{cases}
\ \ \ .
\end{align*}

\begin{vershort}
\noindent Thus, we can explicitly describe the matrix $G$ as follows: For each
$i\in\left\{  1,2,\ldots,k-1\right\}  $, the $i$-th row of $G$ has an entry
equal to $1$ in position $k-\gamma_{i}$ if $k-\gamma_{i}>0$, and an entry
equal to $-1$ in position $k-\gamma_{i}+1$ if $k-\gamma_{i}+1<k$; all
remaining entries of this row are $0$. Recalling
(\ref{pf.lem.petk.explicit.2.b.short.2}), we thus see that $G$ has the
following form:\footnote{Empty cells are understood to have entry $0$.}%
\[
G=\left(
\begin{tabular}
[c]{ccccc|ccccc}%
$-1$ &  &  &  &  &  &  &  &  & \\
$1$ & $-1$ &  &  &  &  &  &  &  & \\
& $1$ & $-1$ &  &  &  &  &  &  & \\
&  & $\ddots$ & $\ddots$ &  &  &  &  &  & \\
&  &  & $1$ & $-1$ &  &  &  &  & \\\hline
&  &  &  &  & $1$ & $-1$ &  &  & \\
&  &  &  &  &  & $1$ & $-1$ &  & \\
&  &  &  &  &  &  & $1$ & $-1$ & \\
&  &  &  &  &  &  &  & $\ddots$ & $\ddots$\\
&  &  &  &  &  &  &  &  & $1$%
\end{tabular}
\right)  ,
\]
where the horizontal bar separates the $\left(  k-u\right)  $-th row from the
$\left(  k-u+1\right)  $-st row, while the vertical bar separates the $\left(
k-u\right)  $-th column from the $\left(  k-u+1\right)  $-st column. Thus, $G$
can be written as a block-diagonal matrix
\begin{equation}
G=\left(
\begin{array}
[c]{cc}%
A & 0_{\left(  k-u\right)  \times\left(  u-1\right)  }\\
0_{\left(  u-1\right)  \times\left(  k-u\right)  } & B
\end{array}
\right)  , \label{pf.lem.petk.explicit.2.b.short.blocks}%
\end{equation}
where $A$ is a lower-triangular $\left(  k-u\right)  \times\left(  k-u\right)
$-matrix with all diagonal entries equal to $-1$, and where $B$ is an
upper-triangular $\left(  u-1\right)  \times\left(  u-1\right)  $-matrix with
all diagonal entries equal to $1$. Since the determinant of a block-diagonal
matrix equals the product of the determinants of its diagonal blocks, we thus
conclude that%
\begin{align}
\det G  &  =\underbrace{\det A}_{\substack{=\left(  -1\right)  ^{k-u}%
\\\text{(since }A\text{ is a}\\\text{lower-triangular }\left(  k-u\right)
\times\left(  k-u\right)  \text{-matrix}\\\text{with all diagonal entries
equal to }-1\text{)}}}\cdot\underbrace{\det B}_{\substack{=1\\\text{(since
}B\text{ is an}\\\text{upper-triangular }\left(  u-1\right)  \times\left(
u-1\right)  \text{-matrix}\\\text{with all diagonal entries equal to
}1\text{)}}}\nonumber\\
&  =\left(  -1\right)  ^{k-u}. \label{pf.lem.petk.explicit.2.b.short.7}%
\end{align}

But (\ref{pf.lem.petk.explicit.2.b.short.2}) yields%
\begin{align*}
\gamma_{1}+\gamma_{2}+\cdots+\gamma_{k-1}  &  =k+\left(  k-1\right)
+\cdots+\widehat{u}+\cdots+2+1\\
&  =\underbrace{\left(  k+\left(  k-1\right)  +\cdots+2+1\right)
}_{\substack{=1+2+\cdots+k\\=\left(  1+2+\cdots+\left(  k-1\right)  \right)
+k}}-u\\
&  =\left(  1+2+\cdots+\left(  k-1\right)  \right)  +k-u.
\end{align*}
Solving this for $k-u$, we find%
\[
k-u=\left(  \gamma_{1}+\gamma_{2}+\cdots+\gamma_{k-1}\right)  -\left(
1+2+\cdots+\left(  k-1\right)  \right)  .
\]
Hence, (\ref{pf.lem.petk.explicit.2.b.short.7}) rewrites as
\[
\det G=\left(  -1\right)  ^{\left(  \gamma_{1}+\gamma_{2}+\cdots+\gamma
_{k-1}\right)  -\left(  1+2+\cdots+\left(  k-1\right)  \right)  }.
\]

\end{vershort}

\begin{verlong}
\noindent Thus, we can explicitly describe the matrix $G$ as follows: For each
$i\in\left\{  1,2,\ldots,k-1\right\}  $, the $i$-th row of $G$ has an entry
equal to $1$ in position $k-\gamma_{i}$ if $k-\gamma_{i}>0$, and an entry
equal to $-1$ in position $k-\gamma_{i}+1$ if $k-\gamma_{i}+1<k$; all
remaining entries of this row are $0$. Recalling
(\ref{pf.lem.petk.explicit.2.b.2a}) and (\ref{pf.lem.petk.explicit.2.b.2b}),
we thus see that $G$ has the following form:\footnote{Empty cells are
understood to have entry $0$.}%
\[
G=\left(
\begin{tabular}
[c]{ccccc|ccccc}%
$-1$ &  &  &  &  &  &  &  &  & \\
$1$ & $-1$ &  &  &  &  &  &  &  & \\
& $1$ & $-1$ &  &  &  &  &  &  & \\
&  & $\ddots$ & $\ddots$ &  &  &  &  &  & \\
&  &  & $1$ & $-1$ &  &  &  &  & \\\hline
&  &  &  &  & $1$ & $-1$ &  &  & \\
&  &  &  &  &  & $1$ & $-1$ &  & \\
&  &  &  &  &  &  & $1$ & $-1$ & \\
&  &  &  &  &  &  &  & $\ddots$ & $\ddots$\\
&  &  &  &  &  &  &  &  & $1$%
\end{tabular}
\right)  ,
\]
where the horizontal bar separates the $\left(  k-u\right)  $-th row from the
$\left(  k-u+1\right)  $-st row, while the vertical bar separates the $\left(
k-u\right)  $-th column from the $\left(  k-u+1\right)  $-st column. In other
words, $G$ can be written as a block matrix
\begin{equation}
G=\left(
\begin{array}
[c]{cc}%
A & 0_{\left(  k-u\right)  \times\left(  u-1\right)  }\\
0_{\left(  u-1\right)  \times\left(  k-u\right)  } & B
\end{array}
\right)  , \label{pf.lem.petk.explicit.2.b.blocks}%
\end{equation}
where $A$ is the $\left(  k-u\right)  \times\left(  k-u\right)  $-matrix
$\left(
\begin{array}
[c]{ccccc}%
-1 &  &  &  & \\
1 & -1 &  &  & \\
& 1 & -1 &  & \\
&  & \ddots & \ddots & \\
&  &  & 1 & -1
\end{array}
\right)  $ (that is, the $\left(  k-u\right)  \times\left(  k-u\right)
$-matrix whose diagonal entries are $-1$ and whose entries immediately below
the diagonal are $1$, while all its other entries are $0$), and where $B$ is
the $\left(  u-1\right)  \times\left(  u-1\right)  $-matrix $\left(
\begin{array}
[c]{ccccc}%
1 & -1 &  &  & \\
& 1 & -1 &  & \\
&  & 1 & -1 & \\
&  &  & \ddots & \ddots\\
&  &  &  & 1
\end{array}
\right)  $ (that is, the $\left(  u-1\right)  \times\left(  u-1\right)
$-matrix whose diagonal entries are $1$ and whose entries immediately above
the diagonal are $-1$, while all its other entries are $0$). Thus, $G$ (as
written in (\ref{pf.lem.petk.explicit.2.b.blocks})) is a block-diagonal matrix
(since $A$ and $B$ are square matrices). Since the determinant of a
block-diagonal matrix equals the product of the determinants of its diagonal
blocks, we thus conclude that%
\begin{align}
\det G  &  =\underbrace{\det A}_{\substack{=\left(  -1\right)  ^{k-u}%
\\\text{(since }A\text{ is a}\\\text{lower-triangular }\left(  k-u\right)
\times\left(  k-u\right)  \text{-matrix}\\\text{whose all diagonal entries
equal }-1\text{)}}}\cdot\underbrace{\det B}_{\substack{=1\\\text{(since
}B\text{ is an}\\\text{upper-triangular }\left(  u-1\right)  \times\left(
u-1\right)  \text{-matrix}\\\text{whose all diagonal entries equal }1\text{)}%
}}\nonumber\\
&  =\left(  -1\right)  ^{k-u}. \label{pf.lem.petk.explicit.2.b.7}%
\end{align}

But (\ref{pf.lem.petk.explicit.2.b.2}) yields%
\begin{align*}
\gamma_{1}+\gamma_{2}+\cdots+\gamma_{k-1}  &  =k+\left(  k-1\right)
+\cdots+\widehat{u}+\cdots+2+1\\
&  =\underbrace{\left(  k+\left(  k-1\right)  +\cdots+2+1\right)
}_{\substack{=1+2+\cdots+k\\=\left(  1+2+\cdots+\left(  k-1\right)  \right)
+k}}-u\\
&  =\left(  1+2+\cdots+\left(  k-1\right)  \right)  +k-u.
\end{align*}
Solving this for $k-u$, we find%
\[
k-u=\left(  \gamma_{1}+\gamma_{2}+\cdots+\gamma_{k-1}\right)  -\left(
1+2+\cdots+\left(  k-1\right)  \right)  .
\]
Hence, (\ref{pf.lem.petk.explicit.2.b.7}) rewrites as
\[
\det G=\left(  -1\right)  ^{\left(  \gamma_{1}+\gamma_{2}+\cdots+\gamma
_{k-1}\right)  -\left(  1+2+\cdots+\left(  k-1\right)  \right)  }.
\]

\end{verlong}

\noindent This proves Lemma \ref{lem.petk.explicit.2} \textbf{(b)}.

\textbf{(c)} Assume that the $k-1$ numbers $\gamma_{1},\gamma_{2}%
,\ldots,\gamma_{k-1}$ are distinct. Then, there exists a unique permutation
$\sigma$ of $\left\{  1,2,\ldots,k-1\right\}  $ such that $\gamma
_{\sigma\left(  1\right)  }>\gamma_{\sigma\left(  2\right)  }>\cdots
>\gamma_{\sigma\left(  k-1\right)  }$ (indeed, this is simply saying that the
$\left(  k-1\right)  $-tuple $\left(  \gamma_{1},\gamma_{2},\ldots
,\gamma_{k-1}\right)  $ can be sorted into decreasing order by a unique
permutation). Consider this $\sigma$.

Let $\tau$ denote the permutation $\sigma^{-1}$. Thus, $\tau$ is a permutation
of $\left\{  1,2,\ldots,k-1\right\}  $ and satisfies $\sigma\circ
\tau=\operatorname*{id}$.

Let $\delta_{1},\delta_{2},\ldots,\delta_{k-1}$ denote the $k-1$ elements
$\gamma_{\sigma\left(  1\right)  },\gamma_{\sigma\left(  2\right)  }%
,\ldots,\gamma_{\sigma\left(  k-1\right)  }$ of $\left\{  1,2,\ldots
,k\right\}  $. Thus, for each $j\in\left\{  1,2,\ldots,k-1\right\}  $, we have%
\begin{equation}
\delta_{j}=\gamma_{\sigma\left(  j\right)  }.
\label{pf.lem.petk.explicit.2.c.1}%
\end{equation}
Hence, the chain of inequalities $\gamma_{\sigma\left(  1\right)  }%
>\gamma_{\sigma\left(  2\right)  }>\cdots>\gamma_{\sigma\left(  k-1\right)  }$
(which is true) can be rewritten as $\delta_{1}>\delta_{2}>\cdots>\delta
_{k-1}$.

Moreover, from (\ref{pf.lem.petk.explicit.2.c.1}), we obtain%
\begin{align}
\delta_{1}+\delta_{2}+\cdots+\delta_{k-1}  &  =\gamma_{\sigma\left(  1\right)
}+\gamma_{\sigma\left(  2\right)  }+\cdots+\gamma_{\sigma\left(  k-1\right)
}\nonumber\\
&  =\gamma_{1}+\gamma_{2}+\cdots+\gamma_{k-1}
\label{pf.lem.petk.explicit.2.c.sumdel=sumgam}%
\end{align}
(since $\sigma$ is a permutation of $\left\{  1,2,\ldots,k-1\right\}  $).

Moreover, for each $i\in\left\{  1,2,\ldots,k-1\right\}  $, we have%
\begin{align}
\delta_{\tau\left(  i\right)  }  &  =\gamma_{\sigma\left(  \tau\left(
i\right)  \right)  }\ \ \ \ \ \ \ \ \ \ \left(  \text{by
(\ref{pf.lem.petk.explicit.2.c.1}), applied to }j=\tau\left(  i\right)
\right) \nonumber\\
&  =\gamma_{i}\ \ \ \ \ \ \ \ \ \ \left(  \text{since }\sigma\left(
\tau\left(  i\right)  \right)  =\underbrace{\left(  \sigma\circ\tau\right)
}_{=\operatorname*{id}}\left(  i\right)  =i\right)  .
\label{pf.lem.petk.explicit.2.c.2}%
\end{align}

Recall that an \emph{inversion} of the permutation $\tau$ is defined to be a
pair $\left(  i,j\right)  $ of elements of $\left\{  1,2,\ldots,k-1\right\}  $
satisfying $i<j$ and $\tau\left(  i\right)  >\tau\left(  j\right)  $. Hence,%
\begin{align}
&  \left\{  \text{the inversions of }\tau\right\} \nonumber\\
&  =\left\{  \left(  i,j\right)  \in\left\{  1,2,\ldots,k-1\right\}
^{2}\ \mid\ i<j\text{ and }\underbrace{\tau\left(  i\right)  >\tau\left(
j\right)  }_{\substack{\text{This is equivalent to }\left(  \delta
_{\tau\left(  i\right)  }<\delta_{\tau\left(  j\right)  }\right)
\\\text{(since }\delta_{1}>\delta_{2}>\cdots>\delta_{k-1}\text{)}}}\right\}
\nonumber\\
&  =\left\{  \left(  i,j\right)  \in\left\{  1,2,\ldots,k-1\right\}
^{2}\ \mid\ i<j\text{ and }\underbrace{\delta_{\tau\left(  i\right)  }%
}_{\substack{=\gamma_{i}\\\text{(by (\ref{pf.lem.petk.explicit.2.c.2}))}%
}}<\underbrace{\delta_{\tau\left(  j\right)  }}_{\substack{=\gamma
_{j}\\\text{(by (\ref{pf.lem.petk.explicit.2.c.2}))}}}\right\} \nonumber\\
&  =\left\{  \left(  i,j\right)  \in\left\{  1,2,\ldots,k-1\right\}
^{2}\ \mid\ i<j\text{ and }\gamma_{i}<\gamma_{j}\right\}  .
\label{pf.lem.petk.explicit.2.c.Inv}%
\end{align}

Recall that the \emph{length} $\ell\left(  \tau\right)  $ of the permutation
$\tau$ is defined to be the number of inversions of $\tau$. Thus,%
\begin{align*}
\ell\left(  \tau\right)   &  =\left(  \text{the number of inversions of }%
\tau\right) \\
&  =\left\vert \left\{  \text{the inversions of }\tau\right\}  \right\vert \\
&  =\left\vert \left\{  \left(  i,j\right)  \in\left\{  1,2,\ldots
,k-1\right\}  ^{2}\ \mid\ i<j\text{ and }\gamma_{i}<\gamma_{j}\right\}
\right\vert \ \ \ \ \ \ \ \ \ \ \left(  \text{by
(\ref{pf.lem.petk.explicit.2.c.Inv})}\right) \\
&  =g\ \ \ \ \ \ \ \ \ \ \left(  \text{by the definition of }g\right)  .
\end{align*}

Recall that the \emph{sign} $\left(  -1\right)  ^{\tau}$ of the permutation
$\tau$ is defined by $\left(  -1\right)  ^{\tau}=\left(  -1\right)
^{\ell\left(  \tau\right)  }$. Hence, $\left(  -1\right)  ^{\tau}=\left(
-1\right)  ^{\ell\left(  \tau\right)  }=\left(  -1\right)  ^{g}$ (since
$\ell\left(  \tau\right)  =g$).

Let $H$ be the $\left(  k-1\right)  \times\left(  k-1\right)  $-matrix%
\[
\left(  \left(  -1\right)  ^{\left(  \delta_{i}+j\right)  \%k}\left[  \left(
\delta_{i}+j\right)  \%k\in\left\{  0,1\right\}  \right]  \right)  _{1\leq
i\leq k-1,\ 1\leq j\leq k-1}.
\]
Then, we can apply Lemma \ref{lem.petk.explicit.2} \textbf{(b)} to $\delta
_{i}$ and $H$ instead of $\gamma_{i}$ and $G$ (since $\delta_{1},\delta
_{2},\ldots,\delta_{k-1}$ are $k-1$ elements of $\left\{  1,2,\ldots
,k\right\}  $ and satisfy $\delta_{1}>\delta_{2}>\cdots>\delta_{k-1}$). We
thus obtain%
\[
\det H=\left(  -1\right)  ^{\left(  \delta_{1}+\delta_{2}+\cdots+\delta
_{k-1}\right)  -\left(  1+2+\cdots+\left(  k-1\right)  \right)  }=\left(
-1\right)  ^{\left(  \gamma_{1}+\gamma_{2}+\cdots+\gamma_{k-1}\right)
-\left(  1+2+\cdots+\left(  k-1\right)  \right)  }%
\]
(by (\ref{pf.lem.petk.explicit.2.c.sumdel=sumgam})).

But the definition of $G$ yields%
\begin{align*}
G  &  =\left(  \underbrace{\left(  -1\right)  ^{\left(  \gamma_{i}+j\right)
\%k}\left[  \left(  \gamma_{i}+j\right)  \%k\in\left\{  0,1\right\}  \right]
}_{\substack{=\left(  -1\right)  ^{\left(  \delta_{\tau\left(  i\right)
}+j\right)  \%k}\left[  \left(  \delta_{\tau\left(  i\right)  }+j\right)
\%k\in\left\{  0,1\right\}  \right]  \\\text{(since
(\ref{pf.lem.petk.explicit.2.c.2}) yields }\gamma_{i}=\delta_{\tau\left(
i\right)  }\text{)}}}\right)  _{1\leq i\leq k-1,\ 1\leq j\leq k-1}\\
&  =\left(  \left(  -1\right)  ^{\left(  \delta_{\tau\left(  i\right)
}+j\right)  \%k}\left[  \left(  \delta_{\tau\left(  i\right)  }+j\right)
\%k\in\left\{  0,1\right\}  \right]  \right)  _{1\leq i\leq k-1,\ 1\leq j\leq
k-1}.
\end{align*}
Hence,%
\begin{align*}
\det G  &  =\det\left(  \left(  \left(  -1\right)  ^{\left(  \delta
_{\tau\left(  i\right)  }+j\right)  \%k}\left[  \left(  \delta_{\tau\left(
i\right)  }+j\right)  \%k\in\left\{  0,1\right\}  \right]  \right)  _{1\leq
i\leq k-1,\ 1\leq j\leq k-1}\right) \\
&  =\underbrace{\left(  -1\right)  ^{\tau}}_{=\left(  -1\right)  ^{g}}%
\det\left(  \underbrace{\left(  \left(  -1\right)  ^{\left(  \delta
_{i}+j\right)  \%k}\left[  \left(  \delta_{i}+j\right)  \%k\in\left\{
0,1\right\}  \right]  \right)  _{1\leq i\leq k-1,\ 1\leq j\leq k-1}%
}_{\substack{=H\\\text{(by the definition of }H\text{)}}}\right) \\
&  \ \ \ \ \ \ \ \ \ \ \ \ \ \ \ \ \ \ \ \ \left(
\begin{array}
[c]{c}%
\text{by Lemma \ref{lem.petk.explicit.1} \textbf{(a)}, applied to }m=k-1\text{
and }R=\mathbf{k}\\
\text{and }a_{i,j}=\left(  -1\right)  ^{\left(  \delta_{i}+j\right)
\%k}\left[  \left(  \delta_{i}+j\right)  \%k\in\left\{  0,1\right\}  \right]
\end{array}
\right) \\
&  =\left(  -1\right)  ^{g}\underbrace{\det H}_{=\left(  -1\right)  ^{\left(
\gamma_{1}+\gamma_{2}+\cdots+\gamma_{k-1}\right)  -\left(  1+2+\cdots+\left(
k-1\right)  \right)  }}\\
&  =\left(  -1\right)  ^{g}\left(  -1\right)  ^{\left(  \gamma_{1}+\gamma
_{2}+\cdots+\gamma_{k-1}\right)  -\left(  1+2+\cdots+\left(  k-1\right)
\right)  }\\
&  =\left(  -1\right)  ^{g+\left(  \gamma_{1}+\gamma_{2}+\cdots+\gamma
_{k-1}\right)  -\left(  1+2+\cdots+\left(  k-1\right)  \right)  }.
\end{align*}
This proves Lemma \ref{lem.petk.explicit.2} \textbf{(c)}.
\end{proof}

Next, we recall a well-known property of symmetric functions:

\begin{lemma}
\label{lem.h-e-reciprocity-t}Consider the ring $\Lambda\left[  \left[
t\right]  \right]  $ of formal power series in one indeterminate $t$ over
$\Lambda$. In this ring, we have%
\begin{equation}
1=\left(  \sum_{n\geq0}\left(  -1\right)  ^{n}e_{n}t^{n}\right)  \left(
\sum_{n\geq0}h_{n}t^{n}\right)  . \label{pf.thm.petk.explicit.aker.pf.1}%
\end{equation}

\end{lemma}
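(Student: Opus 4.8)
The plan is to reduce \eqref{pf.thm.petk.explicit.aker.pf.1} to the classical product formulas for the generating functions of the $h_n$ and the $e_n$, in which it becomes a factorwise cancellation. Since $\Lambda\left[\left[t\right]\right]$ is a subring of $\left(\mathbf{k}\left[\left[x_1,x_2,x_3,\ldots\right]\right]\right)\left[\left[t\right]\right]$, it suffices to prove the identity in this larger ring.

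First I would establish, by the same product-rule computation used in the proof of Proposition \ref{prop.G.basics} \textbf{(b)}, the two formulas
\[
\sum_{n\geq0}h_nt^n=\prod_{i=1}^{\infty}\left(1-x_it\right)^{-1}
\qquad\text{and}\qquad
\sum_{n\geq0}\left(-1\right)^ne_nt^n=\prod_{i=1}^{\infty}\left(1-x_it\right).
\]
For the first, expand each factor as $\left(1-x_it\right)^{-1}=\sum_{a\geq0}x_i^at^a$ and multiply out: the coefficient of $t^n$ becomes $\sum_{\substack{\alpha\in\operatorname*{WC};\\\left\vert\alpha\right\vert=n}}\mathbf{x}^{\alpha}=h_n$ by the definition of $h_n$. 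For the second, expand the factor $1-x_it$ and multiply out, noting that a surviving monomial uses each variable at most once, so the coefficient of $t^n$ is $\left(-1\right)^n\sum_{i_1<i_2<\cdots<i_n}x_{i_1}x_{i_2}\cdots x_{i_n}=\left(-1\right)^ne_n$. (Both formulas are also entirely standard and could instead be quoted, e.g.\ from \cite[\S 2.4]{GriRei}.) Combining them,
\[
\left(\sum_{n\geq0}\left(-1\right)^ne_nt^n\right)\left(\sum_{n\geq0}h_nt^n\right)=\left(\prod_{i=1}^{\infty}\left(1-x_it\right)\right)\left(\prod_{i=1}^{\infty}\left(1-x_it\right)^{-1}\right)=\prod_{i=1}^{\infty}1=1,
\]
which is exactly \eqref{pf.thm.petk.explicit.aker.pf.1}.

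The only genuinely delicate point — and hence the main, if minor, obstacle — is the bookkeeping around the infinite products: one must check that they converge in the relevant power-series topology and that the regrouping $\prod_iA_i\cdot\prod_iB_i=\prod_i\left(A_iB_i\right)$ is legitimate. This is routine, since every factor involved equals $1$ plus terms of positive degree, so only finitely many factors contribute to any fixed coefficient. If one prefers to sidestep infinite products altogether, an equivalent route is to compare coefficients of $t^n$ on the two sides directly: the claim then reduces to $\sum_{i=0}^{n}\left(-1\right)^ie_ih_{n-i}=\left[n=0\right]$, which for $n\geq1$ follows from a standard sign-reversing involution on the monomials appearing in $e_ih_{n-i}$ (or can simply be cited as a classical identity).
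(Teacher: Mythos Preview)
Your proof is correct and follows essentially the same approach as the paper's own: both reduce the identity to the product formulas $\sum_{n\geq0}h_nt^n=\prod_i(1-x_it)^{-1}$ and $\sum_{n\geq0}(-1)^ne_nt^n=\prod_i(1-x_it)$ and then observe factorwise cancellation. The paper simply quotes these two formulas from \cite{GriRei} rather than sketching their derivations, and obtains the second one by substituting $-t$ for $t$ in $\sum_{n\geq0}e_nt^n=\prod_i(1+x_it)$.
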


Lemma \ref{lem.h-e-reciprocity-t} is a well-known identity (see, e.g.,
\cite[proof of Theorem 7.6.1]{Stanley-EC2} or \cite[(2.4.3)]{GriRei}); for the
sake of completeness, let us nevertheless give a proof:

\begin{proof}
[Proof of Lemma \ref{lem.h-e-reciprocity-t}.]Consider the ring $\left(
\mathbf{k}\left[  \left[  x_{1},x_{2},x_{3},\ldots\right]  \right]  \right)
\left[  \left[  t\right]  \right]  $ of formal power series in one
indeterminate $t$ over $\mathbf{k}\left[  \left[  x_{1},x_{2},x_{3}%
,\ldots\right]  \right]  $. In this ring, we have the equalities%
\begin{equation}
\prod_{i=1}^{\infty}\left(  1-x_{i}t\right)  ^{-1}=\sum_{n\geq0}h_{n}t^{n}
\label{pf.lem.h-e-reciprocity-t.h}%
\end{equation}
and%
\begin{equation}
\prod_{i=1}^{\infty}\left(  1+x_{i}t\right)  =\sum_{n\geq0}e_{n}t^{n}.
\label{pf.lem.h-e-reciprocity-t.e}%
\end{equation}
(Indeed, the first of these two equalities is \cite[(2.2.18)]{GriRei}, whereas
the second is \cite[(2.2.19)]{GriRei}.)

\begin{vershort}
Substituting $-t$ for $t$ in the equality (\ref{pf.lem.h-e-reciprocity-t.e}),
and multiplying the resulting equality by (\ref{pf.lem.h-e-reciprocity-t.h}),
we obtain%
\[
\left(  \prod_{i=1}^{\infty}\left(  1-x_{i}t\right)  \right)  \left(
\prod_{i=1}^{\infty}\left(  1-x_{i}t\right)  ^{-1}\right)  =\left(
\sum_{n\geq0}e_{n}\left(  -t\right)  ^{n}\right)  \left(  \sum_{n\geq0}%
h_{n}t^{n}\right)  .
\]
The left hand side of this equality simplifies to $1$, while the right hand
side is the right hand side of (\ref{pf.thm.petk.explicit.aker.pf.1}). Thus,
(\ref{pf.thm.petk.explicit.aker.pf.1}) holds, which proves Lemma
\ref{lem.h-e-reciprocity-t}.
\end{vershort}

\begin{verlong}
Substituting $-t$ for $t$ in the equality (\ref{pf.lem.h-e-reciprocity-t.e}),
we obtain%
\[
\prod_{i=1}^{\infty}\left(  1-x_{i}t\right)  =\sum_{n\geq0}e_{n}%
\underbrace{\left(  -t\right)  ^{n}}_{=\left(  -1\right)  ^{n}t^{n}}%
=\sum_{n\geq0}\left(  -1\right)  ^{n}e_{n}t^{n}.
\]
Multiplying this equality by (\ref{pf.lem.h-e-reciprocity-t.h}), we find%
\[
\left(  \prod_{i=1}^{\infty}\left(  1-x_{i}t\right)  \right)  \left(
\prod_{i=1}^{\infty}\left(  1-x_{i}t\right)  ^{-1}\right)  =\left(
\sum_{n\geq0}\left(  -1\right)  ^{n}e_{n}t^{n}\right)  \left(  \sum_{n\geq
0}h_{n}t^{n}\right)  .
\]
Comparing this with%
\[
\left(  \prod_{i=1}^{\infty}\left(  1-x_{i}t\right)  \right)  \left(
\prod_{i=1}^{\infty}\left(  1-x_{i}t\right)  ^{-1}\right)  =1,
\]
we obtain%
\[
1=\left(  \sum_{n\geq0}\left(  -1\right)  ^{n}e_{n}t^{n}\right)  \left(
\sum_{n\geq0}h_{n}t^{n}\right)  .
\]
This equality is an equality in $\Lambda\left[  \left[  t\right]  \right]  $
(since both of its sides belong to $\Lambda\left[  \left[  t\right]  \right]
$). This proves Lemma \ref{lem.h-e-reciprocity-t}.
\end{verlong}
\end{proof}

Next, we shall prove yet another evaluation of the homomorphism $\alpha_{k}$:

\begin{lemma}
\label{lem.alphak.e}Let $k$ be a positive integer such that $k>1$. Consider
the $\mathbf{k}$-algebra homomorphism $\alpha_{k}:\Lambda\rightarrow
\mathbf{k}$ from Definition \ref{def.alphak}. Also, recall Convention
\ref{conv.iverson}. Let $r$ be an integer such that $r>-k+1$. Then,%
\begin{equation}
\alpha_{k}\left(  e_{r}\right)  =\left(  -1\right)  ^{r+r\%k}\left[
r\%k\in\left\{  0,1\right\}  \right]  . \label{pf.thm.petk.explicit.aker}%
\end{equation}

\end{lemma}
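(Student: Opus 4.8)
The plan is to apply the $\mathbf{k}$-algebra homomorphism $\alpha_{k}$ to the identity in Lemma \ref{lem.h-e-reciprocity-t} and then read off the coefficient of each power of $t$.

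First I would extend $\alpha_{k}:\Lambda\rightarrow\mathbf{k}$ to a $\mathbf{k}$-algebra homomorphism $\widehat{\alpha_{k}}:\Lambda\left[\left[t\right]\right]\rightarrow\mathbf{k}\left[\left[t\right]\right]$ acting coefficientwise on power series in $t$ (this is routine, and entirely analogous to the extension of $\alpha_{k}$ to a power-series ring used in the first proof of Theorem \ref{thm.G.pieri}, only simpler since there is a single indeterminate). Applying $\widehat{\alpha_{k}}$ to (\ref{pf.thm.petk.explicit.aker.pf.1}), and using $\alpha_{k}\left(h_{n}\right)=\left[n<k\right]$ for $n\in\mathbb{N}$ (this is (\ref{pf.thm.G.main.alk})), I obtain the equality $1=\left(\sum_{n\geq0}\left(-1\right)^{n}\alpha_{k}\left(e_{n}\right)t^{n}\right)\left(\sum_{n=0}^{k-1}t^{n}\right)$ in $\mathbf{k}\left[\left[t\right]\right]$.

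Next, since the polynomial $1+t+\cdots+t^{k-1}$ has constant term $1$, it is invertible in $\mathbf{k}\left[\left[t\right]\right]$; and from the telescoping identity $\left(1-t\right)\left(1+t+\cdots+t^{k-1}\right)=1-t^{k}$ together with $\left(1-t^{k}\right)^{-1}=\sum_{j\geq0}t^{kj}$ I get $\left(1+t+\cdots+t^{k-1}\right)^{-1}=\left(1-t\right)\sum_{j\geq0}t^{kj}=\sum_{j\geq0}t^{kj}-\sum_{j\geq0}t^{kj+1}$. Hence $\sum_{n\geq0}\left(-1\right)^{n}\alpha_{k}\left(e_{n}\right)t^{n}=\sum_{j\geq0}t^{kj}-\sum_{j\geq0}t^{kj+1}$. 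Comparing the coefficient of $t^{n}$ for each $n\in\mathbb{N}$ — and using that $k>1$, so the two index families $\left\{kj\right\}$ and $\left\{kj+1\right\}$ are disjoint, and that $n\%k=0\iff k\mid n$ while $n\%k=1\iff n\in\left\{kj+1\ \mid\ j\geq0\right\}$ — I find $\left(-1\right)^{n}\alpha_{k}\left(e_{n}\right)=\left(-1\right)^{n\%k}\left[n\%k\in\left\{0,1\right\}\right]$, and therefore $\alpha_{k}\left(e_{n}\right)=\left(-1\right)^{n+n\%k}\left[n\%k\in\left\{0,1\right\}\right]$. This proves (\ref{pf.thm.petk.explicit.aker}) whenever $r=n\in\mathbb{N}$.

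It remains to treat integers $r$ with $-k+1<r<0$, i.e.\ $r\in\left\{-k+2,-k+3,\ldots,-1\right\}$. For such $r$ we have $e_{r}=0$ (since $r<0$), so $\alpha_{k}\left(e_{r}\right)=0$; on the other hand $r\%k=r+k\in\left\{2,3,\ldots,k-1\right\}$, so $\left[r\%k\in\left\{0,1\right\}\right]=0$ and the right-hand side of (\ref{pf.thm.petk.explicit.aker}) is $0$ as well. Combining the two cases yields the lemma. There is no genuine obstacle in this argument; the only points needing a little care are the coefficientwise extension of $\alpha_{k}$ to $\mathbf{k}\left[\left[t\right]\right]$ (standard) and the observation that the hypothesis $r>-k+1$ is exactly what excludes the value $r=-k+1$, for which $r\%k=1$ would make the formula false (as $e_{-k+1}=0\neq\left(-1\right)^{k}$).
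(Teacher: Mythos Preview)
Your proof is correct and follows essentially the same approach as the paper: both extend $\alpha_{k}$ coefficientwise to $\Lambda\left[\left[t\right]\right]$, apply it to the identity of Lemma~\ref{lem.h-e-reciprocity-t}, invert the factor $1+t+\cdots+t^{k-1}=\dfrac{1-t^{k}}{1-t}$ to obtain $\sum_{n\geq0}(-1)^{n}\alpha_{k}(e_{n})t^{n}=(1-t)\sum_{j\geq0}t^{kj}$, read off coefficients, and then handle the negative range $r\in\{-k+2,\ldots,-1\}$ separately. Your closing remark explaining why the bound $r>-k+1$ is sharp is a nice addition.
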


\begin{proof}
[Proof of Lemma \ref{lem.alphak.e}.]Consider the ring $\Lambda\left[  \left[
t\right]  \right]  $ of formal power series in one indeterminate $t$ over
$\Lambda$. Consider also the analogous ring $\mathbf{k}\left[  \left[
t\right]  \right]  $ over $\mathbf{k}$.

\begin{vershort}
The map $\alpha_{k}:\Lambda\rightarrow\mathbf{k}$ is a $\mathbf{k}$-algebra
homomorphism. Hence, it induces a continuous\footnote{Continuity is defined
with respect to the usual topologies on $\Lambda\left[  \left[  t\right]
\right]  $ and $\mathbf{k}\left[  \left[  t\right]  \right]  $, where we equip
both $\Lambda$ and $\mathbf{k}$ with the discrete topologies.} $\mathbf{k}%
\left[  \left[  t\right]  \right]  $-algebra homomorphism%
\[
\alpha_{k}\left[  \left[  t\right]  \right]  :\Lambda\left[  \left[  t\right]
\right]  \rightarrow\mathbf{k}\left[  \left[  t\right]  \right]
\]
that sends each formal power series $\sum_{n\geq0}a_{n}t^{n}\in\Lambda\left[
\left[  t\right]  \right]  $ (with $a_{n}\in\Lambda$) to $\sum_{n\geq0}%
\alpha_{k}\left(  a_{n}\right)  t^{n}$. Consider this $\mathbf{k}\left[
\left[  t\right]  \right]  $-algebra homomorphism $\alpha_{k}\left[  \left[
t\right]  \right]  $.
\end{vershort}

\begin{verlong}
The map $\alpha_{k}:\Lambda\rightarrow\mathbf{k}$ is a $\mathbf{k}$-algebra
homomorphism. Hence, it induces a continuous\footnote{Continuity is defined
with respect to the usual topologies on $\Lambda\left[  \left[  t\right]
\right]  $ and $\mathbf{k}\left[  \left[  t\right]  \right]  $, where we equip
both $\Lambda$ and $\mathbf{k}$ with the discrete topologies.} $\mathbf{k}%
\left[  \left[  t\right]  \right]  $-algebra homomorphism%
\[
\alpha_{k}\left[  \left[  t\right]  \right]  :\Lambda\left[  \left[  t\right]
\right]  \rightarrow\mathbf{k}\left[  \left[  t\right]  \right]
\]
that sends each formal power series $\sum_{n\geq0}a_{n}t^{n}\in\Lambda\left[
\left[  t\right]  \right]  $ (with $a_{n}\in\Lambda$) to $\sum_{n\geq0}%
\alpha_{k}\left(  a_{n}\right)  t^{n}$. Consider this $\mathbf{k}\left[
\left[  t\right]  \right]  $-algebra homomorphism $\alpha_{k}\left[  \left[
t\right]  \right]  $. In particular, it satisfies
\[
\left(  \alpha_{k}\left[  \left[  t\right]  \right]  \right)  \left(
t^{i}\right)  =t^{i}\ \ \ \ \ \ \ \ \ \ \text{for each }i\in\mathbb{N}.
\]

\end{verlong}

Applying the map $\alpha_{k}\left[  \left[  t\right]  \right]  $ to both sides
of the equality (\ref{pf.thm.petk.explicit.aker.pf.1}), we obtain%
\begin{align*}
\left(  \alpha_{k}\left[  \left[  t\right]  \right]  \right)  \left(
1\right)   &  =\left(  \alpha_{k}\left[  \left[  t\right]  \right]  \right)
\left(  \left(  \sum_{n\geq0}\left(  -1\right)  ^{n}e_{n}t^{n}\right)  \left(
\sum_{n\geq0}h_{n}t^{n}\right)  \right) \\
&  =\underbrace{\left(  \alpha_{k}\left[  \left[  t\right]  \right]  \right)
\left(  \sum_{n\geq0}\left(  -1\right)  ^{n}e_{n}t^{n}\right)  }%
_{\substack{=\sum_{n\geq0}\alpha_{k}\left(  \left(  -1\right)  ^{n}%
e_{n}\right)  t^{n}\\\text{(by the definition of }\alpha_{k}\left[  \left[
t\right]  \right]  \text{)}}}\cdot\underbrace{\left(  \alpha_{k}\left[
\left[  t\right]  \right]  \right)  \left(  \sum_{n\geq0}h_{n}t^{n}\right)
}_{\substack{=\sum_{n\geq0}\alpha_{k}\left(  h_{n}\right)  t^{n}\\\text{(by
the definition of }\alpha_{k}\left[  \left[  t\right]  \right]  \text{)}}}\\
&  \ \ \ \ \ \ \ \ \ \ \ \ \ \ \ \ \ \ \ \ \left(  \text{since }\alpha
_{k}\left[  \left[  t\right]  \right]  \text{ is a }\mathbf{k}\left[  \left[
t\right]  \right]  \text{-algebra homomorphism}\right) \\
&  =\left(  \sum_{n\geq0}\underbrace{\alpha_{k}\left(  \left(  -1\right)
^{n}e_{n}\right)  }_{\substack{=\left(  -1\right)  ^{n}\alpha_{k}\left(
e_{n}\right)  \\\text{(since }\alpha_{k}\text{ is }\mathbf{k}\text{-linear)}%
}}t^{n}\right)  \cdot\left(  \sum_{n\geq0}\underbrace{\alpha_{k}\left(
h_{n}\right)  }_{\substack{=\left[  n<k\right]  \\\text{(by
(\ref{pf.thm.G.main.alk}))}}}t^{n}\right) \\
&  =\left(  \sum_{n\geq0}\left(  -1\right)  ^{n}\alpha_{k}\left(
e_{n}\right)  t^{n}\right)  \cdot\underbrace{\left(  \sum_{n\geq0}\left[
n<k\right]  t^{n}\right)  }_{=t^{0}+t^{1}+\cdots+t^{k-1}=\dfrac{1-t^{k}}{1-t}%
}\\
&  =\left(  \sum_{n\geq0}\left(  -1\right)  ^{n}\alpha_{k}\left(
e_{n}\right)  t^{n}\right)  \cdot\dfrac{1-t^{k}}{1-t}.
\end{align*}
Comparing this with%
\[
\left(  \alpha_{k}\left[  \left[  t\right]  \right]  \right)  \left(
1\right)  =1\ \ \ \ \ \ \ \ \ \ \left(  \text{since }\alpha_{k}\left[  \left[
t\right]  \right]  \text{ is a }\mathbf{k}\left[  \left[  t\right]  \right]
\text{-algebra homomorphism}\right)  ,
\]
we obtain%
\[
\left(  \sum_{n\geq0}\left(  -1\right)  ^{n}\alpha_{k}\left(  e_{n}\right)
t^{n}\right)  \cdot\dfrac{1-t^{k}}{1-t}=1.
\]
Hence,%
\begin{align*}
\sum_{n\geq0}\left(  -1\right)  ^{n}\alpha_{k}\left(  e_{n}\right)  t^{n}  &
=\dfrac{1-t}{1-t^{k}}=\left(  1-t\right)  \cdot\underbrace{\dfrac{1}{1-t^{k}}%
}_{=1+t^{k}+t^{2k}+t^{3k}+\cdots}\\
&  =\left(  1-t\right)  \cdot\left(  1+t^{k}+t^{2k}+t^{3k}+\cdots\right) \\
&  =1-t+t^{k}-t^{k+1}+t^{2k}-t^{2k+1}+t^{3k}-t^{3k+1}\pm\cdots\\
&  =\sum_{n\geq0}\left(  -1\right)  ^{n\%k}\left[  n\%k\in\left\{
0,1\right\}  \right]  t^{n}%
\end{align*}
(here, we have used that $k>1$, since for $k=1$ there would be cancellations
in the sum $1-t+t^{k}-t^{k+1}+t^{2k}-t^{2k+1}+t^{3k}-t^{3k+1}\pm\cdots$).
Comparing coefficients before $t^{m}$ on both sides of this equality, we
obtain%
\begin{equation}
\left(  -1\right)  ^{m}\alpha_{k}\left(  e_{m}\right)  =\left(  -1\right)
^{m\%k}\left[  m\%k\in\left\{  0,1\right\}  \right]
\label{pf.thm.petk.explicit.aker.pf.4}%
\end{equation}
for each $m\in\mathbb{N}$.

\begin{vershort}
Multiplying both sides of this equality by $\left(  -1\right)  ^{m}$, we
obtain%
\begin{equation}
\alpha_{k}\left(  e_{m}\right)  =\left(  -1\right)  ^{m+m\%k}\left[
m\%k\in\left\{  0,1\right\}  \right]  .
\label{pf.thm.petk.explicit.aker.pf.short.4b}%
\end{equation}

\end{vershort}

\begin{verlong}
Now, each $m\in\mathbb{N}$ satisfies $\underbrace{\left(  -1\right)
^{m}\left(  -1\right)  ^{m}}_{=\left(  -1\right)  ^{2m}=1}\alpha_{k}\left(
e_{m}\right)  =\alpha_{k}\left(  e_{m}\right)  $ and thus%
\begin{align}
\alpha_{k}\left(  e_{m}\right)   &  =\left(  -1\right)  ^{m}%
\underbrace{\left(  -1\right)  ^{m}\alpha_{k}\left(  e_{m}\right)
}_{\substack{=\left(  -1\right)  ^{m\%k}\left[  m\%k\in\left\{  0,1\right\}
\right]  \\\text{(by (\ref{pf.thm.petk.explicit.aker.pf.4}))}}%
}=\underbrace{\left(  -1\right)  ^{m}\left(  -1\right)  ^{m\%k}}_{=\left(
-1\right)  ^{m+m\%k}}\left[  m\%k\in\left\{  0,1\right\}  \right] \nonumber\\
&  =\left(  -1\right)  ^{m+m\%k}\left[  m\%k\in\left\{  0,1\right\}  \right]
. \label{pf.thm.petk.explicit.aker.pf.4b}%
\end{align}

\end{verlong}

\begin{vershort}
We must prove that
\[
\alpha_{k}\left(  e_{r}\right)  =\left(  -1\right)  ^{r+r\%k}\left[
r\%k\in\left\{  0,1\right\}  \right]  .
\]
If $r\in\mathbb{N}$, then this follows by applying
(\ref{pf.thm.petk.explicit.aker.pf.short.4b}) to $m=r$. Hence, for the rest of
this proof, we WLOG assume that $r\notin\mathbb{N}$. Thus, $r$ is negative, so
that $r\in\left\{  -k+2,-k+3,\ldots,-1\right\}  $ (since $r>-k+1$). Hence,
$r\%k\in\left\{  2,3,\ldots,k-1\right\}  $, so that $r\%k\notin\left\{
0,1\right\}  $. Consequently, $\left[  r\%k\in\left\{  0,1\right\}  \right]
=0$. Also, $e_{r}=0$ (since $r$ is negative) and thus $\alpha_{k}\left(
e_{r}\right)  =\alpha_{k}\left(  0\right)  =0$. Comparing this with $\left(
-1\right)  ^{r+r\%k}\underbrace{\left[  r\%k\in\left\{  0,1\right\}  \right]
}_{=0}=0$, we obtain $\alpha_{k}\left(  e_{r}\right)  =\left(  -1\right)
^{r+r\%k}\left[  r\%k\in\left\{  0,1\right\}  \right]  $. This concludes the
proof of Lemma \ref{lem.alphak.e}.
\end{vershort}

\begin{verlong}
We must prove that
\[
\alpha_{k}\left(  e_{r}\right)  =\left(  -1\right)  ^{r+r\%k}\left[
r\%k\in\left\{  0,1\right\}  \right]  .
\]
If $r\in\mathbb{N}$, then this follows by applying
(\ref{pf.thm.petk.explicit.aker.pf.4b}) to $m=r$. Hence, for the rest of this
proof, we WLOG assume that $r\notin\mathbb{N}$. Thus, $r$ is negative (since
$r$ is an integer). In view of $r>-k+1$, this yields $r\in\left\{
-k+2,-k+3,\ldots,-1\right\}  $. Hence, $r\%k\in\left\{  2,3,\ldots
,k-1\right\}  $. Thus, $r\%k\notin\left\{  0,1\right\}  $. Hence, $\left[
r\%k\in\left\{  0,1\right\}  \right]  =0$. Also, $e_{r}=0$ (since $r$ is
negative) and thus $\alpha_{k}\left(  e_{r}\right)  =\alpha_{k}\left(
0\right)  =0$ (since the map $\alpha_{k}$ is $\mathbf{k}$-linear). Comparing
this with $\left(  -1\right)  ^{r+r\%k}\underbrace{\left[  r\%k\in\left\{
0,1\right\}  \right]  }_{=0}=0$, we obtain $\alpha_{k}\left(  e_{r}\right)
=\left(  -1\right)  ^{r+r\%k}\left[  r\%k\in\left\{  0,1\right\}  \right]  $.
This concludes the proof of Lemma \ref{lem.alphak.e}.
\end{verlong}
\end{proof}

\begin{proof}
[Proof of Theorem \ref{thm.petk.explicit}.]\textbf{(a)} Assume that $\mu
_{k}\neq0$. But $\mu=\lambda^{t}$, whence%
\[
\mu_{k}=\left(  \lambda^{t}\right)  _{k}=\left\vert \left\{  j\in\left\{
1,2,3,\ldots\right\}  \ \mid\ \lambda_{j}\geq k\right\}  \right\vert
\ \ \ \ \ \ \ \ \ \ \left(  \text{by the definition of }\lambda^{t}\right)  .
\]
Hence,
\[
\left\vert \left\{  j\in\left\{  1,2,3,\ldots\right\}  \ \mid\ \lambda_{j}\geq
k\right\}  \right\vert =\mu_{k}\neq0.
\]
In other words, the set $\left\{  j\in\left\{  1,2,3,\ldots\right\}
\ \mid\ \lambda_{j}\geq k\right\}  $ is nonempty. Hence, there exists some
$j\in\left\{  1,2,3,\ldots\right\}  $ satisfying $\lambda_{j}\geq k$. Consider
this $j$. We have $\lambda_{1}\geq\lambda_{2}\geq\lambda_{3}\geq\cdots$ (since
$\lambda\in\operatorname*{Par}$) and thus $\lambda_{1}\geq\lambda_{j}$ (since
$1\leq j$). Hence, $\lambda_{1}\geq\lambda_{j}\geq k$. Thus, Proposition
\ref{prop.petk.0} yields $\operatorname*{pet}\nolimits_{k}\left(
\lambda,\varnothing\right)  =0$. This proves Theorem \ref{thm.petk.explicit}
\textbf{(a)}.

Now, let us prepare for the proof of parts \textbf{(b)} and \textbf{(c)}.

Consider the $\mathbf{k}$-algebra homomorphism $\alpha_{k}:\Lambda
\rightarrow\mathbf{k}$ from Definition \ref{def.alphak}.

\begin{vershort}
For each $i\in\left\{  1,2,\ldots,k-1\right\}  $, we have $\left(  \beta
_{i}-1\right)  \%k\in\left\{  0,1,\ldots,k-1\right\}  $ (by the definition of
a remainder) and thus $\gamma_{i}\in\left\{  1,2,\ldots,k\right\}  $ (by
(\ref{eq.thm.petk.explicit.gammi=})). In other words, $\gamma_{1},\gamma
_{2},\ldots,\gamma_{k-1}$ are $k-1$ elements of the set $\left\{
1,2,\ldots,k\right\}  $.
\end{vershort}

\begin{verlong}
For each $i\in\left\{  1,2,\ldots,k-1\right\}  $, we have%
\begin{align*}
\gamma_{i}  &  =1+\left(  \beta_{i}-1\right)  \%k\ \ \ \ \ \ \ \ \ \ \left(
\text{by (\ref{eq.thm.petk.explicit.gammi=})}\right) \\
&  \in\left\{  1,2,\ldots,k\right\}  \ \ \ \ \ \ \ \ \ \ \left(
\begin{array}
[c]{c}%
\text{since }\left(  \beta_{i}-1\right)  \%k\in\left\{  0,1,\ldots,k-1\right\}
\\
\text{(by the definition of a remainder)}%
\end{array}
\right)  .
\end{align*}
Hence, the $\left(  k-1\right)  $-tuple $\left(  \gamma_{1},\gamma_{2}%
,\ldots,\gamma_{k-1}\right)  $ really belongs to $\left\{  1,2,\ldots
,k\right\}  ^{k-1}$. In other words, $\gamma_{1},\gamma_{2},\ldots
,\gamma_{k-1}$ are $k-1$ elements of the set $\left\{  1,2,\ldots,k\right\}  $.
\end{verlong}

\begin{vershort}
Assume that $\mu_{k}=0$. Thus, $\mu=\left(  \mu_{1},\mu_{2},\ldots,\mu
_{k-1}\right)  $ (since $\mu\in\operatorname*{Par}$).
\end{vershort}

\begin{verlong}
Assume that $\mu_{k}=0$. But $\mu\in\operatorname*{Par}$ and thus $\mu_{1}%
\geq\mu_{2}\geq\mu_{3}\geq\cdots$. Hence, from $\mu_{k}=0$, we obtain $\mu
_{k}=\mu_{k+1}=\mu_{k+2}=\cdots=0$. Thus, $\mu=\left(  \mu_{1},\mu_{2}%
,\ldots,\mu_{k-1}\right)  $.
\end{verlong}

It is known that taking the transpose of the transpose of a partition returns
the original partition. Thus, $\left(  \lambda^{t}\right)  ^{t}=\lambda$. In
view of $\mu=\lambda^{t}$, this rewrites as $\mu^{t}=\lambda$. Hence,
$\lambda=\mu^{t}$. Therefore,%
\[
s_{\lambda}=s_{\mu^{t}}=\det\left(  \left(  e_{\mu_{i}-i+j}\right)  _{1\leq
i\leq k-1,\ 1\leq j\leq k-1}\right)
\]
(by (\ref{eq.schur.JTsh-e}), applied to $\mu$ and $k-1$ instead of $\lambda$
and $\ell$), because $\mu=\left(  \mu_{1},\mu_{2},\ldots,\mu_{k-1}\right)  $.
Applying the map $\alpha_{k}$ to both sides of this equality, we find%
\begin{align*}
\alpha_{k}\left(  s_{\lambda}\right)   &  =\alpha_{k}\left(  \det\left(
\left(  \underbrace{e_{\mu_{i}-i+j}}_{\substack{=e_{\beta_{i}+j}\\\text{(since
(\ref{eq.thm.petk.explicit.beti=}) yields }\mu_{i}-i=\beta_{i}\text{)}%
}}\right)  _{1\leq i\leq k-1,\ 1\leq j\leq k-1}\right)  \right) \\
&  =\alpha_{k}\left(  \det\left(  \left(  e_{\beta_{i}+j}\right)  _{1\leq
i\leq k-1,\ 1\leq j\leq k-1}\right)  \right) \\
&  =\det\left(  \left(  \alpha_{k}\left(  e_{\beta_{i}+j}\right)  \right)
_{1\leq i\leq k-1,\ 1\leq j\leq k-1}\right)
\end{align*}
(since $\alpha_{k}$ is a $\mathbf{k}$-algebra homomorphism, and thus commutes
with taking determinants of matrices). On the other hand,%
\[
\alpha_{k}\left(  \underbrace{s_{\lambda}}_{=s_{\lambda/\varnothing}}\right)
=\alpha_{k}\left(  s_{\lambda/\varnothing}\right)  =\operatorname*{pet}%
\nolimits_{k}\left(  \lambda,\varnothing\right)
\]
(by (\ref{pf.thm.G.pieri.alks}), applied to $\varnothing$ instead of $\mu$).
Comparing these two equalities, we obtain%
\begin{equation}
\operatorname*{pet}\nolimits_{k}\left(  \lambda,\varnothing\right)
=\det\left(  \left(  \alpha_{k}\left(  e_{\beta_{i}+j}\right)  \right)
_{1\leq i\leq k-1,\ 1\leq j\leq k-1}\right)  .
\label{pf.thm.petk.explicit.as-det-1}%
\end{equation}
But each $i\in\left\{  1,2,\ldots,k-1\right\}  $ and $j\in\left\{
1,2,\ldots,k-1\right\}  $ satisfy $k>1$\ \ \ \ \footnote{Indeed, if
$i\in\left\{  1,2,\ldots,k-1\right\}  $, then $1\leq i\leq k-1$ and thus
$k-1\geq1>0$, so that $k>1$.} and%
\[
\underbrace{\beta_{i}}_{\substack{=\mu_{i}-i\\\text{(by
(\ref{eq.thm.petk.explicit.beti=}))}}}+j=\underbrace{\mu_{i}}_{\geq
0}-\underbrace{i}_{\leq k-1}+\underbrace{j}_{>0}>0-\left(  k-1\right)
+0=-k+1
\]
and thus%
\begin{equation}
\alpha_{k}\left(  e_{\beta_{i}+j}\right)  =\left(  -1\right)  ^{\left(
\beta_{i}+j\right)  +\left(  \beta_{i}+j\right)  \%k}\left[  \left(  \beta
_{i}+j\right)  \%k\in\left\{  0,1\right\}  \right]
\label{pf.thm.petk.explicit.akem1a}%
\end{equation}
(by (\ref{pf.thm.petk.explicit.aker}), applied to $r=\beta_{i}+j$).

Furthermore, each $i\in\left\{  1,2,\ldots,k-1\right\}  $ and $j\in\left\{
1,2,\ldots,k-1\right\}  $ satisfy%
\begin{align}
&  \left(  -1\right)  ^{\left(  \beta_{i}+j\right)  +\left(  \beta
_{i}+j\right)  \%k}\left[  \left(  \beta_{i}+j\right)  \%k\in\left\{
0,1\right\}  \right] \nonumber\\
&  =\left(  -1\right)  ^{\beta_{i}}\left(  -1\right)  ^{j}\left(  -1\right)
^{\left(  \gamma_{i}+j\right)  \%k}\left[  \left(  \gamma_{i}+j\right)
\%k\in\left\{  0,1\right\}  \right]  . \label{pf.thm.petk.explicit.akem2}%
\end{align}

\begin{vershort}
[\textit{Proof of (\ref{pf.thm.petk.explicit.akem2}):} Let $i\in\left\{
1,2,\ldots,k-1\right\}  $ and $j\in\left\{  1,2,\ldots,k-1\right\}  $. The
definition of $\gamma_{i}$ yields%
\[
\gamma_{i}=1+\underbrace{\left(  \beta_{i}-1\right)  \%k}_{\substack{\equiv
\beta_{i}-1\operatorname{mod}k\\\text{(since }u\%k\equiv u\operatorname{mod}%
k\text{ for any }u\in\mathbb{Z}\text{)}}}\equiv1+\left(  \beta_{i}-1\right)
=\beta_{i}\operatorname{mod}k.
\]
Hence, $\underbrace{\gamma_{i}}_{\equiv\beta_{i}\operatorname{mod}k}%
+j\equiv\beta_{i}+j\operatorname{mod}k$. But if $u\in\mathbb{Z}$ and
$v\in\mathbb{Z}$ satisfy $u\equiv v\operatorname{mod}k$, then $u\%k=v\%k$.
Applying this to $u=\gamma_{i}+j$ and $v=\beta_{i}+j$, we obtain $\left(
\gamma_{i}+j\right)  \%k=\left(  \beta_{i}+j\right)  \%k$. Hence,%
\begin{align*}
&  \left(  -1\right)  ^{\beta_{i}}\left(  -1\right)  ^{j}\left(  -1\right)
^{\left(  \gamma_{i}+j\right)  \%k}\left[  \left(  \gamma_{i}+j\right)
\%k\in\left\{  0,1\right\}  \right] \\
&  =\underbrace{\left(  -1\right)  ^{\beta_{i}}\left(  -1\right)  ^{j}\left(
-1\right)  ^{\left(  \beta_{i}+j\right)  \%k}}_{=\left(  -1\right)  ^{\left(
\beta_{i}+j\right)  +\left(  \beta_{i}+j\right)  \%k}}\left[  \left(
\beta_{i}+j\right)  \%k\in\left\{  0,1\right\}  \right] \\
&  =\left(  -1\right)  ^{\left(  \beta_{i}+j\right)  +\left(  \beta
_{i}+j\right)  \%k}\left[  \left(  \beta_{i}+j\right)  \%k\in\left\{
0,1\right\}  \right]  .
\end{align*}
This proves (\ref{pf.thm.petk.explicit.akem2}).]
\end{vershort}

\begin{verlong}
[\textit{Proof of (\ref{pf.thm.petk.explicit.akem2}):} Let $i\in\left\{
1,2,\ldots,k-1\right\}  $ and $j\in\left\{  1,2,\ldots,k-1\right\}  $. The
definition of $\gamma_{i}$ yields%
\[
\gamma_{i}=1+\underbrace{\left(  \beta_{i}-1\right)  \%k}_{\substack{\equiv
\beta_{i}-1\operatorname{mod}k\\\text{(since }u\%k\equiv u\operatorname{mod}%
k\text{ for any }u\in\mathbb{Z}\text{)}}}\equiv1+\left(  \beta_{i}-1\right)
=\beta_{i}\operatorname{mod}k.
\]
Hence, $\underbrace{\gamma_{i}}_{\equiv\beta_{i}\operatorname{mod}k}%
+j\equiv\beta_{i}+j\operatorname{mod}k$. But if two integers are congruent
modulo $k$, then they must leave the same remainder upon division by $k$. In
other words, if $u\in\mathbb{Z}$ and $v\in\mathbb{Z}$ satisfy $u\equiv
v\operatorname{mod}k$, then $u\%k=v\%k$. Applying this to $u=\gamma_{i}+j$ and
$v=\beta_{i}+j$, we obtain $\left(  \gamma_{i}+j\right)  \%k=\left(  \beta
_{i}+j\right)  \%k$. Hence,%
\begin{align*}
&  \left(  -1\right)  ^{\beta_{i}}\left(  -1\right)  ^{j}\left(  -1\right)
^{\left(  \gamma_{i}+j\right)  \%k}\left[  \left(  \gamma_{i}+j\right)
\%k\in\left\{  0,1\right\}  \right] \\
&  =\underbrace{\left(  -1\right)  ^{\beta_{i}}\left(  -1\right)  ^{j}\left(
-1\right)  ^{\left(  \beta_{i}+j\right)  \%k}}_{=\left(  -1\right)  ^{\left(
\beta_{i}+j\right)  +\left(  \beta_{i}+j\right)  \%k}}\left[  \left(
\beta_{i}+j\right)  \%k\in\left\{  0,1\right\}  \right] \\
&  =\left(  -1\right)  ^{\left(  \beta_{i}+j\right)  +\left(  \beta
_{i}+j\right)  \%k}\left[  \left(  \beta_{i}+j\right)  \%k\in\left\{
0,1\right\}  \right]  .
\end{align*}
This proves (\ref{pf.thm.petk.explicit.akem2}).]
\end{verlong}

Now, (\ref{pf.thm.petk.explicit.as-det-1}) becomes%
\begin{align*}
&  \operatorname*{pet}\nolimits_{k}\left(  \lambda,\varnothing\right) \\
&  =\det\left(  \left(  \underbrace{\alpha_{k}\left(  e_{\beta_{i}+j}\right)
}_{\substack{=\left(  -1\right)  ^{\left(  \beta_{i}+j\right)  +\left(
\beta_{i}+j\right)  \%k}\left[  \left(  \beta_{i}+j\right)  \%k\in\left\{
0,1\right\}  \right]  \\\text{(by (\ref{pf.thm.petk.explicit.akem1a}))}%
}}\right)  _{1\leq i\leq k-1,\ 1\leq j\leq k-1}\right) \\
&  =\det\left(  \left(  \underbrace{\left(  -1\right)  ^{\left(  \beta
_{i}+j\right)  +\left(  \beta_{i}+j\right)  \%k}\left[  \left(  \beta
_{i}+j\right)  \%k\in\left\{  0,1\right\}  \right]  }_{\substack{=\left(
-1\right)  ^{\beta_{i}}\left(  -1\right)  ^{j}\left(  -1\right)  ^{\left(
\gamma_{i}+j\right)  \%k}\left[  \left(  \gamma_{i}+j\right)  \%k\in\left\{
0,1\right\}  \right]  \\\text{(by (\ref{pf.thm.petk.explicit.akem2}))}%
}}\right)  _{1\leq i\leq k-1,\ 1\leq j\leq k-1}\right) \\
&  =\det\left(  \left(  \left(  -1\right)  ^{\beta_{i}}\left(  -1\right)
^{j}\left(  -1\right)  ^{\left(  \gamma_{i}+j\right)  \%k}\left[  \left(
\gamma_{i}+j\right)  \%k\in\left\{  0,1\right\}  \right]  \right)  _{1\leq
i\leq k-1,\ 1\leq j\leq k-1}\right) \\
&  =\left(  \prod_{i=1}^{k-1}\left(  \left(  -1\right)  ^{\beta_{i}}\left(
-1\right)  ^{i}\right)  \right)  \cdot\det\left(  \left(  \left(  -1\right)
^{\left(  \gamma_{i}+j\right)  \%k}\left[  \left(  \gamma_{i}+j\right)
\%k\in\left\{  0,1\right\}  \right]  \right)  _{1\leq i\leq k-1,\ 1\leq j\leq
k-1}\right)
\end{align*}
(by Lemma \ref{lem.petk.explicit.1} \textbf{(b)}, applied to $m=k-1$ and
$R=\mathbf{k}$ and \newline$a_{i,j}=\left(  -1\right)  ^{\left(  \gamma
_{i}+j\right)  \%k}\left[  \left(  \gamma_{i}+j\right)  \%k\in\left\{
0,1\right\}  \right]  $ and $u_{i}=\left(  -1\right)  ^{\beta_{i}}$ and
$v_{j}=\left(  -1\right)  ^{j}$).

Define a $\left(  k-1\right)  \times\left(  k-1\right)  $-matrix $G$ as in
Lemma \ref{lem.petk.explicit.2}. Then, this becomes%
\begin{align}
&  \operatorname*{pet}\nolimits_{k}\left(  \lambda,\varnothing\right)
\nonumber\\
&  =\left(  \prod_{i=1}^{k-1}\left(  \left(  -1\right)  ^{\beta_{i}}\left(
-1\right)  ^{i}\right)  \right)  \cdot\det\left(  \underbrace{\left(  \left(
-1\right)  ^{\left(  \gamma_{i}+j\right)  \%k}\left[  \left(  \gamma
_{i}+j\right)  \%k\in\left\{  0,1\right\}  \right]  \right)  _{1\leq i\leq
k-1,\ 1\leq j\leq k-1}}_{\substack{=G\\\text{(by the definition of }G\text{)}%
}}\right) \nonumber\\
&  =\left(  \prod_{i=1}^{k-1}\left(  \left(  -1\right)  ^{\beta_{i}}\left(
-1\right)  ^{i}\right)  \right)  \cdot\det G.
\label{pf.thm.petk.explicit.petk-via-G}%
\end{align}

Now, we can readily prove parts \textbf{(b)} and \textbf{(c)} of Theorem
\ref{thm.petk.explicit}:

\textbf{(b)} Assume that the $k-1$ numbers $\gamma_{1},\gamma_{2}%
,\ldots,\gamma_{k-1}$ are not distinct. Then, Lemma \ref{lem.petk.explicit.2}
\textbf{(a)} yields $\det G=0$. Hence, (\ref{pf.thm.petk.explicit.petk-via-G})
yields%
\[
\operatorname*{pet}\nolimits_{k}\left(  \lambda,\varnothing\right)  =\left(
\prod_{i=1}^{k-1}\left(  \left(  -1\right)  ^{\beta_{i}}\left(  -1\right)
^{i}\right)  \right)  \cdot\underbrace{\det G}_{=0}=0.
\]
This proves Theorem \ref{thm.petk.explicit} \textbf{(b)}.

\textbf{(c)} The equality (\ref{pf.thm.petk.explicit.petk-via-G}) becomes%
\begin{align*}
\operatorname*{pet}\nolimits_{k}\left(  \lambda,\varnothing\right)   &
=\underbrace{\left(  \prod_{i=1}^{k-1}\left(  \left(  -1\right)  ^{\beta_{i}%
}\left(  -1\right)  ^{i}\right)  \right)  }_{=\left(  \prod_{i=1}^{k-1}\left(
-1\right)  ^{\beta_{i}}\right)  \left(  \prod_{i=1}^{k-1}\left(  -1\right)
^{i}\right)  }\cdot\underbrace{\det G}_{\substack{=\left(  -1\right)
^{g+\left(  \gamma_{1}+\gamma_{2}+\cdots+\gamma_{k-1}\right)  -\left(
1+2+\cdots+\left(  k-1\right)  \right)  }\\\text{(by Lemma
\ref{lem.petk.explicit.2} \textbf{(c)})}}}\\
&  =\underbrace{\left(  \prod_{i=1}^{k-1}\left(  -1\right)  ^{\beta_{i}%
}\right)  }_{=\left(  -1\right)  ^{\beta_{1}+\beta_{2}+\cdots+\beta_{k-1}}%
}\underbrace{\left(  \prod_{i=1}^{k-1}\left(  -1\right)  ^{i}\right)
}_{=\left(  -1\right)  ^{1+2+\cdots+\left(  k-1\right)  }}\cdot\left(
-1\right)  ^{g+\left(  \gamma_{1}+\gamma_{2}+\cdots+\gamma_{k-1}\right)
-\left(  1+2+\cdots+\left(  k-1\right)  \right)  }\\
&  =\left(  -1\right)  ^{\beta_{1}+\beta_{2}+\cdots+\beta_{k-1}}\left(
-1\right)  ^{1+2+\cdots+\left(  k-1\right)  }\cdot\left(  -1\right)
^{g+\left(  \gamma_{1}+\gamma_{2}+\cdots+\gamma_{k-1}\right)  -\left(
1+2+\cdots+\left(  k-1\right)  \right)  }\\
&  =\left(  -1\right)  ^{\left(  \beta_{1}+\beta_{2}+\cdots+\beta
_{k-1}\right)  +\left(  1+2+\cdots+\left(  k-1\right)  \right)  +g+\left(
\gamma_{1}+\gamma_{2}+\cdots+\gamma_{k-1}\right)  -\left(  1+2+\cdots+\left(
k-1\right)  \right)  }\\
&  =\left(  -1\right)  ^{\left(  \beta_{1}+\beta_{2}+\cdots+\beta
_{k-1}\right)  +g+\left(  \gamma_{1}+\gamma_{2}+\cdots+\gamma_{k-1}\right)  }.
\end{align*}
This proves Theorem \ref{thm.petk.explicit} \textbf{(c)}.
\end{proof}

The proof of Proposition \ref{prop.petk.explicit-old} relies on the following
known fact:

\begin{proposition}
\label{prop.partition.transpose.djun}Let $\lambda\in\operatorname*{Par}$. Let
$\mu=\lambda^{t}$. Then:

\textbf{(a)} If $i$ and $j$ are two positive integers satisfying $\lambda
_{i}\geq j$, then $\mu_{j}\geq i$.

\textbf{(b)} If $i$ and $j$ are two positive integers satisfying $\lambda
_{i}<j$, then $\mu_{j}<i$.

\textbf{(c)} Any two positive integers $i$ and $j$ satisfy $\lambda_{i}%
+\mu_{j}-i-j\neq-1$.

For each positive integer $i$, set $\alpha_{i}=\lambda_{i}-i$. For each
positive integer $j$, set $\beta_{j}=\mu_{j}-j$ and $\eta_{j}=-1-\beta_{j}$. Then:

\textbf{(d)} The two sets $\left\{  \alpha_{1},\alpha_{2},\alpha_{3}%
,\ldots\right\}  $ and $\left\{  \eta_{1},\eta_{2},\eta_{3},\ldots\right\}  $
are disjoint, and their union is $\mathbb{Z}$.

\textbf{(e)} Let $p$ be an integer such that $p\geq\lambda_{1}$. Then, the two
sets $\left\{  \alpha_{1},\alpha_{2},\alpha_{3},\ldots\right\}  $ and
$\left\{  \eta_{1},\eta_{2},\ldots,\eta_{p}\right\}  $ are disjoint, and their
union is
\[
\left\{  \ldots,p-3,p-2,p-1\right\}  =\left\{  k\in\mathbb{Z}\ \mid
\ k<p\right\}  .
\]

\textbf{(f)} Let $p$ and $q$ be two integers such that $p\geq\lambda_{1}$ and
$q\geq\mu_{1}$. Then, the two sets $\left\{  \alpha_{1},\alpha_{2}%
,\ldots,\alpha_{q}\right\}  $ and $\left\{  \eta_{1},\eta_{2},\ldots,\eta
_{p}\right\}  $ are disjoint, and their union is
\[
\left\{  -q,-q+1,\ldots,p-1\right\}  =\left\{  k\in\mathbb{Z}\ \mid\ -q\leq
k<p\right\}  .
\]

\end{proposition}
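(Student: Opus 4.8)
The plan is to prove Proposition \ref{prop.partition.transpose.djun} by exploiting the symmetry of the Young diagram under transposition, working through the parts in the order stated. Parts \textbf{(a)} and \textbf{(b)} are the foundational facts; once they are in hand, part \textbf{(c)} and the set-theoretic claims \textbf{(d)}--\textbf{(f)} follow by combining them. For part \textbf{(a)}: if $i$ and $j$ are positive integers with $\lambda_i \geq j$, then the cell $(i,j)$ lies in the Young diagram of $\lambda$; hence $(j,i)$ lies in the Young diagram of $\lambda^t = \mu$, which means $\mu_j \geq i$. More concretely, from the defining formula $\mu_j = (\lambda^t)_j = \lvert\{\,\ell \in \{1,2,3,\ldots\} \ \mid\ \lambda_\ell \geq j\,\}\rvert$, the hypothesis $\lambda_i \geq j$ together with $\lambda_1 \geq \lambda_2 \geq \cdots$ gives $\lambda_1, \lambda_2, \ldots, \lambda_i \geq j$, so the counted set contains $\{1, 2, \ldots, i\}$ and thus $\mu_j \geq i$. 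Part \textbf{(b)} is the contrapositive reformulation: if $\lambda_i < j$, then $i$ is not among the indices $\ell$ with $\lambda_\ell \geq j$, and since $\lambda$ is weakly decreasing, neither is any $\ell \geq i$; hence $\mu_j = \lvert\{\,\ell \ \mid\ \lambda_\ell \geq j\,\}\rvert \leq i-1 < i$.

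For part \textbf{(c)}: suppose for contradiction that $\lambda_i + \mu_j - i - j = -1$ for some positive integers $i, j$, i.e.\ $\lambda_i - i = -1 - (\mu_j - j)$, equivalently $\lambda_i = i - 1 - \mu_j + j$. I would split on whether $\lambda_i \geq j$ or $\lambda_i < j$. If $\lambda_i \geq j$, then part \textbf{(a)} gives $\mu_j \geq i$, so $\lambda_i + \mu_j - i - j \geq j + i - i - j = 0 > -1$, a contradiction. If $\lambda_i < j$, then $\lambda_i \leq j - 1$, so $\lambda_i + \mu_j - i - j \leq (j-1) + \mu_j - i - j = \mu_j - i - 1$; but $\lambda_i < j$ gives (using part \textbf{(b)} with roles of $\lambda$ and $\mu$... actually directly) $\mu_j < i$ — wait, part \textbf{(b)} already applies: $\lambda_i < j$ yields $\mu_j < i$, hence $\mu_j \leq i - 1$, so $\lambda_i + \mu_j - i - j \leq (j-1) + (i-1) - i - j = -2 < -1$, again a contradiction. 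So $\lambda_i + \mu_j - i - j \neq -1$ in all cases.

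For parts \textbf{(d)}--\textbf{(f)}, the key observation is that the map $j \mapsto \eta_j = -1 - \beta_j = -1 - (\mu_j - j) = j - 1 - \mu_j$ records, via the diagram boundary, exactly the positions not hit by $i \mapsto \alpha_i = \lambda_i - i$. For \textbf{(d)}: I would show that $\{\alpha_1, \alpha_2, \ldots\} \cap \{\eta_1, \eta_2, \ldots\} = \varnothing$ is precisely the statement that $\lambda_i - i \neq -1 - (\mu_j - j)$ for all $i, j$, i.e.\ $\lambda_i + \mu_j - i - j \neq -1$, which is part \textbf{(c)}. For the union being all of $\mathbb{Z}$: given any $n \in \mathbb{Z}$, the sequence $\alpha_1 > \alpha_2 > \alpha_3 > \cdots$ is strictly decreasing with $\alpha_i = -i$ for all large $i$, so either $n = \alpha_i$ for some $i$, or there is a unique $i$ with $\alpha_{i} > n > \alpha_{i+1}$ (with the convention $\alpha_0 = +\infty$, handled separately for $n \geq \alpha_1$); in the latter case one checks $\lambda_i \geq n + i + 1 > \lambda_{i+1}$... — more cleanly, I would argue that for fixed $n$, setting $j = n + 1 + (\text{the number of }i\text{ with }\lambda_i - i > n)$ — actually, the standard Maya-diagram / $\beta$-set argument: the complement in $\mathbb{Z}$ of $\{\lambda_i - i : i \geq 1\}$ is exactly $\{j - 1 - \mu_j : j \geq 1\}$, which is a classical fact (it is the bijection between a partition's first-column hook lengths read two ways, or equivalently the "particle--hole" symmetry of the Maya diagram cited after Proposition \ref{prop.petk.explicit-old}). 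Parts \textbf{(e)} and \textbf{(f)} are then refinements obtained by truncating: since $p \geq \lambda_1$ forces $\eta_j = j - 1 - \mu_j$ to satisfy $\eta_j = j - 1 < p$ once $\mu_j = 0$ (i.e.\ $j > \ell(\mu)$), and $\eta_1 < \eta_2 < \cdots$ is strictly increasing, the set $\{\eta_1, \ldots, \eta_p\}$ already captures all $\eta_j < p$; combined with the fact that $\{\alpha_i : i \geq 1\} \subseteq \{k : k < \lambda_1\} \cup \cdots$ restricted appropriately, one gets the stated intervals by a counting argument (the disjoint union has $p$ elements from the $\eta$'s plus "enough" from the $\alpha$'s to fill $\{\ldots, p-1\}$, resp.\ $q + p$ elements filling $\{-q, \ldots, p-1\}$, and the cardinalities match).

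The main obstacle I anticipate is \textbf{(d)}, specifically proving that the union $\{\alpha_i\} \cup \{\eta_j\}$ is all of $\mathbb{Z}$ (rather than just disjointness, which is immediate from \textbf{(c)}). This requires either a bijective/counting argument tracking how many $\alpha_i$ and $\eta_j$ fall below a given threshold, or an appeal to the classical Maya-diagram symmetry; I would make it rigorous by the threshold-counting method, showing that for every integer $n$, the number of $\alpha_i$ with $\alpha_i \geq n$ plus the number of $\eta_j$ with $\eta_j \geq n$, when compared across consecutive values of $n$, forces every integer to be covered exactly once. Parts \textbf{(e)} and \textbf{(f)} then reduce to \textbf{(d)} plus bounding which $\alpha_i$, $\eta_j$ can possibly lie in the truncated ranges, using $\alpha_i < \lambda_1 \leq p$ fails for small $i$ — hence the care with the hypotheses $p \geq \lambda_1$ and $q \geq \mu_1$, which guarantee that the truncations $\{\alpha_1, \ldots, \alpha_q\}$ and $\{\eta_1, \ldots, \eta_p\}$ lose nothing relevant below the respective bounds.
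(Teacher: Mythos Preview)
Your arguments for parts \textbf{(a)}, \textbf{(b)}, and \textbf{(c)} are essentially identical to the paper's, including the case split in \textbf{(c)}.

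For parts \textbf{(d)}--\textbf{(f)}, however, you propose the reverse order from the paper: you want to establish \textbf{(d)} first (the infinite statement) and then obtain \textbf{(e)} and \textbf{(f)} by truncation. The paper instead proves \textbf{(f)} first, then derives \textbf{(e)} and \textbf{(d)} from it by letting the parameters $q$ and $p$ grow. The advantage of the paper's order is precisely that it sidesteps the obstacle you flagged: in the finite setting of \textbf{(f)}, once you check that both sets lie inside the interval $\{-q,\ldots,p-1\}$ and are disjoint (immediate from \textbf{(c)}), the fact that their union is the whole interval follows from the trivial cardinality count $|\{\alpha_1,\ldots,\alpha_q\}| + |\{\eta_1,\ldots,\eta_p\}| = q + p = |\{-q,\ldots,p-1\}|$. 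No Maya-diagram symmetry or threshold-counting is needed. Parts \textbf{(e)} and \textbf{(d)} then follow by observing that every integer in the claimed union lies in $\{-q,\ldots,p-1\}$ for suitable $p,q$.

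Your route is not wrong, but the threshold-counting argument you sketch for \textbf{(d)} is left vague at exactly its nontrivial point (why every integer is covered), and you would in effect be reproving the finite case \textbf{(f)} along the way anyway. Reordering to do \textbf{(f)} first makes the whole thing mechanical.
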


Note that Proposition \ref{prop.partition.transpose.djun} \textbf{(f)} is a
restatement of \cite[Chapter I, (1.7)]{Macdon95}.

\begin{vershort}
\begin{proof}
[Proof of Proposition \ref{prop.partition.transpose.djun} (sketched).]Left to
the reader (see \cite{verlong} for a detailed proof). The easiest way to
proceed is by proving \textbf{(a)} and \textbf{(b)} first, then deriving
\textbf{(c)} as their consequence, then deriving \textbf{(f)} from it, then
concluding \textbf{(d)} and \textbf{(e)}.
\end{proof}
\end{vershort}

\begin{verlong}
\begin{proof}
[Proof of Proposition \ref{prop.partition.transpose.djun}.]We have
$\mu=\lambda^{t}$. Thus, each positive integer $i$ satisfies%
\begin{align}
\mu_{i}  &  =\left(  \lambda^{t}\right)  _{i}=\left\vert \left\{  j\in\left\{
1,2,3,\ldots\right\}  \ \mid\ \lambda_{j}\geq i\right\}  \right\vert
\ \ \ \ \ \ \ \ \ \ \left(  \text{by Definition \ref{def.transpose}}\right)
\nonumber\\
&  =\left\vert \left\{  k\in\left\{  1,2,3,\ldots\right\}  \ \mid\ \lambda
_{k}\geq i\right\}  \right\vert \label{pf.prop.partition.transpose.djun.mui=}%
\end{align}
(here, we have renamed the index $j$ as $k$).

\textbf{(a)} Let $i$ and $j$ be two positive integers satisfying $\lambda
_{i}\geq j$. We must prove that $\mu_{j}\geq i$.

Indeed, (\ref{pf.prop.partition.transpose.djun.mui=}) (applied to $i=j$)
yields%
\begin{equation}
\mu_{j}=\left\vert \left\{  k\in\left\{  1,2,3,\ldots\right\}  \ \mid
\ \lambda_{k}\geq j\right\}  \right\vert .
\label{pf.prop.partition.transpose.djun.a.muj=}%
\end{equation}

Now, we have $\left\{  1,2,\ldots,i\right\}  \subseteq\left\{  k\in\left\{
1,2,3,\ldots\right\}  \ \mid\ \lambda_{k}\geq j\right\}  $%
\ \ \ \ \footnote{\textit{Proof.} Let $g\in\left\{  1,2,\ldots,i\right\}  $.
We shall show that $g\in\left\{  k\in\left\{  1,2,3,\ldots\right\}
\ \mid\ \lambda_{k}\geq j\right\}  $.
\par
Indeed, $g\in\left\{  1,2,\ldots,i\right\}  \subseteq\left\{  1,2,3,\ldots
\right\}  $ and $g\leq i$ (since $g\in\left\{  1,2,\ldots,i\right\}  $). But
$\lambda$ is a partition (since $\lambda\in\operatorname*{Par}$). Hence,
$\lambda_{1}\geq\lambda_{2}\geq\lambda_{3}\geq\cdots$. Thus, if $u$ and $v$
are two positive integers satisfying $u\leq v$, then $\lambda_{u}\geq
\lambda_{v}$. Applying this to $u=g$ and $v=i$, we obtain $\lambda_{g}%
\geq\lambda_{i}$ (since $g\leq i$). Hence, $\lambda_{g}\geq\lambda_{i}\geq j$.
Now, we know that $g$ is an element of $\left\{  1,2,3,\ldots\right\}  $
(since $g\in\left\{  1,2,3,\ldots\right\}  $) and satisfies $\lambda_{g}\geq
j$. In other words, $g$ is a $k\in\left\{  1,2,3,\ldots\right\}  $ satisfying
$\lambda_{k}\geq j$. In other words, $g\in\left\{  k\in\left\{  1,2,3,\ldots
\right\}  \ \mid\ \lambda_{k}\geq j\right\}  $.
\par
Now, forget that we fixed $g$. We thus have shown that $g\in\left\{
k\in\left\{  1,2,3,\ldots\right\}  \ \mid\ \lambda_{k}\geq j\right\}  $ for
each $g\in\left\{  1,2,\ldots,i\right\}  $. In other words, we have $\left\{
1,2,\ldots,i\right\}  \subseteq\left\{  k\in\left\{  1,2,3,\ldots\right\}
\ \mid\ \lambda_{k}\geq j\right\}  $.} and therefore%
\[
\left\vert \left\{  1,2,\ldots,i\right\}  \right\vert \leq\left\vert \left\{
k\in\left\{  1,2,3,\ldots\right\}  \ \mid\ \lambda_{k}\geq j\right\}
\right\vert =\mu_{j}\ \ \ \ \ \ \ \ \ \ \left(  \text{by
(\ref{pf.prop.partition.transpose.djun.a.muj=})}\right)  .
\]
Hence, $\mu_{j}\geq\left\vert \left\{  1,2,\ldots,i\right\}  \right\vert =i$.
This proves Proposition \ref{prop.partition.transpose.djun} \textbf{(a)}.

\textbf{(b)} Let $i$ and $j$ be two positive integers satisfying $\lambda
_{i}<j$. We must prove that $\mu_{j}<i$.

We have $\left\{  k\in\left\{  1,2,3,\ldots\right\}  \ \mid\ \lambda_{k}\geq
j\right\}  \subseteq\left\{  1,2,\ldots,i-1\right\}  $%
\ \ \ \ \footnote{\textit{Proof.} Let $g\in\left\{  k\in\left\{
1,2,3,\ldots\right\}  \ \mid\ \lambda_{k}\geq j\right\}  $. We shall show that
$g\in\left\{  1,2,\ldots,i-1\right\}  $.
\par
Indeed, assume the contrary. Thus, $g\notin\left\{  1,2,\ldots,i-1\right\}  $.
\par
But $g\in\left\{  k\in\left\{  1,2,3,\ldots\right\}  \ \mid\ \lambda_{k}\geq
j\right\}  $. In other words, $g$ is a $k\in\left\{  1,2,3,\ldots\right\}  $
satisfying $\lambda_{k}\geq j$. In other words, $g$ is an element of $\left\{
1,2,3,\ldots\right\}  $ and satisfies $\lambda_{g}\geq j$. Hence,
$g\in\left\{  1,2,3,\ldots\right\}  $. Combining this with $g\notin\left\{
1,2,\ldots,i-1\right\}  $, we obtain $g\in\left\{  1,2,3,\ldots\right\}
\setminus\left\{  1,2,\ldots,i-1\right\}  =\left\{  i,i+1,i+2,\ldots\right\}
$. Thus, $g\geq i$. Hence, $i\leq g$.
\par
But $\lambda$ is a partition (since $\lambda\in\operatorname*{Par}$). Hence,
$\lambda_{1}\geq\lambda_{2}\geq\lambda_{3}\geq\cdots$. Thus, if $u$ and $v$
are two positive integers satisfying $u\leq v$, then $\lambda_{u}\geq
\lambda_{v}$. Applying this to $u=i$ and $v=g$, we obtain $\lambda_{i}%
\geq\lambda_{g}$ (since $i\leq g$). Hence, $\lambda_{g}\leq\lambda_{i}<j$.
This contradicts $\lambda_{g}\geq j$. This contradiction shows that our
assumption was false. Thus, $g\in\left\{  1,2,\ldots,i-1\right\}  $ is proven.
\par
Now, forget that we fixed $g$. We thus have shown that $g\in\left\{
1,2,\ldots,i-1\right\}  $ for each $g\in\left\{  k\in\left\{  1,2,3,\ldots
\right\}  \ \mid\ \lambda_{k}\geq j\right\}  $. In other words, we have
$\left\{  k\in\left\{  1,2,3,\ldots\right\}  \ \mid\ \lambda_{k}\geq
j\right\}  \subseteq\left\{  1,2,\ldots,i-1\right\}  $.}. Hence,%
\[
\left\vert \left\{  k\in\left\{  1,2,3,\ldots\right\}  \ \mid\ \lambda_{k}\geq
j\right\}  \right\vert \leq\left\vert \left\{  1,2,\ldots,i-1\right\}
\right\vert =i-1.
\]
But (\ref{pf.prop.partition.transpose.djun.mui=}) (applied to $i=j$) yields%
\[
\mu_{j}=\left\vert \left\{  k\in\left\{  1,2,3,\ldots\right\}  \ \mid
\ \lambda_{k}\geq j\right\}  \right\vert \leq i-1<i.
\]
This proves Proposition \ref{prop.partition.transpose.djun} \textbf{(b)}.

\textbf{(c)} Let $i$ and $j$ be two positive integers. We must prove that
$\lambda_{i}+\mu_{j}-i-j\neq-1$.

We are in one of the following two cases:

\textit{Case 1:} We have $\lambda_{i}\geq j$.

\textit{Case 2:} We have $\lambda_{i}<j$.

Let us first consider Case 1. In this case, we have $\lambda_{i}\geq j$.
Hence, Proposition \ref{prop.partition.transpose.djun} \textbf{(a)} yields
$\mu_{j}\geq i$. Hence, $\underbrace{\lambda_{i}}_{\geq j}+\underbrace{\mu
_{j}}_{\geq i}-i-j\geq j+i-i-j=0>-1$. Thus, $\lambda_{i}+\mu_{j}-i-j\neq-1$.
Hence, Proposition \ref{prop.partition.transpose.djun} \textbf{(c)} is proved
in Case 1.

Let us next consider Case 2. In this case, we have $\lambda_{i}<j$. Hence,
Proposition \ref{prop.partition.transpose.djun} \textbf{(b)} yields $\mu
_{j}<i$. Hence, $\mu_{j}\leq i-1$ (since $\mu_{j}$ and $i$ are integers).
Thus, $\underbrace{\lambda_{i}}_{<j}+\underbrace{\mu_{j}}_{\leq i-1}%
-i-j<j+\left(  i-1\right)  -i-j=-1$. Thus, $\lambda_{i}+\mu_{j}-i-j\neq-1$.
Hence, Proposition \ref{prop.partition.transpose.djun} \textbf{(c)} is proved
in Case 2.

We have now proved Proposition \ref{prop.partition.transpose.djun}
\textbf{(c)} in each of the two Cases 1 and 2. Since these two Cases cover all
possibilities, we thus conclude that Proposition
\ref{prop.partition.transpose.djun} \textbf{(c)} always holds.

\textbf{(f)} We have $\alpha_{1}>\alpha_{2}>\alpha_{3}>\cdots$%
\ \ \ \ \footnote{\textit{Proof.} Let $i\in\left\{  1,2,3,\ldots\right\}  $.
We shall show that $\alpha_{i}>\alpha_{i+1}$.
\par
We know that $\lambda$ is a partition (since $\lambda\in\operatorname*{Par}$),
so that $\lambda_{1}\geq\lambda_{2}\geq\lambda_{3}\geq\cdots$. Hence,
$\lambda_{i}\geq\lambda_{i+1}$. But the definition of $\alpha_{i}$ yields
$\alpha_{i}=\lambda_{i}-i$, while the definition of $\alpha_{i+1}$ yields
$\alpha_{i+1}=\lambda_{i+1}-\left(  i+1\right)  $. Hence, $\alpha
_{i}=\underbrace{\lambda_{i}}_{\geq\lambda_{i+1}}-\underbrace{i}%
_{<i+1}>\lambda_{i+1}-\left(  i+1\right)  =\alpha_{i+1}$.
\par
Now, forget that we fixed $i$. We thus have proved that $\alpha_{i}%
>\alpha_{i+1}$ for each $i\in\left\{  1,2,3,\ldots\right\}  $. In other words,
$\alpha_{1}>\alpha_{2}>\alpha_{3}>\cdots$.} and $\beta_{1}>\beta_{2}>\beta
_{3}>\cdots$\ \ \ \ \footnote{\textit{Proof.} Let $j\in\left\{  1,2,3,\ldots
\right\}  $. We shall show that $\beta_{j}>\beta_{j+1}$.
\par
We know that $\mu$ is a partition (since $\mu=\lambda^{t}\in
\operatorname*{Par}$), so that $\mu_{1}\geq\mu_{2}\geq\mu_{3}\geq\cdots$.
Hence, $\mu_{j}\geq\mu_{j+1}$. But the definition of $\beta_{j}$ yields
$\beta_{j}=\mu_{j}-j$, while the definition of $\beta_{j+1}$ yields
$\beta_{j+1}=\mu_{j+1}-\left(  j+1\right)  $. Hence, $\beta_{j}%
=\underbrace{\mu_{j}}_{\geq\mu_{j+1}}-\underbrace{j}_{<j+1}>\mu_{j+1}-\left(
j+1\right)  =\beta_{j+1}$.
\par
Now, forget that we fixed $j$. We thus have proved that $\beta_{j}>\beta
_{j+1}$ for each $j\in\left\{  1,2,3,\ldots\right\}  $. In other words,
$\beta_{1}>\beta_{2}>\beta_{3}>\cdots$.}, hence $\eta_{1}<\eta_{2}<\eta
_{3}<\cdots$\ \ \ \ \footnote{\textit{Proof.} Let $j\in\left\{  1,2,3,\ldots
\right\}  $. We shall show that $\eta_{j}<\eta_{j+1}$.
\par
We know that $\beta_{1}>\beta_{2}>\beta_{3}>\cdots$. Hence, $\beta_{j}%
>\beta_{j+1}$. But the definition of $\eta_{j}$ yields $\eta_{j}=-1-\beta_{j}%
$, while the definition of $\eta_{j+1}$ yields $\eta_{j+1}=-1-\beta_{j+1}$.
Hence, $\eta_{j}=-1-\underbrace{\beta_{j}}_{>\beta_{j+1}}<-1-\beta_{j+1}%
=\eta_{j+1}$.
\par
Now, forget that we fixed $j$. We thus have proved that $\eta_{j}<\eta_{j+1}$
for each $j\in\left\{  1,2,3,\ldots\right\}  $. In other words, $\eta_{1}%
<\eta_{2}<\eta_{3}<\cdots$.}.

From $\alpha_{1}>\alpha_{2}>\alpha_{3}>\cdots$, we obtain $\alpha_{1}%
>\alpha_{2}>\cdots>\alpha_{q}$. Thus, the $q$ integers $\alpha_{1},\alpha
_{2},\ldots,\alpha_{q}$ are distinct. Hence, $\left\vert \left\{  \alpha
_{1},\alpha_{2},\ldots,\alpha_{q}\right\}  \right\vert =q$.

From $\eta_{1}<\eta_{2}<\eta_{3}<\cdots$, we obtain $\eta_{1}<\eta_{2}%
<\cdots<\eta_{p}$. Thus, the $p$ integers $\eta_{1},\eta_{2},\ldots,\eta_{p}$
are distinct. Hence, $\left\vert \left\{  \eta_{1},\eta_{2},\ldots,\eta
_{p}\right\}  \right\vert =p$.

Let $L$ be the finite set $\left\{  -q,-q+1,\ldots,p-1\right\}  =\left\{
k\in\mathbb{Z}\ \mid\ -q\leq k<p\right\}  $. Then,
\begin{align*}
\left\vert L\right\vert  &  =\left(  p-1\right)  -\left(  -q\right)
+1=\underbrace{q}_{=\left\vert \left\{  \alpha_{1},\alpha_{2},\ldots
,\alpha_{q}\right\}  \right\vert }+\underbrace{p}_{=\left\vert \left\{
\eta_{1},\eta_{2},\ldots,\eta_{p}\right\}  \right\vert }\\
&  =\left\vert \left\{  \alpha_{1},\alpha_{2},\ldots,\alpha_{q}\right\}
\right\vert +\left\vert \left\{  \eta_{1},\eta_{2},\ldots,\eta_{p}\right\}
\right\vert .
\end{align*}

We have $\left\{  \alpha_{1},\alpha_{2},\ldots,\alpha_{q}\right\}  \subseteq
L$\ \ \ \ \footnote{\textit{Proof.} Let $i\in\left\{  1,2,\ldots,q\right\}  $.
We shall show that $\alpha_{i}\in L$.
\par
We have $i\in\left\{  1,2,\ldots,q\right\}  $, so that $1\leq i\leq q$. Thus,
$q\geq1$. Hence, $\alpha_{q}$ is well-defined.
\par
From $\alpha_{1}>\alpha_{2}>\cdots>\alpha_{q}$, we conclude that all the $q$
numbers $\alpha_{1},\alpha_{2},\ldots,\alpha_{q}$ lie in the interval between
$\alpha_{q}$ (inclusive) and $\alpha_{1}$ (inclusive). In other words,
$\alpha_{q}\leq\alpha_{j}\leq\alpha_{1}$ for each $j\in\left\{  1,2,\ldots
,q\right\}  $. Applying this to $j=i$, we obtain $\alpha_{q}\leq\alpha_{i}%
\leq\alpha_{1}$.
\par
Now, the definition of $\alpha_{1}$ yields $\alpha_{1}=\lambda_{1}%
-1<\lambda_{1}\leq p$ (since $p\geq\lambda_{1}$). Now, $\alpha_{i}\leq
\alpha_{1}<p$. Also, the definition of $\alpha_{q}$ yields $\alpha
_{q}=\underbrace{\lambda_{q}}_{\geq0}-q\geq-q$, so that $-q\leq\alpha_{q}%
\leq\alpha_{i}$. Hence, $-q\leq\alpha_{i}<p$.
\par
Thus, we know that $\alpha_{i}$ is an element of $\mathbb{Z}$ and satisfies
$-q\leq\alpha_{i}<p$. In other words, $\alpha_{i}$ is a $k\in\mathbb{Z}$
satisfying $-q\leq k<p$. In other words, $\alpha_{i}\in\left\{  k\in
\mathbb{Z}\ \mid\ -q\leq k<p\right\}  $.
\par
But the definition of $L$ yields $L=\left\{  k\in\mathbb{Z}\ \mid\ -q\leq
k<p\right\}  $. Hence, $\alpha_{i}\in\left\{  k\in\mathbb{Z}\ \mid\ -q\leq
k<p\right\}  =L$.
\par
Forget that we fixed $i$. We thus have proven that $\alpha_{i}\in L$ for each
$i\in\left\{  1,2,\ldots,q\right\}  $. In other words, $\alpha_{1},\alpha
_{2},\ldots,\alpha_{q}$ are elements of $L$. In other words, $\left\{
\alpha_{1},\alpha_{2},\ldots,\alpha_{q}\right\}  \subseteq L$.} and $\left\{
\eta_{1},\eta_{2},\ldots,\eta_{p}\right\}  \subseteq L$%
\ \ \ \ \footnote{\textit{Proof.} Let $j\in\left\{  1,2,\ldots,p\right\}  $.
We shall show that $\eta_{j}\in L$.
\par
We have $j\in\left\{  1,2,\ldots,p\right\}  $, so that $1\leq j\leq p$. Thus,
$p\geq1$. Hence, $\eta_{p}$ is well-defined.
\par
From $\eta_{1}<\eta_{2}<\cdots<\eta_{p}$, we conclude that all the $p$ numbers
$\eta_{1},\eta_{2},\ldots,\eta_{p}$ lie in the interval between $\eta_{1}$
(inclusive) and $\eta_{p}$ (inclusive). In other words, $\eta_{1}\leq\eta
_{i}\leq\eta_{p}$ for each $i\in\left\{  1,2,\ldots,p\right\}  $. Applying
this to $i=j$, we obtain $\eta_{1}\leq\eta_{j}\leq\eta_{p}$.
\par
Now, the definition of $\beta_{1}$ yields $\beta_{1}=\underbrace{\mu_{1}%
}_{\substack{\leq q\\\text{(since }q\geq\mu_{1}\text{)}}}-1\leq q-1$. But the
definition of $\eta_{1}$ yields $\eta_{1}=-1-\underbrace{\beta_{1}}_{\leq
q-1}\geq-1-\left(  q-1\right)  =-q$. Hence, $-q\leq\eta_{1}\leq\eta_{j}$.
\par
Also, the definition of $\beta_{p}$ yields $\beta_{p}=\underbrace{\mu_{p}%
}_{\geq0}-p\geq-p$. But the definition of $\eta_{p}$ yields $\eta
_{p}=-1-\underbrace{\beta_{p}}_{\geq-p}\leq-1-\left(  -p\right)  =p-1<p$.
Hence, $\eta_{j}\leq\eta_{p}<p$.
\par
Thus, we know that $\eta_{j}$ is an element of $\mathbb{Z}$ and satisfies
$-q\leq\eta_{j}<p$. In other words, $\eta_{j}$ is a $k\in\mathbb{Z}$
satisfying $-q\leq k<p$. In other words, $\eta_{j}\in\left\{  k\in
\mathbb{Z}\ \mid\ -q\leq k<p\right\}  $.
\par
But the definition of $L$ yields $L=\left\{  k\in\mathbb{Z}\ \mid\ -q\leq
k<p\right\}  $. Hence, $\eta_{j}\in\left\{  k\in\mathbb{Z}\ \mid\ -q\leq
k<p\right\}  =L$.
\par
Forget that we fixed $j$. We thus have proven that $\eta_{j}\in L$ for each
$j\in\left\{  1,2,\ldots,p\right\}  $. In other words, $\eta_{1},\eta
_{2},\ldots,\eta_{p}$ are elements of $L$. In other words, $\left\{  \eta
_{1},\eta_{2},\ldots,\eta_{p}\right\}  \subseteq L$.}. Furthermore,
Proposition \ref{prop.partition.transpose.djun} \textbf{(c)} easily shows that
the sets $\left\{  \alpha_{1},\alpha_{2},\ldots,\alpha_{q}\right\}  $ and
$\left\{  \eta_{1},\eta_{2},\ldots,\eta_{p}\right\}  $ are
disjoint\footnote{\textit{Proof.} Let $\zeta\in\left\{  \alpha_{1},\alpha
_{2},\ldots,\alpha_{q}\right\}  \cap\left\{  \eta_{1},\eta_{2},\ldots,\eta
_{p}\right\}  $. Then, $\zeta\in\left\{  \alpha_{1},\alpha_{2},\ldots
,\alpha_{q}\right\}  \cap\left\{  \eta_{1},\eta_{2},\ldots,\eta_{p}\right\}
\subseteq\left\{  \alpha_{1},\alpha_{2},\ldots,\alpha_{q}\right\}  $; in other
words, there exists some $i\in\left\{  1,2,\ldots,q\right\}  $ such that
$\zeta=\alpha_{i}$. Consider this $i$. We then have $\zeta=\alpha_{i}%
=\lambda_{i}-i$ (by the definition of $\alpha_{i}$).
\par
Also, $\zeta\in\left\{  \alpha_{1},\alpha_{2},\ldots,\alpha_{q}\right\}
\cap\left\{  \eta_{1},\eta_{2},\ldots,\eta_{p}\right\}  \subseteq\left\{
\eta_{1},\eta_{2},\ldots,\eta_{p}\right\}  $; in other words, there exists
some $j\in\left\{  1,2,\ldots,p\right\}  $ such that $\zeta=\eta_{j}$.
Consider this $j$. We then have $\zeta=\eta_{j}=-1-\beta_{j}$ (by the
definition of $\eta_{j}$). But the definition of $\beta_{j}$ yields $\beta
_{j}=\mu_{j}-j$. Hence, $\zeta=-1-\underbrace{\beta_{j}}_{=\mu_{j}%
-j}=-1-\left(  \mu_{j}-j\right)  =-1-\mu_{j}+j$. Comparing this with
$\zeta=\lambda_{i}-i$, we obtain $\lambda_{i}-i=-1-\mu_{j}+j$. In other words,
$\lambda_{i}+\mu_{j}-i-j=-1$. But Proposition
\ref{prop.partition.transpose.djun} \textbf{(c)} yields $\lambda_{i}+\mu
_{j}-i-j\neq-1$. This contradicts $\lambda_{i}+\mu_{j}-i-j=-1$.
\par
Forget that we fixed $\zeta$. We thus have found a contradiction for each
$\zeta\in\left\{  \alpha_{1},\alpha_{2},\ldots,\alpha_{q}\right\}
\cap\left\{  \eta_{1},\eta_{2},\ldots,\eta_{p}\right\}  $. Thus, there exists
no $\zeta\in\left\{  \alpha_{1},\alpha_{2},\ldots,\alpha_{q}\right\}
\cap\left\{  \eta_{1},\eta_{2},\ldots,\eta_{p}\right\}  $. In other words, the
set $\left\{  \alpha_{1},\alpha_{2},\ldots,\alpha_{q}\right\}  \cap\left\{
\eta_{1},\eta_{2},\ldots,\eta_{p}\right\}  $ is empty. In other words, the
sets $\left\{  \alpha_{1},\alpha_{2},\ldots,\alpha_{q}\right\}  $ and
$\left\{  \eta_{1},\eta_{2},\ldots,\eta_{p}\right\}  $ are disjoint.}.

Now, recall the following basic fact from the theory of finite sets:

\begin{statement}
\textit{Fact A:} Let $U$, $V$ and $W$ be three finite sets such that
$V\subseteq U$ and $W\subseteq U$ and $\left\vert U\right\vert =\left\vert
V\right\vert +\left\vert W\right\vert $. Assume that $V$ and $W$ are disjoint.
Then, $V\cup W=U$.
\end{statement}

[\textit{Proof of Fact A:} The set $V$ is a subset of $U$ (since $V\subseteq
U$), but is disjoint from $W$ (since $V$ and $W$ are disjoint). Thus, $V$ is a
subset of $U\setminus W$. But from $W\subseteq U$, we obtain $\left\vert
U\setminus W\right\vert =\left\vert U\right\vert -\left\vert W\right\vert
=\left\vert V\right\vert $ (since $\left\vert U\right\vert =\left\vert
V\right\vert +\left\vert W\right\vert $). Hence, $\left\vert V\right\vert
=\left\vert U\setminus W\right\vert $. Thus, the set $V$ has the same size as
$U\setminus W$. Note that the set $U\setminus W$ is finite (since $U$ is finite).

Now, recall the well-known fact that if a subset $Q$ of a finite set $R$ has
the same size as $R$, then $Q=R$. We can apply this to $R=U\setminus W$ and
$Q=V$ (since $V$ is a subset of $U\setminus W$ and has the same size as
$U\setminus W$), and conclude that $V=U\setminus W$. Hence, $\underbrace{V}%
_{=U\setminus W}\cup W=\left(  U\setminus W\right)  \cup W=U$ (since
$W\subseteq U$). This proves Fact A.]

Now, recall that $L$, $\left\{  \alpha_{1},\alpha_{2},\ldots,\alpha
_{q}\right\}  $ and $\left\{  \eta_{1},\eta_{2},\ldots,\eta_{p}\right\}  $ are
three finite sets such that $\left\{  \alpha_{1},\alpha_{2},\ldots,\alpha
_{q}\right\}  \subseteq L$ and $\left\{  \eta_{1},\eta_{2},\ldots,\eta
_{p}\right\}  \subseteq L$ and $\left\vert L\right\vert =\left\vert \left\{
\alpha_{1},\alpha_{2},\ldots,\alpha_{q}\right\}  \right\vert +\left\vert
\left\{  \eta_{1},\eta_{2},\ldots,\eta_{p}\right\}  \right\vert $ and such
that the sets $\left\{  \alpha_{1},\alpha_{2},\ldots,\alpha_{q}\right\}  $ and
$\left\{  \eta_{1},\eta_{2},\ldots,\eta_{p}\right\}  $ are disjoint. Hence,
Fact A (applied to $U=L$, $V=\left\{  \alpha_{1},\alpha_{2},\ldots,\alpha
_{q}\right\}  $ and $W=\left\{  \eta_{1},\eta_{2},\ldots,\eta_{p}\right\}  $)
yields that $\left\{  \alpha_{1},\alpha_{2},\ldots,\alpha_{q}\right\}
\cup\left\{  \eta_{1},\eta_{2},\ldots,\eta_{p}\right\}  =L$. In other words,
the union of the two sets $\left\{  \alpha_{1},\alpha_{2},\ldots,\alpha
_{q}\right\}  $ and $\left\{  \eta_{1},\eta_{2},\ldots,\eta_{p}\right\}  $ is
$L$.

Thus, we have shown that the two sets $\left\{  \alpha_{1},\alpha_{2}%
,\ldots,\alpha_{q}\right\}  $ and $\left\{  \eta_{1},\eta_{2},\ldots,\eta
_{p}\right\}  $ are disjoint, and their union is
\[
L=\left\{  -q,-q+1,\ldots,p-1\right\}  =\left\{  k\in\mathbb{Z}\ \mid\ -q\leq
k<p\right\}  .
\]
This proves Proposition \ref{prop.partition.transpose.djun} \textbf{(f)}.

\textbf{(e)} We have $\alpha_{1}>\alpha_{2}>\alpha_{3}>\cdots$ (as we have
shown in our above proof of Proposition \ref{prop.partition.transpose.djun}
\textbf{(f)}) and $\eta_{1}<\eta_{2}<\cdots<\eta_{p}$ (as we have shown in our
above proof of Proposition \ref{prop.partition.transpose.djun} \textbf{(f)}).

Let $M$ be the set $\left\{  \ldots,p-3,p-2,p-1\right\}  =\left\{
k\in\mathbb{Z}\ \mid\ k<p\right\}  $. Then, $\left\{  \alpha_{1},\alpha
_{2},\alpha_{3},\ldots\right\}  \subseteq M$\ \ \ \ \footnote{\textit{Proof.}
Let $i\in\left\{  1,2,3,\ldots\right\}  $. We shall show that $\alpha_{i}\in
M$.
\par
From $\alpha_{1}>\alpha_{2}>\alpha_{3}>\cdots$, we conclude that $\alpha
_{1}\geq\alpha_{j}$ for each $j\in\left\{  1,2,3,\ldots\right\}  $. Applying
this to $j=i$, we obtain $\alpha_{1}\geq\alpha_{i}$, so that $\alpha_{i}%
\leq\alpha_{1}$.
\par
Now, the definition of $\alpha_{1}$ yields $\alpha_{1}=\lambda_{1}%
-1<\lambda_{1}\leq p$ (since $p\geq\lambda_{1}$). Now, $\alpha_{i}\leq
\alpha_{1}<p$.
\par
Thus, we know that $\alpha_{i}$ is an element of $\mathbb{Z}$ and satisfies
$\alpha_{i}<p$. In other words, $\alpha_{i}$ is a $k\in\mathbb{Z}$ satisfying
$k<p$. In other words, $\alpha_{i}\in\left\{  k\in\mathbb{Z}\ \mid
\ k<p\right\}  $.
\par
But the definition of $M$ yields $M=\left\{  k\in\mathbb{Z}\ \mid
\ k<p\right\}  $. Hence, $\alpha_{i}\in\left\{  k\in\mathbb{Z}\ \mid
\ k<p\right\}  =M$.
\par
Forget that we fixed $i$. We thus have proven that $\alpha_{i}\in M$ for each
$i\in\left\{  1,2,3,\ldots\right\}  $. In other words, $\alpha_{1},\alpha
_{2},\alpha_{3},\ldots$ are elements of $M$. In other words, $\left\{
\alpha_{1},\alpha_{2},\alpha_{3},\ldots\right\}  \subseteq M$.} and $\left\{
\eta_{1},\eta_{2},\ldots,\eta_{p}\right\}  \subseteq M$%
\ \ \ \ \footnote{\textit{Proof.} Let $j\in\left\{  1,2,\ldots,p\right\}  $.
We shall show that $\eta_{j}\in L$.
\par
We have $j\in\left\{  1,2,\ldots,p\right\}  $, so that $1\leq j\leq p$. Thus,
$p\geq1$. Hence, $\eta_{p}$ is well-defined.
\par
From $\eta_{1}<\eta_{2}<\cdots<\eta_{p}$, we conclude that all the $p$ numbers
$\eta_{1},\eta_{2},\ldots,\eta_{p}$ lie in the interval between $\eta_{1}$
(inclusive) and $\eta_{p}$ (inclusive). In other words, $\eta_{1}\leq\eta
_{i}\leq\eta_{p}$ for each $i\in\left\{  1,2,\ldots,p\right\}  $. Applying
this to $i=j$, we obtain $\eta_{1}\leq\eta_{j}\leq\eta_{p}$.
\par
The definition of $\beta_{p}$ yields $\beta_{p}=\underbrace{\mu_{p}}_{\geq
0}-p\geq-p$. But the definition of $\eta_{p}$ yields $\eta_{p}%
=-1-\underbrace{\beta_{p}}_{\geq-p}\leq-1-\left(  -p\right)  =p-1<p$. Hence,
$\eta_{j}\leq\eta_{p}<p$.
\par
Thus, we know that $\eta_{j}$ is an element of $\mathbb{Z}$ and satisfies
$\eta_{j}<p$. In other words, $\eta_{j}$ is a $k\in\mathbb{Z}$ satisfying
$k<p$. In other words, $\eta_{j}\in\left\{  k\in\mathbb{Z}\ \mid\ k<p\right\}
$.
\par
But the definition of $M$ yields $M=\left\{  k\in\mathbb{Z}\ \mid
\ k<p\right\}  $. Hence, $\eta_{j}\in\left\{  k\in\mathbb{Z}\ \mid
\ k<p\right\}  =M$.
\par
Forget that we fixed $j$. We thus have proven that $\eta_{j}\in M$ for each
$j\in\left\{  1,2,\ldots,p\right\}  $. In other words, $\eta_{1},\eta
_{2},\ldots,\eta_{p}$ are elements of $M$. In other words, $\left\{  \eta
_{1},\eta_{2},\ldots,\eta_{p}\right\}  \subseteq M$.}. Furthermore,
Proposition \ref{prop.partition.transpose.djun} \textbf{(c)} easily shows that
the sets $\left\{  \alpha_{1},\alpha_{2},\alpha_{3},\ldots\right\}  $ and
$\left\{  \eta_{1},\eta_{2},\ldots,\eta_{p}\right\}  $ are
disjoint\footnote{\textit{Proof.} Let $\zeta\in\left\{  \alpha_{1},\alpha
_{2},\alpha_{3},\ldots\right\}  \cap\left\{  \eta_{1},\eta_{2},\ldots,\eta
_{p}\right\}  $. Then, $\zeta\in\left\{  \alpha_{1},\alpha_{2},\alpha
_{3},\ldots\right\}  \cap\left\{  \eta_{1},\eta_{2},\ldots,\eta_{p}\right\}
\subseteq\left\{  \alpha_{1},\alpha_{2},\alpha_{3},\ldots\right\}  $; in other
words, there exists some $i\in\left\{  1,2,3,\ldots\right\}  $ such that
$\zeta=\alpha_{i}$. Consider this $i$. We then have $\zeta=\alpha_{i}%
=\lambda_{i}-i$ (by the definition of $\alpha_{i}$).
\par
Also, $\zeta\in\left\{  \alpha_{1},\alpha_{2},\alpha_{3},\ldots\right\}
\cap\left\{  \eta_{1},\eta_{2},\ldots,\eta_{p}\right\}  \subseteq\left\{
\eta_{1},\eta_{2},\ldots,\eta_{p}\right\}  $; in other words, there exists
some $j\in\left\{  1,2,\ldots,p\right\}  $ such that $\zeta=\eta_{j}$.
Consider this $j$. We then have $\zeta=\eta_{j}=-1-\beta_{j}$ (by the
definition of $\eta_{j}$). But the definition of $\beta_{j}$ yields $\beta
_{j}=\mu_{j}-j$. Hence, $\zeta=-1-\underbrace{\beta_{j}}_{=\mu_{j}%
-j}=-1-\left(  \mu_{j}-j\right)  =-1-\mu_{j}+j$. Comparing this with
$\zeta=\lambda_{i}-i$, we obtain $\lambda_{i}-i=-1-\mu_{j}+j$. In other words,
$\lambda_{i}+\mu_{j}-i-j=-1$. But Proposition
\ref{prop.partition.transpose.djun} \textbf{(c)} yields $\lambda_{i}+\mu
_{j}-i-j\neq-1$. This contradicts $\lambda_{i}+\mu_{j}-i-j=-1$.
\par
Forget that we fixed $\zeta$. We thus have found a contradiction for each
$\zeta\in\left\{  \alpha_{1},\alpha_{2},\alpha_{3},\ldots\right\}
\cap\left\{  \eta_{1},\eta_{2},\ldots,\eta_{p}\right\}  $. Thus, there exists
no $\zeta\in\left\{  \alpha_{1},\alpha_{2},\alpha_{3},\ldots\right\}
\cap\left\{  \eta_{1},\eta_{2},\ldots,\eta_{p}\right\}  $. In other words, the
set $\left\{  \alpha_{1},\alpha_{2},\alpha_{3},\ldots\right\}  \cap\left\{
\eta_{1},\eta_{2},\ldots,\eta_{p}\right\}  $ is empty. In other words, the
sets $\left\{  \alpha_{1},\alpha_{2},\alpha_{3},\ldots\right\}  $ and
$\left\{  \eta_{1},\eta_{2},\ldots,\eta_{p}\right\}  $ are disjoint.}.
Moreover, $\left\{  \alpha_{1},\alpha_{2},\alpha_{3},\ldots\right\}
\cup\left\{  \eta_{1},\eta_{2},\ldots,\eta_{p}\right\}  =M$%
\ \ \ \ \footnote{\textit{Proof.} Combining $\left\{  \alpha_{1},\alpha
_{2},\alpha_{3},\ldots\right\}  \subseteq M$ with $\left\{  \eta_{1},\eta
_{2},\ldots,\eta_{p}\right\}  \subseteq M$, we obtain $\left\{  \alpha
_{1},\alpha_{2},\alpha_{3},\ldots\right\}  \cup\left\{  \eta_{1},\eta
_{2},\ldots,\eta_{p}\right\}  \subseteq M$. We shall now prove the reverse
inclusion.
\par
Fix $m\in M$. Thus, $m\in M=\left\{  \ldots,p-3,p-2,p-1\right\}  $ (by the
definition of $M$), so that $m\leq p-1<p$.
\par
Let $q=\max\left\{  \mu_{1},-m\right\}  $. Thus, $q=\max\left\{  \mu
_{1},m\right\}  \geq-m$ and $q=\max\left\{  \mu_{1},m\right\}  \geq\mu_{1}$.
\par
From $q\geq-m$, we obtain $-\underbrace{q}_{\geq-m}\leq-\left(  -m\right)
=m$. Hence, $-q\leq m<p$. Thus, $m$ is an element of $\mathbb{Z}$ satisfying
$-q\leq m<p$. In other words, $m$ is a $k\in\mathbb{Z}$ satisfying $-q\leq
k<p$. In other words, $m\in\left\{  k\in\mathbb{Z}\ \mid\ -q\leq k<p\right\}
$.
\par
But Proposition \ref{prop.partition.transpose.djun} \textbf{(f)} yields that
the two sets $\left\{  \alpha_{1},\alpha_{2},\ldots,\alpha_{q}\right\}  $ and
$\left\{  \eta_{1},\eta_{2},\ldots,\eta_{p}\right\}  $ are disjoint, and their
union is
\[
\left\{  -q,-q+1,\ldots,p-1\right\}  =\left\{  k\in\mathbb{Z}\ \mid\ -q\leq
k<p\right\}  .
\]
Thus, in particular, their union is $\left\{  -q,-q+1,\ldots,p-1\right\}
=\left\{  k\in\mathbb{Z}\ \mid\ -q\leq k<p\right\}  $. In other words,%
\[
\left\{  \alpha_{1},\alpha_{2},\ldots,\alpha_{q}\right\}  \cup\left\{
\eta_{1},\eta_{2},\ldots,\eta_{p}\right\}  =\left\{  -q,-q+1,\ldots
,p-1\right\}  =\left\{  k\in\mathbb{Z}\ \mid\ -q\leq k<p\right\}  .
\]
Hence,%
\begin{align*}
m  &  \in\left\{  k\in\mathbb{Z}\ \mid\ -q\leq k<p\right\}
=\underbrace{\left\{  \alpha_{1},\alpha_{2},\ldots,\alpha_{q}\right\}
}_{\subseteq\left\{  \alpha_{1},\alpha_{2},\alpha_{3},\ldots\right\}  }%
\cup\left\{  \eta_{1},\eta_{2},\ldots,\eta_{p}\right\} \\
&  \subseteq\left\{  \alpha_{1},\alpha_{2},\alpha_{3},\ldots\right\}
\cup\left\{  \eta_{1},\eta_{2},\ldots,\eta_{p}\right\}  .
\end{align*}
\par
Forget that we fixed $m$. We thus have shown that $m\in\left\{  \alpha
_{1},\alpha_{2},\alpha_{3},\ldots\right\}  \cup\left\{  \eta_{1},\eta
_{2},\ldots,\eta_{p}\right\}  $ for each $m\in M$. In other words,
$M\subseteq\left\{  \alpha_{1},\alpha_{2},\alpha_{3},\ldots\right\}
\cup\left\{  \eta_{1},\eta_{2},\ldots,\eta_{p}\right\}  $. Combining this with
$\left\{  \alpha_{1},\alpha_{2},\alpha_{3},\ldots\right\}  \cup\left\{
\eta_{1},\eta_{2},\ldots,\eta_{p}\right\}  \subseteq M$, we obtain $\left\{
\alpha_{1},\alpha_{2},\alpha_{3},\ldots\right\}  \cup\left\{  \eta_{1}%
,\eta_{2},\ldots,\eta_{p}\right\}  =M$. Qed.}. In other words, the union of
the two sets $\left\{  \alpha_{1},\alpha_{2},\alpha_{3},\ldots\right\}  $ and
$\left\{  \eta_{1},\eta_{2},\ldots,\eta_{p}\right\}  $ is $M$.

Thus, we have shown that the two sets $\left\{  \alpha_{1},\alpha_{2}%
,\alpha_{3},\ldots\right\}  $ and $\left\{  \eta_{1},\eta_{2},\ldots,\eta
_{p}\right\}  $ are disjoint, and their union is
\[
M=\left\{  \ldots,p-3,p-2,p-1\right\}  =\left\{  k\in\mathbb{Z}\ \mid
\ k<p\right\}  .
\]
This proves Proposition \ref{prop.partition.transpose.djun} \textbf{(e)}.

\textbf{(d)} We have $\alpha_{1}>\alpha_{2}>\alpha_{3}>\cdots$ (as we have
shown in our above proof of Proposition \ref{prop.partition.transpose.djun}
\textbf{(f)}) and $\eta_{1}<\eta_{2}<\eta_{3}<\cdots$ (as we have shown in our
above proof of Proposition \ref{prop.partition.transpose.djun} \textbf{(f)}).

We have $\left\{  \alpha_{1},\alpha_{2},\alpha_{3},\ldots\right\}
\subseteq\mathbb{Z}$ (since $\alpha_{1},\alpha_{2},\alpha_{3},\ldots$ are
integers) and $\left\{  \eta_{1},\eta_{2},\eta_{3},\ldots\right\}
\subseteq\mathbb{Z}$ (since $\eta_{1},\eta_{2},\eta_{3},\ldots$ are integers).
Furthermore, Proposition \ref{prop.partition.transpose.djun} \textbf{(c)}
easily shows that the sets $\left\{  \alpha_{1},\alpha_{2},\alpha_{3}%
,\ldots\right\}  $ and $\left\{  \eta_{1},\eta_{2},\eta_{3},\ldots\right\}  $
are disjoint\footnote{\textit{Proof.} Let $\zeta\in\left\{  \alpha_{1}%
,\alpha_{2},\alpha_{3},\ldots\right\}  \cap\left\{  \eta_{1},\eta_{2},\eta
_{3},\ldots\right\}  $. Then, $\zeta\in\left\{  \alpha_{1},\alpha_{2}%
,\alpha_{3},\ldots\right\}  \cap\left\{  \eta_{1},\eta_{2},\eta_{3}%
,\ldots\right\}  \subseteq\left\{  \alpha_{1},\alpha_{2},\alpha_{3}%
,\ldots\right\}  $; in other words, there exists some $i\in\left\{
1,2,3,\ldots\right\}  $ such that $\zeta=\alpha_{i}$. Consider this $i$. We
then have $\zeta=\alpha_{i}=\lambda_{i}-i$ (by the definition of $\alpha_{i}%
$).
\par
Also, $\zeta\in\left\{  \alpha_{1},\alpha_{2},\alpha_{3},\ldots\right\}
\cap\left\{  \eta_{1},\eta_{2},\eta_{3},\ldots\right\}  \subseteq\left\{
\eta_{1},\eta_{2},\eta_{3},\ldots\right\}  $; in other words, there exists
some $j\in\left\{  1,2,3,\ldots\right\}  $ such that $\zeta=\eta_{j}$.
Consider this $j$. We then have $\zeta=\eta_{j}=-1-\beta_{j}$ (by the
definition of $\eta_{j}$). But the definition of $\beta_{j}$ yields $\beta
_{j}=\mu_{j}-j$. Hence, $\zeta=-1-\underbrace{\beta_{j}}_{=\mu_{j}%
-j}=-1-\left(  \mu_{j}-j\right)  =-1-\mu_{j}+j$. Comparing this with
$\zeta=\lambda_{i}-i$, we obtain $\lambda_{i}-i=-1-\mu_{j}+j$. In other words,
$\lambda_{i}+\mu_{j}-i-j=-1$. But Proposition
\ref{prop.partition.transpose.djun} \textbf{(c)} yields $\lambda_{i}+\mu
_{j}-i-j\neq-1$. This contradicts $\lambda_{i}+\mu_{j}-i-j=-1$.
\par
Forget that we fixed $\zeta$. We thus have found a contradiction for each
$\zeta\in\left\{  \alpha_{1},\alpha_{2},\alpha_{3},\ldots\right\}
\cap\left\{  \eta_{1},\eta_{2},\eta_{3},\ldots\right\}  $. Thus, there exists
no $\zeta\in\left\{  \alpha_{1},\alpha_{2},\alpha_{3},\ldots\right\}
\cap\left\{  \eta_{1},\eta_{2},\eta_{3},\ldots\right\}  $. In other words, the
set $\left\{  \alpha_{1},\alpha_{2},\alpha_{3},\ldots\right\}  \cap\left\{
\eta_{1},\eta_{2},\eta_{3},\ldots\right\}  $ is empty. In other words, the
sets $\left\{  \alpha_{1},\alpha_{2},\alpha_{3},\ldots\right\}  $ and
$\left\{  \eta_{1},\eta_{2},\eta_{3},\ldots\right\}  $ are disjoint.}.
Moreover, $\left\{  \alpha_{1},\alpha_{2},\alpha_{3},\ldots\right\}
\cup\left\{  \eta_{1},\eta_{2},\eta_{3},\ldots\right\}  =\mathbb{Z}%
$\ \ \ \ \footnote{\textit{Proof.} Combining $\left\{  \alpha_{1},\alpha
_{2},\alpha_{3},\ldots\right\}  \subseteq\mathbb{Z}$ with $\left\{  \eta
_{1},\eta_{2},\eta_{3},\ldots\right\}  \subseteq\mathbb{Z}$, we obtain
$\left\{  \alpha_{1},\alpha_{2},\alpha_{3},\ldots\right\}  \cup\left\{
\eta_{1},\eta_{2},\eta_{3},\ldots\right\}  \subseteq\mathbb{Z}$. We shall now
prove the reverse inclusion.
\par
Fix $m\in\mathbb{Z}$.
\par
Let $p=\max\left\{  \lambda_{1},m+1\right\}  $. Thus, $p=\max\left\{
\lambda_{1},m+1\right\}  \geq\lambda_{1}$ and $p=\max\left\{  \lambda
_{1},m+1\right\}  \geq m+1>m$. From $p>m$, we obtain $m<p$. Thus, $m$ is a
$k\in\mathbb{Z}$ satisfying $k<p$ (since $m\in\mathbb{Z}$ and $m<p$). In other
words, $m\in\left\{  k\in\mathbb{Z}\ \mid\ k<p\right\}  $.
\par
But Proposition \ref{prop.partition.transpose.djun} \textbf{(e)} yields that
the two sets $\left\{  \alpha_{1},\alpha_{2},\alpha_{3},\ldots\right\}  $ and
$\left\{  \eta_{1},\eta_{2},\ldots,\eta_{p}\right\}  $ are disjoint, and their
union is
\[
\left\{  \ldots,p-3,p-2,p-1\right\}  =\left\{  k\in\mathbb{Z}\ \mid
\ k<p\right\}  .
\]
Thus, in particular, their union is $\left\{  \ldots,p-3,p-2,p-1\right\}
=\left\{  k\in\mathbb{Z}\ \mid\ k<p\right\}  $. In other words,%
\[
\left\{  \alpha_{1},\alpha_{2},\alpha_{3},\ldots\right\}  \cup\left\{
\eta_{1},\eta_{2},\ldots,\eta_{p}\right\}  =\left\{  \ldots
,p-3,p-2,p-1\right\}  =\left\{  k\in\mathbb{Z}\ \mid\ k<p\right\}  .
\]
Hence,%
\[
m\in\left\{  k\in\mathbb{Z}\ \mid\ k<p\right\}  =\left\{  \alpha_{1}%
,\alpha_{2},\alpha_{3},\ldots\right\}  \cup\underbrace{\left\{  \eta_{1}%
,\eta_{2},\ldots,\eta_{p}\right\}  }_{\subseteq\left\{  \eta_{1},\eta_{2}%
,\eta_{3},\ldots\right\}  }\subseteq\left\{  \alpha_{1},\alpha_{2},\alpha
_{3},\ldots\right\}  \cup\left\{  \eta_{1},\eta_{2},\eta_{3},\ldots\right\}
.
\]
\par
Forget that we fixed $m$. We thus have shown that $m\in\left\{  \alpha
_{1},\alpha_{2},\alpha_{3},\ldots\right\}  \cup\left\{  \eta_{1},\eta_{2}%
,\eta_{3},\ldots\right\}  $ for each $m\in\mathbb{Z}$. In other words,
$\mathbb{Z}\subseteq\left\{  \alpha_{1},\alpha_{2},\alpha_{3},\ldots\right\}
\cup\left\{  \eta_{1},\eta_{2},\eta_{3},\ldots\right\}  $. Combining this with
$\left\{  \alpha_{1},\alpha_{2},\alpha_{3},\ldots\right\}  \cup\left\{
\eta_{1},\eta_{2},\eta_{3},\ldots\right\}  \subseteq\mathbb{Z}$, we obtain
$\left\{  \alpha_{1},\alpha_{2},\alpha_{3},\ldots\right\}  \cup\left\{
\eta_{1},\eta_{2},\eta_{3},\ldots\right\}  =\mathbb{Z}$. Qed.}. In other
words, the union of the two sets $\left\{  \alpha_{1},\alpha_{2},\alpha
_{3},\ldots\right\}  $ and $\left\{  \eta_{1},\eta_{2},\eta_{3},\ldots
\right\}  $ is $\mathbb{Z}$.

Thus, we have shown that the two sets $\left\{  \alpha_{1},\alpha_{2}%
,\alpha_{3},\ldots\right\}  $ and $\left\{  \eta_{1},\eta_{2},\eta_{3}%
,\ldots\right\}  $ are disjoint, and their union is $\mathbb{Z}$. This proves
Proposition \ref{prop.partition.transpose.djun} \textbf{(d)}.
\end{proof}
\end{verlong}

\begin{proof}
[Proof of Proposition \ref{prop.petk.explicit-old} (sketched).]Let
$\mu=\lambda^{t}$. Then, the number of parts of $\mu$ is $\lambda_{1}$. Hence,
from $\lambda_{1}<k$, we conclude that $\mu$ has fewer than $k$ parts. Thus,
$\mu_{k}=0$.

For each positive integer $i$, set $\alpha_{i}=\lambda_{i}-i$. Hence,%
\begin{align*}
\left\{  \alpha_{1},\alpha_{2},\alpha_{3},\ldots\right\}   &  =\left\{
\underbrace{\alpha_{i}}_{=\lambda_{i}-i}\ \mid\ i\in\left\{  1,2,3,\ldots
\right\}  \right\}  =\left\{  \lambda_{i}-i\ \mid\ i\in\left\{  1,2,3,\ldots
\right\}  \right\} \\
&  =B\ \ \ \ \ \ \ \ \ \ \left(  \text{by the definition of }B\right)  .
\end{align*}

For each positive integer $j$, set $\beta_{j}=\mu_{j}-j$ and $\eta
_{j}=-1-\beta_{j}$. Note that $\left(  \beta_{1},\beta_{2},\ldots,\beta
_{k-1}\right)  \in\mathbb{Z}^{k-1}$ is thus the same $\left(  k-1\right)
$-tuple that was called $\left(  \beta_{1},\beta_{2},\ldots,\beta
_{k-1}\right)  $ in Theorem \ref{thm.petk.explicit}. It is easy to see that
$\beta_{1}>\beta_{2}>\cdots>\beta_{k-1}$ and $\lambda_{1}-1>\lambda
_{2}-2>\lambda_{3}-3>\cdots$.

From $\lambda_{1}<k$, we obtain $k-1\geq\lambda_{1}$. Hence, Proposition
\ref{prop.partition.transpose.djun} \textbf{(e)} (applied to $p=k-1$) yields
that the two sets $\left\{  \alpha_{1},\alpha_{2},\alpha_{3},\ldots\right\}  $
and $\left\{  \eta_{1},\eta_{2},\ldots,\eta_{k-1}\right\}  $ are disjoint, and
their union is
\[
\left\{  \ldots,\left(  k-1\right)  -3,\left(  k-1\right)  -2,\left(
k-1\right)  -1\right\}  =\left\{  \text{all integers smaller than
}k-1\right\}  =W.
\]
Since $\left\{  \alpha_{1},\alpha_{2},\alpha_{3},\ldots\right\}  =B$, we can
restate this as follows: The two sets $B$ and $\left\{  \eta_{1},\eta
_{2},\ldots,\eta_{k-1}\right\}  $ are disjoint, and their union is $W$. Hence,
$\left\{  \eta_{1},\eta_{2},\ldots,\eta_{k-1}\right\}  =W\setminus B$.

It is also easy to see that $\beta_{1}>\beta_{2}>\cdots>\beta_{k-1}$, so that
$\eta_{1}<\eta_{2}<\cdots<\eta_{k-1}$. Hence, $\eta_{1},\eta_{2},\ldots
,\eta_{k-1}$ are the elements of the set $\left\{  \eta_{1},\eta_{2}%
,\ldots,\eta_{k-1}\right\}  $ listed in increasing order (with no repetition).

Let us define a $\left(  k-1\right)  $-tuple $\left(  \gamma_{1},\gamma
_{2},\ldots,\gamma_{k-1}\right)  \in\left\{  1,2,\ldots,k\right\}  ^{k-1}$ as
in Theorem \ref{thm.petk.explicit}. Now, we have the following chain of
logical equivalences:%
\begin{align*}
&  \ \left(  \operatorname*{pet}\nolimits_{k}\left(  \lambda,\varnothing
\right)  \neq0\right) \\
&  \Longleftrightarrow\ \left(  \text{the }k-1\text{ numbers }\gamma
_{1},\gamma_{2},\ldots,\gamma_{k-1}\text{ are distinct}\right) \\
&  \ \ \ \ \ \ \ \ \ \ \ \ \ \ \ \ \ \ \ \ \left(  \text{by parts \textbf{(b)}
and \textbf{(c)} of Theorem \ref{thm.petk.explicit}}\right) \\
&  \Longleftrightarrow\ \left(  \text{the }k-1\text{ numbers }\left(
\beta_{1}-1\right)  \%k,\left(  \beta_{2}-1\right)  \%k,\ldots,\left(
\beta_{k-1}-1\right)  \%k\text{ are distinct}\right) \\
&  \ \ \ \ \ \ \ \ \ \ \ \ \ \ \ \ \ \ \ \ \left(  \text{since }\gamma
_{i}=1+\left(  \beta_{i}-1\right)  \%k\text{ for each }i\right) \\
&  \Longleftrightarrow\ \left(  \text{no two of the }k-1\text{ numbers }%
\beta_{1}-1,\beta_{2}-1,\ldots,\beta_{k-1}-1\text{ are congruent modulo
}k\right) \\
&  \Longleftrightarrow\ \left(  \text{no two of the }k-1\text{ numbers }%
\beta_{1},\beta_{2},\ldots,\beta_{k-1}\text{ are congruent modulo }k\right) \\
&  \Longleftrightarrow\ \left(  \text{no two of the }k-1\text{ numbers
}-1-\beta_{1},-1-\beta_{2},\ldots,-1-\beta_{k-1}\text{ }\right. \\
&  \ \ \ \ \ \ \ \ \ \ \left.  \text{are congruent modulo }k\right) \\
&  \Longleftrightarrow\ \left(  \text{no two of the }k-1\text{ numbers }%
\eta_{1},\eta_{2},\ldots,\eta_{k-1}\text{ are congruent modulo }k\right) \\
&  \ \ \ \ \ \ \ \ \ \ \ \ \ \ \ \ \ \ \ \ \left(  \text{since }\eta
_{j}=-1-\beta_{j}\text{ for each }j\right) \\
&  \Longleftrightarrow\ \left(  \text{no two of the }k-1\text{ elements of
}\left\{  \eta_{1},\eta_{2},\ldots,\eta_{k-1}\right\}  \text{ are congruent
modulo }k\right) \\
&  \ \ \ \ \ \ \ \ \ \ \left(
\begin{array}
[c]{c}%
\text{since }\eta_{1},\eta_{2},\ldots,\eta_{k-1}\text{ are the elements of the
set }\left\{  \eta_{1},\eta_{2},\ldots,\eta_{k-1}\right\} \\
\text{listed in increasing order (with no repetition)}%
\end{array}
\right) \\
&  \Longleftrightarrow\ \left(  \text{no two of the }k-1\text{ elements of
}W\setminus B\text{ are congruent modulo }k\right) \\
&  \ \ \ \ \ \ \ \ \ \ \ \ \ \ \ \ \ \ \ \ \left(  \text{since }\left\{
\eta_{1},\eta_{2},\ldots,\eta_{k-1}\right\}  =W\setminus B\right) \\
&  \Longleftrightarrow\ \left(  \text{each congruence class }\overline
{i}\text{ has at most }1\text{ element in common with }W\setminus B\right) \\
&  \Longleftrightarrow\ \left(  \text{each }i\in\left\{  0,1,\ldots
,k-1\right\}  \text{ satisfies }\left\vert \overline{i}\cap\left(  W\setminus
B\right)  \right\vert \leq1\right) \\
&  \Longleftrightarrow\ \left(  \text{each }i\in\left\{  0,1,\ldots
,k-1\right\}  \text{ satisfies }\left\vert \left(  \overline{i}\cap W\right)
\setminus B\right\vert \leq1\right)
\end{align*}
(since $\overline{i}\cap\left(  W\setminus B\right)  =\left(  \overline{i}\cap
W\right)  \setminus B$ for each $i$). This proves Proposition
\ref{prop.petk.explicit-old}.
\end{proof}

\begin{noncompile}
I also have a truly ugly proof of Proposition \ref{prop.petk.explicit-old}
using the Jacobi complementary minor theorem.
\end{noncompile}

\subsection{\label{subsect.proofs.DeltaGkm}Proof of Theorem \ref{thm.DeltaGkm}%
}

\begin{proof}
[Proof of Theorem \ref{thm.DeltaGkm}.]In this proof, the word
\textquotedblleft monomial\textquotedblright\ may refer to a monomial in any
set of variables (not necessarily in $x_{1},x_{2},x_{3},\ldots$).

In the following, an $i$\emph{-monomial} (where $i\in\mathbb{N}$) shall mean a
monomial of degree $i$.

We shall say that a monomial is $k$\emph{-bounded} if all exponents in this
monomial are $<k$. In other words, a monomial is $k$\emph{-bounded} if it can
be written in the form $z_{1}^{a_{1}}z_{2}^{a_{2}}\cdots z_{s}^{a_{s}}$, where
$z_{1},z_{2},\ldots,z_{s}$ are distinct variables and $a_{1},a_{2}%
,\ldots,a_{s}$ are nonnegative integers $<k$. Thus, the $k$-bounded monomials
in the variables $x_{1},x_{2},x_{3},\ldots$ are precisely the monomials of the
form $\mathbf{x}^{\alpha}$ for $\alpha\in\operatorname*{WC}$ satisfying
$\left(  \alpha_{i}<k\text{ for all }i\right)  $. Hence, the $k$-bounded
$m$-monomials in the variables $x_{1},x_{2},x_{3},\ldots$ are precisely the
monomials of the form $\mathbf{x}^{\alpha}$ for $\alpha\in\operatorname*{WC}$
satisfying $\left\vert \alpha\right\vert =m$ and $\left(  \alpha_{i}<k\text{
for all }i\right)  $.

Now, the definition of $G\left(  k,m\right)  $ yields%
\begin{align}
G\left(  k,m\right)   &  =\sum_{\substack{\alpha\in\operatorname*{WC}%
;\\\left\vert \alpha\right\vert =m;\\\alpha_{i}<k\text{ for all }i}%
}\mathbf{x}^{\alpha}\nonumber\\
&  =\left(  \text{the sum of all }k\text{-bounded }m\text{-monomials in the
variables }x_{1},x_{2},x_{3},\ldots\right)  \label{pf.thm.DeltaGkm.Gkm=}%
\end{align}
(since the $k$-bounded $m$-monomials in the variables $x_{1},x_{2}%
,x_{3},\ldots$ are precisely the monomials of the form $\mathbf{x}^{\alpha}$
for $\alpha\in\operatorname*{WC}$ satisfying $\left\vert \alpha\right\vert =m$
and $\left(  \alpha_{i}<k\text{ for all }i\right)  $).

Let us now substitute the variables $x_{1},x_{2},x_{3},\ldots,y_{1}%
,y_{2},y_{3},\ldots$ for the variables $x_{1},x_{2},x_{3},\ldots$ on both
sides of the equality (\ref{pf.thm.DeltaGkm.Gkm=}). (This means that we choose
some bijection $\phi:\left\{  x_{1},x_{2},x_{3},\ldots\right\}  \rightarrow
\left\{  x_{1},x_{2},x_{3},\ldots,y_{1},y_{2},y_{3},\ldots\right\}  $, and
substitute $\phi\left(  x_{i}\right)  $ for each $x_{i}$ on both sides of
(\ref{pf.thm.DeltaGkm.Gkm=}).) The left hand side of
(\ref{pf.thm.DeltaGkm.Gkm=}) turns into $\left(  G\left(  k,m\right)  \right)
\left(  \mathbf{x},\mathbf{y}\right)  $ upon this
substitution\footnote{because this is how $\left(  G\left(  k,m\right)
\right)  \left(  \mathbf{x},\mathbf{y}\right)  $ was defined}, whereas the
right hand side turns into%
\[
\left(  \text{the sum of all }k\text{-bounded }m\text{-monomials in the
variables }x_{1},x_{2},x_{3},\ldots,y_{1},y_{2},y_{3},\ldots\right)
\]
\footnote{Indeed, the substitution can be regarded as simply renaming the
variables $x_{1},x_{2},x_{3},\ldots$ as $x_{1},x_{2},x_{3},\ldots,y_{1}%
,y_{2},y_{3},\ldots$ (in some order). Thus, it turns the $k$-bounded
$m$-monomials in the variables $x_{1},x_{2},x_{3},\ldots$ into the $k$-bounded
$m$-monomials in the variables $x_{1},x_{2},x_{3},\ldots,y_{1},y_{2}%
,y_{3},\ldots$.}. Thus, our substitution transforms the equality
(\ref{pf.thm.DeltaGkm.Gkm=}) into%
\begin{align}
&  \left(  G\left(  k,m\right)  \right)  \left(  \mathbf{x},\mathbf{y}\right)
\nonumber\\
&  =\left(  \text{the sum of all }k\text{-bounded }m\text{-monomials in the
variables }x_{1},x_{2},x_{3},\ldots,y_{1},y_{2},y_{3},\ldots\right)  .
\label{pf.thm.DeltaGkm.Gkmxy=}%
\end{align}

But any monomial $\mathfrak{m}$ in the variables $x_{1},x_{2},x_{3}%
,\ldots,y_{1},y_{2},y_{3},\ldots$ can be uniquely written as a product
$\mathfrak{np}$, where $\mathfrak{n}$ is a monomial in the variables
$x_{1},x_{2},x_{3},\ldots$ and where $\mathfrak{p}$ is a monomial in the
variables $y_{1},y_{2},y_{3},\ldots$. Moreover, if $\mathfrak{m}$ is written
in this form, then:

\begin{itemize}
\item the degree of $\mathfrak{m}$ equals the sum of the degrees of
$\mathfrak{n}$ and $\mathfrak{p}$;

\item thus, $\mathfrak{m}$ is an $m$-monomial if and only if there exists some
$i\in\left\{  0,1,\ldots,m\right\}  $ such that $\mathfrak{n}$ is an
$i$-monomial and $\mathfrak{p}$ is an $\left(  m-i\right)  $-monomial;

\item furthermore, $\mathfrak{m}$ is $k$-bounded if and only if both
$\mathfrak{n}$ and $\mathfrak{p}$ are $k$-bounded.
\end{itemize}

Thus, any $k$-bounded $m$-monomial $\mathfrak{m}$ in the variables
$x_{1},x_{2},x_{3},\ldots,y_{1},y_{2},y_{3},\ldots$ can be uniquely written as
a product $\mathfrak{np}$, where $i\in\left\{  0,1,\ldots,m\right\}  $, where
$\mathfrak{n}$ is a $k$-bounded $i$-monomial in the variables $x_{1}%
,x_{2},x_{3},\ldots$ and where $\mathfrak{p}$ is a $k$-bounded $\left(
m-i\right)  $-monomial in the variables $y_{1},y_{2},y_{3},\ldots$.
Conversely, every such product $\mathfrak{np}$ is a $k$-bounded $m$-monomial
$\mathfrak{m}$ in the variables $x_{1},x_{2},x_{3},\ldots,y_{1},y_{2}%
,y_{3},\ldots$. Thus, we obtain a bijection%
\begin{align*}
&  \bigsqcup_{i\in\left\{  0,1,\ldots,m\right\}  }\left(
\vphantom{\frac12}\left\{  k\text{-bounded }i\text{-monomials in the variables
}x_{1},x_{2},x_{3},\ldots\right\}  \right. \\
&  \ \ \ \ \ \ \ \ \ \ \left.  \vphantom{\frac12}\times\left\{
k\text{-bounded }\left(  m-i\right)  \text{-monomials in the variables }%
y_{1},y_{2},y_{3},\ldots\right\}  \right) \\
&  \rightarrow\left\{  k\text{-bounded }m\text{-monomials in the variables
}x_{1},x_{2},x_{3},\ldots,y_{1},y_{2},y_{3},\ldots\right\}
\end{align*}
that sends each pair $\left(  \mathfrak{n},\mathfrak{p}\right)  $ to
$\mathfrak{np}$. Hence,%
\begin{align}
&  \left(  \text{the sum of all }k\text{-bounded }m\text{-monomials in the
variables }x_{1},x_{2},x_{3},\ldots,y_{1},y_{2},y_{3},\ldots\right)
\nonumber\\
&  =\sum_{i\in\left\{  0,1,\ldots,m\right\}  }\ \ \sum_{\substack{\mathfrak{n}%
\text{ is a }k\text{-bounded}\\i\text{-monomial in the}\\\text{variables
}x_{1},x_{2},x_{3},\ldots}}\ \ \sum_{\substack{\mathfrak{p}\text{ is a
}k\text{-bounded}\\\left(  m-i\right)  \text{-monomial in the}%
\\\text{variables }y_{1},y_{2},y_{3},\ldots}}\mathfrak{np}\nonumber\\
&  =\sum_{i\in\left\{  0,1,\ldots,m\right\}  }\left(  \sum
_{\substack{\mathfrak{n}\text{ is a }k\text{-bounded}\\i\text{-monomial in
the}\\\text{variables }x_{1},x_{2},x_{3},\ldots}}\mathfrak{n}\right)  \left(
\sum_{\substack{\mathfrak{p}\text{ is a }k\text{-bounded}\\\left(  m-i\right)
\text{-monomial in the}\\\text{variables }y_{1},y_{2},y_{3},\ldots
}}\mathfrak{p}\right) \nonumber\\
&  =\sum_{i\in\left\{  0,1,\ldots,m\right\}  }\left(  \text{the sum of all
}k\text{-bounded }i\text{-monomials in the variables }x_{1},x_{2},x_{3}%
,\ldots\right) \nonumber\\
&  \ \ \ \ \ \ \ \ \ \ \cdot\left(  \text{the sum of all }k\text{-bounded
}\left(  m-i\right)  \text{-monomials in the variables }y_{1},y_{2}%
,y_{3},\ldots\right)  . \label{pf.thm.DeltaGkm.prod1}%
\end{align}

Now, let $i\in\left\{  0,1,\ldots,m\right\}  $. The same reasoning that gave
us (\ref{pf.thm.DeltaGkm.Gkm=}) can be applied to $i$ instead of $m$. Thus we
obtain%
\begin{equation}
G\left(  k,i\right)  =\left(  \text{the sum of all }k\text{-bounded
}i\text{-monomials in the variables }x_{1},x_{2},x_{3},\ldots\right)  .
\label{pf.thm.DeltaGkm.Gki=}%
\end{equation}

Also, $i\in\left\{  0,1,\ldots,m\right\}  $, so that $m-i\in\left\{
0,1,\ldots,m\right\}  \subseteq\mathbb{N}$. Hence, the same reasoning that
gave us (\ref{pf.thm.DeltaGkm.Gkm=}) can be applied to $m-i$ instead of $m$.
Thus we obtain%
\begin{align*}
&  G\left(  k,m-i\right) \\
&  =\left(  \text{the sum of all }k\text{-bounded }\left(  m-i\right)
\text{-monomials in the variables }x_{1},x_{2},x_{3},\ldots\right)  .
\end{align*}
Renaming the variables $x_{1},x_{2},x_{3},\ldots$ as $y_{1},y_{2},y_{3}%
,\ldots$ in this equality, we obtain%
\begin{align}
&  \left(  G\left(  k,m-i\right)  \right)  \left(  \mathbf{y}\right)
\nonumber\\
&  =\left(  \text{the sum of all }k\text{-bounded }\left(  m-i\right)
\text{-monomials in the variables }y_{1},y_{2},y_{3},\ldots\right)  .
\label{pf.thm.DeltaGkm.Gkm-iy=}%
\end{align}

Forget that we fixed $i$. We thus have proved (\ref{pf.thm.DeltaGkm.Gki=}) and
(\ref{pf.thm.DeltaGkm.Gkm-iy=}) for each $i\in\left\{  0,1,\ldots,m\right\}  $.

Now, (\ref{pf.thm.DeltaGkm.Gkmxy=}) becomes%
\begin{align*}
&  \left(  G\left(  k,m\right)  \right)  \left(  \mathbf{x},\mathbf{y}\right)
\\
&  =\left(  \text{the sum of all }k\text{-bounded }m\text{-monomials in the
variables }x_{1},x_{2},x_{3},\ldots,y_{1},y_{2},y_{3},\ldots\right) \\
&  =\sum_{i\in\left\{  0,1,\ldots,m\right\}  }\underbrace{\left(  \text{the
sum of all }k\text{-bounded }i\text{-monomials in the variables }x_{1}%
,x_{2},x_{3},\ldots\right)  }_{\substack{=G\left(  k,i\right)  \\\text{(by
(\ref{pf.thm.DeltaGkm.Gki=}))}}}\\
&  \ \ \ \ \ \ \ \ \ \ \cdot\underbrace{\left(  \text{the sum of all
}k\text{-bounded }\left(  m-i\right)  \text{-monomials in the variables }%
y_{1},y_{2},y_{3},\ldots\right)  }_{\substack{=\left(  G\left(  k,m-i\right)
\right)  \left(  \mathbf{y}\right)  \\\text{(by (\ref{pf.thm.DeltaGkm.Gkm-iy=}%
))}}}\\
&  \ \ \ \ \ \ \ \ \ \ \ \ \ \ \ \ \ \ \ \ \left(  \text{by
(\ref{pf.thm.DeltaGkm.prod1})}\right) \\
&  =\sum_{i\in\left\{  0,1,\ldots,m\right\}  }\underbrace{G\left(  k,i\right)
}_{=\left(  G\left(  k,i\right)  \right)  \left(  \mathbf{x}\right)  }%
\cdot\left(  G\left(  k,m-i\right)  \right)  \left(  \mathbf{y}\right)
=\sum_{i\in\left\{  0,1,\ldots,m\right\}  }\left(  G\left(  k,i\right)
\right)  \left(  \mathbf{x}\right)  \cdot\left(  G\left(  k,m-i\right)
\right)  \left(  \mathbf{y}\right)  .
\end{align*}
Hence, (\ref{eq.Delta-on-Lam.if}) holds for $f=G\left(  k,m\right)  $,
$I=\left\{  0,1,\ldots,m\right\}  $, $\left(  f_{1,i}\right)  _{i\in
I}=\left(  G\left(  k,i\right)  \right)  _{i\in\left\{  0,1,\ldots,m\right\}
}$ and $\left(  f_{2,i}\right)  _{i\in I}=\left(  G\left(  k,m-i\right)
\right)  _{i\in\left\{  0,1,\ldots,m\right\}  }$. Therefore,
(\ref{eq.Delta-on-Lam.then}) (applied to these $f$, $I$, $\left(
f_{1,i}\right)  _{i\in I}$ and $\left(  f_{2,i}\right)  _{i\in I}$) yields%
\[
\Delta\left(  G\left(  k,m\right)  \right)  =\sum_{i\in\left\{  0,1,\ldots
,m\right\}  }G\left(  k,i\right)  \otimes G\left(  k,m-i\right)  =\sum
_{i=0}^{m}G\left(  k,i\right)  \otimes G\left(  k,m-i\right)  .
\]
This proves Theorem \ref{thm.DeltaGkm}.
\end{proof}

\subsection{\label{subsect.proofs.G.frob}Proof of Theorem \ref{thm.G.frob}}

\begin{proof}
[Proof of Theorem \ref{thm.G.frob}.]Consider the ring $\left(  \mathbf{k}%
\left[  \left[  x_{1},x_{2},x_{3},\ldots\right]  \right]  \right)  \left[
\left[  t\right]  \right]  $ of formal power series in one indeterminate $t$
over $\mathbf{k}\left[  \left[  x_{1},x_{2},x_{3},\ldots\right]  \right]  $.
We equip this ring with the topology that is obtained by identifying it with
$\mathbf{k}\left[  \left[  x_{1},x_{2},x_{3},\ldots,t\right]  \right]  $ (or,
equivalently, which is obtained by considering $\mathbf{k}\left[  \left[
x_{1},x_{2},x_{3},\ldots\right]  \right]  $ itself as equipped with the
standard topology on a ring of formal power series, and then adjoining the
extra indeterminate $t$).

Now, consider the map%
\begin{align*}
\mathbf{F}_{k}:\mathbf{k}\left[  \left[  x_{1},x_{2},x_{3},\ldots\right]
\right]   &  \rightarrow\mathbf{k}\left[  \left[  x_{1},x_{2},x_{3}%
,\ldots\right]  \right]  ,\\
a  &  \mapsto a\left(  x_{1}^{k},x_{2}^{k},x_{3}^{k},\ldots\right)  .
\end{align*}
This map $\mathbf{F}_{k}$ is a continuous $\mathbf{k}$-algebra homomorphism
(since it is an evaluation homomorphism)\footnote{It is well-defined, since
$k$ is positive.}. Hence, it induces a continuous\footnote{Continuity is
defined with respect to the topology that we defined on $\left(
\mathbf{k}\left[  \left[  x_{1},x_{2},x_{3},\ldots\right]  \right]  \right)
\left[  \left[  t\right]  \right]  $.} $\mathbf{k}\left[  \left[  t\right]
\right]  $-algebra homomorphism%
\[
\mathbf{F}_{k}\left[  \left[  t\right]  \right]  :\left(  \mathbf{k}\left[
\left[  x_{1},x_{2},x_{3},\ldots\right]  \right]  \right)  \left[  \left[
t\right]  \right]  \rightarrow\left(  \mathbf{k}\left[  \left[  x_{1}%
,x_{2},x_{3},\ldots\right]  \right]  \right)  \left[  \left[  t\right]
\right]
\]
that sends each formal power series $\sum_{n\geq0}a_{n}t^{n}\in\left(
\mathbf{k}\left[  \left[  x_{1},x_{2},x_{3},\ldots\right]  \right]  \right)
\left[  \left[  t\right]  \right]  $ (with $a_{n}\in\mathbf{k}\left[  \left[
x_{1},x_{2},x_{3},\ldots\right]  \right]  $) to $\sum_{n\geq0}\mathbf{F}%
_{k}\left(  a_{n}\right)  t^{n}$. Consider this $\mathbf{k}\left[  \left[
t\right]  \right]  $-algebra homomorphism $\mathbf{F}_{k}\left[  \left[
t\right]  \right]  $. In particular, it satisfies
\[
\left(  \mathbf{F}_{k}\left[  \left[  t\right]  \right]  \right)  \left(
t^{i}\right)  =t^{i}\ \ \ \ \ \ \ \ \ \ \text{for each }i\in\mathbb{N}.
\]

The definition of $\mathbf{F}_{k}$ yields%
\begin{equation}
\mathbf{F}_{k}\left(  x_{i}\right)  =x_{i}^{k}\ \ \ \ \ \ \ \ \ \ \text{for
each }i\in\left\{  1,2,3,\ldots\right\}  . \label{pf.thm.G.frob.Fkxi}%
\end{equation}
Also, for each $a\in\Lambda$, we have%
\begin{align}
\mathbf{F}_{k}\left(  a\right)   &  =a\left(  x_{1}^{k},x_{2}^{k},x_{3}%
^{k},\ldots\right)  \ \ \ \ \ \ \ \ \ \ \left(  \text{by the definition of
}\mathbf{F}_{k}\right) \nonumber\\
&  =\mathbf{f}_{k}\left(  a\right)  \label{pf.thm.G.frob.Fka}%
\end{align}
(since the definition of $\mathbf{f}_{k}$ yields $\mathbf{f}_{k}\left(
a\right)  =a\left(  x_{1}^{k},x_{2}^{k},x_{3}^{k},\ldots\right)  $). Thus, in
particular, each $n\in\mathbb{N}$ satisfies%
\begin{equation}
\mathbf{F}_{k}\left(  e_{n}\right)  =\mathbf{f}_{k}\left(  e_{n}\right)
\label{pf.thm.G.frob.Fken}%
\end{equation}
(by (\ref{pf.thm.G.frob.Fka}), applied to $a=e_{n}$).

Applying the map $\mathbf{F}_{k}\left[  \left[  t\right]  \right]  $ to both
sides of the equality (\ref{pf.lem.h-e-reciprocity-t.e}), we obtain%
\[
\left(  \mathbf{F}_{k}\left[  \left[  t\right]  \right]  \right)  \left(
\prod_{i=1}^{\infty}\left(  1+x_{i}t\right)  \right)  =\left(  \mathbf{F}%
_{k}\left[  \left[  t\right]  \right]  \right)  \left(  \sum_{n\geq0}%
e_{n}t^{n}\right)  =\sum_{n\geq0}\mathbf{F}_{k}\left(  e_{n}\right)  t^{n}%
\]
(by the definition of $\mathbf{F}_{k}\left[  \left[  t\right]  \right]  $).
Hence,%
\begin{align*}
\sum_{n\geq0}\mathbf{F}_{k}\left(  e_{n}\right)  t^{n} &  =\left(
\mathbf{F}_{k}\left[  \left[  t\right]  \right]  \right)  \left(  \prod
_{i=1}^{\infty}\left(  1+x_{i}t\right)  \right)  =\prod_{i=1}^{\infty
}\underbrace{\left(  \mathbf{F}_{k}\left[  \left[  t\right]  \right]  \right)
\left(  1+x_{i}t\right)  }_{\substack{=1+\mathbf{F}_{k}\left(  x_{i}\right)
t\\\text{(by the definition of }\mathbf{F}_{k}\left[  \left[  t\right]
\right]  \text{)}}}\\
&  \ \ \ \ \ \ \ \ \ \ \left(
\begin{array}
[c]{c}%
\text{since }\mathbf{F}_{k}\left[  \left[  t\right]  \right]  \text{ is a
continuous }\mathbf{k}\left[  \left[  t\right]  \right]  \text{-algebra
homomorphism,}\\
\text{and thus respects infinite products}%
\end{array}
\right)  \\
&  =\prod_{i=1}^{\infty}\left(  1+\underbrace{\mathbf{F}_{k}\left(
x_{i}\right)  }_{\substack{=x_{i}^{k}\\\text{(by (\ref{pf.thm.G.frob.Fkxi}))}%
}}t\right)  =\prod_{i=1}^{\infty}\left(  1+x_{i}^{k}t\right)  .
\end{align*}
Substituting $-t^{k}$ for $t$ in this equality, we find%
\begin{align*}
\sum_{n\geq0}\mathbf{F}_{k}\left(  e_{n}\right)  \left(  -t^{k}\right)  ^{n}
&  =\prod_{i=1}^{\infty}\underbrace{\left(  1+x_{i}^{k}\left(  -t^{k}\right)
\right)  }_{\substack{=1-\left(  x_{i}t\right)  ^{k}\\=\left(  1-x_{i}%
t\right)  \left(  \left(  x_{i}t\right)  ^{0}+\left(  x_{i}t\right)
^{1}+\cdots+\left(  x_{i}t\right)  ^{k-1}\right)  \\\text{(since }%
1-u^{k}=\left(  1-u\right)  \left(  u^{0}+u^{1}+\cdots+u^{k-1}\right)
\\\text{for any element }u\text{ of any ring)}}}\\
&  =\prod_{i=1}^{\infty}\left(  \left(  1-x_{i}t\right)  \left(  \left(
x_{i}t\right)  ^{0}+\left(  x_{i}t\right)  ^{1}+\cdots+\left(  x_{i}t\right)
^{k-1}\right)  \right)  \\
&  =\left(  \prod_{i=1}^{\infty}\left(  1-x_{i}t\right)  \right)  \left(
\prod_{i=1}^{\infty}\left(  \left(  x_{i}t\right)  ^{0}+\left(  x_{i}t\right)
^{1}+\cdots+\left(  x_{i}t\right)  ^{k-1}\right)  \right)  .
\end{align*}
We can divide both sides of this equality by $\prod_{i=1}^{\infty}\left(
1-x_{i}t\right)  $ (since the formal power series $\prod_{i=1}^{\infty}\left(
1-x_{i}t\right)  $ has constant term $1$ and thus is invertible), and thus
obtain%
\begin{align*}
\dfrac{\sum_{n\geq0}\mathbf{F}_{k}\left(  e_{n}\right)  \left(  -t^{k}\right)
^{n}}{\prod_{i=1}^{\infty}\left(  1-x_{i}t\right)  } &  =\prod_{i=1}^{\infty
}\underbrace{\left(  \left(  x_{i}t\right)  ^{0}+\left(  x_{i}t\right)
^{1}+\cdots+\left(  x_{i}t\right)  ^{k-1}\right)  }_{=\sum_{u=0}^{k-1}\left(
x_{i}t\right)  ^{u}}=\prod_{i=1}^{\infty}\ \ \sum_{u=0}^{k-1}\left(
x_{i}t\right)  ^{u}\\
&  =\underbrace{\sum_{\substack{\alpha=\left(  \alpha_{1},\alpha_{2}%
,\alpha_{3},\ldots\right)  \in\left\{  0,1,\ldots,k-1\right\}  ^{\infty
};\\\alpha_{i}=0\text{ for all but finitely many }i}}}_{\substack{=\sum
_{\substack{\alpha\in\left\{  0,1,\ldots,k-1\right\}  ^{\infty}\\\text{is a
weak composition}}}\\=\sum_{\substack{\alpha\in\operatorname*{WC};\\\alpha
_{i}<k\text{ for all }i}}}}\ \ \underbrace{\left(  x_{1}t\right)  ^{\alpha
_{1}}\left(  x_{2}t\right)  ^{\alpha_{2}}\left(  x_{3}t\right)  ^{\alpha_{3}%
}\cdots}_{\substack{=\left(  x_{1}^{\alpha_{1}}x_{2}^{\alpha_{2}}x_{3}%
^{\alpha_{3}}\cdots\right)  \left(  t^{\alpha_{1}}t^{\alpha_{2}}t^{\alpha_{3}%
}\cdots\right)  }}\\
&  \ \ \ \ \ \ \ \ \ \ \ \ \ \ \ \ \ \ \ \ \left(  \text{here, we have
expanded the product}\right)  \\
&  =\sum_{\substack{\alpha\in\operatorname*{WC};\\\alpha_{i}<k\text{ for all
}i}}\ \ \underbrace{\left(  x_{1}^{\alpha_{1}}x_{2}^{\alpha_{2}}x_{3}%
^{\alpha_{3}}\cdots\right)  }_{\substack{=\mathbf{x}^{\alpha}\\\text{(by the
definition of }\mathbf{x}^{\alpha}\text{)}}}\ \ \underbrace{\left(
t^{\alpha_{1}}t^{\alpha_{2}}t^{\alpha_{3}}\cdots\right)  }%
_{\substack{=t^{\alpha_{1}+\alpha_{2}+\alpha_{3}+\cdots}=t^{\left\vert
\alpha\right\vert }\\\text{(since }\alpha_{1}+\alpha_{2}+\alpha_{3}%
+\cdots=\left\vert \alpha\right\vert \text{)}}}\\
&  =\sum_{\substack{\alpha\in\operatorname*{WC};\\\alpha_{i}<k\text{ for all
}i}}\mathbf{x}^{\alpha}t^{\left\vert \alpha\right\vert }.
\end{align*}
Hence,%
\begin{align*}
&  \sum_{\substack{\alpha\in\operatorname*{WC};\\\alpha_{i}<k\text{ for all
}i}}\mathbf{x}^{\alpha}t^{\left\vert \alpha\right\vert }\\
&  =\dfrac{\sum_{n\geq0}\mathbf{F}_{k}\left(  e_{n}\right)  \left(
-t^{k}\right)  ^{n}}{\prod_{i=1}^{\infty}\left(  1-x_{i}t\right)  }=\left(
\sum_{n\geq0}\underbrace{\mathbf{F}_{k}\left(  e_{n}\right)  }%
_{\substack{=\mathbf{f}_{k}\left(  e_{n}\right)  \\\text{(by
(\ref{pf.thm.G.frob.Fken}))}}}\ \ \underbrace{\left(  -t^{k}\right)  ^{n}%
}_{=\left(  -1\right)  ^{n}t^{kn}}\right)  \cdot\underbrace{\prod
_{i=1}^{\infty}\left(  1-x_{i}t\right)  ^{-1}}_{\substack{=\sum_{n\geq0}%
h_{n}t^{n}\\\text{(by (\ref{pf.lem.h-e-reciprocity-t.h}))}}}\\
&  =\left(  \sum_{n\geq0}\mathbf{f}_{k}\left(  e_{n}\right)  \left(
-1\right)  ^{n}t^{kn}\right)  \cdot\underbrace{\left(  \sum_{n\geq0}h_{n}%
t^{n}\right)  }_{=\sum_{j\geq0}h_{j}t^{j}}=\left(  \sum_{n\geq0}\mathbf{f}%
_{k}\left(  e_{n}\right)  \left(  -1\right)  ^{n}t^{kn}\right)  \cdot\left(
\sum_{j\geq0}h_{j}t^{j}\right)  \\
&  =\underbrace{\sum_{n\geq0}\sum_{j\geq0}}_{=\sum_{\left(  n,j\right)
\in\mathbb{N}^{2}}}\mathbf{f}_{k}\left(  e_{n}\right)  \left(  -1\right)
^{n}\underbrace{t^{kn}h_{j}t^{j}}_{=h_{j}t^{kn+j}}=\sum_{\left(  n,j\right)
\in\mathbb{N}^{2}}\mathbf{f}_{k}\left(  e_{n}\right)  \left(  -1\right)
^{n}h_{j}t^{kn+j}.
\end{align*}

\begin{vershort}
\noindent This is an equality between two power series in $\left(
\mathbf{k}\left[  \left[  x_{1},x_{2},x_{3},\ldots\right]  \right]  \right)
\left[  \left[  t\right]  \right]  $. If we compare the coefficients of
$t^{m}$ on both sides of it (where $x_{1},x_{2},x_{3},\ldots$ are considered
scalars, not monomials), we obtain%
\[
\sum_{\substack{\alpha\in\operatorname*{WC};\\\alpha_{i}<k\text{ for all
}i;\\\left\vert \alpha\right\vert =m}}\mathbf{x}^{\alpha}=\underbrace{\sum
_{\substack{\left(  n,j\right)  \in\mathbb{N}^{2};\\kn+j=m}}}_{=\sum
_{n\in\mathbb{N}}\ \ \sum_{\substack{j\in\mathbb{N};\\kn+j=m}}}\mathbf{f}%
_{k}\left(  e_{n}\right)  \left(  -1\right)  ^{n}h_{j}=\sum_{n\in\mathbb{N}%
}\mathbf{f}_{k}\left(  e_{n}\right)  \left(  -1\right)  ^{n}\cdot
\sum_{\substack{j\in\mathbb{N};\\kn+j=m}}h_{j}.
\]

\end{vershort}

\begin{verlong}
\noindent This is an equality between two power series in $\left(
\mathbf{k}\left[  \left[  x_{1},x_{2},x_{3},\ldots\right]  \right]  \right)
\left[  \left[  t\right]  \right]  $. If we compare the coefficients of
$t^{m}$ on both sides of it (where $x_{1},x_{2},x_{3},\ldots$ are considered
scalars, not monomials), we obtain%
\begin{align*}
\sum_{\substack{\alpha\in\operatorname*{WC};\\\alpha_{i}<k\text{ for all
}i;\\\left\vert \alpha\right\vert =m}}\mathbf{x}^{\alpha}  &
=\underbrace{\sum_{\substack{\left(  n,j\right)  \in\mathbb{N}^{2};\\kn+j=m}%
}}_{=\sum_{n\in\mathbb{N}}\ \ \sum_{\substack{j\in\mathbb{N};\\kn+j=m}%
}}\mathbf{f}_{k}\left(  e_{n}\right)  \left(  -1\right)  ^{n}h_{j}\\
&  =\sum_{n\in\mathbb{N}}\ \ \sum_{\substack{j\in\mathbb{N};\\kn+j=m}%
}\mathbf{f}_{k}\left(  e_{n}\right)  \left(  -1\right)  ^{n}h_{j}=\sum
_{n\in\mathbb{N}}\mathbf{f}_{k}\left(  e_{n}\right)  \left(  -1\right)
^{n}\cdot\sum_{\substack{j\in\mathbb{N};\\kn+j=m}}h_{j}.
\end{align*}

\end{verlong}

\noindent Now, the definition of $G\left(  k,m\right)  $ yields%
\begin{align}
G\left(  k,m\right)   &  =\sum_{\substack{\alpha\in\operatorname*{WC}%
;\\\left\vert \alpha\right\vert =m;\\\alpha_{i}<k\text{ for all }i}%
}\mathbf{x}^{\alpha}=\sum_{\substack{\alpha\in\operatorname*{WC};\\\alpha
_{i}<k\text{ for all }i;\\\left\vert \alpha\right\vert =m}}\mathbf{x}^{\alpha
}\nonumber\\
&  =\sum_{n\in\mathbb{N}}\mathbf{f}_{k}\left(  e_{n}\right)  \left(
-1\right)  ^{n}\cdot\sum_{\substack{j\in\mathbb{N};\\kn+j=m}}h_{j}.
\label{pf.thm.G.frob.15}%
\end{align}

But the right hand side of this equality can be simplified. Namely, for each
$n\in\mathbb{N}$, we have%
\begin{equation}
\sum_{\substack{j\in\mathbb{N};\\kn+j=m}}h_{j}=h_{m-kn}.
\label{pf.thm.G.frob.16}%
\end{equation}

\begin{vershort}
[\textit{Proof of (\ref{pf.thm.G.frob.16}):} Let $n\in\mathbb{N}$. We must
prove the equality (\ref{pf.thm.G.frob.16}). If $m-kn<0$, then $h_{m-kn}=0$
and $\sum_{\substack{j\in\mathbb{N};\\kn+j=m}}h_{j}=\left(  \text{empty
sum}\right)  =0$. Thus, if $m-kn<0$, then (\ref{pf.thm.G.frob.16}) boils down
to $0=0$, which is obviously true. Therefore, for the rest of the proof of
(\ref{pf.thm.G.frob.16}), we WLOG assume that $m-kn\geq0$. Hence, the sum
$\sum_{\substack{j\in\mathbb{N};\\kn+j=m}}h_{j}$ has exactly one addend,
namely the addend for $j=m-kn$. Therefore, $\sum_{\substack{j\in
\mathbb{N};\\kn+j=m}}h_{j}=h_{m-kn}$. This proves (\ref{pf.thm.G.frob.16}).]
\end{vershort}

\begin{verlong}
[\textit{Proof of (\ref{pf.thm.G.frob.16}):} Let $n\in\mathbb{N}$. We must
prove the equality (\ref{pf.thm.G.frob.16}). If $m-kn<0$, then%
\begin{align*}
\sum_{\substack{j\in\mathbb{N};\\kn+j=m}}h_{j} &  =\left(  \text{empty
sum}\right)  \ \ \ \ \ \ \ \ \ \ \left(
\begin{array}
[c]{c}%
\text{since there exists no }j\in\mathbb{N}\text{ such that }kn+j=m\\
\text{(because }m<kn\text{ (since }m-kn<0\text{))}%
\end{array}
\right)  \\
&  =0=h_{m-kn}\ \ \ \ \ \ \ \ \ \ \left(  \text{since }h_{m-kn}=0\text{
(because }m-kn<0\text{)}\right)
\end{align*}
Hence, if $m-kn<0$, then (\ref{pf.thm.G.frob.16}) is proven. Therefore, for
the rest of the proof of (\ref{pf.thm.G.frob.16}), we WLOG assume that
$m-kn\geq0$. Thus, $m-kn\in\mathbb{N}$. Hence, there exists exactly one
$j\in\mathbb{N}$ satisfying $kn+j=m$, namely $j=m-kn$. Thus, the sum
$\sum_{\substack{j\in\mathbb{N};\\kn+j=m}}h_{j}$ has exactly one addend,
namely the addend for $j=m-kn$. Therefore, $\sum_{\substack{j\in
\mathbb{N};\\kn+j=m}}h_{j}=h_{m-kn}$. This proves (\ref{pf.thm.G.frob.16}).]
\end{verlong}

Now, (\ref{pf.thm.G.frob.15}) becomes%
\begin{align*}
G\left(  k,m\right)   &  =\sum_{n\in\mathbb{N}}\mathbf{f}_{k}\left(
e_{n}\right)  \left(  -1\right)  ^{n}\cdot\underbrace{\sum_{\substack{j\in
\mathbb{N};\\kn+j=m}}h_{j}}_{\substack{=h_{m-kn}\\\text{(by
(\ref{pf.thm.G.frob.16}))}}}=\sum_{n\in\mathbb{N}}\mathbf{f}_{k}\left(
e_{n}\right)  \left(  -1\right)  ^{n}\cdot h_{m-kn}\\
&  =\sum_{n\in\mathbb{N}}\left(  -1\right)  ^{n}h_{m-kn}\cdot\mathbf{f}%
_{k}\left(  e_{n}\right)  =\sum_{i\in\mathbb{N}}\left(  -1\right)
^{i}h_{m-ki}\cdot\mathbf{f}_{k}\left(  e_{i}\right)
\end{align*}
(here, we have renamed the summation index $n$ as $i$). This proves Theorem
\ref{thm.G.frob}.
\end{proof}

Another proof of Theorem \ref{thm.G.frob} is sketched in a footnote in Section
\ref{sect.liu} below.

\subsection{\label{subsect.proofs.Gkm-genset}Proofs of the results from
Section \ref{subsect.thms.genset}}

We shall now prove the results from Section \ref{subsect.thms.genset}. We
begin with Lemma \ref{lem.hall-hfep}. This will rely on the Verschiebung
endomorphisms $\mathbf{v}_{n}$ introduced in Definition \ref{def.vn}, and on
Proposition \ref{prop.f-v-dual} and the equality (\ref{eq.vnpm}).

\begin{proof}
[Proof of Lemma \ref{lem.hall-hfep}.]Applying (\ref{eq.vnpm}) to $n=k$, we
obtain%
\begin{equation}
\mathbf{v}_{k}\left(  p_{m}\right)  =%
\begin{cases}
kp_{m/k}, & \text{if }k\mid m;\\
0, & \text{if }k\nmid m
\end{cases}
\ \ \ \ . \label{pf.lem.hall-hfep.1}%
\end{equation}
Applying Proposition \ref{prop.f-v-dual} to $n=k$, $a=p_{m}$ and $b=e_{j}$, we
obtain%
\begin{equation}
\left\langle p_{m},\mathbf{f}_{k}\left(  e_{j}\right)  \right\rangle
=\left\langle \mathbf{v}_{k}\left(  p_{m}\right)  ,e_{j}\right\rangle .
\label{pf.lem.hall-hfep.2}%
\end{equation}

Now, we are in one of the following three cases:

\textit{Case 1:} We have $m=kj$.

\textit{Case 2:} We have $k\nmid m$.

\textit{Case 3:} We have neither $m=kj$ nor $k\nmid m$.

Let us first consider Case 1. In this case, we have $m=kj$. Thus, $k\mid m$
(since $j\in\mathbb{N}\subseteq\mathbb{Z}$) and $m/k=j$. Hence, $j=m/k$, so
that the integer $j$ is positive (since $m$ and $k$ are positive). But
(\ref{pf.lem.hall-hfep.1}) becomes%
\begin{align*}
\mathbf{v}_{k}\left(  p_{m}\right)   &  =%
\begin{cases}
kp_{m/k}, & \text{if }k\mid m;\\
0, & \text{if }k\nmid m
\end{cases}
\ \ \ \ =kp_{m/k}\ \ \ \ \ \ \ \ \ \ \left(  \text{since }k\mid m\right) \\
&  =kp_{j}\ \ \ \ \ \ \ \ \ \ \left(  \text{since }m/k=j\right)  .
\end{align*}
Thus, (\ref{pf.lem.hall-hfep.2}) becomes%
\begin{align*}
\left\langle p_{m},\mathbf{f}_{k}\left(  e_{j}\right)  \right\rangle  &
=\left\langle \underbrace{\mathbf{v}_{k}\left(  p_{m}\right)  }_{=kp_{j}%
},e_{j}\right\rangle =\left\langle kp_{j},e_{j}\right\rangle \\
&  =\left\langle e_{j},kp_{j}\right\rangle \ \ \ \ \ \ \ \ \ \ \left(
\text{since the Hall inner product is symmetric}\right) \\
&  =k\underbrace{\left\langle e_{j},p_{j}\right\rangle }_{\substack{=\left(
-1\right)  ^{j-1}\\\text{(by Proposition \ref{prop.ejpj}, applied to
}n=j\text{)}}}=k\left(  -1\right)  ^{j-1}=\left(  -1\right)  ^{j-1}k.
\end{align*}
Comparing this with%
\[
\left(  -1\right)  ^{j-1}\underbrace{\left[  m=kj\right]  }%
_{\substack{=1\\\text{(since }m=kj\text{)}}}k=\left(  -1\right)  ^{j-1}k,
\]
we obtain $\left\langle p_{m},\mathbf{f}_{k}\left(  e_{j}\right)
\right\rangle =\left(  -1\right)  ^{j-1}\left[  m=kj\right]  k$. Thus, Lemma
\ref{lem.hall-hfep} is proven in Case 1.

\begin{vershort}
A similar (but simpler) argument can be used to prove Lemma
\ref{lem.hall-hfep} in Case 2. The main \textquotedblleft
idea\textquotedblright\ here is that $k\nmid m$ implies $m\neq kj$. The
details are left to the reader.
\end{vershort}

\begin{verlong}
Let us next consider Case 2. In this case, we have $k\nmid m$. Hence, $m\neq
kj$ (since otherwise, we would have $m=kj$, thus $k\mid m$ (since
$j\in\mathbb{N}$), contradicting $k\nmid m$). Now, (\ref{pf.lem.hall-hfep.1})
becomes%
\[
\mathbf{v}_{k}\left(  p_{m}\right)  =%
\begin{cases}
kp_{m/k}, & \text{if }k\mid m;\\
0, & \text{if }k\nmid m
\end{cases}
\ \ \ \ =0\ \ \ \ \ \ \ \ \ \ \left(  \text{since }k\nmid m\right)  .
\]
Thus, (\ref{pf.lem.hall-hfep.2}) becomes%
\[
\left\langle p_{m},\mathbf{f}_{k}\left(  e_{j}\right)  \right\rangle
=\left\langle \underbrace{\mathbf{v}_{k}\left(  p_{m}\right)  }_{=0}%
,e_{j}\right\rangle =\left\langle 0,e_{j}\right\rangle =0.
\]
Comparing this with%
\[
\left(  -1\right)  ^{j-1}\underbrace{\left[  m=kj\right]  }%
_{\substack{=0\\\text{(since }m\neq kj\text{)}}}k=0,
\]
we obtain $\left\langle p_{m},\mathbf{f}_{k}\left(  e_{j}\right)
\right\rangle =\left(  -1\right)  ^{j-1}\left[  m=kj\right]  k$. Thus, Lemma
\ref{lem.hall-hfep} is proven in Case 2.
\end{verlong}

Let us finally consider Case 3. In this case, we have neither $m=kj$ nor
$k\nmid m$. In other words, we have $m\neq kj$ and $k\mid m$. From $k\mid m$,
we conclude that $m/k$ is a positive integer\footnote{Indeed, it is positive
since $m$ and $k$ are positive.}. From $m\neq kj$, we obtain $m/k\neq j$.
Thus, the symmetric functions $p_{m/k}$ and $e_{j}$ are homogeneous of
different degrees\footnote{since $p_{m/k}$ is homogeneous of degree $m/k$,
whereas $e_{j}$ is homogeneous of degree $j$}, and therefore satisfy
$\left\langle p_{m/k},e_{j}\right\rangle =0$ (by (\ref{eq.Hall.graded}),
applied to $f=p_{m/k}$ and $g=e_{j}$).

Now, (\ref{pf.lem.hall-hfep.1}) becomes%
\[
\mathbf{v}_{k}\left(  p_{m}\right)  =%
\begin{cases}
kp_{m/k}, & \text{if }k\mid m;\\
0, & \text{if }k\nmid m
\end{cases}
\ \ \ \ =kp_{m/k}\ \ \ \ \ \ \ \ \ \ \left(  \text{since }k\mid m\right)  .
\]
Thus, (\ref{pf.lem.hall-hfep.2}) becomes%
\[
\left\langle p_{m},\mathbf{f}_{k}\left(  e_{j}\right)  \right\rangle
=\left\langle \underbrace{\mathbf{v}_{k}\left(  p_{m}\right)  }_{=kp_{m/k}%
},e_{j}\right\rangle =\left\langle kp_{m/k},e_{j}\right\rangle
=k\underbrace{\left\langle p_{m/k},e_{j}\right\rangle }_{=0}=0.
\]
Comparing this with%
\[
\left(  -1\right)  ^{j-1}\underbrace{\left[  m=kj\right]  }%
_{\substack{=0\\\text{(since }m\neq kj\text{)}}}k=0,
\]
we obtain $\left\langle p_{m},\mathbf{f}_{k}\left(  e_{j}\right)
\right\rangle =\left(  -1\right)  ^{j-1}\left[  m=kj\right]  k$. Thus, Lemma
\ref{lem.hall-hfep} is proven in Case 3.

We have thus proven Lemma \ref{lem.hall-hfep} in all three Cases 1, 2 and 3.
Thus, Lemma \ref{lem.hall-hfep} always holds.
\end{proof}

Next, let us prove a simple property of Hall inner products:

\begin{lemma}
\label{lem.hall-pmprod}Let $m$, $\alpha$ and $\beta$ be positive integers. Let
$a$ be a homogeneous symmetric function of degree $\alpha$. Let $b$ be a
homogeneous symmetric function of degree $\beta$. Then, $\left\langle
p_{m},ab\right\rangle =0$.
\end{lemma}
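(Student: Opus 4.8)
The plan is to prove $\left(  p_{m},ab\right)  =0$ by comparing degrees on both sides of the Hall inner product, using the fact (already recalled as (\ref{eq.Hall.graded})) that the Hall inner product of two homogeneous symmetric functions of different degrees vanishes. First I would observe that $p_{m}$ is a homogeneous symmetric function of degree $m$ (this is immediate from the definition $p_{m}=x_{1}^{m}+x_{2}^{m}+x_{3}^{m}+\cdots$, since every monomial appearing in it has degree $m$). Next I would note that since $a$ is homogeneous of degree $\alpha$ and $b$ is homogeneous of degree $\beta$, and $\Lambda$ is a graded $\mathbf{k}$-algebra, the product $ab$ is homogeneous of degree $\alpha+\beta$.

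Then I would split into two cases according to whether $m=\alpha+\beta$ or $m\neq\alpha+\beta$. In the case $m\neq\alpha+\beta$, the symmetric functions $p_{m}$ and $ab$ are homogeneous of different degrees, so (\ref{eq.Hall.graded}) (applied to $f=p_{m}$ and $g=ab$) directly yields $\left(p_{m},ab\right)=0$, as desired.

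The remaining case is $m=\alpha+\beta$. Here the degree comparison against $ab$ no longer helps, so instead I would use the \emph{other} well-known structural fact about $p_{m}$: a single power-sum $p_{m}$ is a \emph{primitive} element of the Hopf algebra $\Lambda$, i.e. $\Delta\left(p_{m}\right)=1\otimes p_{m}+p_{m}\otimes 1$. Combined with the standard adjointness between multiplication and comultiplication with respect to the Hall inner product --- namely $\left(p_{m},ab\right)=\left(\Delta\left(p_{m}\right),a\otimes b\right)$, where the inner product on $\Lambda\otimes\Lambda$ is the tensor-product inner product --- this gives
\[
\left(  p_{m},ab\right)  =\left(  1\otimes p_{m}+p_{m}\otimes 1,\ a\otimes b\right)  =\left(  1,a\right)  \left(  p_{m},b\right)  +\left(  p_{m},a\right)  \left(  1,b\right)  .
\]
Now $\left(1,a\right)=0$ because $1$ is homogeneous of degree $0$ while $a$ is homogeneous of degree $\alpha\geq 1$ (here I use that $\alpha$ is a \emph{positive} integer, which is part of the hypothesis), and (\ref{eq.Hall.graded}) applies; likewise $\left(1,b\right)=0$ since $\beta\geq 1$. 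Hence both terms vanish and $\left(p_{m},ab\right)=0$.

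The main obstacle is that the two ingredients used in the second case --- the primitivity of $p_{m}$ and the adjointness $\left(f,gh\right)=\left(\Delta f,g\otimes h\right)$ --- are not literally stated in the excerpt, although they are entirely standard (both appear in \cite{GriRei}; the adjointness is the statement that $\Delta$ and $m$ are Hopf-adjoint, and primitivity of $p_{m}$ is classical). If one prefers to avoid invoking them, an alternative is available: expand $a$ and $b$ in the power-sum basis (valid after tensoring with $\mathbb{Q}$, or directly over $\mathbf{k}$ using that the relevant Gram matrix is integral) as $a=\sum_{\lambda\vdash\alpha}c_{\lambda}p_{\lambda}$ and $b=\sum_{\mu\vdash\beta}d_{\mu}p_{\mu}$, use the orthogonality $\left(p_{\lambda},p_{\nu}\right)=z_{\lambda}\delta_{\lambda,\nu}$, and observe that the partition $\left(m\right)$ has a single part of size $m=\alpha+\beta$ whereas every $p_{\lambda}p_{\mu}$ occurring in $ab$ is indexed by a partition of $\alpha+\beta$ with at least two parts (since $\lambda\vdash\alpha\geq 1$ and $\mu\vdash\beta\geq 1$ each contribute at least one part), so $\left(m\right)$ never appears and the inner product is zero. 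I would present the Hopf-theoretic argument as the main proof and perhaps mention this combinatorial alternative in a footnote.
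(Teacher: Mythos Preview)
Your proof is correct. The Hopf-theoretic argument in your second case (primitivity of $p_{m}$ together with the adjointness $\left(f,gh\right)=\left(\Delta f,\,g\otimes h\right)$) is exactly the approach the paper describes as its \emph{second} proof (given in full in the detailed version and mentioned in the short version as ``a different proof of Lemma \ref{lem.hall-pmprod}, using the graded dual $\Lambda^{o}$ of the Hopf algebra $\Lambda$ and the primitivity of the element $p_{m}$''). Your case split on $m=\alpha+\beta$ versus $m\neq\alpha+\beta$ is harmless but redundant: the primitivity argument already handles both cases, since $\left(1,a\right)=0$ and $\left(1,b\right)=0$ follow from $\alpha,\beta\geq1$ regardless of the value of $m$.

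The paper's \emph{main} proof is different: it reduces by $\mathbf{k}$-linearity to the case $a=h_{\lambda}$ with $\lambda\vdash\alpha$ and $b=h_{\mu}$ with $\mu\vdash\beta$, notes that $h_{\lambda}h_{\mu}=h_{\lambda\sqcup\mu}$, and then uses the duality $\left(h_{\nu},m_{\rho}\right)=\delta_{\nu,\rho}$ together with $p_{m}=m_{\left(m\right)}$ to get $\left(p_{m},ab\right)=\delta_{\lambda\sqcup\mu,\left(m\right)}=0$, since $\lambda\sqcup\mu$ has at least two parts. This is closer in spirit to your footnoted ``combinatorial alternative'', except that it uses the $h$-basis rather than the $p$-basis and so avoids any issues with $\mathbb{Q}$-coefficients. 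Your Hopf argument is cleaner conceptually; the paper's $h$-basis argument stays entirely within facts already established in Section~\ref{sect.not}.
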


\begin{vershort}
\begin{proof}
[Proof of Lemma \ref{lem.hall-pmprod}.]For each $n\in\mathbb{N}$, let
$\Lambda_{n}$ denote the $n$-th homogeneous component of the graded
$\mathbf{k}$-algebra $\Lambda$. Thus, $a\in\Lambda_{\alpha}$ and $b\in
\Lambda_{\beta}$ (since $a$ and $b$ are homogeneous symmetric functions of
degrees $\alpha$ and $\beta$).

But it is known that the family $\left(  h_{\lambda}\right)  _{\lambda
\in\operatorname*{Par}}$ is a graded basis of the graded $\mathbf{k}$-module
$\Lambda$; this means that for each $n\in\mathbb{N}$, its subfamily $\left(
h_{\lambda}\right)  _{\lambda\in\operatorname*{Par}\nolimits_{n}}$ is a basis
of the $\mathbf{k}$-module $\Lambda_{n}$.\ \ \ \ \footnote{This fact appears,
e.g., in \cite[Proposition 2.4.3(j)]{GriRei}.} Applying this to $n=\alpha$, we
conclude that the subfamily $\left(  h_{\lambda}\right)  _{\lambda
\in\operatorname*{Par}\nolimits_{\alpha}}$ is a basis of $\Lambda_{\alpha}$.
Hence, $a$ is a $\mathbf{k}$-linear combination of this family $\left(
h_{\lambda}\right)  _{\lambda\in\operatorname*{Par}\nolimits_{\alpha}}$ (since
$a\in\Lambda_{\alpha}$).

We must prove the equality $\left\langle p_{m},ab\right\rangle =0$. Both sides
of this equality depend $\mathbf{k}$-linearly on $a$. Thus, in proving it, we
can WLOG assume that $a$ belongs to the family $\left(  h_{\lambda}\right)
_{\lambda\in\operatorname*{Par}\nolimits_{\alpha}}$ (because we know that $a$
is a $\mathbf{k}$-linear combination of this family). In other words, we can
WLOG assume that $a=h_{\lambda}$ for some $\lambda\in\operatorname*{Par}%
\nolimits_{\alpha}$. Assume this. For similar reasons, we can WLOG assume that
$b=h_{\mu}$ for some $\mu\in\operatorname*{Par}\nolimits_{\beta}$. Assume
this, too. Consider these two partitions $\lambda$ and $\mu$.

We have $\lambda\in\operatorname*{Par}\nolimits_{\alpha}$ and thus $\left\vert
\lambda\right\vert =\alpha>0$, so that $\lambda\neq\varnothing$. Hence, the
partition $\lambda$ has at least one part. Likewise, the partition $\mu$ has
at least one part.

Now, let $\lambda\sqcup\mu$ be the partition obtained by listing all parts of
$\lambda$ and of $\mu$ and sorting the resulting list in weakly decreasing
order.\footnote{For example: If $\lambda=\left(  5,3,2\right)  $ and
$\mu=\left(  6,4,3,1,1\right)  $, then $\lambda\sqcup\mu=\left(
6,5,4,3,3,2,1,1\right)  $.} Using Definition \ref{def.hlam}, we can easily see
that $h_{\lambda\sqcup\mu}=h_{\lambda}h_{\mu}$. Comparing this with
$\underbrace{a}_{=h_{\lambda}}\underbrace{b}_{=h_{\mu}}=h_{\lambda}h_{\mu}$,
we obtain $ab=h_{\lambda\sqcup\mu}$.

But the partition $\lambda\sqcup\mu$ has as many parts as $\lambda$ and $\mu$
have combined. Thus, the partition $\lambda\sqcup\mu$ has at least $2$ parts
(since $\lambda$ has at least one part, and $\mu$ has at least one part).
Therefore, $\lambda\sqcup\mu\neq\left(  m\right)  $ (since the partition
$\lambda\sqcup\mu$ has at least $2$ parts, while the partition $\left(
m\right)  $ has only $1$ part). Now, recall that $p_{m}=m_{\left(  m\right)
}$ (where, of course, the two \textquotedblleft$m$\textquotedblright s in
\textquotedblleft$m_{\left(  m\right)  }$\textquotedblright\ mean completely
unrelated things). Thus,%
\begin{align*}
\left\langle \underbrace{p_{m}}_{=m_{\left(  m\right)  }},\underbrace{ab}%
_{=h_{\lambda\sqcup\mu}}\right\rangle  &  =\left\langle m_{\left(  m\right)
},h_{\lambda\sqcup\mu}\right\rangle =\left\langle h_{\lambda\sqcup\mu
},m_{\left(  m\right)  }\right\rangle \ \ \ \ \ \ \ \ \ \ \left(
\begin{array}
[c]{c}%
\text{since the Hall inner product}\\
\text{is symmetric}%
\end{array}
\right) \\
&  =\delta_{\lambda\sqcup\mu,\left(  m\right)  }\ \ \ \ \ \ \ \ \ \ \left(
\text{by (\ref{eq.hlam-mlam-dual})}\right) \\
&  =0\ \ \ \ \ \ \ \ \ \ \left(  \text{since }\lambda\sqcup\mu\neq\left(
m\right)  \right)  .
\end{align*}
This proves Lemma \ref{lem.hall-pmprod}.
\end{proof}

See \cite{verlong} for a different proof of Lemma \ref{lem.hall-pmprod}, using
the graded dual $\Lambda^{o}$ of the Hopf algebra $\Lambda$ and the
primitivity of the element $p_{m} \in\Lambda$.
\end{vershort}

\begin{verlong}
We shall give two proofs of this lemma: one using (\ref{eq.hlam-mlam-dual}),
and one using Hopf-algebraic machinery.

\begin{proof}
[First proof of Lemma \ref{lem.hall-pmprod} (sketched).]For each
$n\in\mathbb{N}$, let $\Lambda_{n}$ denote the $n$-th homogeneous component of
the graded $\mathbf{k}$-algebra $\Lambda$. Thus, $a\in\Lambda_{\alpha}$ and
$b\in\Lambda_{\beta}$ (since $a$ and $b$ are homogeneous symmetric functions
of degrees $\alpha$ and $\beta$).

But it is known that the family $\left(  h_{\lambda}\right)  _{\lambda
\in\operatorname*{Par}}$ is a graded basis of the graded $\mathbf{k}$-module
$\Lambda$; this means that for each $n\in\mathbb{N}$, its subfamily $\left(
h_{\lambda}\right)  _{\lambda\in\operatorname*{Par}\nolimits_{n}}$ is a basis
of the $\mathbf{k}$-module $\Lambda_{n}$.\ \ \ \ \footnote{This fact appears,
e.g., in \cite[Proposition 2.4.3(j)]{GriRei}.} Applying this to $n=\alpha$, we
conclude that the subfamily $\left(  h_{\lambda}\right)  _{\lambda
\in\operatorname*{Par}\nolimits_{\alpha}}$ is a basis of $\Lambda_{\alpha}$.
Hence, $a$ is a $\mathbf{k}$-linear combination of this family $\left(
h_{\lambda}\right)  _{\lambda\in\operatorname*{Par}\nolimits_{\alpha}}$ (since
$a\in\Lambda_{\alpha}$).

We must prove the equality $\left\langle p_{m},ab\right\rangle =0$. Both sides
of this equality depend $\mathbf{k}$-linearly on $a$. Thus, in proving it, we
can WLOG assume that $a$ belongs to the family $\left(  h_{\lambda}\right)
_{\lambda\in\operatorname*{Par}\nolimits_{\alpha}}$ (because we know that $a$
is a $\mathbf{k}$-linear combination of this family). In other words, we can
WLOG assume that $a=h_{\lambda}$ for some $\lambda\in\operatorname*{Par}%
\nolimits_{\alpha}$. Assume this. For similar reasons, we can WLOG assume that
$b=h_{\mu}$ for some $\mu\in\operatorname*{Par}\nolimits_{\beta}$. Assume
this, too. Consider these two partitions $\lambda$ and $\mu$.

We have $\lambda\in\operatorname*{Par}\nolimits_{\alpha}$ and thus $\left\vert
\lambda\right\vert =\alpha>0$, so that $\lambda\neq\varnothing$. Hence, the
partition $\lambda$ has at least one part. Likewise, the partition $\mu$ has
at least one part.

Now, let $\lambda\sqcup\mu$ be the partition obtained by listing all parts of
$\lambda$ and of $\mu$ and sorting the resulting list in weakly decreasing
order.\footnote{For example: If $\lambda=\left(  5,3,2\right)  $ and
$\mu=\left(  6,4,3,1,1\right)  $, then $\lambda\sqcup\mu=\left(
6,5,4,3,3,2,1,1\right)  $.} Using Definition \ref{def.hlam}, we can easily see
that $h_{\lambda\sqcup\mu}=h_{\lambda}h_{\mu}$. Comparing this with
$\underbrace{a}_{=h_{\lambda}}\underbrace{b}_{=h_{\mu}}=h_{\lambda}h_{\mu}$,
we obtain $ab=h_{\lambda\sqcup\mu}$.

But the partition $\lambda\sqcup\mu$ has as many parts as $\lambda$ and $\mu$
have combined. Thus, the partition $\lambda\sqcup\mu$ has at least $2$ parts
(since $\lambda$ has at least one part, and $\mu$ has at least one part).
Therefore, $\lambda\sqcup\mu\neq\left(  m\right)  $ (since the partition
$\lambda\sqcup\mu$ has at least $2$ parts, while the partition $\left(
m\right)  $ has only $1$ part). Now, recall that $p_{m}=m_{\left(  m\right)
}$ (where, of course, the two \textquotedblleft$m$\textquotedblright s in
\textquotedblleft$m_{\left(  m\right)  }$\textquotedblright\ mean completely
unrelated things). Thus,%
\begin{align*}
\left\langle \underbrace{p_{m}}_{=m_{\left(  m\right)  }},\underbrace{ab}%
_{=h_{\lambda\sqcup\mu}}\right\rangle  &  =\left\langle m_{\left(  m\right)
},h_{\lambda\sqcup\mu}\right\rangle =\left\langle h_{\lambda\sqcup\mu
},m_{\left(  m\right)  }\right\rangle \\
&  \ \ \ \ \ \ \ \ \ \ \left(  \text{since the Hall inner product is
symmetric}\right) \\
&  =\delta_{\lambda\sqcup\mu,\left(  m\right)  }\ \ \ \ \ \ \ \ \ \ \left(
\begin{array}
[c]{c}%
\text{by (\ref{eq.hlam-mlam-dual}), applied to }\lambda\sqcup\mu\text{ and
}\left(  m\right) \\
\text{instead of }\lambda\text{ and }\mu
\end{array}
\right) \\
&  =0\ \ \ \ \ \ \ \ \ \ \left(  \text{since }\lambda\sqcup\mu\neq\left(
m\right)  \right)  .
\end{align*}
This proves Lemma \ref{lem.hall-pmprod}.
\end{proof}

\begin{proof}
[Second proof of Lemma \ref{lem.hall-pmprod}.]Let $\widetilde{p_{m}}$ be the
map $\Lambda\rightarrow\mathbf{k},\ g\mapsto\left\langle p_{m},g\right\rangle
$. This is a $\mathbf{k}$-linear map.

The power-sum symmetric function $p_{m}$ is primitive as an element of the
Hopf algebra $\Lambda$ (see \cite[Proposition 2.3.6(i)]{GriRei}). Now,
consider the graded dual $\Lambda^{\circ}$ of the Hopf algebra $\Lambda$ (as
defined in \cite[\S 1.6]{GriRei}). The map $\Phi:\Lambda\rightarrow
\Lambda^{\circ}$ that sends each $f\in\Lambda$ to the $\mathbf{k}$-linear map
$\Lambda\rightarrow\mathbf{k},\ g\mapsto\left\langle f,g\right\rangle $ is a
Hopf algebra isomorphism (by \cite[Corollary 2.5.14]{GriRei}). Thus, this map
$\Phi$ sends primitive elements of $\Lambda$ to primitive elements of
$\Lambda^{\circ}$. Hence, in particular, $\Phi\left(  p_{m}\right)  \in
\Lambda^{\circ}$ is primitive (since $p_{m}\in\Lambda$ is primitive). In other
words, $\widetilde{p_{m}}\in\Lambda^{\circ}$ is primitive (since the
definitions of $\Phi$ and of $\widetilde{p_{m}}$ quickly reveal that
$\Phi\left(  p_{m}\right)  =\widetilde{p_{m}}$). In other words,
$\Delta_{\Lambda^{\circ}}\left(  \widetilde{p_{m}}\right)  =1_{\Lambda^{\circ
}}\otimes\widetilde{p_{m}}+\widetilde{p_{m}}\otimes1_{\Lambda^{\circ}}$.

Consider the $\mathbf{k}$-bilinear pairing $\left\langle \cdot,\cdot
\right\rangle :\Lambda^{\circ}\times\Lambda\rightarrow\mathbf{k}$ that sends
each pair $\left(  f,a\right)  \in\Lambda^{\circ}\times\Lambda$ to $f\left(
a\right)  \in\mathbf{k}$. It induces a pairing $\left\langle \cdot
,\cdot\right\rangle :\left(  \Lambda^{\circ}\otimes\Lambda^{\circ}\right)
\times\left(  \Lambda\times\Lambda\right)  \rightarrow\mathbf{k}$ that sends
each pair $\left(  f\otimes g,a\otimes b\right)  $ to $\left\langle
f,a\right\rangle \cdot\left\langle g,b\right\rangle =f\left(  a\right)  \cdot
g\left(  b\right)  \in\mathbf{k}$. We now have defined three $\mathbf{k}%
$-bilinear forms, all of which we denote by $\left\langle \cdot,\cdot
\right\rangle $; they will be distinguished by what is inside the parentheses.

The definition of the graded dual $\Lambda^{\circ}$ yields that $1_{\Lambda
^{\circ}}$ is a homogeneous element of $\Lambda^{\circ}$ of degree $0$. Thus,
$1_{\Lambda^{\circ}}$ annihilates all homogeneous components of $\Lambda$
except for the $0$-th component. In other words, if $f\in\Lambda$ is
homogeneous of degree $\gamma$, where $\gamma\in\mathbb{N}$ is distinct from
$0$, then $\left\langle 1_{\Lambda^{\circ}},f\right\rangle =0$. Applying this
to $f=a$ and $\gamma=\alpha$, we obtain $\left\langle 1_{\Lambda^{\circ}%
},a\right\rangle =0$ (since $\alpha$ is distinct from $0$ (because $\alpha$ is
positive)). Similarly, $\left\langle 1_{\Lambda^{\circ}},b\right\rangle =0$.

Now, the definition of $\widetilde{p_{m}}$ yields $\widetilde{p_{m}}\left(
ab\right)  =\left\langle p_{m},ab\right\rangle $, so that%
\begin{align*}
\left\langle p_{m},ab\right\rangle  &  =\widetilde{p_{m}}\left(  ab\right)
=\left\langle \widetilde{p_{m}},ab\right\rangle \\
&  =\left\langle \underbrace{\Delta_{\Lambda^{\circ}}\left(  \widetilde{p_{m}%
}\right)  }_{=1_{\Lambda^{\circ}}\otimes\widetilde{p_{m}}+\widetilde{p_{m}%
}\otimes1_{\Lambda^{\circ}}},a\otimes b\right\rangle
\ \ \ \ \ \ \ \ \ \ \left(  \text{by the definition of }\Delta_{\Lambda
^{\circ}}\right) \\
&  =\left\langle 1_{\Lambda^{\circ}}\otimes\widetilde{p_{m}}+\widetilde{p_{m}%
}\otimes1_{\Lambda^{\circ}},a\otimes b\right\rangle =\underbrace{\left\langle
1_{\Lambda^{\circ}}\otimes\widetilde{p_{m}},a\otimes b\right\rangle
}_{=\left\langle 1_{\Lambda^{\circ}},a\right\rangle \cdot\left\langle
\widetilde{p_{m}},b\right\rangle }+\underbrace{\left\langle \widetilde{p_{m}%
}\otimes1_{\Lambda^{\circ}},a\otimes b\right\rangle }_{=\left\langle
\widetilde{p_{m}},a\right\rangle \cdot\left\langle 1_{\Lambda^{\circ}%
},b\right\rangle }\\
&  =\underbrace{\left\langle 1_{\Lambda^{\circ}},a\right\rangle }_{=0}%
\cdot\left\langle \widetilde{p_{m}},b\right\rangle +\left\langle
\widetilde{p_{m}},a\right\rangle \cdot\underbrace{\left\langle 1_{\Lambda
^{\circ}},b\right\rangle }_{=0}=0.
\end{align*}
This proves Lemma \ref{lem.hall-pmprod}.
\end{proof}
\end{verlong}

We can now prove Proposition \ref{prop.hall-pmGkm}:

\begin{proof}
[Proof of Proposition \ref{prop.hall-pmGkm}.]Theorem \ref{thm.G.frob} yields
$G\left(  k,m\right)  =\sum_{i\in\mathbb{N}}\left(  -1\right)  ^{i}%
h_{m-ki}\cdot\mathbf{f}_{k}\left(  e_{i}\right)  $. Hence,%
\begin{align}
\left\langle p_{m},G\left(  k,m\right)  \right\rangle  &  =\left\langle
p_{m},\sum_{i\in\mathbb{N}}\left(  -1\right)  ^{i}h_{m-ki}\cdot\mathbf{f}%
_{k}\left(  e_{i}\right)  \right\rangle \nonumber\\
&  =\sum_{i\in\mathbb{N}}\left(  -1\right)  ^{i}\left\langle p_{m}%
,h_{m-ki}\cdot\mathbf{f}_{k}\left(  e_{i}\right)  \right\rangle
\label{pf.prop.hall-pmGkm.1}%
\end{align}
(since the Hall inner product is $\mathbf{k}$-bilinear).

Now, we claim that every $i\in\mathbb{N}\setminus\left\{  0,m/k\right\}  $
satisfies%
\begin{equation}
\left\langle p_{m},h_{m-ki}\cdot\mathbf{f}_{k}\left(  e_{i}\right)
\right\rangle =0. \label{pf.prop.hall-pmGkm.zeroes}%
\end{equation}

[\textit{Proof of (\ref{pf.prop.hall-pmGkm.zeroes}):} Let $i\in\mathbb{N}%
\setminus\left\{  0,m/k\right\}  $. Thus, $i\in\mathbb{N}$ and $i\notin%
\left\{  0,m/k\right\}  $. From $i\notin\left\{  0,m/k\right\}  $, we obtain
$i\neq0$ and $i\neq m/k$. From $i\neq m/k$, we obtain $ki\neq m$, so that
$m-ki\neq0$.

We must prove the equality (\ref{pf.prop.hall-pmGkm.zeroes}). If $m-ki<0$,
then $h_{m-ki}=0$, and therefore%
\[
\left\langle p_{m},\underbrace{h_{m-ki}}_{=0}\cdot\mathbf{f}_{k}\left(
e_{i}\right)  \right\rangle =\left\langle p_{m},0\right\rangle =0.
\]
Hence, the equality (\ref{pf.prop.hall-pmGkm.zeroes}) is proven if $m-ki<0$.
Thus, for the rest of this proof, we WLOG assume that $m-ki\geq0$. Combining
this with $m-ki\neq0$, we obtain $m-ki>0$. Thus, $m-ki$ is a positive integer.
Also, $i$ is a positive integer (since $i\in\mathbb{N}$ and $i\neq0$), and
thus $ki$ is a positive integer (since $k$ is a positive integer).

The map $\mathbf{f}_{k}:\Lambda\rightarrow\Lambda$ operates by replacing each
$x_{i}$ by $x_{i}^{k}$ in a symmetric function (by the definition of
$\mathbf{f}_{k}$). Thus, if $g\in\Lambda$ is any homogeneous symmetric
function of some degree $\gamma$, then $\mathbf{f}_{k}\left(  g\right)  $ is a
homogeneous symmetric function of degree $k\gamma$. Applying this to $g=e_{i}$
and $\gamma=i$, we conclude that $\mathbf{f}_{k}\left(  e_{i}\right)  $ is a
homogeneous symmetric function of degree $ki$ (since $e_{i}$ is a homogeneous
symmetric function of degree $i$). Also, $h_{m-ki}$ is a homogeneous symmetric
function of degree $m-ki$.

Hence, Lemma \ref{lem.hall-pmprod} (applied to $\alpha=m-ki$, $a=h_{m-ki}$,
$\beta=ki$ and $b=\mathbf{f}_{k}\left(  e_{i}\right)  $) yields $\left\langle
p_{m},h_{m-ki}\cdot\mathbf{f}_{k}\left(  e_{i}\right)  \right\rangle =0$. This
proves (\ref{pf.prop.hall-pmGkm.zeroes}).]

Note that $e_{0}=1$ and thus $\mathbf{f}_{k}\left(  e_{0}\right)
=\mathbf{f}_{k}\left(  1\right)  =1$ (by the definition of $\mathbf{f}_{k}$).

Note that $m/k>0$ (since $m$ and $k$ are positive). Hence, $m/k\neq0$. Now, we
are in one of the following two cases:

\textit{Case 1:} We have $k\mid m$.

\textit{Case 2:} We have $k\nmid m$.

Let us consider Case 1 first. In this case, we have $k\mid m$. Hence, $m/k$ is
a positive integer (since $m$ and $k$ are positive integers). Thus, $0$ and
$m/k$ are two distinct elements of $\mathbb{N}$ (indeed, they are distinct
because $m/k\neq0$). Lemma \ref{lem.hall-hfep} (applied to $j=m/k$) yields%
\[
\left\langle p_{m},\mathbf{f}_{k}\left(  e_{m/k}\right)  \right\rangle
=\left(  -1\right)  ^{m/k-1}\underbrace{\left[  m=k\left(  m/k\right)
\right]  }_{\substack{=1\\\text{(since }m=k\left(  m/k\right)  \text{)}%
}}k=\left(  -1\right)  ^{m/k-1}k.
\]

Now, (\ref{pf.prop.hall-pmGkm.1}) becomes%
\begin{align*}
&  \left\langle p_{m},G\left(  k,m\right)  \right\rangle \\
&  =\sum_{i\in\mathbb{N}}\left(  -1\right)  ^{i}\left\langle p_{m}%
,h_{m-ki}\cdot\mathbf{f}_{k}\left(  e_{i}\right)  \right\rangle \\
&  =\underbrace{\left(  -1\right)  ^{0}}_{=1}\left\langle p_{m}%
,\underbrace{h_{m-k\cdot0}}_{=h_{m}}\cdot\underbrace{\mathbf{f}_{k}\left(
e_{0}\right)  }_{=1}\right\rangle +\left(  -1\right)  ^{m/k}\left\langle
p_{m},\underbrace{h_{m-k\cdot m/k}}_{\substack{=h_{0}\\\text{(since }m-k\cdot
m/k=0\text{)}}}\cdot\mathbf{f}_{k}\left(  e_{m/k}\right)  \right\rangle \\
&  \ \ \ \ \ \ \ \ \ \ +\sum_{i\in\mathbb{N}\setminus\left\{  0,m/k\right\}
}\left(  -1\right)  ^{i}\underbrace{\left\langle p_{m},h_{m-ki}\cdot
\mathbf{f}_{k}\left(  e_{i}\right)  \right\rangle }_{\substack{=0\\\text{(by
(\ref{pf.prop.hall-pmGkm.zeroes}))}}}\\
&  \ \ \ \ \ \ \ \ \ \ \ \ \ \ \ \ \ \ \ \ \left(
\begin{array}
[c]{c}%
\text{here, we have split off the addends for }i=0\text{ and for }i=m/k\\
\text{from the sum (since }0\text{ and }m/k\text{ are two distinct elements of
}\mathbb{N}\text{)}%
\end{array}
\right) \\
&  =\underbrace{\left\langle p_{m},h_{m}\right\rangle }%
_{\substack{=\left\langle h_{m},p_{m}\right\rangle \\\text{(since the Hall
inner}\\\text{product is symmetric)}}}+\left(  -1\right)  ^{m/k}\left\langle
p_{m},\underbrace{h_{0}}_{=1}\cdot\mathbf{f}_{k}\left(  e_{m/k}\right)
\right\rangle +\underbrace{\sum_{i\in\mathbb{N}\setminus\left\{
0,m/k\right\}  }\left(  -1\right)  ^{i}0}_{=0}\\
&  =\underbrace{\left\langle h_{m},p_{m}\right\rangle }%
_{\substack{=1\\\text{(by Proposition \ref{prop.hjpj},}\\\text{applied to
}n=m\text{)}}}+\left(  -1\right)  ^{m/k}\underbrace{\left\langle
p_{m},\mathbf{f}_{k}\left(  e_{m/k}\right)  \right\rangle }_{=\left(
-1\right)  ^{m/k-1}k}=1+\underbrace{\left(  -1\right)  ^{m/k}\left(
-1\right)  ^{m/k-1}}_{=-1}k=1-k.
\end{align*}
Comparing this with%
\[
1-\underbrace{\left[  k\mid m\right]  }_{\substack{=1\\\text{(since }k\mid
m\text{)}}}k=1-k,
\]
we obtain $\left\langle p_{m},G\left(  k,m\right)  \right\rangle =1-\left[
k\mid m\right]  k$. Hence, Proposition \ref{prop.hall-pmGkm} is proven in Case 1.

\begin{vershort}
Case 2 is similar to Case 1, but simpler because we no longer need to split
off the addend for $i=m/k$ from the sum (since $m/k\notin\mathbb{N}$ in this
case). We leave it to the reader.
\end{vershort}

\begin{verlong}
Let us now consider Case 2. In this case, we have $k\nmid m$. Hence,
$m/k\notin\mathbb{Z}$, so that $m/k\notin\mathbb{N}$. Thus, $\mathbb{N}%
\setminus\left\{  0\right\}  =\mathbb{N}\setminus\left\{  0,m/k\right\}  $.
Now, (\ref{pf.prop.hall-pmGkm.1}) becomes%
\begin{align*}
&  \left\langle p_{m},G\left(  k,m\right)  \right\rangle \\
&  =\sum_{i\in\mathbb{N}}\left(  -1\right)  ^{i}\left\langle p_{m}%
,h_{m-ki}\cdot\mathbf{f}_{k}\left(  e_{i}\right)  \right\rangle \\
&  =\underbrace{\left(  -1\right)  ^{0}}_{=1}\left\langle p_{m}%
,\underbrace{h_{m-k\cdot0}}_{=h_{m}}\cdot\underbrace{\mathbf{f}_{k}\left(
e_{0}\right)  }_{=1}\right\rangle +\underbrace{\sum_{i\in\mathbb{N}%
\setminus\left\{  0\right\}  }}_{\substack{=\sum_{i\in\mathbb{N}%
\setminus\left\{  0,m/k\right\}  }\\\text{(since }\mathbb{N}\setminus\left\{
0\right\}  =\mathbb{N}\setminus\left\{  0,m/k\right\}  \text{)}}}\left(
-1\right)  ^{i}\left\langle p_{m},h_{m-ki}\cdot\mathbf{f}_{k}\left(
e_{i}\right)  \right\rangle \\
&  \ \ \ \ \ \ \ \ \ \ \ \ \ \ \ \ \ \ \ \ \left(  \text{here, we have split
off the addend for }i=0\text{ from the sum}\right) \\
&  =\underbrace{\left\langle p_{m},h_{m}\right\rangle }%
_{\substack{=\left\langle h_{m},p_{m}\right\rangle \\\text{(since the Hall
inner}\\\text{product is symmetric)}}}+\sum_{i\in\mathbb{N}\setminus\left\{
0,m/k\right\}  }\left(  -1\right)  ^{i}\underbrace{\left\langle p_{m}%
,h_{m-ki}\cdot\mathbf{f}_{k}\left(  e_{i}\right)  \right\rangle }%
_{\substack{=0\\\text{(by (\ref{pf.prop.hall-pmGkm.zeroes}))}}}\\
&  =\underbrace{\left\langle h_{m},p_{m}\right\rangle }%
_{\substack{=1\\\text{(by Proposition \ref{prop.hjpj},}\\\text{applied to
}n=m\text{)}}}+\underbrace{\sum_{i\in\mathbb{N}\setminus\left\{
0,m/k\right\}  }\left(  -1\right)  ^{i}0}_{=0}=1.
\end{align*}
Comparing this with%
\[
1-\underbrace{\left[  k\mid m\right]  }_{\substack{=0\\\text{(since }k\nmid
m\text{)}}}k=1,
\]
we obtain $\left\langle p_{m},G\left(  k,m\right)  \right\rangle =1-\left[
k\mid m\right]  k$. Hence, Proposition \ref{prop.hall-pmGkm} is proven in Case 2.
\end{verlong}

We have now proven Proposition \ref{prop.hall-pmGkm} both in Case 1 and in
Case 2. Hence, Proposition \ref{prop.hall-pmGkm} always holds.
\end{proof}

Theorem \ref{thm.Gkm-genset} will follow from Proposition
\ref{prop.hall-pmGkm} using the following general criterion for generating
sets of $\Lambda$:

\begin{proposition}
\label{prop.genset-crit}For each positive integer $m$, let $v_{m}\in\Lambda$
be a homogeneous symmetric function of degree $m$.

Assume that $\left\langle p_{m},v_{m}\right\rangle $ is an invertible element
of $\mathbf{k}$ for each positive integer $m$.

Then, the family $\left(  v_{m}\right)  _{m\geq1}=\left(  v_{1},v_{2}%
,v_{3},\ldots\right)  $ is an algebraically independent generating set of the
commutative $\mathbf{k}$-algebra $\Lambda$.
\end{proposition}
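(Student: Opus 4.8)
The plan is to reduce this to the well-known criterion for when a family of symmetric functions is an algebraically independent generating set, phrased in terms of the power-sum basis. The key fact I would use is a standard one (e.g. from the theory in \cite{GriRei}): if $\left(v_m\right)_{m\geq1}$ is a family of symmetric functions with $v_m$ homogeneous of degree $m$, and if for each $m$ the component of $v_m$ along $p_m$ (in the expansion of $v_m$ in the power-sum basis of $\Lambda\otimes\mathbb{Q}$, or more precisely the relevant triangularity coefficient) is invertible in $\mathbf{k}$, then $\left(v_m\right)_{m\geq1}$ freely generates $\Lambda$. The bridge between \textquotedblleft component along $p_m$\textquotedblright\ and the hypothesis $\left(p_m,v_m\right)$ invertible is supplied by the Hall inner product: since $\left(h_\lambda\right)_{\lambda}$ and $\left(m_\lambda\right)_\lambda$ are dual bases and the inner product is graded, one has control over $\left(p_m, v_m\right)$ precisely in terms of that coefficient.

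Concretely, I would argue as follows. First, observe that $\Lambda$ is a free commutative $\mathbf{k}$-algebra on the generators $p_1,p_2,p_3,\ldots$ \emph{after base change to $\mathbb{Q}$}, but over a general $\mathbf{k}$ the $p_m$ need not generate. Instead I would work with the complete homogeneous generators $h_1,h_2,h_3,\ldots$, which do freely generate $\Lambda$ over any $\mathbf{k}$. So it suffices to show that the $\mathbf{k}$-algebra map $\mathbf{k}[u_1,u_2,\ldots]\to\Lambda$, $u_m\mapsto v_m$, is an isomorphism. Since both sides are graded and free of finite rank in each degree, by a standard graded Nakayama-type argument it is enough to check that in each degree $m$ the symmetric function $v_m$ differs from $h_m$ by a polynomial in $v_1,\ldots,v_{m-1}$ (equivalently in $h_1,\ldots,h_{m-1}$) plus an invertible scalar multiple of something detecting $p_m$. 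The clean way to make this precise: expand $v_m = \sum_{\lambda\in\operatorname{Par}_m} c_\lambda m_\lambda$ or, better, track the coefficient of $p_m = m_{(m)}$ when $v_m$ is written in terms of the algebra generators. Using $\left(h_\lambda, m_\mu\right)=\delta_{\lambda,\mu}$ from \eqref{eq.hlam-mlam-dual} and gradedness \eqref{eq.Hall.graded}, the value $\left(p_m, v_m\right)$ picks out exactly the coefficient that governs whether $v_m$ can replace $h_m$ as a polynomial generator in degree $m$; invertibility of this coefficient is exactly what lets the induction on $m$ go through.

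So the induction would run: in degree $1$, $v_1$ is a scalar multiple of $h_1=p_1$ with that scalar equal to $\left(p_1,v_1\right)$ up to a unit, hence invertible, so $v_1$ generates degree $1$. Inductively, assume $v_1,\ldots,v_{m-1}$ together generate the subalgebra of $\Lambda$ spanned by all products of $h_i$'s of total degree $<m$; then in degree $m$, the span of degree-$m$ products of $v_1,\ldots,v_{m-1}$ equals the span of degree-$m$ products of $h_1,\ldots,h_{m-1}$, which is a codimension-one direct summand of $\Lambda_m$ (the complement being $\mathbf{k}\cdot h_m$, or equivalently a line detecting $p_m$). Pairing $v_m$ against $p_m$ and using that $\left(p_m, (\text{product of }\geq 2 \text{ positive-degree }h_i\text{'s})\right)$ can be handled by a lemma of the type \textquotedblleft$p_m$ pairs to zero with any product of two homogeneous symmetric functions of positive degree\textquotedblright\ (such a lemma appears in the excerpt as Lemma~\ref{lem.hall-pmprod}), we conclude that $\left(p_m, v_m\right)$ equals $\left(p_m, c\,h_m\right)$ for the relevant leading coefficient $c$, so $c$ is invertible, and therefore $v_m$ together with $v_1,\ldots,v_{m-1}$ spans $\Lambda_m$. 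This gives surjectivity degree by degree; a dimension/rank count in each degree then forces the algebra map to be injective as well, hence an isomorphism, and in particular $\left(v_m\right)_{m\geq1}$ is algebraically independent.

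\textbf{The main obstacle} I anticipate is making the \textquotedblleft leading coefficient\textquotedblright\ bookkeeping rigorous over an arbitrary commutative ring $\mathbf{k}$: one cannot simply invert scalars or pass to $\mathbb{Q}$, so the triangularity of $\left(v_m\right)$ with respect to the $h$-generators must be established integrally, and the role of the Hall pairing with $p_m$ must be pinned down as genuinely extracting the coefficient of the \textquotedblleft new\textquotedblright\ generator in degree $m$ rather than of $p_m$ literally (which is not even an element of an integral basis). The cleanest route around this is exactly Lemma~\ref{lem.hall-pmprod}: since every degree-$m$ polynomial in $h_1,\ldots,h_{m-1}$ is a $\mathbf{k}$-linear combination of $h_\lambda$'s with $\lambda$ having at least two parts, pairing with $p_m=m_{(m)}$ annihilates all of them, so $\left(p_m,\cdot\right)$ is a well-defined $\mathbf{k}$-linear functional on $\Lambda_m$ that vanishes on the span of degree-$m$ products of lower generators and is nonzero on $h_m$ (indeed $\left(p_m,h_m\right)=1$ by Proposition~\ref{prop.hjpj}); invertibility of $\left(p_m,v_m\right)$ then says $v_m$ together with the lower generators spans all of $\Lambda_m$, completing the induction without ever leaving $\mathbf{k}$.
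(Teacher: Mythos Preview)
Your argument is correct in substance, and in fact supplies considerably more than the paper does: the paper's entire proof of this proposition is the single sentence ``Proposition~\ref{prop.genset-crit} is \cite[Exercise 2.5.24]{GriRei},'' i.e., a bare citation to an exercise in Grinberg--Reiner. What you have sketched is essentially the intended solution to that exercise.

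A small point of phrasing: your inductive hypothesis, stated as ``$v_1,\ldots,v_{m-1}$ together generate the subalgebra of $\Lambda$ spanned by all products of $h_i$'s of total degree $<m$,'' conflates two different objects. The cleaner hypothesis is simply that each $h_i$ for $i<m$ lies in $\mathbf{k}[v_1,\ldots,v_{m-1}]$. Then in the inductive step you expand $v_m=\sum_{\lambda\in\operatorname{Par}_m} c_\lambda h_\lambda$ in the $h$-basis, apply Lemma~\ref{lem.hall-pmprod} and Proposition~\ref{prop.hjpj} to get $(p_m,v_m)=c_{(m)}$, conclude that $c_{(m)}$ is invertible, and hence $h_m=c_{(m)}^{-1}\bigl(v_m-\sum_{\lambda\neq(m)}c_\lambda h_\lambda\bigr)\in\mathbf{k}[v_1,\ldots,v_m]$ by the inductive hypothesis. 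This gives surjectivity of $\mathbf{k}[u_1,u_2,\ldots]\to\Lambda$; the injectivity then follows from the fact that a surjective $\mathbf{k}$-linear map between free $\mathbf{k}$-modules of the same finite rank is an isomorphism, applied degree by degree. Your ``main obstacle'' paragraph already identifies exactly this route, so the sketch is sound.
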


\begin{proof}
[Proof of Proposition \ref{prop.genset-crit}.]Proposition
\ref{prop.genset-crit} is \cite[Exercise 2.5.24]{GriRei}.
\end{proof}

\begin{proof}
[Proof of Theorem \ref{thm.Gkm-genset}.]Let $m$ be a positive integer.
Proposition \ref{prop.hall-pmGkm} yields that%
\begin{align*}
\left\langle p_{m},G\left(  k,m\right)  \right\rangle  &
=1-\underbrace{\left[  k\mid m\right]  }_{=%
\begin{cases}
1, & \text{if }k\mid m;\\
0, & \text{if }k\nmid m
\end{cases}
}k=1-%
\begin{cases}
1, & \text{if }k\mid m;\\
0, & \text{if }k\nmid m
\end{cases}
\ \ \ \cdot k\\
&  =%
\begin{cases}
1-1\cdot k, & \text{if }k\mid m;\\
1-0\cdot k, & \text{if }k\nmid m
\end{cases}
\ \ \ \ =%
\begin{cases}
1-k, & \text{if }k\mid m;\\
1, & \text{if }k\nmid m
\end{cases}
\ \ \ \ .
\end{align*}
Hence, $\left\langle p_{m},G\left(  k,m\right)  \right\rangle $ is an
invertible element of $\mathbf{k}$ (because both $1-k$ and $1$ are invertible
elements of $\mathbf{k}$).

Forget that we fixed $m$. We thus have showed that $\left\langle
p_{m},G\left(  k,m\right)  \right\rangle $ is an invertible element of
$\mathbf{k}$ for each positive integer $m$. Also, clearly, for each positive
integer $m$, the element $G\left(  k,m\right)  \in\Lambda$ is a homogeneous
symmetric function of degree $m$. Thus, Proposition \ref{prop.genset-crit}
(applied to $v_{m}=G\left(  k,m\right)  $) shows that the family $\left(
G\left(  k,m\right)  \right)  _{m\geq1}=\left(  G\left(  k,1\right)  ,G\left(
k,2\right)  ,G\left(  k,3\right)  ,\ldots\right)  $ is an algebraically
independent generating set of the commutative $\mathbf{k}$-algebra $\Lambda$.
This proves Theorem \ref{thm.Gkm-genset}.
\end{proof}

\subsection{\label{subsect.proofs.Uk.main}Proof of Theorem \ref{thm.Uk.main}}

\begin{proof}
[Proof of Theorem \ref{thm.Uk.main}.]The $\mathbf{k}$-Hopf algebra $\Lambda$
is both commutative and cocommutative (by \cite[Exercise 2.3.7(a)]{GriRei}).

\begin{vershort}
The antipode $S$ of this Hopf algebra $\Lambda$ is a $\mathbf{k}$-Hopf algebra
homomorphism (by \cite[Proposition 2.4.3(g)]{GriRei}).
\end{vershort}

\begin{verlong}
Thus, its antipode $S$ is a $\mathbf{k}$-Hopf algebra
homomorphism\footnote{\textit{Proof.} This is actually the claim of
\cite[Proposition 2.4.3(g)]{GriRei}, but let us also give a self-contained
proof here:
\par
The antipode of a Hopf algebra is an algebra anti-endomorphism (by
\cite[Proposition 1.4.10]{GriRei}). Thus, $S$ is an algebra anti-endomorphism
(since $S$ is the antipode of the Hopf algebra $\Lambda$). But since $\Lambda$
is commutative, an algebra anti-endomorphism of $\Lambda$ is the same thing as
an algebra endomorphism of $\Lambda$ (by \cite[Exercise 1.5.8(a)]{GriRei}).
Hence, $S$ is an algebra endomorphism of $\Lambda$ (since $S$ is an algebra
anti-endomorphism of $\Lambda$).
\par
The antipode of a Hopf algebra is a coalgebra anti-endomorphism (by
\cite[Exercise 1.4.28]{GriRei}). Thus, $S$ is a coalgebra anti-endomorphism
(since $S$ is the antipode of the Hopf algebra $\Lambda$). But since $\Lambda$
is cocommutative, a coalgebra anti-endomorphism of $\Lambda$ is the same thing
as a coalgebra endomorphism of $\Lambda$ (by \cite[Exercise 1.5.8(b)]%
{GriRei}). Hence, $S$ is a coalgebra endomorphism of $\Lambda$ (since $S$ is a
coalgebra anti-endomorphism of $\Lambda$).
\par
We now know that $S$ is an algebra endomorphism of $\Lambda$ and a coalgebra
endomorphism of $\Lambda$ at the same time. In other words, $S$ is a bialgebra
endomorphism of $\Lambda$. Hence, $S$ is a $\mathbf{k}$-Hopf algebra
endomorphism of $\Lambda$. In other words, $S$ is a $\mathbf{k}$-Hopf algebra
homomorphism.}.
\end{verlong}

\textbf{(a)} The map $\mathbf{f}_{k}$ is a $\mathbf{k}$-Hopf algebra
homomorphism (by \cite[Exercise 2.9.9(d)]{GriRei}, applied to $n=k$). The map
$\mathbf{v}_{k}$ is a $\mathbf{k}$-Hopf algebra homomorphism (by
\cite[Exercise 2.9.10(e)]{GriRei}, applied to $n=k$). Thus, we have shown that
all three maps $\mathbf{f}_{k}$, $S$ and $\mathbf{v}_{k}$ are $\mathbf{k}%
$-Hopf algebra homomorphisms. Hence, their composition $\mathbf{f}_{k}\circ
S\circ\mathbf{v}_{k}$ is a $\mathbf{k}$-Hopf algebra homomorphism as well. In
other words, $U_{k}$ is a $\mathbf{k}$-Hopf algebra homomorphism (since
$U_{k}=\mathbf{f}_{k}\circ S\circ\mathbf{v}_{k}$). This proves Theorem
\ref{thm.Uk.main} \textbf{(a)}.

\textbf{(b)} Recall (from \cite[Exercise 1.5.11(a)]{GriRei}) the following fact:

\begin{statement}
\textit{Claim 1:} If $H$ is a $\mathbf{k}$-bialgebra and $A$ is a commutative
$\mathbf{k}$-algebra, then the convolution $f\star g$ of any two $\mathbf{k}%
$-algebra homomorphisms $f,g:H\rightarrow A$ is again a $\mathbf{k}$-algebra homomorphism.
\end{statement}

The following fact is dual to Claim 1:

\begin{statement}
\textit{Claim 2:} If $H$ is a $\mathbf{k}$-bialgebra and $C$ is a
cocommutative $\mathbf{k}$-coalgebra, then the convolution $f\star g$ of any
two $\mathbf{k}$-coalgebra homomorphisms $f,g:C\rightarrow H$ is again a
$\mathbf{k}$-coalgebra homomorphism.
\end{statement}

(See \cite[solution to Exercise 1.5.11(h)]{GriRei} for why exactly Claim 2 is
dual to Claim 1, and how it can be proved.)

Theorem \ref{thm.Uk.main} \textbf{(a)} yields that the map $U_{k}$ is a
$\mathbf{k}$-Hopf algebra homomorphism. Hence, $U_{k}$ is both a $\mathbf{k}%
$-algebra homomorphism and a $\mathbf{k}$-coalgebra homomorphism.

Now, recall that $\Lambda$ is commutative, and that $\operatorname*{id}%
\nolimits_{\Lambda}$ and $U_{k}$ are two $\mathbf{k}$-algebra homomorphisms
from $\Lambda$ to $\Lambda$. Hence, Claim 1 (applied to $H=\Lambda$,
$A=\Lambda$, $f=\operatorname*{id}\nolimits_{\Lambda}$ and $g=U_{k}$) shows
that the convolution $\operatorname*{id}\nolimits_{\Lambda}\star U_{k}$ is a
$\mathbf{k}$-algebra homomorphism. In other words, $V_{k}$ is a $\mathbf{k}%
$-algebra homomorphism (since $V_{k}=\operatorname*{id}\nolimits_{\Lambda
}\star U_{k}$).

Next, recall that $\Lambda$ is cocommutative, and that $\operatorname*{id}%
\nolimits_{\Lambda}$ and $U_{k}$ are two $\mathbf{k}$-coalgebra homomorphisms
from $\Lambda$ to $\Lambda$. Hence, Claim 2 (applied to $H=\Lambda$,
$C=\Lambda$, $f=\operatorname*{id}\nolimits_{\Lambda}$ and $g=U_{k}$) shows
that the convolution $\operatorname*{id}\nolimits_{\Lambda}\star U_{k}$ is a
$\mathbf{k}$-coalgebra homomorphism. In other words, $V_{k}$ is a $\mathbf{k}%
$-coalgebra homomorphism (since $V_{k}=\operatorname*{id}\nolimits_{\Lambda
}\star U_{k}$).

So we know that the map $V_{k}$ is both a $\mathbf{k}$-algebra homomorphism
and a $\mathbf{k}$-coalgebra homomorphism. Thus, $V_{k}$ is a $\mathbf{k}%
$-bialgebra homomorphism, thus a $\mathbf{k}$-Hopf algebra
homomorphism\footnote{since any $\mathbf{k}$-bialgebra homomorphism between
two $\mathbf{k}$-Hopf algebras is automatically a $\mathbf{k}$-Hopf algebra
homomorphism}. This proves Theorem \ref{thm.Uk.main} \textbf{(b)}.

\textbf{(c)} The map $\mathbf{v}_{k}$ is a $\mathbf{k}$-algebra homomorphism;
thus, $\mathbf{v}_{k}\left(  1\right)  =1$. Now, we have%
\begin{equation}
\mathbf{v}_{k}\left(  h_{m}\right)  =%
\begin{cases}
h_{m/k}, & \text{if }k\mid m;\\
0, & \text{if }k\nmid m
\end{cases}
\label{pf.thm.Uk.main.c.vh}%
\end{equation}
for each $m\in\mathbb{N}$. (Indeed, if $m>0$, then this follows from the
definition of $\mathbf{v}_{k}$. But if $m=0$, then this follows from
$\mathbf{v}_{k}\left(  1\right)  =1$, since $h_{0}=1$.)

We have%
\begin{equation}
S\left(  h_{n}\right)  =\left(  -1\right)  ^{n}e_{n}%
\ \ \ \ \ \ \ \ \ \ \text{for each }n\in\mathbb{N}.
\label{pf.thm.Uk.main.c.Sh}%
\end{equation}
(This follows from \cite[Proposition 2.4.1(iii)]{GriRei}.)

Each $i\in\mathbb{N}$ satisfies%
\begin{align}
\mathbf{v}_{k}\left(  h_{ki}\right)   &  =%
\begin{cases}
h_{ki/k}, & \text{if }k\mid ki;\\
0, & \text{if }k\nmid ki
\end{cases}
\ \ \ \ \ \ \ \ \ \ \left(  \text{by (\ref{pf.thm.Uk.main.c.vh}), applied to
}m=ki\right) \nonumber\\
&  =h_{ki/k}\ \ \ \ \ \ \ \ \ \ \left(  \text{since }k\mid ki\right)
\nonumber\\
&  =h_{i}\ \ \ \ \ \ \ \ \ \ \left(  \text{since }ki/k=i\right)
\label{pf.thm.Uk.main.c.vkhj1}%
\end{align}
and
\begin{align}
U_{k}\left(  h_{ki}\right)   &  =\left(  \mathbf{f}_{k}\circ S\circ
\mathbf{v}_{k}\right)  \left(  h_{ki}\right)  \ \ \ \ \ \ \ \ \ \ \left(
\text{since }U_{k}=\mathbf{f}_{k}\circ S\circ\mathbf{v}_{k}\right) \nonumber\\
&  =\mathbf{f}_{k}\left(  S\left(  \underbrace{\mathbf{v}_{k}\left(
h_{ki}\right)  }_{\substack{=h_{i}\\\text{(by (\ref{pf.thm.Uk.main.c.vkhj1}%
))}}}\right)  \right)  =\mathbf{f}_{k}\left(  \underbrace{S\left(
h_{i}\right)  }_{\substack{=\left(  -1\right)  ^{i}e_{i}\\\text{(by
(\ref{pf.thm.Uk.main.c.Sh}))}}}\right)  =\mathbf{f}_{k}\left(  \left(
-1\right)  ^{i}e_{i}\right) \nonumber\\
&  =\left(  -1\right)  ^{i}\mathbf{f}_{k}\left(  e_{i}\right)
\label{pf.thm.Uk.main.c.Ukhj1}%
\end{align}
(since the map $\mathbf{f}_{k}$ is $\mathbf{k}$-linear).

On the other hand, if $j\in\mathbb{N}$ satisfies $k\nmid j$, then%
\begin{align}
\mathbf{v}_{k}\left(  h_{j}\right)   &  =%
\begin{cases}
h_{j/k}, & \text{if }k\mid j;\\
0, & \text{if }k\nmid j
\end{cases}
\ \ \ \ \ \ \ \ \ \ \left(  \text{by (\ref{pf.thm.Uk.main.c.vh}), applied to
}m=j\right) \nonumber\\
&  =0\ \ \ \ \ \ \ \ \ \ \left(  \text{since }k\nmid j\right)
\label{pf.thm.Uk.main.c.vkhj2}%
\end{align}
and
\begin{align}
U_{k}\left(  h_{j}\right)   &  =\left(  \mathbf{f}_{k}\circ S\circ
\mathbf{v}_{k}\right)  \left(  h_{j}\right)  \ \ \ \ \ \ \ \ \ \ \left(
\text{since }U_{k}=\mathbf{f}_{k}\circ S\circ\mathbf{v}_{k}\right) \nonumber\\
&  =\left(  \mathbf{f}_{k}\circ S\right)  \left(  \underbrace{\mathbf{v}%
_{k}\left(  h_{j}\right)  }_{\substack{=0\\\text{(by
(\ref{pf.thm.Uk.main.c.vkhj2}))}}}\right)  =\left(  \mathbf{f}_{k}\circ
S\right)  \left(  0\right) \nonumber\\
&  =0 \label{pf.thm.Uk.main.c.Ukhj2}%
\end{align}
(since the map $\mathbf{f}_{k}\circ S$ is $\mathbf{k}$-linear).

Let $\Delta_{\Lambda}$ be the comultiplication $\Delta:\Lambda\rightarrow
\Lambda\otimes\Lambda$ of the $\mathbf{k}$-coalgebra $\Lambda$. Let
$m_{\Lambda}:\Lambda\otimes\Lambda\rightarrow\Lambda$ be the $\mathbf{k}%
$-linear map sending each pure tensor $a\otimes b\in\Lambda\otimes\Lambda$ to
$ab\in\Lambda$. Definition \ref{def.convolution} then yields
$\operatorname*{id}\nolimits_{\Lambda}\star U_{k}=m_{\Lambda}\circ\left(
\operatorname*{id}\nolimits_{\Lambda}\otimes U_{k}\right)  \circ
\Delta_{\Lambda}$. Thus,%
\begin{align}
V_{k}  &  =\operatorname*{id}\nolimits_{\Lambda}\star U_{k}=m_{\Lambda}%
\circ\left(  \operatorname*{id}\nolimits_{\Lambda}\otimes U_{k}\right)
\circ\underbrace{\Delta_{\Lambda}}_{=\Delta}\nonumber\\
&  =m_{\Lambda}\circ\left(  \operatorname*{id}\nolimits_{\Lambda}\otimes
U_{k}\right)  \circ\Delta. \label{pf.thm.Uk.main.c.conv-def}%
\end{align}

\begin{vershort}
Let $m\in\mathbb{N}$ (not to be mistaken for the map $m_{\Lambda}$). Then,
\cite[Proposition 2.3.6(iii)]{GriRei} (applied to $n=m$) yields
\begin{align*}
\Delta\left(  h_{m}\right)   &  =\sum_{i+j=m}h_{i}\otimes h_{j}\\
&  \ \ \ \ \ \ \ \ \ \ \left(  \text{where the sum ranges over all pairs
}\left(  i,j\right)  \in\mathbb{N}\times\mathbb{N}\text{ with }i+j=m\right) \\
&  =\sum_{j=0}^{m}h_{m-j}\otimes h_{j}%
\end{align*}
(here, we have substituted $\left(  m-j,j\right)  $ for $\left(  i,j\right)  $
in the sum). Applying the map $\operatorname*{id}\nolimits_{\Lambda}\otimes
U_{k}$ to both sides of this equality, we obtain%
\[
\left(  \operatorname*{id}\nolimits_{\Lambda}\otimes U_{k}\right)  \left(
\Delta\left(  h_{m}\right)  \right)  =\left(  \operatorname*{id}%
\nolimits_{\Lambda}\otimes U_{k}\right)  \left(  \sum_{j=0}^{m}h_{m-j}\otimes
h_{j}\right)  =\sum_{j=0}^{m}h_{m-j}\otimes U_{k}\left(  h_{j}\right)  .
\]
Applying the map $m_{\Lambda}$ to both sides of this equality, we find%
\begin{align*}
&  m_{\Lambda}\left(  \left(  \operatorname*{id}\nolimits_{\Lambda}\otimes
U_{k}\right)  \left(  \Delta\left(  h_{m}\right)  \right)  \right) \\
&  =m_{\Lambda}\left(  \sum_{j=0}^{m}h_{m-j}\otimes U_{k}\left(  h_{j}\right)
\right)  =\sum_{j=0}^{m}h_{m-j}U_{k}\left(  h_{j}\right)
\ \ \ \ \ \ \ \ \ \ \left(  \text{by the definition of }m_{\Lambda}\right) \\
&  =\underbrace{\sum_{j=0}^{\infty}}_{=\sum_{j\in\mathbb{N}}}h_{m-j}%
U_{k}\left(  h_{j}\right)  -\sum_{j=m+1}^{\infty}\underbrace{h_{m-j}%
}_{\substack{=0\\\text{(since }m-j<0\\\text{(because }j\geq m+1>m\text{))}%
}}U_{k}\left(  h_{j}\right) \\
&  =\sum_{j\in\mathbb{N}}h_{m-j}U_{k}\left(  h_{j}\right)  -\underbrace{\sum
_{j=m+1}^{\infty}0U_{k}\left(  h_{j}\right)  }_{=0}\\
&  =\sum_{j\in\mathbb{N}}h_{m-j}U_{k}\left(  h_{j}\right)  =\sum
_{\substack{j\in\mathbb{N};\\k\mid j}}h_{m-j}U_{k}\left(  h_{j}\right)
+\sum_{\substack{j\in\mathbb{N};\\k\nmid j}}h_{m-j}\underbrace{U_{k}\left(
h_{j}\right)  }_{\substack{=0\\\text{(by (\ref{pf.thm.Uk.main.c.Ukhj2}))}}}\\
&  \ \ \ \ \ \ \ \ \ \ \left(  \text{since each }j\in\mathbb{N}\text{
satisfies either }k\mid j\text{ or }k\nmid j\text{ (but not both)}\right) \\
&  =\sum_{\substack{j\in\mathbb{N};\\k\mid j}}h_{m-j}U_{k}\left(
h_{j}\right)  +\underbrace{\sum_{\substack{j\in\mathbb{N};\\k\nmid j}%
}h_{m-j}0}_{=0}=\sum_{\substack{j\in\mathbb{N};\\k\mid j}}h_{m-j}U_{k}\left(
h_{j}\right)  =\sum_{i\in\mathbb{N}}h_{m-ki}\underbrace{U_{k}\left(
h_{ki}\right)  }_{\substack{=\left(  -1\right)  ^{i}\mathbf{f}_{k}\left(
e_{i}\right)  \\\text{(by (\ref{pf.thm.Uk.main.c.Ukhj1}))}}}\\
&  \ \ \ \ \ \ \ \ \ \ \left(  \text{here, we have substituted }ki\text{ for
}j\text{ in the sum}\right) \\
&  =\sum_{i\in\mathbb{N}}\left(  -1\right)  ^{i}h_{m-ki}\cdot\mathbf{f}%
_{k}\left(  e_{i}\right)  .
\end{align*}
Comparing this with%
\[
G\left(  k,m\right)  =\sum_{i\in\mathbb{N}}\left(  -1\right)  ^{i}%
h_{m-ki}\cdot\mathbf{f}_{k}\left(  e_{i}\right)  \ \ \ \ \ \ \ \ \ \ \left(
\text{by Theorem \ref{thm.G.frob}}\right)  ,
\]
we obtain%
\[
G\left(  k,m\right)  =m_{\Lambda}\left(  \left(  \operatorname*{id}%
\nolimits_{\Lambda}\otimes U_{k}\right)  \left(  \Delta\left(  h_{m}\right)
\right)  \right)  =\underbrace{\left(  m_{\Lambda}\circ\left(
\operatorname*{id}\nolimits_{\Lambda}\otimes U_{k}\right)  \circ\Delta\right)
}_{\substack{=V_{k}\\\text{(by (\ref{pf.thm.Uk.main.c.conv-def}))}}}\left(
h_{m}\right)  =V_{k}\left(  h_{m}\right)  .
\]
This proves Theorem \ref{thm.Uk.main} \textbf{(c)}.
\end{vershort}

\begin{verlong}
Let $m\in\mathbb{N}$ (not to be mistaken for the map $m_{\Lambda}$). Then,
\cite[Proposition 2.3.6(iii)]{GriRei} (applied to $n=m$) yields
\begin{align*}
\Delta\left(  h_{m}\right)   &  =\sum_{i+j=m}h_{i}\otimes h_{j}\\
&  \ \ \ \ \ \ \ \ \ \ \left(  \text{where the sum ranges over all pairs
}\left(  i,j\right)  \in\mathbb{N}\times\mathbb{N}\text{ with }i+j=m\right) \\
&  =\sum_{j=0}^{m}h_{m-j}\otimes h_{j}%
\end{align*}
(here, we have substituted $\left(  m-j,j\right)  $ for $\left(  i,j\right)  $
in the sum, since the map $\left\{  0,1,\ldots,m\right\}  \rightarrow\left\{
\left(  i,j\right)  \in\mathbb{N}\times\mathbb{N}\ \mid\ i+j=m\right\}  $ that
sends each $j$ to $\left(  m-j,j\right)  $ is a bijection). Applying the map
$\operatorname*{id}\nolimits_{\Lambda}\otimes U_{k}$ to both sides of this
equality, we obtain%
\begin{align*}
\left(  \operatorname*{id}\nolimits_{\Lambda}\otimes U_{k}\right)  \left(
\Delta\left(  h_{m}\right)  \right)   &  =\left(  \operatorname*{id}%
\nolimits_{\Lambda}\otimes U_{k}\right)  \left(  \sum_{j=0}^{m}h_{m-j}\otimes
h_{j}\right) \\
&  =\sum_{j=0}^{m}\underbrace{\operatorname*{id}\nolimits_{\Lambda}\left(
h_{m-j}\right)  }_{=h_{m-j}}\otimes U_{k}\left(  h_{j}\right)  =\sum_{j=0}%
^{m}h_{m-j}\otimes U_{k}\left(  h_{j}\right)  .
\end{align*}
Applying the map $m_{\Lambda}$ to both sides of this equality, we find%
\begin{align*}
&  m_{\Lambda}\left(  \left(  \operatorname*{id}\nolimits_{\Lambda}\otimes
U_{k}\right)  \left(  \Delta\left(  h_{m}\right)  \right)  \right) \\
&  =m_{\Lambda}\left(  \sum_{j=0}^{m}h_{m-j}\otimes U_{k}\left(  h_{j}\right)
\right)  =\sum_{j=0}^{m}\underbrace{m_{\Lambda}\left(  h_{m-j}\otimes
U_{k}\left(  h_{j}\right)  \right)  }_{\substack{=h_{m-j}U_{k}\left(
h_{j}\right)  \\\text{(by the definition of }m_{\Lambda}\text{)}}}=\sum
_{j=0}^{m}h_{m-j}U_{k}\left(  h_{j}\right) \\
&  =\sum_{j\in\mathbb{N}}h_{m-j}U_{k}\left(  h_{j}\right)
\end{align*}
(since%
\begin{align*}
\sum_{j\in\mathbb{N}}h_{m-j}U_{k}\left(  h_{j}\right)   &  =\sum_{j=0}%
^{m}h_{m-j}U_{k}\left(  h_{j}\right)  +\sum_{j=m+1}^{\infty}%
\underbrace{h_{m-j}}_{\substack{=0\\\text{(since }m-j<0\\\text{(because }j\geq
m+1>m\text{))}}}U_{k}\left(  h_{j}\right) \\
&  =\sum_{j=0}^{m}h_{m-j}U_{k}\left(  h_{j}\right)  +\underbrace{\sum
_{j=m+1}^{\infty}0U_{k}\left(  h_{j}\right)  }_{=0}=\sum_{j=0}^{m}h_{m-j}%
U_{k}\left(  h_{j}\right)
\end{align*}
). Therefore,%
\begin{align*}
&  m_{\Lambda}\left(  \left(  \operatorname*{id}\nolimits_{\Lambda}\otimes
U_{k}\right)  \left(  \Delta\left(  h_{m}\right)  \right)  \right) \\
&  =\sum_{j\in\mathbb{N}}h_{m-j}U_{k}\left(  h_{j}\right)  =\sum
_{\substack{j\in\mathbb{N};\\k\mid j}}h_{m-j}U_{k}\left(  h_{j}\right)
+\sum_{\substack{j\in\mathbb{N};\\k\nmid j}}h_{m-j}\underbrace{U_{k}\left(
h_{j}\right)  }_{\substack{=0\\\text{(by (\ref{pf.thm.Uk.main.c.Ukhj2}))}}}\\
&  \ \ \ \ \ \ \ \ \ \ \left(  \text{since each }j\in\mathbb{N}\text{
satisfies either }k\mid j\text{ or }k\nmid j\text{ (but not both)}\right) \\
&  =\sum_{\substack{j\in\mathbb{N};\\k\mid j}}h_{m-j}U_{k}\left(
h_{j}\right)  +\underbrace{\sum_{\substack{j\in\mathbb{N};\\k\nmid j}%
}h_{m-j}0}_{=0}=\sum_{\substack{j\in\mathbb{N};\\k\mid j}}h_{m-j}U_{k}\left(
h_{j}\right)  =\sum_{i\in\mathbb{N}}h_{m-ki}\underbrace{U_{k}\left(
h_{ki}\right)  }_{\substack{=\left(  -1\right)  ^{i}\mathbf{f}_{k}\left(
e_{i}\right)  \\\text{(by (\ref{pf.thm.Uk.main.c.Ukhj1}))}}}\\
&  \ \ \ \ \ \ \ \ \ \ \left(  \text{here, we have substituted }ki\text{ for
}j\text{ in the sum}\right) \\
&  =\sum_{i\in\mathbb{N}}\underbrace{h_{m-ki}\left(  -1\right)  ^{i}%
}_{=\left(  -1\right)  ^{i}h_{m-ki}}\mathbf{f}_{k}\left(  e_{i}\right)
=\sum_{i\in\mathbb{N}}\left(  -1\right)  ^{i}h_{m-ki}\cdot\mathbf{f}%
_{k}\left(  e_{i}\right)  .
\end{align*}
Comparing this with%
\[
G\left(  k,m\right)  =\sum_{i\in\mathbb{N}}\left(  -1\right)  ^{i}%
h_{m-ki}\cdot\mathbf{f}_{k}\left(  e_{i}\right)  \ \ \ \ \ \ \ \ \ \ \left(
\text{by Theorem \ref{thm.G.frob}}\right)  ,
\]
we obtain%
\[
G\left(  k,m\right)  =m_{\Lambda}\left(  \left(  \operatorname*{id}%
\nolimits_{\Lambda}\otimes U_{k}\right)  \left(  \Delta\left(  h_{m}\right)
\right)  \right)  =\underbrace{\left(  m_{\Lambda}\circ\left(
\operatorname*{id}\nolimits_{\Lambda}\otimes U_{k}\right)  \circ\Delta\right)
}_{\substack{=V_{k}\\\text{(by (\ref{pf.thm.Uk.main.c.conv-def}))}}}\left(
h_{m}\right)  =V_{k}\left(  h_{m}\right)  .
\]
This proves Theorem \ref{thm.Uk.main} \textbf{(c)}.
\end{verlong}

\textbf{(d)} Let us recall a few facts from \cite{GriRei}.

From \cite[Exercise 2.9.10(a)]{GriRei}, we know that every positive integers
$n$ and $m$ satisfy%
\begin{equation}
\mathbf{v}_{n}\left(  p_{m}\right)  =%
\begin{cases}
np_{m/n}, & \text{if }n\mid m;\\
0, & \text{if }n\nmid m
\end{cases}
\ \ . \label{pf.thm.Uk.main.vp}%
\end{equation}

\begin{vershort}
On the other hand, it is easy to see (directly using the definition of
$\mathbf{f}_{n}$) that every positive integers $n$ and $m$ satisfy%
\begin{equation}
\mathbf{f}_{n}\left(  p_{m}\right)  =p_{nm}. \label{pf.thm.Uk.main.short.fp}%
\end{equation}

\end{vershort}

\begin{verlong}
On the other hand, it is easy to see (directly using the definition of
$\mathbf{f}_{n}$) that every positive integers $n$ and $m$ satisfy%
\begin{equation}
\mathbf{f}_{n}\left(  p_{m}\right)  =p_{nm}. \label{pf.thm.Uk.main.fp}%
\end{equation}
\footnote{\textit{Proof.} Let $n$ and $m$ be two positive integers. Then, the
definition of $p_{nm}$ yields $p_{nm}=x_{1}^{nm}+x_{2}^{nm}+x_{3}^{nm}%
+\cdots=\sum_{i\geq1}x_{i}^{nm}$. But the definition of $p_{m}$ yields
$p_{m}=x_{1}^{m}+x_{2}^{m}+x_{3}^{m}+\cdots=\sum_{i\geq1}x_{i}^{m}$. Now, the
definition of $\mathbf{f}_{n}$ yields%
\begin{align*}
\mathbf{f}_{n}\left(  p_{m}\right)   &  =p_{m}\left(  x_{1}^{n},x_{2}%
^{n},x_{3}^{n},\ldots\right)  =\sum_{i\geq1}\underbrace{\left(  x_{i}%
^{n}\right)  ^{m}}_{=x_{i}^{nm}}\ \ \ \ \ \ \ \ \ \ \left(  \text{since }%
p_{m}=\sum_{i\geq1}x_{i}^{m}\right) \\
&  =\sum_{i\geq1}x_{i}^{nm}=p_{nm}\ \ \ \ \ \ \ \ \ \ \left(  \text{since
}p_{nm}=\sum_{i\geq1}x_{i}^{nm}\right)  .
\end{align*}
This proves (\ref{pf.thm.Uk.main.fp}).}
\end{verlong}

Finally, \cite[Proposition 2.4.1(i)]{GriRei} yields that every positive
integer $n$ satisfies%
\begin{equation}
S\left(  p_{n}\right)  =-p_{n}. \label{pf.thm.Uk.main.Sp}%
\end{equation}

Now, let $n$ be a positive integer. We first claim the following:

\begin{statement}
\textit{Claim 1:} We have $U_{k}\left(  p_{n}\right)  =-\left[  k\mid
n\right]  kp_{n}$.
\end{statement}

[\textit{Proof of Claim 1:} We are in one of the following two cases:

\textit{Case 1:} We have $k\mid n$.

\textit{Case 2:} We have $k\nmid n$.

Let us first consider Case 1. In this case, we have $k\mid n$. Hence, $n/k$ is
a positive integer. Now, (\ref{pf.thm.Uk.main.vp}) (applied to $k$ and $n$
instead of $n$ and $m$) yields%
\[
\mathbf{v}_{k}\left(  p_{n}\right)  =%
\begin{cases}
kp_{n/k}, & \text{if }k\mid n;\\
0, & \text{if }k\nmid n
\end{cases}
\ \ \ \ =kp_{n/k}\ \ \ \ \ \ \ \ \ \ \left(  \text{since }k\mid n\right)  .
\]
Applying the map $S$ to both sides of this equality, we find%
\begin{align*}
S\left(  \mathbf{v}_{k}\left(  p_{n}\right)  \right)   &  =S\left(
kp_{n/k}\right)  =k\underbrace{S\left(  p_{n/k}\right)  }_{\substack{=-p_{n/k}%
\\\text{(by (\ref{pf.thm.Uk.main.Sp}),}\\\text{applied to }n/k\text{ instead
of }n\text{)}}}\ \ \ \ \ \ \ \ \ \ \left(  \text{since the map }S\text{ is
}\mathbf{k}\text{-linear}\right) \\
&  =k\left(  -p_{n/k}\right)  =-kp_{n/k}.
\end{align*}

\begin{vershort}
\noindent Applying the map $\mathbf{f}_{k}$ to both sides of this equality, we
find
\begin{align*}
\mathbf{f}_{k}\left(  S\left(  \mathbf{v}_{k}\left(  p_{n}\right)  \right)
\right)   &  =\mathbf{f}_{k}\left(  -kp_{n/k}\right)
=-k\underbrace{\mathbf{f}_{k}\left(  p_{n/k}\right)  }_{\substack{=p_{k\left(
n/k\right)  }\\\text{(by (\ref{pf.thm.Uk.main.short.fp}),}\\\text{applied to
}k\text{ and }n/k\\\text{instead of }n\text{ and }m\text{)}}%
}\ \ \ \ \ \ \ \ \ \ \left(  \text{since the map }\mathbf{f}_{k}\text{ is
}\mathbf{k}\text{-linear}\right) \\
&  =-kp_{k\left(  n/k\right)  }=-kp_{n}.
\end{align*}

\end{vershort}

\begin{verlong}
\noindent Applying the map $\mathbf{f}_{k}$ to both sides of this equality, we
find
\begin{align*}
\mathbf{f}_{k}\left(  S\left(  \mathbf{v}_{k}\left(  p_{n}\right)  \right)
\right)   &  =\mathbf{f}_{k}\left(  -kp_{n/k}\right)
=-k\underbrace{\mathbf{f}_{k}\left(  p_{n/k}\right)  }_{\substack{=p_{k\left(
n/k\right)  }\\\text{(by (\ref{pf.thm.Uk.main.fp}),}\\\text{applied to
}k\text{ and }n/k\\\text{instead of }n\text{ and }m\text{)}}%
}\ \ \ \ \ \ \ \ \ \ \left(  \text{since the map }\mathbf{f}_{k}\text{ is
}\mathbf{k}\text{-linear}\right) \\
&  =-kp_{k\left(  n/k\right)  }=-kp_{n}\ \ \ \ \ \ \ \ \ \ \left(  \text{since
}k\left(  n/k\right)  =n\right)  .
\end{align*}

\end{verlong}

\noindent Now, the definition of $U_{k}$ yields $U_{k}=\mathbf{f}_{k}\circ
S\circ\mathbf{v}_{k}$. Hence,%
\[
U_{k}\left(  p_{n}\right)  =\left(  \mathbf{f}_{k}\circ S\circ\mathbf{v}%
_{k}\right)  \left(  p_{n}\right)  =\mathbf{f}_{k}\left(  S\left(
\mathbf{v}_{k}\left(  p_{n}\right)  \right)  \right)  =-kp_{n}.
\]
Comparing this with%
\[
-\underbrace{\left[  k\mid n\right]  }_{\substack{=1\\\text{(since }k\mid
n\text{)}}}kp_{n}=-kp_{n},
\]
we obtain $U_{k}\left(  p_{n}\right)  =-\left[  k\mid n\right]  kp_{n}$.
Hence, Claim 1 is proved in Case 1.

Let us now consider Case 2. In this case, we have $k\nmid n$. But
(\ref{pf.thm.Uk.main.vp}) (applied to $k$ and $n$ instead of $n$ and $m$)
yields%
\[
\mathbf{v}_{k}\left(  p_{n}\right)  =%
\begin{cases}
kp_{n/k}, & \text{if }k\mid n;\\
0, & \text{if }k\nmid n
\end{cases}
\ \ \ \ =0\ \ \ \ \ \ \ \ \ \ \left(  \text{since }k\nmid n\right)  .
\]
But the definition of $U_{k}$ yields $U_{k}=\mathbf{f}_{k}\circ S\circ
\mathbf{v}_{k}$. Hence,%
\[
U_{k}\left(  p_{n}\right)  =\left(  \mathbf{f}_{k}\circ S\circ\mathbf{v}%
_{k}\right)  \left(  p_{n}\right)  =\left(  \mathbf{f}_{k}\circ S\right)
\left(  \underbrace{\mathbf{v}_{k}\left(  p_{n}\right)  }_{=0}\right)
=\left(  \mathbf{f}_{k}\circ S\right)  \left(  0\right)  =0
\]
(since the map $\mathbf{f}_{k}\circ S$ is $\mathbf{k}$-linear). Comparing this
with%
\[
-\underbrace{\left[  k\mid n\right]  }_{\substack{=0\\\text{(since }k\nmid
n\text{)}}}kp_{n}=0,
\]
we obtain $U_{k}\left(  p_{n}\right)  =-\left[  k\mid n\right]  kp_{n}$.
Hence, Claim 1 is proved in Case 2.

We have now proved Claim 1 in both Cases 1 and 2. Thus, Claim 1 always holds.]

Theorem \ref{thm.Uk.main} \textbf{(a)} shows that the map $U_{k}$ is a
$\mathbf{k}$-Hopf algebra homomorphism. Hence, $U_{k}$ is a $\mathbf{k}%
$-algebra homomorphism. Thus, $U_{k}\left(  1\right)  =1$.

Now, let $\Delta_{\Lambda}$ be the comultiplication $\Delta:\Lambda
\rightarrow\Lambda\otimes\Lambda$ of the $\mathbf{k}$-coalgebra $\Lambda$. Let
$m_{\Lambda}:\Lambda\otimes\Lambda\rightarrow\Lambda$ be the $\mathbf{k}%
$-linear map sending each pure tensor $a\otimes b\in\Lambda\otimes\Lambda$ to
$ab\in\Lambda$. Then, the definition of $V_{k}$ yields $V_{k}%
=\operatorname*{id}\nolimits_{\Lambda}\star U_{k}=m_{\Lambda}\circ\left(
\operatorname*{id}\nolimits_{\Lambda}\otimes U_{k}\right)  \circ
\Delta_{\Lambda}$ (by the definition of convolution).

But \cite[Proposition 2.3.6(i)]{GriRei} yields $\Delta_{\Lambda}\left(
p_{n}\right)  =1\otimes p_{n}+p_{n}\otimes1$. Now,%
\begin{align*}
&  \underbrace{V_{k}}_{=m_{\Lambda}\circ\left(  \operatorname*{id}%
\nolimits_{\Lambda}\otimes U_{k}\right)  \circ\Delta_{\Lambda}}\left(
p_{n}\right) \\
&  =\left(  m_{\Lambda}\circ\left(  \operatorname*{id}\nolimits_{\Lambda
}\otimes U_{k}\right)  \circ\Delta_{\Lambda}\right)  \left(  p_{n}\right) \\
&  =m_{\Lambda}\left(  \left(  \operatorname*{id}\nolimits_{\Lambda}\otimes
U_{k}\right)  \left(  \underbrace{\Delta_{\Lambda}\left(  p_{n}\right)
}_{=1\otimes p_{n}+p_{n}\otimes1}\right)  \right) \\
&  =m_{\Lambda}\left(  \underbrace{\left(  \operatorname*{id}%
\nolimits_{\Lambda}\otimes U_{k}\right)  \left(  1\otimes p_{n}+p_{n}%
\otimes1\right)  }_{=\operatorname*{id}\nolimits_{\Lambda}\left(  1\right)
\otimes U_{k}\left(  p_{n}\right)  +\operatorname*{id}\nolimits_{\Lambda
}\left(  p_{n}\right)  \otimes U_{k}\left(  1\right)  }\right) \\
&  =m_{\Lambda}\left(  \operatorname*{id}\nolimits_{\Lambda}\left(  1\right)
\otimes U_{k}\left(  p_{n}\right)  +\operatorname*{id}\nolimits_{\Lambda
}\left(  p_{n}\right)  \otimes U_{k}\left(  1\right)  \right) \\
&  =\underbrace{\operatorname*{id}\nolimits_{\Lambda}\left(  1\right)  }%
_{=1}\cdot\underbrace{U_{k}\left(  p_{n}\right)  }_{\substack{=-\left[  k\mid
n\right]  kp_{n}\\\text{(by Claim 1)}}}+\underbrace{\operatorname*{id}%
\nolimits_{\Lambda}\left(  p_{n}\right)  }_{=p_{n}}\cdot\underbrace{U_{k}%
\left(  1\right)  }_{=1}\ \ \ \ \ \ \ \ \ \ \left(  \text{by the definition of
}m_{\Lambda}\right) \\
&  =-\left[  k\mid n\right]  kp_{n}+p_{n}=\left(  1-\left[  k\mid n\right]
k\right)  p_{n}.
\end{align*}
This proves Theorem \ref{thm.Uk.main} \textbf{(d)}.
\end{proof}

\begin{verlong}
\silentsubsection{\label{subsect.proofs.DeltaGkm.2nd}Second proof of Theorem
\ref{thm.DeltaGkm}}

Let us reprove Theorem \ref{thm.DeltaGkm} using Theorem \ref{thm.Uk.main}:

\begin{proof}
[Second proof of Theorem \ref{thm.DeltaGkm}.]Theorem \ref{thm.Uk.main}
\textbf{(b)} shows that the map $V_{k}$ is a $\mathbf{k}$-Hopf algebra
homomorphism. Thus, in particular, $V_{k}$ is a $\mathbf{k}$-coalgebra
homomorphism. In other words, we have
\[
\left(  V_{k}\otimes V_{k}\right)  \circ\Delta=\Delta\circ V_{k}%
\ \ \ \ \ \ \ \ \ \ \text{and}\ \ \ \ \ \ \ \ \ \ \varepsilon=\varepsilon\circ
V_{k}%
\]
(where $\varepsilon$ denotes the counit of the $\mathbf{k}$-coalgebra
$\Lambda$). But we have%
\begin{equation}
V_{k}\left(  h_{n}\right)  =G\left(  k,n\right)  \ \ \ \ \ \ \ \ \ \ \text{for
each }n\in\mathbb{N}\label{pf.thm.DeltaGkm.2nd.Gkn=}%
\end{equation}
(by Theorem \ref{thm.Uk.main} \textbf{(c)}, applied to $n$ instead of $m$).
Applying this to $n=m$, we obtain $V_{k}\left(  h_{m}\right)  =G\left(
k,m\right)  $, so that $G\left(  k,m\right)  =V_{k}\left(  h_{m}\right)  $.
Applying the map $\Delta$ to both sides of this equality, we find%
\begin{align}
\Delta\left(  G\left(  k,m\right)  \right)   &  =\Delta\left(  V_{k}\left(
h_{m}\right)  \right)  =\underbrace{\left(  \Delta\circ V_{k}\right)
}_{=\left(  V_{k}\otimes V_{k}\right)  \circ\Delta}\left(  h_{m}\right)
=\left(  \left(  V_{k}\otimes V_{k}\right)  \circ\Delta\right)  \left(
h_{m}\right)  \nonumber\\
&  =\left(  V_{k}\otimes V_{k}\right)  \left(  \Delta\left(  h_{m}\right)
\right)  .\label{pf.thm.DeltaGkm.2nd.1}%
\end{align}
But \cite[Proposition 2.3.6(iii)]{GriRei} (applied to $n=m$) yields
\begin{align*}
\Delta\left(  h_{m}\right)   &  =\sum_{i+j=m}h_{i}\otimes h_{j}\\
&  \ \ \ \ \ \ \ \ \ \ \left(  \text{where the sum ranges over all pairs
}\left(  i,j\right)  \in\mathbb{N}\times\mathbb{N}\text{ with }i+j=m\right)
\\
&  =\sum_{i=0}^{m}h_{i}\otimes h_{m-i}%
\end{align*}
(here, we have substituted $\left(  i,m-i\right)  $ for $\left(  i,j\right)  $
in the sum, since the map $\left\{  0,1,\ldots,m\right\}  \rightarrow\left\{
\left(  i,j\right)  \in\mathbb{N}\times\mathbb{N}\ \mid\ i+j=m\right\}  $ that
sends each $i$ to $\left(  i,m-i\right)  $ is a bijection). Hence,
(\ref{pf.thm.DeltaGkm.2nd.1}) becomes%
\begin{align*}
\Delta\left(  G\left(  k,m\right)  \right)   &  =\left(  V_{k}\otimes
V_{k}\right)  \left(  \underbrace{\Delta\left(  h_{m}\right)  }_{=\sum
_{i=0}^{m}h_{i}\otimes h_{m-i}}\right)  =\left(  V_{k}\otimes V_{k}\right)
\left(  \sum_{i=0}^{m}h_{i}\otimes h_{m-i}\right)  \\
&  =\sum_{i=0}^{m}\underbrace{V_{k}\left(  h_{i}\right)  }%
_{\substack{=G\left(  k,i\right)  \\\text{(by (\ref{pf.thm.DeltaGkm.2nd.Gkn=}%
))}}}\otimes\underbrace{V_{k}\left(  h_{m-i}\right)  }_{\substack{=G\left(
k,m-i\right)  \\\text{(by (\ref{pf.thm.DeltaGkm.2nd.Gkn=}))}}}=\sum_{i=0}%
^{m}G\left(  k,i\right)  \otimes G\left(  k,m-i\right)  .
\end{align*}
Thus, Theorem \ref{thm.DeltaGkm} is proved again.
\end{proof}
\end{verlong}

\subsection{\label{subsect.proofs.p-via-G}Proof of Corollary \ref{cor.p-via-G}%
}

\begin{proof}
[Proof of Corollary \ref{cor.p-via-G}.]Recall that the family $\left(
h_{n}\right)  _{n\geq1}=\left(  h_{1},h_{2},h_{3},\ldots\right)  $ generates
$\Lambda$ as a $\mathbf{k}$-algebra. Hence, each $g\in\Lambda$ can be written
as a polynomial in $h_{1},h_{2},h_{3},\ldots$. Applying this to $g=p_{n}$, we
conclude that $p_{n}$ can be written as a polynomial in $h_{1},h_{2}%
,h_{3},\ldots$. In other words, there exists a polynomial $f\in\mathbf{k}%
\left[  x_{1},x_{2},x_{3},\ldots\right]  $ such that%
\begin{equation}
p_{n}=f\left(  h_{1},h_{2},h_{3},\ldots\right)  . \label{pf.cor.p-via-G.0}%
\end{equation}
Consider this $f$. We shall show that this $f$ satisfies
(\ref{eq.cor.p-via-G.eq}). This will clearly prove Corollary \ref{cor.p-via-G}.

Consider the map $V_{k}$ defined in Theorem \ref{thm.Uk.main}. Theorem
\ref{thm.Uk.main} \textbf{(c)} yields that $V_{k}\left(  h_{m}\right)
=G\left(  k,m\right)  $ for each positive integer $m$. In other words,%
\begin{equation}
\left(  V_{k}\left(  h_{1}\right)  ,V_{k}\left(  h_{2}\right)  ,V_{k}\left(
h_{3}\right)  ,\ldots\right)  =\left(  G\left(  k,1\right)  ,G\left(
k,2\right)  ,G\left(  k,3\right)  ,\ldots\right)  . \label{pf.cor.p-via-G.1}%
\end{equation}

The map $V_{k}$ is a $\mathbf{k}$-Hopf algebra homomorphism (by Theorem
\ref{thm.Uk.main} \textbf{(b)}), and thus is a $\mathbf{k}$-algebra
homomorphism. Hence, it commutes with polynomials over $\mathbf{k}$. Thus,%
\begin{align*}
V_{k}\left(  f\left(  h_{1},h_{2},h_{3},\ldots\right)  \right)   &  =f\left(
V_{k}\left(  h_{1}\right)  ,V_{k}\left(  h_{2}\right)  ,V_{k}\left(
h_{3}\right)  ,\ldots\right) \\
&  =f\left(  G\left(  k,1\right)  ,G\left(  k,2\right)  ,G\left(  k,3\right)
,\ldots\right)  \ \ \ \ \ \ \ \ \ \ \left(  \text{by (\ref{pf.cor.p-via-G.1}%
)}\right)  .
\end{align*}
Now, applying the map $V_{k}$ to both sides of the equality
(\ref{pf.cor.p-via-G.0}), we obtain%
\[
V_{k}\left(  p_{n}\right)  =V_{k}\left(  f\left(  h_{1},h_{2},h_{3}%
,\ldots\right)  \right)  =f\left(  G\left(  k,1\right)  ,G\left(  k,2\right)
,G\left(  k,3\right)  ,\ldots\right)  .
\]
Comparing this with%
\[
V_{k}\left(  p_{n}\right)  =\left(  1-\left[  k\mid n\right]  k\right)
p_{n}\ \ \ \ \ \ \ \ \ \ \left(  \text{by Theorem \ref{thm.Uk.main}
\textbf{(d)}}\right)  ,
\]
we obtain%
\[
\left(  1-\left[  k\mid n\right]  k\right)  p_{n}=f\left(  G\left(
k,1\right)  ,G\left(  k,2\right)  ,G\left(  k,3\right)  ,\ldots\right)  .
\]
Thus, we have shown that our $f$ satisfies (\ref{eq.cor.p-via-G.eq}). As we
said, this proves Corollary \ref{cor.p-via-G}.
\end{proof}

\section{\label{sect.liu}Proof of the Liu--Polo conjecture}

Let us recall a well-known partial order on the set of partitions of a given
$n\in\mathbb{N}$:

\begin{definition}
\label{def.dominance}Let $n\in\mathbb{N}$. We define a binary relation
$\triangleright$ on the set $\operatorname*{Par}\nolimits_{n}$ as follows: Two
partitions $\lambda,\mu\in\operatorname*{Par}\nolimits_{n}$ shall satisfy
$\lambda\triangleright\mu$ if and only if we have%
\[
\lambda_{1}+\lambda_{2}+\cdots+\lambda_{k}\geq\mu_{1}+\mu_{2}+\cdots+\mu
_{k}\ \ \ \ \ \ \ \ \ \ \text{for each }k\in\left\{  1,2,\ldots,n\right\}  .
\]
This relation $\triangleright$ is the greater-or-equal relation of a partial
order on $\operatorname*{Par}\nolimits_{n}$, which is known as the
\emph{dominance order} (or the \emph{majorization order}).
\end{definition}

This definition is precisely \cite[Definition 2.2.7]{GriRei}. Note that if we
replace \textquotedblleft for each $k\in\left\{  1,2,\ldots,n\right\}
$\textquotedblright\ by \textquotedblleft for each $k\in\left\{
1,2,3,\ldots\right\}  $\textquotedblright\ in this definition, then the
relation $\triangleright$ does not change.

Our goal in this section is to prove the conjecture made in \cite[Remark
1.4.5]{LiuPol19}. We state this conjecture as follows:\footnote{Note that
$\left(  n-1,n-1,1\right)  $ is a partition whenever $n>1$ is an integer.}

\begin{theorem}
\label{thm.claim}Let $n$ be an integer such that $n>1$. Then:

\textbf{(a)} We have%
\[
\sum_{\substack{\lambda\in\operatorname*{Par}\nolimits_{n};\\\left(
n-1,1\right)  \triangleright\lambda}}m_{\lambda}=\sum_{i=0}^{n-2}\left(
-1\right)  ^{i}s_{\left(  n-1-i,1^{i+1}\right)  }.
\]

\textbf{(b)} We have%
\[
\sum_{\substack{\lambda\in\operatorname*{Par}\nolimits_{2n-1};\\\left(
n-1,n-1,1\right)  \triangleright\lambda}}m_{\lambda}=\sum_{i=0}^{n-2}\left(
-1\right)  ^{i}s_{\left(  n-1,n-1-i,1^{i+1}\right)  }.
\]

\end{theorem}

\begin{example}
For this example, let $n=3$. Then, $n-1=2$ and $2n-1=5$. Hence, the left hand
side of the equality in Theorem \ref{thm.claim} \textbf{(b)} is%
\[
\sum_{\substack{\lambda\in\operatorname*{Par}\nolimits_{2n-1};\\\left(
n-1,n-1,1\right)  \triangleright\lambda}}m_{\lambda}=\sum_{\substack{\lambda
\in\operatorname*{Par}\nolimits_{5};\\\left(  2,2,1\right)  \triangleright
\lambda}}m_{\lambda}=m_{\left(  2,2,1\right)  }+m_{\left(  2,1,1,1\right)
}+m_{\left(  1,1,1,1,1\right)  }.
\]
Meanwhile, the right hand side of the equality in Theorem \ref{thm.claim}
\textbf{(b)} is%
\[
\sum_{i=0}^{n-2}\left(  -1\right)  ^{i}s_{\left(  n-1,n-1-i,1^{i+1}\right)
}=\sum_{i=0}^{1}\left(  -1\right)  ^{i}s_{\left(  2,2-i,1^{i+1}\right)
}=s_{\left(  2,2,1\right)  }-s_{\left(  2,1,1,1\right)  }.
\]
Thus, Theorem \ref{thm.claim} \textbf{(b)} claims that $m_{\left(
2,2,1\right)  }+m_{\left(  2,1,1,1\right)  }+m_{\left(  1,1,1,1,1\right)
}=s_{\left(  2,2,1\right)  }-s_{\left(  2,1,1,1\right)  }$ in this case.
\end{example}

We will pave our way to the proof of Theorem \ref{thm.claim} by several
lemmas. We begin with a particularly simple one:

\begin{lemma}
\label{lem.dominnn-1}Let $n$ be an integer such that $n>1$. Let $\lambda
\in\operatorname*{Par}\nolimits_{2n-1}$. Then, $\left(  n-1,n-1,1\right)
\triangleright\lambda$ if and only if all positive integers $i$ satisfy
$\lambda_{i}<n$.
\end{lemma}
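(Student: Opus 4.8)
\textbf{Proof proposal for Lemma \ref{lem.dominnn-1}.}

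The plan is to unpack both sides of the claimed equivalence into explicit inequalities and match them up. First I would fix $\lambda\in\operatorname*{Par}\nolimits_{2n-1}$ and recall that, by Definition \ref{def.dominance}, the relation $\left(n-1,n-1,1\right)\triangleright\lambda$ means
\[
\lambda_{1}+\lambda_{2}+\cdots+\lambda_{k}\leq\left(n-1,n-1,1\right)_{1}+\left(n-1,n-1,1\right)_{2}+\cdots+\left(n-1,n-1,1\right)_{k}
\]
for every $k\geq1$ (using that both $\lambda$ and $\left(n-1,n-1,1\right)$ are partitions of $2n-1$, so the relation can be tested on all $k\in\{1,2,3,\ldots\}$). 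The right-hand sums are $n-1$ for $k=1$, then $2n-2$ for $k=2$, and then $2n-1$ for all $k\geq3$. Since $\lambda$ is a partition of $2n-1$, the partial sum $\lambda_{1}+\cdots+\lambda_{k}$ is automatically $\leq 2n-1$ for all $k$, so the constraints for $k\geq3$ are vacuous. Hence $\left(n-1,n-1,1\right)\triangleright\lambda$ is equivalent to the conjunction of just two inequalities: $\lambda_{1}\leq n-1$ and $\lambda_{1}+\lambda_{2}\leq 2n-2$.

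Next I would show that this conjunction is equivalent to the single inequality $\lambda_{1}\leq n-1$, i.e. to ``$\lambda_{i}<n$ for all positive integers $i$'' (the latter being equivalent to $\lambda_{1}<n$ because $\lambda$ is weakly decreasing, so $\lambda_{1}\geq\lambda_{i}$ for all $i$). Indeed, $\lambda_{1}\leq n-1$ implies $\lambda_{2}\leq\lambda_{1}\leq n-1$, hence $\lambda_{1}+\lambda_{2}\leq 2(n-1)=2n-2$; so the second inequality follows from the first, and the conjunction collapses to $\lambda_{1}\leq n-1$. Conversely, $\lambda_{1}\leq n-1$ is literally the first of the two inequalities. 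Combining the two equivalences gives: $\left(n-1,n-1,1\right)\triangleright\lambda$ if and only if $\lambda_{1}<n$ if and only if all positive integers $i$ satisfy $\lambda_{i}<n$, which is the assertion of the lemma.

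There is essentially no obstacle here; the only point requiring a little care is the bookkeeping that the $k\geq3$ constraints in the dominance condition are automatically satisfied (using $\left|\lambda\right|=2n-1$), and that the $k=2$ constraint is implied by the $k=1$ constraint for a partition. Everything else is immediate from the definition of a partition and of $\triangleright$.
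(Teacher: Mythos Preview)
Your proposal is correct and takes essentially the same approach as the paper's proof: both reduce the dominance condition to the inequalities for $k=1$ and $k=2$ (using $|\lambda|=2n-1$ to make $k\geq 3$ automatic), and then observe that $\lambda_1\leq n-1$ forces $\lambda_2\leq n-1$ so that the $k=2$ constraint is redundant. Your write-up is slightly more streamlined than the paper's, which argues the two implications separately, but the logical content is identical.
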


\begin{vershort}
\begin{proof}
This simple proof (an exercise in following Definition~\ref{def.dominance}) is
left to the reader.
\end{proof}
\end{vershort}

\begin{verlong}
\begin{proof}
[Proof of Lemma~\ref{lem.dominnn-1}.]$\Longrightarrow:$ Assume that $\left(
n-1,n-1,1\right)  \triangleright\lambda$. Thus, $n-1\geq\lambda_{1}$ (by
Definition \ref{def.dominance}). Hence, $\lambda_{1}\leq n-1<n$. But $\lambda$
is a partition; thus, $\lambda_{1}\geq\lambda_{2}\geq\lambda_{3}\geq\cdots$.
Hence, all positive integers $i$ satisfy $\lambda_{i}\leq\lambda_{1}<n$. This
proves the \textquotedblleft$\Longrightarrow$\textquotedblright\ direction of
Lemma \ref{lem.dominnn-1}.

$\Longleftarrow:$ Assume that all positive integers $i$ satisfy $\lambda
_{i}<n$. Thus, all positive integers $i$ satisfy $\lambda_{i}\leq n-1$ (since
$\lambda_{i}$ and $n$ are integers). Hence, in particular, $\lambda_{1}\leq
n-1$ and $\lambda_{2}\leq n-1$.

Define a partition $\mu$ by $\mu=\left(  n-1,n-1,1\right)  $; thus,
$\left\vert \mu\right\vert =\left(  n-1\right)  +\left(  n-1\right)  +1=2n-1$,
so that $\mu\in\operatorname*{Par}\nolimits_{2n-1}$. Also, $\lambda
\in\operatorname*{Par}\nolimits_{2n-1}$ (as we know). Thus, $\mu
\triangleright\lambda$ holds if and only if each $k\in\left\{  1,2,\ldots
,2n-1\right\}  $ satisfies%
\begin{equation}
\mu_{1}+\mu_{2}+\cdots+\mu_{k}\geq\lambda_{1}+\lambda_{2}+\cdots+\lambda_{k}
\label{pf.lem.dominnn-1.denn.1}%
\end{equation}
(by Definition~\ref{def.dominance}).

But each $k\in\left\{  1,2,\ldots,2n-1\right\}  $ satisfies
(\ref{pf.lem.dominnn-1.denn.1}).

[\textit{Proof of (\ref{pf.lem.dominnn-1.denn.1}):} Let $k\in\left\{
1,2,\ldots,2n-1\right\}  $. We must prove (\ref{pf.lem.dominnn-1.denn.1}).

If $k\geq3$, then
\begin{align*}
\mu_{1}+\mu_{2}+\cdots+\mu_{k}  &  \geq\mu_{1}+\mu_{2}+\mu_{3}\\
&  =\left(  n-1\right)  +\left(  n-1\right)  +1\ \ \ \ \ \ \ \ \ \ \left(
\text{since }\mu=\left(  n-1,n-1,1\right)  \right) \\
&  =2n-1=\left\vert \lambda\right\vert \ \ \ \ \ \ \ \ \ \ \left(  \text{since
}\lambda\in\operatorname*{Par}\nolimits_{2n-1}\right) \\
&  =\lambda_{1}+\lambda_{2}+\lambda_{3}+\cdots\geq\lambda_{1}+\lambda
_{2}+\cdots+\lambda_{k},
\end{align*}
and thus (\ref{pf.lem.dominnn-1.denn.1}) is proven in this case. Hence, it
remains to prove (\ref{pf.lem.dominnn-1.denn.1}) for $k\leq2$. But
$\mu=\left(  n-1,n-1,1\right)  $, and thus $\mu_{1}=n-1\geq\lambda_{1}$ and
$\mu_{2}=n-1\geq\lambda_{2}$. Hence, $\mu_{1}\geq\lambda_{1}$ and
$\underbrace{\mu_{1}}_{\geq\lambda_{1}}+\underbrace{\mu_{2}}_{\geq\lambda_{2}%
}\geq\lambda_{1}+\lambda_{2}$. In other words, (\ref{pf.lem.dominnn-1.denn.1})
is proven for $k\leq2$. As we have said, this concludes the proof of
(\ref{pf.lem.dominnn-1.denn.1}).]

Thus, we have $\mu\triangleright\lambda$ (since $\mu\triangleright\lambda$
holds if and only if each $k\in\left\{  1,2,\ldots,2n-1\right\}  $ satisfies
(\ref{pf.lem.dominnn-1.denn.1})). In other words, $\left(  n-1,n-1,1\right)
\triangleright\lambda$ holds (since $\mu=\left(  n-1,n-1,1\right)  $). This
proves the \textquotedblleft$\Longleftarrow$\textquotedblright\ direction of
Lemma \ref{lem.dominnn-1}.
\end{proof}
\end{verlong}

\begin{lemma}
\label{lem.dominnn-1a}Let $n$ be an integer such that $n>1$. Let $\lambda
\in\operatorname*{Par}\nolimits_{n}$. Then, $\left(  n-1,1\right)
\triangleright\lambda$ if and only if all positive integers $i$ satisfy
$\lambda_{i}<n$.
\end{lemma}

\begin{proof}
[Proof of Lemma \ref{lem.dominnn-1a}.]This is analogous to the proof of Lemma
\ref{lem.dominnn-1}.
\end{proof}

The next lemma identifies the left hand side of Theorem \ref{thm.claim}
\textbf{(a)} as the Petrie symmetric function $G\left(  n,n\right)  $, and the
left hand side of Theorem \ref{thm.claim} \textbf{(b)} as the Petrie symmetric
function $G\left(  n,2n-1\right)  $:

\begin{corollary}
\label{cor.LHS1}Let $n$ be an integer such that $n>1$. Then:

\textbf{(a)} We have%
\[
\sum_{\substack{\lambda\in\operatorname*{Par}\nolimits_{n};\\\left(
n-1,1\right)  \triangleright\lambda}}m_{\lambda}=G\left(  n,n\right)  .
\]

\textbf{(b)} We have%
\[
\sum_{\substack{\lambda\in\operatorname*{Par}\nolimits_{2n-1};\\\left(
n-1,n-1,1\right)  \triangleright\lambda}}m_{\lambda}=G\left(  n,2n-1\right)
.
\]

\end{corollary}

\begin{proof}
\textbf{(b)} Proposition \ref{prop.G.basics} \textbf{(c)} (applied to $k=n$
and $m=2n-1$) yields
\begin{equation}
G\left(  n,2n-1\right)  =\sum_{\substack{\alpha\in\operatorname*{WC}%
;\\\left\vert \alpha\right\vert =2n-1;\\\alpha_{i}<n\text{ for all }%
i}}\mathbf{x}^{\alpha}=\sum_{\substack{\lambda\in\operatorname*{Par}%
;\\\left\vert \lambda\right\vert =2n-1;\\\lambda_{i}<n\text{ for all }%
i}}m_{\lambda}. \label{pf.cor.LHS1.1}%
\end{equation}
But Lemma \ref{lem.dominnn-1} yields the following equality of summation
signs:%
\[
\sum_{\substack{\lambda\in\operatorname*{Par}\nolimits_{2n-1};\\\left(
n-1,n-1,1\right)  \triangleright\lambda}}=\sum_{\substack{\lambda
\in\operatorname*{Par}\nolimits_{2n-1};\\\lambda_{i}<n\text{ for all }i}%
}=\sum_{\substack{\lambda\in\operatorname*{Par};\\\left\vert \lambda
\right\vert =2n-1;\\\lambda_{i}<n\text{ for all }i}}.
\]
Hence,%
\[
\sum_{\substack{\lambda\in\operatorname*{Par}\nolimits_{2n-1};\\\left(
n-1,n-1,1\right)  \triangleright\lambda}}m_{\lambda}=\sum_{\substack{\lambda
\in\operatorname*{Par};\\\left\vert \lambda\right\vert =2n-1;\\\lambda
_{i}<n\text{ for all }i}}m_{\lambda}.
\]
Comparing this with (\ref{pf.cor.LHS1.1}), we obtain
\[
\sum_{\substack{\lambda\in\operatorname*{Par}\nolimits_{2n-1};\\\left(
n-1,n-1,1\right)  \triangleright\lambda}}m_{\lambda}=G\left(  n,2n-1\right)
.
\]
This proves Corollary \ref{cor.LHS1} \textbf{(b)}.

\textbf{(a)} This is analogous to Corollary \ref{cor.LHS1} \textbf{(b)}, but
uses Lemma \ref{lem.dominnn-1a} instead of Lemma \ref{lem.dominnn-1}.
\end{proof}

It was Corollary \ref{cor.LHS1} that led the author to introduce and study the
Petrie symmetric functions $G\left(  k,m\right)  $ in general, even if little
of their general properties has proven relevant to Theorem \ref{thm.claim}.

The next proposition gives a simple formula for certain kinds of Petrie
symmetric functions:

\begin{proposition}
\label{prop.h2n-1}Let $n$ be a positive integer. Let $k\in\left\{
0,1,\ldots,n-1\right\}  $. Then,%
\[
G\left(  n,n+k\right)  =h_{n+k}-h_{k}p_{n}.
\]

\end{proposition}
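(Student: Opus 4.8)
The plan is to obtain Proposition \ref{prop.h2n-1} directly from Theorem \ref{thm.G.frob}, exploiting the fact that the hypothesis $k\le n-1$ makes all but two terms in the Frobenius expansion of $G\left(n,n+k\right)$ vanish.

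First I would invoke Theorem \ref{thm.G.frob} with its ``$k$'' specialized to $n$ and its ``$m$'' specialized to $n+k$ (taking care, since the statement of Proposition \ref{prop.h2n-1} reuses the letters $n$ and $k$). This gives
$G\left(n,n+k\right)=\sum_{i\in\mathbb{N}}\left(-1\right)^{i}h_{\left(n+k\right)-ni}\cdot\mathbf{f}_{n}\left(e_{i}\right)$. Next I would show that the summand is $0$ whenever $i\ge 2$: indeed $ni\ge 2n>n+k$ (using $k\le n-1<n$), so $\left(n+k\right)-ni<0$ and hence $h_{\left(n+k\right)-ni}=0$. Thus only the terms $i=0$ and $i=1$ survive.

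For the $i=0$ term I would use $e_{0}=1$, the fact that $\mathbf{f}_{n}$ is a $\mathbf{k}$-algebra homomorphism (so $\mathbf{f}_{n}\left(1\right)=1$), and $h_{\left(n+k\right)-n\cdot 0}=h_{n+k}$; this term is therefore $h_{n+k}$. For the $i=1$ term I would use $h_{\left(n+k\right)-n}=h_{k}$ together with the elementary computation $\mathbf{f}_{n}\left(e_{1}\right)=e_{1}\left(x_{1}^{n},x_{2}^{n},x_{3}^{n},\ldots\right)=x_{1}^{n}+x_{2}^{n}+x_{3}^{n}+\cdots=p_{n}$ (recalling $e_{1}=x_{1}+x_{2}+x_{3}+\cdots$); this term is therefore $-h_{k}p_{n}$. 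Adding the two surviving terms gives $G\left(n,n+k\right)=h_{n+k}-h_{k}p_{n}$, as claimed.

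There is no real obstacle here; the only points demanding attention are the bookkeeping of the variable names when substituting into Theorem \ref{thm.G.frob} and the one-line verification that $\mathbf{f}_{n}\left(e_{1}\right)=p_{n}$. As a sanity check, the case $k=0$ recovers $G\left(n,n\right)=h_{n}-p_{n}$, matching Proposition \ref{prop.G.basics} \textbf{(f)}. (If one preferred to avoid Theorem \ref{thm.G.frob}, there is a slightly longer self-contained route: by Proposition \ref{prop.G.basics} \textbf{(c)} one has $h_{n+k}-G\left(n,n+k\right)=\sum_{\lambda\in\operatorname*{Par}\nolimits_{n+k};\ \lambda_{1}\ge n}m_{\lambda}$, and a direct count shows the coefficient of $\mathbf{x}^{\alpha}$ in $h_{k}p_{n}$, for $\alpha$ a weak composition of $n+k$, equals $\left\vert\left\{i\ \mid\ \alpha_{i}\ge n\right\}\right\vert$ — which is $1$ if some $\alpha_{i}\ge n$ and $0$ otherwise, the key point being that at most one entry of a weak composition of $n+k$ can be $\ge n$ when $k<n$ — whence $h_{k}p_{n}=\sum_{\lambda\in\operatorname*{Par}\nolimits_{n+k};\ \lambda_{1}\ge n}m_{\lambda}$ as well, and subtracting yields the result.)
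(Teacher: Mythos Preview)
Your proposal is correct. Interestingly, you have given both of the arguments the paper considers, but with the emphasis reversed.

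Your main route --- applying Theorem \ref{thm.G.frob} with its $k$ and $m$ specialized to $n$ and $n+k$, then observing that only the $i=0$ and $i=1$ terms survive and that $\mathbf{f}_n(e_1)=p_n$ --- is exactly the argument the paper mentions in the paragraph preceding its proof (``Proposition \ref{prop.h2n-1} can be viewed as a particular case of Theorem \ref{thm.G.frob} \ldots\ only the first two addends will (potentially) be nonzero''), but then declines to spell out in favor of an independent proof.

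The paper's actual proof is essentially your parenthetical alternative: it shows directly that the monomials of degree $n+k$ having at least one exponent $\ge n$ are in bijection with pairs $(\mathfrak{m},i)\in\mathfrak{M}_k\times\{1,2,3,\ldots\}$ via $(\mathfrak{m},i)\mapsto\mathfrak{m}\cdot x_i^n$ (injectivity resting on the observation that at most one variable can appear with exponent $\ge n$, since $n+k<2n$), whence $h_{n+k}-G(n,n+k)=h_k p_n$. Your phrasing in terms of the coefficient of $\mathbf{x}^\alpha$ in $h_k p_n$ being $\left\vert\{i:\alpha_i\ge n\}\right\vert\in\{0,1\}$ is the same argument at the level of coefficients.

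The trade-off is clear: your Frobenius route is shorter once Theorem \ref{thm.G.frob} is in hand, while the paper's direct route is self-contained and makes the section on the Liu--Polo conjecture independent of the earlier machinery.
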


Proposition \ref{prop.h2n-1} can be viewed as a particular case of Theorem
\ref{thm.G.frob} (applied to $n$ and $n+k$ instead of $k$ and $m$), after
realizing that in the sum on the right hand side of Theorem \ref{thm.G.frob},
only the first two addends will (potentially) be nonzero in this case.
However, let us give an independent proof of the proposition.

\begin{proof}
[Proof of Proposition \ref{prop.h2n-1}.]From $k\in\left\{  0,1,\ldots
,n-1\right\}  $, we obtain $k<n$ and thus $n+k<n+n$. Thus we conclude:

\begin{statement}
\textit{Observation 1:} A monomial of degree $n+k$ cannot have more than one
variable appear in it with exponent $\geq n$ (since this would require it to
have degree $\geq n+n>n+k$).
\end{statement}

Let $\mathfrak{M}_{k}$ be the set of all monomials of degree $k$. The
definition of $h_{k}$ shows that $h_{k}$ is the sum of all monomials of degree
$k$. In other words,%
\begin{equation}
h_{k}=\sum_{\mathfrak{m}\in\mathfrak{M}_{k}}\mathfrak{m}.
\label{pf.prop.h2n-1.hk=}%
\end{equation}

Let $\mathfrak{M}_{n+k}$ be the set of all monomials of degree $n+k$. The
definition of $h_{n+k}$ shows that $h_{n+k}$ is the sum of all monomials of
degree $n+k$. In other words,%
\begin{equation}
h_{n+k}=\sum_{\mathfrak{n}\in\mathfrak{M}_{n+k}}\mathfrak{n}.
\label{pf.prop.h2n-1.hn+k=}%
\end{equation}

Let $\mathfrak{N}$ be the set of all monomials of degree $n+k$ in which all
exponents are $<n$. These monomials are exactly the $\mathbf{x}^{\alpha}$ for
$\alpha\in\operatorname*{WC}$ satisfying $\left\vert \alpha\right\vert =n+k$
and $\left(  \alpha_{i}<n\text{ for all }i\right)  $. Hence,%
\begin{equation}
\sum_{\mathfrak{n}\in\mathfrak{N}}\mathfrak{n}=\sum_{\substack{\alpha
\in\operatorname*{WC};\\\left\vert \alpha\right\vert =n+k;\\\alpha_{i}<n\text{
for all }i}}\mathbf{x}^{\alpha}. \label{pf.prop.h2n-1.su1}%
\end{equation}
But Proposition \ref{prop.G.basics} \textbf{(c)} (applied to $n$ and $n+k$
instead of $k$ and $m$) yields
\[
G\left(  n,n+k\right)  =\sum_{\substack{\alpha\in\operatorname*{WC}%
;\\\left\vert \alpha\right\vert =n+k;\\\alpha_{i}<n\text{ for all }%
i}}\mathbf{x}^{\alpha}=\sum_{\substack{\lambda\in\operatorname*{Par}%
;\\\left\vert \lambda\right\vert =n+k;\\\lambda_{i}<n\text{ for all }%
i}}m_{\lambda}.
\]
Hence,%
\begin{equation}
G\left(  n,n+k\right)  =\sum_{\substack{\alpha\in\operatorname*{WC}%
;\\\left\vert \alpha\right\vert =n+k;\\\alpha_{i}<n\text{ for all }%
i}}\mathbf{x}^{\alpha}=\sum_{\mathfrak{n}\in\mathfrak{N}}\mathfrak{n}
\label{pf.prop.h2n-1.su1pet}%
\end{equation}
(by (\ref{pf.prop.h2n-1.su1})).

Clearly, the set $\mathfrak{N}$ is a subset of $\mathfrak{M}_{n+k}$, and
furthermore its complement $\mathfrak{M}_{n+k}\setminus\mathfrak{N}$ is the
set of all monomials of degree $n+k$ in which at least one exponent is $\geq
n$. Hence, the map%
\begin{align*}
\mathfrak{M}_{k}\times\left\{  1,2,3,\ldots\right\}   &  \rightarrow
\mathfrak{M}_{n+k}\setminus\mathfrak{N},\\
\left(  \mathfrak{m},i\right)   &  \mapsto\mathfrak{m}\cdot x_{i}^{n}%
\end{align*}
is well-defined (because if $\mathfrak{m}$ is a monomial of degree $k$, and if
$i\in\left\{  1,2,3,\ldots\right\}  $, then $\mathfrak{m}\cdot x_{i}^{n}$ is a
monomial of degree $k+n=n+k$, and the variable $x_{i}$ appears in it with
exponent $\geq n$). This map is furthermore surjective (for simple reasons)
and injective (in fact, if $\mathfrak{n}\in\mathfrak{M}_{n+k}\setminus
\mathfrak{N}$, then $\mathfrak{n}$ is a monomial of degree $n+k$, and thus
Observation 1 yields that there is \textbf{at most} one variable $x_{i}$ that
appears in $\mathfrak{n}$ with exponent $\geq n$; but this means that the only
preimage of $\mathfrak{n}$ under our map is $\left(  \dfrac{\mathfrak{n}%
}{x_{i}^{n}},i\right)  $). Hence, this map is a bijection. We can thus use it
to substitute $\mathfrak{m}\cdot x_{i}^{n}$ for $\mathfrak{n}$ in the sum
$\sum_{\mathfrak{n}\in\mathfrak{M}_{n+k}\setminus\mathfrak{N}}\mathfrak{n}$.
We thus obtain%
\begin{align}
\sum_{\mathfrak{n}\in\mathfrak{M}_{n+k}\setminus\mathfrak{N}}\mathfrak{n}  &
=\sum_{\left(  \mathfrak{m},i\right)  \in\mathfrak{M}_{k}\times\left\{
1,2,3,\ldots\right\}  }\mathfrak{m}\cdot x_{i}^{n}=\underbrace{\left(
\sum_{\mathfrak{m}\in\mathfrak{M}_{k}}\mathfrak{m}\right)  }_{\substack{=h_{k}%
\\\text{(by (\ref{pf.prop.h2n-1.hk=}))}}}\cdot\underbrace{\sum_{i\in\left\{
1,2,3,\ldots\right\}  }x_{i}^{n}}_{=p_{n}}\nonumber\\
&  =h_{k}p_{n}. \label{pf.prop.h2n-1.su2}%
\end{align}
But (\ref{pf.prop.h2n-1.hn+k=}) becomes%
\begin{align*}
h_{n+k}  &  =\sum_{\mathfrak{n}\in\mathfrak{M}_{n+k}}\mathfrak{n}%
=\underbrace{\sum_{\mathfrak{n}\in\mathfrak{N}}\mathfrak{n}}%
_{\substack{=G\left(  n,n+k\right)  \\\text{(by (\ref{pf.prop.h2n-1.su1pet}%
))}}}+\underbrace{\sum_{\mathfrak{n}\in\mathfrak{M}_{n+k}\setminus
\mathfrak{N}}\mathfrak{n}}_{\substack{=h_{k}p_{n}\\\text{(by
(\ref{pf.prop.h2n-1.su2}))}}}\ \ \ \ \ \ \ \ \ \ \left(  \text{since
}\mathfrak{N}\subseteq\mathfrak{M}_{n+k}\right) \\
&  =G\left(  n,n+k\right)  +h_{k}p_{n}.
\end{align*}
In other words,%
\[
G\left(  n,n+k\right)  =h_{n+k}-h_{k}p_{n}.
\]
This proves Proposition \ref{prop.h2n-1}.
\end{proof}

We note in passing that the idea used in the above proof of Proposition
\ref{prop.h2n-1} can be generalized to yield a second proof of Theorem
\ref{thm.G.frob}, using an inclusion/exclusion argument.\footnote{Here is an
outline of this second proof: For any positive integer $k$ and any
$m\in\mathbb{N}$, we have%
\begin{align*}
G\left(  k,m\right)   &  =\sum_{\substack{\alpha\in\operatorname*{WC}%
;\\\left\vert \alpha\right\vert =m;\\\alpha_{i}<k\text{ for all }i}%
}\mathbf{x}^{\alpha}=\sum_{I\subseteq\left\{  1,2,3,\ldots\right\}  }\left(
-1\right)  ^{\left\vert I\right\vert }\underbrace{\sum_{\substack{\alpha
\in\operatorname*{WC};\\\left\vert \alpha\right\vert =m;\\\alpha_{i}\geq
k\text{ for all }i\in I}}\mathbf{x}^{\alpha}}_{\substack{=\left(  \prod_{i\in
I}x_{i}^{k}\right)  \cdot\sum_{\substack{\beta\in\operatorname*{WC}%
;\\\left\vert \beta\right\vert =m-k\left\vert I\right\vert }}\mathbf{x}%
^{\beta}}}\\
&  \ \ \ \ \ \ \ \ \ \ \left(  \text{by an infinite-set version of the
inclusion-exclusion principle}\right) \\
&  =\underbrace{\sum_{I\subseteq\left\{  1,2,3,\ldots\right\}  }}_{=\sum
_{p\in\mathbb{N}}\ \ \sum_{\substack{I\subseteq\left\{  1,2,3,\ldots\right\}
;\\\left\vert I\right\vert =p}}}\left(  -1\right)  ^{\left\vert I\right\vert
}\left(  \prod_{i\in I}x_{i}^{k}\right)  \cdot\underbrace{\sum
_{\substack{\beta\in\operatorname*{WC};\\\left\vert \beta\right\vert
=m-k\left\vert I\right\vert }}\mathbf{x}^{\beta}}_{=h_{m-k\left\vert
I\right\vert }}\\
&  =\sum_{p\in\mathbb{N}}\ \ \sum_{\substack{I\subseteq\left\{  1,2,3,\ldots
\right\}  ;\\\left\vert I\right\vert =p}}\underbrace{\left(  -1\right)
^{\left\vert I\right\vert }}_{=\left(  -1\right)  ^{p}}\left(  \prod_{i\in
I}x_{i}^{k}\right)  \cdot\underbrace{h_{m-k\left\vert I\right\vert }%
}_{\substack{=h_{m-kp}\\\text{(since }\left\vert I\right\vert =p\text{)}}}\\
&  =\sum_{p\in\mathbb{N}}\left(  -1\right)  ^{p}h_{m-kp}\underbrace{\sum
_{\substack{I\subseteq\left\{  1,2,3,\ldots\right\}  ;\\\left\vert
I\right\vert =p}}\ \ \prod_{i\in I}x_{i}^{k}}_{\substack{=\mathbf{f}%
_{k}\left(  e_{p}\right)  \\\text{(this is easy to check)}}}=\sum
_{p\in\mathbb{N}}\left(  -1\right)  ^{p}h_{m-kp}\cdot\mathbf{f}_{k}\left(
e_{p}\right) \\
&  =\sum_{i\in\mathbb{N}}\left(  -1\right)  ^{i}h_{m-ki}\cdot\mathbf{f}%
_{k}\left(  e_{i}\right)  .
\end{align*}
}

\begin{corollary}
\label{cor.LHS2}Let $n$ be an integer such that $n>1$. Then:

\textbf{(a)} We have%
\[
\sum_{\substack{\lambda\in\operatorname*{Par}\nolimits_{n};\\\left(
n-1,1\right)  \triangleright\lambda}}m_{\lambda}=h_{n}-p_{n}.
\]

\textbf{(b)} We have%
\[
\sum_{\substack{\lambda\in\operatorname*{Par}\nolimits_{2n-1};\\\left(
n-1,n-1,1\right)  \triangleright\lambda}}m_{\lambda}=h_{2n-1}-h_{n-1}p_{n}.
\]

\end{corollary}

\begin{proof}
\textbf{(b)} Corollary \ref{cor.LHS1} \textbf{(b)} yields%
\begin{align*}
\sum_{\substack{\lambda\in\operatorname*{Par}\nolimits_{2n-1};\\\left(
n-1,n-1,1\right)  \triangleright\lambda}}m_{\lambda}  &  =G\left(
n,2n-1\right)  =G\left(  n,n+\left(  n-1\right)  \right)
\ \ \ \ \ \ \ \ \ \ \left(  \text{since }2n-1=n+\left(  n-1\right)  \right) \\
&  =h_{n+\left(  n-1\right)  }-h_{n-1}p_{n}\ \ \ \ \ \ \ \ \ \ \left(
\text{by Proposition \ref{prop.h2n-1}, applied to }k=n-1\right) \\
&  =h_{2n-1}-h_{n-1}p_{n}.
\end{align*}
This proves Corollary \ref{cor.LHS2} \textbf{(b)}.

\textbf{(a)} Corollary \ref{cor.LHS1} \textbf{(a)} yields%
\begin{align*}
\sum_{\substack{\lambda\in\operatorname*{Par}\nolimits_{n};\\\left(
n-1,1\right)  \triangleright\lambda}}m_{\lambda}  &  =G\left(  n,n\right)
=G\left(  n,n+0\right) \\
&  =\underbrace{h_{n+0}}_{=h_{n}}-\underbrace{h_{0}}_{=1}p_{n}%
\ \ \ \ \ \ \ \ \ \ \left(  \text{by Proposition \ref{prop.h2n-1}, applied to
}k=0\right) \\
&  =h_{n}-p_{n}.
\end{align*}
This proves Corollary \ref{cor.LHS2} \textbf{(a)}.
\end{proof}

Our next claim is an easy consequence of Proposition \ref{prop.p-as-sum}:

\begin{corollary}
\label{cor.p-as-sum-2}Let $n$ be a positive integer. Then,
\[
h_{n}-p_{n}=\sum_{i=0}^{n-2}\left(  -1\right)  ^{i}s_{\left(  n-1-i,1^{i+1}%
\right)  }.
\]

\end{corollary}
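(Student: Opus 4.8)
The plan is to deduce Corollary \ref{cor.p-as-sum-2} directly from Proposition \ref{prop.p-as-sum}, which states that $p_n=\sum_{i=0}^{n-1}(-1)^i s_{(n-i,1^i)}$. First I would split off the $i=0$ term from this sum, writing
\[
p_n=s_{(n)}+\sum_{i=1}^{n-1}(-1)^i s_{(n-i,1^i)}.
\]
Since $s_{(n)}=h_n$ (as recalled in the excerpt), subtracting gives $h_n-p_n=-\sum_{i=1}^{n-1}(-1)^i s_{(n-i,1^i)}$.

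Next I would reindex the remaining sum by substituting $i=j+1$, so that $j$ ranges over $\{0,1,\ldots,n-2\}$ as $i$ ranges over $\{1,2,\ldots,n-1\}$. Under this substitution $n-i=n-(j+1)=n-1-j$ and $1^i=1^{j+1}$, while $(-1)^i=(-1)^{j+1}=-(-1)^j$. Hence
\[
-\sum_{i=1}^{n-1}(-1)^i s_{(n-i,1^i)}=-\sum_{j=0}^{n-2}\bigl(-(-1)^j\bigr)s_{(n-1-j,1^{j+1})}=\sum_{j=0}^{n-2}(-1)^j s_{(n-1-j,1^{j+1})}.
\]
Renaming $j$ back to $i$ yields the claimed formula $h_n-p_n=\sum_{i=0}^{n-2}(-1)^i s_{(n-1-i,1^{i+1})}$.

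This is essentially a routine bookkeeping argument, so there is no real obstacle; the only mild subtlety worth a sentence of care is that the partition $(n-i,1^i)$ for $i=0$ really is $(n)$ (so that its Schur function is $h_n$), and that for $n=1$ the sum $\sum_{i=0}^{n-2}$ is empty while $h_1-p_1=h_1-h_1=0$, so the identity holds trivially in that boundary case as well. I would likely phrase the proof in a couple of lines: invoke Proposition \ref{prop.p-as-sum}, isolate the $s_{(n)}=h_n$ term, move it to the left, and reindex.
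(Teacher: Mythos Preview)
Your proposal is correct and follows essentially the same approach as the paper: invoke Proposition \ref{prop.p-as-sum}, split off the $i=0$ term (using $s_{(n)}=h_n$), and reindex the remaining sum. Your added remark on the $n=1$ boundary case is a nice touch that the paper leaves implicit.
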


\begin{proof}
Proposition \ref{prop.p-as-sum} yields%
\begin{align*}
p_{n}  &  =\sum_{i=0}^{n-1}\left(  -1\right)  ^{i}s_{\left(  n-i,1^{i}\right)
}=\underbrace{\left(  -1\right)  ^{0}}_{=1}\underbrace{s_{\left(
n-0,1^{0}\right)  }}_{=s_{\left(  n-0\right)  }=s_{\left(  n\right)  }=h_{n}%
}+\sum_{i=1}^{n-1}\left(  -1\right)  ^{i}s_{\left(  n-i,1^{i}\right)  }\\
&  =h_{n}+\sum_{i=1}^{n-1}\left(  -1\right)  ^{i}s_{\left(  n-i,1^{i}\right)
},
\end{align*}
so that%
\begin{align*}
h_{n}-p_{n}  &  =-\sum_{i=1}^{n-1}\left(  -1\right)  ^{i}s_{\left(
n-i,1^{i}\right)  }=\sum_{i=1}^{n-1}\underbrace{\left(  -\left(  -1\right)
^{i}\right)  }_{=\left(  -1\right)  ^{i-1}}s_{\left(  n-i,1^{i}\right)  }%
=\sum_{i=1}^{n-1}\left(  -1\right)  ^{i-1}s_{\left(  n-i,1^{i}\right)  }\\
&  =\sum_{i=0}^{n-2}\left(  -1\right)  ^{i}s_{\left(  n-1-i,1^{i+1}\right)  }%
\end{align*}
(here, we have substituted $i+1$ for $i$ in the sum).
\end{proof}

We can now immediately prove Theorem \ref{thm.claim} \textbf{(a)}:

\begin{proof}
[Proof of Theorem \ref{thm.claim} \textbf{(a)}.]Corollary \ref{cor.LHS2}
\textbf{(a)} yields%
\[
\sum_{\substack{\lambda\in\operatorname*{Par}\nolimits_{n};\\\left(
n-1,1\right)  \triangleright\lambda}}m_{\lambda}=h_{n}-p_{n}=\sum_{i=0}%
^{n-2}\left(  -1\right)  ^{i}s_{\left(  n-1-i,1^{i+1}\right)  }%
\ \ \ \ \ \ \ \ \ \ \left(  \text{by Corollary \ref{cor.p-as-sum-2}}\right)
.
\]
This proves Theorem \ref{thm.claim} \textbf{(a)}.
\end{proof}

We shall use the \emph{skewing operators} $f^{\perp}:\Lambda\rightarrow
\Lambda$ for all $f\in\Lambda$ as defined in \cite[\S 2.8]{GriRei} or in
\cite[Chapter I,\ Section 5, Example 3]{Macdon95}. The easiest way to define
them (following \cite[Chapter I,\ Section 5, Example 3]{Macdon95}) is as
follows: For each $f\in\Lambda$, we let $f^{\perp}:\Lambda\rightarrow\Lambda$
be the $\mathbf{k}$-linear map adjoint to the map $L_{f}:\Lambda
\rightarrow\Lambda,\ g\mapsto fg$ (that is, to the map that multiplies every
element of $\Lambda$ by $f$) with respect to the Hall inner product. That is,
$f^{\perp}$ is the $\mathbf{k}$-linear map from $\Lambda$ to $\Lambda$ that
satisfies
\[
\left\langle g,f^{\perp}\left(  a\right)  \right\rangle =\left\langle
fg,a\right\rangle \ \ \ \ \ \ \ \ \ \ \text{for all }a\in\Lambda\text{ and
}g\in\Lambda.
\]
It is not hard to show that such an operator $f^{\perp}$ exists\footnote{This
is not completely automatic: Not every $\mathbf{k}$-linear map from $\Lambda$
to $\Lambda$ has an adjoint with respect to the Hall inner product! (For
example, the $\mathbf{k}$-linear map $\Lambda\rightarrow\Lambda$ that sends
each Schur function $s_{\lambda}$ to $1$ has none.) The reason why the map
$L_{f}:\Lambda\rightarrow\Lambda,\ g\mapsto fg$ has an adjoint is that when
$f$ is homogeneous of degree $k$, this map $L_{f}$ sends each graded component
$\Lambda_{m}$ of $\Lambda$ to $\Lambda_{m+k}$, and both of these graded
components $\Lambda_{m}$ and $\Lambda_{m+k}$ are $\mathbf{k}$-modules with
\textbf{finite} bases. (The case when $f$ is not homogeneous can be reduced to
the case when $f$ is homogeneous, since each $f\in\Lambda$ is a sum of
finitely many homogeneous elements.)}. The definition of $f^{\perp}$ in
\cite[\S 2.8]{GriRei} is different but equivalent (because of
\cite[Proposition 2.8.2(i)]{GriRei}). One of the most important properties of
skewing operators is the following fact (\cite[(2.8.2)]{GriRei}):

\begin{lemma}
\label{lem.skewing.schur}Let $\lambda$ and $\mu$ be any two partitions. Then,%
\begin{equation}
s_{\mu}^{\perp}\left(  s_{\lambda}\right)  =s_{\lambda/\mu}.
\label{pf.lem.Bmhn.skew}%
\end{equation}
(Here, $s_{\lambda/\mu}$ is a skew Schur function, defined in Subsection
\ref{subsect.proofs.skew}.)
\end{lemma}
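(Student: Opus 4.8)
The plan is to reduce Lemma \ref{lem.skewing.schur} (which is \cite[(2.8.2)]{GriRei}) to the symmetry of the Littlewood--Richardson coefficients, using nothing beyond the defining adjointness property of the skewing operator $s_{\mu}^{\perp}$ and the orthonormality of the Schur basis with respect to the Hall inner product. Since $\left(  s_{\nu}\right)  _{\nu\in\operatorname*{Par}}$ is a basis of $\Lambda$, it suffices to verify that $\left(  s_{\nu},s_{\mu}^{\perp}\left(  s_{\lambda}\right)  \right)  =\left(  s_{\nu},s_{\lambda/\mu}\right)  $ for every partition $\nu$. For the left-hand side, the definition of $s_{\mu}^{\perp}$ as the adjoint of multiplication by $s_{\mu}$ gives $\left(  s_{\nu},s_{\mu}^{\perp}\left(  s_{\lambda}\right)  \right)  =\left(  s_{\mu}s_{\nu},s_{\lambda}\right)  $, and since the Schur basis is orthonormal, this equals the coefficient of $s_{\lambda}$ in the Schur expansion of the product $s_{\mu}s_{\nu}$; call this coefficient $c_{\mu\nu}^{\lambda}$. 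For the right-hand side, orthonormality again shows that $\left(  s_{\nu},s_{\lambda/\mu}\right)  $ is the coefficient of $s_{\nu}$ in the Schur expansion of $s_{\lambda/\mu}$. So the whole lemma comes down to the statement that the coefficient of $s_{\nu}$ in $s_{\lambda/\mu}$ equals $c_{\mu\nu}^{\lambda}$.

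To establish that last statement I would invoke the Cauchy-like identity of Theorem \ref{thm.our-cauchy}. Applying it, and then rewriting $\sum_{\lambda}h_{\lambda}\left(  \mathbf{x}\right)  m_{\lambda}\left(  \mathbf{y}\right)  $ as $\sum_{\nu}s_{\nu}\left(  \mathbf{x}\right)  s_{\nu}\left(  \mathbf{y}\right)  $ (the classical Cauchy identity for Schur functions, which also follows from the duality of $\left(  h_{\lambda}\right)  $ and $\left(  m_{\lambda}\right)  $ in (\ref{eq.hlam-mlam-dual}) together with the self-duality of $\left(  s_{\nu}\right)  $), one gets
\[
\sum_{\lambda\in\operatorname*{Par}}s_{\lambda/\mu}\left(  \mathbf{x}\right)  s_{\lambda}\left(  \mathbf{y}\right)  =s_{\mu}\left(  \mathbf{y}\right)  \cdot\sum_{\nu\in\operatorname*{Par}}s_{\nu}\left(  \mathbf{x}\right)  s_{\nu}\left(  \mathbf{y}\right)  =\sum_{\nu\in\operatorname*{Par}}s_{\nu}\left(  \mathbf{x}\right)  \cdot\sum_{\lambda\in\operatorname*{Par}}c_{\mu\nu}^{\lambda}s_{\lambda}\left(  \mathbf{y}\right)  ,
\]
where in the last step I expanded $s_{\mu}\left(  \mathbf{y}\right)  s_{\nu}\left(  \mathbf{y}\right)  =\sum_{\lambda}c_{\mu\nu}^{\lambda}s_{\lambda}\left(  \mathbf{y}\right)  $. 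Comparing coefficients of $s_{\lambda}\left(  \mathbf{y}\right)  $ on both sides then yields $s_{\lambda/\mu}\left(  \mathbf{x}\right)  =\sum_{\nu}c_{\mu\nu}^{\lambda}s_{\nu}\left(  \mathbf{x}\right)  $, so the coefficient of $s_{\nu}$ in $s_{\lambda/\mu}$ is indeed $c_{\mu\nu}^{\lambda}$. Combined with the previous paragraph, this proves $s_{\mu}^{\perp}\left(  s_{\lambda}\right)  =s_{\lambda/\mu}$.

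I do not expect a genuine obstacle: the only non-formal input is the Cauchy identity (on which the paper already relies in the proof of Theorem \ref{thm.our-cauchy}), and everything else is bookkeeping with adjoints and orthonormal bases. The one point requiring mild care is the interchange of summations and the legitimacy of comparing coefficients of $s_{\lambda}\left(  \mathbf{y}\right)  $ in the displayed identity; this should be handled exactly as in the first proof of Theorem \ref{thm.G.pieri} in Subsection \ref{subsect.proofs.G.pieri.1st} (every symmetric function involved is homogeneous, so each graded component is finite and the comparison is valid). Of course, if one prefers brevity, one may simply cite \cite[(2.8.2)]{GriRei} directly instead of reproducing this argument.
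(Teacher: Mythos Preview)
Your argument is correct. The paper, however, does not prove this lemma at all: it simply quotes it as a known fact from \cite[(2.8.2)]{GriRei} and moves on. Your proposal therefore goes further than the paper does, supplying a self-contained proof via adjointness, orthonormality of the Schur basis, and the identity $s_{\lambda/\mu}=\sum_{\nu}c_{\mu\nu}^{\lambda}s_{\nu}$ (which you derive from Theorem~\ref{thm.our-cauchy} combined with the Schur--Schur form of the Cauchy identity). This is the standard textbook approach and is entirely sound; the only step not already proved in the paper is the rewriting $\sum_{\lambda}h_{\lambda}(\mathbf{x})m_{\lambda}(\mathbf{y})=\sum_{\nu}s_{\nu}(\mathbf{x})s_{\nu}(\mathbf{y})$, but as you note this follows from the duality (\ref{eq.hlam-mlam-dual}) together with the orthonormality of the Schur basis (both bases are dual to $(m_{\lambda})$, hence yield the same expansion of the Cauchy kernel). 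Your remark about the legitimacy of comparing coefficients of $s_{\lambda}(\mathbf{y})$ is also apt and handled correctly.
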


Using skewing operators, we can define another helpful family of operators on
$\Lambda$:

\begin{definition}
For any $m\in\mathbb{Z}$, we define a map $\mathbf{B}_{m}:\Lambda
\rightarrow\Lambda$ by setting%
\[
\mathbf{B}_{m}\left(  f\right)  =\sum_{i\in\mathbb{N}}\left(  -1\right)
^{i}h_{m+i}e_{i}^{\perp}f\ \ \ \ \ \ \ \ \ \ \text{for all }f\in
\Lambda\text{.}%
\]
It is known (\cite[Exercise 2.9.1(a)]{GriRei}) that this map $\mathbf{B}_{m}$
is well-defined and $\mathbf{k}$-linear.
\end{definition}

(Actually, the well-definedness of $\mathbf{B}_{m}$ is easy to check: If
$f\in\Lambda$ has degree $d$, then all integers $i>d$ satisfy $e_{i}^{\perp
}f=0$ for degree reasons, and thus the sum $\sum_{i\in\mathbb{N}}\left(
-1\right)  ^{i}h_{m+i}e_{i}^{\perp}f$ has only finitely many nonzero addends.
The $\mathbf{k}$-linearity of $\mathbf{B}_{m}$ is even clearer.)

The operators $\mathbf{B}_{m}$ for $m\in\mathbb{Z}$ have first appeared in
Zelevinsky's \cite[\S 4.20]{Zelevi81} (in the different-looking but secretly
equivalent setting of a PSH-algebra), where they are credited to J. N.
Bernstein. They have since been dubbed the \emph{Bernstein creation operators}
and proved useful in various contexts (e.g., the definition of the
\textquotedblleft dual immaculate functions\textquotedblright\ in
\cite{BBSSZ13} takes them for inspiration). One of their most fundamental
properties is the following fact (which originates in \cite[4.20, ($\ast\ast
$)]{Zelevi81} and appears implicitly in \cite[Chapter I, Section 5, Example
29]{Macdon95}):

\begin{proposition}
\label{prop.Bmslam}Let $\lambda$ be any partition. Let $m\in\mathbb{Z}$
satisfy $m\geq\lambda_{1}$. Then,%
\begin{equation}
\sum_{i\in\mathbb{N}}\left(  -1\right)  ^{i}h_{m+i}e_{i}^{\perp}s_{\lambda
}=s_{\left(  m,\lambda_{1},\lambda_{2},\lambda_{3},\ldots\right)  }.
\label{eq.schur-row-adder-1}%
\end{equation}

\end{proposition}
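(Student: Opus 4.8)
The plan is to turn both sides of \eqref{eq.schur-row-adder-1} into determinants built from the $h_n$ and to match them by a Laplace expansion along one row. First I would rewrite the left-hand side. Since $e_i = s_{(1^i)}$, Lemma~\ref{lem.skewing.schur} (applied with $\mu = (1^i)$) gives $e_i^{\perp} s_{\lambda} = s_{(1^i)}^{\perp} s_{\lambda} = s_{\lambda/(1^i)}$. Hence the left-hand side of \eqref{eq.schur-row-adder-1} equals $\sum_{i\in\mathbb{N}} (-1)^i h_{m+i}\, s_{\lambda/(1^i)}$, and since $s_{\lambda/(1^i)} = 0$ as soon as $(1^i) \not\subseteq \lambda$, only finitely many summands survive (namely those with $i$ at most the number of parts of $\lambda$).

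Next, fix $\ell\in\mathbb{N}$ with $\lambda = (\lambda_1,\dots,\lambda_\ell)$ and note that $\nu := (m,\lambda_1,\dots,\lambda_\ell)$ is a genuine partition precisely because $m\ge\lambda_1$; this is where the hypothesis is used. The first Jacobi--Trudi formula \eqref{eq.schur.JTsh} writes $s_\nu$ as the $(\ell+1)\times(\ell+1)$ determinant $\det\!\big((h_{\nu_a - a + b})_{1\le a,b\le \ell+1}\big)$, whose first row has $(1,b)$-entry $h_{m-1+b}$. Expanding this determinant along the first row gives $s_\nu = \sum_{b=1}^{\ell+1}(-1)^{1+b} h_{m-1+b}\, M_b$, where $M_b$ is the minor obtained by deleting the first row and the $b$-th column.

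The heart of the argument is to identify $M_b$ with $s_{\lambda/(1^{b-1})}$. After deleting the first row, the surviving rows, reindexed by $a' = a-1 \in \{1,\dots,\ell\}$, have $(a',b)$-entry $h_{\lambda_{a'} - a' + (b-1)}$; deleting the $b$-th column and reindexing the surviving columns by the shift $c = b'-1$ shows that $M_b = \det\!\big((h_{\lambda_{a'} - a' + c})\big)$ with $a'$ ranging over $\{1,\dots,\ell\}$ and $c$ over $\{0,1,\dots,\ell\}\setminus\{b-1\}$. On the other hand, the skew Jacobi--Trudi formula (Theorem~\ref{thm.JTs.h}), applied to $\lambda$ and $(1^{b-1})$ padded to length $\ell$, expresses $s_{\lambda/(1^{b-1})}$ as the $\ell\times\ell$ determinant whose $(a,j)$-entry is $h_{\lambda_a - a + (j - [\,j \le b-1\,])}$; as $j$ runs over $\{1,\dots,\ell\}$ the shift $j - [\,j \le b-1\,]$ runs increasingly and bijectively over $\{0,1,\dots,\ell\}\setminus\{b-1\}$, so this determinant is literally $M_b$. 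Substituting $i = b-1$ and using $(-1)^{1+b} = (-1)^i$ turns the row expansion into $s_\nu = \sum_{i=0}^{\ell}(-1)^i h_{m+i}\, s_{\lambda/(1^i)}$, which is exactly the left-hand side computed in the first step; this proves Proposition~\ref{prop.Bmslam}.

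The only genuinely delicate point is the index bookkeeping in the third paragraph: one must verify that deleting column $b$ from $\{1,\dots,\ell+1\}$ corresponds, under $c = b'-1$, to deleting $b-1$ from the set $\{0,1,\dots,\ell\}$ of column shifts, and that this matches the shift set $\{0,1,\dots,\ell\}\setminus\{b-1\}$ produced by the vertical strip $(1^{b-1})$ in the skew Jacobi--Trudi determinant. Once the two index sets are seen to coincide, the two determinants are equal on the nose and everything else is formal manipulation of signs and of the finite range of summation.
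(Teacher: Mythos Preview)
Your argument is correct. The paper does not actually give its own proof of Proposition~\ref{prop.Bmslam}; it only cites \cite[Exercise 2.9.1(b)]{GriRei}. Your approach --- Laplace expansion of the Jacobi--Trudi determinant for $s_{(m,\lambda_1,\ldots,\lambda_\ell)}$ along its first row, combined with recognizing each cofactor $M_b$ as the skew Jacobi--Trudi determinant for $s_{\lambda/(1^{b-1})}$ --- is the standard proof and almost certainly what that exercise intends. The index bookkeeping you flag as delicate is handled correctly: the column-shift set $\{0,1,\ldots,\ell\}\setminus\{b-1\}$ arising from deleting column $b$ does coincide (in increasing order, so no sign correction is needed) with the set $\{\,j-[j\le b-1]\ :\ 1\le j\le\ell\,\}$ coming from $\mu=(1^{b-1})$ in Theorem~\ref{thm.JTs.h}.
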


See \cite[Exercise 2.9.1(b)]{GriRei} for a proof of Proposition
\ref{prop.Bmslam}. Thus, if $\lambda$ is any partition, and if $m\in
\mathbb{Z}$ satisfies $m\geq\lambda_{1}$, then%
\begin{align}
\mathbf{B}_{m}\left(  s_{\lambda}\right)   &  =\sum_{i\in\mathbb{N}}\left(
-1\right)  ^{i}h_{m+i}e_{i}^{\perp}s_{\lambda}\ \ \ \ \ \ \ \ \ \ \left(
\text{by the definition of }\mathbf{B}_{m}\right) \nonumber\\
&  =s_{\left(  m,\lambda_{1},\lambda_{2},\lambda_{3},\ldots\right)
}\ \ \ \ \ \ \ \ \ \ \left(  \text{by (\ref{eq.schur-row-adder-1})}\right)  .
\label{eq.schur-row-adder-2}%
\end{align}

\begin{lemma}
\label{lem.Bmhn}Let $n$ be a positive integer. Let $m\in\mathbb{N}$. Then,
$\mathbf{B}_{m}\left(  h_{n}\right)  =h_{m}h_{n}-h_{m+1}h_{n-1}$.
\end{lemma}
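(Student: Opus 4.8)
The plan is to unwind the definition of $\mathbf{B}_{m}$ and reduce everything to evaluating the skewing operators $e_{i}^{\perp}$ on $h_{n}$ explicitly. By definition,
\[
\mathbf{B}_{m}\left(  h_{n}\right)  =\sum_{i\in\mathbb{N}}\left(  -1\right)
^{i}h_{m+i}\cdot e_{i}^{\perp}\left(  h_{n}\right)  ,
\]
so it suffices to show that $e_{0}^{\perp}\left(  h_{n}\right)  =h_{n}$, that $e_{1}^{\perp}\left(  h_{n}\right)  =h_{n-1}$, and that $e_{i}^{\perp}\left(  h_{n}\right)  =0$ for all $i\geq2$. Granting these three facts, the sum collapses to its $i=0$ and $i=1$ terms, giving $\mathbf{B}_{m}\left(  h_{n}\right)  =h_{m}h_{n}-h_{m+1}h_{n-1}$, which is the claim.

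To prove the three facts, I would use $h_{n}=s_{\left(  n\right)  }$ and $e_{i}=s_{\left(  1^{i}\right)  }$ together with Lemma \ref{lem.skewing.schur}, which gives $e_{i}^{\perp}\left(  h_{n}\right)  =s_{\left(  1^{i}\right)  }^{\perp}\left(  s_{\left(  n\right)  }\right)  =s_{\left(  n\right)  /\left(  1^{i}\right)  }$. Now $\left(  1^{i}\right)  \subseteq\left(  n\right)  $ holds if and only if $i\leq1$ (since $n\geq1$, while the second part of $\left(  1^{i}\right)  $ already exceeds the second part of $\left(  n\right)  $ once $i\geq2$); hence $s_{\left(  n\right)  /\left(  1^{i}\right)  }=0$ for $i\geq2$. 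For $i=0$ the skew shape is just $\left(  n\right)  $, so $s_{\left(  n\right)  /\varnothing}=s_{\left(  n\right)  }=h_{n}$; and for $i=1$ the skew diagram $\left(  n\right)  /\left(  1\right)  $ is a single horizontal strip of $n-1$ cells, whence $s_{\left(  n\right)  /\left(  1\right)  }=h_{n-1}$ (with the convention $h_{0}=1$ covering the degenerate case $n=1$). Alternatively, and perhaps more self-containedly, one can compute $e_{i}^{\perp}\left(  h_{n}\right)  $ from the coproduct $\Delta\left(  h_{n}\right)  =\sum_{j=0}^{n}h_{j}\otimes h_{n-j}$ and the adjointness defining $f^{\perp}$: this yields $e_{i}^{\perp}\left(  h_{n}\right)  =\sum_{j=0}^{n}h_{j}\left(  e_{i},h_{n-j}\right)  $, and since $\left(  e_{i},h_{n-j}\right)  =\left(  s_{\left(  1^{i}\right)  },s_{\left(  n-j\right)  }\right)  =\delta_{\left(  1^{i}\right)  ,\left(  n-j\right)  }$ vanishes unless $\left(  i,j\right)  \in\left\{  \left(  0,n\right)  ,\left(  1,n-1\right)  \right\}  $, the same three facts drop out.

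As a consistency check, when $m\geq n$ the lemma also follows directly from Proposition \ref{prop.Bmslam} applied to $\lambda=\left(  n\right)  $: indeed $\mathbf{B}_{m}\left(  h_{n}\right)  =\mathbf{B}_{m}\left(  s_{\left(  n\right)  }\right)  =s_{\left(  m,n\right)  }$, and expanding $s_{\left(  m,n\right)  }$ via the Jacobi--Trudi formula (\ref{eq.schur.JTsh}) gives the $2\times2$ determinant $h_{m}h_{n}-h_{m+1}h_{n-1}$. The advantage of the argument above is that it works for \emph{all} $m\in\mathbb{N}$, including $m<n$, where $\left(  m,n\right)  $ is not a partition. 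There is no real obstacle here: the only thing to watch is the degenerate case $n=1$ (with $i=1$), where one must remember $h_{0}=1$ and that $\left(  n\right)  /\left(  1\right)  $ is the empty skew shape; everything else is routine bookkeeping.
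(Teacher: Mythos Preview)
Your proof is correct and follows essentially the same approach as the paper: compute $e_{i}^{\perp}(h_{n})$ via $s_{(n)/(1^{i})}$ using Lemma~\ref{lem.skewing.schur}, observe that only the $i=0$ and $i=1$ terms survive, and read off the result. Your alternative route through the coproduct and the consistency check via Jacobi--Trudi are nice additions but not needed for the argument.
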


\begin{proof}
[Proof of Lemma \ref{lem.Bmhn}.]We have $e_{0}=1$ and thus $e_{0}^{\perp
}=1^{\perp}=\operatorname*{id}$. Hence, $e_{0}^{\perp}\left(  h_{n}\right)
=h_{n}$.

We shall use the notion of skew Schur functions $s_{\lambda/\mu}$ (as in
Subsection \ref{subsect.proofs.skew}). Recall that $s_{\lambda/\mu}=0$ when
$\mu\not \subseteq \lambda$.

From $e_{1}=s_{\left(  1\right)  }$ and $h_{n}=s_{\left(  n\right)  }$, we
obtain%
\[
e_{1}^{\perp}\left(  h_{n}\right)  =s_{\left(  1\right)  }^{\perp}\left(
s_{\left(  n\right)  }\right)  =s_{\left(  n\right)  /\left(  1\right)
}\ \ \ \ \ \ \ \ \ \ \left(  \text{by (\ref{pf.lem.Bmhn.skew})}\right)  .
\]

But it is easy to see that $s_{\left(  n\right)  /\left(  1\right)
}=s_{\left(  n-1\right)  }$. (Indeed, this follows from the combinatorial
definition of skew Schur functions, since the skew Ferrers diagram of $\left(
n\right)  /\left(  1\right)  $ can be obtained from the Ferrers diagram of
$\left(  n-1\right)  $ by parallel shift\footnote{See \cite[\S 2.3]{GriRei}
for the notions we are using here.}. Alternatively, this follows easily from
Theorem \ref{thm.JTs.h}, because $s_{\left(  n-1\right)  }=h_{n-1}$.)

Thus, we obtain%
\[
e_{1}^{\perp}\left(  h_{n}\right)  =s_{\left(  n\right)  /\left(  1\right)
}=s_{\left(  n-1\right)  }=h_{n-1}.
\]

For each integer $i>1$, we have%
\begin{align}
e_{i}^{\perp}\left(  h_{n}\right)   &  =s_{\left(  1^{i}\right)  }^{\perp
}\left(  s_{\left(  n\right)  }\right)  \ \ \ \ \ \ \ \ \ \ \left(
\text{since }e_{i}=s_{\left(  1^{i}\right)  }\text{ and }h_{n}=s_{\left(
n\right)  }\right) \nonumber\\
&  =s_{\left(  n\right)  /\left(  1^{i}\right)  }\ \ \ \ \ \ \ \ \ \ \left(
\text{by (\ref{pf.lem.Bmhn.skew})}\right) \nonumber\\
&  =0\ \ \ \ \ \ \ \ \ \ \left(  \text{since }\left(  1^{i}\right)
\not \subseteq \left(  n\right)  \text{ (because }i>1\text{)}\right)  .
\label{pf.lem.Bmhn.3}%
\end{align}

Now, the definition of $\mathbf{B}_{m}$ yields%
\begin{align*}
\mathbf{B}_{m}\left(  h_{n}\right)   &  =\sum_{i\in\mathbb{N}}\left(
-1\right)  ^{i}h_{m+i}e_{i}^{\perp}\left(  h_{n}\right) \\
&  =\underbrace{\left(  -1\right)  ^{0}}_{=1}\underbrace{h_{m+0}}_{=h_{m}%
}\underbrace{e_{0}^{\perp}\left(  h_{n}\right)  }_{=h_{n}}+\underbrace{\left(
-1\right)  ^{1}}_{=-1}h_{m+1}\underbrace{e_{1}^{\perp}\left(  h_{n}\right)
}_{=h_{n-1}}+\sum_{i\geq2}\left(  -1\right)  ^{i}h_{m+i}\underbrace{e_{i}%
^{\perp}\left(  h_{n}\right)  }_{\substack{=0\\\text{(by (\ref{pf.lem.Bmhn.3}%
))}}}\\
&  =h_{m}h_{n}-h_{m+1}h_{n-1}.
\end{align*}

\end{proof}

\begin{corollary}
\label{cor.Bn-1Hn}Let $n$ be a positive integer. Then, $\mathbf{B}%
_{n-1}\left(  h_{n}\right)  =0$.
\end{corollary}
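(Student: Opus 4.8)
The plan is to deduce this immediately from Lemma \ref{lem.Bmhn}, which is the only ingredient needed. Since $n$ is a positive integer, the integer $m = n-1$ is a nonnegative integer, so Lemma \ref{lem.Bmhn} applies with this choice of $m$ together with the same $n$.

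Concretely, I would apply Lemma \ref{lem.Bmhn} to $m = n-1$ to obtain
\[
\mathbf{B}_{n-1}\left(h_n\right) = h_{n-1}h_n - h_{(n-1)+1}h_{n-1}.
\]
Then I would simplify the second term using $(n-1)+1 = n$, so that $h_{(n-1)+1} = h_n$, giving $\mathbf{B}_{n-1}\left(h_n\right) = h_{n-1}h_n - h_n h_{n-1}$. Finally, since $\Lambda$ is a commutative ring, $h_n h_{n-1} = h_{n-1} h_n$, and the two terms cancel, yielding $\mathbf{B}_{n-1}\left(h_n\right) = 0$.

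There is no real obstacle here: the corollary is a direct specialization of Lemma \ref{lem.Bmhn} combined with commutativity of $\Lambda$. The only thing worth spelling out carefully is that $m = n-1$ is indeed a legitimate value (i.e.\ lies in $\mathbb{N}$), which holds because $n$ is a positive integer. So the write-up will be just a couple of lines.
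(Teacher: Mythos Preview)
Your proposal is correct and matches the paper's own proof exactly: the paper simply says ``Apply Lemma \ref{lem.Bmhn} to $m=n-1$ and simplify.''
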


\begin{proof}
Apply Lemma \ref{lem.Bmhn} to $m=n-1$ and simplify.
\end{proof}

\begin{lemma}
\label{lem.Bmpn}Let $m\in\mathbb{N}$. Let $n$ be a positive integer. Then,
$\mathbf{B}_{m}\left(  p_{n}\right)  =h_{m}p_{n}-h_{m+n}$.
\end{lemma}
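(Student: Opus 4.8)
Proof proposal for Lemma \ref{lem.Bmpn}.

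The plan is to compute $\mathbf{B}_m\left(p_n\right)$ directly from the definition $\mathbf{B}_m\left(f\right)=\sum_{i\in\mathbb{N}}\left(-1\right)^{i}h_{m+i}e_{i}^{\perp}f$, by working out the skewing operators $e_{i}^{\perp}\left(p_n\right)$ for all $i\in\mathbb{N}$. Since $p_n$ is a homogeneous symmetric function of degree $n$, the image $e_{i}^{\perp}\left(p_n\right)$ is homogeneous of degree $n-i$, and thus vanishes whenever $i>n$; so the sum is finite and we only need to understand $e_{0}^{\perp}\left(p_n\right), e_{1}^{\perp}\left(p_n\right), \ldots, e_{n}^{\perp}\left(p_n\right)$. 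The cases $i=0$ and $i=n$ are immediate: $e_{0}=1$, so $e_{0}^{\perp}=\operatorname{id}$ and $e_{0}^{\perp}\left(p_n\right)=p_n$; and $e_{n}^{\perp}\left(p_n\right)$ is homogeneous of degree $0$, hence a scalar, which one computes via the Hall inner product as $\left(1,e_{n}^{\perp}\left(p_n\right)\right)=\left(e_n,p_n\right)=\left(-1\right)^{n-1}$ by Proposition \ref{prop.ejpj} (using that $\left(h_0,\cdot\right)=\left(1,\cdot\right)$ picks out the degree-$0$ component).

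The crux is the intermediate range $1\le i\le n-1$, where I expect $e_{i}^{\perp}\left(p_n\right)=0$. The cleanest way to see this is again via adjointness: for any homogeneous $g\in\Lambda$ of degree $n-i$, we have $\left(g,e_{i}^{\perp}\left(p_n\right)\right)=\left(e_i g,p_n\right)$, and $e_i g$ is a product of two homogeneous symmetric functions of positive degrees $i$ and $n-i$ (both positive since $1\le i\le n-1$), so Lemma \ref{lem.hall-pmprod} gives $\left(e_i g, p_n\right)=0$. Since this holds for all such $g$ and $e_{i}^{\perp}\left(p_n\right)$ lies in the degree-$(n-i)$ component (a free $\mathbf{k}$-module with finite basis, on which the Hall inner product is nondegenerate — e.g.\ the Schur basis is orthonormal), we conclude $e_{i}^{\perp}\left(p_n\right)=0$. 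Alternatively one could cite the known coproduct formula $\Delta\left(p_n\right)=1\otimes p_n+p_n\otimes 1$ (primitivity of $p_n$) together with the standard identity relating $f^{\perp}$ to $\Delta$, but the Lemma \ref{lem.hall-pmprod} route is self-contained within the excerpt.

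Assembling these pieces: in the sum $\mathbf{B}_m\left(p_n\right)=\sum_{i\in\mathbb{N}}\left(-1\right)^{i}h_{m+i}e_{i}^{\perp}\left(p_n\right)$, only the $i=0$ and $i=n$ terms survive, giving
\[
\mathbf{B}_m\left(p_n\right)=\left(-1\right)^{0}h_{m}\cdot p_n+\left(-1\right)^{n}h_{m+n}\cdot\left(-1\right)^{n-1}=h_m p_n-h_{m+n},
\]
since $\left(-1\right)^{n}\left(-1\right)^{n-1}=-1$. This is exactly the claimed formula. The main obstacle is the vanishing $e_{i}^{\perp}\left(p_n\right)=0$ for $1\le i\le n-1$; everything else is bookkeeping. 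I should double-check the edge case $n=1$ (then the range $1\le i\le n-1$ is empty, and the formula reads $\mathbf{B}_m\left(p_1\right)=h_m p_1-h_{m+1}$, which is consistent with $p_1=h_1$ and Lemma \ref{lem.Bmhn} applied with $n=1$), so no separate argument is needed there.
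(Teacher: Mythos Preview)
Your proof is correct and follows essentially the same strategy as the paper: compute $e_i^{\perp}(p_n)$ for each $i$, observe that only $i=0$ and $i=n$ contribute, and assemble. The only minor difference is in how you establish $e_i^{\perp}(p_n)$: the paper invokes the primitivity of $p_n$ (i.e., $\Delta(p_n)=1\otimes p_n+p_n\otimes 1$) together with the coproduct-based description of $f^{\perp}$ to obtain $e_i^{\perp}(p_n)=(e_i,1)\,p_n+(e_i,p_n)\,1$ in one stroke, whereas you use the adjointness definition together with Lemma~\ref{lem.hall-pmprod} and nondegeneracy of the Hall form on each graded piece. Since Lemma~\ref{lem.hall-pmprod} is itself essentially a reformulation of the primitivity of $p_n$, the two arguments are equivalent in content; your route has the virtue of staying entirely within the adjoint framework set up in the text.
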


\begin{proof}
This is \cite[Exercise 2.9.1(f)]{GriRei}. But here is a more direct proof: We
will use the comultiplication $\Delta:\Lambda\rightarrow\Lambda\otimes\Lambda$
of the Hopf algebra $\Lambda$ (see \cite[\S 2.3]{GriRei}). Here and in the
following, the \textquotedblleft$\otimes$\textquotedblright\ sign denotes
$\otimes_{\mathbf{k}}$. The power-sum symmetric function $p_{n}$ is
primitive\footnote{Recall that an element $x$ of a Hopf algebra $H$ is said to
be \textit{primitive} if the comultiplication $\Delta_{H}$ of $H$ satisfies
$\Delta_{H}\left(  x\right)  =1\otimes x+x\otimes1$.} (see \cite[Proposition
2.3.6(i)]{GriRei}); thus,
\[
\Delta\left(  p_{n}\right)  =1\otimes p_{n}+p_{n}\otimes1.
\]
Hence, for each $i\in\mathbb{N}$, the definition of $e_{i}^{\perp}$ given in
\cite[Definition 2.8.1]{GriRei} (not the equivalent definition we gave above)
yields%
\begin{equation}
e_{i}^{\perp}\left(  p_{n}\right)  =\left\langle e_{i},1\right\rangle
p_{n}+\left\langle e_{i},p_{n}\right\rangle 1. \label{pf.lem.Bmpn.eipn}%
\end{equation}

Now, the definition of $\mathbf{B}_{m}$ yields%
\begin{align*}
\mathbf{B}_{m}\left(  p_{n}\right)   &  =\sum_{i\in\mathbb{N}}\left(
-1\right)  ^{i}h_{m+i}\underbrace{e_{i}^{\perp}\left(  p_{n}\right)
}_{\substack{=\left\langle e_{i},1\right\rangle p_{n}+\left\langle e_{i}%
,p_{n}\right\rangle 1\\\text{(by (\ref{pf.lem.Bmpn.eipn}))}}}\\
&  =\sum_{i\in\mathbb{N}}\left(  -1\right)  ^{i}h_{m+i}\left(  \left\langle
e_{i},1\right\rangle p_{n}+\left\langle e_{i},p_{n}\right\rangle 1\right) \\
&  =\underbrace{\sum_{i\in\mathbb{N}}\left(  -1\right)  ^{i}h_{m+i}%
\cdot\left\langle e_{i},1\right\rangle p_{n}}_{\substack{=\left(  -1\right)
^{0}h_{m+0}\cdot\left\langle e_{0},1\right\rangle p_{n}\\\text{(because the
Hall inner product }\left\langle e_{i},1\right\rangle \\\text{equals }0\text{
whenever }i\neq0\text{ (by (\ref{eq.Hall.graded})),}\\\text{and thus the only
nonzero addend of this}\\\text{sum is the addend for }i=0\text{)}%
}}+\underbrace{\sum_{i\in\mathbb{N}}\left(  -1\right)  ^{i}h_{m+i}%
\cdot\left\langle e_{i},p_{n}\right\rangle 1}_{\substack{=\left(  -1\right)
^{n}h_{m+n}\cdot\left\langle e_{n},p_{n}\right\rangle 1\\\text{(because the
Hall inner product }\left\langle e_{i},p_{n}\right\rangle \\\text{equals
}0\text{ whenever }i\neq n\text{ (by (\ref{eq.Hall.graded})),}\\\text{and thus
the only nonzero addend of this}\\\text{sum is the addend for }i=n\text{)}}}\\
&  =\underbrace{\left(  -1\right)  ^{0}}_{=1}\underbrace{h_{m+0}}_{=h_{m}%
}\cdot\underbrace{\left\langle e_{0},1\right\rangle }_{=\left\langle
1,1\right\rangle =1}p_{n}+\left(  -1\right)  ^{n}h_{m+n}\cdot
\underbrace{\left\langle e_{n},p_{n}\right\rangle }_{\substack{=\left(
-1\right)  ^{n-1}\\\text{(by Proposition \ref{prop.ejpj})}}}1\\
&  =h_{m}p_{n}+\underbrace{\left(  -1\right)  ^{n}h_{m+n}\cdot\left(
-1\right)  ^{n-1}1}_{=-h_{m+n}}=h_{m}p_{n}-h_{m+n}.
\end{align*}

\end{proof}

\begin{lemma}
\label{lem.Bm1}Let $n$ be a positive integer. Then,%
\[
\mathbf{B}_{n-1}\left(  h_{n}-p_{n}\right)  =h_{2n-1}-h_{n-1}p_{n}.
\]

\end{lemma}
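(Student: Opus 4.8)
The plan is to derive Lemma \ref{lem.Bm1} directly from the two facts we have already established about the Bernstein operator $\mathbf{B}_{n-1}$, namely Corollary \ref{cor.Bn-1Hn} and Lemma \ref{lem.Bmpn}, together with the $\mathbf{k}$-linearity of $\mathbf{B}_m$. No new computation is really needed; the statement is essentially the bookkeeping step that combines the action of $\mathbf{B}_{n-1}$ on $h_n$ and on $p_n$.

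First I would note that $n-1 \in \mathbb{N}$ (since $n$ is a positive integer), so both Corollary \ref{cor.Bn-1Hn} and Lemma \ref{lem.Bmpn} (the latter applied with $m = n-1$) are available. Corollary \ref{cor.Bn-1Hn} gives $\mathbf{B}_{n-1}(h_n) = 0$, and Lemma \ref{lem.Bmpn} (with $m = n-1$) gives $\mathbf{B}_{n-1}(p_n) = h_{n-1}p_n - h_{(n-1)+n} = h_{n-1}p_n - h_{2n-1}$.

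Then, using that $\mathbf{B}_{n-1} : \Lambda \rightarrow \Lambda$ is $\mathbf{k}$-linear, I would compute
\[
\mathbf{B}_{n-1}(h_n - p_n) = \mathbf{B}_{n-1}(h_n) - \mathbf{B}_{n-1}(p_n) = 0 - \left( h_{n-1}p_n - h_{2n-1} \right) = h_{2n-1} - h_{n-1}p_n,
\]
which is exactly the claimed identity. The main (and only) point that needs attention is making sure the hypotheses of the two invoked results are met, i.e. that $n-1$ is a nonnegative integer; there is no genuine obstacle here, and the proof is a one-line consequence of the earlier lemmas.
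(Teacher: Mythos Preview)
Your proof is correct and follows exactly the same route as the paper: use the $\mathbf{k}$-linearity of $\mathbf{B}_{n-1}$, apply Corollary \ref{cor.Bn-1Hn} to get $\mathbf{B}_{n-1}(h_n)=0$, and apply Lemma \ref{lem.Bmpn} with $m=n-1$ to get $\mathbf{B}_{n-1}(p_n)=h_{n-1}p_n-h_{2n-1}$, then subtract.
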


\begin{proof}
The map $\mathbf{B}_{n-1}$ is $\mathbf{k}$-linear. Thus,%
\begin{align*}
\mathbf{B}_{n-1}\left(  h_{n}-p_{n}\right)   &  =\underbrace{\mathbf{B}%
_{n-1}\left(  h_{n}\right)  }_{\substack{=0\\\text{(by Corollary
\ref{cor.Bn-1Hn})}}}-\underbrace{\mathbf{B}_{n-1}\left(  p_{n}\right)
}_{\substack{=h_{n-1}p_{n}-h_{\left(  n-1\right)  +n}\\\text{(by Lemma
\ref{lem.Bmpn},}\\\text{applied to }m=n-1\text{)}}}\\
&  =-\left(  h_{n-1}p_{n}-h_{\left(  n-1\right)  +n}\right)
=\underbrace{h_{\left(  n-1\right)  +n}}_{=h_{2n-1}}-h_{n-1}p_{n}%
=h_{2n-1}-h_{n-1}p_{n}.
\end{align*}

\end{proof}

\begin{lemma}
\label{lem.Bm2}Let $n$ be a positive integer. Then,%
\[
\mathbf{B}_{n-1}\left(  h_{n}-p_{n}\right)  =\sum_{i=0}^{n-2}\left(
-1\right)  ^{i}s_{\left(  n-1,n-1-i,1^{i+1}\right)  }.
\]

\end{lemma}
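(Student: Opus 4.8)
\textbf{Proof plan for Lemma \ref{lem.Bm2}.}

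The plan is to apply the Bernstein creation operator $\mathbf{B}_{n-1}$ directly to the expansion of $h_n - p_n$ into Schur functions provided by Corollary \ref{cor.p-as-sum-2}, and to evaluate the result term by term using the formula \eqref{eq.schur-row-adder-2}. First I would recall that Corollary \ref{cor.p-as-sum-2} gives
\[
h_{n}-p_{n}=\sum_{i=0}^{n-2}\left(  -1\right)  ^{i}s_{\left(  n-1-i,1^{i+1}\right)  },
\]
so that, by $\mathbf{k}$-linearity of $\mathbf{B}_{n-1}$,
\[
\mathbf{B}_{n-1}\left(  h_{n}-p_{n}\right)  =\sum_{i=0}^{n-2}\left(  -1\right)  ^{i}\mathbf{B}_{n-1}\left(  s_{\left(  n-1-i,1^{i+1}\right)  }\right)  .
\]
Now each partition $\lambda = \left(  n-1-i,1^{i+1}\right)  $ appearing here satisfies $\lambda_1 = n-1-i \le n-1$, so $m := n-1 \ge \lambda_1$, and hence \eqref{eq.schur-row-adder-2} applies and yields
\[
\mathbf{B}_{n-1}\left(  s_{\left(  n-1-i,1^{i+1}\right)  }\right)  = s_{\left(  n-1,\ n-1-i,\ 1^{i+1}\right)  }.
\]
Substituting this into the sum gives exactly the claimed right-hand side $\sum_{i=0}^{n-2}\left(-1\right)^{i}s_{\left(n-1,n-1-i,1^{i+1}\right)}$, completing the proof.

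The one point that needs a little care is the verification that prepending a new first part equal to $n-1$ to the partition $\left(n-1-i,1^{i+1}\right)$ actually produces a genuine partition, i.e. that the resulting sequence $\left(n-1,n-1-i,1^{i+1}\right)$ is weakly decreasing. This holds because $n-1 \ge n-1-i$ (as $i \ge 0$) and $n-1-i \ge 1$ (as $i \le n-2$), and $1 \ge 1$; so the entries weakly decrease. This is precisely the hypothesis $m \ge \lambda_1$ that \eqref{eq.schur-row-adder-2} requires, so no separate argument beyond citing that formula is needed; the inequality $i \le n-2$ from the summation range is what makes everything fit. I expect no real obstacle here — the lemma is a routine application of Proposition \ref{prop.Bmslam} (in the form \eqref{eq.schur-row-adder-2}) to the Schur expansion already established in Corollary \ref{cor.p-as-sum-2}.

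Finally, combining Lemma \ref{lem.Bm1} and Lemma \ref{lem.Bm2} (both expressing $\mathbf{B}_{n-1}\left(h_n - p_n\right)$) will give the identity $h_{2n-1} - h_{n-1}p_n = \sum_{i=0}^{n-2}(-1)^i s_{\left(n-1,n-1-i,1^{i+1}\right)}$, which together with Corollary \ref{cor.LHS2} will immediately yield Theorem \ref{thm.claim}; but that assembly is the business of the next steps, not of this lemma.
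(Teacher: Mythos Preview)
Your proof is correct and follows essentially the same approach as the paper: apply Corollary \ref{cor.p-as-sum-2} to expand $h_n - p_n$ in Schur functions, then apply $\mathbf{B}_{n-1}$ term by term using \eqref{eq.schur-row-adder-2} after checking the hypothesis $n-1 \ge n-1-i$. Your extra verification that $(n-1,n-1-i,1^{i+1})$ is a genuine partition is a nice touch that the paper leaves implicit.
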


\begin{proof}
[Proof of Lemma \ref{lem.Bm2}.]We have
\begin{align*}
\mathbf{B}_{n-1}\left(  h_{n}-p_{n}\right)   &  =\mathbf{B}_{n-1}\left(
\sum_{i=0}^{n-2}\left(  -1\right)  ^{i}s_{\left(  n-1-i,1^{i+1}\right)
}\right)  \ \ \ \ \ \ \ \ \ \ \left(  \text{by Corollary \ref{cor.p-as-sum-2}%
}\right) \\
&  =\sum_{i=0}^{n-2}\left(  -1\right)  ^{i}\underbrace{\mathbf{B}_{n-1}\left(
s_{\left(  n-1-i,1^{i+1}\right)  }\right)  }_{\substack{=s_{\left(
n-1,n-1-i,1^{i+1}\right)  }\\\text{(by (\ref{eq.schur-row-adder-2}), applied
to }m=n-1\\\text{and }\lambda=\left(  n-1-i,1^{i+1}\right)  \\\text{(since
}n-1\geq n-1-i\text{))}}}\ \ \ \ \ \ \ \ \ \ \left(  \text{since }%
\mathbf{B}_{n-1}\text{ is }\mathbf{k}\text{-linear}\right) \\
&  =\sum_{i=0}^{n-2}\left(  -1\right)  ^{i}s_{\left(  n-1,n-1-i,1^{i+1}%
\right)  }.
\end{align*}

\end{proof}

Now the proof of Theorem \ref{thm.claim} \textbf{(b)} is a trivial
concatenation of equalities:

\begin{proof}
[Proof of Theorem \ref{thm.claim} \textbf{(b)}.]Corollary \ref{cor.LHS2}
\textbf{(b)} yields%
\begin{align*}
\sum_{\substack{\lambda\in\operatorname*{Par}\nolimits_{2n-1};\\\left(
n-1,n-1,1\right)  \triangleright\lambda}}m_{\lambda}  &  =h_{2n-1}%
-h_{n-1}p_{n}=\mathbf{B}_{n-1}\left(  h_{n}-p_{n}\right)
\ \ \ \ \ \ \ \ \ \ \left(  \text{by Lemma \ref{lem.Bm1}}\right) \\
&  =\sum_{i=0}^{n-2}\left(  -1\right)  ^{i}s_{\left(  n-1,n-1-i,1^{i+1}%
\right)  }\ \ \ \ \ \ \ \ \ \ \left(  \text{by Lemma \ref{lem.Bm2}}\right)  .
\end{align*}

\end{proof}

\section{\label{sect.fin}Final remarks}

\subsection{SageMath code}

The SageMath computer algebra system \cite{SageMath} does not (yet) natively
know the Petrie symmetric functions $G\left(  k,m\right)  $; but they can be
easily constructed in it. For example, the code that follows computes
$G\left(  k, n\right)  $ expanded in the Schur basis:

\begin{python}
Sym = SymmetricFunctions(QQ) # Replace QQ by your favorite base ring.
m = Sym.m() # monomial symmetric functions
s = Sym.s() # Schur functions
	
def G(k, n): # a Petrie function
	return s(m.sum(m[lam] for lam in Partitions(n, max_part=k-1)))
\end{python}

\subsection{Understanding the Petrie numbers}

Combining Corollary \ref{cor.Gkm.main} with Theorem \ref{thm.petk.explicit}
yields an explicit expression of all coefficients in the expansion of a Petrie
symmetric function $G\left(  k,m\right)  $ in the Schur basis. It would stand
to reason if the identity in Theorem \ref{thm.claim} \textbf{(b)} (whose left
hand side is $G\left(  n,2n-1\right)  $) could be obtained from this
expression. Surprisingly, we have been unable to do so, which suggests that
the description of $\operatorname*{pet}\nolimits_{k}\left(  \lambda
,\varnothing\right)  $ in Theorem \ref{thm.petk.explicit} might not be optimal.

As to $\operatorname*{pet}\nolimits_{k}\left(  \lambda,\mu\right)  $, we do
not have an explicit description at all, unless we count the recursive one
that can be extracted from the proof in \cite{GorWil74}.

\subsection{MNable symmetric functions}

Combining Theorem \ref{thm.G.pieri} with Proposition \ref{prop.petkrel.-101},
we conclude that for any $k>0$ and $m\in\mathbb{N}$, the symmetric function
$G\left(  k,m\right)  \in\Lambda$ has the following property: For any $\mu
\in\operatorname*{Par}$, its product $G\left(  k,m\right)  \cdot s_{\mu}$ with
$s_{\mu}$ can be written in the form $\sum_{\lambda\in\operatorname*{Par}%
}u_{\lambda}s_{\lambda}$ with $u_{\lambda}\in\left\{  -1,0,1\right\}  $ for
all $\lambda\in\operatorname*{Par}$. It has this property in common with the
symmetric functions $h_{m}$ and $e_{m}$ (according to the Pieri rules) and
$p_{m}$ (according to the Murnaghan--Nakayama rule) as well as several others.
The study of symmetric functions having this property -- which we call
\textit{MNable symmetric functions} (in honor of Murnaghan and Nakayama) --
has been initiated in \cite[\S 8]{Grinbe20}, but there is much to be done.

\subsection{A conjecture of Per Alexandersson}

In February 2020, Per Alexandersson suggested the following conjecture:

\begin{conjecture}
\label{conj.Gkmp2}Let $k$ be a positive integer, and $m\in\mathbb{N}$. Then,
$G\left(  k,m\right)  \cdot p_{2}\in\Lambda$ can be written in the form
$\sum_{\lambda\in\operatorname*{Par}}u_{\lambda}s_{\lambda}$ with $u_{\lambda
}\in\left\{  -1,0,1\right\}  $ for all $\lambda\in\operatorname*{Par}$.
\end{conjecture}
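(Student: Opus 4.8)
The plan is to reduce Conjecture~\ref{conj.Gkmp2} to a statement about Petrie numbers and then attack it through the combinatorics of Petrie matrices. First I would use Proposition~\ref{prop.p-as-sum} to write $p_{2}=s_{\left(2\right)}-s_{\left(1,1\right)}$, so that Corollary~\ref{cor.Gkm.pieri} (applied with $\mu=\left(2\right)$ and with $\mu=\left(1,1\right)$) gives
\[
G\left(k,m\right)\cdot p_{2}=\sum_{\lambda\in\operatorname*{Par}\nolimits_{m+2}}\bigl(\operatorname{pet}_{k}\left(\lambda,\left(2\right)\right)-\operatorname{pet}_{k}\left(\lambda,\left(1,1\right)\right)\bigr)\,s_{\lambda}.
\]
By Proposition~\ref{prop.petkrel.-101} each of the two Petrie numbers lies in $\left\{-1,0,1\right\}$, so the whole task is to show that the difference lies in $\left\{-1,0,1\right\}$, i.e.\ that we never have $\operatorname{pet}_{k}\left(\lambda,\left(2\right)\right)=-\operatorname{pet}_{k}\left(\lambda,\left(1,1\right)\right)=\pm1$.

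Second, I would record the dual description, which is the one I expect to be more tractable. Since $s_{\lambda/\mu}=s_{\mu}^{\perp}\left(s_{\lambda}\right)$ and skewing is $\mathbf{k}$-linear, we have $s_{\lambda/\left(2\right)}-s_{\lambda/\left(1,1\right)}=p_{2}^{\perp}\left(s_{\lambda}\right)$, and the Murnaghan--Nakayama rule expands $p_{2}^{\perp}\left(s_{\lambda}\right)$ as an alternating sum of $s_{\nu}$ over partitions $\nu$ obtained from $\lambda$ by deleting a border strip of size $2$ (a horizontal domino contributing $+1$, a vertical one $-1$). Applying $\alpha_{k}$ and using Lemma~\ref{lem.alphak.slammu} (with second argument $\varnothing$), the coefficient of $s_{\lambda}$ in $G\left(k,m\right)\cdot p_{2}$ equals $\sum_{\nu}\left(-1\right)^{\operatorname{ht}\left(\lambda/\nu\right)}\operatorname{pet}_{k}\left(\nu,\varnothing\right)$, the sum running over domino-deletions $\nu$ of $\lambda$. (This is also what one gets by expanding $p_{2}s_{\mu}$ by Murnaghan--Nakayama inside $G\left(k,m\right)\cdot p_{2}=p_{2}\sum_{\mu}\operatorname{pet}_{k}\left(\mu,\varnothing\right)s_{\mu}$.) So the conjecture asserts that this signed sum of Petrie numbers, each already in $\left\{-1,0,1\right\}$, is again in $\left\{-1,0,1\right\}$.

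Third, the core of the argument. I would analyze the two Petrie matrices directly: with $c_{i}:=\lambda_{i}-i$, the matrices $M^{\left(2\right)}$ and $M^{\left(1,1\right)}$ defining $\operatorname{pet}_{k}\left(\lambda,\left(2\right)\right)$ and $\operatorname{pet}_{k}\left(\lambda,\left(1,1\right)\right)$ are Petrie matrices in the sense of the proof of Proposition~\ref{prop.petkrel.-101}, and they agree in every column of index $j\ge3$; their first two columns are the row-indicators of $c_{i}\in\left\{1,\dots,k\right\}$ and $c_{i}\in\left\{-2,\dots,k-3\right\}$ for $M^{\left(2\right)}$, and of $c_{i}\in\left\{0,\dots,k-1\right\}$ and $c_{i}\in\left\{-1,\dots,k-2\right\}$ for $M^{\left(1,1\right)}$. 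Laplace-expanding both determinants along their first two columns, the complementary $\left(\ell-2\right)$-minors coincide for the two matrices (and, being determinants of Petrie matrices, lie in $\left\{-1,0,1\right\}$); hence the difference of determinants is a signed sum over pairs of rows of $\bigl($difference of $2\times2$ minors$\bigr)\times\bigl($shared Petrie minor$\bigr)$. Two reductions cut this down drastically: (i) $\operatorname{pet}_{k}\left(\lambda,\left(1,1\right)\right)\ne0$ forces $\lambda_{1}\le k$ and $\operatorname{pet}_{k}\left(\lambda,\left(2\right)\right)\ne0$ forces $\lambda_{1}\le k+1$, because otherwise the first row of the matrix vanishes identically; in particular when $\lambda_{1}\ge k+1$ the difference equals $\operatorname{pet}_{k}\left(\lambda,\left(2\right)\right)\in\left\{-1,0,1\right\}$, so one may assume $\lambda_{1}\le k$; (ii) granting $\lambda_{1}\le k$, the only invertible $2\times2$ minors involve a row $i$ with $c_{i}\in\left\{k-2,k-1\right\}$ (at most two such $i$) paired with a row $j$ with $c_{j}\in\left\{-2,-1,0\right\}$ (at most three such $j$ for $M^{\left(2\right)}$, exactly one, namely $c_{j}=-1$, for $M^{\left(1,1\right)}$), and the $c_{j}=-1$ term cancels between the two expansions. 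One is then left with a bounded handful of signed Petrie numbers $\operatorname{pet}_{k}\left(\nu,\varnothing\right)$ (the surviving complementary minors, re-identified via an abacus shift as Petrie numbers of domino-deletions $\nu$ of $\lambda$), to be pinned down to $\left\{-1,0,1\right\}$ by a finite case analysis on $\lambda_{1}\in\left\{k-1,k\right\}$ and on which of $-2,-1,0$ belong to $\left\{\lambda_{i}-i\right\}$.

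Finally, the part I expect to be the real obstacle is exactly that last step: proving that the surviving terms never add up to $\pm2$. This needs a sharp vanishing criterion for $\operatorname{pet}_{k}\left(\nu,\varnothing\right)$ — Theorem~\ref{thm.petk.explicit} or Proposition~\ref{prop.petk.explicit-old}, phrased via the $k$-modular (abacus) structure of $\nu$ — and the genuine difficulty is the mismatch of moduli: domino-deletions are governed by the modulus $2$ while $\operatorname{pet}_{k}$ is governed by $k$, so one must show that these two structures conspire to kill or cancel all but one contribution. The case $k=2$ is harmless, since $G\left(2,m\right)=e_{m}$ by Proposition~\ref{prop.G.basics}\,\textbf{(e)} and the claim is then the ordinary Murnaghan--Nakayama expansion of $e_{m}\cdot p_{2}$; for odd $k$ one might instead try Theorem~\ref{thm.Uk.main}, where $V_{k}\left(p_{2}\right)=p_{2}$ yields $G\left(k,m\right)\cdot p_{2}=V_{k}\left(h_{m}p_{2}\right)=V_{k}\left(s_{\left(m+2\right)}\right)+V_{k}\left(s_{\left(m,2\right)}\right)-V_{k}\left(s_{\left(m,1,1\right)}\right)$, reducing the problem to controlling $V_{k}$ on three explicit Schur functions — but this neither obviously produces $\left\{-1,0,1\right\}$-coefficients nor covers even $k$, so I would expect the determinant route above to be the one that has to be pushed all the way through.
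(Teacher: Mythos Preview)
This statement is presented in the paper as an open \emph{conjecture} (due to Per Alexandersson), not as a theorem with a proof; the paper merely reports computational verification for $k+n\leq 30$ and notes that the analogous claim for $p_{3}$ is false. So there is no paper-proof to compare against, and your proposal should be judged on its own merits as an attack on an open problem.

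Your reductions are sound: the coefficient of $s_{\lambda}$ in $G(k,m)\cdot p_{2}$ really is $\operatorname{pet}_{k}(\lambda,(2))-\operatorname{pet}_{k}(\lambda,(1,1))$, and equivalently $\sum_{\nu}(-1)^{\operatorname{ht}(\lambda/\nu)}\operatorname{pet}_{k}(\nu,\varnothing)$ over domino-deletions. The Laplace-expansion bookkeeping is also correct in spirit. But you have correctly identified the gap yourself: the final case analysis, where you must rule out a residual $\pm 2$, is not carried out. That step is the entire content of the conjecture --- the preceding reductions are formal --- and the ``mismatch of moduli'' you flag (domino combinatorics is mod $2$, Petrie vanishing is mod $k$) is precisely why no proof is known. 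Your $V_{k}$ rewriting for odd $k$ is also not decisive, since the paper explicitly notes that $V_{k}(s_{\lambda})$ need not have $\{-1,0,1\}$ Schur coefficients (e.g.\ $V_{4}(s_{(4,4)})$), so splitting into three $V_{k}(s_{\mu})$ terms does not obviously help. In short: this is a reasonable outline of how one might try to prove the conjecture, but it is not a proof, and the hard part remains open.
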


For example,%
\[
G\left(  3,4\right)  \cdot p_{2}=s_{\left(  1,1,1,1,1,1\right)  }+s_{\left(
2,2,2\right)  }-s_{\left(  3,1,1,1\right)  }-s_{\left(  3,3\right)
}+s_{\left(  4,2\right)  }.
\]

Conjecture \ref{conj.Gkmp2} has been verified for all $k$ and $m$ satisfying
$k+m\leq30$.

Note that Conjecture \ref{conj.Gkmp2} becomes false if $p_{2}$ is replaced by
$p_{3}$. For example,%
\[
G\left(  3,4\right)  \cdot p_{3}=-s_{\left(  1,1,1,1,1,1,1\right)
}+s_{\left(  2,2,1,1,1\right)  }-2s_{\left(  2,2,2,1\right)  }+s_{\left(
3,2,1,1\right)  }-s_{\left(  4,1,1,1\right)  }-s_{\left(  4,3\right)
}+s_{\left(  5,2\right)  }.
\]

\subsection{A conjecture of Fran\c{c}ois Bergeron}

An even more mysterious conjecture was suggested by Fran\c{c}ois Bergeron in
April 2020:

\begin{conjecture}
\label{conj.nabla-fb}Let $k$ and $n$ be positive integers, and $m\in
\mathbb{N}$. Let $\nabla$ be the \emph{nabla operator} as defined (e.g.) in
\cite[\S 3.2.1]{Berger19}. Then, there exists a sign $\sigma_{n,k,m}%
\in\left\{  1,-1\right\}  $ such that $\sigma_{n,k,m}\nabla^{n}\left(
G\left(  k,m\right)  \right)  $ is an $\mathbb{N}\left[  q,t\right]  $-linear
combination of Schur functions.
\end{conjecture}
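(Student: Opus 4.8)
The plan is to attack Conjecture~\ref{conj.nabla-fb} through the basis of modified Macdonald polynomials $\widetilde{H}_{\mu}=\widetilde{H}_{\mu}[X;q,t]$, in which $\nabla$ is diagonal: one has $\nabla\widetilde{H}_{\mu}=t^{n(\mu)}q^{n(\mu')}\widetilde{H}_{\mu}$ with $n(\mu)=\sum_{i}(i-1)\mu_{i}$, hence $\nabla^{n}\widetilde{H}_{\mu}=t^{n\cdot n(\mu)}q^{n\cdot n(\mu')}\widetilde{H}_{\mu}$. So the first step is to write $G(k,m)=\sum_{\mu}c_{\mu}(q,t)\,\widetilde{H}_{\mu}$ for suitable $c_{\mu}\in\mathbb{Q}(q,t)$; then $\sigma\,\nabla^{n}\!\left(G(k,m)\right)=\sum_{\mu}\sigma\,c_{\mu}(q,t)\,t^{n\cdot n(\mu)}q^{n\cdot n(\mu')}\,\widetilde{H}_{\mu}$, and the task reduces to re-expanding this in the Schur basis and checking that, for one choice of sign $\sigma=\sigma_{n,k,m}$, every coefficient lies in $\mathbb{N}[q,t]$.

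The crucial input is a workable description of $G(k,m)$ adapted to $\nabla$. Here I would build on Theorem~\ref{thm.G.frob}, $G(k,m)=\sum_{i\ge 0}(-1)^{i}h_{m-ki}\cdot\mathbf{f}_{k}(e_{i})$, together with the plethystic identity $\mathbf{f}_{k}(e_{i})=e_{i}[p_{k}]$ (so that $G(k,m)$ is a ``$k$-dilated'' deformation of $h_{m}$), and with the well-studied action of $\nabla$ and of the Delta operators on products $h_{a}\cdot f$ and on plethysms $f[p_{k}]$. The most promising reformulation is to exhibit $\sigma\,\nabla^{n}\!\left(G(k,m)\right)$ as a nonnegative integer combination of (plethystically twisted) LLT polynomials: since LLT polynomials are Schur-positive with coefficients in $\mathbb{N}[q]$ (Grojnowski--Haiman), and since $\nabla$-images of such families have repeatedly been proved $q,t$-Schur-positive (the compositional shuffle theorem of Carlsson--Mellit and the Delta-conjecture circle of ideas), a positive combinatorial model in the spirit of the Haglund--Haiman--Loehr formula for $\nabla e_{n}$ would close the argument.

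To pin down the sign $\sigma_{n,k,m}$ I would isolate one unambiguous coefficient of the answer --- for instance the coefficient of $s_{(1^{m})}$, obtained by pairing with $e_{m}$ under the Hall inner product, or the leading power of $t$, or the value of a suitable one-variable specialization --- and read off the forced parity; I expect $\sigma_{n,k,m}$ to be of the simple shape $(-1)^{(n+1)(m-1)}$ or a close variant, and this computation is routine once the structural positivity statement is available, since both $G(k,m)$ and $\nabla^{n}G(k,m)$ are homogeneous of degree $m$ and the pertinent coefficient can be extracted from Corollary~\ref{cor.Gkm.main}.

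The hard part will be the second step: wresting enough control over the modified Macdonald (equivalently, LLT) expansion of $G(k,m)$. Unlike $h_{m}$, $e_{m}$ and $p_{m}$, the Petrie function $G(k,m)$ is neither a single Macdonald polynomial nor an evident $\nabla$-image, and its Schur expansion already carries signs (Corollary~\ref{cor.Gkm.main}), so the positivity after applying $\nabla^{n}$ is a genuine cancellation phenomenon rather than a term-by-term one. A realistic fallback, and a good warm-up, is to settle the ranges where $G(k,m)$ collapses --- $m<k$, where $G(k,m)=h_{m}$ by Proposition~\ref{prop.G.basics}~\textbf{(d)} and $\nabla^{n}h_{m}$ positivity is essentially classical, and $k\le m<2k$, where $G(k,m)=h_{m}-h_{m-k}p_{k}$ by Proposition~\ref{prop.h2n-1} --- and then to induct on $m$, transporting positivity from $G(k,m)$ to $G(k,m+1)$ via the Bernstein-type operators $\mathbf{B}_{\ell}$ of Section~\ref{sect.liu} and their $\nabla$-conjugates $\nabla^{n}\mathbf{B}_{\ell}\nabla^{-n}$.
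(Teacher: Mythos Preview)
This statement is a \emph{conjecture}, and the paper does not claim to prove it; it only reports a SageMath verification for $n=1$ and $k,m\le 9$, together with an explicit table of the observed signs $\sigma_{1,k,m}$, and remarks that no pattern in these signs is known beyond the two special cases $k=2$ and $m<k$. So there is no ``paper's own proof'' to compare against, and your task was, in effect, to settle an open problem.

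What you have written is not a proof but a strategy outline, and the outline has two concrete gaps. First, your guessed sign $\sigma_{n,k,m}=(-1)^{(n+1)(m-1)}$ is already refuted by the paper's data: for $n=1$ it would give $+1$ for every $m$, whereas the table shows, e.g., $\sigma_{1,3,2}=-1$, $\sigma_{1,4,2}=-1$, $\sigma_{1,3,5}=+1$, and the author explicitly states that he is ``not aware of a pattern in these signs''. Any argument that predicts a closed form for $\sigma_{n,k,m}$ without accounting for this irregular behavior is not on the right track. Second, and more seriously, the step you label ``the hard part'' --- exhibiting $\sigma\,\nabla^{n}G(k,m)$ as a nonnegative combination of LLT polynomials (or otherwise controlling the modified-Macdonald expansion of $G(k,m)$ well enough to force Schur positivity after applying $\nabla^{n}$) --- is precisely the content of the conjecture. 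The Schur expansion of $G(k,m)$ already carries signs (Corollary~\ref{cor.Gkm.main} with Proposition~\ref{prop.petkrel.-101}), so neither termwise positivity nor the known positivity theorems for $\nabla e_m$, $\nabla h_m$, or the shuffle/Delta frameworks apply directly; invoking them amounts to hoping that the needed cancellation occurs, not to proving it. Your fallback induction via $\mathbf{B}_{\ell}$ and $\nabla^{n}\mathbf{B}_{\ell}\nabla^{-n}$ faces the same obstacle: those conjugated operators are not known to preserve $q,t$-Schur positivity. In short, the proposal identifies reasonable tools but does not supply the missing positivity mechanism, which remains the heart of the open problem.
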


Using SageMath, this conjecture has been checked for $n=1$ and all
$k,m\in\left\{  0,1,\ldots,9\right\}  $; the signs $\sigma_{1,k,m}$ are given
by the following table:%
\[%
\begin{tabular}
[c]{||c||ccccccccc}\hline\hline
& $1$ & $2$ & $3$ & $4$ & $5$ & $6$ & $7$ & $8$ & $9$\\\hline\hline
$2$ & $+$ & $+$ & $+$ & $+$ & $+$ & $+$ & $+$ & $+$ & $+$\\
$3$ & $+$ & $-$ & $-$ & $-$ & $+$ & $+$ & $+$ & $-$ & $-$\\
$4$ & $+$ & $-$ & $+$ & $+$ & $+$ & $-$ & $+$ & $+$ & $+$\\
$5$ & $+$ & $-$ & $+$ & $-$ & $-$ & $-$ & $+$ & $-$ & $+$\\
$6$ & $+$ & $-$ & $+$ & $-$ & $+$ & $+$ & $+$ & $-$ & $+$\\
$7$ & $+$ & $-$ & $+$ & $-$ & $+$ & $-$ & $-$ & $-$ & $+$\\
$8$ & $+$ & $-$ & $+$ & $-$ & $+$ & $-$ & $+$ & $+$ & $+$\\
$9$ & $+$ & $-$ & $+$ & $-$ & $+$ & $-$ & $+$ & $-$ & $-$%
\end{tabular}
\]
(where the entry in the row indexed $k$ and the column indexed $m$ is the sign
$\sigma_{1,k,m}$, represented by a \textquotedblleft$+$\textquotedblright%
\ sign if it is $1$ and by a \textquotedblleft$-$\textquotedblright\ sign if
it is $-1$). I am not aware of a pattern in these signs, apart from the fact
that $\sigma_{1,2,m}=1$ for all $m\in\mathbb{N}$ (a consequence of Haiman's
famous interpretation of $\nabla\left(  e_{m}\right)  $ as a character), and
that $\sigma_{1,k,m}$ appears to be $\left(  -1\right)  ^{m-1}$ whenever
$1\leq m<k$ (which would follow from the conjecture that $\left(  -1\right)
^{m-1}\nabla\left(  h_{m}\right)  $ is an $\mathbb{N}\left[  q,t\right]
$-linear combination of Schur functions for any $m\geq1$).

\subsection{\textquotedblleft Petriefication\textquotedblright\ of Schur
functions}

Theorem \ref{thm.Uk.main} shows the existence of a Hopf algebra homomorphism
$V_{k}:\Lambda\rightarrow\Lambda$ that sends the complete homogeneous
symmetric functions $h_{1},h_{2},h_{3},\ldots$ to the Petrie symmetric
functions $G\left(  k,1\right)  ,G\left(  k,2\right)  ,G\left(  k,3\right)
,\ldots$. It thus is natural to consider the images of all Schur functions
$s_{\lambda}$ under this homomorphism $V_{k}$. Experiments with small
$\lambda$'s may suggest that these images $V_{k}\left(  s_{\lambda}\right)  $
all can be written in the form $\sum_{\lambda\in\operatorname*{Par}}%
u_{\lambda}s_{\lambda}$ with $u_{\lambda}\in\left\{  -1,0,1\right\}  $. But
this is not generally the case; counterexamples include $V_{3}\left(
s_{\left(  4,4,4\right)  }\right)  $, $V_{4}\left(  s_{\left(  4,4\right)
}\right)  $ and $V_{4}\left(  s_{\left(  5,1,1,1,1\right)  }\right)  $. (Of
course, it is true when $\lambda$ is a single row, because of $V_{k}\left(
s_{\left(  m\right)  }\right)  =V_{k}\left(  h_{m}\right)  =G\left(
k,m\right)  $; and it is also true when $\lambda$ is a single column, because
the Hopf algebra homomorphism $V_{k}$ commutes with the antipode $S$ that
sends $h_{m}\mapsto\left(  -1\right)  ^{m}e_{m}$ and $s_{\lambda}%
\mapsto\left(  -1\right)  ^{\left\vert \lambda\right\vert }s_{\lambda^{t}}$.)

Note that these images $V_{k}\left(  s_{\lambda}\right)  $ are precisely the
\emph{modular Schur functions} $s_{\lambda}^{\prime}$ studied in
\cite{Walker94}.

\subsection{Postnikov's generalization}

At the MIT Algebraic Combinatorics preseminar roundtable (2020), Alexander
Postnikov has suggested a generalization of the Petrie symmetric functions
that preserves some of their more elementary properties. In this subsection,
we shall survey this generalization.

\begin{vershort}
Proofs will be sketched (at best); the reader can find the details in the
detailed version \cite{verlong}.
\end{vershort}

\begin{convention}
We fix a formal power series $F\in\mathbf{k}\left[  \left[  t\right]  \right]
$ whose constant term is $1$. (We will keep this $F$ fixed throughout the
present subsection.)
\end{convention}

The notations in the following definition will also be used throughout this subsection:

\begin{definition}
\label{def.GF.GF}\textbf{(a)} Let $f_{0},f_{1},f_{2},\ldots$ be the
coefficients of the formal power series $F$, so that $F=\sum_{n\in\mathbb{N}%
}f_{n}t^{n}$. Thus, $f_{0}$ is the constant term of $F$; hence, $f_{0}=1$
(since the constant term of $F$ is $1$).

\textbf{(b)} We set $f_{i}=0$ for each negative integer $i$.

\textbf{(c)} For any weak composition $\alpha$, we define an element
$f_{\alpha}\in\mathbf{k}$ by%
\[
f_{\alpha}=f_{\alpha_{1}}f_{\alpha_{2}}f_{\alpha_{3}}\cdots.
\]
(Here, the infinite product $f_{\alpha_{1}}f_{\alpha_{2}}f_{\alpha_{3}}\cdots$
is well-defined, since every sufficiently high positive integer $i$ satisfies
$\alpha_{i}=0$ and thus $f_{\alpha_{i}}=f_{0}=1$.)

\textbf{(d)} We define the power series%
\begin{equation}
G_{F}=\sum_{\alpha\in\operatorname*{WC}}f_{\alpha}\mathbf{x}^{\alpha}.
\label{eq.GF=}%
\end{equation}
This is a formal power series in $\mathbf{k}\left[  \left[  x_{1},x_{2}%
,x_{3},\ldots\right]  \right]  $.

\textbf{(e)} For any $m\in\mathbb{N}$, we define the power series%
\begin{equation}
G_{F,m}=\sum_{\substack{\alpha\in\operatorname*{WC};\\\left\vert
\alpha\right\vert =m}}f_{\alpha}\mathbf{x}^{\alpha}. \label{eq.GFm=}%
\end{equation}
This is a formal power series in $\mathbf{k}\left[  \left[  x_{1},x_{2}%
,x_{3},\ldots\right]  \right]  $.
\end{definition}

\begin{example}
\label{exa.GF.GF123}Let us see how these power series $G_{F}$ and $G_{F,m}$
look for specific values of $F$.

\textbf{(a)} Let $F=\dfrac{1}{1-t}=1+t+t^{2}+t^{3}+\cdots$. Then, $f_{i}=1$
for each $i\in\mathbb{N}$. Hence, $f_{\alpha}=\underbrace{1\cdot1\cdot
1\cdot\cdots}_{=1}=1$ for any weak composition $\alpha$. Thus,
\[
G_{F}=\sum_{\alpha\in\operatorname*{WC}}\underbrace{f_{\alpha}}_{=1}%
\mathbf{x}^{\alpha}=\sum_{\alpha\in\operatorname*{WC}}\mathbf{x}^{\alpha}%
\]
and%
\[
G_{F,m}=\sum_{\substack{\alpha\in\operatorname*{WC};\\\left\vert
\alpha\right\vert =m}}\underbrace{f_{\alpha}}_{=1}\mathbf{x}^{\alpha}%
=\sum_{\substack{\alpha\in\operatorname*{WC};\\\left\vert \alpha\right\vert
=m}}\mathbf{x}^{\alpha}=h_{m}\ \ \ \ \ \ \ \ \ \ \text{for each }%
m\in\mathbb{N}.
\]

\textbf{(b)} Now, let $F=1$. Then, $f_{i}=\left[  i=0\right]  $ for each
$i\in\mathbb{N}$ (where we are using the Iverson bracket notation from
Convention \ref{conv.iverson}). Hence, $f_{\alpha}=\left[  \alpha
=\varnothing\right]  $ for any weak composition $\alpha$. Thus, it is easy to
see that $G_{F}=1$ and $G_{F,m}=\left[  m=0\right]  $ for each $m\in
\mathbb{N}$.

\textbf{(c)} Now, fix a positive integer $k$, and set $F=1+t+t^{2}%
+\cdots+t^{k-1}$. Then, $f_{i}=\left[  i<k\right]  $ for each $i\in\mathbb{N}%
$. Hence, $f_{\alpha}=\prod_{i\geq1}\left[  \alpha_{i}<k\right]  =\left[
\alpha_{i}<k\text{ for all }i\right]  $ for any weak composition $\alpha$.
Thus,
\begin{align*}
G_{F}  &  =\sum_{\alpha\in\operatorname*{WC}}\underbrace{f_{\alpha}}_{=\left[
\alpha_{i}<k\text{ for all }i\right]  }\mathbf{x}^{\alpha}=\sum_{\alpha
\in\operatorname*{WC}}\left[  \alpha_{i}<k\text{ for all }i\right]
\mathbf{x}^{\alpha}\\
&  =\sum_{\substack{\alpha\in\operatorname*{WC};\\\alpha_{i}<k\text{ for all
}i}}\mathbf{x}^{\alpha}\ \ \ \ \ \ \ \ \ \ \left(
\begin{array}
[c]{c}%
\text{since the }\left[  \alpha_{i}<k\text{ for all }i\right]  \text{
factor}\\
\text{makes all addends that don't}\\
\text{satisfy \textquotedblleft}\alpha_{i}<k\text{ for all }%
i\text{\textquotedblright\ vanish}%
\end{array}
\right) \\
&  =G\left(  k\right)  .
\end{align*}
Likewise, we can see that $G_{F,m}=G\left(  k,m\right)  $ for each
$m\in\mathbb{N}$. This shows that the $G_{F}$ and the $G_{F,m}$ are
generalizations of the Petrie symmetric series $G\left(  k\right)  $ and the
Petrie symmetric functions $G\left(  k,m\right)  $, respectively.
\end{example}

The next proposition generalizes parts \textbf{(a)}, \textbf{(b)} and
\textbf{(c)} of Proposition \ref{prop.G.basics}:

\begin{proposition}
\label{prop.GF.basics}\textbf{(a)} The formal power series $G_{F,m}$ is the
$m$-th degree homogeneous component of $G_{F}$ for each $m\in\mathbb{N}$.

\textbf{(b)} We have%
\[
G_{F}=\sum_{\alpha\in\operatorname*{WC}}f_{\alpha}\mathbf{x}^{\alpha}%
=\sum_{\lambda\in\operatorname*{Par}}f_{\lambda}m_{\lambda}=\prod
_{i=1}^{\infty}F\left(  x_{i}\right)  .
\]

\textbf{(c)} We have
\[
G_{F,m}=\sum_{\substack{\alpha\in\operatorname*{WC};\\\left\vert
\alpha\right\vert =m}}f_{\alpha}\mathbf{x}^{\alpha}=\sum_{\substack{\lambda
\in\operatorname*{Par};\\\left\vert \lambda\right\vert =m}}f_{\lambda
}m_{\lambda}\in\Lambda
\]
for each $m\in\mathbb{N}$.

\textbf{(d)} The formal power series $G_{F}$ is symmetric.

\textbf{(e)} We have $G_{F,0}=1$.
\end{proposition}

\begin{vershort}
\begin{proof}
[Proof of Proposition \ref{prop.GF.basics} (sketched).]Parts \textbf{(a)},
\textbf{(b)} and \textbf{(c)} of Proposition \ref{prop.GF.basics} generalize
the corresponding parts of Proposition \ref{prop.G.basics}, and are proved
more or less analogously. (The only novelty is the use of a fact that says
that $f_{\alpha}=f_{\lambda}$ whenever a weak composition $\alpha$ is obtained
by permuting the entries of a partition $\lambda$. Of course, this fact
follows from the definitions of $f_{\alpha}$ and $f_{\lambda}$.)

Part \textbf{(d)} of Proposition \ref{prop.GF.basics} is clear from part
\textbf{(b)}. Part \textbf{(e)} follows from $f_{\varnothing}=1$.
\end{proof}
\end{vershort}

\begin{verlong}
Our proof of Proposition \ref{prop.GF.basics} will use the group
$\mathfrak{S}_{\left(  \infty\right)  }$ and its action on the set
$\operatorname*{WC}$ defined in Subsection \ref{subsect.proofs.Sinf}. This
action has the following property:

\begin{lemma}
\label{lem.GF.basics.lam-al}Let $\lambda\in\operatorname*{Par}$. Let
$\alpha\in\mathfrak{S}_{\left(  \infty\right)  }\lambda$. Then, $f_{\alpha
}=f_{\lambda}$.
\end{lemma}

\begin{proof}
[Proof of Lemma \ref{lem.GF.basics.lam-al}.]We have $\alpha\in\mathfrak{S}%
_{\left(  \infty\right)  }\lambda$. In other words, there exists some
$\sigma\in\mathfrak{S}_{\left(  \infty\right)  }$ such that $\alpha
=\sigma\cdot\lambda$. Consider this $\sigma$. In our proof of Lemma
\ref{lem.G.basics.lam-al}, we have seen that $\sigma^{-1}$ is a bijection from
$\left\{  1,2,3,\ldots\right\}  $ to $\left\{  1,2,3,\ldots\right\}  $. In
that same proof, we have shown that $\left(  \alpha_{1},\alpha_{2},\alpha
_{3},\ldots\right)  =\left(  \lambda_{\sigma^{-1}\left(  1\right)  }%
,\lambda_{\sigma^{-1}\left(  2\right)  },\lambda_{\sigma^{-1}\left(  3\right)
},\ldots\right)  $. In other words,
\begin{equation}
\alpha_{i}=\lambda_{\sigma^{-1}\left(  i\right)  }%
\ \ \ \ \ \ \ \ \ \ \text{for each }i\in\left\{  1,2,3,\ldots\right\}  .
\label{pf.lem.GF.basics.lam-al.1}%
\end{equation}

Now, the definition of $f_{\alpha}$ yields%
\begin{align*}
f_{\alpha}  &  =f_{\alpha_{1}}f_{\alpha_{2}}f_{\alpha_{3}}\cdots=\prod
_{i\in\left\{  1,2,3,\ldots\right\}  }\underbrace{f_{\alpha_{i}}%
}_{\substack{=f_{\lambda_{\sigma^{-1}\left(  i\right)  }}\\\text{(since
}\alpha_{i}=\lambda_{\sigma^{-1}\left(  i\right)  }\\\text{(by
(\ref{pf.lem.GF.basics.lam-al.1})))}}}=\prod_{i\in\left\{  1,2,3,\ldots
\right\}  }f_{\lambda_{\sigma^{-1}\left(  i\right)  }}\\
&  =\prod_{i\in\left\{  1,2,3,\ldots\right\}  }f_{\lambda_{i}}%
\ \ \ \ \ \ \ \ \ \ \left(
\begin{array}
[c]{c}%
\text{here, we have substituted }i\text{ for }\sigma^{-1}\left(  i\right)
\text{ in the product,}\\
\text{since }\sigma^{-1}\text{ is a bijection from }\left\{  1,2,3,\ldots
\right\}  \text{ to }\left\{  1,2,3,\ldots\right\}
\end{array}
\right) \\
&  =f_{\lambda_{1}}f_{\lambda_{2}}f_{\lambda_{3}}\cdots=f_{\lambda}%
\end{align*}
(since the definition of $f_{\lambda}$ yields $f_{\lambda}=f_{\lambda_{1}%
}f_{\lambda_{2}}f_{\lambda_{3}}\cdots$). This proves Lemma
\ref{lem.GF.basics.lam-al}.
\end{proof}

\begin{proof}
[Proof of Proposition \ref{prop.GF.basics}.]\textbf{(a)} It is easy to see
that for any $m\in\mathbb{N}$, the formal power series $G_{F,m}$ is
homogeneous of degree $m$\ \ \ \ \footnote{\textit{Proof.} Let $m\in
\mathbb{N}$. For any $\alpha\in\operatorname*{WC}$, the monomial
$\mathbf{x}^{\alpha}$ is a monomial of degree $\left\vert \alpha\right\vert $.
Thus, if $\alpha\in\operatorname*{WC}$ satisfies $\left\vert \alpha\right\vert
=m$, then $\mathbf{x}^{\alpha}$ is a monomial of degree $m$ (since $\left\vert
\alpha\right\vert =m$). Hence, $\sum_{\substack{\alpha\in\operatorname*{WC}%
;\\\left\vert \alpha\right\vert =m}}f_{\alpha}\mathbf{x}^{\alpha}$ is a
$\mathbf{k}$-linear combination of monomials of degree $m$ (since $f_{\alpha
}\in\mathbf{k}$ for each $\alpha\in\operatorname*{WC}$). In view of
\[
G_{F,m}=\sum_{\substack{\alpha\in\operatorname*{WC};\\\left\vert
\alpha\right\vert =m}}f_{\alpha}\mathbf{x}^{\alpha}\ \ \ \ \ \ \ \ \ \ \left(
\text{by (\ref{eq.GFm=})}\right)  ,
\]
we can restate this as follows: $G_{F,m}$ is a $\mathbf{k}$-linear combination
of monomials of degree $m$. Thus, the formal power series $G_{F,m}$ is
homogeneous of degree $m$. Qed.}. Moreover, (\ref{eq.GF=}) yields%
\begin{align}
G_{F}  &  =\underbrace{\sum_{\alpha\in\operatorname*{WC}}}_{\substack{=\sum
_{m\in\mathbb{N}}\ \ \sum_{\substack{\alpha\in\operatorname*{WC};\\\left\vert
\alpha\right\vert =m}}\\\text{(since }\left\vert \alpha\right\vert
\in\mathbb{N}\text{ for each }\alpha\in\operatorname*{WC}\text{)}}}f_{\alpha
}\mathbf{x}^{\alpha}=\sum_{m\in\mathbb{N}}\ \ \underbrace{\sum
_{\substack{\alpha\in\operatorname*{WC};\\\left\vert \alpha\right\vert
=m}}f_{\alpha}\mathbf{x}^{\alpha}}_{\substack{=G_{F,m}\\\text{(by
(\ref{eq.GFm=}))}}}\nonumber\\
&  =\sum_{m\in\mathbb{N}}G_{F,m}. \label{pf.prop.GF.basics.a.GF=sum}%
\end{align}
Thus, the family $\left(  G_{F,m}\right)  _{m\in\mathbb{N}}$ is the
homogeneous decomposition of $G_{F}$ (since each $G_{F,m}$ is homogeneous of
degree $m$). Hence, for each $m\in\mathbb{N}$, the power series $G_{F,m}$ is
the $m$-th degree homogeneous component of $G_{F}$. This proves Proposition
\ref{prop.GF.basics} \textbf{(a)}.

\textbf{(b)} Let us define the group $\mathfrak{S}_{\left(  \infty\right)  }$
and its action on the set $\operatorname*{WC}$ as in Subsection
\ref{subsect.proofs.Sinf}. Then,%
\begin{align}
\sum_{\lambda\in\operatorname*{Par}}f_{\lambda}\underbrace{m_{\lambda}%
}_{\substack{=\sum_{\alpha\in\mathfrak{S}_{\left(  \infty\right)  }\lambda
}\mathbf{x}^{\alpha}\\\text{(by (\ref{pf.prop.G.basics.b.1}))}}}  &
=\sum_{\lambda\in\operatorname*{Par}}f_{\lambda}\sum_{\alpha\in\mathfrak{S}%
_{\left(  \infty\right)  }\lambda}\mathbf{x}^{\alpha}=\sum_{\lambda
\in\operatorname*{Par}}\ \ \underbrace{\sum_{\alpha\in\mathfrak{S}_{\left(
\infty\right)  }\lambda}}_{\substack{=\sum_{\substack{\alpha\in
\operatorname*{WC};\\\alpha\in\mathfrak{S}_{\left(  \infty\right)  }\lambda
}}\\\text{(since }\mathfrak{S}_{\left(  \infty\right)  }\lambda\subseteq
\operatorname*{WC}\text{)}}}\underbrace{f_{\lambda}}_{\substack{=f_{\alpha
}\\\text{(since Lemma \ref{lem.GF.basics.lam-al}}\\\text{yields }f_{\alpha
}=f_{\lambda}\text{)}}}\mathbf{x}^{\alpha}\nonumber\\
&  =\underbrace{\sum_{\lambda\in\operatorname*{Par}}\ \ \sum_{\substack{\alpha
\in\operatorname*{WC};\\\alpha\in\mathfrak{S}_{\left(  \infty\right)  }%
\lambda}}}_{=\sum_{\alpha\in\operatorname*{WC}}\ \ \sum_{\substack{\lambda
\in\operatorname*{Par};\\\alpha\in\mathfrak{S}_{\left(  \infty\right)
}\lambda}}}f_{\alpha}\mathbf{x}^{\alpha}\nonumber\\
&  =\sum_{\alpha\in\operatorname*{WC}}\ \ \sum_{\substack{\lambda
\in\operatorname*{Par};\\\alpha\in\mathfrak{S}_{\left(  \infty\right)
}\lambda}}f_{\alpha}\mathbf{x}^{\alpha}. \label{pf.prop.GF.basics.b.new2}%
\end{align}

Now, fix some $\alpha\in\operatorname*{WC}$. Then, Lemma
\ref{lem.G.basics.unique-perm} yields that there exists a unique partition
$\lambda\in\operatorname*{Par}$ such that $\alpha\in\mathfrak{S}_{\left(
\infty\right)  }\lambda$. Thus, the sum $\sum_{\substack{\lambda
\in\operatorname*{Par};\\\alpha\in\mathfrak{S}_{\left(  \infty\right)
}\lambda}}f_{\alpha}\mathbf{x}^{\alpha}$ has exactly one addend. Hence, this
sum simplifies as follows:%
\begin{equation}
\sum_{\substack{\lambda\in\operatorname*{Par};\\\alpha\in\mathfrak{S}_{\left(
\infty\right)  }\lambda}}f_{\alpha}\mathbf{x}^{\alpha}=f_{\alpha}%
\mathbf{x}^{\alpha}. \label{pf.prop.GF.basics.b.new3}%
\end{equation}

Forget that we fixed $\alpha$. We thus have proved
(\ref{pf.prop.GF.basics.b.new3}) for each $\alpha\in\operatorname*{WC}$. Thus,
(\ref{pf.prop.GF.basics.b.new2}) becomes%
\[
\sum_{\lambda\in\operatorname*{Par}}f_{\lambda}m_{\lambda}=\sum_{\alpha
\in\operatorname*{WC}}\ \ \underbrace{\sum_{\substack{\lambda\in
\operatorname*{Par};\\\alpha\in\mathfrak{S}_{\left(  \infty\right)  }\lambda
}}f_{\alpha}\mathbf{x}^{\alpha}}_{\substack{=f_{\alpha}\mathbf{x}^{\alpha
}\\\text{(by (\ref{pf.prop.GF.basics.b.new3}))}}}=\sum_{\alpha\in
\operatorname*{WC}}f_{\alpha}\mathbf{x}^{\alpha}.
\]
Comparing this with (\ref{eq.GF=}), we obtain%
\begin{equation}
G_{F}=\sum_{\lambda\in\operatorname*{Par}}f_{\lambda}m_{\lambda}.
\label{pf.prop.GF.basics.b.2}%
\end{equation}

On the other hand, $F=\sum_{n\in\mathbb{N}}f_{n}t^{n}$ (as we have noticed in
Definition \ref{def.GF.GF}). Thus, for each $i\in\left\{  1,2,3,\ldots
\right\}  $, we have%
\[
F\left(  x_{i}\right)  =\sum_{n\in\mathbb{N}}f_{n}x_{i}^{n}=\sum
_{u\in\mathbb{N}}f_{u}x_{i}^{u}%
\]
(here, we have renamed the summation index $n$ as $u$). Multiplying these
equalities over all $i\in\left\{  1,2,3,\ldots\right\}  $, we obtain%
\begin{align*}
&  \prod_{i=1}^{\infty}F\left(  x_{i}\right)  \\
&  =\prod_{i=1}^{\infty}\ \ \sum_{u\in\mathbb{N}}f_{u}x_{i}^{u}\\
&  =\underbrace{\sum_{\substack{\left(  u_{1},u_{2},u_{3},\ldots\right)
\in\mathbb{N}^{\infty};\\\text{all but finitely many }i\text{ satisfy }%
u_{i}=0}}}_{\substack{=\sum_{\left(  u_{1},u_{2},u_{3},\ldots\right)
\in\operatorname*{WC}}\\\text{(since a sequence }\left(  u_{1},u_{2}%
,u_{3},\ldots\right)  \text{ of nonnegative integers}\\\text{satisfies the
statement \textquotedblleft all but finitely many }i\text{ satisfy }%
u_{i}=0\text{\textquotedblright}\\\text{if and only if it satisfies }\left(
u_{1},u_{2},u_{3},\ldots\right)  \in\operatorname*{WC}\text{)}}%
}\underbrace{\left(  f_{u_{1}}x_{1}^{u_{1}}\right)  \left(  f_{u_{2}}%
x_{2}^{u_{2}}\right)  \left(  f_{u_{3}}x_{3}^{u_{3}}\right)  \cdots}_{=\left(
f_{u_{1}}f_{u_{2}}f_{u_{3}}\cdots\right)  \left(  x_{1}^{u_{1}}x_{2}^{u_{2}%
}x_{3}^{u_{3}}\cdots\right)  }\\
&  \ \ \ \ \ \ \ \ \ \ \ \ \ \ \ \ \ \ \ \ \left(  \text{by the product
rule}\right)  \\
&  =\sum_{\left(  u_{1},u_{2},u_{3},\ldots\right)  \in\operatorname*{WC}%
}\left(  f_{u_{1}}f_{u_{2}}f_{u_{3}}\cdots\right)  \left(  x_{1}^{u_{1}}%
x_{2}^{u_{2}}x_{3}^{u_{3}}\cdots\right)  \\
&  =\sum_{\left(  \alpha_{1},\alpha_{2},\alpha_{3},\ldots\right)
\in\operatorname*{WC}}\underbrace{\left(  f_{\alpha_{1}}f_{\alpha_{2}%
}f_{\alpha_{3}}\cdots\right)  }_{\substack{=f_{\left(  \alpha_{1},\alpha
_{2},\alpha_{3},\ldots\right)  }\\\text{(by the definition of }f_{\left(
\alpha_{1},\alpha_{2},\alpha_{3},\ldots\right)  }\text{)}}}\underbrace{\left(
x_{1}^{\alpha_{1}}x_{2}^{\alpha_{2}}x_{3}^{\alpha_{3}}\cdots\right)
}_{\substack{=\mathbf{x}^{\left(  \alpha_{1},\alpha_{2},\alpha_{3}%
,\ldots\right)  }\\\text{(by the definition of }\mathbf{x}^{\left(  \alpha
_{1},\alpha_{2},\alpha_{3},\ldots\right)  }\text{)}}}\\
&  \ \ \ \ \ \ \ \ \ \ \ \ \ \ \ \ \ \ \ \ \left(
\begin{array}
[c]{c}%
\text{here, we have renamed the}\\
\text{summation index }\left(  u_{1},u_{2},u_{3},\ldots\right)  \text{ as
}\left(  \alpha_{1},\alpha_{2},\alpha_{3},\ldots\right)
\end{array}
\right)  \\
&  =\sum_{\left(  \alpha_{1},\alpha_{2},\alpha_{3},\ldots\right)
\in\operatorname*{WC}}f_{\left(  \alpha_{1},\alpha_{2},\alpha_{3}%
,\ldots\right)  }\mathbf{x}^{\left(  \alpha_{1},\alpha_{2},\alpha_{3}%
,\ldots\right)  }=\sum_{\alpha\in\operatorname*{WC}}f_{\alpha}\mathbf{x}%
^{\alpha}\\
&  \ \ \ \ \ \ \ \ \ \ \ \ \ \ \ \ \ \ \ \ \left(
\begin{array}
[c]{c}%
\text{here, we have renamed the}\\
\text{summation index }\left(  \alpha_{1},\alpha_{2},\alpha_{3},\ldots\right)
\text{ as }\alpha
\end{array}
\right)  .
\end{align*}
Comparing this with (\ref{eq.GF=}), we obtain%
\[
G_{F}=\prod_{i=1}^{\infty}F\left(  x_{i}\right)  .
\]
Combining this equality with (\ref{pf.prop.GF.basics.b.2}) and (\ref{eq.GF=}),
we obtain%
\[
G_{F}=\sum_{\alpha\in\operatorname*{WC}}f_{\alpha}\mathbf{x}^{\alpha}%
=\sum_{\lambda\in\operatorname*{Par}}f_{\lambda}m_{\lambda}=\prod
_{i=1}^{\infty}F\left(  x_{i}\right)  .
\]
This proves Proposition \ref{prop.GF.basics} \textbf{(b)}.

\textbf{(c)} Let $m\in\mathbb{N}$. Let us define the group $\mathfrak{S}%
_{\left(  \infty\right)  }$ and its action on the set $\operatorname*{WC}$ as
in Subsection \ref{subsect.proofs.Sinf}. Then,%
\begin{align}
\sum_{\substack{\lambda\in\operatorname*{Par};\\\left\vert \lambda\right\vert
=m}}f_{\lambda}\underbrace{m_{\lambda}}_{\substack{=\sum_{\alpha
\in\mathfrak{S}_{\left(  \infty\right)  }\lambda}\mathbf{x}^{\alpha
}\\\text{(by (\ref{pf.prop.G.basics.b.1}))}}}  &  =\sum_{\substack{\lambda
\in\operatorname*{Par};\\\left\vert \lambda\right\vert =m}}f_{\lambda}%
\sum_{\alpha\in\mathfrak{S}_{\left(  \infty\right)  }\lambda}\mathbf{x}%
^{\alpha}=\sum_{\substack{\lambda\in\operatorname*{Par};\\\left\vert
\lambda\right\vert =m}}\ \ \sum_{\alpha\in\mathfrak{S}_{\left(  \infty\right)
}\lambda}\underbrace{f_{\lambda}}_{\substack{=f_{\alpha}\\\text{(since Lemma
\ref{lem.GF.basics.lam-al}}\\\text{yields }f_{\alpha}=f_{\lambda}\text{)}%
}}\mathbf{x}^{\alpha}\nonumber\\
&  =\sum_{\substack{\lambda\in\operatorname*{Par};\\\left\vert \lambda
\right\vert =m}}\ \ \sum_{\alpha\in\mathfrak{S}_{\left(  \infty\right)
}\lambda}f_{\alpha}\mathbf{x}^{\alpha}. \label{pf.prop.GF.basics.c.new1}%
\end{align}

Now, we have the following equality of summation signs:%
\begin{align*}
\sum_{\substack{\lambda\in\operatorname*{Par};\\\left\vert \lambda\right\vert
=m}}\ \ \sum_{\alpha\in\mathfrak{S}_{\left(  \infty\right)  }\lambda}  &
=\sum_{\lambda\in\operatorname*{Par}}\ \ \underbrace{\sum_{\substack{\alpha
\in\mathfrak{S}_{\left(  \infty\right)  }\lambda;\\\left\vert \lambda
\right\vert =m}}}_{\substack{=\sum_{\substack{\alpha\in\mathfrak{S}_{\left(
\infty\right)  }\lambda;\\\left\vert \alpha\right\vert =m}}\\\text{(because
for each }\alpha\in\mathfrak{S}_{\left(  \infty\right)  }\lambda
\text{,}\\\text{we have }\left\vert \lambda\right\vert =\left\vert
\alpha\right\vert \\\text{(by Lemma \ref{lem.G.basics.lam-al} \textbf{(a)}))}%
}}=\sum_{\lambda\in\operatorname*{Par}}\ \ \underbrace{\sum_{\substack{\alpha
\in\mathfrak{S}_{\left(  \infty\right)  }\lambda;\\\left\vert \alpha
\right\vert =m}}}_{\substack{=\sum_{\substack{\alpha\in\operatorname*{WC}%
;\\\alpha\in\mathfrak{S}_{\left(  \infty\right)  }\lambda;\\\left\vert
\alpha\right\vert =m}}\\\text{(since }\mathfrak{S}_{\left(  \infty\right)
}\lambda\subseteq\operatorname*{WC}\text{)}}}\\
&  =\sum_{\lambda\in\operatorname*{Par}}\ \ \sum_{\substack{\alpha
\in\operatorname*{WC};\\\alpha\in\mathfrak{S}_{\left(  \infty\right)  }%
\lambda;\\\left\vert \alpha\right\vert =m}}=\sum_{\substack{\alpha
\in\operatorname*{WC};\\\left\vert \alpha\right\vert =m}}\ \ \sum
_{\substack{\lambda\in\operatorname*{Par};\\\alpha\in\mathfrak{S}_{\left(
\infty\right)  }\lambda}}.
\end{align*}
Hence, (\ref{pf.prop.GF.basics.c.new1}) becomes%
\begin{align}
\sum_{\substack{\lambda\in\operatorname*{Par};\\\left\vert \lambda\right\vert
=m}}f_{\lambda}m_{\lambda}  &  =\underbrace{\sum_{\substack{\lambda
\in\operatorname*{Par};\\\left\vert \lambda\right\vert =m}}\ \ \sum_{\alpha
\in\mathfrak{S}_{\left(  \infty\right)  }\lambda}}_{=\sum_{\substack{\alpha
\in\operatorname*{WC};\\\left\vert \alpha\right\vert =m}}\ \ \sum
_{\substack{\lambda\in\operatorname*{Par};\\\alpha\in\mathfrak{S}_{\left(
\infty\right)  }\lambda}}}f_{\alpha}\mathbf{x}^{\alpha}=\sum_{\substack{\alpha
\in\operatorname*{WC};\\\left\vert \alpha\right\vert =m}}\ \ \underbrace{\sum
_{\substack{\lambda\in\operatorname*{Par};\\\alpha\in\mathfrak{S}_{\left(
\infty\right)  }\lambda}}f_{\alpha}\mathbf{x}^{\alpha}}_{\substack{=f_{\alpha
}\mathbf{x}^{\alpha}\\\text{(by (\ref{pf.prop.GF.basics.b.new3}))}%
}}\nonumber\\
&  =\sum_{\substack{\alpha\in\operatorname*{WC};\\\left\vert \alpha\right\vert
=m}}f_{\alpha}\mathbf{x}^{\alpha}. \label{pf.prop.GF.basics.c.new5}%
\end{align}
Now, (\ref{eq.GFm=}) becomes%
\begin{align*}
G_{F,m}  &  =\sum_{\substack{\alpha\in\operatorname*{WC};\\\left\vert
\alpha\right\vert =m}}f_{\alpha}\mathbf{x}^{\alpha}=\sum_{\substack{\lambda
\in\operatorname*{Par};\\\left\vert \lambda\right\vert =m}}f_{\lambda
}\underbrace{m_{\lambda}}_{\in\Lambda}\ \ \ \ \ \ \ \ \ \ \left(  \text{by
(\ref{pf.prop.GF.basics.c.new5})}\right) \\
&  \in\sum_{\substack{\lambda\in\operatorname*{Par};\\\left\vert
\lambda\right\vert =m}}f_{\lambda}\Lambda\subseteq\Lambda
\ \ \ \ \ \ \ \ \ \ \left(  \text{since }\Lambda\text{ is a }\mathbf{k}%
\text{-module}\right)  .
\end{align*}
This proves Proposition \ref{prop.GF.basics} \textbf{(c)}.

\textbf{(d)} Proposition \ref{prop.GF.basics} \textbf{(b)} yields $G_{F}%
=\prod_{i=1}^{\infty}F\left(  x_{i}\right)  $. Thus, the power series $G_{F}$
is symmetric (since $\prod_{i=1}^{\infty}F\left(  x_{i}\right)  $ is obviously
symmetric). This proves Proposition \ref{prop.GF.basics} \textbf{(d)}.

\textbf{(e)} The definition of $G_{F,0}$ yields%
\begin{align*}
G_{F,0}  &  =\sum_{\substack{\alpha\in\operatorname*{WC};\\\left\vert
\alpha\right\vert =0}}f_{\alpha}\mathbf{x}^{\alpha}=f_{\left(  0,0,0,\ldots
\right)  }\underbrace{\mathbf{x}^{\left(  0,0,0,\ldots\right)  }}%
_{=1}\ \ \ \ \ \ \ \ \ \ \left(
\begin{array}
[c]{c}%
\text{since the only }\alpha\in\operatorname*{WC}\text{ satisfying }\left\vert
\alpha\right\vert =0\\
\text{is the weak composition }\left(  0,0,0,\ldots\right)
\end{array}
\right) \\
&  =f_{\left(  0,0,0,\ldots\right)  }=f_{0}f_{0}f_{0}\cdots
\ \ \ \ \ \ \ \ \ \ \left(  \text{by the definition of }f_{\left(
0,0,0,\ldots\right)  }\right) \\
&  =1\cdot1\cdot1\cdot\cdots\ \ \ \ \ \ \ \ \ \ \left(  \text{since }%
f_{0}=1\right) \\
&  =1.
\end{align*}
This proves Proposition \ref{prop.GF.basics} \textbf{(e)}.
\end{proof}
\end{verlong}

Next, let us generalize Definition \ref{def.petklam}:

\begin{definition}
\label{def.petFlam}Let $\lambda=\left(  \lambda_{1},\lambda_{2},\ldots
,\lambda_{\ell}\right)  \in\operatorname*{Par}$ and $\mu=\left(  \mu_{1}%
,\mu_{2},\ldots,\mu_{\ell}\right)  \in\operatorname*{Par}$. Then, the
$F$\emph{-Petrie number} $\operatorname*{pet}\nolimits_{F}\left(  \lambda
,\mu\right)  $ of $\lambda$ and $\mu$ is the element of $\mathbf{k}$ defined
by%
\begin{equation}
\operatorname*{pet}\nolimits_{F}\left(  \lambda,\mu\right)  =\det\left(
\left(  f_{\lambda_{i}-\mu_{j}-i+j}\right)  _{1\leq i\leq\ell,\ 1\leq
j\leq\ell}\right)  .\label{eq.def.petFlam.def}%
\end{equation}
Note that this integer does not depend on the choice of $\ell$ (in the sense
that it does not change if we enlarge $\ell$ by adding trailing zeroes to the
representations of $\lambda$ and $\mu$); this follows from Lemma
\ref{lem.petFrel.indep} below.
\end{definition}

\begin{example}
For $\ell=3$, the equality (\ref{eq.def.petFlam.def}) rewrites as%
\[
\operatorname*{pet}\nolimits_{F}\left(  \lambda,\mu\right)  =\det\left(
\begin{array}
[c]{ccc}%
f_{\lambda_{1}-\mu_{1}} & f_{\lambda_{1}-\mu_{2}+1} & f_{\lambda_{1}-\mu
_{3}+2}\\
f_{\lambda_{2}-\mu_{1}-1} & f_{\lambda_{2}-\mu_{2}} & f_{\lambda_{2}-\mu
_{3}+1}\\
f_{\lambda_{3}-\mu_{1}-2} & f_{\lambda_{3}-\mu_{2}-1} & f_{\lambda_{3}-\mu
_{3}}%
\end{array}
\right)  .
\]

\end{example}

We can now state the generalization of Lemma \ref{lem.petkrel.indep} that is
needed to justify Definition \ref{def.petFlam}:

\begin{lemma}
\label{lem.petFrel.indep}Let $\lambda\in\operatorname*{Par}$ and $\mu
\in\operatorname*{Par}$. Let $\ell\in\mathbb{N}$ be such that $\lambda=\left(
\lambda_{1},\lambda_{2},\ldots,\lambda_{\ell}\right)  $ and $\mu=\left(
\mu_{1},\mu_{2},\ldots,\mu_{\ell}\right)  $. Then, the determinant
$\det\left(  \left(  f_{\lambda_{i}-\mu_{j}-i+j}\right)  _{1\leq i\leq
\ell,\ 1\leq j\leq\ell}\right)  $ does not depend on the choice of $\ell$.
\end{lemma}

The slickest way to prove Lemma \ref{lem.petFrel.indep} is using a
$\mathbf{k}$-algebra homomorphism $\alpha_{F}:\Lambda\rightarrow\mathbf{k}$
that generalizes the homomorphism $\alpha_{k}$ from Definition
\ref{def.alphak}. Let us introduce this homomorphism $\alpha_{F}$ (which will
also be used in other proofs). We recall the h-universal property of $\Lambda
$, which was stated in Subsection \ref{subsect.proofs.petk.alphak}.

\begin{definition}
\label{def.alphaF}The h-universal property of $\Lambda$ shows that there is a
unique $\mathbf{k}$-algebra homomorphism $\alpha_{F}:\Lambda\rightarrow
\mathbf{k}$ that sends $h_{i}$ to $f_{i}$ for all positive integers $i$.
Consider this $\alpha_{F}$.
\end{definition}

We will use this homomorphism $\alpha_{F}$ several times in what follows; let
us thus begin by stating some elementary properties of $\alpha_{F}$.

\begin{lemma}
\label{lem.alphaF.h}\textbf{(a)} We have%
\begin{equation}
\alpha_{F}\left(  h_{i}\right)  =f_{i}\ \ \ \ \ \ \ \ \ \ \text{for all }%
i\in\mathbb{N}. \label{pf.thm.GF.main.alk}%
\end{equation}

\textbf{(b)} We have%
\begin{equation}
\alpha_{F}\left(  h_{i}\right)  =f_{i}\ \ \ \ \ \ \ \ \ \ \text{for all }%
i\in\mathbb{Z}. \label{pf.thm.GF.main.alkZ}%
\end{equation}

\textbf{(c)} Let $\lambda$ be a partition. Define $h_{\lambda}$ as in
Definition \ref{def.hlam}. Then,%
\begin{equation}
\alpha_{F}\left(  h_{\lambda}\right)  =f_{\lambda}.
\label{pf.thm.GF.main.alkh}%
\end{equation}

\end{lemma}

\begin{vershort}
\begin{proof}
[Proof of Lemma \ref{lem.alphaF.h} (sketched).]Analogous to the proof of Lemma
\ref{lem.alphak.h}.
\end{proof}
\end{vershort}

\begin{verlong}
\begin{proof}
[Proof of Lemma \ref{lem.alphaF.h}.]\textbf{(a)} Let $i\in\mathbb{N}$. We must
prove that $\alpha_{F}\left(  h_{i}\right)  =f_{i}$. If $i>0$, then this
follows from the definition of $\alpha_{F}$. Thus, we WLOG assume that we
don't have $i>0$. Hence, $i=0$ (since $i\in\mathbb{N}$). Therefore,
$h_{i}=h_{0}=1$, so that $\alpha_{F}\left(  h_{i}\right)  =\alpha_{F}\left(
1\right)  =1$ (since $\alpha_{F}$ is a $\mathbf{k}$-algebra homomorphism). On
the other hand, $i=0$, so that $f_{i}=f_{0}=1$. Comparing this with
$\alpha_{F}\left(  h_{i}\right)  =1$, we obtain $\alpha_{F}\left(
h_{i}\right)  =f_{i}$. This proves Lemma \ref{lem.alphaF.h} \textbf{(a)}.

\textbf{(b)} Let $i\in\mathbb{Z}$. We must prove that $\alpha_{F}\left(
h_{i}\right)  =f_{i}$. If $i<0$, then this easily follows from $0=0$%
\ \ \ \ \footnote{\textit{Proof.} Assume that $i<0$. Thus, $h_{i}=0$, so that
$\alpha_{F}\left(  h_{i}\right)  =\alpha_{F}\left(  0\right)  =0$ (since
$\alpha_{F}$ is a $\mathbf{k}$-algebra homomorphism). But Definition
\ref{def.GF.GF} \textbf{(b)} yields $f_{i}=0$ (since $i$ is negative).
Comparing this with $\alpha_{F}\left(  h_{i}\right)  =0$, we obtain
$\alpha_{F}\left(  h_{i}\right)  =f_{i}$, qed.}. Hence, we WLOG assume that we
don't have $i<0$. Therefore, $i\geq0$, so that $i\in\mathbb{N}$. Hence, Lemma
\ref{lem.alphaF.h} \textbf{(a)} yields $\alpha_{F}\left(  h_{i}\right)
=f_{i}$. This proves Lemma \ref{lem.alphaF.h} \textbf{(b)}.

\textbf{(c)} Write the partition $\lambda$ in the form $\lambda=\left(
\lambda_{1},\lambda_{2},\ldots,\lambda_{\ell}\right)  $, where $\lambda
_{1},\lambda_{2},\ldots,\lambda_{\ell}$ are positive integers. Then, the
definition of $h_{\lambda}$ yields
\[
h_{\lambda}=h_{\lambda_{1}}h_{\lambda_{2}}\cdots h_{\lambda_{\ell}}%
=\prod_{i=1}^{\ell}h_{\lambda_{i}}.
\]
Applying the map $\alpha_{F}$ to both sides of this equality, we find%
\begin{align}
\alpha_{F}\left(  h_{\lambda}\right)   &  =\alpha_{F}\left(  \prod_{i=1}%
^{\ell}h_{\lambda_{i}}\right)  =\prod_{i=1}^{\ell}\underbrace{\alpha
_{F}\left(  h_{\lambda_{i}}\right)  }_{\substack{=f_{\lambda_{i}}\\\text{(by
(\ref{pf.thm.GF.main.alk}), applied to }\lambda_{i}\text{ instead of
}i\text{)}}}\nonumber\\
&  \ \ \ \ \ \ \ \ \ \ \ \ \ \ \ \ \ \ \ \ \left(  \text{since }\alpha
_{F}\text{ is a }\mathbf{k}\text{-algebra homomorphism}\right) \nonumber\\
&  =\prod_{i=1}^{\ell}f_{\lambda_{i}}. \label{pf.lem.alphaF.h.c.2}%
\end{align}

But we have $\lambda=\left(  \lambda_{1},\lambda_{2},\ldots,\lambda_{\ell
}\right)  $ and thus $\lambda_{\ell+1}=\lambda_{\ell+2}=\lambda_{\ell
+3}=\cdots=0$. In other words, we have $\lambda_{i}=0$ for all $i\in\left\{
\ell+1,\ell+2,\ell+3,\ldots\right\}  $. Hence, we have $f_{\lambda_{i}}%
=f_{0}=1$ for all $i\in\left\{  \ell+1,\ell+2,\ell+3,\ldots\right\}  $.
Multiplying these equalities over all $i\in\left\{  \ell+1,\ell+2,\ell
+3,\ldots\right\}  $, we obtain $\prod_{i=\ell+1}^{\infty}f_{\lambda_{i}%
}=\prod_{i=\ell+1}^{\infty}1=1$.

Now, the definition of $f_{\lambda}$ yields%
\[
f_{\lambda}=f_{\lambda_{1}}f_{\lambda_{2}}f_{\lambda_{3}}\cdots=\prod
_{i=1}^{\infty}f_{\lambda_{i}}=\left(  \prod_{i=1}^{\ell}f_{\lambda_{i}%
}\right)  \underbrace{\left(  \prod_{i=\ell+1}^{\infty}f_{\lambda_{i}}\right)
}_{=1}=\prod_{i=1}^{\ell}f_{\lambda_{i}}=\alpha_{F}\left(  h_{\lambda}\right)
\]
(by (\ref{pf.lem.alphaF.h.c.2})). Thus, Lemma \ref{lem.alphaF.h} \textbf{(c)}
is proved.
\end{proof}
\end{verlong}

\begin{vershort}
\begin{proof}
[Proof of Lemma \ref{lem.petFrel.indep} (sketched).]Adapt any of the two
proofs of Lemma \ref{lem.petkrel.indep}. (In adapting the first proof, use
$\alpha_{F}$ instead of $\alpha_{k}$.)
\end{proof}
\end{vershort}

\begin{verlong}
The following proof of Lemma \ref{lem.petFrel.indep} is a straightforward
adaptation of our first proof of Lemma \ref{lem.petkrel.indep} (note that the
second proof can be adapted just as easily).

\begin{proof}
[Proof of Lemma \ref{lem.petFrel.indep}.]Recall that $\alpha_{F}%
:\Lambda\rightarrow\mathbf{k}$ is a $\mathbf{k}$-algebra homomorphism. Thus,
$\alpha_{F}$ respects determinants; i.e., if $\left(  a_{i,j}\right)  _{1\leq
i\leq m,\ 1\leq j\leq m}\in\Lambda^{m\times m}$ is an $m\times m$-matrix over
$\Lambda$ (for some $m\in\mathbb{N}$), then%
\begin{align}
&  \alpha_{F}\left(  \det\left(  \left(  a_{i,j}\right)  _{1\leq i\leq
m,\ 1\leq j\leq m}\right)  \right) \nonumber\\
&  =\det\left(  \left(  \alpha_{F}\left(  a_{i,j}\right)  \right)  _{1\leq
i\leq m,\ 1\leq j\leq m}\right)  . \label{pf.lem.petFrel.indep.adet}%
\end{align}

Applying $\alpha_{F}$ to both sides of the equality
(\ref{eq.schur.JT.sh-lammu}), we obtain%
\begin{align}
\alpha_{F}\left(  s_{\lambda/\mu}\right)   &  =\alpha_{F}\left(  \det\left(
\left(  h_{\lambda_{i}-\mu_{j}-i+j}\right)  _{1\leq i\leq\ell,\ 1\leq
j\leq\ell}\right)  \right)  \nonumber\\
&  =\det\left(  \left(  \underbrace{\alpha_{F}\left(  h_{\lambda_{i}-\mu
_{j}-i+j}\right)  }_{\substack{=f_{\lambda_{i}-\mu_{j}-i+j}\\\text{(by
(\ref{pf.thm.GF.main.alkZ}), applied to }\lambda_{i}-\mu_{j}%
-i+j\\\text{instead of }i\text{)}}}\right)  _{1\leq i\leq\ell,\ 1\leq
j\leq\ell}\right)  \nonumber\\
&  \ \ \ \ \ \ \ \ \ \ \ \ \ \ \ \ \ \ \ \ \left(  \text{by
(\ref{pf.lem.petFrel.indep.adet}), applied to }m=\ell\text{ and }%
a_{i,j}=h_{\lambda_{i}-\mu_{j}-i+j}\right)  \nonumber\\
&  =\det\left(  \left(  f_{\lambda_{i}-\mu_{j}-i+j}\right)  _{1\leq i\leq
\ell,\ 1\leq j\leq\ell}\right)  .\label{pf.lem.petFrel.indep.1}%
\end{align}
Clearly, the element $\alpha_{F}\left(  s_{\lambda/\mu}\right)  $ does not
depend on the choice of $\ell$. In view of the equality
(\ref{pf.lem.petFrel.indep.1}), we can rewrite this as follows: The element
$\det\left(  \left(  f_{\lambda_{i}-\mu_{j}-i+j}\right)  _{1\leq i\leq
\ell,\ 1\leq j\leq\ell}\right)  $ does not depend on the choice of $\ell$.
This proves Lemma \ref{lem.petFrel.indep}.
\end{proof}

Just as with Lemma \ref{lem.petkrel.indep}, our proof of Lemma
\ref{lem.petFrel.indep} leads to a useful consequence (analogous to Lemma
\ref{lem.alphak.slammu}):

\begin{lemma}
\label{lem.alphaF.slammu}Let $\lambda$ and $\mu$ be two partitions. Then, the
homomorphism $\alpha_{F}:\Lambda\rightarrow\mathbf{k}$ from Definition
\ref{def.alphaF} satisfies%
\begin{equation}
\alpha_{F}\left(  s_{\lambda/\mu}\right)  =\operatorname*{pet}\nolimits_{F}%
\left(  \lambda,\mu\right)  . \label{pf.thm.GF.pieri.alks}%
\end{equation}

\end{lemma}

\begin{proof}
[Proof of Lemma \ref{lem.alphaF.slammu}.]Write the partitions $\lambda$ and
$\mu$ in the forms $\lambda=\left(  \lambda_{1},\lambda_{2},\ldots
,\lambda_{\ell}\right)  $ and $\mu=\left(  \mu_{1},\mu_{2},\ldots,\mu_{\ell
}\right)  $ for some $\ell\in\mathbb{N}$\ \ \ \ \footnote{Such an $\ell$ can
always be found, since each of $\lambda$ and $\mu$ has only finitely many
nonzero entries.}. Then, the equality (\ref{pf.lem.petFrel.indep.1}) (which we
showed in our proof of Lemma \ref{lem.petFrel.indep}) yields%
\[
\alpha_{F}\left(  s_{\lambda/\mu}\right)  =\det\left(  \left(  f_{\lambda
_{i}-\mu_{j}-i+j}\right)  _{1\leq i\leq\ell,\ 1\leq j\leq\ell}\right)
=\operatorname*{pet}\nolimits_{F}\left(  \lambda,\mu\right)
\]
(by the definition of $\operatorname*{pet}\nolimits_{F}\left(  \lambda
,\mu\right)  $). This proves Lemma \ref{lem.alphaF.slammu}.
\end{proof}
\end{verlong}

We now come to more substantive properties of $G\left(  k\right)  $ and
$G\left(  k,m\right)  $.

\begin{verlong}
First, we shall state them; the proofs will come later.
\end{verlong}

The following theorem generalizes Theorem \ref{thm.G.main}:

\begin{theorem}
\label{thm.GF.main}We have%
\[
G_{F}=\sum_{\lambda\in\operatorname*{Par}}\operatorname*{pet}\nolimits_{F}%
\left(  \lambda,\varnothing\right)  s_{\lambda}.
\]
(Recall that $\varnothing$ denotes the empty partition $\left(  {}\right)
=\left(  0,0,0,\ldots\right)  $.)
\end{theorem}

The following corollary (which already appeared in \cite[Exercise 7.91
(d)]{Stanley-EC2}) generalizes Corollary \ref{cor.Gkm.main}:

\begin{corollary}
\label{cor.GFm.main}Let $m\in\mathbb{N}$. Then,%
\[
G_{F,m}=\sum_{\lambda\in\operatorname*{Par}\nolimits_{m}}\operatorname*{pet}%
\nolimits_{F}\left(  \lambda,\varnothing\right)  s_{\lambda}.
\]

\end{corollary}

The following theorem generalizes Theorem \ref{thm.G.pieri}:

\begin{theorem}
\label{thm.GF.pieri}Let $\mu\in\operatorname*{Par}$. Then,%
\[
G_{F}\cdot s_{\mu}=\sum_{\lambda\in\operatorname*{Par}}\operatorname*{pet}%
\nolimits_{F}\left(  \lambda,\mu\right)  s_{\lambda}.
\]

\end{theorem}

The following corollary generalizes Corollary \ref{cor.Gkm.pieri}:

\begin{corollary}
\label{cor.GFm.pieri}Let $m\in\mathbb{N}$. Let $\mu\in\operatorname*{Par}$.
Then,%
\[
G_{F,m}\cdot s_{\mu}=\sum_{\lambda\in\operatorname*{Par}%
\nolimits_{m+\left\vert \mu\right\vert }}\operatorname*{pet}\nolimits_{F}%
\left(  \lambda,\mu\right)  s_{\lambda}.
\]

\end{corollary}

\begin{vershort}
\begin{proof}
[Proofs of Theorem \ref{thm.GF.pieri}, Corollary \ref{cor.GFm.pieri}, Theorem
\ref{thm.GF.main} and Corollary \ref{cor.GFm.main} (sketched).]These proofs
are analogous to the proofs of Theorem \ref{thm.G.pieri}, Corollary
\ref{cor.Gkm.pieri}, Theorem \ref{thm.G.main} and Corollary \ref{cor.Gkm.main}%
, respectively (but using $\alpha_{F}$ instead of $\alpha_{k}$).
\end{proof}
\end{vershort}

\begin{verlong}
Let us prove these four results. We begin with Theorem \ref{thm.GF.pieri},
which (just as its particular case the Theorem \ref{thm.G.pieri}) can be
proved in two ways. We shall only give the first proof:

\begin{proof}
[Proof of Theorem \ref{thm.GF.pieri}.]We shall use the notations
$\mathbf{k}\left[  \left[  \mathbf{x}\right]  \right]  $, $\mathbf{k}\left[
\left[  \mathbf{x},\mathbf{y}\right]  \right]  $, $\mathbf{x}$, $\mathbf{y}$,
$f\left(  \mathbf{x}\right)  $ and $f\left(  \mathbf{y}\right)  $ introduced
in Subsection \ref{subsect.thms.coprod}. If $R$ is any commutative ring, then
$R\left[  \left[  \mathbf{y}\right]  \right]  $ shall denote the ring
$R\left[  \left[  y_{1},y_{2},y_{3},\ldots\right]  \right]  $ of formal power
series in the indeterminates $y_{1},y_{2},y_{3},\ldots$ over the ring $R$. We
will identify the ring $\mathbf{k}\left[  \left[  \mathbf{x},\mathbf{y}%
\right]  \right]  $ with the ring $\left(  \mathbf{k}\left[  \left[
\mathbf{x}\right]  \right]  \right)  \left[  \left[  \mathbf{y}\right]
\right]  =\left(  \mathbf{k}\left[  \left[  x_{1},x_{2},x_{3},\ldots\right]
\right]  \right)  \left[  \left[  y_{1},y_{2},y_{3},\ldots\right]  \right]  $.
Note that $\Lambda\subseteq\mathbf{k}\left[  \left[  \mathbf{x}\right]
\right]  $ and thus $\Lambda\left[  \left[  \mathbf{y}\right]  \right]
\subseteq\left(  \mathbf{k}\left[  \left[  \mathbf{x}\right]  \right]
\right)  \left[  \left[  \mathbf{y}\right]  \right]  =\mathbf{k}\left[
\left[  \mathbf{x},\mathbf{y}\right]  \right]  $. We equip the rings
$\mathbf{k}\left[  \left[  \mathbf{y}\right]  \right]  $, $\Lambda\left[
\left[  \mathbf{y}\right]  \right]  $ and $\mathbf{k}\left[  \left[
\mathbf{x},\mathbf{y}\right]  \right]  $ with the usual topologies that are
defined on rings of power series, where $\Lambda$ itself is equipped with the
discrete topology. This has the somewhat confusing consequence that
$\Lambda\left[  \left[  \mathbf{y}\right]  \right]  \subseteq\mathbf{k}\left[
\left[  \mathbf{x},\mathbf{y}\right]  \right]  $ is an inclusion of rings but
not of topological spaces; however, this will not cause us any trouble, since
all infinite sums in $\Lambda\left[  \left[  \mathbf{y}\right]  \right]  $ we
will consider (such as $\sum_{\lambda\in\operatorname*{Par}}s_{\lambda/\mu
}\left(  \mathbf{x}\right)  s_{\lambda}\left(  \mathbf{y}\right)  $ and
$\sum_{\lambda\in\operatorname*{Par}}h_{\lambda}\left(  \mathbf{x}\right)
m_{\lambda}\left(  \mathbf{y}\right)  $) will converge to the same value in
either topology.

We consider both $\mathbf{k}\left[  \left[  \mathbf{y}\right]  \right]  $ and
$\Lambda$ as subrings of $\Lambda\left[  \left[  \mathbf{y}\right]  \right]  $
(indeed, $\mathbf{k}\left[  \left[  \mathbf{y}\right]  \right]  $ embeds into
$\Lambda\left[  \left[  \mathbf{y}\right]  \right]  $ because $\mathbf{k}$ is
a subring of $\Lambda$, whereas $\Lambda$ embeds into $\Lambda\left[  \left[
\mathbf{y}\right]  \right]  $ because $\Lambda\left[  \left[  \mathbf{y}%
\right]  \right]  $ is a ring of power series over $\Lambda$).

In this proof, the word \textquotedblleft monomial\textquotedblright\ may
refer to a monomial in any set of variables (not necessarily in $x_{1}%
,x_{2},x_{3},\ldots$).

Recall the $\mathbf{k}$-algebra homomorphism $\alpha_{F}:\Lambda
\rightarrow\mathbf{k}$ from Definition \ref{def.alphaF}. This $\mathbf{k}%
$-algebra homomorphism $\alpha_{F}:\Lambda\rightarrow\mathbf{k}$ induces a
$\mathbf{k}\left[  \left[  \mathbf{y}\right]  \right]  $-algebra homomorphism
$\alpha_{F}\left[  \left[  \mathbf{y}\right]  \right]  :\Lambda\left[  \left[
\mathbf{y}\right]  \right]  \rightarrow\mathbf{k}\left[  \left[
\mathbf{y}\right]  \right]  $, which is given by the formula%
\[
\left(  \alpha_{F}\left[  \left[  \mathbf{y}\right]  \right]  \right)  \left(
\sum_{\substack{\mathfrak{n}\text{ is a monomial}\\\text{in }y_{1},y_{2}%
,y_{3},\ldots}}f_{\mathfrak{n}}\mathfrak{n}\right)  =\sum
_{\substack{\mathfrak{n}\text{ is a monomial}\\\text{in }y_{1},y_{2}%
,y_{3},\ldots}}\alpha_{F}\left(  f_{\mathfrak{n}}\right)  \mathfrak{n}%
\]
for any family $\left(  f_{\mathfrak{n}}\right)  _{\mathfrak{n}\text{ is a
monomial in }y_{1},y_{2},y_{3},\ldots}$ of elements of $\Lambda$. This induced
$\mathbf{k}\left[  \left[  \mathbf{y}\right]  \right]  $-algebra homomorphism
$\alpha_{F}\left[  \left[  \mathbf{y}\right]  \right]  $ is $\mathbf{k}\left[
\left[  \mathbf{y}\right]  \right]  $-linear and continuous (with respect to
the usual topologies on the power series rings $\Lambda\left[  \left[
\mathbf{y}\right]  \right]  $ and $\mathbf{k}\left[  \left[  \mathbf{y}%
\right]  \right]  $), and thus preserves infinite $\mathbf{k}\left[  \left[
\mathbf{y}\right]  \right]  $-linear combinations. Moreover, it extends
$\alpha_{F}$ (that is, for any $f\in\Lambda$, we have $\left(  \alpha
_{F}\left[  \left[  \mathbf{y}\right]  \right]  \right)  \left(  f\right)
=\alpha_{F}\left(  f\right)  $).

Recall the skew Schur functions $s_{\lambda/\mu}$ defined in Subsection
\ref{subsect.proofs.skew}. Also, recall the symmetric functions $h_{\lambda}$
defined in Definition \ref{def.hlam}. In the First proof of Theorem
\ref{thm.G.pieri}, we have proved the equality%
\[
\sum_{\lambda\in\operatorname*{Par}}s_{\lambda}\left(  \mathbf{y}\right)
s_{\lambda/\mu}=\sum_{\lambda\in\operatorname*{Par}}s_{\mu}\left(
\mathbf{y}\right)  m_{\lambda}\left(  \mathbf{y}\right)  h_{\lambda}.
\]
Consider this as an equality in the ring $\Lambda\left[  \left[
\mathbf{y}\right]  \right]  =\Lambda\left[  \left[  y_{1},y_{2},y_{3}%
,\ldots\right]  \right]  $. Apply the map $\alpha_{F}\left[  \left[
\mathbf{y}\right]  \right]  :\Lambda\left[  \left[  \mathbf{y}\right]
\right]  \rightarrow\mathbf{k}\left[  \left[  \mathbf{y}\right]  \right]  $ to
both sides of this equality. We obtain%
\[
\left(  \alpha_{F}\left[  \left[  \mathbf{y}\right]  \right]  \right)  \left(
\sum_{\lambda\in\operatorname*{Par}}s_{\lambda}\left(  \mathbf{y}\right)
s_{\lambda/\mu}\right)  =\left(  \alpha_{F}\left[  \left[  \mathbf{y}\right]
\right]  \right)  \left(  \sum_{\lambda\in\operatorname*{Par}}s_{\mu}\left(
\mathbf{y}\right)  m_{\lambda}\left(  \mathbf{y}\right)  h_{\lambda}\right)
.
\]
Comparing this with%
\begin{align*}
&  \left(  \alpha_{F}\left[  \left[  \mathbf{y}\right]  \right]  \right)
\left(  \sum_{\lambda\in\operatorname*{Par}}s_{\lambda}\left(  \mathbf{y}%
\right)  s_{\lambda/\mu}\right) \\
&  =\sum_{\lambda\in\operatorname*{Par}}s_{\lambda}\left(  \mathbf{y}\right)
\cdot\underbrace{\left(  \alpha_{F}\left[  \left[  \mathbf{y}\right]  \right]
\right)  \left(  s_{\lambda/\mu}\right)  }_{\substack{=\alpha_{F}\left(
s_{\lambda/\mu}\right)  \\\text{(since }\alpha_{F}\left[  \left[
\mathbf{y}\right]  \right]  \text{ extends }\alpha_{F}\text{)}}}\\
&  \ \ \ \ \ \ \ \ \ \ \left(  \text{since the map }\alpha_{F}\left[  \left[
\mathbf{y}\right]  \right]  \text{ preserves infinite }\mathbf{k}\left[
\left[  \mathbf{y}\right]  \right]  \text{-linear combinations}\right) \\
&  =\sum_{\lambda\in\operatorname*{Par}}s_{\lambda}\left(  \mathbf{y}\right)
\cdot\underbrace{\alpha_{F}\left(  s_{\lambda/\mu}\right)  }%
_{\substack{=\operatorname*{pet}\nolimits_{F}\left(  \lambda,\mu\right)
\\\text{(by (\ref{pf.thm.GF.pieri.alks}))}}}=\sum_{\lambda\in
\operatorname*{Par}}s_{\lambda}\left(  \mathbf{y}\right)  \cdot
\operatorname*{pet}\nolimits_{F}\left(  \lambda,\mu\right)  =\sum_{\lambda
\in\operatorname*{Par}}\operatorname*{pet}\nolimits_{F}\left(  \lambda
,\mu\right)  \cdot s_{\lambda}\left(  \mathbf{y}\right)  ,
\end{align*}
we obtain%
\begin{align*}
&  \sum_{\lambda\in\operatorname*{Par}}\operatorname*{pet}\nolimits_{F}\left(
\lambda,\mu\right)  \cdot s_{\lambda}\left(  \mathbf{y}\right) \\
&  =\left(  \alpha_{F}\left[  \left[  \mathbf{y}\right]  \right]  \right)
\left(  \sum_{\lambda\in\operatorname*{Par}}s_{\mu}\left(  \mathbf{y}\right)
m_{\lambda}\left(  \mathbf{y}\right)  h_{\lambda}\right)  =\sum_{\lambda
\in\operatorname*{Par}}s_{\mu}\left(  \mathbf{y}\right)  m_{\lambda}\left(
\mathbf{y}\right)  \underbrace{\left(  \alpha_{F}\left[  \left[
\mathbf{y}\right]  \right]  \right)  \left(  h_{\lambda}\right)
}_{\substack{=\alpha_{F}\left(  h_{\lambda}\right)  \\\text{(since }\alpha
_{F}\left[  \left[  \mathbf{y}\right]  \right]  \text{ extends }\alpha
_{F}\text{)}}}\\
&  \ \ \ \ \ \ \ \ \ \ \left(  \text{since the map }\alpha_{F}\left[  \left[
\mathbf{y}\right]  \right]  \text{ preserves infinite }\mathbf{k}\left[
\left[  \mathbf{y}\right]  \right]  \text{-linear combinations}\right) \\
&  =\sum_{\lambda\in\operatorname*{Par}}s_{\mu}\left(  \mathbf{y}\right)
m_{\lambda}\left(  \mathbf{y}\right)  \underbrace{\alpha_{F}\left(
h_{\lambda}\right)  }_{\substack{=f_{\lambda}\\\text{(by
(\ref{pf.thm.GF.main.alkh}))}}}=\sum_{\lambda\in\operatorname*{Par}}s_{\mu
}\left(  \mathbf{y}\right)  m_{\lambda}\left(  \mathbf{y}\right)  \cdot
f_{\lambda}=\sum_{\lambda\in\operatorname*{Par}}f_{\lambda}\cdot s_{\mu
}\left(  \mathbf{y}\right)  m_{\lambda}\left(  \mathbf{y}\right)  .
\end{align*}
Renaming the indeterminates $\mathbf{y}=\left(  y_{1},y_{2},y_{3}%
,\ldots\right)  $ as $\mathbf{x}=\left(  x_{1},x_{2},x_{3},\ldots\right)  $ on
both sides of this equality, we obtain%
\[
\sum_{\lambda\in\operatorname*{Par}}\operatorname*{pet}\nolimits_{F}\left(
\lambda,\mu\right)  \cdot s_{\lambda}\left(  \mathbf{x}\right)  =\sum
_{\lambda\in\operatorname*{Par}}f_{\lambda}\cdot\underbrace{s_{\mu}\left(
\mathbf{x}\right)  }_{=s_{\mu}}\underbrace{m_{\lambda}\left(  \mathbf{x}%
\right)  }_{=m_{\lambda}}=\sum_{\lambda\in\operatorname*{Par}}%
\underbrace{f_{\lambda}\cdot s_{\mu}}_{=s_{\mu}f_{\lambda}}m_{\lambda}%
=\sum_{\lambda\in\operatorname*{Par}}s_{\mu}f_{\lambda}m_{\lambda}.
\]
Comparing this with%
\[
G_{F}\cdot s_{\mu}=s_{\mu}\cdot\underbrace{G_{F}}_{\substack{=\sum_{\lambda
\in\operatorname*{Par}}f_{\lambda}m_{\lambda}\\\text{(by Proposition
\ref{prop.GF.basics} \textbf{(b)})}}}=s_{\mu}\cdot\sum_{\lambda\in
\operatorname*{Par}}f_{\lambda}m_{\lambda}=\sum_{\lambda\in\operatorname*{Par}%
}s_{\mu}f_{\lambda}m_{\lambda},
\]
we obtain
\[
G_{F}\cdot s_{\mu}=\sum_{\lambda\in\operatorname*{Par}}\operatorname*{pet}%
\nolimits_{F}\left(  \lambda,\mu\right)  \cdot\underbrace{s_{\lambda}\left(
\mathbf{x}\right)  }_{=s_{\lambda}}=\sum_{\lambda\in\operatorname*{Par}%
}\operatorname*{pet}\nolimits_{F}\left(  \lambda,\mu\right)  s_{\lambda}.
\]
This proves Theorem \ref{thm.GF.pieri}.
\end{proof}

\begin{proof}
[Proof of Corollary \ref{cor.GFm.pieri}.]Forget that we fixed $m$. If
$n\in\mathbb{N}$, then the power series $%
\begin{cases}
G_{F,n-\left\vert \mu\right\vert }\cdot s_{\mu}, & \text{if }n\geq\left\vert
\mu\right\vert ;\\
0, & \text{if }n<\left\vert \mu\right\vert
\end{cases}
\in\mathbf{k}\left[  \left[  x_{1},x_{2},x_{3},\ldots\right]  \right]  $ is
homogeneous of degree $n$\ \ \ \ \footnote{\textit{Proof.} Let $n\in
\mathbb{N}$. We must prove that the power series $%
\begin{cases}
G_{F,n-\left\vert \mu\right\vert }\cdot s_{\mu}, & \text{if }n\geq\left\vert
\mu\right\vert ;\\
0, & \text{if }n<\left\vert \mu\right\vert
\end{cases}
$ is homogeneous of degree $n$.
\par
We are in one of the following two cases:
\par
\textit{Case 1:} We have $n\geq\left\vert \mu\right\vert $.
\par
\textit{Case 2:} We have $n<\left\vert \mu\right\vert $.
\par
Let us first consider Case 1. In this case, we have $n\geq\left\vert
\mu\right\vert $. Hence, $n-\left\vert \mu\right\vert \in\mathbb{N}$. But
Proposition \ref{prop.GF.basics} \textbf{(a)} (applied to $m=n-\left\vert
\mu\right\vert $) yields that the power series $G_{F,n-\left\vert
\mu\right\vert }$ is the $\left(  n-\left\vert \mu\right\vert \right)  $-th
degree homogeneous component of $G_{F}$. Hence, $G_{F,n-\left\vert
\mu\right\vert }$ is homogeneous of degree $n-\left\vert \mu\right\vert $.
\par
On the other hand, recall that for any $\lambda\in\operatorname*{Par}$, the
Schur function $s_{\lambda}$ is homogeneous of degree $\left\vert
\lambda\right\vert $. Applying this to $\lambda=\mu$, we conclude that the
Schur function $s_{\mu}$ is homogeneous of degree $\left\vert \mu\right\vert
$.
\par
So we know that $G_{F,n-\left\vert \mu\right\vert }$ is homogeneous of degree
$n-\left\vert \mu\right\vert $, whereas $s_{\mu}$ is homogeneous of degree
$\left\vert \mu\right\vert $. Since $\Lambda$ is a graded algebra, this
entails that the power series $G_{F,n-\left\vert \mu\right\vert }\cdot s_{\mu
}$ (being the product of $G_{F,n-\left\vert \mu\right\vert }$ and $s_{\mu}$)
is homogeneous of degree $\left(  n-\left\vert \mu\right\vert \right)
+\left\vert \mu\right\vert $. In other words, the power series
$G_{F,n-\left\vert \mu\right\vert }\cdot s_{\mu}$ is homogeneous of degree $n$
(since $\left(  n-\left\vert \mu\right\vert \right)  +\left\vert
\mu\right\vert =n$). In other words, the power series $%
\begin{cases}
G_{F,n-\left\vert \mu\right\vert }\cdot s_{\mu}, & \text{if }n\geq\left\vert
\mu\right\vert ;\\
0, & \text{if }n<\left\vert \mu\right\vert
\end{cases}
$ is homogeneous of degree $n$ (since%
\[%
\begin{cases}
G_{F,n-\left\vert \mu\right\vert }\cdot s_{\mu}, & \text{if }n\geq\left\vert
\mu\right\vert ;\\
0, & \text{if }n<\left\vert \mu\right\vert
\end{cases}
=G_{F,n-\left\vert \mu\right\vert }\cdot s_{\mu}\ \ \ \ \ \ \ \ \ \ \left(
\text{because }n\geq\left\vert \mu\right\vert \right)
\]
). Thus, we have proved in Case 1 that the power series $%
\begin{cases}
G_{F,n-\left\vert \mu\right\vert }\cdot s_{\mu}, & \text{if }n\geq\left\vert
\mu\right\vert ;\\
0, & \text{if }n<\left\vert \mu\right\vert
\end{cases}
$ is homogeneous of degree $n$.
\par
Let us now consider Case 2. In this case, we have $n<\left\vert \mu\right\vert
$. The power series $0$ is homogeneous of degree $n$ (since $0$ is homogeneous
of any degree). In other words, the power series $%
\begin{cases}
G_{F,n-\left\vert \mu\right\vert }\cdot s_{\mu}, & \text{if }n\geq\left\vert
\mu\right\vert ;\\
0, & \text{if }n<\left\vert \mu\right\vert
\end{cases}
$ is homogeneous of degree $n$ (since%
\[%
\begin{cases}
G_{F,n-\left\vert \mu\right\vert }\cdot s_{\mu}, & \text{if }n\geq\left\vert
\mu\right\vert ;\\
0, & \text{if }n<\left\vert \mu\right\vert
\end{cases}
=0\ \ \ \ \ \ \ \ \ \ \left(  \text{because }n<\left\vert \mu\right\vert
\right)
\]
). Thus, we have proved in Case 2 that the power series $%
\begin{cases}
G_{F,n-\left\vert \mu\right\vert }\cdot s_{\mu}, & \text{if }n\geq\left\vert
\mu\right\vert ;\\
0, & \text{if }n<\left\vert \mu\right\vert
\end{cases}
$ is homogeneous of degree $n$.
\par
Thus, our claim (namely, that the power series $%
\begin{cases}
G_{F,n-\left\vert \mu\right\vert }\cdot s_{\mu}, & \text{if }n\geq\left\vert
\mu\right\vert ;\\
0, & \text{if }n<\left\vert \mu\right\vert
\end{cases}
$ is homogeneous of degree $n$) has been proven in both Cases 1 and 2. Since
these cases cover all possibilities, we thus conclude that our claim always
holds. Qed.}.

In the proof of Proposition \ref{prop.GF.basics} \textbf{(a)}, we have shown
that $G_{F}=\sum_{m\in\mathbb{N}}G_{F,m}$. Multiplying both sides of this
equality by $s_{\mu}$, we find%
\[
G_{F}\cdot s_{\mu}=\left(  \sum_{m\in\mathbb{N}}G_{F,m}\right)  \cdot s_{\mu
}=\sum_{m\in\mathbb{N}}G_{F,m}\cdot s_{\mu}.
\]
Comparing this with%
\begin{align*}
&  \sum_{n\in\mathbb{N}}%
\begin{cases}
G_{F,n-\left\vert \mu\right\vert }\cdot s_{\mu}, & \text{if }n\geq\left\vert
\mu\right\vert ;\\
0, & \text{if }n<\left\vert \mu\right\vert
\end{cases}
\\
&  =\sum_{\substack{n\in\mathbb{N};\\n\geq\left\vert \mu\right\vert
}}\underbrace{%
\begin{cases}
G_{F,n-\left\vert \mu\right\vert }\cdot s_{\mu}, & \text{if }n\geq\left\vert
\mu\right\vert ;\\
0, & \text{if }n<\left\vert \mu\right\vert
\end{cases}
}_{\substack{=G_{F,n-\left\vert \mu\right\vert }\cdot s_{\mu}\\\text{(since
}n\geq\left\vert \mu\right\vert \text{)}}}\ \ \ \ +\sum_{\substack{n\in
\mathbb{N};\\n<\left\vert \mu\right\vert }}\underbrace{%
\begin{cases}
G_{F,n-\left\vert \mu\right\vert }\cdot s_{\mu}, & \text{if }n\geq\left\vert
\mu\right\vert ;\\
0, & \text{if }n<\left\vert \mu\right\vert
\end{cases}
}_{\substack{=0\\\text{(since }n<\left\vert \mu\right\vert \text{)}}}\\
&  \ \ \ \ \ \ \ \ \ \ \ \ \ \ \ \ \ \ \ \ \left(
\begin{array}
[c]{c}%
\text{since each }n\in\mathbb{N}\text{ satisfies either }n\geq\left\vert
\mu\right\vert \text{ or }n<\left\vert \mu\right\vert \\
\text{(but not both at the same time)}%
\end{array}
\right) \\
&  =\sum_{\substack{n\in\mathbb{N};\\n\geq\left\vert \mu\right\vert
}}G_{F,n-\left\vert \mu\right\vert }\cdot s_{\mu}+\underbrace{\sum
_{\substack{n\in\mathbb{N};\\n<\left\vert \mu\right\vert }}0}_{=0}%
=\sum_{\substack{n\in\mathbb{N};\\n\geq\left\vert \mu\right\vert
}}G_{F,n-\left\vert \mu\right\vert }\cdot s_{\mu}\\
&  =\sum_{m\in\mathbb{N}}G_{F,m}\cdot s_{\mu}\ \ \ \ \ \ \ \ \ \ \left(
\begin{array}
[c]{c}%
\text{here, we have substituted }m\\
\text{for }n-\left\vert \mu\right\vert \text{ in the sum}%
\end{array}
\right)  ,
\end{align*}
we obtain%
\begin{equation}
G_{F}\cdot s_{\mu}=\sum_{n\in\mathbb{N}}%
\begin{cases}
G_{F,n-\left\vert \mu\right\vert }\cdot s_{\mu}, & \text{if }n\geq\left\vert
\mu\right\vert ;\\
0, & \text{if }n<\left\vert \mu\right\vert
\end{cases}
\ \ \ . \label{pf.cor.GFm.pieri.2}%
\end{equation}

But recall that each $%
\begin{cases}
G_{F,n-\left\vert \mu\right\vert }\cdot s_{\mu}, & \text{if }n\geq\left\vert
\mu\right\vert ;\\
0, & \text{if }n<\left\vert \mu\right\vert
\end{cases}
$ is homogeneous of degree $n$. Thus, the equality (\ref{pf.cor.GFm.pieri.2})
reveals that the family
\[
\left(
\begin{cases}
G_{F,n-\left\vert \mu\right\vert }\cdot s_{\mu}, & \text{if }n\geq\left\vert
\mu\right\vert ;\\
0, & \text{if }n<\left\vert \mu\right\vert
\end{cases}
\right)  _{n\in\mathbb{N}}%
\]
is the homogeneous decomposition of $G_{F}\cdot s_{\mu}$ (by the definition of
a homogeneous decomposition). Therefore, for each $n\in\mathbb{N}$, the power
series $%
\begin{cases}
G_{F,n-\left\vert \mu\right\vert }\cdot s_{\mu}, & \text{if }n\geq\left\vert
\mu\right\vert ;\\
0, & \text{if }n<\left\vert \mu\right\vert
\end{cases}
$ is the $n$-th degree homogeneous component of $G_{F}\cdot s_{\mu}$.

Now, let $m\in\mathbb{N}$. We have just shown that for each $n\in\mathbb{N}$,
the power series $%
\begin{cases}
G_{F,n-\left\vert \mu\right\vert }\cdot s_{\mu}, & \text{if }n\geq\left\vert
\mu\right\vert ;\\
0, & \text{if }n<\left\vert \mu\right\vert
\end{cases}
$ is the $n$-th degree homogeneous component of $G_{F}\cdot s_{\mu}$. Applying
this to $n=m+\left\vert \mu\right\vert $, we conclude that the power series
\newline$%
\begin{cases}
G_{F,\left(  m+\left\vert \mu\right\vert \right)  -\left\vert \mu\right\vert
}\cdot s_{\mu}, & \text{if }m+\left\vert \mu\right\vert \geq\left\vert
\mu\right\vert ;\\
0, & \text{if }m+\left\vert \mu\right\vert <\left\vert \mu\right\vert
\end{cases}
$ is the $\left(  m+\left\vert \mu\right\vert \right)  $-th degree homogeneous
component of $G_{F}\cdot s_{\mu}$. Since%
\begin{align*}
&
\begin{cases}
G_{F,\left(  m+\left\vert \mu\right\vert \right)  -\left\vert \mu\right\vert
}\cdot s_{\mu}, & \text{if }m+\left\vert \mu\right\vert \geq\left\vert
\mu\right\vert ;\\
0, & \text{if }m+\left\vert \mu\right\vert <\left\vert \mu\right\vert
\end{cases}
\\
&  =G_{F,\left(  m+\left\vert \mu\right\vert \right)  -\left\vert
\mu\right\vert }\cdot s_{\mu}\ \ \ \ \ \ \ \ \ \ \left(  \text{since
}m+\left\vert \mu\right\vert \geq\left\vert \mu\right\vert \text{ (because
}m\geq0\text{)}\right) \\
&  =G_{F,m}\cdot s_{\mu}\ \ \ \ \ \ \ \ \ \ \left(  \text{since }\left(
m+\left\vert \mu\right\vert \right)  -\left\vert \mu\right\vert =m\right)  ,
\end{align*}
we can rewrite this as follows: The power series $G_{F,m}\cdot s_{\mu}$ is the
$\left(  m+\left\vert \mu\right\vert \right)  $-th degree homogeneous
component of $G_{F}\cdot s_{\mu}$. In other words,%
\begin{align}
&  G_{F,m}\cdot s_{\mu}\nonumber\\
&  =\left(  \text{the }\left(  m+\left\vert \mu\right\vert \right)  \text{-th
degree homogeneous component of }G_{F}\cdot s_{\mu}\right)  .
\label{pf.cor.GFm.pieri.left}%
\end{align}

On the other hand, Theorem \ref{thm.GF.pieri} yields%
\begin{align}
G_{F}\cdot s_{\mu}  &  =\underbrace{\sum_{\lambda\in\operatorname*{Par}}%
}_{\substack{=\sum_{n\in\mathbb{N}}\ \ \sum_{\substack{\lambda\in
\operatorname*{Par};\\\left\vert \lambda\right\vert =n}}\\\text{(since each
}\lambda\in\operatorname*{Par}\\\text{satisfies }\left\vert \lambda\right\vert
\in\mathbb{N}\text{)}}}\operatorname*{pet}\nolimits_{F}\left(  \lambda
,\mu\right)  s_{\lambda}\nonumber\\
&  =\sum_{n\in\mathbb{N}}\ \ \sum_{\substack{\lambda\in\operatorname*{Par}%
;\\\left\vert \lambda\right\vert =n}}\operatorname*{pet}\nolimits_{F}\left(
\lambda,\mu\right)  s_{\lambda}. \label{pf.cor.GFm.pieri.4}%
\end{align}
For each $n\in\mathbb{N}$, the formal power series $\sum_{\substack{\lambda
\in\operatorname*{Par};\\\left\vert \lambda\right\vert =n}}\operatorname*{pet}%
\nolimits_{F}\left(  \lambda,\mu\right)  s_{\lambda}$ is homogeneous of degree
$n$\ \ \ \ \footnote{\textit{Proof.} Let $n\in\mathbb{N}$. Recall that for any
$\lambda\in\operatorname*{Par}$, the Schur function $s_{\lambda}$ is
homogeneous of degree $\left\vert \lambda\right\vert $. Hence, if $\lambda
\in\operatorname*{Par}$ satisfies $\left\vert \lambda\right\vert =n$, then the
Schur function $s_{\lambda}$ is homogeneous of degree $n$ (since $\left\vert
\lambda\right\vert =n$). Thus, $\sum_{\substack{\lambda\in\operatorname*{Par}%
;\\\left\vert \lambda\right\vert =n}}\operatorname*{pet}\nolimits_{F}\left(
\lambda,\mu\right)  s_{\lambda}$ is a $\mathbf{k}$-linear combination of Schur
functions that are homogeneous of degree $n$. Therefore, $\sum
_{\substack{\lambda\in\operatorname*{Par};\\\left\vert \lambda\right\vert
=n}}\operatorname*{pet}\nolimits_{F}\left(  \lambda,\mu\right)  s_{\lambda}$
is homogeneous of degree $n$. Qed.}. Thus, the equality
(\ref{pf.cor.GFm.pieri.4}) reveals that
\[
\left(  \sum_{\substack{\lambda\in\operatorname*{Par};\\\left\vert
\lambda\right\vert =n}}\operatorname*{pet}\nolimits_{F}\left(  \lambda
,\mu\right)  s_{\lambda}\right)  _{n\in\mathbb{N}}%
\]
is the homogeneous decomposition of $G_{F}\cdot s_{\mu}$. Therefore, for each
$n\in\mathbb{N}$, the power series $\sum_{\substack{\lambda\in
\operatorname*{Par};\\\left\vert \lambda\right\vert =n}}\operatorname*{pet}%
\nolimits_{F}\left(  \lambda,\mu\right)  s_{\lambda}$ is the $n$-th degree
homogeneous component of $G_{F}\cdot s_{\mu}$. Applying this to
$n=m+\left\vert \mu\right\vert $, we conclude that the power series
$\sum_{\substack{\lambda\in\operatorname*{Par};\\\left\vert \lambda\right\vert
=m+\left\vert \mu\right\vert }}\operatorname*{pet}\nolimits_{F}\left(
\lambda,\mu\right)  s_{\lambda}$ is the $\left(  m+\left\vert \mu\right\vert
\right)  $-th degree homogeneous component of $G_{F}\cdot s_{\mu}$. In other
words,%
\begin{align*}
&  \sum_{\substack{\lambda\in\operatorname*{Par};\\\left\vert \lambda
\right\vert =m+\left\vert \mu\right\vert }}\operatorname*{pet}\nolimits_{F}%
\left(  \lambda,\mu\right)  s_{\lambda}\\
&  =\left(  \text{the }\left(  m+\left\vert \mu\right\vert \right)  \text{-th
degree homogeneous component of }G_{F}\cdot s_{\mu}\right)  .
\end{align*}
Comparing this with (\ref{pf.cor.GFm.pieri.left}), we find%
\[
G_{F,m}\cdot s_{\mu}=\underbrace{\sum_{\substack{\lambda\in\operatorname*{Par}%
;\\\left\vert \lambda\right\vert =m+\left\vert \mu\right\vert }}}%
_{\substack{=\sum_{\lambda\in\operatorname*{Par}\nolimits_{m+\left\vert
\mu\right\vert }}\\\text{(since }\operatorname*{Par}\nolimits_{m+\left\vert
\mu\right\vert }\text{ is defined as the}\\\text{set of all }\lambda
\in\operatorname*{Par}\text{ satisfying }\left\vert \lambda\right\vert
=m+\left\vert \mu\right\vert \text{)}}}\operatorname*{pet}\nolimits_{F}\left(
\lambda,\mu\right)  s_{\lambda}=\sum_{\lambda\in\operatorname*{Par}%
\nolimits_{m+\left\vert \mu\right\vert }}\operatorname*{pet}\nolimits_{F}%
\left(  \lambda,\mu\right)  s_{\lambda}.
\]
This proves Corollary \ref{cor.GFm.pieri}.
\end{proof}

\begin{proof}
[Proof of Theorem \ref{thm.GF.main}.]Theorem \ref{thm.GF.pieri} (applied to
$\mu=\varnothing$) yields%
\[
G_{F}\cdot s_{\varnothing}=\sum_{\lambda\in\operatorname*{Par}}%
\operatorname*{pet}\nolimits_{F}\left(  \lambda,\varnothing\right)
s_{\lambda}.
\]
Comparing this with $G_{F}\cdot\underbrace{s_{\varnothing}}_{=1}=G_{F}$, we
obtain%
\[
G_{F}=\sum_{\lambda\in\operatorname*{Par}}\operatorname*{pet}\nolimits_{F}%
\left(  \lambda,\varnothing\right)  s_{\lambda}.
\]
This proves Theorem \ref{thm.GF.main}.
\end{proof}

\begin{proof}
[Proof of Corollary \ref{cor.GFm.main}.]Corollary \ref{cor.GFm.pieri} (applied
to $\mu=\varnothing$) yields%
\[
G_{F,m}\cdot s_{\varnothing}=\sum_{\lambda\in\operatorname*{Par}%
\nolimits_{m+\left\vert \varnothing\right\vert }}\operatorname*{pet}%
\nolimits_{F}\left(  \lambda,\varnothing\right)  s_{\lambda}.
\]
In view of $G_{F,m}\cdot\underbrace{s_{\varnothing}}_{=1}=G_{F,m}$ and
$m+\underbrace{\left\vert \varnothing\right\vert }_{=0}=m$, we can rewrite
this as
\[
G_{F,m}=\sum_{\lambda\in\operatorname*{Par}\nolimits_{m}}\operatorname*{pet}%
\nolimits_{F}\left(  \lambda,\varnothing\right)  s_{\lambda}.
\]
This proves Corollary \ref{cor.GFm.main}.
\end{proof}
\end{verlong}

Proposition \ref{prop.GF.basics} \textbf{(c)} shows that $G_{F,m}\in\Lambda$
for each $m\in\mathbb{N}$. Hence, we can apply the comultiplication $\Delta$
of the Hopf algebra $\Lambda$ to $G_{F,m}$. The next theorem (which
generalizes Theorem \ref{thm.DeltaGkm}) gives a simple expression for the
result of this:

\begin{theorem}
\label{thm.DeltaGFm}Let $m\in\mathbb{N}$. Then,%
\[
\Delta\left(  G_{F,m}\right)  =\sum_{i=0}^{m}G_{F,i}\otimes G_{F,m-i}.
\]

\end{theorem}

\begin{vershort}
\begin{proof}
[Proof of Theorem \ref{thm.DeltaGFm} (sketched).]Proposition
\ref{prop.GF.basics} \textbf{(b)} tells us that $G_{F}=\prod_{i=1}^{\infty
}F\left(  x_{i}\right)  $. Comparing this with the equality $G_{F}=\sum
_{k\in\mathbb{N}}G_{F,k}$ (which follows from Proposition \ref{prop.GF.basics}
\textbf{(a)}), we obtain%
\begin{equation}
\sum_{k\in\mathbb{N}}G_{F,k}=\prod_{i=1}^{\infty}F\left(  x_{i}\right)  .
\label{pf.thm.DeltaGFm.short.1}%
\end{equation}
Substituting the variables $y_{1},y_{2},y_{3},\ldots$ for the variables
$x_{1},x_{2},x_{3},\ldots$ in this equality, we obtain
\begin{equation}
\sum_{k\in\mathbb{N}}G_{F,k}\left(  \mathbf{y}\right)  =\prod_{i=1}^{\infty
}F\left(  y_{i}\right)  . \label{pf.thm.DeltaGFm.short.1y}%
\end{equation}
Substituting the variables $x_{1},x_{2},x_{3},\ldots,y_{1},y_{2},y_{3},\ldots$
for the variables $x_{1},x_{2},x_{3},\ldots$ on both sides of the equality
$G_{F}=\prod_{i=1}^{\infty}F\left(  x_{i}\right)  $, we obtain%
\begin{align*}
G_{F}\left(  \mathbf{x},\mathbf{y}\right)   &  =\underbrace{\left(
\prod_{i=1}^{\infty}F\left(  x_{i}\right)  \right)  }_{\substack{=\sum
_{k\in\mathbb{N}}G_{F,k}\\\text{(by (\ref{pf.thm.DeltaGFm.short.1}))}%
}}\underbrace{\left(  \prod_{i=1}^{\infty}F\left(  y_{i}\right)  \right)
}_{\substack{=\sum_{k\in\mathbb{N}}G_{F,k}\left(  \mathbf{y}\right)
\\\text{(by (\ref{pf.thm.DeltaGFm.short.1y}))}}}=\left(  \sum_{k\in\mathbb{N}%
}\underbrace{G_{F,k}}_{=G_{F,k}\left(  \mathbf{x}\right)  }\right)  \left(
\sum_{k\in\mathbb{N}}G_{F,k}\left(  \mathbf{y}\right)  \right) \\
&  =\left(  \sum_{k\in\mathbb{N}}G_{F,k}\left(  \mathbf{x}\right)  \right)
\left(  \sum_{k\in\mathbb{N}}G_{F,k}\left(  \mathbf{y}\right)  \right)
=\sum_{\left(  i,j\right)  \in\mathbb{N}\times\mathbb{N}}G_{F,i}\left(
\mathbf{x}\right)  G_{F,j}\left(  \mathbf{y}\right)  .
\end{align*}

Comparing the $m$-th degree homogeneous components of both sides of this
equality, we find%
\[
G_{F,m}\left(  \mathbf{x},\mathbf{y}\right)  =\sum_{\substack{\left(
i,j\right)  \in\mathbb{N}\times\mathbb{N};\\i+j=m}}G_{F,i}\left(
\mathbf{x}\right)  G_{F,j}\left(  \mathbf{y}\right)
\]
(since Proposition \ref{prop.GF.basics} \textbf{(a)} shows that the $m$-th
degree homogeneous component of $G_{F}\left(  \mathbf{x},\mathbf{y}\right)  $
is $G_{F,m}\left(  \mathbf{x},\mathbf{y}\right)  $, while that of the sum
$\sum_{\left(  i,j\right)  \in\mathbb{N}\times\mathbb{N}}\underbrace{G_{F,i}%
\left(  \mathbf{x}\right)  G_{F,j}\left(  \mathbf{y}\right)  }%
_{\text{homogeneous of degree }i+j}$ is the subsum $\sum_{\substack{\left(
i,j\right)  \in\mathbb{N}\times\mathbb{N};\\i+j=m}}G_{F,i}\left(
\mathbf{x}\right)  G_{F,j}\left(  \mathbf{y}\right)  $). Thus,%
\[
G_{F,m}\left(  \mathbf{x},\mathbf{y}\right)  =\sum_{\substack{\left(
i,j\right)  \in\mathbb{N}\times\mathbb{N};\\i+j=m}}G_{F,i}\left(
\mathbf{x}\right)  G_{F,j}\left(  \mathbf{y}\right)  =\sum_{i\in\left\{
0,1,\ldots,m\right\}  }G_{F,i}\left(  \mathbf{x}\right)  G_{F,m-i}\left(
\mathbf{y}\right)  .
\]
Hence, (\ref{eq.Delta-on-Lam.if}) holds for $f=G_{F,m}$, $I=\left\{
0,1,\ldots,m\right\}  $, $\left(  f_{1,i}\right)  _{i\in I}=\left(
G_{F,i}\right)  _{i\in\left\{  0,1,\ldots,m\right\}  }$ and $\left(
f_{2,i}\right)  _{i\in I}=\left(  G_{F,m-i}\right)  _{i\in\left\{
0,1,\ldots,m\right\}  }$. Therefore, (\ref{eq.Delta-on-Lam.then}) (applied to
these $f$, $I$, $\left(  f_{1,i}\right)  _{i\in I}$ and $\left(
f_{2,i}\right)  _{i\in I}$) yields%
\[
\Delta\left(  G_{F,m}\right)  =\sum_{i\in\left\{  0,1,\ldots,m\right\}
}G_{F,i}\otimes G_{F,m-i}=\sum_{i=0}^{m}G_{F,i}\otimes G_{F,m-i}.
\]
This proves Theorem \ref{thm.DeltaGFm}.
\end{proof}
\end{vershort}

\begin{verlong}
\begin{proof}
[Proof of Theorem \ref{thm.DeltaGFm}.]Forget that we fixed $m$. Recall that
\begin{equation}
G_{F}=\sum_{m\in\mathbb{N}}G_{F,m} \label{pf.thm.DeltaGFm.GF=sum}%
\end{equation}
(indeed, we have proved this in our proof of Proposition \ref{prop.GF.basics}
\textbf{(a)}). On the other hand, Proposition \ref{prop.GF.basics}
\textbf{(d)} says that the power series $G_{F}$ is symmetric. Furthermore,
Proposition \ref{prop.GF.basics} \textbf{(b)} tells us that
\begin{equation}
G_{F}=\prod_{i=1}^{\infty}F\left(  x_{i}\right)  . \label{pf.thm.DeltaGFm.0}%
\end{equation}
Comparing this with (\ref{pf.thm.DeltaGFm.GF=sum}), we obtain%
\begin{equation}
\sum_{m\in\mathbb{N}}G_{F,m}=\prod_{i=1}^{\infty}F\left(  x_{i}\right)  .
\label{pf.thm.DeltaGFm.1}%
\end{equation}
Substituting the variables $y_{1},y_{2},y_{3},\ldots$ for the variables
$x_{1},x_{2},x_{3},\ldots$ in this equality, we obtain
\begin{equation}
\sum_{m\in\mathbb{N}}G_{F,m}\left(  \mathbf{y}\right)  =\prod_{i=1}^{\infty
}F\left(  y_{i}\right)  \label{pf.thm.DeltaGFm.1y}%
\end{equation}
(since the left hand side of (\ref{pf.thm.DeltaGFm.1}) turns into $\sum
_{m\in\mathbb{N}}G_{F,m}\left(  \mathbf{y}\right)  $ upon this
substitution\footnote{because each $G_{F,m}$ turns into $G_{F,m}\left(
\mathbf{y}\right)  $ upon this substitution}, whereas the right hand side
turns into $\prod_{i=1}^{\infty}F\left(  y_{i}\right)  $).

On the other hand, let us substitute the variables $x_{1},x_{2},x_{3}%
,\ldots,y_{1},y_{2},y_{3},\ldots$ for the variables $x_{1},x_{2},x_{3},\ldots$
on both sides of the equality (\ref{pf.thm.DeltaGFm.0}). (This means that we
choose some bijection $\phi:\left\{  x_{1},x_{2},x_{3},\ldots\right\}
\rightarrow\left\{  x_{1},x_{2},x_{3},\ldots,y_{1},y_{2},y_{3},\ldots\right\}
$, and substitute $\phi\left(  x_{i}\right)  $ for each $x_{i}$ on both sides
of (\ref{pf.thm.DeltaGFm.0}).) Thus, we readily obtain%
\begin{equation}
G_{F}\left(  \mathbf{x},\mathbf{y}\right)  =\left(  \prod_{i=1}^{\infty
}F\left(  x_{i}\right)  \right)  \left(  \prod_{i=1}^{\infty}F\left(
y_{i}\right)  \right)  . \label{pf.thm.DeltaGFm.1xy}%
\end{equation}

[Here is a detailed \textit{proof of (\ref{pf.thm.DeltaGFm.1xy}):} Choose some
bijection $\phi:\left\{  x_{1},x_{2},x_{3},\ldots\right\}  \rightarrow\left\{
x_{1},x_{2},x_{3},\ldots,y_{1},y_{2},y_{3},\ldots\right\}  $. (Such a
bijection clearly exists, since both $\left\{  x_{1},x_{2},x_{3}%
,\ldots\right\}  $ and $\left\{  x_{1},x_{2},x_{3},\ldots,y_{1},y_{2}%
,y_{3},\ldots\right\}  $ are countably infinite sets.) Then, $G_{F}\left(
\mathbf{x},\mathbf{y}\right)  $ is the result of substituting $\phi\left(
x_{i}\right)  $ for each $x_{i}$ in $G_{F}$ (by the definition of
$G_{F}\left(  \mathbf{x},\mathbf{y}\right)  $, since the power series $G_{F}$
is symmetric). In other words,%
\begin{equation}
G_{F}\left(  \mathbf{x},\mathbf{y}\right)  =G_{F}\left(  \phi\left(
x_{1}\right)  ,\phi\left(  x_{2}\right)  ,\phi\left(  x_{3}\right)
,\ldots\right)  =\prod_{i=1}^{\infty}F\left(  \phi\left(  x_{i}\right)
\right)  \label{pf.thm.DeltaGFm.1xy.1}%
\end{equation}
(here, we have substituted $\phi\left(  x_{i}\right)  $ for each $x_{i}$ on
both sides of the equality (\ref{pf.thm.DeltaGFm.0})). Now, recall that the
map $\phi$ is a bijection from $\left\{  x_{1},x_{2},x_{3},\ldots\right\}  $
to $\left\{  x_{1},x_{2},x_{3},\ldots,y_{1},y_{2},y_{3},\ldots\right\}  $.
Thus, its values $\phi\left(  x_{1}\right)  ,\phi\left(  x_{2}\right)
,\phi\left(  x_{3}\right)  ,\ldots$ are precisely the indeterminates
\newline$x_{1},x_{2},x_{3},\ldots,y_{1},y_{2},y_{3},\ldots$ (in some order).
Hence,%
\begin{align*}
&  F\left(  \phi\left(  x_{1}\right)  \right)  \cdot F\left(  \phi\left(
x_{2}\right)  \right)  \cdot F\left(  \phi\left(  x_{3}\right)  \right)
\cdot\cdots\\
&  =\prod_{u\in\left\{  x_{1},x_{2},x_{3},\ldots,y_{1},y_{2},y_{3}%
,\ldots\right\}  }F\left(  u\right) \\
&  =\underbrace{\left(  F\left(  x_{1}\right)  \cdot F\left(  x_{2}\right)
\cdot F\left(  x_{3}\right)  \cdot\cdots\right)  }_{=\prod_{i=1}^{\infty
}F\left(  x_{i}\right)  }\cdot\underbrace{\left(  F\left(  y_{1}\right)  \cdot
F\left(  y_{2}\right)  \cdot F\left(  y_{3}\right)  \cdot\cdots\right)
}_{=\prod_{i=1}^{\infty}F\left(  y_{i}\right)  }\\
&  =\left(  \prod_{i=1}^{\infty}F\left(  x_{i}\right)  \right)  \left(
\prod_{i=1}^{\infty}F\left(  y_{i}\right)  \right)  .
\end{align*}
Hence, (\ref{pf.thm.DeltaGFm.1xy.1}) becomes%
\begin{align*}
G_{F}\left(  \mathbf{x},\mathbf{y}\right)   &  =\prod_{i=1}^{\infty}F\left(
\phi\left(  x_{i}\right)  \right)  =F\left(  \phi\left(  x_{1}\right)
\right)  \cdot F\left(  \phi\left(  x_{2}\right)  \right)  \cdot F\left(
\phi\left(  x_{3}\right)  \right)  \cdot\cdots\\
&  =\left(  \prod_{i=1}^{\infty}F\left(  x_{i}\right)  \right)  \left(
\prod_{i=1}^{\infty}F\left(  y_{i}\right)  \right)  .
\end{align*}
This proves (\ref{pf.thm.DeltaGFm.1xy}).]

Now, (\ref{pf.thm.DeltaGFm.1xy}) becomes%
\begin{align}
G_{F}\left(  \mathbf{x},\mathbf{y}\right)   &  =\underbrace{\left(
\prod_{i=1}^{\infty}F\left(  x_{i}\right)  \right)  }_{\substack{=\sum
_{m\in\mathbb{N}}G_{F,m}\\\text{(by (\ref{pf.thm.DeltaGFm.1}))}}%
}\underbrace{\left(  \prod_{i=1}^{\infty}F\left(  y_{i}\right)  \right)
}_{\substack{=\sum_{m\in\mathbb{N}}G_{F,m}\left(  \mathbf{y}\right)
\\\text{(by (\ref{pf.thm.DeltaGFm.1y}))}}}\nonumber\\
&  =\left(  \sum_{m\in\mathbb{N}}\underbrace{G_{F,m}}_{=G_{F,m}\left(
\mathbf{x}\right)  }\right)  \left(  \sum_{m\in\mathbb{N}}G_{F,m}\left(
\mathbf{y}\right)  \right) \nonumber\\
&  =\underbrace{\left(  \sum_{m\in\mathbb{N}}G_{F,m}\left(  \mathbf{x}\right)
\right)  }_{\substack{=\sum_{i\in\mathbb{N}}G_{F,i}\left(  \mathbf{x}\right)
\\\text{(here, we have renamed}\\\text{the summation index }m\text{ as
}i\text{)}}}\ \ \underbrace{\left(  \sum_{m\in\mathbb{N}}G_{F,m}\left(
\mathbf{y}\right)  \right)  }_{\substack{=\sum_{j\in\mathbb{N}}G_{F,j}\left(
\mathbf{y}\right)  \\\text{(here, we have renamed}\\\text{the summation index
}m\text{ as }j\text{)}}}\nonumber\\
&  =\left(  \sum_{i\in\mathbb{N}}G_{F,i}\left(  \mathbf{x}\right)  \right)
\left(  \sum_{j\in\mathbb{N}}G_{F,j}\left(  \mathbf{y}\right)  \right)
=\underbrace{\sum_{i\in\mathbb{N}}\ \ \sum_{j\in\mathbb{N}}}_{\substack{=\sum
_{\left(  i,j\right)  \in\mathbb{N}\times\mathbb{N}}\\=\sum_{n\in\mathbb{N}%
}\ \ \sum_{\substack{\left(  i,j\right)  \in\mathbb{N}\times\mathbb{N}%
;\\i+j=n}}}}G_{F,i}\left(  \mathbf{x}\right)  G_{F,j}\left(  \mathbf{y}\right)
\nonumber\\
&  =\sum_{n\in\mathbb{N}}\ \ \sum_{\substack{\left(  i,j\right)  \in
\mathbb{N}\times\mathbb{N};\\i+j=n}}G_{F,i}\left(  \mathbf{x}\right)
G_{F,j}\left(  \mathbf{y}\right) \nonumber\\
&  =\sum_{n\in\mathbb{N}}\left(  \sum_{\substack{\left(  i,j\right)
\in\mathbb{N}\times\mathbb{N};\\i+j=n}}G_{F,i}\left(  \mathbf{x}\right)
G_{F,j}\left(  \mathbf{y}\right)  \right)  . \label{pf.thm.DeltaGFm.4}%
\end{align}

If $n\in\mathbb{N}$, then the power series $\sum_{\substack{\left(
i,j\right)  \in\mathbb{N}\times\mathbb{N};\\i+j=n}}G_{F,i}\left(
\mathbf{x}\right)  G_{F,j}\left(  \mathbf{y}\right)  \in\mathbf{k}\left[
\left[  \mathbf{x},\mathbf{y}\right]  \right]  $ is homogeneous of degree
$n$\ \ \ \ \footnote{\textit{Proof.} Let $n\in\mathbb{N}$. We must prove that
the power series $\sum_{\substack{\left(  i,j\right)  \in\mathbb{N}%
\times\mathbb{N};\\i+j=n}}G_{F,i}\left(  \mathbf{x}\right)  G_{F,j}\left(
\mathbf{y}\right)  $ is homogeneous of degree $n$.
\par
Let $\left(  i,j\right)  \in\mathbb{N}\times\mathbb{N}$ be such that $i+j=n$.
Then, Proposition \ref{prop.GF.basics} \textbf{(a)} (applied to $m=i$) shows
that the formal power series $G_{F,i}$ is the $i$-th degree homogeneous
component of $G_{F}$. Hence, this formal power series $G_{F,i}$ is homogeneous
of degree $i$. In other words, $G_{F,i}\left(  \mathbf{x}\right)  $ is
homogeneous of degree $i$ (since $G_{F,i}\left(  \mathbf{x}\right)  =G_{F,i}%
$).
\par
Moreover, Proposition \ref{prop.GF.basics} \textbf{(a)} (applied to $m=j$)
shows that the formal power series $G_{F,j}$ is the $j$-th degree homogeneous
component of $G_{F}$. Hence, this formal power series $G_{F,j}$ is homogeneous
of degree $j$. Hence, the power series $G_{F,j}\left(  \mathbf{y}\right)  $ is
homogeneous of degree $j$ as well (since this $G_{F,j}\left(  \mathbf{y}%
\right)  $ is obtained by substituting $y_{1},y_{2},y_{3},\ldots$ for the
variables $x_{1},x_{2},x_{3},\ldots$ in $G_{F,j}$; but this substitution
clearly preserves homogeneity and degree).
\par
Now we have shown that the two power series $G_{F,i}\left(  \mathbf{x}\right)
$ and $G_{F,j}\left(  \mathbf{y}\right)  $ are homogeneous of degrees $i$ and
$j$, respectively. Thus, their product $G_{F,i}\left(  \mathbf{x}\right)
G_{F,j}\left(  \mathbf{y}\right)  $ is homogeneous of degree $i+j$. In other
words, $G_{F,i}\left(  \mathbf{x}\right)  G_{F,j}\left(  \mathbf{y}\right)  $
is homogeneous of degree $n$ (since $i+j=n$).
\par
Forget that we fixed $\left(  i,j\right)  $. We thus have shown that
$G_{F,i}\left(  \mathbf{x}\right)  G_{F,j}\left(  \mathbf{y}\right)  $ is
homogeneous of degree $n$ whenever $\left(  i,j\right)  \in\mathbb{N}%
\times\mathbb{N}$ satisfies $i+j=n$. In other words, each addend of the sum
$\sum_{\substack{\left(  i,j\right)  \in\mathbb{N}\times\mathbb{N}%
;\\i+j=n}}G_{F,i}\left(  \mathbf{x}\right)  G_{F,j}\left(  \mathbf{y}\right)
$ is homogeneous of degree $n$. Hence, the entire sum $\sum_{\substack{\left(
i,j\right)  \in\mathbb{N}\times\mathbb{N};\\i+j=n}}G_{F,i}\left(
\mathbf{x}\right)  G_{F,j}\left(  \mathbf{y}\right)  $ is homogeneous of
degree $n$ as well. This completes our proof.}. Thus, the equality
(\ref{pf.thm.DeltaGFm.4}) reveals that the family
\[
\left(  \sum_{\substack{\left(  i,j\right)  \in\mathbb{N}\times\mathbb{N}%
;\\i+j=n}}G_{F,i}\left(  \mathbf{x}\right)  G_{F,j}\left(  \mathbf{y}\right)
\right)  _{n\in\mathbb{N}}%
\]
is the homogeneous decomposition of $G_{F}\left(  \mathbf{x},\mathbf{y}%
\right)  $ (by the definition of a homogeneous decomposition).

On the other hand, we have%
\begin{equation}
G_{F}\left(  \mathbf{x},\mathbf{y}\right)  =\sum_{m\in\mathbb{N}}%
G_{F,m}\left(  \mathbf{x},\mathbf{y}\right)  . \label{pf.thm.DeltaGFm.1xy.4}%
\end{equation}

[\textit{Proof of (\ref{pf.thm.DeltaGFm.1xy.4}):} If we substitute the
variables $x_{1},x_{2},x_{3},\ldots,y_{1},y_{2},y_{3},\ldots$ for the
variables $x_{1},x_{2},x_{3},\ldots$ on both sides of the equality
(\ref{pf.thm.DeltaGFm.GF=sum}), then we obtain%
\[
G_{F}\left(  \mathbf{x},\mathbf{y}\right)  =\sum_{m\in\mathbb{N}}%
G_{F,m}\left(  \mathbf{x},\mathbf{y}\right)
\]
(because this substitution transforms $G_{F}$ into $G_{F}\left(
\mathbf{x},\mathbf{y}\right)  $ and transforms $G_{F,m}$ into $G_{F,m}\left(
\mathbf{x},\mathbf{y}\right)  $ for each $m\in\mathbb{N}$). This proves
(\ref{pf.thm.DeltaGFm.1xy.4}).]

Now, if $n\in\mathbb{N}$, then the power series $G_{F,n}\left(  \mathbf{x}%
,\mathbf{y}\right)  \in\mathbf{k}\left[  \left[  \mathbf{x},\mathbf{y}\right]
\right]  $ is homogeneous of degree $n$\ \ \ \ \footnote{\textit{Proof.} Let
$n\in\mathbb{N}$. We must prove that the power series $G_{F,n}\left(
\mathbf{x},\mathbf{y}\right)  $ is homogeneous of degree $n$.
\par
Proposition \ref{prop.GF.basics} \textbf{(a)} (applied to $m=n$) shows that
the formal power series $G_{F,n}$ is the $n$-th degree homogeneous component
of $G_{F}$. Hence, this formal power series $G_{F,n}$ is homogeneous of degree
$n$. Hence, the power series $G_{F,n}\left(  \mathbf{x},\mathbf{y}\right)  $
is homogeneous of degree $n$ as well (since this $G_{F,n}\left(
\mathbf{x},\mathbf{y}\right)  $ is obtained by substituting the variables
$x_{1},x_{2},x_{3},\ldots,y_{1},y_{2},y_{3},\ldots$ for the variables
$x_{1},x_{2},x_{3},\ldots$ in $G_{F,n}$; but this substitution clearly
preserves homogeneity and degree). This completes our proof.}. Thus, the
equality%
\begin{align*}
G_{F}\left(  \mathbf{x},\mathbf{y}\right)   &  =\sum_{m\in\mathbb{N}}%
G_{F,m}\left(  \mathbf{x},\mathbf{y}\right)  =\sum_{n\in\mathbb{N}}%
G_{F,n}\left(  \mathbf{x},\mathbf{y}\right) \\
&  \ \ \ \ \ \ \ \ \ \ \left(  \text{here, we have renamed the summation index
}m\text{ as }n\right)
\end{align*}
reveals that $\left(  G_{F,n}\left(  \mathbf{x},\mathbf{y}\right)  \right)
_{n\in\mathbb{N}}$ is the homogeneous decomposition of $G_{F}\left(
\mathbf{x},\mathbf{y}\right)  $ (by the definition of a homogeneous decomposition).

We have now shown that each of the two families
\[
\left(  G_{F,n}\left(  \mathbf{x},\mathbf{y}\right)  \right)  _{n\in
\mathbb{N}}\ \ \ \ \ \ \ \ \ \ \text{and}\ \ \ \ \ \ \ \ \ \ \left(
\sum_{\substack{\left(  i,j\right)  \in\mathbb{N}\times\mathbb{N}%
;\\i+j=n}}G_{F,i}\left(  \mathbf{x}\right)  G_{F,j}\left(  \mathbf{y}\right)
\right)  _{n\in\mathbb{N}}%
\]
is the homogeneous decomposition of $G_{F}\left(  \mathbf{x},\mathbf{y}%
\right)  $. Since the homogeneous decomposition of $G_{F}\left(
\mathbf{x},\mathbf{y}\right)  $ is unique, this entails that these two
families are equal. In other words, we have%
\[
\left(  G_{F,n}\left(  \mathbf{x},\mathbf{y}\right)  \right)  _{n\in
\mathbb{N}}=\left(  \sum_{\substack{\left(  i,j\right)  \in\mathbb{N}%
\times\mathbb{N};\\i+j=n}}G_{F,i}\left(  \mathbf{x}\right)  G_{F,j}\left(
\mathbf{y}\right)  \right)  _{n\in\mathbb{N}}.
\]
In other words, we have%
\begin{equation}
G_{F,n}\left(  \mathbf{x},\mathbf{y}\right)  =\sum_{\substack{\left(
i,j\right)  \in\mathbb{N}\times\mathbb{N};\\i+j=n}}G_{F,i}\left(
\mathbf{x}\right)  G_{F,j}\left(  \mathbf{y}\right)
\label{pf.thm.DeltaGFm.1xy.6}%
\end{equation}
for each $n\in\mathbb{N}$.

Now, let $m\in\mathbb{N}$. Then, (\ref{pf.thm.DeltaGFm.1xy.6}) (applied to
$n=m$) yields
\begin{align*}
&  G_{F,m}\left(  \mathbf{x},\mathbf{y}\right) \\
&  =\sum_{\substack{\left(  i,j\right)  \in\mathbb{N}\times\mathbb{N}%
;\\i+j=m}}G_{F,i}\left(  \mathbf{x}\right)  G_{F,j}\left(  \mathbf{y}\right)
=\sum_{i\in\left\{  0,1,\ldots,m\right\}  }G_{F,i}\left(  \mathbf{x}\right)
G_{F,m-i}\left(  \mathbf{y}\right) \\
&  \ \ \ \ \ \ \ \ \ \ \ \ \ \ \ \ \ \ \ \ \left(
\begin{array}
[c]{c}%
\text{here, we have substituted }\left(  i,m-i\right)  \text{ for }\left(
i,j\right)  \text{ in the sum,}\\
\text{since the map }\left\{  0,1,\ldots,m\right\}  \rightarrow\left\{
\left(  i,j\right)  \in\mathbb{N}\times\mathbb{N}\ \mid\ i+j=m\right\} \\
\text{that sends each }i\text{ to }\left(  i,m-i\right)  \text{ is a
bijection}%
\end{array}
\right)  .
\end{align*}
Hence, (\ref{eq.Delta-on-Lam.if}) holds for $f=G_{F,m}$, $I=\left\{
0,1,\ldots,m\right\}  $, $\left(  f_{1,i}\right)  _{i\in I}=\left(
G_{F,i}\right)  _{i\in\left\{  0,1,\ldots,m\right\}  }$ and $\left(
f_{2,i}\right)  _{i\in I}=\left(  G_{F,m-i}\right)  _{i\in\left\{
0,1,\ldots,m\right\}  }$. Therefore, (\ref{eq.Delta-on-Lam.then}) (applied to
these $f$, $I$, $\left(  f_{1,i}\right)  _{i\in I}$ and $\left(
f_{2,i}\right)  _{i\in I}$) yields%
\[
\Delta\left(  G_{F,m}\right)  =\sum_{i\in\left\{  0,1,\ldots,m\right\}
}G_{F,i}\otimes G_{F,m-i}=\sum_{i=0}^{m}G_{F,i}\otimes G_{F,m-i}.
\]
This proves Theorem \ref{thm.DeltaGFm}.
\end{proof}
\end{verlong}

The next few results we will state rely on the following definition:

\begin{definition}
\label{def.Fprime} Let $F^{\prime}$ be the derivative of the formal power
series $F\in\mathbf{k}\left[  \left[  t\right]  \right]  $. Let us write the
formal power series $\dfrac{F^{\prime}}{F}\in\mathbf{k}\left[  \left[
t\right]  \right]  $ (which is well-defined, since $F$ has constant term $1$)
in the form $\dfrac{F^{\prime}}{F}=\sum_{n\in\mathbb{N}}\gamma_{n}t^{n}$ for
some $\gamma_{0},\gamma_{1},\gamma_{2},\ldots\in\mathbf{k}$.
\end{definition}

\begin{example}
Let us see how $F^{\prime}$ and $\gamma_{n}$ look for specific values of $F$.

\textbf{(a)} Let $F=\dfrac{1}{1-t}=1+t+t^{2}+t^{3}+\cdots$. Then, $F^{\prime
}=\dfrac{1}{\left(  1-t\right)  ^{2}}$, so that
\[
\dfrac{F^{\prime}}{F}=\dfrac{1}{1-t}=1+t+t^{2}+t^{3}+\cdots=\sum
_{n\in\mathbb{N}}t^{n}.
\]
Therefore, $\gamma_{n}=1$ for each $n\in\mathbb{N}$.

\textbf{(b)} Now, let $F=1$. Then, $F^{\prime}=0$, so that $\dfrac{F^{\prime}%
}{F}=0=\sum_{n\in\mathbb{N}}0t^{n}$. Therefore, $\gamma_{n}=0$ for each
$n\in\mathbb{N}$.

\textbf{(c)} Now, fix a positive integer $k$, and set $F=1+t+t^{2}%
+\cdots+t^{k-1}$. Then, $F=\dfrac{1-t^{k}}{1-t}$, and thus a simple
calculation using the quotient rule shows that $F^{\prime}=\dfrac{1+\left(
k-1\right)  t^{k}-kt^{k-1}}{\left(  1-t\right)  ^{2}}$. Hence,
\begin{align*}
\dfrac{F^{\prime}}{F}  &  =\dfrac{1+\left(  k-1\right)  t^{k}-kt^{k-1}%
}{\left(  1-t\right)  \left(  1-t^{k}\right)  }=\underbrace{\dfrac{1}{1-t}%
}_{=\sum_{n\in\mathbb{N}}t^{n}}-kt^{k-1}\cdot\underbrace{\dfrac{1}{1-t^{k}}%
}_{=\sum_{n\in\mathbb{N}}\left(  t^{k}\right)  ^{n}}\\
&  =\sum_{n\in\mathbb{N}}t^{n}-\underbrace{kt^{k-1}\cdot\sum_{n\in\mathbb{N}%
}\left(  t^{k}\right)  ^{n}}_{\substack{=\sum_{n\in\mathbb{N}}kt^{nk+k-1}%
\\=\sum_{\substack{n\in\mathbb{N};\\k\mid n+1}}kt^{n}}}=\sum_{n\in\mathbb{N}%
}t^{n}-\sum_{\substack{n\in\mathbb{N};\\k\mid n+1}}kt^{n}\\
&  =\sum_{n\in\mathbb{N}}\left(  1-\left[  k\mid n+1\right]  k\right)  t^{n}.
\end{align*}
Therefore, $\gamma_{n}=1-\left[  k\mid n+1\right]  k$ for each $n\in
\mathbb{N}$.
\end{example}

The next proposition is easily seen to generalize Proposition
\ref{prop.hall-pmGkm}:

\begin{proposition}
\label{prop.hall-pmGFm}Let $m$ be a positive integer. Then, $\left\langle
p_{m},G_{F,m}\right\rangle =\gamma_{m-1}$.
\end{proposition}

The proof of this proposition relies on the following property of the
$\mathbf{k}$-algebra homomorphism $\alpha_{F}:\Lambda\rightarrow\mathbf{k}$
from Definition \ref{def.alphaF}:

\begin{lemma}
\label{lem.alphaF.p}We have $\alpha_{F}\left(  p_{m}\right)  =\gamma_{m-1}$
for each positive integer $m$.
\end{lemma}

\begin{vershort}
\begin{proof}
[Proof of Lemma \ref{lem.alphaF.p} (sketched).]Consider the ring
$\Lambda\left[  \left[  t\right]  \right]  $ of formal power series in one
indeterminate $t$ over $\Lambda$. Consider also the analogous ring
$\mathbf{k}\left[  \left[  t\right]  \right]  $ over $\mathbf{k}$.

The map $\alpha_{F}:\Lambda\rightarrow\mathbf{k}$ is a $\mathbf{k}$-algebra
homomorphism, and therefore induces a $\mathbf{k}\left[  \left[  t\right]
\right]  $-algebra homomorphism%
\[
\alpha_{F}\left[  \left[  t\right]  \right]  :\Lambda\left[  \left[  t\right]
\right]  \rightarrow\mathbf{k}\left[  \left[  t\right]  \right]
\]
that sends each formal power series $\sum_{n\geq0}a_{n}t^{n}\in\Lambda\left[
\left[  t\right]  \right]  $ (with $a_{n}\in\Lambda$) to $\sum_{n\geq0}%
\alpha_{F}\left(  a_{n}\right)  t^{n}$. Consider this $\mathbf{k}\left[
\left[  t\right]  \right]  $-algebra homomorphism $\alpha_{F}\left[  \left[
t\right]  \right]  $.

Define the formal power series
\begin{equation}
H\left(  t\right)  =\prod_{i=1}^{\infty}\left(  1-x_{i}t\right)  ^{-1}%
\in\left(  \mathbf{k}\left[  \left[  x_{1},x_{2},x_{3},\ldots\right]  \right]
\right)  \left[  \left[  t\right]  \right]  .
\label{pf.lem.alphaF.p.short.Ht=}%
\end{equation}
Then, from \cite[(2.4.1)]{GriRei}, we know that%
\[
H\left(  t\right)  =\sum_{n\geq0}\underbrace{h_{n}\left(  \mathbf{x}\right)
}_{=h_{n}}t^{n}=\sum_{n\geq0}h_{n}t^{n}\in\Lambda\left[  \left[  t\right]
\right]  .
\]

It is now easy to see that
\begin{equation}
\left(  \alpha_{F}\left[  \left[  t\right]  \right]  \right)  \left(  H\left(
t\right)  \right)  =F. \label{pf.lem.alphaF.p.short.5}%
\end{equation}
(Indeed, this follows by straightforward computations using the definition of
$\alpha_{F}\left[  \left[  t\right]  \right]  $ from $H\left(  t\right)
=\sum_{n\geq0}h_{n}t^{n}$ and from Lemma \ref{lem.alphaF.h} \textbf{(a)}.)

Also, it is easy to see that the map $\alpha_{F}\left[  \left[  t\right]
\right]  $ respects derivatives: i.e., any power series $u\in\Lambda\left[
\left[  t\right]  \right]  $ satisfies $\left(  \alpha_{F}\left[  \left[
t\right]  \right]  \right)  \left(  u^{\prime}\right)  =\left(  \left(
\alpha_{F}\left[  \left[  t\right]  \right]  \right)  \left(  u\right)
\right)  ^{\prime}$. Applying this to $u=H\left(  t\right)  $, we obtain%
\begin{equation}
\left(  \alpha_{F}\left[  \left[  t\right]  \right]  \right)  \left(
H^{\prime}\left(  t\right)  \right)  =\left(  \underbrace{\left(  \alpha
_{F}\left[  \left[  t\right]  \right]  \right)  \left(  H\left(  t\right)
\right)  }_{=F}\right)  ^{\prime}=F^{\prime}.
\label{pf.lem.alphaF.p.short.5pri}%
\end{equation}

From \cite[Exercise 2.5.21]{GriRei}, we know that%
\begin{equation}
\sum_{m\geq0}p_{m+1}t^{m}=\dfrac{H^{\prime}\left(  t\right)  }{H\left(
t\right)  }. \label{pf.lem.alphaF.p.short.pHH}%
\end{equation}
Applying the map $\alpha_{F}\left[  \left[  t\right]  \right]  $ to both sides
of this equality, we find%
\begin{align*}
\left(  \alpha_{F}\left[  \left[  t\right]  \right]  \right)  \left(
\sum_{m\geq0}p_{m+1}t^{m}\right)   &  =\left(  \alpha_{F}\left[  \left[
t\right]  \right]  \right)  \left(  \dfrac{H^{\prime}\left(  t\right)
}{H\left(  t\right)  }\right)  =\dfrac{\left(  \alpha_{F}\left[  \left[
t\right]  \right]  \right)  \left(  H^{\prime}\left(  t\right)  \right)
}{\left(  \alpha_{F}\left[  \left[  t\right]  \right]  \right)  \left(
H\left(  t\right)  \right)  }\\
&  \ \ \ \ \ \ \ \ \ \ \ \ \ \ \ \ \ \ \ \ \left(  \text{since }\alpha
_{F}\left[  \left[  t\right]  \right]  \text{ is a }\mathbf{k}\text{-algebra
homomorphism}\right) \\
&  =\dfrac{F^{\prime}}{F}\ \ \ \ \ \ \ \ \ \ \left(  \text{by
(\ref{pf.lem.alphaF.p.short.5}) and (\ref{pf.lem.alphaF.p.short.5pri})}\right)
\\
&  =\sum_{n\in\mathbb{N}}\gamma_{n}t^{n}=\sum_{n\geq0}\gamma_{n}t^{n}.
\end{align*}
Comparing this with%
\[
\left(  \alpha_{F}\left[  \left[  t\right]  \right]  \right)  \left(
\sum_{m\geq0}p_{m+1}t^{m}\right)  =\sum_{m\geq0}\alpha_{F}\left(
p_{m+1}\right)  t^{m}\ \ \ \ \ \ \ \ \ \ \left(  \text{by the definition of
}\alpha_{F}\left[  \left[  t\right]  \right]  \right)  ,
\]
we obtain
\[
\sum_{m\geq0}\alpha_{F}\left(  p_{m+1}\right)  t^{m}=\sum_{n\geq0}\gamma
_{n}t^{n}.
\]
Comparing $t^{n}$-coefficients on both sides of this equality, we find%
\[
\alpha_{F}\left(  p_{n+1}\right)  =\gamma_{n}\ \ \ \ \ \ \ \ \ \ \text{for
each }n\in\mathbb{N}.
\]
In other words, $\alpha_{F}\left(  p_{m}\right)  =\gamma_{m-1}$ for each
positive integer $m$. This proves Lemma \ref{lem.alphaF.p}.
\end{proof}
\end{vershort}

\begin{verlong}
\begin{proof}
[Proof of Lemma \ref{lem.alphaF.p}.]Consider the ring $\Lambda\left[  \left[
t\right]  \right]  $ of formal power series in one indeterminate $t$ over
$\Lambda$. Consider also the analogous ring $\mathbf{k}\left[  \left[
t\right]  \right]  $ over $\mathbf{k}$.

The map $\alpha_{F}:\Lambda\rightarrow\mathbf{k}$ is a $\mathbf{k}$-algebra
homomorphism. Hence, it induces a continuous\footnote{Continuity is defined
with respect to the usual topologies on $\Lambda\left[  \left[  t\right]
\right]  $ and $\mathbf{k}\left[  \left[  t\right]  \right]  $, where we equip
both $\Lambda$ and $\mathbf{k}$ with the discrete topologies.} $\mathbf{k}%
\left[  \left[  t\right]  \right]  $-algebra homomorphism%
\[
\alpha_{F}\left[  \left[  t\right]  \right]  :\Lambda\left[  \left[  t\right]
\right]  \rightarrow\mathbf{k}\left[  \left[  t\right]  \right]
\]
that sends each formal power series $\sum_{n\geq0}a_{n}t^{n}\in\Lambda\left[
\left[  t\right]  \right]  $ (with $a_{n}\in\Lambda$) to $\sum_{n\geq0}%
\alpha_{F}\left(  a_{n}\right)  t^{n}$. Consider this $\mathbf{k}\left[
\left[  t\right]  \right]  $-algebra homomorphism $\alpha_{F}\left[  \left[
t\right]  \right]  $.

Define the formal power series
\[
H\left(  t\right)  =\prod_{i=1}^{\infty}\left(  1-x_{i}t\right)  ^{-1}%
\in\left(  \mathbf{k}\left[  \left[  x_{1},x_{2},x_{3},\ldots\right]  \right]
\right)  \left[  \left[  t\right]  \right]  .
\]
Then, from \cite[(2.4.1)]{GriRei}, we know that%
\[
H\left(  t\right)  =\sum_{n\geq0}\underbrace{h_{n}\left(  \mathbf{x}\right)
}_{=h_{n}}t^{n}=\sum_{n\geq0}h_{n}t^{n}\in\Lambda\left[  \left[  t\right]
\right]  .
\]
Hence, $\left(  \alpha_{F}\left[  \left[  t\right]  \right]  \right)  \left(
H\left(  t\right)  \right)  $ is well-defined. Moreover, applying the map
$\alpha_{F}\left[  \left[  t\right]  \right]  $ to both sides of the equality
$H\left(  t\right)  =\sum_{n\geq0}h_{n}t^{n}$, we obtain%
\begin{align*}
\left(  \alpha_{F}\left[  \left[  t\right]  \right]  \right)  \left(  H\left(
t\right)  \right)   &  =\left(  \alpha_{F}\left[  \left[  t\right]  \right]
\right)  \left(  \sum_{n\geq0}h_{n}t^{n}\right)  =\underbrace{\sum_{n\geq0}%
}_{=\sum_{n\in\mathbb{N}}}\underbrace{\alpha_{F}\left(  h_{n}\right)
}_{\substack{=f_{n}\\\text{(by Lemma \ref{lem.alphaF.h} \textbf{(b)}%
}\\\text{(applied to }i=n\text{))}}}t^{n}\\
&  \ \ \ \ \ \ \ \ \ \ \ \ \ \ \ \ \ \ \ \ \left(  \text{by the definition of
}\alpha_{F}\left[  \left[  t\right]  \right]  \right) \\
&  =\sum_{n\in\mathbb{N}}f_{n}t^{n}=F\ \ \ \ \ \ \ \ \ \ \left(  \text{since
}F=\sum_{n\in\mathbb{N}}f_{n}t^{n}\right)  .
\end{align*}

Moreover, consider the derivative $H^{\prime}\left(  t\right)  $ of the power
series $H\left(  t\right)  \in\Lambda\left[  \left[  t\right]  \right]  $.
This derivative again belongs to $\Lambda\left[  \left[  t\right]  \right]  $,
so that $\left(  \alpha_{F}\left[  \left[  t\right]  \right]  \right)  \left(
H^{\prime}\left(  t\right)  \right)  $ is well-defined. Moreover, from
$H\left(  t\right)  =\sum_{n\geq0}h_{n}t^{n}$, we obtain $H^{\prime}\left(
t\right)  =\sum_{n\geq1}nh_{n}t^{n-1}$ (by the definition of a derivative), so
that%
\[
H^{\prime}\left(  t\right)  =\sum_{n\geq1}nh_{n}t^{n-1}=\sum_{n\geq0}\left(
n+1\right)  h_{n+1}t^{n}%
\]
(here, we have substituted $n+1$ for $n$ in the sum). Applying the map
$\alpha_{F}\left[  \left[  t\right]  \right]  $ to both sides of this
equality, we find%
\begin{align}
\left(  \alpha_{F}\left[  \left[  t\right]  \right]  \right)  \left(
H^{\prime}\left(  t\right)  \right)   &  =\left(  \alpha_{F}\left[  \left[
t\right]  \right]  \right)  \left(  \sum_{n\geq0}\left(  n+1\right)
h_{n+1}t^{n}\right)  =\sum_{n\geq0}\underbrace{\alpha_{F}\left(  \left(
n+1\right)  h_{n+1}\right)  }_{\substack{=\left(  n+1\right)  \alpha
_{F}\left(  h_{n+1}\right)  \\\text{(since }\alpha_{F}\text{ is }%
\mathbf{k}\text{-linear)}}}t^{n}\nonumber\\
&  \ \ \ \ \ \ \ \ \ \ \ \ \ \ \ \ \ \ \ \ \left(  \text{by the definition of
}\alpha_{F}\left[  \left[  t\right]  \right]  \right) \nonumber\\
&  =\sum_{n\geq0}\left(  n+1\right)  \underbrace{\alpha_{F}\left(
h_{n+1}\right)  }_{\substack{=f_{n+1}\\\text{(by Lemma \ref{lem.alphaF.h}
\textbf{(b)}}\\\text{(applied to }i=n+1\text{))}}}t^{n}\nonumber\\
&  =\sum_{n\geq0}\left(  n+1\right)  f_{n+1}t^{n}. \label{pf.lem.alphaF.p.5}%
\end{align}
On the other hand, from $F=\sum_{n\in\mathbb{N}}f_{n}t^{n}$, we obtain
\begin{align*}
F^{\prime}  &  =\sum_{n\geq1}nf_{n}t^{n-1}\ \ \ \ \ \ \ \ \ \ \left(  \text{by
the definition of the derivative of a power series}\right) \\
&  =\sum_{n\geq0}\left(  n+1\right)  f_{n+1}t^{n}%
\end{align*}
(here, we have substituted $n+1$ for $n$ in the sum). Comparing this with
(\ref{pf.lem.alphaF.p.5}), we obtain
\[
\left(  \alpha_{F}\left[  \left[  t\right]  \right]  \right)  \left(
H^{\prime}\left(  t\right)  \right)  =F^{\prime}.
\]

From \cite[Exercise 2.5.21]{GriRei}, we know that%
\[
\sum_{m\geq0}p_{m+1}t^{m}=\dfrac{H^{\prime}\left(  t\right)  }{H\left(
t\right)  }.
\]
Hence,%
\[
\dfrac{H^{\prime}\left(  t\right)  }{H\left(  t\right)  }=\sum_{m\geq0}%
p_{m+1}t^{m}=\sum_{n\geq0}p_{n+1}t^{n}%
\]
(here, we have renamed the summation index $m$ as $n$). Applying the map
$\alpha_{F}\left[  \left[  t\right]  \right]  $ to both sides of this
equality, we find%
\begin{align}
\left(  \alpha_{F}\left[  \left[  t\right]  \right]  \right)  \left(
\dfrac{H^{\prime}\left(  t\right)  }{H\left(  t\right)  }\right)   &  =\left(
\alpha_{F}\left[  \left[  t\right]  \right]  \right)  \left(  \sum_{n\geq
0}p_{n+1}t^{n}\right)  =\underbrace{\sum_{n\geq0}}_{=\sum_{n\in\mathbb{N}}%
}\alpha_{F}\left(  p_{n+1}\right)  t^{n}\nonumber\\
&  \ \ \ \ \ \ \ \ \ \ \ \ \ \ \ \ \ \ \ \ \left(  \text{by the definition of
}\alpha_{F}\left[  \left[  t\right]  \right]  \right) \nonumber\\
&  =\sum_{n\in\mathbb{N}}\alpha_{F}\left(  p_{n+1}\right)  t^{n}.
\label{pf.lem.alphaF.p.8}%
\end{align}

Now, the map $\alpha_{F}\left[  \left[  t\right]  \right]  $ is a $\mathbf{k}%
$-algebra homomorphism, and thus respects quotients. Hence,%
\begin{align*}
\left(  \alpha_{F}\left[  \left[  t\right]  \right]  \right)  \left(
\dfrac{H^{\prime}\left(  t\right)  }{H\left(  t\right)  }\right)   &
=\dfrac{\left(  \alpha_{F}\left[  \left[  t\right]  \right]  \right)  \left(
H^{\prime}\left(  t\right)  \right)  }{\left(  \alpha_{F}\left[  \left[
t\right]  \right]  \right)  \left(  H\left(  t\right)  \right)  }%
=\dfrac{F^{\prime}}{F}\\
&  \ \ \ \ \ \ \ \ \ \ \left(  \text{since }\left(  \alpha_{F}\left[  \left[
t\right]  \right]  \right)  \left(  H^{\prime}\left(  t\right)  \right)
=F^{\prime}\text{ and }\left(  \alpha_{F}\left[  \left[  t\right]  \right]
\right)  \left(  H\left(  t\right)  \right)  =F\right) \\
&  =\sum_{n\in\mathbb{N}}\gamma_{n}t^{n}.
\end{align*}
Comparing this with (\ref{pf.lem.alphaF.p.8}), we obtain%
\[
\sum_{n\in\mathbb{N}}\alpha_{F}\left(  p_{n+1}\right)  t^{n}=\sum
_{n\in\mathbb{N}}\gamma_{n}t^{n}.
\]
Comparing coefficients on both sides of this equality, we find%
\begin{equation}
\alpha_{F}\left(  p_{n+1}\right)  =\gamma_{n}\ \ \ \ \ \ \ \ \ \ \text{for
each }n\in\mathbb{N}. \label{pf.lem.alphaF.p.9}%
\end{equation}

Now, let $m$ be a positive integer. Thus, $m-1\in\mathbb{N}$. Hence,
(\ref{pf.lem.alphaF.p.9}) (applied to $n=m-1$) yields $\alpha_{F}\left(
p_{\left(  m-1\right)  +1}\right)  =\gamma_{m-1}$. In other words, $\alpha
_{F}\left(  p_{m}\right)  =\gamma_{m-1}$ (since $\left(  m-1\right)  +1=m$).
This proves Lemma \ref{lem.alphaF.p}.
\end{proof}
\end{verlong}

\begin{vershort}
\begin{proof}
[Proof of Proposition \ref{prop.hall-pmGFm} (sketched).]Proposition
\ref{prop.GF.basics} \textbf{(c)} yields $G_{F,m}=\sum_{\substack{\lambda
\in\operatorname*{Par};\\\left\vert \lambda\right\vert =m}}f_{\lambda
}m_{\lambda}$. Hence,%
\begin{equation}
\left\langle p_{m},G_{F,m}\right\rangle =\left\langle p_{m},\sum
_{\substack{\lambda\in\operatorname*{Par};\\\left\vert \lambda\right\vert
=m}}f_{\lambda}m_{\lambda}\right\rangle =\sum_{\substack{\lambda
\in\operatorname*{Par};\\\left\vert \lambda\right\vert =m}}f_{\lambda
}\left\langle p_{m},m_{\lambda}\right\rangle
\label{pf.prop.hall-pmGFm.short.1}%
\end{equation}
(since the Hall inner product is $\mathbf{k}$-bilinear).

Now, recall that the bases $\left(  m_{\lambda}\right)  _{\lambda
\in\operatorname*{Par}}$ and $\left(  h_{\lambda}\right)  _{\lambda
\in\operatorname*{Par}}$ of $\Lambda$ are dual to each other with respect to
the Hall inner product $\left\langle \cdot,\cdot\right\rangle $. Hence, every
$a\in\Lambda$ satisfies%
\[
a=\sum_{\lambda\in\operatorname*{Par}}\left\langle m_{\lambda},a\right\rangle
h_{\lambda}%
\]
(by a general property of dual bases with respect to symmetric bilinear
forms). Applying this to $a=p_{m}$, we obtain%
\begin{align*}
p_{m}  &  =\sum_{\lambda\in\operatorname*{Par}}\left\langle m_{\lambda}%
,p_{m}\right\rangle h_{\lambda}=\sum_{\substack{\lambda\in\operatorname*{Par}%
;\\\left\vert \lambda\right\vert =m}}\underbrace{\left\langle m_{\lambda
},p_{m}\right\rangle }_{\substack{=\left\langle p_{m},m_{\lambda}\right\rangle
\\\text{(since the Hall inner}\\\text{product is symmetric)}}}h_{\lambda}%
+\sum_{\substack{\lambda\in\operatorname*{Par};\\\left\vert \lambda\right\vert
\neq m}}\underbrace{\left\langle m_{\lambda},p_{m}\right\rangle }%
_{\substack{=0\\\text{(by (\ref{eq.Hall.graded}), since }m_{\lambda}\text{ and
}p_{m}\\\text{are homogeneous}\\\text{of degrees }\left\vert \lambda
\right\vert \text{ and }m\\\text{(and since }\left\vert \lambda\right\vert
\neq m\text{))}}}h_{\lambda}\\
&  =\sum_{\substack{\lambda\in\operatorname*{Par};\\\left\vert \lambda
\right\vert =m}}\left\langle p_{m},m_{\lambda}\right\rangle h_{\lambda}.
\end{align*}
Applying the map $\alpha_{F}$ to both sides of this equality, we find%
\begin{align*}
\alpha_{F}\left(  p_{m}\right)   &  =\sum_{\substack{\lambda\in
\operatorname*{Par};\\\left\vert \lambda\right\vert =m}}\left\langle
p_{m},m_{\lambda}\right\rangle \underbrace{\alpha_{F}\left(  h_{\lambda
}\right)  }_{\substack{=f_{\lambda}\\\text{(by Lemma \ref{lem.alphaF.h}
\textbf{(c)})}}}\ \ \ \ \ \ \ \ \ \ \left(  \text{since the map }\alpha
_{F}\text{ is }\mathbf{k}\text{-linear}\right) \\
&  =\sum_{\substack{\lambda\in\operatorname*{Par};\\\left\vert \lambda
\right\vert =m}}\left\langle p_{m},m_{\lambda}\right\rangle f_{\lambda}%
=\sum_{\substack{\lambda\in\operatorname*{Par};\\\left\vert \lambda\right\vert
=m}}f_{\lambda}\left\langle p_{m},m_{\lambda}\right\rangle .
\end{align*}
Comparing this with (\ref{pf.prop.hall-pmGFm.short.1}), we obtain%
\[
\left\langle p_{m},G_{F,m}\right\rangle =\alpha_{F}\left(  p_{m}\right)
=\gamma_{m-1}\ \ \ \ \ \ \ \ \ \ \left(  \text{by Lemma \ref{lem.alphaF.p}%
}\right)  .
\]
This proves Proposition \ref{prop.hall-pmGFm}.
\end{proof}
\end{vershort}

\begin{verlong}
\begin{proof}
[Proof of Proposition \ref{prop.hall-pmGFm}.]Proposition \ref{prop.GF.basics}
\textbf{(c)} yields $G_{F,m}=\sum_{\substack{\lambda\in\operatorname*{Par}%
;\\\left\vert \lambda\right\vert =m}}f_{\lambda}m_{\lambda}$. Hence,%
\begin{equation}
\left\langle p_{m},G_{F,m}\right\rangle =\left\langle p_{m},\sum
_{\substack{\lambda\in\operatorname*{Par};\\\left\vert \lambda\right\vert
=m}}f_{\lambda}m_{\lambda}\right\rangle =\sum_{\substack{\lambda
\in\operatorname*{Par};\\\left\vert \lambda\right\vert =m}}f_{\lambda
}\left\langle p_{m},m_{\lambda}\right\rangle \label{pf.prop.hall-pmGFm.1}%
\end{equation}
(since the Hall inner product is $\mathbf{k}$-bilinear).

Recall the following fundamental fact from linear algebra: If $A$ is a
$\mathbf{k}$-module, if $\left\langle \cdot,\cdot\right\rangle :A\times
A\rightarrow\mathbf{k}$ is a symmetric bilinear form on $A$, and if $\left(
u_{\lambda}\right)  _{\lambda\in L}$ and $\left(  v_{\lambda}\right)
_{\lambda\in L}$ are two bases of the $\mathbf{k}$-module $A$ that are dual to
each other with respect to the form $\left\langle \cdot,\cdot\right\rangle $
(where $L$ is some indexing set), then every $a\in A$ satisfies
\[
a=\sum_{\lambda\in L}\left\langle u_{\lambda},a\right\rangle v_{\lambda}.
\]
We can apply this fact to $A=\Lambda$, $L=\operatorname*{Par}$, $\left(
u_{\lambda}\right)  _{\lambda\in L}=\left(  m_{\lambda}\right)  _{\lambda
\in\operatorname*{Par}}$ and $\left(  v_{\lambda}\right)  _{\lambda\in
L}=\left(  h_{\lambda}\right)  _{\lambda\in\operatorname*{Par}}$ (since the
bases $\left(  m_{\lambda}\right)  _{\lambda\in\operatorname*{Par}}$ and
$\left(  h_{\lambda}\right)  _{\lambda\in\operatorname*{Par}}$ of $\Lambda$
are dual to each other with respect to the Hall inner product $\left\langle
\cdot,\cdot\right\rangle $). We thus conclude that every $a\in\Lambda$
satisfies%
\[
a=\sum_{\lambda\in\operatorname*{Par}}\left\langle m_{\lambda},a\right\rangle
h_{\lambda}.
\]
Applying this to $a=p_{m}$, we obtain%
\begin{equation}
p_{m}=\sum_{\lambda\in\operatorname*{Par}}\underbrace{\left\langle m_{\lambda
},p_{m}\right\rangle }_{\substack{=\left\langle p_{m},m_{\lambda}\right\rangle
\\\text{(since the Hall inner}\\\text{product is symmetric)}}}h_{\lambda}%
=\sum_{\lambda\in\operatorname*{Par}}\left\langle p_{m},m_{\lambda
}\right\rangle h_{\lambda}. \label{pf.prop.hall-pmGFm.3}%
\end{equation}

Now, it is easy to see that if $\lambda\in\operatorname*{Par}$ satisfies
$\left\vert \lambda\right\vert \neq m$, then%
\begin{equation}
\left\langle p_{m},m_{\lambda}\right\rangle =0. \label{pf.prop.hall-pmGFm.4}%
\end{equation}

[\textit{Proof of (\ref{pf.prop.hall-pmGFm.4}):} Let $\lambda\in
\operatorname*{Par}$ satisfy $\left\vert \lambda\right\vert \neq m$. Thus,
$m\neq\left\vert \lambda\right\vert $. The two symmetric functions $p_{m}$ and
$m_{\lambda}$ are homogeneous of degrees $m$ and $\left\vert \lambda
\right\vert $, respectively. Thus, these two symmetric functions $p_{m}$ and
$m_{\lambda}$ are homogeneous of different degrees (since $m\neq\left\vert
\lambda\right\vert $). Hence, (\ref{eq.Hall.graded}) (applied to $f=p_{m}$ and
$g=m_{\lambda}$) yields $\left\langle p_{m},m_{\lambda}\right\rangle =0$. This
proves (\ref{pf.prop.hall-pmGFm.4}).]

Now, (\ref{pf.prop.hall-pmGFm.3}) becomes%
\begin{align*}
p_{m}  &  =\sum_{\lambda\in\operatorname*{Par}}\left\langle p_{m},m_{\lambda
}\right\rangle h_{\lambda}=\sum_{\substack{\lambda\in\operatorname*{Par}%
;\\\left\vert \lambda\right\vert =m}}\left\langle p_{m},m_{\lambda
}\right\rangle h_{\lambda}+\sum_{\substack{\lambda\in\operatorname*{Par}%
;\\\left\vert \lambda\right\vert \neq m}}\underbrace{\left\langle
p_{m},m_{\lambda}\right\rangle }_{\substack{=0\\\text{(by
(\ref{pf.prop.hall-pmGFm.4}))}}}h_{\lambda}\\
&  \ \ \ \ \ \ \ \ \ \ \ \ \ \ \ \ \ \ \ \ \left(
\begin{array}
[c]{c}%
\text{since each }\lambda\in\operatorname*{Par}\text{ satisfies either
}\left\vert \lambda\right\vert =m\\
\text{or }\left\vert \lambda\right\vert \neq m\text{, but not both at the same
time}%
\end{array}
\right) \\
&  =\sum_{\substack{\lambda\in\operatorname*{Par};\\\left\vert \lambda
\right\vert =m}}\left\langle p_{m},m_{\lambda}\right\rangle h_{\lambda
}+\underbrace{\sum_{\substack{\lambda\in\operatorname*{Par};\\\left\vert
\lambda\right\vert \neq m}}0h_{\lambda}}_{=0}=\sum_{\substack{\lambda
\in\operatorname*{Par};\\\left\vert \lambda\right\vert =m}}\left\langle
p_{m},m_{\lambda}\right\rangle h_{\lambda}.
\end{align*}
Applying the map $\alpha_{F}$ to both sides of this equality, we find%
\begin{align*}
\alpha_{F}\left(  p_{m}\right)   &  =\alpha_{F}\left(  \sum_{\substack{\lambda
\in\operatorname*{Par};\\\left\vert \lambda\right\vert =m}}\left\langle
p_{m},m_{\lambda}\right\rangle h_{\lambda}\right)  =\sum_{\substack{\lambda
\in\operatorname*{Par};\\\left\vert \lambda\right\vert =m}}\left\langle
p_{m},m_{\lambda}\right\rangle \underbrace{\alpha_{F}\left(  h_{\lambda
}\right)  }_{\substack{=f_{\lambda}\\\text{(by Lemma \ref{lem.alphaF.h}
\textbf{(c)})}}}\\
&  \ \ \ \ \ \ \ \ \ \ \ \ \ \ \ \ \ \ \ \ \left(  \text{since the map }%
\alpha_{F}\text{ is }\mathbf{k}\text{-linear}\right) \\
&  =\sum_{\substack{\lambda\in\operatorname*{Par};\\\left\vert \lambda
\right\vert =m}}\underbrace{\left\langle p_{m},m_{\lambda}\right\rangle
f_{\lambda}}_{=f_{\lambda}\left\langle p_{m},m_{\lambda}\right\rangle }%
=\sum_{\substack{\lambda\in\operatorname*{Par};\\\left\vert \lambda\right\vert
=m}}f_{\lambda}\left\langle p_{m},m_{\lambda}\right\rangle =\left\langle
p_{m},G_{F,m}\right\rangle
\end{align*}
(by (\ref{pf.prop.hall-pmGFm.1})). Hence,%
\[
\left\langle p_{m},G_{F,m}\right\rangle =\alpha_{F}\left(  p_{m}\right)
=\gamma_{m-1}\ \ \ \ \ \ \ \ \ \ \left(  \text{by Lemma \ref{lem.alphaF.p}%
}\right)  .
\]
This proves Proposition \ref{prop.hall-pmGFm}.
\end{proof}
\end{verlong}

We can now generalize Theorem \ref{thm.Gkm-genset}:

\begin{theorem}
\label{thm.GFm-genset}Assume that all the elements $\gamma_{0},\gamma
_{1},\gamma_{2},\ldots$ are invertible in $\mathbf{k}$.

Then, the family $\left(  G_{F,m}\right)  _{m\geq1}=\left(  G_{F,1}%
,G_{F,2},G_{F,3},\ldots\right)  $ is an algebraically independent generating
set of the commutative $\mathbf{k}$-algebra $\Lambda$. (In other words, the
canonical $\mathbf{k}$-algebra homomorphism
\begin{align*}
\mathbf{k}\left[  u_{1},u_{2},u_{3},\ldots\right]   &  \rightarrow\Lambda,\\
u_{m}  &  \mapsto G_{F,m}%
\end{align*}
is an isomorphism.)
\end{theorem}

\begin{vershort}
\begin{proof}
[Proof of Theorem \ref{thm.GFm-genset} (sketched).]Analogous to the proof of
Theorem \ref{thm.Gkm-genset}, but using Proposition \ref{prop.hall-pmGFm} (and
Proposition \ref{prop.GF.basics}) instead of Proposition \ref{prop.hall-pmGkm}
(and Proposition \ref{prop.G.basics}).
\end{proof}
\end{vershort}

\begin{verlong}
\begin{proof}
[Proof of Theorem \ref{thm.GFm-genset}.]For each positive integer $m$, the
power series $G_{F,m}$ belongs to $\Lambda$ (by Proposition
\ref{prop.GF.basics} \textbf{(c)}), and thus is a symmetric function.
Moreover, this symmetric function $G_{F,m}$ is homogeneous of degree $m$ (by
Proposition \ref{prop.GF.basics} \textbf{(a)}). Hence, for each positive
integer $m$, the element $G_{F,m}\in\Lambda$ is a homogeneous symmetric
function of degree $m$.

Let $m$ be a positive integer. From Proposition \ref{prop.hall-pmGFm}, we
obtain $\left\langle p_{m},G_{F,m}\right\rangle =\gamma_{m-1}$. Hence,
$\left\langle p_{m},G_{F,m}\right\rangle $ is an invertible element of
$\mathbf{k}$ (because $\gamma_{m-1}$ is an invertible element of $\mathbf{k}$
(since all the elements $\gamma_{0},\gamma_{1},\gamma_{2},\ldots$ are
invertible in $\mathbf{k}$)).

Forget that we fixed $m$. We thus have showed that $\left\langle p_{m}%
,G_{F,m}\right\rangle $ is an invertible element of $\mathbf{k}$ for each
positive integer $m$. Also, as we know, for each positive integer $m$, the
element $G_{F,m}\in\Lambda$ is a homogeneous symmetric function of degree $m$.
Thus, Proposition \ref{prop.genset-crit} (applied to $v_{m}=G_{F,m}$) shows
that the family $\left(  G_{F,m}\right)  _{m\geq1}=\left(  G_{F,1}%
,G_{F,2},G_{F,3},\ldots\right)  $ is an algebraically independent generating
set of the commutative $\mathbf{k}$-algebra $\Lambda$. This proves Theorem
\ref{thm.GFm-genset}.
\end{proof}
\end{verlong}

\begin{remark}
It is not hard to verify that the converse of Theorem \ref{thm.GFm-genset}
also holds: If the family $\left(  G_{F,m}\right)  _{m\geq1}=\left(
G_{F,1},G_{F,2},G_{F,3},\ldots\right)  $ generates the $\mathbf{k}$-algebra
$\Lambda$, then all the elements $\gamma_{0},\gamma_{1},\gamma_{2},\ldots$ are
invertible in $\mathbf{k}$. We omit the proof of this.
\end{remark}

The next theorem generalizes parts of Theorem \ref{thm.Uk.main} (specifically,
it generalizes the properties of the map $V_{k}$ stated in Theorem
\ref{thm.Uk.main}, even though it defines this map differently):\footnote{We
recall the \textquotedblleft h-universal property of $\Lambda$%
\textquotedblright, which we stated in Subsection
\ref{subsect.proofs.petk.alphak}.}

\begin{theorem}
\label{thm.VF.main}The h-universal property of $\Lambda$ shows that there is a
unique $\mathbf{k}$-algebra homomorphism $V_{F}:\Lambda\rightarrow\Lambda$
that sends $h_{i}$ to $G_{F,i}$ for all positive integers $i$ (since
$G_{F,i}\in\Lambda$ for each positive integer $i$). Consider this $V_{F}$.

\textbf{(a)} This map $V_{F}$ is a $\mathbf{k}$-Hopf algebra homomorphism.

\textbf{(b)} We have $V_{F}\left(  h_{m}\right)  =G_{F,m}$ for each
$m\in\mathbb{N}$.

\textbf{(c)} We have $V_{F}\left(  p_{n}\right)  =\gamma_{n-1}p_{n}$ for each
positive integer $n$. (See Definition \ref{def.Fprime} for the meaning of
$\gamma_{n-1}$.)
\end{theorem}

\begin{vershort}
\begin{proof}
[Proof of Theorem \ref{thm.VF.main} (sketched).]\textbf{(b)} When $m$ is
positive, this follows from the very definition of $V_{F}$. It remains to
prove this for $m=0$. However, this boils down to showing that $V_{F}\left(
1\right)  =1$, which is clear (since $V_{F}$ is a $\mathbf{k}$-algebra homomorphism).

\textbf{(a)} Let $\Delta$ and $\varepsilon$ be the comultiplication and the
counit of the Hopf algebra $\Lambda$. Both $\Delta$ and $\varepsilon$ are
$\mathbf{k}$-algebra homomorphisms. It suffices to show that $\Delta\circ
V_{F}=\left(  V_{F}\otimes V_{F}\right)  \circ\Delta$ and $\varepsilon\circ
V_{F}=\varepsilon$. We shall show that $\Delta\circ V_{F}=\left(  V_{F}\otimes
V_{F}\right)  \circ\Delta$ only; the proof of $\varepsilon\circ V_{F}%
=\varepsilon$ is similar but much simpler (since $\varepsilon$ sends any
homogeneous symmetric function of positive degree to $0$).

Recall that the family $\left(  h_{n}\right)  _{n\geq1}$ generates $\Lambda$
as a $\mathbf{k}$-algebra. Thus, in order to prove that $\Delta\circ
V_{F}=\left(  V_{F}\otimes V_{F}\right)  \circ\Delta$, it suffices to prove
the equality $\left(  \Delta\circ V_{F}\right)  \left(  h_{n}\right)  =\left(
\left(  V_{F}\otimes V_{F}\right)  \circ\Delta\right)  \left(  h_{n}\right)  $
for each $n\geq1$ (since both $\Delta\circ V_{F}$ and $\left(  V_{F}\otimes
V_{F}\right)  \circ\Delta$ are $\mathbf{k}$-algebra homomorphisms). In view of
Theorem \ref{thm.VF.main} \textbf{(b)}, this equality rewrites as
$\Delta\left(  G_{F,n}\right)  =\sum_{i=0}^{n}G_{F,i}\otimes G_{F,n-i}$. But
this follows directly from Theorem \ref{thm.DeltaGFm}.

\textbf{(c)} This is best proved using the notion of a logarithmic derivative.
Let us first define it in full generality, without any assumptions on
$\mathbf{k}$.

If $R$ is a commutative ring, and if $F\in R\left[  \left[  t\right]  \right]
$ is any formal power series whose constant term is $1$ (or, more generally,
any formal power series that has a multiplicative inverse), then the
\emph{logarithmic derivative} of $F$ is defined to be the formal power series
$\dfrac{F^{\prime}}{F}\in R\left[  \left[  t\right]  \right]  $ (this is
well-defined, since $F$ is invertible). This logarithmic derivative is denoted
by $\operatorname*{lder}F$.

The following properties of logarithmic derivatives are easy to
prove\footnote{If $R$ is a commutative $\mathbb{Q}$-algebra, then the
logarithmic derivative $\operatorname*{lder}F$ of a power series $F\in
R\left[  \left[  t\right]  \right]  $ equals the derivative of $\log F$. This
trivializes many of the properties stated below; but this shortcut is not
available when $R$ is merely an arbitrary commutative ring.}:

\begin{enumerate}
\item Let $R$ be a commutative ring. Let $u,v\in R\left[  \left[  t\right]
\right]  $ be two formal power series whose constant terms are $1$. Then,
$\operatorname*{lder}\left(  uv\right)  =\operatorname*{lder}%
u+\operatorname*{lder}v$.

(\textit{Proof:} Just recall the definition of logarithmic derivatives and the
Leibniz law $\left(  uv\right)  ^{\prime}=u^{\prime}v+uv^{\prime}$.)

\item Let $R$ be a commutative topological ring. Let $\left(  u_{n}\right)
_{n\in\mathbb{N}}=\left(  u_{0},u_{1},u_{2},\ldots\right)  \in R\left[
\left[  t\right]  \right]  ^{\mathbb{N}}$ be a sequence of formal power series
whose constant terms are $1$. Let $u\in R\left[  \left[  t\right]  \right]  $
be a formal power series whose constant term is $1$. Assume that
$\lim\limits_{n\rightarrow\infty}u_{n}=u$ (with respect to the standard
topology on $R\left[  \left[  t\right]  \right]  $ induced by the topology on
$R$). Then, $\lim\limits_{n\rightarrow\infty}\left(  \operatorname*{lder}%
u_{n}\right)  =\operatorname*{lder}u$ (with respect to the same topology on
$R\left[  \left[  t\right]  \right]  $).

(\textit{Proof:} Let $R\left[  \left[  t\right]  \right]  _{1}$ be the set of
power series in $R\left[  \left[  t\right]  \right]  $ whose constant term is
$1$. Argue that $\lim\limits_{n\rightarrow\infty}\left(  u_{n}^{\prime
}\right)  =u^{\prime}$ first; then argue that the map%
\begin{align*}
R\left[  \left[  t\right]  \right]  \times R\left[  \left[  t\right]  \right]
_{1}  &  \rightarrow R\left[  \left[  t\right]  \right]  ,\\
\left(  v,w\right)   &  \mapsto\dfrac{v}{w}%
\end{align*}
is continuous.)

\item Let $R$ be a commutative ring. Let $u_{1},u_{2},\ldots,u_{n}\in R\left[
\left[  t\right]  \right]  $ be finitely many formal power series whose
constant terms are $1$. Then,%
\[
\operatorname*{lder}\left(  \prod_{i=1}^{n}u_{i}\right)  =\sum_{i=1}%
^{n}\operatorname*{lder}u_{i}.
\]

(\textit{Proof:} Induction on $n$, using Property 1 in the induction step.)

\item Let $R$ be a commutative topological ring. Let $u_{1},u_{2},u_{3}%
,\ldots\in R\left[  \left[  t\right]  \right]  $ be infinitely many formal
power series whose constant terms are $1$. Assume that the infinite product
$\prod_{i=1}^{\infty}u_{i}$ converges (with respect to the standard topology
on $R\left[  \left[  t\right]  \right]  $ induced by the topology on $R$).
Then, the infinite sum $\sum_{i=1}^{\infty}\operatorname*{lder}u_{i}$
converges as well, and we have%
\[
\operatorname*{lder}\left(  \prod_{i=1}^{\infty}u_{i}\right)  =\sum
_{i=1}^{\infty}\operatorname*{lder}u_{i}.
\]

(\textit{Proof:} This is the \textquotedblleft$n\rightarrow\infty
$\textquotedblright\ limit of Property 3. Use Property 2 to pass to this limit.)

\item Let $R$ be a commutative ring. Let $u\in R\left[  \left[  t\right]
\right]  $ be a formal power series whose constant term is $1$. Let
$\lambda\in R$. Then,%
\[
\operatorname*{lder}\left(  u\left(  \lambda t\right)  \right)  =\lambda
\cdot\left(  \operatorname*{lder}u\right)  \left(  \lambda t\right)  .
\]

(\textit{Proof:} This follows from the equality $\left(  u\left(  \lambda
t\right)  \right)  ^{\prime}=\lambda\cdot u^{\prime}\left(  \lambda t\right)
$, which is an easy consequence of the chain rule but also easy to check directly.)

\item Let $R$ and $S$ be two commutative $\mathbf{k}$-algebras. Let
$\alpha:R\rightarrow S$ be a $\mathbf{k}$-algebra homomorphism. As we know,
$\alpha$ induces a $\mathbf{k}\left[  \left[  t\right]  \right]  $-algebra
homomorphism%
\[
\alpha\left[  \left[  t\right]  \right]  :R\left[  \left[  t\right]  \right]
\rightarrow S\left[  \left[  t\right]  \right]
\]
that sends each power series $\sum_{n\geq0}a_{n}t^{n}\in R\left[  \left[
t\right]  \right]  $ (with $a_{n}\in R$) to $\sum_{n\geq0}\alpha\left(
a_{n}\right)  t^{n}\in S\left[  \left[  t\right]  \right]  $.

Let $u\in R\left[  \left[  t\right]  \right]  $ be a formal power series whose
constant term is $1$. Then, the constant term of the power series $\left(
\alpha\left[  \left[  t\right]  \right]  \right)  \left(  u\right)  $ is $1$,
and we have%
\[
\operatorname*{lder}\left(  \left(  \alpha\left[  \left[  t\right]  \right]
\right)  \left(  u\right)  \right)  =\left(  \alpha\left[  \left[  t\right]
\right]  \right)  \left(  \operatorname*{lder}u\right)  .
\]

(\textit{Proof:} This is essentially saying that the logarithmic derivative is
functorial with respect to the base ring. The proof is straightforward.)
\end{enumerate}

Now, let us come back to proving Theorem \ref{thm.VF.main} \textbf{(c)}:

Consider the ring $\left(  \mathbf{k}\left[  \left[  x_{1},x_{2},x_{3}%
,\ldots\right]  \right]  \right)  \left[  \left[  t\right]  \right]  $ of
formal power series in one indeterminate $t$ over $\mathbf{k}\left[  \left[
x_{1},x_{2},x_{3},\ldots\right]  \right]  $. This ring is a topological ring,
where the topology is the standard one induced by the standard topology on
$\mathbf{k}\left[  \left[  x_{1},x_{2},x_{3},\ldots\right]  \right]  $ (not
the discrete topology!). This topological ring $\left(  \mathbf{k}\left[
\left[  x_{1},x_{2},x_{3},\ldots\right]  \right]  \right)  \left[  \left[
t\right]  \right]  $ is, of course, isomorphic to $\mathbf{k}\left[  \left[
x_{1},x_{2},x_{3},\ldots,t\right]  \right]  $. The ring $\Lambda\left[
\left[  t\right]  \right]  $ is a subring of $\left(  \mathbf{k}\left[
\left[  x_{1},x_{2},x_{3},\ldots\right]  \right]  \right)  \left[  \left[
t\right]  \right]  $.

Now, for each $m\in\mathbb{N}$, we know that $G_{F,m}$ is homogeneous of
degree $m$ (by Proposition \ref{prop.GF.basics} \textbf{(a)}), and therefore
satisfies%
\begin{equation}
G_{F,m}\left(  tx_{1},tx_{2},tx_{3},\ldots\right)  =t^{m}\cdot G_{F,m}
\label{pf.thm.VF.main.short.c.GFmtxi}%
\end{equation}
(since any formal power series $u\in\mathbf{k}\left[  \left[  x_{1}%
,x_{2},x_{3},\ldots\right]  \right]  $ that is homogeneous of degree $m$
satisfies $u\left(  tx_{1},tx_{2},tx_{3},\ldots\right)  =t^{m}\cdot u$).

On the other hand, from (\ref{pf.thm.DeltaGFm.short.1}), we obtain%
\[
\prod_{i=1}^{\infty}F\left(  x_{i}\right)  =\sum_{k\in\mathbb{N}}G_{F,k}%
=\sum_{m\in\mathbb{N}}G_{F,m}.
\]
Substituting $tx_{1},tx_{2},tx_{3},\ldots$ for $x_{1},x_{2},x_{3},\ldots$ on
both sides of this equality, we obtain%
\begin{align}
\prod_{i=1}^{\infty}F\left(  tx_{i}\right)   &  =\sum_{m\in\mathbb{N}%
}\underbrace{G_{F,m}\left(  tx_{1},tx_{2},tx_{3},\ldots\right)  }%
_{\substack{=t^{m}\cdot G_{F,m}\\\text{(by
(\ref{pf.thm.VF.main.short.c.GFmtxi}))}}}\nonumber\\
&  =\sum_{m\in\mathbb{N}}t^{m}\cdot G_{F,m}.
\label{pf.thm.VF.main.short.c.GFtxi}%
\end{align}

The map $V_{F}:\Lambda\rightarrow\Lambda$ is a $\mathbf{k}$-algebra
homomorphism. Hence, it induces a $\mathbf{k}\left[  \left[  t\right]
\right]  $-algebra homomorphism%
\[
V_{F}\left[  \left[  t\right]  \right]  :\Lambda\left[  \left[  t\right]
\right]  \rightarrow\Lambda\left[  \left[  t\right]  \right]
\]
that sends each formal power series $\sum_{n\geq0}a_{n}t^{n}\in\Lambda\left[
\left[  t\right]  \right]  $ (with $a_{n}\in\Lambda$) to $\sum_{n\geq0}%
V_{F}\left(  a_{n}\right)  t^{n}$. Consider this $V_{F}\left[  \left[
t\right]  \right]  $.

Define the formal power series $H\left(  t\right)  $ as in
(\ref{pf.lem.alphaF.p.short.Ht=}). Then, from \cite[(2.4.1)]{GriRei}, we know
that%
\[
H\left(  t\right)  =\sum_{n\geq0}\underbrace{h_{n}\left(  \mathbf{x}\right)
}_{=h_{n}}t^{n}=\sum_{n\geq0}h_{n}t^{n}\in\Lambda\left[  \left[  t\right]
\right]  .
\]
Moreover, $H\left(  t\right)  =\sum_{n\geq0}h_{n}t^{n}$ shows that the
constant term of $H\left(  t\right)  $ is $h_{0}=1$. Thus,
$\operatorname*{lder}\left(  H\left(  t\right)  \right)  $ is well-defined.

Applying the map $V_{F}\left[  \left[  t\right]  \right]  $ to both sides of
the equality $H\left(  t\right)  =\sum_{n\geq0}h_{n}t^{n}$, we obtain%
\begin{align*}
\left(  V_{F}\left[  \left[  t\right]  \right]  \right)  \left(  H\left(
t\right)  \right)   &  =\left(  V_{F}\left[  \left[  t\right]  \right]
\right)  \left(  \sum_{n\geq0}h_{n}t^{n}\right)  =\underbrace{\sum_{n\geq0}%
}_{=\sum_{n\in\mathbb{N}}}\underbrace{V_{F}\left(  h_{n}\right)
}_{\substack{=G_{F,n}\\\text{(by Theorem \ref{thm.VF.main} \textbf{(b)})}%
}}t^{n}\\
&  \ \ \ \ \ \ \ \ \ \ \ \ \ \ \ \ \ \ \ \ \left(  \text{by the definition of
}V_{F}\left[  \left[  t\right]  \right]  \right) \\
&  =\sum_{n\in\mathbb{N}}G_{F,n}t^{n}=\sum_{n\in\mathbb{N}}t^{n}\cdot
G_{F,n}=\sum_{m\in\mathbb{N}}t^{m}\cdot G_{F,m}.
\end{align*}
Comparing this with (\ref{pf.thm.VF.main.short.c.GFtxi}), we find%
\begin{equation}
\left(  V_{F}\left[  \left[  t\right]  \right]  \right)  \left(  H\left(
t\right)  \right)  =\prod_{i=1}^{\infty}F\left(  tx_{i}\right)  .
\label{pf.thm.VF.main.short.c.GFtxi2}%
\end{equation}

Now, the definition of $\operatorname*{lder}\left(  H\left(  t\right)
\right)  $ yields%
\begin{align*}
\operatorname*{lder}\left(  H\left(  t\right)  \right)   &  =\dfrac{H^{\prime
}\left(  t\right)  }{H\left(  t\right)  }=\sum_{m\geq0}p_{m+1}t^{m}%
\ \ \ \ \ \ \ \ \ \ \left(  \text{by (\ref{pf.lem.alphaF.p.short.pHH})}\right)
\\
&  =\sum_{n\geq0}p_{n+1}t^{n}.
\end{align*}
Applying the map $V_{F}\left[  \left[  t\right]  \right]  $ to both sides of
this equality, we find%
\begin{align}
\left(  V_{F}\left[  \left[  t\right]  \right]  \right)  \left(
\operatorname*{lder}\left(  H\left(  t\right)  \right)  \right)   &  =\left(
V_{F}\left[  \left[  t\right]  \right]  \right)  \left(  \sum_{n\geq0}%
p_{n+1}t^{n}\right)  =\underbrace{\sum_{n\geq0}}_{=\sum_{n\in\mathbb{N}}}%
V_{F}\left(  p_{n+1}\right)  t^{n}\nonumber\\
&  \ \ \ \ \ \ \ \ \ \ \ \ \ \ \ \ \ \ \ \ \left(  \text{by the definition of
}V_{F}\left[  \left[  t\right]  \right]  \right) \nonumber\\
&  =\sum_{n\in\mathbb{N}}V_{F}\left(  p_{n+1}\right)  t^{n}.
\label{pf.thm.VF.main.short.c.3}%
\end{align}

Now is the time to use our above list of properties of logarithmic
derivatives. Recall that the constant term of $H\left(  t\right)  $ is $1$.
Hence, Property 6 of logarithmic derivatives shows that the constant term of
the power series $\left(  V_{F}\left[  \left[  t\right]  \right]  \right)
\left(  H\left(  t\right)  \right)  $ is $1$, and that we have%
\begin{equation}
\operatorname*{lder}\left(  \left(  V_{F}\left[  \left[  t\right]  \right]
\right)  \left(  H\left(  t\right)  \right)  \right)  =\left(  V_{F}\left[
\left[  t\right]  \right]  \right)  \left(  \operatorname*{lder}\left(
H\left(  t\right)  \right)  \right)  . \label{pf.thm.VF.main.short.c.4}%
\end{equation}

Now, (\ref{pf.thm.VF.main.short.c.3}) yields%
\begin{align}
\sum_{n\in\mathbb{N}}V_{F}\left(  p_{n+1}\right)  t^{n}  &  =\left(
V_{F}\left[  \left[  t\right]  \right]  \right)  \left(  \operatorname*{lder}%
\left(  H\left(  t\right)  \right)  \right) \nonumber\\
&  =\operatorname*{lder}\left(  \left(  V_{F}\left[  \left[  t\right]
\right]  \right)  \left(  H\left(  t\right)  \right)  \right)
\ \ \ \ \ \ \ \ \ \ \left(  \text{by (\ref{pf.thm.VF.main.short.c.4})}\right)
\nonumber\\
&  =\operatorname*{lder}\left(  \prod_{i=1}^{\infty}F\left(  tx_{i}\right)
\right)  \label{pf.thm.VF.main.short.c.5}%
\end{align}
(by (\ref{pf.thm.VF.main.short.c.GFtxi2})).

Now, the infinite product $\prod_{i=1}^{\infty}F\left(  tx_{i}\right)  $
converges (as we know from (\ref{pf.thm.VF.main.short.c.GFtxi})). Hence,
Property 4 of logarithmic derivatives yields that the infinite sum $\sum
_{i=1}^{\infty}\operatorname*{lder}\left(  F\left(  tx_{i}\right)  \right)  $
converges as well, and that we have%
\[
\operatorname*{lder}\left(  \prod_{i=1}^{\infty}F\left(  tx_{i}\right)
\right)  =\sum_{i=1}^{\infty}\operatorname*{lder}\left(  F\left(
tx_{i}\right)  \right)  .
\]
Hence, (\ref{pf.thm.VF.main.short.c.5}) rewrites as%
\begin{align}
\sum_{n\in\mathbb{N}}V_{F}\left(  p_{n+1}\right)  t^{n}  &  =\sum
_{i=1}^{\infty}\operatorname*{lder}\left(  F\left(  \underbrace{tx_{i}%
}_{=x_{i}t}\right)  \right)  =\sum_{i=1}^{\infty}%
\underbrace{\operatorname*{lder}\left(  F\left(  x_{i}t\right)  \right)
}_{\substack{=x_{i}\cdot\left(  \operatorname*{lder}F\right)  \left(
x_{i}t\right)  \\\text{(by Property 5}\\\text{of logarithmic derivatives)}%
}}\nonumber\\
&  =\sum_{i=1}^{\infty}x_{i}\cdot\left(  \operatorname*{lder}F\right)  \left(
x_{i}t\right)  . \label{pf.thm.VF.main.short.c.7}%
\end{align}

The definition of $\operatorname*{lder}F$ yields%
\[
\operatorname*{lder}F=\dfrac{F^{\prime}}{F}=\sum_{n\in\mathbb{N}}\gamma
_{n}t^{n}.
\]
Hence, for each $i\in\left\{  1,2,3,\ldots\right\}  $, we have%
\begin{equation}
\left(  \operatorname*{lder}F\right)  \left(  x_{i}t\right)  =\sum
_{n\in\mathbb{N}}\gamma_{n}\underbrace{\left(  x_{i}t\right)  ^{n}}%
_{=x_{i}^{n}t^{n}}=\sum_{n\in\mathbb{N}}\gamma_{n}x_{i}^{n}t^{n}.
\label{pf.thm.VF.main.short.c.8}%
\end{equation}

Now, (\ref{pf.thm.VF.main.short.c.7}) becomes%
\begin{align*}
\sum_{n\in\mathbb{N}}V_{F}\left(  p_{n+1}\right)  t^{n}  &  =\sum
_{i=1}^{\infty}x_{i}\cdot\underbrace{\left(  \operatorname*{lder}F\right)
\left(  x_{i}t\right)  }_{\substack{=\sum_{n\in\mathbb{N}}\gamma_{n}x_{i}%
^{n}t^{n}\\\text{(by (\ref{pf.thm.VF.main.short.c.8}))}}}=\sum_{i=1}^{\infty
}x_{i}\cdot\sum_{n\in\mathbb{N}}\gamma_{n}x_{i}^{n}t^{n}=\underbrace{\sum
_{i=1}^{\infty}\ \ \sum_{n\in\mathbb{N}}}_{=\sum_{n\in\mathbb{N}}%
\ \ \sum_{i=1}^{\infty}}\underbrace{x_{i}\gamma_{n}x_{i}^{n}}_{=\gamma
_{n}x_{i}^{n+1}}t^{n}\\
&  =\sum_{n\in\mathbb{N}}\ \ \sum_{i=1}^{\infty}\gamma_{n}x_{i}^{n+1}%
t^{n}=\sum_{n\in\mathbb{N}}\gamma_{n}\underbrace{\left(  \sum_{i=1}^{\infty
}x_{i}^{n+1}\right)  }_{\substack{=x_{1}^{n+1}+x_{2}^{n+1}+x_{3}^{n+1}%
+\cdots\\=p_{n+1}\\\text{(by the definition of }p_{n+1}\text{)}}}t^{n}%
=\sum_{n\in\mathbb{N}}\gamma_{n}p_{n+1}t^{n}.
\end{align*}
Comparing coefficients before $t^{n}$ in this equality, we conclude that%
\[
V_{F}\left(  p_{n+1}\right)  =\gamma_{n}p_{n+1}\ \ \ \ \ \ \ \ \ \ \text{for
each }n\in\mathbb{N}.
\]
In other words, $V_{F}\left(  p_{n}\right)  =\gamma_{n-1}p_{n}$ for each
positive integer $n$. This proves Theorem \ref{thm.VF.main} \textbf{(c)}.
\end{proof}
\end{vershort}

\begin{verlong}
Our proof of this theorem (specifically, of its part \textbf{(c)}) will use
the notion of the logarithmic derivative of a formal power series. We first
recall its definition:

\begin{definition}
\label{def.lder.lder}Let $R$ be a commutative ring. Let $F\in R\left[  \left[
t\right]  \right]  $ be a formal power series whose constant term is $1$.
Thus, $F$ is invertible (since $F$ has constant term $1$).

The \emph{logarithmic derivative} of $F$ is defined to be the formal power
series $\dfrac{F^{\prime}}{F}\in R\left[  \left[  t\right]  \right]  $ (this
is well-defined, since $F$ is invertible). This logarithmic derivative is
denoted by $\operatorname*{lder}F$.
\end{definition}

It is easy to see that $\operatorname*{lder}F$ is the derivative of $\log F$
if $R$ is a commutative $\mathbb{Q}$-algebra\footnote{This explains the name
\textquotedblleft logarithmic derivative\textquotedblright.}. However, if $R$
is not a $\mathbb{Q}$-algebra, then $\log F$ is not defined, so that
$\operatorname*{lder}F$ can only be defined via Definition \ref{def.lder.lder}.

We shall now state (and, for the sake of completeness, prove) a few well-known
properties of logarithmic derivatives:

\begin{proposition}
\label{prop.lder.lder(uv)}Let $R$ be a commutative ring. Let $u,v\in R\left[
\left[  t\right]  \right]  $ be two formal power series whose constant terms
are $1$. Then, $\operatorname*{lder}\left(  uv\right)  =\operatorname*{lder}%
u+\operatorname*{lder}v$.
\end{proposition}

\begin{proof}
[Proof of Proposition \ref{prop.lder.lder(uv)}.]The two power series $u$ and
$v$ have constant term $1$. Hence, their product $uv$ has constant term $1$ as
well (since the constant term of the product of two power series equals the
product of their constant terms). Thus, $\operatorname*{lder}\left(
uv\right)  $ is well-defined.

The Leibniz rule yields $\left(  uv\right)  ^{\prime}=u^{\prime}v+uv^{\prime}%
$. However, the definition of $\operatorname*{lder}u$ yields
$\operatorname*{lder}u=\dfrac{u^{\prime}}{u}$. Likewise, $\operatorname*{lder}%
v=\dfrac{v^{\prime}}{v}$. Adding these two equalities, we obtain%
\[
\operatorname*{lder}u+\operatorname*{lder}v=\dfrac{u^{\prime}}{u}%
+\dfrac{v^{\prime}}{v}=\dfrac{u^{\prime}v+uv^{\prime}}{uv}.
\]
On the other hand, the definition of $\operatorname*{lder}\left(  uv\right)  $
yields%
\[
\operatorname*{lder}\left(  uv\right)  =\dfrac{\left(  uv\right)  ^{\prime}%
}{uv}=\dfrac{u^{\prime}v+uv^{\prime}}{uv}\ \ \ \ \ \ \ \ \ \ \left(
\text{since }\left(  uv\right)  ^{\prime}=u^{\prime}v+uv^{\prime}\right)  .
\]
Comparing these two equalities, we find $\operatorname*{lder}\left(
uv\right)  =\operatorname*{lder}u+\operatorname*{lder}v$. This proves
Proposition \ref{prop.lder.lder(uv)}.
\end{proof}

\begin{proposition}
\label{prop.lder.lder(lim)}Let $R$ be a commutative topological ring. Let
$\left(  u_{n}\right)  _{n\in\mathbb{N}}=\left(  u_{0},u_{1},u_{2}%
,\ldots\right)  \in R\left[  \left[  t\right]  \right]  ^{\mathbb{N}}$ be a
sequence of formal power series whose constant terms are $1$. Let $u\in
R\left[  \left[  t\right]  \right]  $ be a formal power series whose constant
term is $1$. Assume that $\lim\limits_{n\rightarrow\infty}u_{n}=u$ (with
respect to the standard topology on $R\left[  \left[  t\right]  \right]  $
induced by the topology on $R$). Then, $\lim\limits_{n\rightarrow\infty
}\left(  \operatorname*{lder}u_{n}\right)  =\operatorname*{lder}u$ (with
respect to the same topology on $R\left[  \left[  t\right]  \right]  $).
\end{proposition}

\begin{proof}
[Proof of Proposition \ref{prop.lder.lder(lim)}.]It is well-known that the map%
\begin{align*}
R\left[  \left[  t\right]  \right]   &  \rightarrow R\left[  \left[  t\right]
\right]  ,\\
v  &  \mapsto v^{\prime}%
\end{align*}
(that is, the map that sends each power series $v\in R\left[  \left[
t\right]  \right]  $ to its derivative $v^{\prime}$) is
continuous\footnote{This follows from the fact that for each $n\in\mathbb{N}$,
the $n$-th coefficient of the derivative $v^{\prime}$ of a power series $v\in
R\left[  \left[  t\right]  \right]  $ is a continuous function of the first
$n+2$ coefficients of $v$ (indeed, it equals $n+1$ times the $\left(
n+1\right)  $-st coefficient of $v$).}. Hence, from $\lim\limits_{n\rightarrow
\infty}u_{n}=u$, we obtain $\lim\limits_{n\rightarrow\infty}u_{n}^{\prime
}=u^{\prime}$.

Let $R\left[  \left[  t\right]  \right]  _{1}$ be the subset $\left\{  v\in
R\left[  \left[  t\right]  \right]  \ \mid\ \text{the constant term of
}v\text{ is }1\right\}  $ of $R\left[  \left[  t\right]  \right]  $. Then, the
formal power series $u_{0},u_{1},u_{2},\ldots$ and $u$ all belong to $R\left[
\left[  t\right]  \right]  _{1}$ (since their constant terms are $1$). It is
well-known that the map%
\begin{align*}
R\left[  \left[  t\right]  \right]  _{1}  &  \rightarrow R\left[  \left[
t\right]  \right]  ,\\
v  &  \mapsto\dfrac{1}{v}%
\end{align*}
is continuous\footnote{This follows from the fact that for each $n\in
\mathbb{N}$, the $n$-th coefficient of the power series $\dfrac{1}{v}$ (where
$v\in R\left[  \left[  t\right]  \right]  _{1}$) is a continuous function of
the first $n+1$ coefficients of $v$ (indeed, if we let $v_{i}$ and $\left(
\dfrac{1}{v}\right)  _{i}$ denote the $i$-th coefficients of the power series
$v$ and $\dfrac{1}{v}$ for all $i\in\mathbb{N}$, then the coefficients of
$\dfrac{1}{v}$ can be computed recursively from the coefficients of $v$ using
the formulas%
\begin{align*}
\left(  \dfrac{1}{v}\right)  _{0}  &  =1\ \ \ \ \ \ \ \ \ \ \text{and}\\
\left(  \dfrac{1}{v}\right)  _{n}  &  =-\sum_{i=1}^{n}v_{i}\left(  \dfrac
{1}{v}\right)  _{n-i}\ \ \ \ \ \ \ \ \ \ \text{for each }n>0;
\end{align*}
these formulas rely only on addition, subtraction and multiplication of
elements of $R$, and therefore define continuous maps).}. Thus, from
$\lim\limits_{n\rightarrow\infty}u_{n}=u$, we obtain $\lim
\limits_{n\rightarrow\infty}\dfrac{1}{u_{n}}=\dfrac{1}{u}$ (since the formal
power series $u_{0},u_{1},u_{2},\ldots$ and $u$ all belong to $R\left[
\left[  t\right]  \right]  _{1}$).

Finally, it is well-known that the map%
\begin{align*}
R\left[  \left[  t\right]  \right]  \times R\left[  \left[  t\right]  \right]
&  \rightarrow R\left[  \left[  t\right]  \right]  ,\\
\left(  v,w\right)   &  \mapsto vw
\end{align*}
is continuous\footnote{This follows from the fact that for each $n\in
\mathbb{N}$, the $n$-th coefficient of the power series $vw$ (where $\left(
v,w\right)  \in R\left[  \left[  t\right]  \right]  \times R\left[  \left[
t\right]  \right]  $) is a continuous function of the first $n+1$ coefficients
of $v$ and of $w$ (indeed, it is equal to $\sum_{k=0}^{n}v_{k}w_{n-k}$, where
$v_{i}$ and $w_{i}$ denote the $i$-th coefficients of $v$ and $w$).}. Hence,
from $\lim\limits_{n\rightarrow\infty}u_{n}^{\prime}=u^{\prime}$ and
$\lim\limits_{n\rightarrow\infty}\dfrac{1}{u_{n}}=\dfrac{1}{u}$, we obtain%
\begin{equation}
\lim\limits_{n\rightarrow\infty}\left(  u_{n}^{\prime}\cdot\dfrac{1}{u_{n}%
}\right)  =u^{\prime}\cdot\dfrac{1}{u}=\dfrac{u^{\prime}}{u}%
=\operatorname*{lder}u \label{pf.prop.lder.lder(lim).4}%
\end{equation}
(since $\operatorname*{lder}u$ is defined to be $\dfrac{u^{\prime}}{u}$).
However, for each $n\in\mathbb{N}$, we have%
\begin{align*}
\operatorname*{lder}u_{n}  &  =\dfrac{u_{n}^{\prime}}{u_{n}}%
\ \ \ \ \ \ \ \ \ \ \left(  \text{by the definition of }\operatorname*{lder}%
u_{n}\right) \\
&  =u_{n}^{\prime}\cdot\dfrac{1}{u_{n}}.
\end{align*}
Thus, (\ref{pf.prop.lder.lder(lim).4}) rewrites as $\lim\limits_{n\rightarrow
\infty}\left(  \operatorname*{lder}u_{n}\right)  =\operatorname*{lder}u$. This
proves Proposition \ref{prop.lder.lder(lim)}.
\end{proof}

\begin{proposition}
\label{prop.lder.lder(finprod)}Let $R$ be a commutative ring. Let $u_{1}%
,u_{2},\ldots,u_{n}\in R\left[  \left[  t\right]  \right]  $ be finitely many
formal power series whose constant terms are $1$. Then,%
\[
\operatorname*{lder}\left(  \prod_{i=1}^{n}u_{i}\right)  =\sum_{i=1}%
^{n}\operatorname*{lder}u_{i}.
\]

\end{proposition}

\begin{proof}
[Proof of Proposition \ref{prop.lder.lder(finprod)}.]We shall show that%
\begin{equation}
\operatorname*{lder}\left(  \prod_{i=1}^{m}u_{i}\right)  =\sum_{i=1}%
^{m}\operatorname*{lder}u_{i} \label{pf.prop.lder.lder(finprod).claim}%
\end{equation}
for each $m\in\left\{  0,1,\ldots,n\right\}  $.

Indeed, let us prove (\ref{pf.prop.lder.lder(finprod).claim}) by induction on
$m$:

\textit{Induction base:} We have $\prod_{i=1}^{0}u_{i}=\left(  \text{empty
product}\right)  =1$ and thus%
\begin{align*}
\operatorname*{lder}\left(  \prod_{i=1}^{0}u_{i}\right)   &
=\operatorname*{lder}1=\dfrac{1^{\prime}}{1}\ \ \ \ \ \ \ \ \ \ \left(
\text{by the definition of }\operatorname*{lder}1\right) \\
&  =1^{\prime}=0=\sum_{i=1}^{0}\operatorname*{lder}u_{i}%
\end{align*}
(since $\sum_{i=1}^{0}\operatorname*{lder}u_{i}=\left(  \text{empty
sum}\right)  =0$). In other words, (\ref{pf.prop.lder.lder(finprod).claim})
holds for $m=0$.

\textit{Induction step:} Let $k\in\left\{  0,1,\ldots,n-1\right\}  $. Assume
that (\ref{pf.prop.lder.lder(finprod).claim}) holds for $m=k$. We must prove
that (\ref{pf.prop.lder.lder(finprod).claim}) holds for $m=k+1$.

The power series $u_{1},u_{2},\ldots,u_{k}$ have constant term $1$. Hence,
their product $u_{1}u_{2}\cdots u_{k}$ has constant term $1$ as well (since
the constant term of the product of some power series equals the product of
their constant terms\footnote{This is a consequence of the fact that the map
from $\mathbf{k}\left[  \left[  t\right]  \right]  $ to $\mathbf{k}$ that
sends each power series to its constant term is a $\mathbf{k}$-algebra
homomorphism.}).

We have assumed that (\ref{pf.prop.lder.lder(finprod).claim}) holds for $m=k$.
In other words, we have%
\[
\operatorname*{lder}\left(  \prod_{i=1}^{k}u_{i}\right)  =\sum_{i=1}%
^{k}\operatorname*{lder}u_{i}.
\]
Now,%
\begin{align*}
&  \operatorname*{lder}\underbrace{\left(  \prod_{i=1}^{k+1}u_{i}\right)
}_{=\left(  \prod_{i=1}^{k}u_{i}\right)  \cdot u_{k+1}}\\
&  =\operatorname*{lder}\left(  \left(  \prod_{i=1}^{k}u_{i}\right)  \cdot
u_{k+1}\right)  =\underbrace{\operatorname*{lder}\left(  \prod_{i=1}^{k}%
u_{i}\right)  }_{=\sum_{i=1}^{k}\operatorname*{lder}u_{i}}%
+\operatorname*{lder}u_{k+1}\\
&  \ \ \ \ \ \ \ \ \ \ \ \ \ \ \ \ \ \ \ \ \left(  \text{by Proposition
\ref{prop.lder.lder(uv)}, applied to }u=\prod_{i=1}^{k}u_{i}\text{ and
}v=u_{k+1}\right) \\
&  =\sum_{i=1}^{k}\operatorname*{lder}u_{i}+\operatorname*{lder}u_{k+1}%
=\sum_{i=1}^{k+1}\operatorname*{lder}u_{i}.
\end{align*}
In other words, (\ref{pf.prop.lder.lder(finprod).claim}) holds for $m=k+1$.
This completes the induction step. Thus,
(\ref{pf.prop.lder.lder(finprod).claim}) is proved by induction.

Now, applying (\ref{pf.prop.lder.lder(finprod).claim}) to $m=n$, we obtain
\[
\operatorname*{lder}\left(  \prod_{i=1}^{n}u_{i}\right)  =\sum_{i=1}%
^{n}\operatorname*{lder}u_{i}.
\]
This proves Proposition \ref{prop.lder.lder(finprod)}.
\end{proof}

\begin{proposition}
\label{prop.lder.lder(infprod)}Let $R$ be a commutative topological ring. Let
$u_{1},u_{2},u_{3},\ldots\in R\left[  \left[  t\right]  \right]  $ be
infinitely many formal power series whose constant terms are $1$. Assume that
the infinite product $\prod_{i=1}^{\infty}u_{i}$ converges (with respect to
the standard topology on $R\left[  \left[  t\right]  \right]  $ induced by the
topology on $R$). Then, the infinite sum $\sum_{i=1}^{\infty}%
\operatorname*{lder}u_{i}$ converges as well, and we have%
\begin{equation}
\operatorname*{lder}\left(  \prod_{i=1}^{\infty}u_{i}\right)  =\sum
_{i=1}^{\infty}\operatorname*{lder}u_{i}.
\label{eq.prop.lder.lder(infprod).eq}%
\end{equation}

\end{proposition}

\begin{proof}
[Proof of Proposition \ref{prop.lder.lder(infprod)}.]The power series
$u_{1},u_{2},u_{3},\ldots$ have constant term $1$. Hence, their product
$\prod_{i=1}^{\infty}u_{i}$ has constant term $1$ as well (since the constant
term of the product of some power series equals the product of their constant
terms). Moreover, all the partial products $\prod_{i=1}^{n}u_{i}$ of this
infinite product also have constant term $1$ (for the same reason).

The infinite product $\prod_{i=1}^{\infty}u_{i}$ converges. Thus, the sequence
$\left(  \prod_{i=1}^{n}u_{i}\right)  _{n\in\mathbb{N}}\in R\left[  \left[
t\right]  \right]  ^{\mathbb{N}}$ converges, and its limit is $\lim
\limits_{n\rightarrow\infty}\left(  \prod_{i=1}^{n}u_{i}\right)  =\prod
_{i=1}^{\infty}u_{i}$. Hence, Proposition \ref{prop.lder.lder(lim)} (applied
to $\prod_{i=1}^{n}u_{i}$ and $\prod_{i=1}^{\infty}u_{i}$ instead of $u_{n}$
and $u$) yields%
\begin{equation}
\lim\limits_{n\rightarrow\infty}\left(  \operatorname*{lder}\left(
\prod_{i=1}^{n}u_{i}\right)  \right)  =\operatorname*{lder}\left(  \prod
_{i=1}^{\infty}u_{i}\right)  . \label{pf.prop.lder.lder(infprod).1}%
\end{equation}

However,
\[
\lim\limits_{n\rightarrow\infty}\underbrace{\left(  \operatorname*{lder}%
\left(  \prod_{i=1}^{n}u_{i}\right)  \right)  }_{\substack{=\sum_{i=1}%
^{n}\operatorname*{lder}u_{i}\\\text{(by Proposition
\ref{prop.lder.lder(finprod)})}}}=\lim\limits_{n\rightarrow\infty}\sum
_{i=1}^{n}\operatorname*{lder}u_{i}.
\]
Thus, (\ref{pf.prop.lder.lder(infprod).1}) rewrites as
\[
\lim\limits_{n\rightarrow\infty}\sum_{i=1}^{n}\operatorname*{lder}%
u_{i}=\operatorname*{lder}\left(  \prod_{i=1}^{\infty}u_{i}\right)  .
\]
Thus, the sequence $\left(  \sum_{i=1}^{n}\operatorname*{lder}u_{i}\right)
_{n\in\mathbb{N}}$ converges. In other words, the infinite sum $\sum
_{i=1}^{\infty}\operatorname*{lder}u_{i}$ converges. Moreover, the value of
this sum is
\[
\sum_{i=1}^{\infty}\operatorname*{lder}u_{i}=\lim\limits_{n\rightarrow\infty
}\sum_{i=1}^{n}\operatorname*{lder}u_{i}=\operatorname*{lder}\left(
\prod_{i=1}^{\infty}u_{i}\right)  .
\]
This proves (\ref{eq.prop.lder.lder(infprod).eq}). Thus, the proof of
Proposition \ref{prop.lder.lder(infprod)} is complete.
\end{proof}

\begin{proposition}
\label{prop.lder.lder(lambdat)}Let $R$ be a commutative ring. Let $u\in
R\left[  \left[  t\right]  \right]  $ be a formal power series whose constant
term is $1$. Let $\lambda\in R$. Then,%
\[
\operatorname*{lder}\left(  u\left(  \lambda t\right)  \right)  =\lambda
\cdot\left(  \operatorname*{lder}u\right)  \left(  \lambda t\right)  .
\]

\end{proposition}

\begin{proof}
[Proof of Proposition \ref{prop.lder.lder(lambdat)}.]We first claim that the
derivative of the power series $u\left(  \lambda t\right)  $ is%
\begin{equation}
\left(  u\left(  \lambda t\right)  \right)  ^{\prime}=\lambda\cdot u^{\prime
}\left(  \lambda t\right)  . \label{pf.prop.lder.lder(lambdat).1}%
\end{equation}

[\textit{Proof of (\ref{pf.prop.lder.lder(lambdat).1}):} This is easy to see
using the chain rule, but let us show this directly: Write the power series
$u\in R\left[  \left[  t\right]  \right]  $ in the form $u=\sum_{n\geq0}%
u_{n}t^{n}$ for some $u_{0},u_{1},u_{2},\ldots\in R$. Thus,
\[
u\left(  \lambda t\right)  =\sum_{n\geq0}u_{n}\underbrace{\left(  \lambda
t\right)  ^{n}}_{=\lambda^{n}t^{n}}=\sum_{n\geq0}u_{n}\lambda^{n}t^{n}.
\]
Hence, the definition of a derivative yields%
\begin{align}
\left(  u\left(  \lambda t\right)  \right)  ^{\prime}  &  =\sum_{n\geq1}%
nu_{n}\underbrace{\lambda^{n}}_{\substack{=\lambda\cdot\lambda^{n-1}%
\\\text{(since }n\geq1\text{)}}}t^{n-1}=\sum_{n\geq1}nu_{n}\lambda
\cdot\underbrace{\lambda^{n-1}t^{n-1}}_{=\left(  \lambda t\right)  ^{n-1}%
}\nonumber\\
&  =\sum_{n\geq1}nu_{n}\lambda\cdot\left(  \lambda t\right)  ^{n-1}.
\label{pf.prop.lder.lder(lambdat).1.pf.2}%
\end{align}
On the other hand, from $u=\sum_{n\geq0}u_{n}t^{n}$, we obtain $u^{\prime
}=\sum_{n\geq1}nu_{n}t^{n-1}$. Substituting $\lambda t$ for $t$ on both sides
of this equality, we find%
\[
u^{\prime}\left(  \lambda t\right)  =\sum_{n\geq1}nu_{n}\left(  \lambda
t\right)  ^{n-1}.
\]
Multiplying this equality by $\lambda$, we find%
\[
\lambda\cdot u^{\prime}\left(  \lambda t\right)  =\lambda\cdot\sum_{n\geq
1}nu_{n}\left(  \lambda t\right)  ^{n-1}=\sum_{n\geq1}nu_{n}\lambda
\cdot\left(  \lambda t\right)  ^{n-1}.
\]
Comparing this with (\ref{pf.prop.lder.lder(lambdat).1.pf.2}), we obtain
$\left(  u\left(  \lambda t\right)  \right)  ^{\prime}=\lambda\cdot u^{\prime
}\left(  \lambda t\right)  $. This proves (\ref{pf.prop.lder.lder(lambdat).1}).]

Now, the definition of $\operatorname*{lder}u$ yields $\operatorname*{lder}%
u=\dfrac{u^{\prime}}{u}$. Substituting $\lambda t$ for $t$ on both sides of
this equality, we obtain
\[
\left(  \operatorname*{lder}u\right)  \left(  \lambda t\right)  =\dfrac
{u^{\prime}\left(  \lambda t\right)  }{u\left(  \lambda t\right)  }.
\]
Multiplying this equality by $\lambda$, we find%
\[
\lambda\cdot\left(  \operatorname*{lder}u\right)  \left(  \lambda t\right)
=\lambda\cdot\dfrac{u^{\prime}\left(  \lambda t\right)  }{u\left(  \lambda
t\right)  }=\dfrac{\lambda\cdot u^{\prime}\left(  \lambda t\right)  }{u\left(
\lambda t\right)  }.
\]
On the other hand, the definition of $\operatorname*{lder}\left(  u\left(
\lambda t\right)  \right)  $ yields%
\[
\operatorname*{lder}\left(  u\left(  \lambda t\right)  \right)  =\dfrac
{\left(  u\left(  \lambda t\right)  \right)  ^{\prime}}{u\left(  \lambda
t\right)  }=\dfrac{\lambda\cdot u^{\prime}\left(  \lambda t\right)  }{u\left(
\lambda t\right)  }\ \ \ \ \ \ \ \ \ \ \left(  \text{by
(\ref{pf.prop.lder.lder(lambdat).1})}\right)  .
\]
Comparing these two equalities, we obtain $\operatorname*{lder}\left(
u\left(  \lambda t\right)  \right)  =\lambda\cdot\left(  \operatorname*{lder}%
u\right)  \left(  \lambda t\right)  $. This proves Proposition
\ref{prop.lder.lder(lambdat)}.
\end{proof}

\begin{proposition}
\label{prop.lder.functorial}Let $R$ and $S$ be two commutative $\mathbf{k}%
$-algebras. Let $\alpha:R\rightarrow S$ be a $\mathbf{k}$-algebra
homomorphism. As we know, $\alpha$ induces a continuous $\mathbf{k}\left[
\left[  t\right]  \right]  $-algebra homomorphism%
\[
\alpha\left[  \left[  t\right]  \right]  :R\left[  \left[  t\right]  \right]
\rightarrow S\left[  \left[  t\right]  \right]
\]
that sends each formal power series $\sum_{n\geq0}a_{n}t^{n}\in R\left[
\left[  t\right]  \right]  $ (with $a_{n}\in R$) to $\sum_{n\geq0}%
\alpha\left(  a_{n}\right)  t^{n}\in S\left[  \left[  t\right]  \right]  $.

Let $u\in R\left[  \left[  t\right]  \right]  $ be a formal power series whose
constant term is $1$. Then, the constant term of the power series $\left(
\alpha\left[  \left[  t\right]  \right]  \right)  \left(  u\right)  $ is $1$,
and we have%
\[
\operatorname*{lder}\left(  \left(  \alpha\left[  \left[  t\right]  \right]
\right)  \left(  u\right)  \right)  =\left(  \alpha\left[  \left[  t\right]
\right]  \right)  \left(  \operatorname*{lder}u\right)  .
\]

\end{proposition}

\begin{proof}
[Proof of Proposition \ref{prop.lder.functorial}.]Write the power series $u\in
R\left[  \left[  t\right]  \right]  $ in the form $u=\sum_{n\geq0}u_{n}t^{n}$
with $u_{0},u_{1},u_{2},\ldots\in R$. Then, the definition of $\alpha\left[
\left[  t\right]  \right]  $ yields $\left(  \alpha\left[  \left[  t\right]
\right]  \right)  \left(  u\right)  =\sum_{n\geq0}\alpha\left(  u_{n}\right)
t^{n}$. However, $u_{0}$ is the constant term of $u$ (since $u=\sum_{n\geq
0}u_{n}t^{n}$), and thus is $1$ (since we know that the constant term of $u$
is $1$). In other words, $u_{0}=1$. Hence, $\alpha\left(  u_{0}\right)
=\alpha\left(  1\right)  =1$ (since $\alpha$ is a $\mathbf{k}$-algebra
homomorphism). However, the constant term of the power series $\left(
\alpha\left[  \left[  t\right]  \right]  \right)  \left(  u\right)  $ is
$\alpha\left(  u_{0}\right)  $ (since $\left(  \alpha\left[  \left[  t\right]
\right]  \right)  \left(  u\right)  =\sum_{n\geq0}\alpha\left(  u_{n}\right)
t^{n}$). In other words, the constant term of the power series $\left(
\alpha\left[  \left[  t\right]  \right]  \right)  \left(  u\right)  $ is $1$
(since $\alpha\left(  u_{0}\right)  =1$). Hence, $\operatorname*{lder}\left(
\left(  \alpha\left[  \left[  t\right]  \right]  \right)  \left(  u\right)
\right)  $ is well-defined. The definition of $\operatorname*{lder}\left(
\left(  \alpha\left[  \left[  t\right]  \right]  \right)  \left(  u\right)
\right)  $ yields%
\begin{equation}
\operatorname*{lder}\left(  \left(  \alpha\left[  \left[  t\right]  \right]
\right)  \left(  u\right)  \right)  =\dfrac{\left(  \left(  \alpha\left[
\left[  t\right]  \right]  \right)  \left(  u\right)  \right)  ^{\prime}%
}{\left(  \alpha\left[  \left[  t\right]  \right]  \right)  \left(  u\right)
}. \label{pf.prop.lder.functorial.2}%
\end{equation}

Now, recall that $\left(  \alpha\left[  \left[  t\right]  \right]  \right)
\left(  u\right)  =\sum_{n\geq0}\alpha\left(  u_{n}\right)  t^{n}$ with
$\alpha\left(  u_{0}\right)  ,\alpha\left(  u_{1}\right)  ,\alpha\left(
u_{2}\right)  ,\ldots\in S$. Hence, the definition of the derivative of a
power series yields%
\begin{align}
\left(  \left(  \alpha\left[  \left[  t\right]  \right]  \right)  \left(
u\right)  \right)  ^{\prime}  &  =\sum_{n\geq1}n\alpha\left(  u_{n}\right)
t^{n-1}\nonumber\\
&  =\sum_{n\geq0}\left(  n+1\right)  \alpha\left(  u_{n+1}\right)  t^{n}
\label{pf.prop.lder.functorial.3}%
\end{align}
(here, we have substituted $n+1$ for $n$ in the sum). On the other hand, from
$u=\sum_{n\geq0}u_{n}t^{n}$, we obtain
\begin{align*}
u^{\prime}  &  =\sum_{n\geq1}nu_{n}t^{n-1}\ \ \ \ \ \ \ \ \ \ \left(  \text{by
the definition of the derivative}\right) \\
&  =\sum_{n\geq0}\left(  n+1\right)  u_{n+1}t^{n}%
\end{align*}
(here, we have substituted $n+1$ for $n$ in the sum). Applying the map
$\alpha\left[  \left[  t\right]  \right]  $ to both sides of this equality, we
obtain%
\begin{align*}
\left(  \alpha\left[  \left[  t\right]  \right]  \right)  \left(  u^{\prime
}\right)   &  =\left(  \alpha\left[  \left[  t\right]  \right]  \right)
\left(  \sum_{n\geq0}\left(  n+1\right)  u_{n+1}t^{n}\right)  =\sum_{n\geq
0}\underbrace{\alpha\left(  \left(  n+1\right)  u_{n+1}\right)  }%
_{\substack{=\left(  n+1\right)  \alpha\left(  u_{n+1}\right)  \\\text{(since
the map }\alpha\\\text{is }\mathbf{k}\text{-linear)}}}t^{n}\\
&  \ \ \ \ \ \ \ \ \ \ \ \ \ \ \ \ \ \ \ \ \left(  \text{by the definition of
}\alpha\left[  \left[  t\right]  \right]  \right) \\
&  =\sum_{n\geq0}\left(  n+1\right)  \alpha\left(  u_{n+1}\right)  t^{n}.
\end{align*}
Comparing this with (\ref{pf.prop.lder.functorial.3}), we obtain%
\[
\left(  \left(  \alpha\left[  \left[  t\right]  \right]  \right)  \left(
u\right)  \right)  ^{\prime}=\left(  \alpha\left[  \left[  t\right]  \right]
\right)  \left(  u^{\prime}\right)  .
\]
Hence, (\ref{pf.prop.lder.functorial.2}) rewrites as%
\begin{equation}
\operatorname*{lder}\left(  \left(  \alpha\left[  \left[  t\right]  \right]
\right)  \left(  u\right)  \right)  =\dfrac{\left(  \alpha\left[  \left[
t\right]  \right]  \right)  \left(  u^{\prime}\right)  }{\left(  \alpha\left[
\left[  t\right]  \right]  \right)  \left(  u\right)  }.
\label{pf.prop.lder.functorial.6}%
\end{equation}

On the other hand, the definition of $\operatorname*{lder}u$ yields
$\operatorname*{lder}u=\dfrac{u^{\prime}}{u}$. Applying the map $\alpha\left[
\left[  t\right]  \right]  $ to both sides of this equality, we obtain%
\[
\left(  \alpha\left[  \left[  t\right]  \right]  \right)  \left(
\operatorname*{lder}u\right)  =\left(  \alpha\left[  \left[  t\right]
\right]  \right)  \left(  \dfrac{u^{\prime}}{u}\right)  =\dfrac{\left(
\alpha\left[  \left[  t\right]  \right]  \right)  \left(  u^{\prime}\right)
}{\left(  \alpha\left[  \left[  t\right]  \right]  \right)  \left(  u\right)
}%
\]
(since the map $\alpha\left[  \left[  t\right]  \right]  $ is a $\mathbf{k}%
\left[  \left[  t\right]  \right]  $-algebra homomorphism and thus respects
quotients). Comparing this with (\ref{pf.prop.lder.functorial.6}), we obtain
$\operatorname*{lder}\left(  \left(  \alpha\left[  \left[  t\right]  \right]
\right)  \left(  u\right)  \right)  =\left(  \alpha\left[  \left[  t\right]
\right]  \right)  \left(  \operatorname*{lder}u\right)  $. This completes the
proof of Proposition \ref{prop.lder.functorial}.
\end{proof}

Next, we shall prove a simple property of homogeneous power series:

\begin{lemma}
\label{lem.hg-ps-txi}Consider the ring $\left(  \mathbf{k}\left[  \left[
x_{1},x_{2},x_{3},\ldots\right]  \right]  \right)  \left[  \left[  t\right]
\right]  $ of formal power series in one indeterminate $t$ over $\mathbf{k}%
\left[  \left[  x_{1},x_{2},x_{3},\ldots\right]  \right]  $. Let
$n\in\mathbb{N}$. Let $u\in\mathbf{k}\left[  \left[  x_{1},x_{2},x_{3}%
,\ldots\right]  \right]  $ be any power series that is homogeneous of degree
$n$. Then,%
\[
u\left(  tx_{1},tx_{2},tx_{3},\ldots\right)  =t^{n}\cdot u.
\]

\end{lemma}

\begin{proof}
[Proof of Lemma \ref{lem.hg-ps-txi}.]The power series $u$ is homogeneous of
degree $n$. In other words, it can be written as an infinite $\mathbf{k}%
$-linear combination of monomials of degree $n$. In other words, it can be
written in the form%
\begin{equation}
u=\sum_{\substack{\mathfrak{m}\text{ is a monomial}\\\text{of degree }%
n}}u_{\mathfrak{m}}\mathfrak{m} \label{pf.lem.hg-ps-txi.1}%
\end{equation}
for some coefficients $u_{\mathfrak{m}}\in\mathbf{k}$. Consider these
coefficients $u_{\mathfrak{m}}$. Substituting $tx_{1},tx_{2},tx_{3},\ldots$
for $x_{1},x_{2},x_{3},\ldots$ on both sides of the equality
(\ref{pf.lem.hg-ps-txi.1}), we obtain%
\begin{equation}
u\left(  tx_{1},tx_{2},tx_{3},\ldots\right)  =\sum_{\substack{\mathfrak{m}%
\text{ is a monomial}\\\text{of degree }n}}u_{\mathfrak{m}}\mathfrak{m}\left(
tx_{1},tx_{2},tx_{3},\ldots\right)  . \label{pf.lem.hg-ps-txi.2}%
\end{equation}

However, if $\mathfrak{m}$ is a monomial of degree $n$, then%
\begin{equation}
\mathfrak{m}\left(  tx_{1},tx_{2},tx_{3},\ldots\right)  =t^{n}\cdot
\mathfrak{m}. \label{pf.lem.hg-ps-txi.3}%
\end{equation}

[\textit{Proof of (\ref{pf.lem.hg-ps-txi.3}):} Let $\mathfrak{m}$ be a
monomial of degree $n$. Thus, $\mathfrak{m}$ is a product of $n$
indeterminates. In other words, $\mathfrak{m}=x_{i_{1}}x_{i_{2}}\cdots
x_{i_{n}}$ for some $i_{1},i_{2},\ldots,i_{n}\in\left\{  1,2,3,\ldots\right\}
$. Consider these $i_{1},i_{2},\ldots,i_{n}$. Substituting $tx_{1}%
,tx_{2},tx_{3},\ldots$ for $x_{1},x_{2},x_{3},\ldots$ on both sides of the
equality $\mathfrak{m}=x_{i_{1}}x_{i_{2}}\cdots x_{i_{n}}$, we obtain%
\[
\mathfrak{m}\left(  tx_{1},tx_{2},tx_{3},\ldots\right)  =\left(  tx_{i_{1}%
}\right)  \left(  tx_{i_{2}}\right)  \cdots\left(  tx_{i_{n}}\right)
=t^{n}\cdot\underbrace{x_{i_{1}}x_{i_{2}}\cdots x_{i_{n}}}_{=\mathfrak{m}%
}=t^{n}\cdot\mathfrak{m}.
\]
This proves (\ref{pf.lem.hg-ps-txi.3}).]

Hence, (\ref{pf.lem.hg-ps-txi.2}) becomes%
\[
u\left(  tx_{1},tx_{2},tx_{3},\ldots\right)  =\sum_{\substack{\mathfrak{m}%
\text{ is a monomial}\\\text{of degree }n}}u_{\mathfrak{m}}%
\underbrace{\mathfrak{m}\left(  tx_{1},tx_{2},tx_{3},\ldots\right)
}_{\substack{=t^{n}\cdot\mathfrak{m}\\\text{(by (\ref{pf.lem.hg-ps-txi.3}))}%
}}=\sum_{\substack{\mathfrak{m}\text{ is a monomial}\\\text{of degree }%
n}}u_{\mathfrak{m}}t^{n}\cdot\mathfrak{m}.
\]
Comparing this with%
\[
t^{n}\cdot\underbrace{u}_{=\sum_{\substack{\mathfrak{m}\text{ is a
monomial}\\\text{of degree }n}}u_{\mathfrak{m}}\mathfrak{m}}=t^{n}\cdot
\sum_{\substack{\mathfrak{m}\text{ is a monomial}\\\text{of degree }%
n}}u_{\mathfrak{m}}\mathfrak{m}=\sum_{\substack{\mathfrak{m}\text{ is a
monomial}\\\text{of degree }n}}u_{\mathfrak{m}}t^{n}\cdot\mathfrak{m},
\]
we obtain $u\left(  tx_{1},tx_{2},tx_{3},\ldots\right)  =t^{n}\cdot u$. This
proves Lemma \ref{lem.hg-ps-txi}.
\end{proof}
\end{verlong}

\begin{verlong}
\begin{proof}
[Proof of Theorem \ref{thm.VF.main}.]\textbf{(b)} Let $m\in\mathbb{N}$. We
must prove that $V_{F}\left(  h_{m}\right)  =G_{F,m}$.

We have $h_{0}=1$ and thus $V_{F}\left(  h_{0}\right)  =V_{F}\left(  1\right)
=1$ (since $V_{F}$ is a $\mathbf{k}$-algebra homomorphism). Comparing this
with $G_{F,0}=1$ (which was proved in Proposition \ref{prop.GF.basics}
\textbf{(e)}), we obtain $V_{F}\left(  h_{0}\right)  =G_{F,0}$. Hence,
$V_{F}\left(  h_{m}\right)  =G_{F,m}$ is proved for $m=0$. Thus, for the rest
of this proof, we WLOG assume that $m\neq0$.

Now, $m$ is a positive integer (since $m\in\mathbb{N}$ and $m\neq0$). However,
the definition of $V_{F}$ says that $V_{F}\left(  h_{i}\right)  =G_{F,i}$ for
all positive integers $i$. We can apply this to $i=m$ (since $m$ is a positive
integer), and thus obtain $V_{F}\left(  h_{m}\right)  =G_{F,m}$. Thus, Theorem
\ref{thm.VF.main} \textbf{(b)} is proven.

\textbf{(a)} We recall that the family $\left(  h_{n}\right)  _{n\geq1}$
generates $\Lambda$ as a $\mathbf{k}$-algebra. Hence, any two $\mathbf{k}%
$-algebra homomorphisms with domain $\Lambda$ that agree on this family
$\left(  h_{n}\right)  _{n\geq1}$ must be identical. In other words, if $A$ is
any $\mathbf{k}$-algebra, and if $f:\Lambda\rightarrow A$ and $g:\Lambda
\rightarrow A$ are two $\mathbf{k}$-algebra homomorphisms such that%
\[
\left(  f\left(  h_{n}\right)  =g\left(  h_{n}\right)
\ \ \ \ \ \ \ \ \ \ \text{for each positive integer }n\right)  ,
\]
then
\begin{equation}
f=g. \label{pf.thm.VF.main.a.f=g}%
\end{equation}

The map $V_{F}$ is a $\mathbf{k}$-algebra homomorphism. Hence, the map
$V_{F}\otimes V_{F}$ is a $\mathbf{k}$-algebra homomorphism as well (since the
tensor product of two $\mathbf{k}$-algebra homomorphisms is always a
$\mathbf{k}$-algebra homomorphism).

Let $\Delta$ and $\varepsilon$ be the comultiplication and the counit of the
Hopf algebra $\Lambda$. Both of these maps $\Delta$ and $\varepsilon$ are
$\mathbf{k}$-algebra homomorphisms (since $\Lambda$ is a bialgebra). Hence,
the three maps $\Delta\circ V_{F}$ and $\left(  V_{F}\otimes V_{F}\right)
\circ\Delta$ and $\varepsilon\circ V_{F}$ are $\mathbf{k}$-algebra
homomorphisms as well (since these three maps are compositions of some of the
$\mathbf{k}$-algebra homomorphisms $\Delta$ and $\varepsilon$ and $V_{F}$ and
$V_{F}\otimes V_{F}$).

Now, let $n$ be a positive integer. Then, \cite[Proposition 2.3.6(iii)]%
{GriRei} yields%
\begin{align*}
\Delta\left(  h_{n}\right)   &  =\sum_{i+j=n}h_{i}\otimes h_{j}\\
&  \ \ \ \ \ \ \ \ \ \ \left(  \text{where the sum ranges over all pairs
}\left(  i,j\right)  \in\mathbb{N}\times\mathbb{N}\text{ with }i+j=n\right) \\
&  =\sum_{i\in\left\{  0,1,\ldots,n\right\}  }h_{i}\otimes h_{n-i}%
\end{align*}
(here, we have substituted $\left(  i,n-i\right)  $ for $\left(  i,j\right)  $
in the sum, since the map $\left\{  0,1,\ldots,n\right\}  \rightarrow\left\{
\left(  i,j\right)  \in\mathbb{N}\times\mathbb{N}\ \mid\ i+j=n\right\}  $ that
sends each $i$ to $\left(  i,n-i\right)  $ is a bijection). Applying the map
$V_{F}\otimes V_{F}$ to both sides of this equality, we find%
\begin{align*}
\left(  V_{F}\otimes V_{F}\right)  \left(  \Delta\left(  h_{n}\right)
\right)   &  =\left(  V_{F}\otimes V_{F}\right)  \left(  \sum_{i\in\left\{
0,1,\ldots,n\right\}  }h_{i}\otimes h_{n-i}\right)  =\sum_{i\in\left\{
0,1,\ldots,n\right\}  }\underbrace{\left(  V_{F}\otimes V_{F}\right)  \left(
h_{i}\otimes h_{n-i}\right)  }_{=V_{F}\left(  h_{i}\right)  \otimes
V_{F}\left(  h_{n-i}\right)  }\\
&  \ \ \ \ \ \ \ \ \ \ \ \ \ \ \ \ \ \ \ \ \left(  \text{since the map }%
V_{F}\otimes V_{F}\text{ is }\mathbf{k}\text{-linear}\right) \\
&  =\underbrace{\sum_{i\in\left\{  0,1,\ldots,n\right\}  }}_{=\sum_{i=0}^{n}%
}\underbrace{V_{F}\left(  h_{i}\right)  }_{\substack{=G_{F,i}\\\text{(by
Theorem \ref{thm.VF.main} \textbf{(b)},}\\\text{applied to }m=i\text{)}%
}}\otimes\underbrace{V_{F}\left(  h_{n-i}\right)  }_{\substack{=G_{F,n-i}%
\\\text{(by Theorem \ref{thm.VF.main} \textbf{(b)},}\\\text{applied to
}m=n-i\text{)}}}\\
&  =\sum_{i=0}^{n}G_{F,i}\otimes G_{F,n-i}.
\end{align*}
Comparing this with%
\begin{align*}
\left(  \Delta\circ V_{F}\right)  \left(  h_{n}\right)   &  =\Delta\left(
\underbrace{V_{F}\left(  h_{n}\right)  }_{\substack{=G_{F,n}\\\text{(by
Theorem \ref{thm.VF.main} \textbf{(b)},}\\\text{applied to }m=n\text{)}%
}}\right)  =\Delta\left(  G_{F,n}\right) \\
&  =\sum_{i=0}^{n}G_{F,i}\otimes G_{F,n-i}\ \ \ \ \ \ \ \ \ \ \left(  \text{by
Theorem \ref{thm.DeltaGFm}, applied to }m=n\right)  ,
\end{align*}
we obtain%
\[
\left(  \Delta\circ V_{F}\right)  \left(  h_{n}\right)  =\left(  V_{F}\otimes
V_{F}\right)  \left(  \Delta\left(  h_{n}\right)  \right)  =\left(  \left(
V_{F}\otimes V_{F}\right)  \circ\Delta\right)  \left(  h_{n}\right)  .
\]

Now, forget that we fixed $n$. We thus have shown that \newline$\left(
\Delta\circ V_{F}\right)  \left(  h_{n}\right)  =\left(  \left(  V_{F}\otimes
V_{F}\right)  \circ\Delta\right)  \left(  h_{n}\right)  $ for each positive
integer $n$. Since $\Delta\circ V_{F}$ and $\left(  V_{F}\otimes V_{F}\right)
\circ\Delta$ are $\mathbf{k}$-algebra homomorphisms, we thus conclude that%
\begin{equation}
\Delta\circ V_{F}=\left(  V_{F}\otimes V_{F}\right)  \circ\Delta
\label{pf.thm.VF.main.a.DVF}%
\end{equation}
(by (\ref{pf.thm.VF.main.a.f=g}), applied to $A=\Lambda\otimes\Lambda$ and
$f=\Delta\circ V_{F}$ and $g=\left(  V_{F}\otimes V_{F}\right)  \circ\Delta$).

Again, let $n$ be a positive integer. Recall that the counit $\varepsilon$ of
$\Lambda$ sends every homogeneous symmetric function of positive degree to
$0$. In other words, if $u\in\Lambda$ is homogeneous of positive degree, then%
\begin{equation}
\varepsilon\left(  u\right)  =0. \label{pf.thm.VF.main.a.eu=0}%
\end{equation}

The complete homogeneous symmetric function $h_{n}$ is homogeneous of degree
$n$, thus homogeneous of positive degree (since $n$ is positive). Hence,
(\ref{pf.thm.VF.main.a.eu=0}) (applied to $u=h_{n}$) yields $\varepsilon
\left(  h_{n}\right)  =0$.

Theorem \ref{thm.VF.main} \textbf{(b)} (applied to $m=n$) yields $V_{F}\left(
h_{n}\right)  =G_{F,n}$. Proposition \ref{prop.GF.basics} \textbf{(a)}
(applied to $m=n$) shows that the formal power series $G_{F,n}$ is the $n$-th
degree homogeneous component of $G_{F}$. Hence, this $G_{F,n}$ is homogeneous
of degree $n$. Thus, $G_{F,n}$ is homogeneous of positive degree (since $n$ is
positive). In other words, $V_{F}\left(  h_{n}\right)  $ is homogeneous of
positive degree (since $V_{F}\left(  h_{n}\right)  =G_{F,n}$). Since
$V_{F}\left(  h_{n}\right)  $ is clearly a symmetric function, we thus
conclude that $\varepsilon\left(  V_{F}\left(  h_{n}\right)  \right)  =0$ (by
(\ref{pf.thm.VF.main.a.eu=0}), applied to $u=V_{F}\left(  h_{n}\right)  $).
Thus, $\left(  \varepsilon\circ V_{F}\right)  \left(  h_{n}\right)
=\varepsilon\left(  V_{F}\left(  h_{n}\right)  \right)  =0$. Comparing this
with $\varepsilon\left(  h_{n}\right)  =0$, we find $\left(  \varepsilon\circ
V_{F}\right)  \left(  h_{n}\right)  =\varepsilon\left(  h_{n}\right)  $.

Now, forget that we fixed $n$. We thus have shown that $\left(  \varepsilon
\circ V_{F}\right)  \left(  h_{n}\right)  =\varepsilon\left(  h_{n}\right)  $
for each positive integer $n$. Since $\varepsilon\circ V_{F}$ and
$\varepsilon$ are $\mathbf{k}$-algebra homomorphisms, we thus conclude that%
\begin{equation}
\varepsilon\circ V_{F}=\varepsilon\label{pf.thm.VF.main.a.eVF}%
\end{equation}
(by (\ref{pf.thm.VF.main.a.f=g}), applied to $A=\mathbf{k}$ and $f=\varepsilon
\circ V_{F}$ and $g=\varepsilon$).

The two equalities (\ref{pf.thm.VF.main.a.DVF}) and
(\ref{pf.thm.VF.main.a.eVF}) show that $V_{F}$ is a $\mathbf{k}$-coalgebra
homomorphism (since $V_{F}$ is $\mathbf{k}$-linear). Since we also know that
$V_{F}$ is a $\mathbf{k}$-algebra homomorphism, we thus conclude that this map
$V_{F}$ is a $\mathbf{k}$-bialgebra homomorphism. Hence, $V_{F}$ is a
$\mathbf{k}$-Hopf algebra homomorphism\footnote{since any $\mathbf{k}%
$-bialgebra homomorphism between two $\mathbf{k}$-Hopf algebras is
automatically a $\mathbf{k}$-Hopf algebra homomorphism}. This proves Theorem
\ref{thm.VF.main} \textbf{(a)}.

\textbf{(c)} Let $m\in\mathbb{N}$. Then, Proposition \ref{prop.GF.basics}
\textbf{(a)} shows that the formal power series $G_{F,m}$ is the $m$-th degree
homogeneous component of $G_{F}$. Hence, this $G_{F,m}$ is homogeneous of
degree $m$.

Forget that we fixed $m$. We thus have shown that $G_{F,m}$ is homogeneous of
degree $m$ for each $m\in\mathbb{N}$.

Now, consider the ring $\left(  \mathbf{k}\left[  \left[  x_{1},x_{2}%
,x_{3},\ldots\right]  \right]  \right)  \left[  \left[  t\right]  \right]  $
of formal power series in one indeterminate $t$ over $\mathbf{k}\left[
\left[  x_{1},x_{2},x_{3},\ldots\right]  \right]  $. This ring has a subring
$\Lambda\left[  \left[  t\right]  \right]  $ that consists of those formal
power series whose coefficients belong to $\Lambda$. We consider $\left(
\mathbf{k}\left[  \left[  x_{1},x_{2},x_{3},\ldots\right]  \right]  \right)
\left[  \left[  t\right]  \right]  $ as a topological ring, where the topology
is the standard one induced by the standard topology on $\mathbf{k}\left[
\left[  x_{1},x_{2},x_{3},\ldots\right]  \right]  $ (not the discrete
topology!). This topological ring $\left(  \mathbf{k}\left[  \left[
x_{1},x_{2},x_{3},\ldots\right]  \right]  \right)  \left[  \left[  t\right]
\right]  $ is, of course, isomorphic to $\mathbf{k}\left[  \left[  x_{1}%
,x_{2},x_{3},\ldots,t\right]  \right]  $.

Now, for each $m\in\mathbb{N}$, we know that $G_{F,m}$ is homogeneous of
degree $m$, and therefore we conclude that%
\begin{equation}
G_{F,m}\left(  tx_{1},tx_{2},tx_{3},\ldots\right)  =t^{m}\cdot G_{F,m}
\label{pf.thm.VF.main.c.GFmtxi}%
\end{equation}
(by Lemma \ref{lem.hg-ps-txi}, applied to $u=G_{F,m}$ and $n=m$).

On the other hand,
\[
G_{F}=\sum_{m\in\mathbb{N}}G_{F,m}%
\]
(as we have seen in our proof of Proposition \ref{prop.GF.basics}
\textbf{(a)}). Comparing this with%
\[
G_{F}=\prod_{i=1}^{\infty}F\left(  x_{i}\right)  \ \ \ \ \ \ \ \ \ \ \left(
\text{by Proposition \ref{prop.GF.basics} \textbf{(b)}}\right)  ,
\]
we obtain%
\[
\prod_{i=1}^{\infty}F\left(  x_{i}\right)  =\sum_{m\in\mathbb{N}}G_{F,m}.
\]
Substituting $tx_{1},tx_{2},tx_{3},\ldots$ for $x_{1},x_{2},x_{3},\ldots$ on
both sides of this equality, we obtain%
\begin{align}
\prod_{i=1}^{\infty}F\left(  tx_{i}\right)   &  =\sum_{m\in\mathbb{N}%
}\underbrace{G_{F,m}\left(  tx_{1},tx_{2},tx_{3},\ldots\right)  }%
_{\substack{=t^{m}\cdot G_{F,m}\\\text{(by (\ref{pf.thm.VF.main.c.GFmtxi}))}%
}}\nonumber\\
&  =\sum_{m\in\mathbb{N}}t^{m}\cdot G_{F,m}. \label{pf.thm.VF.main.c.GFtxi}%
\end{align}

The map $V_{F}:\Lambda\rightarrow\Lambda$ is a $\mathbf{k}$-algebra
homomorphism. Hence, it induces a $\mathbf{k}\left[  \left[  t\right]
\right]  $-algebra homomorphism%
\[
V_{F}\left[  \left[  t\right]  \right]  :\Lambda\left[  \left[  t\right]
\right]  \rightarrow\Lambda\left[  \left[  t\right]  \right]
\]
that sends each formal power series $\sum_{n\geq0}a_{n}t^{n}\in\Lambda\left[
\left[  t\right]  \right]  $ (with $a_{n}\in\Lambda$) to $\sum_{n\geq0}%
V_{F}\left(  a_{n}\right)  t^{n}$. Consider this $\mathbf{k}\left[  \left[
t\right]  \right]  $-algebra homomorphism $V_{F}\left[  \left[  t\right]
\right]  $. (Note that $V_{F}\left[  \left[  t\right]  \right]  $ is
continuous with respect to an appropriate topology on $\Lambda\left[  \left[
t\right]  \right]  $, but we shall not use this fact.)

Define the formal power series
\[
H\left(  t\right)  =\prod_{i=1}^{\infty}\left(  1-x_{i}t\right)  ^{-1}%
\in\left(  \mathbf{k}\left[  \left[  x_{1},x_{2},x_{3},\ldots\right]  \right]
\right)  \left[  \left[  t\right]  \right]  .
\]
Then, from \cite[(2.4.1)]{GriRei}, we know that%
\[
H\left(  t\right)  =\sum_{n\geq0}\underbrace{h_{n}\left(  \mathbf{x}\right)
}_{=h_{n}}t^{n}=\sum_{n\geq0}h_{n}t^{n}\in\Lambda\left[  \left[  t\right]
\right]  .
\]
Hence, $\left(  V_{F}\left[  \left[  t\right]  \right]  \right)  \left(
H\left(  t\right)  \right)  $ is well-defined. Moreover, $H\left(  t\right)
=\sum_{n\geq0}h_{n}t^{n}$ shows that the constant term of $H\left(  t\right)
$ is $h_{0}=1$. Thus, $\operatorname*{lder}\left(  H\left(  t\right)  \right)
$ is well-defined.

Applying the map $V_{F}\left[  \left[  t\right]  \right]  $ to both sides of
the equality $H\left(  t\right)  =\sum_{n\geq0}h_{n}t^{n}$, we obtain%
\begin{align*}
\left(  V_{F}\left[  \left[  t\right]  \right]  \right)  \left(  H\left(
t\right)  \right)   &  =\left(  V_{F}\left[  \left[  t\right]  \right]
\right)  \left(  \sum_{n\geq0}h_{n}t^{n}\right)  =\underbrace{\sum_{n\geq0}%
}_{=\sum_{n\in\mathbb{N}}}\underbrace{V_{F}\left(  h_{n}\right)
}_{\substack{=G_{F,n}\\\text{(by Theorem \ref{thm.VF.main} \textbf{(b)}%
,}\\\text{applied to }m=n\text{)}}}t^{n}\\
&  \ \ \ \ \ \ \ \ \ \ \ \ \ \ \ \ \ \ \ \ \left(  \text{by the definition of
}V_{F}\left[  \left[  t\right]  \right]  \right) \\
&  =\sum_{n\in\mathbb{N}}\underbrace{G_{F,n}t^{n}}_{=t^{n}\cdot G_{F,n}}%
=\sum_{n\in\mathbb{N}}t^{n}\cdot G_{F,n}=\sum_{m\in\mathbb{N}}t^{m}\cdot
G_{F,m}%
\end{align*}
(here, we have renamed the summation index $n$ as $m$). Comparing this with
(\ref{pf.thm.VF.main.c.GFtxi}), we find%
\begin{equation}
\left(  V_{F}\left[  \left[  t\right]  \right]  \right)  \left(  H\left(
t\right)  \right)  =\prod_{i=1}^{\infty}F\left(  tx_{i}\right)  .
\label{pf.thm.VF.main.c.GFtxi2}%
\end{equation}

From \cite[Exercise 2.5.21]{GriRei}, we know that%
\[
\sum_{m\geq0}p_{m+1}t^{m}=\dfrac{H^{\prime}\left(  t\right)  }{H\left(
t\right)  }.
\]
On the other hand, the definition of $\operatorname*{lder}\left(  H\left(
t\right)  \right)  $ yields%
\[
\operatorname*{lder}\left(  H\left(  t\right)  \right)  =\dfrac{H^{\prime
}\left(  t\right)  }{H\left(  t\right)  }.
\]
Comparing these two equalities, we obtain%
\[
\operatorname*{lder}\left(  H\left(  t\right)  \right)  =\sum_{m\geq0}%
p_{m+1}t^{m}=\sum_{n\geq0}p_{n+1}t^{n}%
\]
(here, we have renamed the summation index $m$ as $n$). Applying the map
$V_{F}\left[  \left[  t\right]  \right]  $ to both sides of this equality, we
find%
\begin{align}
\left(  V_{F}\left[  \left[  t\right]  \right]  \right)  \left(
\operatorname*{lder}\left(  H\left(  t\right)  \right)  \right)   &  =\left(
V_{F}\left[  \left[  t\right]  \right]  \right)  \left(  \sum_{n\geq0}%
p_{n+1}t^{n}\right)  =\underbrace{\sum_{n\geq0}}_{=\sum_{n\in\mathbb{N}}}%
V_{F}\left(  p_{n+1}\right)  t^{n}\nonumber\\
&  \ \ \ \ \ \ \ \ \ \ \ \ \ \ \ \ \ \ \ \ \left(  \text{by the definition of
}V_{F}\left[  \left[  t\right]  \right]  \right) \nonumber\\
&  =\sum_{n\in\mathbb{N}}V_{F}\left(  p_{n+1}\right)  t^{n}.
\label{pf.thm.VF.main.c.3}%
\end{align}

On the other hand, the constant term of $H\left(  t\right)  $ is $1$ (as we
have shown above). Hence, Proposition \ref{prop.lder.functorial} (applied to
$R=\Lambda$ and $S=\Lambda$ and $\alpha=V_{F}$ and $u=H\left(  t\right)  $)
shows that the constant term of the power series $\left(  V_{F}\left[  \left[
t\right]  \right]  \right)  \left(  H\left(  t\right)  \right)  $ is $1$, and
that we have%
\begin{equation}
\operatorname*{lder}\left(  \left(  V_{F}\left[  \left[  t\right]  \right]
\right)  \left(  H\left(  t\right)  \right)  \right)  =\left(  V_{F}\left[
\left[  t\right]  \right]  \right)  \left(  \operatorname*{lder}\left(
H\left(  t\right)  \right)  \right)  . \label{pf.thm.VF.main.c.4}%
\end{equation}

Now, (\ref{pf.thm.VF.main.c.3}) yields%
\begin{align}
\sum_{n\in\mathbb{N}}V_{F}\left(  p_{n+1}\right)  t^{n}  &  =\left(
V_{F}\left[  \left[  t\right]  \right]  \right)  \left(  \operatorname*{lder}%
\left(  H\left(  t\right)  \right)  \right) \nonumber\\
&  =\operatorname*{lder}\left(  \underbrace{\left(  V_{F}\left[  \left[
t\right]  \right]  \right)  \left(  H\left(  t\right)  \right)  }%
_{\substack{=\prod_{i=1}^{\infty}F\left(  tx_{i}\right)  \\\text{(by
(\ref{pf.thm.VF.main.c.GFtxi2}))}}}\right)  \ \ \ \ \ \ \ \ \ \ \left(
\text{by (\ref{pf.thm.VF.main.c.4})}\right) \nonumber\\
&  =\operatorname*{lder}\left(  \prod_{i=1}^{\infty}F\left(  tx_{i}\right)
\right)  . \label{pf.thm.VF.main.c.5}%
\end{align}

Now, $F\left(  tx_{1}\right)  ,F\left(  tx_{2}\right)  ,F\left(
tx_{3}\right)  ,\ldots$ are infinitely many formal power series in $t$ over
the ring $\mathbf{k}\left[  \left[  x_{1},x_{2},x_{3},\ldots\right]  \right]
$ whose constant terms are $1$\ \ \ \ \footnote{\textit{Proof.} Let
$i\in\left\{  1,2,3,\ldots\right\}  $. We must prove that $F\left(
tx_{i}\right)  $ is a formal power series in $t$ over the ring $\mathbf{k}%
\left[  \left[  x_{1},x_{2},x_{3},\ldots\right]  \right]  $ whose constant
term is $1$.
\par
Recall that $F=\sum_{n\in\mathbb{N}}f_{n}t^{n}$. Substituting $tx_{i}$ for $t$
in this equality, we find $F\left(  tx_{i}\right)  =\sum_{n\in\mathbb{N}}%
f_{n}\underbrace{\left(  tx_{i}\right)  ^{n}}_{=t^{n}x_{i}^{n}=x_{i}^{n}t^{n}%
}=\sum_{n\in\mathbb{N}}f_{n}x_{i}^{n}t^{n}$. Hence, $F\left(  tx_{i}\right)  $
is a formal power series in $t$ over the ring $\mathbf{k}\left[  \left[
x_{1},x_{2},x_{3},\ldots\right]  \right]  $ whose constant term is
$f_{0}\underbrace{x_{i}^{0}}_{=1}=f_{0}=1$. This is exactly what we wanted to
prove. Thus, our proof is complete.}. The infinite product $\prod
_{i=1}^{\infty}F\left(  tx_{i}\right)  $ converges (as we know from
(\ref{pf.thm.VF.main.c.GFtxi})). Hence, Proposition
\ref{prop.lder.lder(infprod)} (applied to $R=\mathbf{k}\left[  \left[
x_{1},x_{2},x_{3},\ldots\right]  \right]  $ and $u_{i}=F\left(  tx_{i}\right)
$) yields that the infinite sum $\sum_{i=1}^{\infty}\operatorname*{lder}%
\left(  F\left(  tx_{i}\right)  \right)  $ converges as well, and that we have%
\[
\operatorname*{lder}\left(  \prod_{i=1}^{\infty}F\left(  tx_{i}\right)
\right)  =\sum_{i=1}^{\infty}\operatorname*{lder}\left(  F\left(
tx_{i}\right)  \right)  .
\]
Hence, (\ref{pf.thm.VF.main.c.5}) becomes%
\begin{align}
\sum_{n\in\mathbb{N}}V_{F}\left(  p_{n+1}\right)  t^{n}  &
=\operatorname*{lder}\left(  \prod_{i=1}^{\infty}F\left(  tx_{i}\right)
\right)  =\sum_{i=1}^{\infty}\operatorname*{lder}\left(  F\left(
\underbrace{tx_{i}}_{=x_{i}t}\right)  \right) \nonumber\\
&  =\sum_{i=1}^{\infty}\underbrace{\operatorname*{lder}\left(  F\left(
x_{i}t\right)  \right)  }_{\substack{=x_{i}\cdot\left(  \operatorname*{lder}%
F\right)  \left(  x_{i}t\right)  \\\text{(by Proposition
\ref{prop.lder.lder(lambdat)},}\\\text{applied to }R=\mathbf{k}\left[  \left[
x_{1},x_{2},x_{3},\ldots\right]  \right]  \text{ and }u=F\text{ and }%
\lambda=x_{i}\text{)}}}\nonumber\\
&  =\sum_{i=1}^{\infty}x_{i}\cdot\left(  \operatorname*{lder}F\right)  \left(
x_{i}t\right)  . \label{pf.thm.VF.main.c.7}%
\end{align}

Now, let $i\in\left\{  1,2,3,\ldots\right\}  $ be arbitrary. The definition of
$\operatorname*{lder}F$ yields%
\[
\operatorname*{lder}F=\dfrac{F^{\prime}}{F}=\sum_{n\in\mathbb{N}}\gamma
_{n}t^{n}.
\]
Substituting $x_{i}t$ for $t$ on both sides of this equality, we obtain%
\begin{equation}
\left(  \operatorname*{lder}F\right)  \left(  x_{i}t\right)  =\sum
_{n\in\mathbb{N}}\gamma_{n}\underbrace{\left(  x_{i}t\right)  ^{n}}%
_{=x_{i}^{n}t^{n}}=\sum_{n\in\mathbb{N}}\gamma_{n}x_{i}^{n}t^{n}.
\label{pf.thm.VF.main.c.8}%
\end{equation}

Forget that we fixed $i$. We thus have proved (\ref{pf.thm.VF.main.c.8}) for
each $i\in\left\{  1,2,3,\ldots\right\}  $. Now, (\ref{pf.thm.VF.main.c.7})
becomes%
\begin{align*}
\sum_{n\in\mathbb{N}}V_{F}\left(  p_{n+1}\right)  t^{n}  &  =\sum
_{i=1}^{\infty}x_{i}\cdot\underbrace{\left(  \operatorname*{lder}F\right)
\left(  x_{i}t\right)  }_{\substack{=\sum_{n\in\mathbb{N}}\gamma_{n}x_{i}%
^{n}t^{n}\\\text{(by (\ref{pf.thm.VF.main.c.8}))}}}=\sum_{i=1}^{\infty}%
x_{i}\cdot\sum_{n\in\mathbb{N}}\gamma_{n}x_{i}^{n}t^{n}=\underbrace{\sum
_{i=1}^{\infty}\ \ \sum_{n\in\mathbb{N}}}_{=\sum_{n\in\mathbb{N}}%
\ \ \sum_{i=1}^{\infty}}\underbrace{x_{i}\gamma_{n}x_{i}^{n}}_{=\gamma
_{n}x_{i}^{n+1}}t^{n}\\
&  =\sum_{n\in\mathbb{N}}\ \ \sum_{i=1}^{\infty}\gamma_{n}x_{i}^{n+1}%
t^{n}=\sum_{n\in\mathbb{N}}\gamma_{n}\underbrace{\left(  \sum_{i=1}^{\infty
}x_{i}^{n+1}\right)  }_{\substack{=x_{1}^{n+1}+x_{2}^{n+1}+x_{3}^{n+1}%
+\cdots\\=p_{n+1}\\\text{(by the definition of }p_{n+1}\text{)}}}t^{n}%
=\sum_{n\in\mathbb{N}}\gamma_{n}p_{n+1}t^{n}.
\end{align*}
Comparing coefficients before $t^{n}$ in this equality, we conclude that%
\begin{equation}
V_{F}\left(  p_{n+1}\right)  =\gamma_{n}p_{n+1}\ \ \ \ \ \ \ \ \ \ \text{for
each }n\in\mathbb{N}. \label{pf.thm.VF.main.c.9}%
\end{equation}

Now, let $n$ be a positive integer. Then, $n-1\in\mathbb{N}$. Hence,
(\ref{pf.thm.VF.main.c.9}) (applied to $n-1$ instead of $n$) yields
\[
V_{F}\left(  p_{\left(  n-1\right)  +1}\right)  =\gamma_{n-1}p_{\left(
n-1\right)  +1}.
\]
In other words, $V_{F}\left(  p_{n}\right)  =\gamma_{n-1}p_{n}$ (since
$\left(  n-1\right)  +1=1$). This proves Theorem \ref{thm.VF.main}
\textbf{(c)}.
\end{proof}
\end{verlong}

Our next (and last) few results are not generalizations of any properties of
Petrie functions. To state them, we take a somewhat more high-level point of
view. We forget that we fixed the power series $F$. Instead, for
\textbf{every} power series $F\in\mathbf{k}\left[  \left[  t\right]  \right]
$ whose constant term is $1$, we define a power series $G_{F}$ according to
Definition \ref{def.GF.GF} \textbf{(d)}. Moreover, for \textbf{every} power
series $F\in\mathbf{k}\left[  \left[  t\right]  \right]  $ whose constant term
is $1$, and for \textbf{every} $m\in\mathbb{N}$, we define a power series
$G_{F,m}$ according to Definition \ref{def.GF.GF} \textbf{(e)}. We then have
the following:

\begin{proposition}
\label{prop.GF.GAB1}Let $A$ and $B$ be two power series in $\mathbf{k}\left[
\left[  t\right]  \right]  $ whose constant terms are $1$. Then:

\textbf{(a)} We have $G_{AB}=G_{A}G_{B}$.

\textbf{(b)} Let $n\in\mathbb{N}$. We have $G_{AB,n}=\sum_{i=0}^{n}%
G_{A,i}G_{B,n-i}$.
\end{proposition}

\begin{vershort}
\begin{proof}
[Proof of Proposition \ref{prop.GF.GAB1} (sketched).]The power series $AB$ has
constant term $1$ (since $A$ and $B$ have constant term $1$). Thus, $G_{AB}$
is well-defined, as is $G_{AB,n}$ for each $n\in\mathbb{N}$.

\textbf{(a)} Proposition \ref{prop.GF.basics} \textbf{(b)} yields that
$G_{F}=\prod_{i=1}^{\infty}F\left(  x_{i}\right)  $ for any power series
$F\in\mathbf{k}\left[  \left[  t\right]  \right]  $ whose constant term is
$1$. Applying this to $F=A$ and to $F=B$ and to $F=AB$ yields $G_{A}%
=\prod_{i=1}^{\infty}A\left(  x_{i}\right)  $ and $G_{B}=\prod_{i=1}^{\infty
}B\left(  x_{i}\right)  $ and
\[
G_{AB}=\prod_{i=1}^{\infty}\underbrace{\left(  AB\right)  \left(
x_{i}\right)  }_{=A\left(  x_{i}\right)  B\left(  x_{i}\right)  }=\prod
_{i=1}^{\infty}\left(  A\left(  x_{i}\right)  B\left(  x_{i}\right)  \right)
=\underbrace{\left(  \prod_{i=1}^{\infty}A\left(  x_{i}\right)  \right)
}_{=G_{A}}\underbrace{\left(  \prod_{i=1}^{\infty}B\left(  x_{i}\right)
\right)  }_{=G_{B}}=G_{A}G_{B}.
\]
This proves Proposition \ref{prop.GF.GAB1} \textbf{(a)}.

\textbf{(b)} Proposition \ref{prop.GF.GAB1} \textbf{(a)} yields $G_{AB}%
=G_{A}G_{B}$. Thus, the $n$-th degree homogeneous components of $G_{AB}$ and
of $G_{A}G_{B}$ are equal. But this means precisely that $G_{AB,n}=\sum
_{i=0}^{n}G_{A,i}G_{B,n-i}$ (by Proposition \ref{prop.GF.basics}
\textbf{(a)}). This proves Proposition \ref{prop.GF.GAB1} \textbf{(b)}.
\end{proof}
\end{vershort}

\begin{verlong}
\begin{proof}
[Proof of Proposition \ref{prop.GF.GAB1}.]The constant terms of the power
series $A$ and $B$ are $1$ (by assumption). Hence, the constant term of the
power series $AB$ is $1$ as well (since the constant term of the product of
two power series equals the product of their constant terms). Thus, $G_{AB}$
is well-defined.

\textbf{(a)} Proposition \ref{prop.GF.basics} \textbf{(b)} yields that%
\begin{equation}
G_{F}=\prod_{i=1}^{\infty}F\left(  x_{i}\right)  \label{pf.prop.GF.GAB1.a.1}%
\end{equation}
for any power series $F\in\mathbf{k}\left[  \left[  t\right]  \right]  $ whose
constant term is $1$.

Recall that the constant term of the power series $AB$ is $1$. Hence,
(\ref{pf.prop.GF.GAB1.a.1}) (applied to $F=AB$) yields%
\begin{equation}
G_{AB}=\prod_{i=1}^{\infty}\underbrace{\left(  AB\right)  \left(
x_{i}\right)  }_{=A\left(  x_{i}\right)  B\left(  x_{i}\right)  }=\prod
_{i=1}^{\infty}\left(  A\left(  x_{i}\right)  B\left(  x_{i}\right)  \right)
. \label{pf.prop.GF.GAB1.a.2}%
\end{equation}

On the other hand, (\ref{pf.prop.GF.GAB1.a.1}) (applied to $F=A$) yields%
\[
G_{A}=\prod_{i=1}^{\infty}A\left(  x_{i}\right)  .
\]
Moreover, (\ref{pf.prop.GF.GAB1.a.1}) (applied to $F=B$) yields%
\[
G_{B}=\prod_{i=1}^{\infty}B\left(  x_{i}\right)  .
\]
Multiplying these two equalities, we find%
\[
G_{A}G_{B}=\left(  \prod_{i=1}^{\infty}A\left(  x_{i}\right)  \right)  \left(
\prod_{i=1}^{\infty}B\left(  x_{i}\right)  \right)  =\prod_{i=1}^{\infty
}\left(  A\left(  x_{i}\right)  B\left(  x_{i}\right)  \right)  .
\]
Comparing this with (\ref{pf.prop.GF.GAB1.a.2}), we find $G_{AB}=G_{A}G_{B}$.
This proves Proposition \ref{prop.GF.GAB1} \textbf{(a)}.

\textbf{(b)} Forget that we fixed $n$.

In the proof of Proposition \ref{prop.GF.basics} \textbf{(a)}, we have shown
that%
\begin{equation}
G_{F}=\sum_{m\in\mathbb{N}}G_{F,m} \label{pf.prop.GF.GAB1.b.1}%
\end{equation}
for any power series $F\in\mathbf{k}\left[  \left[  t\right]  \right]  $ whose
constant term is $1$. (Indeed, this is precisely the equality
(\ref{pf.prop.GF.basics.a.GF=sum}).)

Recall that the constant term of the power series $A$ is $1$. Hence,
(\ref{pf.prop.GF.GAB1.b.1}) (applied to $F=A$) yields%
\begin{equation}
G_{A}=\sum_{m\in\mathbb{N}}G_{A,m}=\sum_{i\in\mathbb{N}}G_{A,i}
\label{pf.prop.GF.GAB1.b.GA=}%
\end{equation}
(here, we have renamed the summation index $m$ as $i$).

Furthermore, recall that the constant term of the power series $B$ is $1$.
Hence, (\ref{pf.prop.GF.GAB1.b.1}) (applied to $F=B$) yields%
\begin{equation}
G_{B}=\sum_{m\in\mathbb{N}}G_{B,m}=\sum_{j\in\mathbb{N}}G_{B,j}
\label{pf.prop.GF.GAB1.b.GB=}%
\end{equation}
(here, we have renamed the summation index $m$ as $j$).

Now, Proposition \ref{prop.GF.GAB1} \textbf{(a)} yields
\begin{align}
G_{AB}  &  =G_{A}G_{B}=\left(  \sum_{i\in\mathbb{N}}G_{A,i}\right)  \left(
\sum_{j\in\mathbb{N}}G_{B,j}\right)  \ \ \ \ \ \ \ \ \ \ \left(  \text{by
(\ref{pf.prop.GF.GAB1.b.GA=}) and (\ref{pf.prop.GF.GAB1.b.GB=})}\right)
\nonumber\\
&  =\underbrace{\sum_{i\in\mathbb{N}}\ \ \sum_{j\in\mathbb{N}}}%
_{\substack{=\sum_{\left(  i,j\right)  \in\mathbb{N}\times\mathbb{N}}%
\\=\sum_{n\in\mathbb{N}}\ \ \sum_{\substack{\left(  i,j\right)  \in
\mathbb{N}\times\mathbb{N};\\i+j=n}}}}G_{A,i}G_{B,j}=\sum_{n\in\mathbb{N}%
}\ \ \sum_{\substack{\left(  i,j\right)  \in\mathbb{N}\times\mathbb{N}%
;\\i+j=n}}G_{A,i}G_{B,j}. \label{pf.prop.GF.GAB1.b.GAB=}%
\end{align}

If $n\in\mathbb{N}$, then the power series $\sum_{\substack{\left(
i,j\right)  \in\mathbb{N}\times\mathbb{N};\\i+j=n}}G_{A,i}G_{B,j}\in
\mathbf{k}\left[  \left[  x_{1},x_{2},x_{3},\ldots\right]  \right]  $ is
homogeneous of degree $n$\ \ \ \ \footnote{\textit{Proof.} Let $n\in
\mathbb{N}$. We must prove that the power series $\sum_{\substack{\left(
i,j\right)  \in\mathbb{N}\times\mathbb{N};\\i+j=n}}G_{A,i}G_{B,j}$ is
homogeneous of degree $n$.
\par
Let $\left(  i,j\right)  \in\mathbb{N}\times\mathbb{N}$ be such that $i+j=n$.
Then, Proposition \ref{prop.GF.basics} \textbf{(a)} (applied to $m=i$ and
$F=A$) shows that the formal power series $G_{A,i}$ is the $i$-th degree
homogeneous component of $G_{A}$. Hence, this formal power series $G_{A,i}$ is
homogeneous of degree $i$.
\par
Moreover, Proposition \ref{prop.GF.basics} \textbf{(a)} (applied to $m=j$ and
$F=B$) shows that the formal power series $G_{B,j}$ is the $j$-th degree
homogeneous component of $G_{B}$. Hence, this formal power series $G_{B,j}$ is
homogeneous of degree $j$.
\par
Now we have shown that the two power series $G_{A,i}$ and $G_{B,j}$ are
homogeneous of degrees $i$ and $j$, respectively. Thus, their product
$G_{A,i}G_{B,j}$ is homogeneous of degree $i+j$. In other words,
$G_{A,i}G_{B,j}$ is homogeneous of degree $n$ (since $i+j=n$).
\par
Forget that we fixed $\left(  i,j\right)  $. We thus have shown that
$G_{A,i}G_{B,j}$ is homogeneous of degree $n$ whenever $\left(  i,j\right)
\in\mathbb{N}\times\mathbb{N}$ satisfies $i+j=n$. In other words, each addend
of the sum $\sum_{\substack{\left(  i,j\right)  \in\mathbb{N}\times
\mathbb{N};\\i+j=n}}G_{A,i}G_{B,j}$ is homogeneous of degree $n$. Hence, the
entire sum $\sum_{\substack{\left(  i,j\right)  \in\mathbb{N}\times
\mathbb{N};\\i+j=n}}G_{A,i}G_{B,j}$ is homogeneous of degree $n$ as well. This
completes our proof.}. Thus, the equality (\ref{pf.prop.GF.GAB1.b.GAB=})
reveals that the family
\[
\left(  \sum_{\substack{\left(  i,j\right)  \in\mathbb{N}\times\mathbb{N}%
;\\i+j=n}}G_{A,i}G_{B,j}\right)  _{n\in\mathbb{N}}%
\]
is the homogeneous decomposition of $G_{AB}$ (by the definition of a
homogeneous decomposition). Hence, for each $n\in\mathbb{N}$, we have%
\begin{align}
&  \sum_{\substack{\left(  i,j\right)  \in\mathbb{N}\times\mathbb{N}%
;\\i+j=n}}G_{A,i}G_{B,j}\nonumber\\
&  =\left(  \text{the }n\text{-th degree homogeneous component of }%
G_{AB}\right)  . \label{pf.prop.GF.GAB1.b.hgc1}%
\end{align}

Now, let $n\in\mathbb{N}$. Recall that the constant term of the power series
$AB$ is $1$. Hence, Proposition \ref{prop.GF.basics} \textbf{(a)} (applied to
$F=AB$ and $m=n$) yields that the formal power series $G_{AB,n}$ is the $n$-th
degree homogeneous component of $G_{AB}$. In other words,%
\[
G_{AB,n}=\left(  \text{the }n\text{-th degree homogeneous component of }%
G_{AB}\right)  .
\]
Comparing this with (\ref{pf.prop.GF.GAB1.b.hgc1}), we obtain%
\begin{align*}
G_{AB,n}  &  =\sum_{\substack{\left(  i,j\right)  \in\mathbb{N}\times
\mathbb{N};\\i+j=n}}G_{A,i}G_{B,j}=\sum_{i\in\left\{  0,1,\ldots,n\right\}
}G_{A,i}G_{B,n-i}\\
&  \ \ \ \ \ \ \ \ \ \ \ \ \ \ \ \ \ \ \ \ \left(
\begin{array}
[c]{c}%
\text{here, we have substituted }\left(  i,n-i\right)  \text{ for }\left(
i,j\right)  \text{ in the sum,}\\
\text{since the map }\left\{  0,1,\ldots,n\right\}  \rightarrow\left\{
\left(  i,j\right)  \in\mathbb{N}\times\mathbb{N}\ \mid\ i+j=n\right\} \\
\text{that sends each }i\text{ to }\left(  i,n-i\right)  \text{ is a
bijection}%
\end{array}
\right) \\
&  =\sum_{i=0}^{n}G_{A,i}G_{B,n-i}.
\end{align*}
This proves Proposition \ref{prop.GF.GAB1} \textbf{(b)}.
\end{proof}
\end{verlong}

Finally, we can express the image of the symmetric function $G_{F,n}$ under
the antipode of $\Lambda$ (a result suggested by Sasha Postnikov):

\begin{theorem}
\label{thm.GF.antipode}Let $S$ be the antipode of the Hopf algebra $\Lambda$.
Let $F\in\mathbf{k}\left[  \left[  t\right]  \right]  $ be a formal power
series whose constant term is $1$. Then, for each $n\in\mathbb{N}$, we have%
\begin{equation}
S\left(  G_{F,n}\right)  =G_{F^{-1},n}. \label{eq.thm.GF.antipode.eq}%
\end{equation}

\end{theorem}

\begin{vershort}
\begin{proof}
[Proof of Theorem \ref{thm.GF.antipode} (sketched).]Let $\Delta$ and
$\varepsilon$ be the comultiplication and the counit of the Hopf algebra
$\Lambda$. Let $\eta:\mathbf{k}\rightarrow\Lambda$ be the map that sends each
$u\in\mathbf{k}$ to $u\cdot1_{\Lambda}\in\Lambda$. It is easy to see that each
positive integer $n$ satisfies%
\begin{equation}
\varepsilon\left(  G_{F,n}\right)  =0. \label{pf.thm.GF.antipode.short.eG=0}%
\end{equation}
(Indeed, $\varepsilon$ sends each homogeneous symmetric function of positive
degree to $0$; but $G_{F,n}$ is a homogeneous symmetric function of degree
$n$.) Also, Proposition \ref{prop.GF.basics} \textbf{(e)} yields $G_{F,0}=1$
and thus $\varepsilon\left(  G_{F,0}\right)  =1$.

We shall use the convolution $\star$ introduced in Definition
\ref{def.convolution}. The antipode $S$ of $\Lambda$ is the $\star$-inverse of
the map $\operatorname*{id}\nolimits_{\Lambda}:\Lambda\rightarrow\Lambda$ (by
the definition of the antipode of a Hopf algebra). In other words,%
\[
S\star\operatorname*{id}\nolimits_{\Lambda}=\operatorname*{id}%
\nolimits_{\Lambda}\star S=\eta\circ\varepsilon
\]
(since $\eta\circ\varepsilon:\Lambda\rightarrow\Lambda$ is the neutral element
with respect to $\star$). We also have $S\left(  1\right)  =1$ (by one of the
fundamental properties of the antipode of a Hopf algebra).

Now, for each $n\in\mathbb{N}$, we have%
\[
\Delta\left(  G_{F,n}\right)  =\sum_{i=0}^{n}G_{F,i}\otimes G_{F,n-i}%
\]
(by Theorem \ref{thm.DeltaGFm}) and therefore%
\begin{align*}
\left(  S\star\operatorname*{id}\nolimits_{\Lambda}\right)  \left(
G_{F,n}\right)   &  =\sum_{i=0}^{n}S\left(  G_{F,i}\right)  \cdot
\underbrace{\operatorname*{id}\left(  G_{F,n-i}\right)  }_{=G_{F,n-i}%
}\ \ \ \ \ \ \ \ \ \ \left(  \text{by the definition of convolution}\right) \\
&  =\sum_{i=0}^{n}S\left(  G_{F,i}\right)  \cdot G_{F,n-i},
\end{align*}
so that%
\begin{align}
\sum_{i=0}^{n}S\left(  G_{F,i}\right)  \cdot G_{F,n-i}  &
=\underbrace{\left(  S\star\operatorname*{id}\nolimits_{\Lambda}\right)
}_{=\eta\circ\varepsilon}\left(  G_{F,n}\right)  =\left(  \eta\circ
\varepsilon\right)  \left(  G_{F,n}\right) \nonumber\\
&  =\left[  n=0\right]  \label{pf.thm.GF.antipode.short.SGG}%
\end{align}
(the last equality sign here follows easily from
(\ref{pf.thm.GF.antipode.short.eG=0}) and from $\varepsilon\left(
G_{F,0}\right)  =1$).

On the other hand, the constant term of the power series $F^{-1}$ is $1$
(since the constant term of $F$ is $1$). Hence, $G_{F^{-1},n}$ is well-defined
for each $n\in\mathbb{N}$.

For each $n\in\mathbb{N}$, we have%
\[
G_{F^{-1}F,n}=\sum_{i=0}^{n}G_{F^{-1},i}G_{F,n-i}%
\]
(by Proposition \ref{prop.GF.GAB1} \textbf{(b)}, applied to $A=F^{-1}$ and
$B=F$) and thus%
\begin{equation}
\sum_{i=0}^{n}G_{F^{-1},i}G_{F,n-i}=G_{F^{-1}F,n}=G_{1,n}=\left[  n=0\right]
\label{pf.thm.GF.antipode.short.GG}%
\end{equation}
(the last equality sign here has been shown in Example \ref{exa.GF.GF123}
\textbf{(b)}).

Recall that $G_{F,0}=1$. Hence, the equalities
(\ref{pf.thm.GF.antipode.short.GG}) (for all $n\in\mathbb{N}$) can be
recursively solved for $G_{F^{-1},0},G_{F^{-1},1},G_{F^{-1},2},\ldots$
(starting with $G_{F,0},G_{F,1},G_{F,2},\ldots$); we obtain%
\[
G_{F^{-1},n}=\left[  n=0\right]  -\sum_{i=0}^{n-1}G_{F^{-1},i}G_{F,n-i}%
\ \ \ \ \ \ \ \ \ \ \text{for each }n\in\mathbb{N}.
\]
The same argument, but using the equalities
(\ref{pf.thm.GF.antipode.short.SGG}) instead of
(\ref{pf.thm.GF.antipode.short.GG}), yields%
\[
S\left(  G_{F,n}\right)  =\left[  n=0\right]  -\sum_{i=0}^{n-1}S\left(
G_{F,i}\right)  \cdot G_{F,n-i}\ \ \ \ \ \ \ \ \ \ \text{for each }%
n\in\mathbb{N}.
\]
Comparing these two recursive formulas for $G_{F^{-1},n}$ and $S\left(
G_{F,n}\right)  $, we see that they are the same. Thus, by strong induction on
$n$, we conclude that%
\[
S\left(  G_{F,n}\right)  =G_{F^{-1},n}\ \ \ \ \ \ \ \ \ \ \text{for each }%
n\in\mathbb{N}\text{.}%
\]
This completes the proof of Theorem \ref{thm.GF.antipode}.
\end{proof}
\end{vershort}

\begin{verlong}
Our proof of this theorem will rely on the following simple lemma\footnote{We
are using Convention \ref{conv.iverson} again.}:

\begin{lemma}
\label{lem.GF.G1n}For any $n\in\mathbb{N}$, we have $G_{1,n}=\left[
n=0\right]  $. (Here, the \textquotedblleft$1$\textquotedblright\ in
\textquotedblleft$G_{1,n}$\textquotedblright\ means the constant power series
$1\in\mathbf{k}\left[  \left[  t\right]  \right]  $.)
\end{lemma}

\begin{proof}
[Proof of Lemma \ref{lem.GF.G1n}.]Let $F$ be the constant power series
$1\in\mathbf{k}\left[  \left[  t\right]  \right]  $. Then, the constant term
of $F$ is $1$; thus, $G_{F,n}$ is well-defined for each $n\in\mathbb{N}$.

We shall use the notations introduced in Definition \ref{def.GF.GF}. Thus,
$F=\sum_{n\in\mathbb{N}}f_{n}t^{n}$. On the other hand,%
\[
\sum_{n\in\mathbb{N}}\left[  n=0\right]  t^{n}=\underbrace{\left[  0=0\right]
}_{\substack{=1\\\text{(since }0=0\text{)}}}\underbrace{t^{0}}_{=1}%
+\sum_{\substack{n\in\mathbb{N};\\n\neq0}}\underbrace{\left[  n=0\right]
}_{\substack{=0\\\text{(since }n\neq0\text{)}}}t^{n}=1+\underbrace{\sum
_{\substack{n\in\mathbb{N};\\n\neq0}}0t^{n}}_{=0}=1=F
\]
(since $F=1$), so that%
\[
\sum_{n\in\mathbb{N}}\left[  n=0\right]  t^{n}=F=\sum_{n\in\mathbb{N}}%
f_{n}t^{n}.
\]
This is an equality of power series. Comparing coefficients in front of
$t^{n}$ in this equality, we thus obtain%
\begin{equation}
\left[  n=0\right]  =f_{n}\ \ \ \ \ \ \ \ \ \ \text{for each }n\in
\mathbb{N}\text{.} \label{pf.lem.GF.G1n.1}%
\end{equation}

Now, let $n\in\mathbb{N}$. We must prove that $G_{1,n}=\left[  n=0\right]  $.
If $n=0$, then this is obvious\footnote{\textit{Proof.} Assume that $n=0$.
Thus, $G_{1,n}=G_{1,0}=1$ (by Proposition \ref{prop.GF.basics} \textbf{(e)},
applied to $1$ instead of $F$). However, from $n=0$, we obtain $\left[
n=0\right]  =1$. Comparing this with $G_{1,n}=1$, we obtain $G_{1,n}=\left[
n=0\right]  $. Thus, we have proved that $G_{1,n}=\left[  n=0\right]  $ under
the assumption that $n=0$.}. Thus, for the rest of this proof, we WLOG assume
that $n\neq0$. Hence, we don't have $n=0$. Thus, we have $\left[  n=0\right]
=0$. On the other hand, we have
\begin{equation}
f_{\alpha}=0\ \ \ \ \ \ \ \ \ \ \text{for any }\alpha\in\operatorname*{WC}%
\text{ satisfying }\left\vert \alpha\right\vert =n \label{pf.lem.GF.G1n.3}%
\end{equation}
\footnote{\textit{Proof of (\ref{pf.lem.GF.G1n.3}):} Let $\alpha
\in\operatorname*{WC}$ satisfy $\left\vert \alpha\right\vert =n$.
\par
If we had $\alpha=\varnothing$, then we would have $\left\vert \alpha
\right\vert =\left\vert \varnothing\right\vert =0$, which would contradict
$\left\vert \alpha\right\vert =n\neq0$. Hence, we cannot have $\alpha
=\varnothing$. Thus, we have $\alpha\neq\varnothing$. Now, $\alpha=\left(
\alpha_{1},\alpha_{2},\alpha_{3},\ldots\right)  $, so that%
\[
\left(  \alpha_{1},\alpha_{2},\alpha_{3},\ldots\right)  =\alpha\neq
\varnothing=\left(  0,0,0,\ldots\right)  .
\]
In other words, there exists some $i\in\left\{  1,2,3,\ldots\right\}  $ such
that $\alpha_{i}\neq0$. Consider this $i$.
\par
We have $\alpha_{i}\neq0$. Thus, we don't have $\alpha_{i}=0$. Hence, we have
$\left[  \alpha_{i}=0\right]  =0$. Now, (\ref{pf.lem.GF.G1n.1}) (applied to
$\alpha_{i}$ instead of $n$) yields $\left[  \alpha_{i}=0\right]
=f_{\alpha_{i}}$. Hence, $f_{\alpha_{i}}=\left[  \alpha_{i}=0\right]  =0$.
Thus, we have shown that $f_{\alpha_{i}}$ is equal to $0$.
\par
Now, the definition of $f_{\alpha}$ yields
\begin{align*}
f_{\alpha}  &  =f_{\alpha_{1}}f_{\alpha_{2}}f_{\alpha_{3}}\cdots
=\underbrace{\left(  f_{\alpha_{1}}f_{\alpha_{2}}\cdots f_{\alpha_{i}}\right)
}_{=\left(  f_{\alpha_{1}}f_{\alpha_{2}}\cdots f_{\alpha_{i-1}}\right)
f_{\alpha_{i}}}\left(  f_{\alpha_{i+1}}f_{\alpha_{i+2}}f_{\alpha_{i+3}}%
\cdots\right) \\
&  =\left(  f_{\alpha_{1}}f_{\alpha_{2}}\cdots f_{\alpha_{i-1}}\right)
\underbrace{f_{\alpha_{i}}}_{=0}\left(  f_{\alpha_{i+1}}f_{\alpha_{i+2}%
}f_{\alpha_{i+3}}\cdots\right)  =\left(  f_{\alpha_{1}}f_{\alpha_{2}}\cdots
f_{\alpha_{i-1}}\right)  0\left(  f_{\alpha_{i+1}}f_{\alpha_{i+2}}%
f_{\alpha_{i+3}}\cdots\right)  =0.
\end{align*}
This proves (\ref{pf.lem.GF.G1n.3}).}. Now, the definition of $G_{F,n}$ yields%
\[
G_{F,n}=\sum_{\substack{\alpha\in\operatorname*{WC};\\\left\vert
\alpha\right\vert =n}}\underbrace{f_{\alpha}}_{\substack{=0\\\text{(by
(\ref{pf.lem.GF.G1n.3}))}}}\mathbf{x}^{\alpha}=\sum_{\substack{\alpha
\in\operatorname*{WC};\\\left\vert \alpha\right\vert =n}}0\mathbf{x}^{\alpha
}=0.
\]
In view of $F=1$, this rewrites as $G_{1,n}=0$. Comparing this with $\left[
n=0\right]  =0$, we obtain $G_{1,n}=\left[  n=0\right]  $. Thus, Lemma
\ref{lem.GF.G1n} is proven.
\end{proof}

\begin{proof}
[Proof of Theorem \ref{thm.GF.antipode}.]We shall use the convolution $\star$
introduced in Definition \ref{def.convolution}.

Let $\Delta$ and $\varepsilon$ be the comultiplication and the counit of the
Hopf algebra $\Lambda$. Let $\eta:\mathbf{k}\rightarrow\Lambda$ be the map
that sends each $u\in\mathbf{k}$ to $u\cdot1_{\Lambda}\in\Lambda$. Let
$m_{\Lambda}:\Lambda\otimes\Lambda\rightarrow\Lambda$ be the $\mathbf{k}%
$-linear map sending each pure tensor $a\otimes b\in\Lambda\otimes\Lambda$ to
$ab\in\Lambda$. Definition \ref{def.convolution} then yields
\begin{equation}
S\star\operatorname*{id}\nolimits_{\Lambda}=m_{\Lambda}\circ\left(
S\otimes\operatorname*{id}\nolimits_{\Lambda}\right)  \circ\Delta.
\label{pf.thm.GF.antipode.Sid1}%
\end{equation}

It is easy to see that each positive integer $n$ satisfies%
\begin{equation}
\varepsilon\left(  G_{F,n}\right)  =0. \label{pf.thm.GF.antipode.eG=0}%
\end{equation}

[\textit{Proof of (\ref{pf.thm.GF.antipode.eG=0}):} Let $n$ be a positive
integer. Proposition \ref{prop.GF.basics} \textbf{(a)} (applied to $m=n$)
yields that the formal power series $G_{F,n}$ is the $n$-th degree homogeneous
component of $G_{F}$. Hence, this power series $G_{F,n}$ is homogeneous of
degree $n$. Thus, $G_{F,n}$ is homogeneous of positive degree (since $n$ is
positive). Also, $G_{F,n}\in\Lambda$ (by Proposition \ref{prop.GF.basics}
\textbf{(c)}, applied to $m=n$).

Recall that the counit $\varepsilon$ of $\Lambda$ sends every homogeneous
symmetric function of positive degree to $0$. In other words, if $u\in\Lambda$
is homogeneous of positive degree, then $\varepsilon\left(  u\right)  =0$. We
can apply this to $u=G_{F,n}$ (since $G_{F,n}$ is homogeneous of positive
degree), and thus obtain $\varepsilon\left(  G_{F,n}\right)  =0$. This proves
(\ref{pf.thm.GF.antipode.eG=0}).]

Recall that the antipode of a Hopf algebra is defined to be the $\star
$-inverse of its identity map (i.e., to be the inverse of its identity map
with respect to the convolution $\star$). Thus, the antipode $S$ of $\Lambda$
is the $\star$-inverse of the map $\operatorname*{id}\nolimits_{\Lambda
}:\Lambda\rightarrow\Lambda$. In other words,%
\begin{equation}
S\star\operatorname*{id}\nolimits_{\Lambda}=\operatorname*{id}%
\nolimits_{\Lambda}\star S=\eta\circ\varepsilon\label{pf.thm.GF.antipode.Sid2}%
\end{equation}
(since $\eta\circ\varepsilon:\Lambda\rightarrow\Lambda$ is the neutral element
with respect to $\star$). We also have $S\left(  1\right)  =1$ (by one of the
fundamental properties of the antipode of a Hopf algebra\footnote{See, e.g.,
\cite[Proposition 1.4.10]{GriRei} for this property.}).

Now, each positive integer $n$ satisfies%
\begin{equation}
S\left(  G_{F,n}\right)  =-\sum_{i=0}^{n-1}S\left(  G_{F,i}\right)  \cdot
G_{F,n-i}. \label{pf.thm.GF.antipode.SG1}%
\end{equation}

[\textit{Proof of (\ref{pf.thm.GF.antipode.SG1}):} Let $n$ be a positive
integer. Then,
\begin{align*}
&  \underbrace{\left(  S\star\operatorname*{id}\nolimits_{\Lambda}\right)
}_{\substack{=m_{\Lambda}\circ\left(  S\otimes\operatorname*{id}%
\nolimits_{\Lambda}\right)  \circ\Delta\\\text{(by
(\ref{pf.thm.GF.antipode.Sid1}))}}}\left(  G_{F,n}\right) \\
&  =\left(  m_{\Lambda}\circ\left(  S\otimes\operatorname*{id}%
\nolimits_{\Lambda}\right)  \circ\Delta\right)  \left(  G_{F,n}\right)
=m_{\Lambda}\left(  \left(  S\otimes\operatorname*{id}\nolimits_{\Lambda
}\right)  \left(  \underbrace{\Delta\left(  G_{F,n}\right)  }_{\substack{=\sum
_{i=0}^{n}G_{F,i}\otimes G_{F,n-i}\\\text{(by Theorem \ref{thm.DeltaGFm}%
,}\\\text{applied to }m=n\text{)}}}\right)  \right) \\
&  =m_{\Lambda}\left(  \underbrace{\left(  S\otimes\operatorname*{id}%
\nolimits_{\Lambda}\right)  \left(  \sum_{i=0}^{n}G_{F,i}\otimes
G_{F,n-i}\right)  }_{\substack{=\sum_{i=0}^{n}\left(  S\otimes
\operatorname*{id}\nolimits_{\Lambda}\right)  \left(  G_{F,i}\otimes
G_{F,n-i}\right)  \\\text{(since the map }S\otimes\operatorname*{id}%
\nolimits_{\Lambda}\text{ is }\mathbf{k}\text{-linear)}}}\right)  =m_{\Lambda
}\left(  \sum_{i=0}^{n}\underbrace{\left(  S\otimes\operatorname*{id}%
\nolimits_{\Lambda}\right)  \left(  G_{F,i}\otimes G_{F,n-i}\right)
}_{=S\left(  G_{F,i}\right)  \otimes\operatorname*{id}\nolimits_{\Lambda
}\left(  G_{F,n-i}\right)  }\right) \\
&  =m_{\Lambda}\left(  \sum_{i=0}^{n}S\left(  G_{F,i}\right)  \otimes
\underbrace{\operatorname*{id}\nolimits_{\Lambda}\left(  G_{F,n-i}\right)
}_{=G_{F,n-i}}\right)  =m_{\Lambda}\left(  \sum_{i=0}^{n}S\left(
G_{F,i}\right)  \otimes G_{F,n-i}\right) \\
&  =\sum_{i=0}^{n}\underbrace{m_{\Lambda}\left(  S\left(  G_{F,i}\right)
\otimes G_{F,n-i}\right)  }_{\substack{=S\left(  G_{F,i}\right)  \cdot
G_{F,n-i}\\\text{(by the definition of the map }m_{\Lambda}\text{)}%
}}\ \ \ \ \ \ \ \ \ \ \left(  \text{since the map }m_{\Lambda}\text{ is
}\mathbf{k}\text{-linear}\right) \\
&  =\sum_{i=0}^{n}S\left(  G_{F,i}\right)  \cdot G_{F,n-i}=\sum_{i=0}%
^{n-1}S\left(  G_{F,i}\right)  \cdot G_{F,n-i}+S\left(  G_{F,n}\right)
\cdot\underbrace{G_{F,n-n}}_{\substack{=G_{F,0}=1\\\text{(by Proposition
\ref{prop.GF.basics} \textbf{(e)})}}}\\
&  \ \ \ \ \ \ \ \ \ \ \ \ \ \ \ \ \ \ \ \ \left(  \text{here, we have split
off the addend for }i=n\text{ from the sum}\right) \\
&  =\sum_{i=0}^{n-1}S\left(  G_{F,i}\right)  \cdot G_{F,n-i}+S\left(
G_{F,n}\right)  .
\end{align*}
Thus,%
\begin{align*}
&  \sum_{i=0}^{n-1}S\left(  G_{F,i}\right)  \cdot G_{F,n-i}+S\left(
G_{F,n}\right) \\
&  =\underbrace{\left(  S\star\operatorname*{id}\nolimits_{\Lambda}\right)
}_{\substack{=\eta\circ\varepsilon\\\text{(by (\ref{pf.thm.GF.antipode.Sid2}%
))}}}\left(  G_{F,n}\right)  =\left(  \eta\circ\varepsilon\right)  \left(
G_{F,n}\right)  =\eta\left(  \varepsilon\left(  G_{F,n}\right)  \right) \\
&  =\underbrace{\varepsilon\left(  G_{F,n}\right)  }_{\substack{=0\\\text{(by
(\ref{pf.thm.GF.antipode.eG=0}))}}}\cdot1_{\Lambda}\ \ \ \ \ \ \ \ \ \ \left(
\text{by the definition of }\eta\right) \\
&  =0,
\end{align*}
so that%
\[
S\left(  G_{F,n}\right)  =-\sum_{i=0}^{n-1}S\left(  G_{F,i}\right)  \cdot
G_{F,n-i}.
\]
This proves (\ref{pf.thm.GF.antipode.SG1}).]

The map from $\mathbf{k}\left[  \left[  t\right]  \right]  $ to $\mathbf{k}$
that sends each power series to its constant term is a $\mathbf{k}$-algebra
homomorphism. Thus, the constant term of the power series $F^{-1}$ is the
reciprocal of the constant term of $F$. Since the constant term of $F$ is $1$,
we thus conclude that the constant term of the power series $F^{-1}$ is the
reciprocal of $1$. In other words, the constant term of the power series
$F^{-1}$ is $1$.

We must prove (\ref{eq.thm.GF.antipode.eq}) for each $n\in\mathbb{N}$. We
shall do this by strong induction on $n$:

\textit{Induction step:} Let $m\in\mathbb{N}$. Assume (as the induction
hypothesis) that (\ref{eq.thm.GF.antipode.eq}) holds for all $n<m$. We must
now prove that (\ref{eq.thm.GF.antipode.eq}) holds for $n=m$. In other words,
we must prove that $S\left(  G_{F,m}\right)  =G_{F^{-1},m}$. If $m=0$, then
this is obvious\footnote{\textit{Proof.} Assume that $m=0$. Thus,
$G_{F,m}=G_{F,0}=1$ (by Proposition \ref{prop.GF.basics} \textbf{(e)}) and
$G_{F^{-1},m}=G_{F^{-1},0}=1$ (by Proposition \ref{prop.GF.basics}
\textbf{(e)}, applied to $F^{-1}$ instead of $F$). Now, applying the map $S$
to both sides of the equality $G_{F,m}=1$, we obtain $S\left(  G_{F,m}\right)
=S\left(  1\right)  =1$. Comparing this with $G_{F^{-1},m}=1$, we obtain
$S\left(  G_{F,m}\right)  =G_{F^{-1},m}$. Thus, we have proved that $S\left(
G_{F,m}\right)  =G_{F^{-1},m}$ under the assumption that $m=0$.}. Thus, for
the rest of this induction step, we WLOG assume that $m\neq0$. Hence, $m$ is a
positive integer (since $m\in\mathbb{N}$).

We have assumed that (\ref{eq.thm.GF.antipode.eq}) holds for all $n<m$. In
other words, for all $n\in\mathbb{N}$ satisfying $n<m$, we have%
\begin{equation}
S\left(  G_{F,n}\right)  =G_{F^{-1},n}. \label{pf.thm.GF.antipode.IH}%
\end{equation}

Now, let $i\in\left\{  0,1,\ldots,m-1\right\}  $. Thus, $i\leq m-1<m$.
Therefore, (\ref{pf.thm.GF.antipode.IH}) (applied to $n=i$) yields
\begin{equation}
S\left(  G_{F,i}\right)  =G_{F^{-1},i}. \label{pf.thm.GF.antipode.IH2}%
\end{equation}

Forget that we fixed $i$. We thus have proved (\ref{pf.thm.GF.antipode.IH2})
for each $i\in\left\{  0,1,\ldots,m-1\right\}  $.

Now, (\ref{pf.thm.GF.antipode.SG1}) (applied to $n=m$) yields%
\begin{equation}
S\left(  G_{F,m}\right)  =-\sum_{i=0}^{m-1}\underbrace{S\left(  G_{F,i}%
\right)  }_{\substack{=G_{F^{-1},i}\\\text{(by (\ref{pf.thm.GF.antipode.IH2}%
))}}}\cdot G_{F,m-i}=-\sum_{i=0}^{m-1}G_{F^{-1},i}G_{F,m-i}.
\label{pf.thm.GF.antipode.15}%
\end{equation}
On the other hand, Proposition \ref{prop.GF.GAB1} \textbf{(b)} (applied to
$A=F^{-1}$ and $B=F$ and $n=m$) yields%
\begin{align*}
G_{F^{-1}F,m}  &  =\sum_{i=0}^{m}G_{F^{-1},i}G_{F,m-i}\\
&  =\sum_{i=0}^{m-1}G_{F^{-1},i}G_{F,m-i}+G_{F^{-1},m}\underbrace{G_{F,m-m}%
}_{\substack{=G_{F,0}=1\\\text{(by Proposition \ref{prop.GF.basics}
\textbf{(e)})}}}\\
&  \ \ \ \ \ \ \ \ \ \ \ \ \ \ \ \ \ \ \ \ \left(  \text{here, we have split
off the addend for }i=m\text{ from the sum}\right) \\
&  =\sum_{i=0}^{m-1}G_{F^{-1},i}G_{F,m-i}+G_{F^{-1},m}.
\end{align*}
Hence,%
\begin{align*}
\sum_{i=0}^{m-1}G_{F^{-1},i}G_{F,m-i}+G_{F^{-1},m}  &  =G_{F^{-1}F,m}%
=G_{1,m}\ \ \ \ \ \ \ \ \ \ \left(  \text{since }F^{-1}F=1\right) \\
&  =\left[  m=0\right]  \ \ \ \ \ \ \ \ \ \ \left(  \text{by Lemma
\ref{lem.GF.G1n}, applied to }n=m\right) \\
&  =0\ \ \ \ \ \ \ \ \ \ \left(  \text{since we don't have }m=0\text{ (because
}m\neq0\text{)}\right)  .
\end{align*}
Thus,%
\[
G_{F^{-1},m}=-\sum_{i=0}^{m-1}G_{F^{-1},i}G_{F,m-i}.
\]
Comparing this with (\ref{pf.thm.GF.antipode.15}), we obtain $S\left(
G_{F,m}\right)  =G_{F^{-1},m}$. In other words, (\ref{eq.thm.GF.antipode.eq})
holds for $n=m$. This completes the induction step. Thus,
(\ref{eq.thm.GF.antipode.eq}) is proved by strong induction. This completes
the proof of Theorem \ref{thm.GF.antipode}.
\end{proof}
\end{verlong}

As a consequence of Theorem \ref{thm.GF.antipode}, we obtain a formula for the
antipode of a Petrie symmetric function:

\begin{corollary}
\label{cor.Gkm.antipode}Let $k$ be a positive integer such that $k>1$. A weak
composition $\alpha$ will be called $k$\emph{-friendly} if each $i\in\left\{
1,2,3,\ldots\right\}  $ satisfies $\alpha_{i}\equiv0\operatorname{mod}k$ or
$\alpha_{i}\equiv1\operatorname{mod}k$. If $\alpha$ is a weak composition,
then $w\left(  \alpha\right)  $ shall denote the number of all $i\in\left\{
1,2,3,\ldots\right\}  $ satisfying $\alpha_{i}\equiv1\operatorname{mod}k$.

Let $S$ be the antipode of the Hopf algebra $\Lambda$. Then, for each
$n\in\mathbb{N}$, we have%
\[
S\left(  G\left(  k,n\right)  \right)  =\sum_{\substack{\alpha\in
\operatorname*{WC};\\\left\vert \alpha\right\vert =n;\\\alpha\text{ is
}k\text{-friendly}}}\left(  -1\right)  ^{w\left(  \alpha\right)  }%
\mathbf{x}^{\alpha}=\sum_{\substack{\lambda\in\operatorname*{Par};\\\left\vert
\lambda\right\vert =n;\\\lambda\text{ is }k\text{-friendly}}}\left(
-1\right)  ^{w\left(  \lambda\right)  }m_{\lambda}.
\]

\end{corollary}

\begin{proof}
[Proof of Corollary \ref{cor.Gkm.antipode} (sketched).]Let $F=1+t+t^{2}%
+\cdots+t^{k-1}\in\mathbf{k}\left[  \left[  t\right]  \right]  $. Then, $F$ is
a power series whose constant term is $1$. Hence, its reciprocal $F^{-1}$ is
well-defined and again is a power series whose constant term is $1$. Let us
denote this reciprocal $F^{-1}$ by $Q$; thus, $Q=F^{-1}$.

Let $q_{0},q_{1},q_{2},\ldots$ be the coefficients of the formal power series
$Q$, so that $Q=\sum_{n\in\mathbb{N}}q_{n}t^{n}$. Thus, $q_{0}$ is the
constant term of $Q$; hence, $q_{0}=1$ (since the constant term of $Q$ is $1$).

On the other hand,%
\begin{align*}
Q  &  =F^{-1}=\left(  \dfrac{1-t^{k}}{1-t}\right)  ^{-1}%
\ \ \ \ \ \ \ \ \ \ \left(  \text{since }F=1+t+t^{2}+\cdots+t^{k-1}%
=\dfrac{1-t^{k}}{1-t}\right) \\
&  =\dfrac{1-t}{1-t^{k}}=\left(  1-t\right)  \cdot\underbrace{\left(
1-t^{k}\right)  ^{-1}}_{\substack{=\sum_{m\in\mathbb{N}}\left(  t^{k}\right)
^{m}=\sum_{m\in\mathbb{N}}t^{mk}\\=t^{0}+t^{k}+t^{2k}+t^{3k}+\cdots}}=\left(
1-t\right)  \cdot\left(  t^{0}+t^{k}+t^{2k}+t^{3k}+\cdots\right) \\
&  =\underbrace{\left(  t^{0}+t^{k}+t^{2k}+t^{3k}+\cdots\right)
}_{\substack{=\sum_{\substack{n\in\mathbb{N};\\n\equiv0\operatorname{mod}%
k}}t^{n}\\=\sum_{n\in\mathbb{N}}\left[  n\equiv0\operatorname{mod}k\right]
t^{n}}}-\underbrace{t\cdot\left(  t^{0}+t^{k}+t^{2k}+t^{3k}+\cdots\right)
}_{\substack{=t^{1}+t^{k+1}+t^{2k+1}+t^{3k+1}+\cdots\\=\sum_{\substack{n\in
\mathbb{N};\\n\equiv1\operatorname{mod}k}}t^{n}\\=\sum_{n\in\mathbb{N}}\left[
n\equiv1\operatorname{mod}k\right]  t^{n}}}\\
&  =\sum_{n\in\mathbb{N}}\left[  n\equiv0\operatorname{mod}k\right]
t^{n}-\sum_{n\in\mathbb{N}}\left[  n\equiv1\operatorname{mod}k\right]  t^{n}\\
&  =\sum_{n\in\mathbb{N}}\left(  \left[  n\equiv0\operatorname{mod}k\right]
-\left[  n\equiv1\operatorname{mod}k\right]  \right)  t^{n}.
\end{align*}
Comparing this with $Q=\sum_{n\in\mathbb{N}}q_{n}t^{n}$, we obtain%
\[
\sum_{n\in\mathbb{N}}q_{n}t^{n}=\sum_{n\in\mathbb{N}}\left(  \left[
n\equiv0\operatorname{mod}k\right]  -\left[  n\equiv1\operatorname{mod}%
k\right]  \right)  t^{n}.
\]
Comparing coefficients on both sides of this equality, we find%
\begin{equation}
q_{n}=\left[  n\equiv0\operatorname{mod}k\right]  -\left[  n\equiv
1\operatorname{mod}k\right]  \ \ \ \ \ \ \ \ \ \ \text{for each }%
n\in\mathbb{N}. \label{pf.cor.Gkm.antipode.qn=}%
\end{equation}

For any weak composition $\alpha$, we define an element $q_{\alpha}%
\in\mathbf{k}$ by%
\[
q_{\alpha}=q_{\alpha_{1}}q_{\alpha_{2}}q_{\alpha_{3}}\cdots.
\]
(Here, the infinite product $q_{\alpha_{1}}q_{\alpha_{2}}q_{\alpha_{3}}\cdots$
is well-defined, since every sufficiently high positive integer $i$ satisfies
$\alpha_{i}=0$ and thus $q_{\alpha_{i}}=q_{0}=1$.)

It is now easy to see (using (\ref{pf.cor.Gkm.antipode.qn=})) that%
\begin{equation}
q_{\alpha}=\left[  \alpha\text{ is }k\text{-friendly}\right]  \cdot\left(
-1\right)  ^{w\left(  \alpha\right)  } \label{pf.cor.Gkm.antipode.qal=}%
\end{equation}
for any weak composition $\alpha$.

\begin{verlong}
[\textit{Proof of (\ref{pf.cor.Gkm.antipode.qal=}):} Let $\alpha$ be a weak
composition. We must prove (\ref{pf.cor.Gkm.antipode.qal=}).

It is easy to see that (\ref{pf.cor.Gkm.antipode.qal=}) holds if $\alpha$ is
not $k$-friendly\footnote{\textit{Proof.} Assume that $\alpha$ is not
$k$-friendly. Thus, \textbf{not} each $i\in\left\{  1,2,3,\ldots\right\}  $
satisfies $\alpha_{i}\equiv0\operatorname{mod}k$ or $\alpha_{i}\equiv
1\operatorname{mod}k$. In other words, there exists some $i\in\left\{
1,2,3,\ldots\right\}  $ that satisfies neither $\alpha_{i}\equiv
0\operatorname{mod}k$ nor $\alpha_{i}\equiv1\operatorname{mod}k$. Consider
this $i$.
\par
We have $\left[  \alpha_{i}\equiv0\operatorname{mod}k\right]  =0$ (since $i$
does not satisfy $\alpha_{i}\equiv0\operatorname{mod}k$) and $\left[
\alpha_{i}\equiv1\operatorname{mod}k\right]  =0$ (since $i$ does not satisfy
$\alpha_{i}\equiv1\operatorname{mod}k$). Now, (\ref{pf.cor.Gkm.antipode.qn=})
(applied to $n=\alpha_{i}$) yields%
\[
q_{\alpha_{i}}=\underbrace{\left[  \alpha_{i}\equiv0\operatorname{mod}%
k\right]  }_{=0}-\underbrace{\left[  \alpha_{i}\equiv1\operatorname{mod}%
k\right]  }_{=0}=0-0=0.
\]
\par
Now, the definition of $q_{\alpha}$ yields%
\begin{align*}
q_{\alpha}  &  =q_{\alpha_{1}}q_{\alpha_{2}}q_{\alpha_{3}}\cdots
=\underbrace{\left(  q_{\alpha_{1}}q_{\alpha_{2}}\cdots q_{\alpha_{i}}\right)
}_{=\left(  q_{\alpha_{1}}q_{\alpha_{2}}\cdots q_{\alpha_{i-1}}\right)
q_{\alpha_{i}}}\left(  q_{\alpha_{i+1}}q_{\alpha_{i+2}}q_{\alpha_{i+3}}%
\cdots\right) \\
&  =\left(  q_{\alpha_{1}}q_{\alpha_{2}}\cdots q_{\alpha_{i-1}}\right)
\underbrace{q_{\alpha_{i}}}_{=0}\left(  q_{\alpha_{i+1}}q_{\alpha_{i+2}%
}q_{\alpha_{i+3}}\cdots\right)  =0.
\end{align*}
Comparing this with%
\[
\underbrace{\left[  \alpha\text{ is }k\text{-friendly}\right]  }%
_{\substack{=0\\\text{(since }\alpha\text{ is not }k\text{-friendly)}}%
}\cdot\left(  -1\right)  ^{w\left(  \alpha\right)  }=0,
\]
we obtain $q_{\alpha}=\left[  \alpha\text{ is }k\text{-friendly}\right]
\cdot\left(  -1\right)  ^{w\left(  \alpha\right)  }$. Thus, we have proved
(\ref{pf.cor.Gkm.antipode.qal=}) under the assumption that $\alpha$ is not
$k$-friendly.}. Hence, for the rest of this proof of
(\ref{pf.cor.Gkm.antipode.qal=}), we WLOG assume that $\alpha$ is
$k$-friendly. In other words, each $i\in\left\{  1,2,3,\ldots\right\}  $
satisfies%
\begin{equation}
\alpha_{i}\equiv0\operatorname{mod}k\ \ \ \ \ \ \ \ \ \ \text{or}%
\ \ \ \ \ \ \ \ \ \ \alpha_{i}\equiv1\operatorname{mod}k.
\label{pf.cor.Gkm.antipode.qal=.pf.friend}%
\end{equation}

Now, if $i\in\left\{  1,2,3,\ldots\right\}  $ satisfies $\alpha_{i}%
\equiv1\operatorname{mod}k$, then%
\begin{equation}
q_{\alpha_{i}}=-1 \label{pf.cor.Gkm.antipode.qal=.pf.odd}%
\end{equation}
\footnote{\textit{Proof of (\ref{pf.cor.Gkm.antipode.qal=.pf.odd}):} Let
$i\in\left\{  1,2,3,\ldots\right\}  $ satisfy $\alpha_{i}\equiv
1\operatorname{mod}k$. Then, $i$ cannot satisfy $\alpha_{i}\equiv
0\operatorname{mod}k$ (because $\alpha_{i}\equiv0\operatorname{mod}k$ would
entail $0\equiv\alpha_{i}\equiv1\operatorname{mod}k$ and therefore
$k\mid0-1=-1$, which would contradict $k>1$). Hence, $\left[  \alpha_{i}%
\equiv0\operatorname{mod}k\right]  =0$. Also, $\left[  \alpha_{i}%
\equiv1\operatorname{mod}k\right]  =1$ (since $\alpha_{i}\equiv
1\operatorname{mod}k$). Now, (\ref{pf.cor.Gkm.antipode.qn=}) (applied to
$n=\alpha_{i}$) yields%
\[
q_{\alpha_{i}}=\underbrace{\left[  \alpha_{i}\equiv0\operatorname{mod}%
k\right]  }_{=0}-\underbrace{\left[  \alpha_{i}\equiv1\operatorname{mod}%
k\right]  }_{=1}=0-1=-1.
\]
This proves (\ref{pf.cor.Gkm.antipode.qal=.pf.odd}).}.

On the other hand, if $i\in\left\{  1,2,3,\ldots\right\}  $ satisfies
$\alpha_{i}\not \equiv 1\operatorname{mod}k$, then%
\begin{equation}
q_{\alpha_{i}}=1 \label{pf.cor.Gkm.antipode.qal=.pf.even}%
\end{equation}
\footnote{\textit{Proof of (\ref{pf.cor.Gkm.antipode.qal=.pf.even}):} Let
$i\in\left\{  1,2,3,\ldots\right\}  $ satisfy $\alpha_{i}\not \equiv
1\operatorname{mod}k$. Then, $i$ cannot satisfy $\alpha_{i}\equiv
1\operatorname{mod}k$. Hence, $\left[  \alpha_{i}\equiv1\operatorname{mod}%
k\right]  =0$. However, (\ref{pf.cor.Gkm.antipode.qal=.pf.friend}) shows that
we have $\alpha_{i}\equiv0\operatorname{mod}k$ or $\alpha_{i}\equiv
1\operatorname{mod}k$. Hence, we have $\alpha_{i}\equiv0\operatorname{mod}k$
(since $i$ cannot satisfy $\alpha_{i}\equiv1\operatorname{mod}k$). Thus,
$\left[  \alpha_{i}\equiv0\operatorname{mod}k\right]  =1$. Now,
(\ref{pf.cor.Gkm.antipode.qn=}) (applied to $n=\alpha_{i}$) yields%
\[
q_{\alpha_{i}}=\underbrace{\left[  \alpha_{i}\equiv0\operatorname{mod}%
k\right]  }_{=1}-\underbrace{\left[  \alpha_{i}\equiv1\operatorname{mod}%
k\right]  }_{=0}=1-0=1.
\]
This proves (\ref{pf.cor.Gkm.antipode.qal=.pf.even}).}.

Now, the definition of $q_{\alpha}=q_{\alpha_{1}}q_{\alpha_{2}}q_{\alpha_{3}%
}\cdots$ yields%
\begin{align*}
q_{\alpha}  &  =q_{\alpha_{1}}q_{\alpha_{2}}q_{\alpha_{3}}\cdots=\prod
_{i\in\left\{  1,2,3,\ldots\right\}  }q_{\alpha_{i}}=\left(  \prod
_{\substack{i\in\left\{  1,2,3,\ldots\right\}  ;\\\alpha_{i}\equiv
1\operatorname{mod}k}}\underbrace{q_{\alpha_{i}}}_{\substack{=-1\\\text{(by
(\ref{pf.cor.Gkm.antipode.qal=.pf.odd}))}}}\right)  \cdot\left(
\prod_{\substack{i\in\left\{  1,2,3,\ldots\right\}  ;\\\alpha_{i}%
\not \equiv 1\operatorname{mod}k}}\underbrace{q_{\alpha_{i}}}%
_{\substack{=1\\\text{(by (\ref{pf.cor.Gkm.antipode.qal=.pf.even}))}}}\right)
\\
&  \ \ \ \ \ \ \ \ \ \ \ \ \ \ \ \ \ \ \ \ \left(
\begin{array}
[c]{c}%
\text{since each }i\in\left\{  1,2,3,\ldots\right\}  \text{ satisfies either
}\alpha_{i}\equiv1\operatorname{mod}k\\
\text{or }\alpha_{i}\not \equiv 1\operatorname{mod}k\text{ (but not both at
the same time)}%
\end{array}
\right) \\
&  =\left(  \prod_{\substack{i\in\left\{  1,2,3,\ldots\right\}  ;\\\alpha
_{i}\equiv1\operatorname{mod}k}}\left(  -1\right)  \right)  \cdot
\underbrace{\left(  \prod_{\substack{i\in\left\{  1,2,3,\ldots\right\}
;\\\alpha_{i}\not \equiv 1\operatorname{mod}k}}1\right)  }_{=1}=\prod
_{\substack{i\in\left\{  1,2,3,\ldots\right\}  ;\\\alpha_{i}\equiv
1\operatorname{mod}k}}\left(  -1\right) \\
&  =\left(  -1\right)  ^{\left(  \text{the number of all }i\in\left\{
1,2,3,\ldots\right\}  \text{ satisfying }\alpha_{i}\equiv1\operatorname{mod}%
k\right)  }=\left(  -1\right)  ^{w\left(  \alpha\right)  }%
\end{align*}
(since $\left(  \text{the number of all }i\in\left\{  1,2,3,\ldots\right\}
\text{ satisfying }\alpha_{i}\equiv1\operatorname{mod}k\right)  =w\left(
\alpha\right)  $ (by the definition of $w\left(  \alpha\right)  $)). Comparing
this with%
\[
\underbrace{\left[  \alpha\text{ is }k\text{-friendly}\right]  }%
_{\substack{=1\\\text{(since }\alpha\text{ is }k\text{-friendly)}}%
}\cdot\left(  -1\right)  ^{w\left(  \alpha\right)  }=\left(  -1\right)
^{w\left(  \alpha\right)  },
\]
we obtain $q_{\alpha}=\left[  \alpha\text{ is }k\text{-friendly}\right]
\cdot\left(  -1\right)  ^{w\left(  \alpha\right)  }$. Hence,
(\ref{pf.cor.Gkm.antipode.qal=}) is proved.]
\end{verlong}

Now, let $n\in\mathbb{N}$. Recall that our scalars $q_{i}$ and $q_{\alpha}$
were defined in the exact same way as the scalars $f_{i}$ and $f_{\alpha}$
were defined in Definition \ref{def.GF.GF}, but using the power series $Q$
instead of $F$. Hence, Proposition \ref{prop.GF.basics} \textbf{(c)} (applied
to $Q$, $q_{i}$, $q_{\alpha}$ and $n$ instead of $F$, $f_{i}$, $f_{\alpha}$
and $m$) yields that%
\[
G_{Q,n}=\sum_{\substack{\alpha\in\operatorname*{WC};\\\left\vert
\alpha\right\vert =n}}q_{\alpha}\mathbf{x}^{\alpha}=\sum_{\substack{\lambda
\in\operatorname*{Par};\\\left\vert \lambda\right\vert =n}}q_{\lambda
}m_{\lambda}\in\Lambda.
\]
Hence,%
\begin{align}
G_{Q,n}  &  =\sum_{\substack{\alpha\in\operatorname*{WC};\\\left\vert
\alpha\right\vert =n}}\ \ \underbrace{q_{\alpha}}_{\substack{=\left[
\alpha\text{ is }k\text{-friendly}\right]  \cdot\left(  -1\right)  ^{w\left(
\alpha\right)  }\\\text{(by (\ref{pf.cor.Gkm.antipode.qal=}))}}}\mathbf{x}%
^{\alpha}=\sum_{\substack{\alpha\in\operatorname*{WC};\\\left\vert
\alpha\right\vert =n}}\left[  \alpha\text{ is }k\text{-friendly}\right]
\cdot\left(  -1\right)  ^{w\left(  \alpha\right)  }\mathbf{x}^{\alpha
}\nonumber\\
&  =\sum_{\substack{\alpha\in\operatorname*{WC};\\\left\vert \alpha\right\vert
=n;\\\alpha\text{ is }k\text{-friendly}}}\left(  -1\right)  ^{w\left(
\alpha\right)  }\mathbf{x}^{\alpha} \label{pf.cor.Gkm.antipode.GHn=1}%
\end{align}
(since the factor $\left[  \alpha\text{ is }k\text{-friendly}\right]  $ inside
the sum makes all the addends vanish except for those that satisfy
\textquotedblleft$\alpha$ is $k$-friendly\textquotedblright) and%
\begin{align}
G_{Q,n}  &  =\sum_{\substack{\lambda\in\operatorname*{Par};\\\left\vert
\lambda\right\vert =n}}\ \ \underbrace{q_{\lambda}}_{\substack{=\left[
\lambda\text{ is }k\text{-friendly}\right]  \cdot\left(  -1\right)  ^{w\left(
\lambda\right)  }\\\text{(by (\ref{pf.cor.Gkm.antipode.qal=}),}\\\text{applied
to }\lambda\\\text{instead of }\alpha\text{)}}}m_{\lambda}=\sum
_{\substack{\lambda\in\operatorname*{Par};\\\left\vert \lambda\right\vert
=n}}\left[  \lambda\text{ is }k\text{-friendly}\right]  \cdot\left(
-1\right)  ^{w\left(  \lambda\right)  }m_{\lambda}\nonumber\\
&  =\sum_{\substack{\lambda\in\operatorname*{Par};\\\left\vert \lambda
\right\vert =n;\\\lambda\text{ is }k\text{-friendly}}}\left(  -1\right)
^{w\left(  \lambda\right)  }m_{\lambda} \label{pf.cor.Gkm.antipode.GHn=2}%
\end{align}
(since the factor $\left[  \lambda\text{ is }k\text{-friendly}\right]  $
inside the sum makes all the addends vanish except for those that satisfy
\textquotedblleft$\lambda$ is $k$-friendly\textquotedblright).

However, in Example \ref{exa.GF.GF123} \textbf{(c)}, we have seen that
$G_{F,m}=G\left(  k,m\right)  $ for each $m\in\mathbb{N}$. Applying this to
$m=n$, we obtain $G_{F,n}=G\left(  k,n\right)  $. Thus, $G\left(  k,n\right)
=G_{F,n}$, so that%
\begin{align*}
S\left(  G\left(  k,n\right)  \right)   &  =S\left(  G_{F,n}\right)
=G_{F^{-1},n}\ \ \ \ \ \ \ \ \ \ \left(  \text{by Theorem
\ref{thm.GF.antipode}}\right) \\
&  =G_{Q,n}\ \ \ \ \ \ \ \ \ \ \left(  \text{since }F^{-1}=Q\right) \\
&  =\sum_{\substack{\alpha\in\operatorname*{WC};\\\left\vert \alpha\right\vert
=n;\\\alpha\text{ is }k\text{-friendly}}}\left(  -1\right)  ^{w\left(
\alpha\right)  }\mathbf{x}^{\alpha}\ \ \ \ \ \ \ \ \ \ \left(  \text{by
(\ref{pf.cor.Gkm.antipode.GHn=1})}\right)  .
\end{align*}
Combining this with%
\[
S\left(  G\left(  k,n\right)  \right)  =G_{Q,n}=\sum_{\substack{\lambda
\in\operatorname*{Par};\\\left\vert \lambda\right\vert =n;\\\lambda\text{ is
}k\text{-friendly}}}\left(  -1\right)  ^{w\left(  \lambda\right)  }m_{\lambda
}\ \ \ \ \ \ \ \ \ \ \left(  \text{by (\ref{pf.cor.Gkm.antipode.GHn=2}%
)}\right)  ,
\]
we obtain%
\[
S\left(  G\left(  k,n\right)  \right)  =\sum_{\substack{\alpha\in
\operatorname*{WC};\\\left\vert \alpha\right\vert =n;\\\alpha\text{ is
}k\text{-friendly}}}\left(  -1\right)  ^{w\left(  \alpha\right)  }%
\mathbf{x}^{\alpha}=\sum_{\substack{\lambda\in\operatorname*{Par};\\\left\vert
\lambda\right\vert =n;\\\lambda\text{ is }k\text{-friendly}}}\left(
-1\right)  ^{w\left(  \lambda\right)  }m_{\lambda}.
\]
This proves Corollary \ref{cor.Gkm.antipode}.
\end{proof}

One last property of $G_{F,n}$ shall be noted in passing:

\begin{proposition}
\label{prop.VF.VAB-etc}For any power series $F\in\mathbf{k}\left[  \left[
t\right]  \right]  $ whose constant term is $1$, we define a $\mathbf{k}%
$-algebra homomorphism $V_{F}:\Lambda\rightarrow\Lambda$ as in Theorem
\ref{thm.VF.main}. Then:

\textbf{(a)} If $A$ and $B$ are two power series in $\mathbf{k}\left[  \left[
t\right]  \right]  $ whose constant terms are $1$, then $V_{AB}=V_{A}\star
V_{B}$.

\textbf{(b)} We have $V_{1}=\eta\circ\varepsilon$.

\textbf{(c)} For any power series $F\in\mathbf{k}\left[  \left[  t\right]
\right]  $ whose constant term is $1$, we have $V_{F^{-1}}=V_{F}\circ S$,
where $S$ is the antipode of $\Lambda$.
\end{proposition}

We leave the proof of Proposition \ref{prop.VF.VAB-etc} (which follows easily
from Proposition \ref{prop.GF.GAB1}) to the reader.

\end{document}